\newtheorem{theorem}{Theorem}[subsection]
\newtheorem{lemma}[theorem]{Lemma}
\newtheorem{proposition}[theorem]{Proposition}
\newtheorem{corollary}[theorem]{Corollary}
\newtheorem{claim}[theorem]{Claim}
\newtheorem{remark}[theorem]{Remark}
\theoremstyle{definition}
\newtheorem{definition}[theorem]{Definition}
\numberwithin{equation}{section}
\def\BState{\State\hskip-\ALG@thistlm}
\def\S{\textsection}
\def\R{\ensuremath{\mathbf{R}}}
\def\C{\ensuremath{\mathbf{C}}}
\def\Q{\ensuremath{\mathbf{Q}}}
\def\N{\ensuremath{\mathbf{N}}}
\def\Z{\ensuremath{\mathbf{Z}}}
\def\Zplus{\ensuremath{\mathbb{Z}_{\small{\geq 0}}}}
\def\L{\ensuremath{\mathcal{L}}}
\def\U{\ensuremath{\mathcal{U}}}
\def\W{\ensuremath{\mathcal{W}}}
\def\V{\ensuremath{\mathcal{V}}}
\def\Linf{\ensuremath{L^{\infty}}}
\newcommand{\GG}{\mathbf{G}}
\newcommand{\Ha}{\operatorname{H}_a}
\newcommand{\Hb}{\operatorname{H}_b}
\begin{document}
\title{
  \textbf{Asymptotic Cohomology and Uniform Stability \\
  	for Lattices in Semisimple Groups}}
\author{Lev Glebsky\thanks{Universidad Aut´onoma de San Luis Potosi, Mexico \textbf{Email:}
 \textit{glebsky@cactus.iico.uaslp.mx}},  Alexander Lubotzky\thanks{Weizmann Institute of Science, Rehovot, Israel \textbf{Email:}\textit{ alex.lubotzky@mail.huji.ac.il}}, Nicolas Monod\thanks{École Polytechnique Fédérale de Lausanne (EPFL), Lausanne, Switzerland \textbf{Email:} \textit{nicolas.monod@epfl.ch}}, Bharatram Rangarajan\thanks{Hebrew University of Jerusalem, Jerusalem, Israel \textbf{Email:}\textit{ bharatrm.rangarajan@mail.huji.ac.il}}}

\setlength{\parskip}{1ex plus 0.5ex minus 0.2ex}
\maketitle
\begin{abstract}
		It is, by now, classical that lattices in higher rank semisimple groups have various rigidity properties. In this work, we add another such rigidity property to the list: uniform stability with respect to the family of unitary operators on finite-dimensional Hilbert spaces equipped with submultiplicative norms. Namely, we show that for (most) high-rank lattices, every finite-dimensional unitary "almost-representation" of $\Gamma$ is a small deformation of a (true) unitary representation. This extends a result of Kazhdan \cite{Kaz} for amenable groups and of Burger-Ozawa-Thom \cite{BOT} for $SL(n,Z)$ (for $n>2$). Towards this goal, we first build an elaborate cohomological theory capturing the obstruction to such stability, and show that the vanishing of second cohomology implies uniform stability in this setting. This cohomology can be roughly thought of as an asymptotic version of bounded cohomology, and sheds light on a question raised in \cite{inviteMonod} about a possible connection between vanishing of second bounded cohomology and Ulam stability.\\
  
  \textbf{\textit{Keywords---}} group stability, bounded cohomology, amenability\\
  
  \textbf{\textit{Mathematics Subject Classification---}} 20J05, 22D40
	\end{abstract}
 
	\centerline{\textit{In memory of Robert J. Zimmer}}

	\section*{Introduction}
	Consider a semisimple group $G = \prod^k_{i = 1} \GG_i(K_i)$, where for $1\leq i \leq k$, $K_i$ is a local field, and $\GG_i$ is an almost $K_i$-simple group. If the rank $\sum^k_{i = 1} rk_{K_i} (\GG_i) \ge 2$, such a $G$ is referred to as a \emph{higher rank} semisimple group. The class of irreducible lattices $\Gamma$ in such groups $G$ (referred to as \emph{higher rank lattices}) form an interesting class of groups, which over the years, have been shown to satisfy many rigidity properties, such as local rigidity, Mostow strong-rigidity, Margulis super-rigidity (implying that they are arithmetic groups), Zimmer cocycle rigidity, quasi-isometric rigidity, first-order rigidity, etc. (see \cite{ES}, \cite{alm}, \cite{margulisbook}, \cite{brownfisher} and the references therein). A common feature of the classical rigidity results is that such a higher rank lattice $\Gamma$ has some clear family of representations, and all other representations are just easy variants of them.\\
	The goal of this paper is to demonstrate another type of rigidity phenomena of these lattices. Before stating the exact formulation, let us recall that Margulis super-rigidity, while usually not formulated this way, also gives a full classification of all the finite dimensional unitary representations of a higher rank lattice $\Gamma$ as above. Margulis super-rigidity implies that all such irreducible representations come from a combination of those that factor through finite quotients (and these are the only ones if $\Gamma$ is a non-uniform lattice) and from the representations of $\Gamma$ appearing naturally in its definition as an arithmetic group by Galois twisting (see \S$1.3$ in \cite{margulisbook}). The rigidity phenomenon we  study here, which is called \emph{uniform stability}, is the property that every unitary \say{almost-representation} of $\Gamma$ is a small deformation of a unitary representation.\\
	
	\subsection*{Uniform Stability of Groups}
	Let $\Gamma$ be a discrete group and $(G,d_G)$ be a metric group (where $d_G$ is a bi-invariant metric on $G$). For $\epsilon >0$, a map $\phi:\Gamma \to G$ is said to be an $\epsilon$-almost homomorphism (or \emph{$\epsilon$-homomorphism}) if $d_G(\phi(xy),\phi(x)\phi(y)) \leq \epsilon$ for every $x,y \in \Gamma$. The value $\sup_{x,y \in \Gamma}d_G(\phi(xy),\phi(x)\phi(y))$ is called the \emph{defect} of $\phi$. \\
	Let $\mathcal{G}$ be a family of metric groups. We say that $\Gamma$ is \emph{uniformly stable} with respect to $\mathcal{G}$ if for any $\epsilon>0$, there exists $\delta=\delta(\epsilon)$ with $\lim_{\epsilon \to 0}\delta(\epsilon)=0$ such that for any $\epsilon$-homomorphism $\phi:\Gamma \to G$ (for $G \in \mathcal{G}$), there exists a homomorphism $\psi \in Hom(\Gamma,G)$ with $\sup_{x \in \Gamma} \: d_G(\phi(x),\psi(x)) \leq \delta$. In other words, $\Gamma$ is uniformly stable with respect to $\mathcal{G}$ if any almost homomorphism of $\Gamma$ to any group in the family $\mathcal{G}$ is close to a (true) homomorphism. \\ 
	
	Questions of this nature were first raised and studied in \cite{turing}, \cite{vonneumann} and \cite{Ulam}, and of particular interest is the case when $\mathcal{G}$ is the family of unitary operators on Hilbert spaces and the metric is given by a norm (on the space of bounded operators), as studied in \cite{Kaz} and \cite{BOT}. Note that in this work, we will be interested solely in uniform stability, as opposed to \emph{pointwise} stability, as studied in \cite{DCGLT}, \cite{arz} and the references therein.\\
	
	The notion of uniform stability with respect to unitary operators on Hilbert spaces equipped with the operator norm is referred to in \cite{BOT} as \emph{strong Ulam stability}, while if we restrict the family to unitary operators on \emph{finite-dimensional} Hilbert spaces, it is referred to as \emph{Ulam stability}. In the pioneering work of Kazhdan \cite{Kaz} (and clarified further in \cite{shtern} and \cite{johnson2}), it is shown that 
	\begin{theorem}[\cite{Kaz}]
		Every (discrete) amenable group $\Gamma$ is Ulam stable (in fact, even strongly Ulam stable).
	\end{theorem}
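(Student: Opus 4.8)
The plan is to deduce Theorem~\ref{Kaz} from the cohomological criterion advertised above: since the operator norm is a (sub)multiplicative norm, it suffices to prove that the relevant \emph{second} asymptotic cohomology group of an amenable $\Gamma$ vanishes, and then invoke that this vanishing forces uniform stability with respect to unitaries on finite‑dimensional Hilbert spaces. So the whole content becomes the vanishing statement, and for amenable $\Gamma$ this should be essentially automatic, being the asymptotic shadow of the classical fact that bounded cohomology of amenable groups vanishes in positive degrees with coefficients in any dual Banach module. Concretely, I would recast Kazhdan's original averaging trick—replacing an $\epsilon$-almost homomorphism $\phi$ by $g\mapsto\int_{\Gamma}\phi(gx)\phi(x)^{-1}\,dm(x)$ for an invariant mean $m$, and then unitarizing by the similarity $S=\big(\int_{\Gamma}\psi(x)^{*}\psi(x)\,dm(x)\big)^{1/2}$, which is close to the identity—as the statement that the obstruction $2$-cocycle attached to $\phi$ is a coboundary.

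In more detail: fix a left-invariant mean $m$ on $\ell^{\infty}(\Gamma)$ and use it to build the usual contracting homotopy. For a bounded $(n{+}1)$-cochain $\beta$ valued in a dual Banach space $E$—here $E$ will be an ultraproduct of copies of $B(H)$ (or of its Lie algebra $\mathfrak{u}(H)$), a dual space, carrying the \emph{almost}-action induced by $\mathrm{Ad}\,\phi$—set
\[
  (h\beta)(g_1,\dots,g_n)\;=\;m_x\big[\beta(g_1,\dots,g_n,x)\big],
\]
the mean being applied coordinatewise in the weak-$*$ sense; this is legitimate precisely because bounded balls of $E$ are weak-$*$ compact. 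Right-invariance of $m$ together with the cocycle identity gives $dh+hd=\mathrm{id}$ on positive-degree cochains, up to error terms that are quadratically small in the defect and hence vanish in the asymptotic (ultraproduct) limit. Thus the second asymptotic cohomology vanishes, and unwinding the criterion converts $\phi$ into an honest unitary representation $\psi$ with $\sup_{x}\|\phi(x)-\psi(x)\|\le\delta(\epsilon)$ and $\delta(\epsilon)\to0$.

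The step I expect to demand the most care is not the vanishing per se but making the invariant-mean argument genuinely \emph{uniform}: one must check that the homotopy $h$ is well defined on the asymptotic cochain complex assembled from ultraproducts and that it interacts with the differentials so that the $O(\epsilon^2)$ errors really do die in the limit, and one must accommodate the fact that the coefficient module $B(H)$ carries only an almost-action of $\Gamma$—which is exactly the difficulty the asymptotic cohomology was engineered to absorb, so the amenable case should serve as the clean warm-up. Finally, for the parenthetical upgrade to \emph{strong} Ulam stability one simply drops finite-dimensionality of $H$: the only place dimension could have entered is in averaging $B(H)$-valued functions, and since that was already performed weak-$*$ using compactness of bounded balls in $B(H)=(B(H)_*)^{*}$, the argument applies verbatim to arbitrary Hilbert spaces with the operator norm.
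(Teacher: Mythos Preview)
Your core approach matches the paper's: the paper reproves the Ulam-stability part of Kazhdan's theorem in \S\ref{ssec-amenable} by constructing an internal mean $m_{in}:\L^{\infty}({}^*\Gamma,\W)\to\W$ from a $\Gamma$-invariant mean on $\ell^{\infty}(\Gamma)$ (using that $\W$ is dual), and then applying it in the last variable of an asymptotic $2$-cocycle $\alpha$ to produce $\beta(g)=m_{in}^{x}(\alpha(g,x))$ with $\tilde\alpha=\tilde\delta^{1}\tilde\beta$. This is exactly your averaging/contracting-homotopy idea specialized to degree~$2$, and it yields \cref{amenable2}: amenable $\Gamma$ is uniformly $\mathfrak{U}$-stable with a linear estimate.

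The gap is your last paragraph. The paper's cohomological criterion (\cref{mainthm2}, \cref{inhomo1}) is proved only for the family $\mathfrak{U}$ of \emph{finite-dimensional} unitary groups with submultiplicative norms; the module $\W=\prod_{\U}\mathfrak{u}(k_n)$ and the entire defect-diminishing/logarithm machinery of \S\ref{sec-prelims}--\S\ref{sec-asymgamma} are set up in that regime. Immediately after \cref{amenable2} the paper states explicitly: ``we \emph{do not} prove here the analogous result of strong Ulam stability, where the family comprises groups of unitary operators on (possibly infinite-dimensional) Hilbert spaces.'' So your assertion that ``the argument applies verbatim to arbitrary Hilbert spaces'' is not justified by this framework: while $B(H)$ is indeed a dual space and the weak-$*$ averaging step would go through, you have not established the implication ``$\Ha^{2}=0\Rightarrow$ strong Ulam stability'' in infinite dimensions, and the paper does not claim it. Kazhdan's original strong Ulam stability is obtained by a direct iterative averaging in $B(H)$ (your first displayed formula), not via the asymptotic-cohomology route; if you want the parenthetical upgrade you should either cite \cite{kaz} for it or carry out that direct argument separately.
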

	It is worth noting that the only known examples of strongly Ulam stable groups are amenable, and it is natural to ask if strong Ulam stability characterizes amenability.\\ 
	Let us mention at this point that one of the (innumerable) equivalent characterizations of amenability is given in terms of the vanishing of bounded cohomology with dual coefficients: $\Gamma$ is amenable iff $\Hb^n(\Gamma, V) = 0$ for every dual Banach $\Gamma$-module $V$ and $n >0$. Here $\Hb^n(\Gamma,V)$ denotes the $n$-th \emph{bounded} cohomology group of $\Gamma$ with coefficients in the Banach $\Gamma$-module $V$. Kazhdan's proof does not use this result explicitly but does use a notion of $\epsilon$-cocycles and approximate cohomology in degree $2$.\\
	
	Ulam stability was further studied in \cite{BOT} where they show more examples (and non-examples) of Ulam stable groups. It is shown there that if a group contains a non-abelian free subgroup, then it is \emph{not} strongly Ulam stable. In particular, this means that higher rank lattices are not strongly Ulam stable. On the positive side, they show:
	\begin{theorem}[\cite{BOT}]
		\label{BOT}
		Let $\mathcal{O}$ be the ring of integers of a number field, $S$ a finite set of primes, and $\mathcal{O}_S$ the corresponding localization. Then for every $n \geq 3$, $SL(n,\mathcal{O}_S)$ is Ulam stable.
	\end{theorem}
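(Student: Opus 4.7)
The plan is to identify $SL(n, \mathcal{O}_S)$ (for $n \geq 3$) as an irreducible higher rank lattice and reduce Ulam stability to a vanishing theorem in second (asymptotic) cohomology. Writing $K$ for the number field with ring of integers $\mathcal{O}$, the group $\Gamma = SL(n, \mathcal{O}_S)$ sits as an irreducible lattice in the locally compact semisimple group $G = \prod_v SL(n, K_v)$, where $v$ ranges over the archimedean places of $K$ together with the finite primes in $S$. Each factor has $K_v$-rank $n-1$, so the total rank is at least $2$ once $n \geq 3$, and $\Gamma$ is a higher rank lattice in the sense of the introduction. One can then hope to invoke vanishing of second bounded cohomology in the style of Burger--Monod and Shalom.

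To convert such a vanishing statement into Ulam stability, the natural approach is a defect-cocycle argument. Given an $\epsilon$-homomorphism $\phi: \Gamma \to U(d)$ (with $d$ arbitrary), form the defect
\[
\omega(g,h) \;=\; \phi(g)\phi(h)\phi(gh)^{-1},
\]
which lies in a small neighborhood of the identity in $U(d)$, and pass to the Lie algebra via a matrix logarithm. The resulting map $c(g,h) = \log \omega(g,h)$ is a bounded $\mathfrak{u}(d)$-valued function satisfying the $2$-cocycle identity up to quadratic error in $\epsilon$. Were $c$ an honest cocycle, a vanishing statement of the form $\Hb^2(\Gamma, \mathfrak{u}(d)) = 0$ (with the adjoint action twisted by $\phi$) would produce a bounded $1$-cochain $\beta$ with $d\beta = c$, and $\psi(g) = \exp(\beta(g))\phi(g)$ would then be a genuine homomorphism uniformly close to $\phi$.

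The hard part is that $c$ is only an approximate cocycle in a nonlinear setting, so a naive application of Burger--Monod is unavailable. This is precisely what the paper's asymptotic cohomology is designed to bridge: by passing to a suitable ultralimit of coefficient modules, approximate cocycles are converted to honest cocycles in a larger (still well-behaved) module, and the higher rank vanishing transfers, with the error estimates staying uniform in the dimension $d$. For the concrete group $\Gamma = SL(n, \mathcal{O}_S)$, a more hands-on alternative (closer to the original Burger--Ozawa--Thom route) is to use: (i) Carter--Keller--Tavgen bounded generation of $SL(n, \mathcal{O}_S)$ by elementary root subgroups $U_{ij} = \{I + aE_{ij} : a \in \mathcal{O}_S\} \cong (\mathcal{O}_S,+)$, (ii) Theorem \ref{Kaz} applied to each amenable $U_{ij}$ to correct $\phi|_{U_{ij}}$ to a unitary representation $\pi_{ij}$ within $\delta(\epsilon)$, and (iii) the Steinberg commutator relations $[e_{ij}(a), e_{jk}(b)] = e_{ik}(ab)$, valid for distinct indices and thus requiring $n \geq 3$, to patch the $\pi_{ij}$ into a global homomorphism close to $\phi$. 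In either route, the main difficulty is ensuring that the defect $\delta(\epsilon)$ of the final approximation is independent of $d$, and it is here that the combination of higher-rank rigidity and bounded generation does the essential work.
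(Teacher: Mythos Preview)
This theorem is not proved in the paper; it is quoted from \cite{BOT} as a known result. The paper explicitly says that the proof in \cite{BOT} ``uses the fact that $SL(n,\mathcal{O}_S)$ (for $n \ge 3$) is boundedly generated by elementary matrices, and makes no reference to bounded cohomology.'' So your second sketch --- bounded generation by the root subgroups $U_{ij}\cong(\mathcal{O}_S,+)$, amenability of these subgroups via Kazhdan's theorem, and the Steinberg commutator relations (which need $n\ge 3$) to assemble a global homomorphism --- is the one that matches what the paper attributes to \cite{BOT}. Your first sketch, via asymptotic cohomology and the higher-rank lattice structure in $\prod_v SL(n,K_v)$, is not the proof of \emph{this} theorem but rather the general machinery the present paper builds to extend the result far beyond $SL(n,\mathcal{O}_S)$; it is not used to reprove the cited statement here.

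One genuine gap in your write-up: even in the bounded-generation route you leave the hardest step vague. Correcting each $\phi|_{U_{ij}}$ to a representation $\pi_{ij}$ is fine, but the Steinberg relations only tell you what the elementary generators \emph{should} satisfy; you still have to show that the corrected pieces are mutually compatible and actually extend to a homomorphism of the whole group, with the error controlled uniformly in the bounded-generation length and independently of the dimension $d$. This is where the real work in \cite{BOT} lies, and your proposal does not indicate how it is done.
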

	The proof of this result in \cite{BOT} uses the fact that $SL(n,\mathcal{O}_S)$ (for $n \ge 3$) is boundedly generated by elementary matrices, and makes no reference to bounded cohomology (this result is further extended in \cite{gamm} in the case of $n=2$ when $\mathcal{O}_S$ has infinitely many units). However, note that for $\Gamma=SL(n,\mathcal{O}_S)$,  $\Hb^2\left(\Gamma,V\right) = 0$ for every dual separable $\Gamma$-module $V$. In fact, it is shown in \cite{burgerMonod} that for every higher rank lattice $\Gamma$ and any dual, separable Banach $\Gamma$-module $V$ with $V^{\Gamma}=\{0\}$, $\Hb^2(\Gamma, V) = 0$. All this hints at a possible connection between bounded cohomology and Ulam stability, as raised by Monod in his ICM talk \cite[Problem F]{inviteMonod}, and serves as one of the starting points for our current work.\\

	\section*{Main Results and Methods}
	In this paper, we generalize \cref{Kaz} and \cref{BOT} to a wider class of groups and metrics. We shall consider the question of uniform stability with respect to the family $\mathfrak{U}$ of groups of unitary operators on \emph{finite-dimensional} Hilbert spaces, with the metrics induced from submultiplicative norms on matrices (which we shall denote uniform $\mathfrak{U}$-stability). These include the $p$-Schatten norms for $1 \leq p \le \infty$ (and in particular, uniform $\mathfrak{U}$-stability subsumes Ulam stability). Furthermore, we shall show uniform $\mathfrak{U}$-stability \emph{with a linear estimate}, which means that the distance of an almost homomorphism from a homomorphism is linearly bounded by its defect, and all our results are proved in this stronger notion of stability.\\
	
	To this end, we build a new type of bounded cohomological theory that can capture obstructions to uniform $\mathfrak{U}$-stability, so that uniform $\mathfrak{U}$-stability follows as a consequence of the vanishing of the second cohomology group in this theory. While we shall develop this in full detail in \S\ref{sec-coho}, our technique involves the following two main steps:
	\begin{itemize}
		\item \textbf{Defect Diminishing}: Expressing the problem of uniform stability as a homomorphism lifting problem, we can treat it as a culmination of intermediate lifts so that at each step, the lifting kernel is abelian. This is a uniform variant of defect diminishing that was introduced in \cite{DCGLT} in the non-uniform setting, and is applicable when the relevant norms in the target groups are submultiplicative.
		\item \textbf{Asymptotic Cohomology}: Such a homomorphism lifting problem with abelian kernel naturally leads to a cohomological reformulation (as in \cite{DCGLT}). However, unlike in the non-uniform setting where ordinary group cohomology comes up, in our uniform setting we need to carefully construct a new  cohomology theory such that the vanishing of the second cohomology group in this model implies (uniform) defect diminishing, and hence uniform stability. 
	\end{itemize} 
	The cohomological theory we construct is an asymptotic variant of the bounded cohomology of the ultrapower ${}^*\Gamma$ with coefficients in a suitable ultraproduct Banach space $\W$, but \emph{restricted to the \say{internal} objects in this universe}, which we shall call the \emph{asymptotic cohomology} of $\Gamma$ denoted $\Ha^{\bullet}(\Gamma,\W)$. 
	\begin{theorem}
		Suppose $\Ha^2(\Gamma,\W)=0$, then $\Gamma$ is uniformly $\mathfrak{U}$-stable with a linear estimate. 
	\end{theorem}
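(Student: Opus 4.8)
The plan is to prove the theorem via the two-step strategy announced in the introduction, namely by combining uniform defect diminishing with the cohomological vanishing hypothesis. First I would set up the homomorphism lifting framework: given an $\epsilon$-homomorphism $\phi\colon\Gamma\to U(n)$ with submultiplicative norm, pass to the ultrapower to obtain an \emph{internal} almost-homomorphism ${}^*\phi\colon{}^*\Gamma\to\W$-valued object, where the defect becomes infinitesimal relative to the nonstandard parameter. The key reduction is to express the task of deforming $\phi$ to a genuine representation as a sequence of lifts through central (abelian-kernel) extensions; at each stage the obstruction to improving the approximation lives in a cocycle whose class sits in $\Ha^2(\Gamma,\W)$ for an appropriate internal coefficient module $\W$. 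I would verify carefully that the cocycle arising from the associator of an internal almost-homomorphism is indeed an asymptotic $2$-cocycle in the sense of \S\ref{sec-coho} — this requires checking the boundedness/internality constraints that distinguish $\Ha^\bullet$ from ordinary bounded cohomology of ${}^*\Gamma$.

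Granting $\Ha^2(\Gamma,\W)=0$, such a $2$-cocycle is a coboundary of an internal $1$-cochain of controlled norm; this $1$-cochain is exactly the correction term that diminishes the defect by a definite factor while moving $\phi$ only by an amount linearly controlled by the defect. Iterating this defect-diminishing step and summing the resulting geometric series of corrections yields, in the ultrapower, an internal genuine homomorphism ${}^*\psi$ at distance $O(\mathrm{defect})$ from ${}^*\phi$. The final step is a transfer (\L o\'s-type) argument: because everything in sight is internal and the estimates are uniform, the existence of such a $\psi$ over ${}^*\Gamma$ descends to the existence, for all sufficiently small $\epsilon$, of an honest homomorphism $\psi\colon\Gamma\to U(n)$ with $\sup_{x}d(\phi(x),\psi(x))\le C\epsilon$, which is precisely uniform $\mathfrak{U}$-stability with a linear estimate.

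I would organize the write-up so that the genuinely new content — the construction of the asymptotic cochain complex, the definition of $\Ha^\bullet(\Gamma,\W)$, and the proof that a vanishing $H^2$ gives a \emph{norm-controlled} primitive — is quoted from \S\ref{sec-coho}, and here only the assembly is performed: (i) almost-rep $\leadsto$ internal almost-rep; (ii) associator $\leadsto$ asymptotic $2$-cocycle; (iii) vanishing $\leadsto$ defect-diminishing correction; (iv) iteration $\leadsto$ internal true rep nearby; (v) transfer $\leadsto$ the standard conclusion with a linear bound.

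The main obstacle I anticipate is step (ii)–(iii): one must ensure that the primitive $1$-cochain provided by $\Ha^2(\Gamma,\W)=0$ is not merely bounded but bounded \emph{linearly in the defect}, and that it is internal so that the corrected map is again an internal almost-homomorphism amenable to a further round of diminishing. This is where submultiplicativity of the norm is essential — it is what makes the associator cocycle estimates compose correctly under iteration — and it is the crux that separates uniform stability with a linear estimate from the weaker pointwise or non-linear variants. Controlling the accumulated error across infinitely many diminishing steps (so that the geometric series converges to something still within a linear multiple of the original defect) is the quantitative heart of the argument.
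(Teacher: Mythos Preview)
Your proposal is correct and follows essentially the same route as the paper: the chain (i)--(v) you outline is precisely the content of \S\ref{sec-prelims}--\S\ref{sec-coho}, culminating in \cref{mainthm2} and \cref{inhomo1}. One clarification on your anticipated obstacle: the concern that the primitive $1$-cochain be ``bounded linearly in the defect'' is already built into the definition of $\Ha^\bullet$, since the $_{\epsilon}\log$ rescaling by $1/\epsilon$ means that membership in $\L^\infty_b$ (which is what $\Ha^2=0$ guarantees for the primitive) is exactly the linear-in-defect control you need --- so this is not an additional estimate to be extracted but rather a feature of the setup.
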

	This new cohomology theory bears some similarity to the theory of bounded cohomology, and sometimes we can easily adapt arguments there to our model (for instance, we can show that $\Ha^2(\Gamma,\W)=0$ for an amenable group $\Gamma$, immediately implying that amenable groups are Ulam-stable as in \cite{Kaz},\cite{shtern},\cite{johnson2}), though other times serious difficulties arise (which are responsible for the length of this paper).\\
	
	The groups $\Gamma$ that we will be particularly interested in are lattices in higher rank semisimple groups. Unlike some of the other rigidity results which sometimes hold for some lattices in rank one simple groups, we also first show the following result: 
	\begin{proposition}\label{rankone}
		If $\Gamma$ is a lattice in a semisimple group of rank $1$, then $\Gamma$ is not uniformly $\mathfrak{U}$-stable.
	\end{proposition}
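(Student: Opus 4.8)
The plan is to reduce the statement to the failure of uniform stability for a single, very simple target: the circle group $U(1)$ of $1\times 1$ unitaries. Note that $U(1)$ is a member of $\mathfrak{U}$, since every submultiplicative norm on $M_1(\C)=\C$ is a positive scalar multiple of the absolute value, and thus induces — up to a global rescaling that does not affect the notion of uniform stability — the usual metric on $U(1)$. Hence it suffices to exhibit, for every small $\epsilon>0$, an $\epsilon$-homomorphism $\phi_\epsilon\colon\Gamma\to U(1)$ that stays at a definite distance (a constant independent of $\epsilon$) from \emph{every} homomorphism $\Gamma\to U(1)$; this is incompatible with the requirement $\lim_{\epsilon\to 0}\delta(\epsilon)=0$ in the definition of uniform $\mathfrak{U}$-stability.

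The raw material for these almost homomorphisms is a homogeneous quasimorphism. The first step is the structural observation that a lattice $\Gamma$ in a rank-one semisimple group is non-elementary and is either a Gromov-hyperbolic group (when the lattice is cocompact) or hyperbolic relative to its virtually nilpotent cusp subgroups (otherwise); in all cases — including lattices acting on rank-one Bruhat--Tits trees in the non-archimedean setting — $\Gamma$ is acylindrically hyperbolic. By Epstein--Fujiwara and Hull--Osin, the kernel of the comparison map $\Hb^2(\Gamma,\R)\to H^2(\Gamma,\R)$ is therefore nonzero (in fact infinite-dimensional). Concretely, $\Gamma$ carries a homogeneous quasimorphism $f\colon\Gamma\to\R$ that is \emph{not} a homomorphism, so its defect $D:=\sup_{x,y}\lvert f(x)+f(y)-f(xy)\rvert$ is finite and strictly positive, and $f$ lies at infinite distance from every homomorphism $\Gamma\to\R$ (two homogeneous quasimorphisms at bounded distance coincide).

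Given $\epsilon\in(0,\pi)$, set $t:=\epsilon/D$ and $\phi_\epsilon(x):=e^{\,itf(x)}\in U(1)$. Since $\lvert e^{i\theta}-1\rvert\le\lvert\theta\rvert$, the defect of $\phi_\epsilon$ is at most $tD=\epsilon$. Suppose, for contradiction, that some homomorphism $\psi\colon\Gamma\to U(1)$ satisfied $\sup_x\lvert\phi_\epsilon(x)-\psi(x)\rvert\le\delta$. Writing $\phi_\epsilon(x)\psi(x)^{-1}=e^{iu(x)}$ with $u(x)\in(-\pi,\pi]$ and using $\lvert e^{i\theta}-1\rvert\ge\tfrac{2}{\pi}\lvert\theta\rvert$ on that interval, we get $\lvert u(x)\rvert\le\tfrac{\pi}{2}\delta$ for all $x$. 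Then $F(x):=tf(x)-u(x)$ is a real-valued function with $e^{iF(x)}=\psi(x)$, so $F(x)+F(y)-F(xy)\in 2\pi\Z$, while at the same time $\lvert F(x)+F(y)-F(xy)\rvert\le tD+\tfrac{3\pi}{2}\delta=\epsilon+\tfrac{3\pi}{2}\delta<2\pi$ as soon as $\delta<\tfrac{2}{3}$ (admissible uniformly over $\epsilon\in(0,\pi)$). Hence $F$ is a genuine homomorphism $\Gamma\to\R$, and $f-\tfrac{1}{t}F=\tfrac{1}{t}u$ is bounded, contradicting that $f$ is at infinite distance from all homomorphisms. Therefore no homomorphism $\Gamma\to U(1)$ comes within $\tfrac{2}{3}$ of $\phi_\epsilon$, for any $\epsilon\in(0,\pi)$, and $\Gamma$ fails to be uniformly $\mathfrak{U}$-stable.

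The only non-elementary input is the existence on $\Gamma$ of a homogeneous quasimorphism that is not a homomorphism; everything else is a one-line winding estimate on the circle. Consequently the single point that deserves care is the structural claim that every lattice in a rank-one semisimple group is acylindrically hyperbolic — covering uniformly the archimedean and non-archimedean factors and the cocompact versus non-cocompact dichotomy — which is classical but should be stated with precise references (relative hyperbolicity of cusped finite-volume locally symmetric spaces; lattices acting on rank-one Bruhat--Tits trees). As an alternative, one can phrase the same obstruction inside the theory developed in the sequel: the quasimorphism $f$ determines a nonzero class in $\Ha^2(\Gamma,\W)$ for a suitable coefficient module $\W$, directly witnessing the failure of the vanishing criterion $\Ha^2(\Gamma,\W)=0$; but the elementary argument above is self-contained and slightly sharper.
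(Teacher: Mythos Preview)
Your argument is correct and follows essentially the same route as the paper: use a nontrivial homogeneous quasimorphism on $\Gamma$ (which the paper attributes directly to Fujiwara's work on rank-one lattices rather than via acylindrical hyperbolicity), exponentiate to obtain $\epsilon$-homomorphisms into $U(1)\in\mathfrak{U}$, and observe that these stay at a fixed positive distance from every genuine character. Your winding estimate makes explicit the constant that the paper leaves implicit in its Proposition~\ref{quasi1}, but the underlying mechanism is identical.
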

	It is shown in \cite{fujiwara} that for such a $\Gamma$, $\Hb^2(\Gamma,\R)$ is infinite dimensional. More precisely, Fujiwara constructs (many) non-trivial
	quasi-homomorphisms witnessing that the comparison map $c:\Hb^2(\Gamma,\R) \to H^2(\Gamma,\R)$ is not injective. By exponentiation, such quasimorphisms
	yield almost homomorphisms to $U(1)$ that are not close to any
	homomorphism (we shall discuss this in more detail in \S\ref{sec-u1}). In
	particular, a lattice of rank one is not uniformly U-stable, and to hope for uniform $\mathfrak{U}$-stability, the condition that rank of $\Gamma$ is at least $2$ is necessary.\\
	
	For the main result of the paper, we need some definitions capturing properties of the class of semisimple groups we will be interested in. For a locally compact group $G$, we denote by $\Hb^{\bullet}(G,\mathbf{R})$ the \emph{continuous} bounded cohomology of $G$ with trivial coefficients.
	\begin{itemize}
		\item A locally compact group $G$ is said to have the \textbf{2\textonehalf property} (of vanishing bounded cohomology) if $\Hb^2(G,\R)=0$ and $\Hb^3(G,\R)$ is Hausdorff.
		\item Let $G$ be a non-compact simple Lie group, and fix a minimal parabolic subgroup $P \leq G$. The group $G$ is said to have \textbf{Property-$G(\mathcal{Q}_1,\mathcal{Q}_2)$} if there exist two proper parabolic subgroups $Q_1$ and $Q_2$ containing $P$, both having the 2\textonehalf-property, such that $G$ is boundedly generated by the union $Q_1 \cup Q_2$. A semisimple group $G$ is said to have Property-$G(\mathcal{Q}_1,\mathcal{Q}_2)$ if all its simple factors have Property-$G(\mathcal{Q}_1,\mathcal{Q}_2)$. 
	\end{itemize}
	Note that if a semisimple group has Property-$G(\mathcal{Q}_1,\mathcal{Q}_2)$, then by definition, it must be of rank at least $2$. But note that not all simple groups have the property (for instance, $SL_3(\R)$). However, in \S\ref{ssec-semisimple}, we will show that many classes of groups do have this property, for example, all simple groups (of rank at least $2$) over $\C$ or over a non-archimedean field, and $SL_n(\R)$ for $n \geq 4$.\\
	We can now state our main result:
	\begin{theorem}\label{maintheorem11}
		Let $\Gamma$ be a lattice in a semisimple group $G$ that has Property-$G(\mathcal{Q}_1,\mathcal{Q}_2)$. Then $\Ha^2(\Gamma,\W)=0$, so in particular, $\Gamma$ is uniformly $\mathfrak{U}$-stable. 
	\end{theorem}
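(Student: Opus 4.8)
The final clause is immediate: once $\Ha^2(\Gamma,\W)=0$ is established, uniform $\mathfrak{U}$-stability (with a linear estimate) follows from the cohomological criterion stated earlier in the paper. So the task is the vanishing, and my plan has four stages: pass from the lattice to the ambient group, replace asymptotic cohomology by a boundary complex, dispose of the parabolics using their $2\tfrac12$ property, and glue along the bounded-generation decomposition. For Stage~1 I would first set up a continuous version $\Ha^\bullet_c(G,-)$ of asymptotic cohomology for the locally compact group $G$ and prove an induction principle: for the induced ultraproduct coefficient module $\mathcal E$ (the internal analogue of $L^\infty(G/\Gamma,\W)$) there is a natural comparison $\Ha^2(\Gamma,\W)\to\Ha^2_c(G,\mathcal E)$, and, since $G/\Gamma$ carries a finite $G$-invariant measure, a norm-one averaging map back, so that $\Ha^2(\Gamma,\W)$ injects into $\Ha^2_c(G,\mathcal E)$. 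This mirrors the classical passage from lattices to ambient groups in bounded cohomology (as in \cite{burgerMonod}); it reduces us to proving $\Ha^2_c(G,\mathcal E)=0$, and, since a product of $2\tfrac12$ groups is again $2\tfrac12$, one may argue one simple factor at a time.

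For Stage~2, fix a minimal parabolic $P\le G$. As $P$ is amenable, $B=G/P$ is an amenable $G$-space, and an asymptotic analogue of the Burger--Monod implementation of bounded cohomology by amenable actions shows that $\Ha^\bullet_c(G,\mathcal E)$ is computed by the complex of internal bounded cochains $L^\infty(B^{\bullet+1},\mathcal E)$ restricted to $G$-invariants. Since each $Q_i$ is a closed subgroup, $B$ is also an amenable $Q_i$-space, so the very same complex, with $Q_i$-invariants, computes $\Ha^\bullet_c(Q_i,\mathcal E)$, and the restriction maps $\Ha^\bullet_c(G,\mathcal E)\to\Ha^\bullet_c(Q_i,\mathcal E)$ are induced simply by inclusions of invariants.

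Stage~3 is where the $2\tfrac12$ property enters. For real coefficients it gives $\Hb^2(Q_i,\R)=0$ and $\Hb^3(Q_i,\R)$ Hausdorff: by the open mapping theorem the former yields primitives of $2$-cocycles with a linear bound, while Hausdorffness of $\Hb^3$ is precisely what makes that primitive constant \emph{uniform} over the whole family of coefficient modules occurring here (equivalently, makes degree-$2$ bounded cohomology commute with the relevant ultraproducts) — and uniformity of the bound is exactly what is needed to produce a genuinely internal primitive, i.e.\ a coboundary in $\Ha$. Combining this with the fact that, on a parabolic, the modules arising from the stability problem are controlled by real coefficients — the unipotent radical is amenable and can be quotiented out, and the remaining Levi contributions reduce, after decomposing the near-representations into matrix-coefficient pieces, to real (and higher-rank-vanishing, à la \cite{burgerMonod}) summands — I would conclude $\Ha^2_c(Q_i,\mathcal E)=0$ with a linear estimate on primitives, together with the analogous control of $\Ha^3$.

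Stage~4 is the gluing and, I expect, the main obstacle. Take a $G$-invariant internal $2$-cocycle $f\in L^\infty(B^3,\mathcal E)$; by Stage~3 it is a $Q_i$-invariant coboundary $f=dg_i$ with $\|g_i\|\le C\|f\|$. The difference $g_1-g_2$ is a $(Q_1\cap Q_2)$-invariant $1$-cocycle, and a Mayer--Vietoris-type argument — using vanishing of the relevant degree-$1$ asymptotic cohomology and, crucially, the uniform bounds from Stage~3 — should let one correct $g_1$ and $g_2$ to a common primitive $g$ of $f$ invariant under both $Q_1$ and $Q_2$, hence under $\langle Q_1\cup Q_2\rangle=G$; since $G=(Q_1\cup Q_2)^N$ for a fixed $N$, the estimates stay linear, giving $\Ha^2_c(G,\mathcal E)=0$ and thus $\Ha^2(\Gamma,\W)=0$. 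The delicate point is that $Q_1\cap Q_2$ need not itself have the $2\tfrac12$ property (inside $SL_n(\R)$ it may contain an $SL_2$ factor, where $\Hb^2(\cdot,\R)\ne 0$), so the correction cannot simply invoke vanishing on the intersection; instead one must carry out the gluing with genuinely uniform control, propagating invariance from the two generating parabolics across the bounded-length product decomposition of $G$ while keeping every estimate linear in the defect. This — rather than the formal framework — is presumably where most of the technical weight of the argument lies, with a secondary difficulty being the precise bookkeeping of which coefficient modules occur in Stage~3 and the verification that the $2\tfrac12$ property controls all of them uniformly (as in the defect-diminishing analysis of \cite{DCGLT}).
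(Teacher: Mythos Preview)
Your overall architecture --- induction to $G$, boundary complex on $G/P$, parabolic vanishing, gluing via bounded generation --- matches the paper's. But two of your stages are missing the specific mechanisms that make them work, and your Stage~4 worry is resolved by a tool you don't mention.

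In Stage~3, ``quotient out the amenable radical and reduce to real coefficients'' glosses over the crux. The paper first proves an \emph{asymptotic Mautner lemma} (\S6.1): if $N$ is the unipotent radical of $Q$, then $\V_b^{\sim{}^*N}=\V_b^{\sim{}^*G}$. This is \emph{not} formal --- it relies on an $L^2$-continuity argument specific to the induced module $\V=\L^\infty({}^*D,\W)$ (\S5.3), which in turn uses that $\Gamma$ has finite covolume. Only after this can one replace the $Q$-cocycle by one with values in $\V_b^{\sim{}^*G}\cong\W_b^{\sim{}^*\Gamma}$, which is a \emph{trivial} $Q$-module, and then invoke the $2\tfrac12$ property. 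Your ``decomposing into matrix-coefficient pieces'' is not what happens.

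In Stage~4, your concern that $Q_1\cap Q_2$ may lack the $2\tfrac12$ property is real but is bypassed, not confronted. The paper works with $P\subseteq Q_1\cap Q_2$: amenability of $P$ gives $\Ha^1(P,\V)=0$, so $\tilde\alpha_1-\tilde\alpha_2=\tilde d\tilde\beta$ for some asymptotically $P$-equivariant $\beta\in\L^\infty_b({}^*(G/P),\V)$. The decisive step is then an \emph{asymptotic double ergodicity theorem with coefficients} (Theorem~6.1.9): any asymptotically $G$-equivariant element of $\L^\infty_b(({}^*(G/P))^2,\V)$ is essentially constant in $\V_b^{\sim{}^*G}$. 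Applying this to the $G$-equivariant extension $\beta_1(gP,hP)=\pi_G(g)\beta(g^{-1}hP)$ forces $\tilde d\tilde\beta=0$, hence $\tilde\alpha_1=\tilde\alpha_2$ outright. There is no Mayer--Vietoris correction and no need to control anything on $Q_1\cap Q_2$ beyond the amenability of $P$. This double ergodicity is again not formal: like the Mautner lemma it rests on the $L^2$-continuity of the specific module $\V$, and the paper explicitly warns that no action is ergodic with arbitrary coefficients.
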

	The main result of our paper is thus concerned with showing that $\Ha^2(\Gamma,\W)=0$ for $\Gamma$ being a lattice in a higher rank Lie group (satisfying certain conditions). For this, we take inspiration from the results of \cite{burgerMonod} about the vanishing of bounded cohomology for such lattices. More specifically, our approach is inspired by a proof of \cite{monodshalom-cocyle} specifically in degree two, and a version of that result and proof technique are outlined below.
	\begin{theorem}[\cite{monodshalom-cocyle}]\label{MonodShalomProof}
		Let $G$ be a higher rank simple group, and $P \leq G$ be a minimal parabolic subgroup. Suppose $G$ contains two proper parabolic subgroups $Q_1$ and $Q_2$ such that $P \subseteq Q_1 \cap Q_2$, $G$ is generated by $Q_1 \cup Q_2$, and $\Hb^2(Q_1,\R)=\Hb^2(Q_2,\R)=0$. Then for any lattice $\Gamma$ in $G$ and a dual separable Banach $\Gamma$-module $W$, $\Hb^2(\Gamma,W)=0$. 
	\end{theorem}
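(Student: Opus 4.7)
The plan is to adapt the boundary-theoretic technique of Burger--Monod for vanishing of bounded cohomology of higher rank lattices. First, by a standard induction argument (Monod's functorial induction for dual separable coefficients), $\Hb^2(\Gamma, W)$ embeds into the continuous bounded cohomology $\Hb^2(G, V)$ for a suitable induced dual separable Banach $G$-module $V$ built from $L^2(G/\Gamma)$ and $W$. It thus suffices to show $\Hb^2(G, V) = 0$. Since $P$ is amenable, the Furstenberg boundary $B = G/P$ yields a strong resolution that computes continuous bounded cohomology, so any degree-two class is represented by a $G$-invariant alternating essentially bounded cocycle $\omega \in L^{\infty}_{w*,\,alt}(B^3, V)^{G}$.

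The heart of the argument is to use the hypothesis $\Hb^2(Q_i,\R)=0$ to trivialize $\omega$ separately over each parabolic. Because $Q_i \supseteq P$ and $P$ is amenable, $B = G/P$ is also a $Q_i$-boundary, and the same boundary resolution computes $\Hb^{\bullet}(Q_i,\cdot)$. The real-coefficient vanishing for $Q_i$ should transfer to vanishing with coefficients in $V$ by a Fubini/disintegration argument along the $Q_i$-equivariant fibration $B \to G/Q_i$ (whose fibers $Q_i/P$ are amenable), combined with a pointwise analysis of the $V$-values using separability of $V$. This yields, for each $i \in \{1,2\}$, a $Q_i$-invariant bounded primitive $\beta_i \in L^{\infty}(B^2, V)$ with $d\beta_i = \omega$.

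Finally, I would glue $\beta_1$ and $\beta_2$ using the generation hypothesis $G = \langle Q_1 \cup Q_2 \rangle$. The difference $\beta_1 - \beta_2$ is an alternating cocycle in $L^{\infty}(B^2, V)$ invariant under $Q_1 \cap Q_2 \supseteq P$; ergodicity of the $P$-action on the open Bruhat cell of $B \times B$ forces this difference to be essentially constant, and since it is alternating, such a constant must vanish, so $\beta_1 = \beta_2$ almost everywhere. This common primitive is then invariant under both $Q_1$ and $Q_2$, hence under all of $G$, so $\omega$ is a coboundary and $\Hb^2(G,V) = 0$. The main obstacle I anticipate is the middle step: the coefficient transfer from $\R$ to an arbitrary dual separable $V$ is not purely formal and requires genuine use of the parabolic structure (amenable fibers over $G/Q_i$) together with the double-ergodic behaviour of $B$ as a boundary for $Q_i$; by contrast, the reduction to $G$, the boundary resolution, and the final gluing are standard once the trivialisation step is in hand.
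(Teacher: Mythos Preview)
Your overall architecture — induce to $G$, compute via the boundary $B=G/P$, trivialise the cocycle over each $Q_i$, then glue — is exactly the paper's strategy. However, the mechanism you propose for the crucial coefficient-transfer step is not the one that works. The passage from $\Hb^2(Q_i,\R)=0$ to $\Hb^2(Q_i,V)=0$ is not achieved by a Fubini/disintegration along $B\to G/Q_i$; rather, one uses the amenable normal unipotent radical $N_i\lhd Q_i$ together with the inflation--restriction identification $\Hb^2(Q_i,V)\cong\Hb^2(Q_i/N_i,V^{N_i})$, and then the key input is \emph{Mautner's lemma}: for the continuous $G$-module $V$ one has $V^{N_i}=V^G$ (since $N_i$ is non-compact), so the coefficients become a \emph{trivial} $Q_i$-module, and only then does the hypothesis $\Hb^2(Q_i,\R)=0$ (equivalently $\Hb^2(Q_i/N_i,\R)=0$) do the job. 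Without Mautner, your disintegration picture does not reduce the coefficients to a trivial module, and the step remains open.

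Your gluing step also needs adjustment. The difference $\beta_1-\beta_2$ is $P$-\emph{equivariant} (for the action on $V$), not $P$-invariant, so plain ergodicity of $P$ on the big Bruhat cell of $B^2$ does not force constancy. The paper's route is: since $P$ is amenable, $\Hb^1(P,V)=0$, so $\beta_1-\beta_2=d\gamma$ for some $P$-equivariant $\gamma\in L^\infty(B,V)$; one then promotes $\gamma$ to the $G$-equivariant map $B^2\to V$, $(gP,hP)\mapsto g\cdot\gamma(g^{-1}hP)$, and applies \emph{$G$}-double-ergodicity with coefficients (itself a consequence of Mautner) to conclude that $\gamma$ is essentially constant, hence $d\gamma=0$ and $\beta_1=\beta_2$. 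In short, Mautner's lemma is the engine behind both the trivialisation over $Q_i$ and the double-ergodicity used in the gluing; it is the missing idea in your sketch.
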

	The proof of \cref{MonodShalomProof} proceeds in several steps briefly sketched below, where we also mention the corresponding steps and difficulties in the proof of \cref{maintheorem11} even when $G$ is simple:
	\begin{itemize}
		\item The first step is to use an Eckman-Shapiro induction to construct a dual, separable, continuous Banach $G$-module $V$ so that $\Hb^2(\Gamma,W)=\Hb^2(G,V)$, thus reducing the problem to showing that $\Hb^2(G,V)=0$. A similar inductive procedure, described in \S\ref{sec-induc}, allows us to construct an ultraproduct Banach space $\V$ with an asymptotic action of $G$ so that $\Ha^2(\Gamma,\W)=\Ha^2(G,\V)$. Note that in the setting of asymptotic cohomology, we actually work with ultrapowers ${}^*\Gamma$ and ${}^*G$, and so ${}^*G/{}^*\Gamma$ is not locally compact. However, the restriction to internal objects allows us to carefully work out an induction procedure as needed. The induced module $\V$ also has an internal continuity property (defined in \S\ref{ssec-basicG}) that we establish in \S\ref{sec-induc}.
		\item Since $P$ is amenable, the bounded cohomology $\Hb^{\bullet}(G,V)$ can be computed as the cohomology of the complex 
		$$\begin{tikzcd}
		0 \arrow[r] & V^G  \arrow[r, "\epsilon"] & \Linf(G/P,V)^G  \arrow[r, "d^0"]&  \Linf((G/P)^2,V)^G \arrow[r, "d^1"] & \Linf((G/P)^3,V)^G \arrow[r, "d^2"]   & \dots
		\end{tikzcd}$$ 
		Furthermore, for a parabolic subgroup $P \leq Q \leq G$, the bounded cohomology $\Hb^{\bullet}(Q,V)$ can be computed as the cohomology of the complex 
		$$\begin{tikzcd}
		0 \arrow[r] & V^Q  \arrow[r, "\epsilon"] & \Linf(G/P,V)^Q  \arrow[r, "d^0"]&  \Linf((G/P)^2,V)^Q \arrow[r, "d^1"] & \Linf((G/P)^3,V)^Q \arrow[r, "d^2"] & \dots
		\end{tikzcd}$$ 
		These steps too can be reworked in the asymptotic setting, analogous to the procedure in bounded cohomology theory, again thanks to the restriction on internality, and this is described in \S\ref{sec-coho}. 
		\item The motivation behind the preceding step is that we have at our disposal the following double ergodicity theorem: let $V$ be a continuous $G$-module and $\alpha \in \Linf\left((G/P)^2,V\right)^G$, then $\alpha$ is essentially constant. This theorem follows from the Mautner's lemma: let $V$ be a continuous $G$-module and $N \leq G$ be a non-compact subgroup, then $V^N = V^G$. Both these results are particularly useful in the context of $\Hb^2(G,V)$.\\
		In our setting, there are particular difficulties in obtaining an
		analoguous Maunter lemma due to the asymptotic nature of our model. We overcome them for our specific Banach module $\V$ by applying a suitable correction to exact cocycles using structure results for the semisimple group $G$; this is worked out in \S\ref{sec-mainproof}.
		\item For the parabolic subgroups $Q_1$ and $Q_2$ as in the hypothesis, an inflation-restriction sequence argument implies that $\Hb^2(Q_i,V)=\Hb^2(Q_i/N_i,V^{N_i})$ for $N_i$ being the (amenable) radical of $Q_i$ for $i \in \{1,2\}$. By Mautner's lemma and the hypothesis that $\Hb^2(Q_i,\R)=0$, one concludes that $\Hb^2(Q_i,V)=0$.\\
		The analogous hypothesis in our asymptotic setting is Property-$G(\mathcal{Q}_1,\mathcal{Q}_2)$, where the conditions that $\Hb^2(Q_i,\R)=0$ and $\Hb^3(Q_i,\R)$ is Hausdorff together are used to conclude that $\Ha^2(Q_i,\V)=0$ in \S\ref{ssec-invariants}.
		\item Let $\omega \in \Linf\left((G/P)^3,V\right)^G$ be a bounded $2$-cocycle for $G$. Since $\Hb^2(Q_1,V)=\Hb^2(Q_2,V)=0$, there exist $\alpha_1 \in \Linf\left((G/P)^2,V\right)^{Q_1}$ and $\alpha_2 \in \Linf\left((G/P)^2,V)\right)^{Q_2}$ such that $\omega=d\alpha_1=d\alpha_2$. In particular, $\alpha_1-\alpha_2$ is a $1$-cocycle for $Q_1 \cap Q_2$. Since $P \leq Q_1\cap Q_2$, $\alpha_1-\alpha_2$ is a $1$-cocycle for $P$ as well. But since $\Hb^1(P,V)=0$, one can show using the double ergodicity theorem, that $\alpha_1=\alpha_2$ ($=\alpha$, say), implying that $\alpha$ is equivariant with respect to both $Q_1$ and $Q_2$, and hence is $G$-equivariant. Thus $\omega=d\alpha$ for $\alpha \in \Linf\left((G/P)^2,V\right)^G$, proving that $\Hb^2(G,V)=0$. This step too goes through in our setting once we have our asymptotic variant of the ergodicity theorem used in the classical case. 
	\end{itemize}
	While in this paper, we focus on using the theory of asymptotic cohomology to prove uniform $\mathfrak{U}$-stability for lattices in semisimple groups, the framework and tools developed  here have also been used in subsequent work \cite{francesco} to prove uniform $\mathfrak{U}$-stability for other classes of groups such as lamplighter groups $\Gamma \wr \Lambda$ where $\Lambda$ is infinite and amenable, as well as several groups of dynamical origin such as Thompson's group $F$. The techniques there too are analogous to corresponding vanishing results of bounded cohomology in \cite{lamplighter}, yet again highlighting the connections between the theories of bounded cohomology and asymptotic cohomology.

	\subsection*{Outline of the Paper}
	We begin with the much simple setting of uniform stability with respect to the fixed group $U(1)$ (equipped with the absolute value metric) in \S\ref{sec-u1}. In this case, we can reduce the question of uniform $U(1)$-stability of $\Gamma$ to the injectivity of the comparison map $c:\Hb^2(\Gamma,\R) \to H^2(\Gamma,\R)$. After recalling how classical results from \cite{fujiwara} imply that lattices in Lie groups of rank $1$ are not uniformly $U(1)$-stable, we then show that lattices in higher rank Lie groups are uniformly $U(1)$-stable. While the connection between uniform $U(1)$-stability of a group $\Gamma$ and the second bounded cohomology $\Hb^2(\Gamma,\R)$ is classical, it motivates the idea of using the logarithm map on an almost homomorphism to construct a bounded $2$-cocycle of $\Gamma$ in a Banach space, which we develop in \S\ref{ssec-logarithm} for a more general setting.\\
	
	In \S\ref{sec-prelims}, we begin by defining the basic notions in full detail and rigor in \S\ref{ssec-definitions}. In particular we focus on interpreting stability in terms of sequences of maps and asymptotic homomorphisms, which we then refine further in \S\ref{ssec-ultraproducts} in the language of non-standard analysis. This formulation will allow us to reinterpret the question of uniform stability as a \emph{homomorphism lifting problem} on the lines of the approach used in \cite{DCGLT} \cite{arz}. While such a lifting problem motivates the attempt at constructing a cohomology, an obstacle here is that the kernel of the extension is not abelian. This issue is resolved in \cite{DCGLT} by considering lifts in small increments so that the kernel at each step is abelian. This idea, known as \emph{defect diminishing}, is explored in \S\ref{ssec-liftings}, and can be shown to imply uniform stability.\\
	
	In \S\ref{sec-asymgamma} we begin by focusing on a particular family of metric groups for which defect diminishing corresponds to a homomorphism lifting problem with an abelian kernel. This is the family of unitary groups equipped with  submultiplicative norms, discussed in \S\ref{ssec-abeliankernel}. In \cite{DCGLT}, defect diminishing combined with ordinary group cohomology with coefficients in the abelian kernel (which turns out to be a Banach $\Gamma$-module) is sufficient to study non-uniform stability. But in our setting, the uniformity condition involves subtleties that necessitate transfering to the internal Lie algebra with an internal \emph{asymptotic}-action of ${}^*\Gamma$ (the ultrapower of $\Gamma$), and the formulation of an \say{internal and asymptotic} bounded cohomology with cofficients in that internal space, denoted $\W$. This is motivated in \S\ref{ssec-logarithm}, and we conclude the section by demonstrating the machinery built so far in (re)proving the result of Kazhdan \cite{Kaz} that discrete amenable groups are Ulam stable.\\
	
	\S\ref{sec-coho} begins with the rigorous definitions of an internal Banach spaces and asymptotic ${}^*G$-modules for a locally compact group $G$ (defining our notions in the category of topological groups is necessary in order to deal with lattices in Lie groups, which shall involve an Eckmann-Shapiro induction of cohomologies explored in \S\ref{sec-induc}) in \S\ref{ssec-basicG}.  In \S\ref{ssec-coho}, we formally define $\Ha^{\bullet}(G,\V)$ using an internal $\Linf$-spaces, and study some functorial properties and different cochain complexes that can be used to compute $\Ha^{\bullet}(G,\V)$ in \S\ref{ssec-subg}. Many of the techniques used here have parallels in the theory of continuous bounded cohomology as in \cite{bookMonod}.\\
	
	In \S\ref{sec-induc}, we restrict our attention to a lattice $\Gamma$ in a Lie group $G$, and begin by studying an intermediate structure $\L_b^{\infty}(G,\W)^{\sim {}^*\Gamma}$ that is not quite the induction module $\V=\L^{\infty}(D,\W)$ in our machinery, but comes with an internal (true) ${}^*G$-action (as opposed to an action upto infinitesimals). This structure leads to useful results proved in \S\ref{ssec-trueAction}, and the actual induction module and an Eckmann-Shapiro induction procedure are discussed in \S\ref{ssec-shapiro}. We conclude the section with a continuity property of our module $\V$, which we use to define contracting elements to lay the groundwork for a Mautner's lemma to be proved in the next section. \\
	
	Finally, in \S\ref{sec-mainproof}, we come to the higher rank semisimple groups of interest, and begin by discussing an analogue of the Mautner's lemma in our setting, along with double ergodicity lemmas in \S\ref{ssec-invariants}, and use these to prove that $\Ha^2(Q,\V)=0$ for $Q\le G$ being a proper parabolic subgroup. All these techniques come together in \S\ref{ssec-mainproof} where we prove that $\Ha^2(G,\V)=0$ for semisimple groups $G$ that have Property-$G(\mathcal{Q}_1,\mathcal{Q}_2)$, thus implying uniform stability with a linear estimate for lattices in such groups. In \S\ref{ssec-semisimple}, we list out classes of simple groups $G$ that have Property-$G(\mathcal{Q}_1,\mathcal{Q}_2)$, and conclude in \S\ref{sec-conclusions} with some discussion on the limitations of our method and related open questions.
	
	\subsection*{Acknowledgements}
	The second author acknowledges with gratitude the hospitality and support of the Fields Institute (Toronto) and the Institute for Advanced Study (Princeton) where part of this work was carried on, as well as a grant by the European Research Council (ERC) under the European Union’s Horizon 2020 (grant agreement No $882751$). The results presented here are part of the fourth author's PhD thesis, also supported by the same grant. The authors would like to thank Andrei Rapinchuk for his guidance in the proof of \cref{rapinchuk}.\\
	
	\noindent \textit{This paper is dedicated to the memory of Bob Zimmer in honor of his remarkable achievements and influence on the study of rigidity of lattices in semisimple Lie groups.}
	
	\newpage
	
	\tableofcontents

	\newpage
	
	\section{$U(1)$-Stability of Groups}\label{sec-u1}
	We begin with a simpler setting, namely uniform stability of a discrete group $\Gamma$ with respect to the abelian group $U(1)$. This section can be read independently of the rest. 
	\begin{definition}
		For $\epsilon>0$, a map $\phi:\Gamma \to U(1)$ is said to be an $\epsilon$-homomorphism if 
		\begin{equation*}
		\sup_{x,y \in \Gamma}\lvert \phi(xy)-\phi(x)\phi(y) \rvert \leq \epsilon
		\end{equation*}
	\end{definition}
	The value $\sup_{x,y \in \Gamma}\lvert \phi(xy)-\phi(x)\phi(y) \rvert$ is called the \emph{defect} of $\phi$. 
	\begin{definition}
		A group $\Gamma$ is said to be uniformly $U(1)$-stable if for every $\delta>0$, there exists $\epsilon>0$ such that if $\phi:\Gamma \to U(1)$ is an $\epsilon$-homomorphism, there exists a homomorphism $\psi:\Gamma \to U(1)$ such that $\sup_{x \in \Gamma} \lvert \phi(x)-\psi(x) \rvert < \delta$. 
	\end{definition}
	A closely related notion is that of a quasimorphism. A \emph{quasimorphism} is a map $f:\Gamma \to \R$ such that there exists $D \geq 0$ such that for every $x,y \in \Gamma$,
	$$\lvert f(x)+f(y)-f(xy) \rvert \leq D$$
	Let $QM(\Gamma)$ denote the space of all quasimorphisms of $\Gamma$. A trivial example of a quasimorphism is obtained by perturbing a homomorphism by a bounded function, and such quasimorphisms form the subspace $Hom(\Gamma,\R) \oplus C_b(\Gamma,\R)$ of $QM(\Gamma)$ (where $C_b(\Gamma,\R)$ denotes the space of all bounded functions from $\Gamma$ to $\R$) A quasimorphism that is not at a bounded distance from any homomorphism is called a \emph{non-trivial quasimorphism}. In this setting, a question analogous to uniform stability is whether every quasimorphism is at a bounded distance from a homomorphism. That is, is $QM(\Gamma) = Hom(\Gamma,\R)\oplus C_b(\Gamma,\R)$?\\
	It is known that every quasimorphism class contains a unique \emph{homogenous} quasimorphism (a quasimorphism $f$ is said to be homogenous if for every $g \in \Gamma$ and $n \in \N$, $f(g^n)=nf(g)$). Suppose $f$ is a homogenous quasimorphism of $\Gamma$ that is not a homomorphism. Then its exponent $\mu \coloneq e^{2\pi i \epsilon f}:\Gamma \to U(1)$ is a function whose defect can be made arbitrarily small (by choosing $\epsilon \to 0$), but whose distance from homomorphisms is bounded below by a positive constant.
	\begin{proposition}[\cite{BOT} \cite{monodshalom-orbit}]\label{quasi1}
		If $\Gamma$ admits a non-trivial quasimorphism, then $\Gamma$ is not uniformly $U(1)$-stable. 
	\end{proposition}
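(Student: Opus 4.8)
The plan is to prove the contrapositive: assuming $\Gamma$ admits a non-trivial quasimorphism, I will construct a family of $\epsilon$-homomorphisms $\Gamma \to U(1)$ (for arbitrarily small $\epsilon$) that stay at distance bounded below by a fixed positive constant from every homomorphism, thereby contradicting uniform $U(1)$-stability. The construction is exactly the one foreshadowed in the text: take a homogeneous quasimorphism $f : \Gamma \to \R$ with defect $D \geq 0$ that is not a homomorphism (homogeneity can be assumed since every class contains a unique homogeneous representative), and for a parameter $t > 0$ set $\mu_t := e^{2\pi i t f} : \Gamma \to U(1)$.

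First I would bound the defect of $\mu_t$. For $x, y \in \Gamma$ we have $\mu_t(xy)\overline{\mu_t(x)\mu_t(y)} = e^{2\pi i t (f(xy) - f(x) - f(y))}$, and since $|e^{i\theta} - 1| \leq |\theta|$ for real $\theta$, together with $|f(xy) - f(x) - f(y)| \leq D$, we get
\begin{equation*}
  |\mu_t(xy) - \mu_t(x)\mu_t(y)| = |\mu_t(xy)\overline{\mu_t(x)\mu_t(y)} - 1| \leq 2\pi t D.
\end{equation*}
Hence $\mu_t$ is an $\epsilon$-homomorphism with $\epsilon = 2\pi t D$, which tends to $0$ as $t \to 0$. (If $D = 0$ then $f$ is already a homomorphism, contrary to assumption, so $D > 0$ and this genuinely gives arbitrarily small nonzero defect.)

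Next, and this is the crux, I would show there is a constant $c > 0$ (independent of $t$) such that $\sup_{x \in \Gamma} |\mu_t(x) - \psi(x)| \geq c$ for every homomorphism $\psi : \Gamma \to U(1)$. Suppose not: then for every $\delta > 0$ there is some $t$ and a homomorphism $\psi$ with $\sup_x |\mu_t(x) - \psi(x)| < \delta$. The key point is that $\psi$, being a homomorphism into the abelian group $U(1)$, factors through $\Gamma^{\mathrm{ab}}$ and in particular $\psi(g^n) = \psi(g)^n$; writing $\psi = e^{2\pi i h}$ locally, comparing $\mu_t$ and $\psi$ on powers $g^n$ and using homogeneity $f(g^n) = n f(g)$ forces the "rotation numbers" to match, i.e. $tf(g) \equiv h(g) \pmod{1}$ for a genuine homomorphism-like $h$ — more carefully, a standard argument (as in the quasimorphism/rotation-number literature, e.g. \cite{BOT}, \cite{monodshalom-orbit}) shows that if $\mu_t$ is uniformly within a small enough constant of a homomorphism $\psi$, then $tf$ itself must be within a bounded distance of a homomorphism $\Gamma \to \R$, and by homogeneity and uniqueness of the homogeneous representative this would force $tf$, hence $f$, to be a homomorphism — a contradiction. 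I expect this step to be the main obstacle, since it requires turning the uniform closeness of the $U(1)$-valued maps into closeness of the $\R$-valued quasimorphism to $\mathrm{Hom}(\Gamma,\R)$, handling the $2\pi i \Z$ ambiguity of the logarithm; the cleanest route is to evaluate on large powers $g^n$ where $|\mu_t(g^n) - \psi(g^n)| < \delta$ combines with $\mu_t(g^n) = \mu_t(g)^n \cdot (\text{defect accumulation})$ — actually $\mu_t(g^n) = e^{2\pi i t n f(g)}$ exactly by homogeneity — and $\psi(g^n) = \psi(g)^n$, so both sides genuinely wind, and matching winding rates pins down $t f(g)$ up to a homomorphism. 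Assembling these three pieces — small defect, bounded-below distance, and the deduction that closeness would make $f$ a homomorphism — contradicts the definition of uniform $U(1)$-stability and completes the proof.
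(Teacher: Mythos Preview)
Your approach matches the paper's exactly: the paper does not give a full proof but only the one-sentence sketch (exponentiate a homogeneous non-homomorphism quasimorphism $f$ and let the parameter go to zero) and then cites \cite{BOT}, \cite{monodshalom-orbit}. Your defect bound and your use of homogeneity on powers $g^n$ to compare $\mu_t(g^n)=e^{2\pi i t n f(g)}$ with $\psi(g)^n$ are the right moves.

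One imprecision worth cleaning up: the intermediate claim ``$tf$ must be within bounded distance of a homomorphism $\Gamma\to\R$'' is not quite the step you need (a homomorphism $\psi:\Gamma\to U(1)$ need not lift to $\R$). The power argument actually gives something sharper: for any $\delta<\sqrt 3$ (say), if $\sup_x|\mu_t(x)-\psi(x)|<\delta$ then comparing on $g^n$ forces $e^{2\pi i t f(g)}=\psi(g)$ \emph{exactly} for every $g$, so $\mu_t=\psi$ is itself a homomorphism. Then $t\bigl(f(x)+f(y)-f(xy)\bigr)\in\Z$ while also $|t(f(x)+f(y)-f(xy))|\le tD<1$ once $t<1/D$, forcing the defect to vanish and contradicting that $f$ is not a homomorphism. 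This also makes transparent why the lower bound $c$ is uniform over all small $t$ (any $c<\sqrt 3$ works for every $t<1/D$), which your contradiction setup needs but does not quite isolate.
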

	Quasimorphisms are also closely related to group cohomology (this is well-known and classical, see \cite{frig},\cite{inviteMonod} for
	references). Observe that given a quasimorphism $f:\Gamma \to \R$, the map 
	$$df:\Gamma \times \Gamma \to \R$$
	$$df(x,y) = f(x)+f(y)-f(xy)$$
	is a $2$-coboundary for $\Gamma$ in $\R$, while also being a bounded function that satisfies the $2$-cocycle condition (and hence a \emph{bounded $2$-cocycle}). This leads to the following characterization of quasimorphisms classes (\cite{frig}, \cite{inviteMonod}): 
	\begin{proposition}\label{quasi2}
		The kernel, denoted $E\Hb^2(\Gamma,\R)$, of the comparison map $c:\Hb^2(\Gamma,\R) \to H^2(\Gamma,\R)$ is isomorphic to the space of quasimorphisms modulo the suabspace of trivial quasimorphisms. 
		$$E\Hb^2(\Gamma,\R) \cong \frac{QM(\Gamma)}{C_b(\Gamma,\R) \oplus Hom(\Gamma,\R)}$$
	\end{proposition}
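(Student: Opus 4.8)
The plan is to construct the isomorphism directly at the level of cochains, by sending a bounded 2-cocycle representing a class in $E\Hb^2(\Gamma,\R)$ to the quasimorphism obtained by ``integrating'' it. First I would unpack the definitions: a class in $E\Hb^2(\Gamma,\R)$ is represented by a bounded 2-cocycle $\beta \in C_b^2(\Gamma,\R)$ which is also an ordinary 2-coboundary, i.e. $\beta = dg$ for some (not necessarily bounded) $1$-cochain $g: \Gamma \to \R$, where $dg(x,y) = g(x) + g(y) - g(xy)$. Setting $f := g$, the boundedness of $\beta = df$ says exactly that $f$ is a quasimorphism with defect $\|\beta\|_\infty$. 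This gives a map $E\Hb^2(\Gamma,\R) \to QM(\Gamma)$ at the level of representatives; I would then check it is well-defined on cohomology classes and descends to the claimed quotient.

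The key steps, in order: (1) \textbf{Well-definedness.} If $\beta$ and $\beta'$ are bounded 2-cocycles differing by a bounded coboundary $d h$ with $h \in C_b^1(\Gamma,\R)$, and $\beta = df$, $\beta' = df'$, then $d(f - f' - h) = 0$, so $f - f' - h \in Hom(\Gamma,\R)$; hence $f$ and $f'$ agree modulo $C_b(\Gamma,\R) \oplus Hom(\Gamma,\R)$. Also changing the choice of primitive $f$ of a fixed $\beta$ only changes $f$ by an element of $Hom(\Gamma,\R)$. So the assignment $[\beta] \mapsto [f]$ is a well-defined linear map $E\Hb^2(\Gamma,\R) \to QM(\Gamma)/(C_b(\Gamma,\R)\oplus Hom(\Gamma,\R))$. (2) \textbf{Surjectivity.} Given any quasimorphism $f$, the cochain $df$ is a bounded 2-cocycle (the cocycle identity holds for any coboundary, and boundedness is the quasimorphism condition), and it is visibly an ordinary coboundary, so its class lies in $E\Hb^2(\Gamma,\R)$ and maps back to $[f]$. (3) \textbf{Injectivity.} If $[\beta] \mapsto 0$, then $f = f_0 + h$ with $f_0 \in Hom(\Gamma,\R)$ and $h \in C_b(\Gamma,\R)$; then $\beta = df = df_0 + dh = dh$ is a bounded coboundary, so $[\beta] = 0$ in $\Hb^2(\Gamma,\R)$, hence in $E\Hb^2(\Gamma,\R)$.

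I would also spell out why $E\Hb^2$ is literally the kernel of $c$: the comparison map $c$ is induced by the inclusion of the bounded cochain complex into the full cochain complex, so a bounded-cohomology class dies under $c$ precisely when some (hence any) bounded cocycle representing it becomes an ordinary coboundary $df$ — which is exactly the situation analyzed above. The only genuinely substantive point, and the one I expect to require the most care, is step (1): keeping straight the two different equivalence relations (bounded-coboundary equivalence upstairs versus the $C_b \oplus Hom$ quotient downstairs) and verifying that the primitive $f$ is canonical up to a homomorphism. Everything else is a routine diagram chase with the (very short) cochain complexes in low degree, so no serious obstacle arises beyond bookkeeping; this is why the statement is cited as classical from \cite{frig} and \cite{inviteMonod}.
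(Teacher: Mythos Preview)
Your proposal is correct and is exactly the standard argument for this classical fact. The paper does not actually prove this proposition; it merely states it and cites \cite{frig} and \cite{inviteMonod}, so there is no paper proof to compare against---your argument is precisely the one found in those references.
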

	Hence to show that $\Gamma$ is not $U(1)$-stable, it is sufficient to show that the comparison map $c:\Hb^2(\Gamma,\R) \to H^2(\Gamma,\R)$ is not injective. In \cite{fujiwara}, it is shown that for a lattice in a rank one semisimple Lie group, its comparison map has non-zero kernel, and hence
	\begin{theorem}
		Let $H$ be a semisimple group (as in the introduction) of rank one, and let $\Gamma$ be a lattice in $H$. Then $\Gamma$ is not uniformly $U(1)$-stable.
	\end{theorem}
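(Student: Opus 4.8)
The plan is to deduce this theorem directly from the chain of results just assembled in this section, so the proof is essentially a matter of invoking the right inputs in the right order. First I would recall that by \cref{quasi1} (together with \cref{quasi2}), it suffices to exhibit a non-trivial quasimorphism on $\Gamma$, or equivalently to show that the comparison map $c:\Hb^2(\Gamma,\R) \to H^2(\Gamma,\R)$ fails to be injective, i.e.\ that $E\Hb^2(\Gamma,\R) \neq 0$. So the entire content reduces to a citation of Fujiwara's construction, and the body of the proof is just the verification that the hypotheses of that construction apply to a lattice $\Gamma$ in a rank-one semisimple group $H$ in the sense fixed in the introduction.

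The key steps, in order, would be: (i) reduce to the case where $\Gamma$ is finitely generated and in fact a lattice in a rank-one \emph{simple} group, noting that a rank-one semisimple $H = \prod \GG_i(K_i)$ has exactly one noncompact rank-one factor up to the compact factors, and an irreducible lattice projects to a lattice there — so $\Gamma$ is (virtually, or up to finite kernel/cokernel) a lattice in a rank-one simple Lie or $p$-adic group, hence Gromov-hyperbolic relative to its cusps, and in any case non-elementary word-hyperbolic-like; (ii) invoke \cite{fujiwara}, which shows that for such $\Gamma$ the space $E\Hb^2(\Gamma,\R)$ is nonzero (indeed infinite-dimensional), by producing explicit non-trivial homogeneous quasimorphisms from the action of $\Gamma$ on the associated hyperbolic space / symmetric space or Bruhat--Tits building; (iii) pick one such non-trivial homogeneous quasimorphism $f$, and apply \cref{quasi1} to conclude that $\Gamma$ is not uniformly $U(1)$-stable. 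Concretely one can even display the obstruction: for each $\epsilon>0$ the map $\mu_\epsilon = e^{2\pi i \epsilon f}$ is an $O(\epsilon)$-homomorphism $\Gamma \to U(1)$, yet $\inf_{\psi \in Hom(\Gamma,U(1))} \sup_{x} |\mu_\epsilon(x) - \psi(x)|$ is bounded below by an absolute positive constant, because $f$ is unbounded modulo $Hom(\Gamma,\R) \oplus C_b(\Gamma,\R)$ and $U(1)$ is abelian so any homomorphism $\Gamma \to U(1)$ factors through $\Gamma^{ab}$.

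I expect the only genuine subtlety — and it is minor — to be step (i): making sure that "semisimple group of rank one as in the introduction" genuinely falls under the hypotheses of Fujiwara's theorem. One must handle the possible compact factors (which are irrelevant, since an irreducible lattice's projection modulo them is still a lattice), the possibility that $\Gamma$ is non-uniform (Fujiwara's results cover non-uniform lattices in rank-one Lie groups, and the $p$-adic rank-one case corresponds to $\Gamma$ acting on a tree, where non-trivial quasimorphisms are classical, e.g.\ via Brooks-type counting quasimorphisms for free or virtually-free groups), and the case of finite-covolume lattices in products where one factor is rank one and the rest are compact. None of this requires new ideas; it is bookkeeping to match definitions. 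Everything else is a direct application of the propositions already proved above, so no new estimates are needed.
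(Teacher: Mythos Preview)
Your proposal is correct and follows exactly the paper's approach: reduce via \cref{quasi1} and \cref{quasi2} to showing the comparison map has nonzero kernel, then cite \cite{fujiwara}. The paper is in fact even terser than your outline---it simply invokes Fujiwara's result directly without spelling out the bookkeeping in your step~(i) about compact factors and the $p$-adic case.
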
 
	The rest of this section is devoted to showing that higher rank lattices are $U(1)$-stable.  For this goal the bounded cohomology plays a central role. For simplicity, we endow $U(1)$ with the distance coming from seeing it as $\R/\Z$ (so we just have to apply a trigonometric formula if we prefer the norm distance).\\
	Recall the long exact sequences associated to $\Z \to \R \to \R/\Z$ for $H^n$ and $\Hb^n$. Together with the comparison maps, we get a commutative diagram:
	$$\begin{tikzcd}
	\cdots \arrow[r] & H^1(\Gamma, \R/\Z)   \arrow[r] \arrow[d] & \Hb^2(\Gamma, \Z)  \arrow[r] \arrow[d] &  \Hb^2(\Gamma, \R) \arrow[r] \arrow[d] & H^2(\Gamma, \R/\Z) \arrow[r] \arrow[d]  & \cdots\\
	\cdots \arrow[r] & H^1(\Gamma, \R/\Z)  \arrow[r] & H^2(\Gamma, \Z)   \arrow[r]&  H^2(\Gamma, \R) \arrow[r] & H^2(\Gamma, \R/\Z) \arrow[r]  & \cdots
	\end{tikzcd}$$
	
	Consider the subset $K\subseteq \Hb^2(\Gamma, \R)$ given by
	$$K=\mathrm{Ker}\Big(\Hb^2(\Gamma, \R) \to H^2(\Gamma, \R/\Z) \Big) = \mathrm{Image}\Big(\Hb^2(\Gamma, \Z) \to \Hb^2(\Gamma, \R)\Big)$$
	Recall that $\Hb^2(\Gamma, \R)$ is a Banach space; we can therefore consider the norm of elements in $K$.
	
	\begin{proposition}\label{prop:gap}
		The following are equivalent for a group $\Gamma$.
		\begin{enumerate}
			\item (Lipschitz $U(1)$ stability): Every $\epsilon$-representation to $U(1)$ with $\epsilon$ small enough is at distance less than $\epsilon$ from a representation. \label{pt:lipstable}
			\item (Linear $U(1)$ stability): There is a constant $c$ such that every $\epsilon$-representation to $U(1)$ is at distance less than $c \epsilon$ from a representation. \label{pt:linstable}
			\item ($U(1)$ stability): For each $\delta>0$ there is $\epsilon>0$ so that every $\epsilon$-representation to $U(1)$ is at distance less than $\delta$ from a representation. \label{pt:stable}
			\item ($U(1)$ $1/3$-stability): There is $\delta <1/3$ such that every $\epsilon$-representation to $U(1)$ with $\epsilon$ small enough is at distance less than $\delta$ from a representation. \label{pt:stable:13}
			\item (Cohomology gap): Non-zero elements of $K$ have norm bounded below by a positive constant.\label{pt:K}
		\end{enumerate}
	\end{proposition}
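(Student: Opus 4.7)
The five conditions form a trivial chain $(1) \Rightarrow (2) \Rightarrow (3) \Rightarrow (4)$, so my plan is to close the loop via $(4) \Rightarrow (5) \Rightarrow (1)$. Both nontrivial implications pivot on a single construction: given an $\epsilon$-homomorphism $\phi \colon \Gamma \to U(1) = \R/\Z$ with $\epsilon < 1/4$, I would lift it arbitrarily to $\tilde\phi \colon \Gamma \to \R$ and define $\omega(x,y)$ as the unique real representative of $\tilde\phi(xy) - \tilde\phi(x) - \tilde\phi(y) \bmod \Z$ lying in $(-1/2, 1/2]$. Then $\|\omega\|_\infty \le \epsilon$; the coboundary $d\omega$ is integer-valued (since $\omega$ differs from $d\tilde\phi$ by an integer cochain) with sup norm strictly less than $1$, forcing $d\omega = 0$; and $[\omega] \in K$ because $\omega \bmod \Z = d\phi$ is manifestly a coboundary in $C^2(\Gamma, \R/\Z)$.

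For $(5) \Rightarrow (1)$, let $\kappa > 0$ be the gap and pick $\epsilon < \min(\kappa, 1/4)$. The quotient seminorm satisfies $\|[\omega]\|_{\Hb^2} \le \|\omega\|_\infty \le \epsilon < \kappa$, so the gap forces $[\omega] = 0$, meaning $\omega = d\beta$ for some bounded $\beta \colon \Gamma \to \R$. The key move is a homogenization trick: since $|d\beta| \le \epsilon$, the map $\beta$ is an $\epsilon$-quasimorphism, so the limit $\bar\beta(x) = \lim_{n \to \infty} \beta(x^n)/n$ exists and satisfies both $\|\beta - \bar\beta\|_\infty \le \epsilon$ and $\bar\beta(x^n) = n\bar\beta(x)$; boundedness of $\bar\beta$ (inherited from $\beta$) combined with homogeneity forces $\bar\beta \equiv 0$, whence $\|\beta\|_\infty \le \epsilon$. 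The map $\psi = \phi - \beta \bmod \Z$ is then a homomorphism within $\R/\Z$-distance $\epsilon$ of $\phi$, establishing Lipschitz stability.

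For $(4) \Rightarrow (5)$ I would argue contrapositively: if $(5)$ fails, choose $[\omega_n] \in K \setminus \{0\}$ with $\|[\omega_n]\|_{\Hb^2} \to 0$ and representatives $\omega_n$ of vanishing sup norm. Since $[\omega_n] \in K$, there exist $\phi_n \colon \Gamma \to \R/\Z$ with $d\phi_n = \omega_n \bmod \Z$; these are $\epsilon_n$-homomorphisms with $\epsilon_n \to 0$. Assuming $(4)$, each $\phi_n$ eventually lies within some $\delta < 1/3$ of a homomorphism $\psi_n$, and lifting $\phi_n - \psi_n$ via the shortest real representative yields $\tilde\beta_n$ with $\|\tilde\beta_n\|_\infty \le \delta$ and $d\tilde\beta_n \equiv \omega_n \bmod \Z$; once $3\delta + \|\omega_n\|_\infty < 1$, the integer-valued discrepancy $d\tilde\beta_n - \omega_n$ must vanish, giving $\omega_n \in B_b^2$ and hence $[\omega_n] = 0$, contradicting the choice. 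The main subtlety of the proof lies in the homogenization step of $(5) \Rightarrow (1)$: the gap condition a priori provides only some bounded primitive of $\omega$ with no norm control, and it is the quasimorphism-to-homogeneous-quasimorphism passage that upgrades this to the sharp constant-$1$ Lipschitz estimate, rather than merely the linear bound one would get from an abstract open mapping argument (which would only yield $(2)$).
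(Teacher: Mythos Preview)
Your proof is correct and follows the same structure as the paper's: the trivial chain $(1)\Rightarrow(2)\Rightarrow(3)\Rightarrow(4)$, then closing via $(4)\Rightarrow(5)\Rightarrow(1)$ with the same lift-and-round construction of $\omega$. The only substantive difference is in the step $(5)\Rightarrow(1)$ where you need $\|\beta\|_\infty \le \epsilon$ from $\omega = d\beta$ with $\|\omega\|_\infty \le \epsilon$: the paper cites a result of Mitsumatsu and Matsumoto--Morita that the operator $d$ on $\ell^\infty(\Gamma)$ has norm one, whereas you reprove this inline via the homogenization argument (any bounded quasimorphism has zero homogenization, and the homogenization is within the defect of the original). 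These are the same idea --- your argument is in fact the content of the cited lemma --- so your version is more self-contained but not genuinely different. Your $(4)\Rightarrow(5)$ is the paper's direct argument recast contrapositively.
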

	
	\begin{remark}
		The kernel of the comparison map $\Hb^2(\Gamma, \R) \to H^2(\Gamma, \R)$ is contained in $K$, see above diagram. Since this kernel is a vector space, point \eqref{pt:K} fails as soon as this kernel is non-zero. This explains why quasimorphisms imply non-stability: it is a special case of the above as quasimorphisms \say{live} in $K$.
	\end{remark}
	
	\begin{proof}
		Trivially \eqref{pt:lipstable}$\Rightarrow$\eqref{pt:linstable}$\Rightarrow$\eqref{pt:stable}$\Rightarrow$\eqref{pt:stable:13}.
		
		We prove \eqref{pt:stable:13}$\Rightarrow$\eqref{pt:K} for the positive constant $\epsilon$ as in \eqref{pt:stable:13} which, without loss of generality, can be made to satisfy $\epsilon  < 1-3 \delta$. Consider a class $[\omega]$ in $K$ with $\|[\omega]\| < \epsilon$. We can choose the representative $\omega\in\ell^\infty(\Gamma^2)$ such that $\|\omega\|_\infty < \epsilon$. Since $[\omega]$ is in the kernel to $H^2(\Gamma, \R/\Z)$, there is $f\colon \Gamma\to \R$ such that $\omega \equiv d f$ modulo $\Z$. The map $\pi=\exp(2\pi i f)$ is an $\epsilon$-representation. Thus $\pi$ is at distance less than $\delta$ of an actual representation, which means that there is $b\colon \Gamma\to [-\delta, \delta]$ with $d(f+b) \equiv 0$ modulo $\Z$. This means that $\omega + d b$ is integer-valued. On the other hand, $\|d b\|_\infty$ is at most $3 \delta$. Thus, since $\epsilon + 3\delta < 1$ we deduce that $\omega + d b$ actually vanishes and thus $[\omega]$ is zero in $K$.
		
		We now prove \eqref{pt:K}$\Rightarrow$\eqref{pt:lipstable}. We show it for $0<\epsilon<1/4$ smaller than the constant given by~\eqref{pt:K}. For any $\pi\colon \Gamma\to U(1)$, choose $f\colon \Gamma\to\R$ such that $\pi = \exp(2 \pi i f)$. If $\pi$ is an $\epsilon$-representation, then there is $\omega\colon \Gamma^2\to [-\epsilon, \epsilon]$ such that $\omega \equiv d f$ modulo $\Z$. In particular, $d\omega\equiv 0$ modulo $\Z$, i.e.\ $d\omega$  is integer-valued. Given that $\|d\omega \|_\infty \leq 4 \epsilon$, we have in fact $d \omega = 0$ and hence we obtain a class $[\omega]$ in $K$ of norm less than $\epsilon$. Therefore, we can assume that $[\omega]$ is trivial, which means $\omega = d b$ for some $b\in \ell^\infty(\Gamma)$.
		
		In fact the operator $d$ on $\ell^\infty(\Gamma)$ has norm one: this was first observed by~\cite[p.~468]{mitsu} in a special case; the short proof is given in general in (the proof of) Corollary~2.7 in ~\cite{matsu}. We can therefore choose $b$ such that $\| b\|_\infty \leq \epsilon$. Now $\exp(2 \pi i (f-b))$ is a representation at distance less than $\epsilon$ of $\pi$. 
	\end{proof}
	
	\begin{theorem}\label{thm:U1:qm}
		Let $\Gamma$ be a group such that $\Hb^2(\Gamma, \R)$ is finite-dimensional and  injects into $H^2(\Gamma, \R)$. Then $\Gamma$ is uniformly $U(1)$-stable. 
	\end{theorem}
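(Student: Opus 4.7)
Plan:

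The approach is to verify the cohomology gap condition~\eqref{pt:K} of Proposition~\ref{prop:gap}; uniform $U(1)$-stability (indeed, with a linear estimate) then follows by the equivalences established there. First, I would unpack $K \subseteq V := \Hb^2(\Gamma, \R)$ through the long exact sequences displayed in the excerpt. Since $\Hb^1(\Gamma, \R) = 0$ (a bounded homomorphism to $\R$ vanishes) and $\Hb^n(\Gamma, \R/\Z) = H^n(\Gamma, \R/\Z)$ (any cochain into $\R/\Z$ is automatically bounded), the top row yields the short exact sequence
\[
0 \to H^1(\Gamma, \R/\Z) \to \Hb^2(\Gamma, \Z) \to K \to 0.
\]
The injectivity hypothesis on $c \colon V \to H^2(\Gamma, \R)$ restricts to an injection on $K$, and by commutativity $c(K)$ lies in $\mathrm{Im}(H^2(\Gamma, \Z) \to H^2(\Gamma, \R)) \cong H^2(\Gamma, \Z)/\mathrm{tors}$ (by universal coefficients). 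In particular $K$ is torsion-free.

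The main step is to show that $K$ is a discrete subgroup of the finite-dimensional Banach space $V$. I plan to combine two observations. \textbf{Rationality}: any $\R$-linear dependence among elements $v_1, \ldots, v_k \in K$ induces one among the integer classes $c(v_i) \in H^2(\Gamma, \R)$; by the flatness of $\Z \hookrightarrow \R$ applied to the torsion-free group $H^2(\Gamma, \Z)/\mathrm{tors}$, such a relation must already be $\Q$-linear, so $K \otimes_\Z \Q$ injects into $V$ as a $\Q$-subspace of dimension at most $\dim V$. \textbf{Integrality}: if $[\omega] \in K$ has $\|[\omega]\|_V < \epsilon < 1/4$, pick an integer-valued representative $\omega$ and a bounded real $f$ with $\|\omega - df\|_\infty < \epsilon$, then round $f$ to the nearest integer-valued $\tilde f$ to obtain a new integer cocycle representative $\omega - d\tilde f$ with sup-norm at most $\epsilon + 3/2 < 2$, hence $\{-1, 0, 1\}$-valued. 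Finite-dimensionality of $V$ together with this bounded-integrality control on representatives forces only finitely many distinct $V$-classes near zero; combining with rationality yields that $K$ is a lattice in its $\R$-span, hence discrete in $V$.

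Once $K$ is discrete, condition~\eqref{pt:K} is immediate, and Proposition~\ref{prop:gap} concludes uniform $U(1)$-stability. The most delicate point I anticipate is the passage from rationality of $K \otimes_\Z \Q$ to genuine discreteness of $K$: a naive rationality argument alone permits dense subgroups such as $\Z[1/2] \subseteq \R$. The essential extra input is the $\Z_{\geq 0}$-valued sup-norm on integer-valued cocycles, which forces the $\Hb^2(\Gamma, \Z)$-seminorm to be bounded below by $1$ on nonzero classes; pushing this lower bound through the quotient $\Hb^2(\Gamma, \Z) \to K$ requires an open-mapping-type argument that relies crucially on $V$ being finite-dimensional.
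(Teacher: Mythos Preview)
Your overall strategy is correct: reduce to the cohomology gap condition~\eqref{pt:K} of Proposition~\ref{prop:gap}, and show that $K$ is discrete in the finite-dimensional space $V=\Hb^2(\Gamma,\R)$. Your rationality step is also fine: $K$ is torsion-free and $K\otimes_\Z\Q$ has finite $\Q$-dimension at most $\dim V$.

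The gap is precisely where you flag it. Your integrality argument shows that every $v\in K$ with $\|v\|_V<\tfrac12$ admits a $\{-1,0,1\}$-valued integer cocycle representative, equivalently $\|v\|_V \le \|v\|_\Z \le \|v\|_V + \tfrac32$ for the integer-representative seminorm $\|\cdot\|_\Z$. But this does \emph{not} force only finitely many $V$-classes near zero: for infinite $\Gamma$ there are uncountably many $\{-1,0,1\}$-valued cocycles, and the quotient map $Z\to V$ is not weak-$*$-to-norm continuous, so no compactness argument is available. Nor does any open mapping theorem apply, since $\Hb^2(\Gamma,\Z)$ and $K$ are merely normed abelian groups, not Banach spaces. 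Rationality alone allows dense subgroups like $\Z[\tfrac12]$, and the inequality $\|v\|_\Z\le\|v\|_V+\tfrac32$ does not by itself rule out such behaviour (try to derive a contradiction from it --- you will find none).

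The paper supplies the missing structural ingredient: via universal coefficients, the injective image $c(K)$ in $H^2(\Gamma,\R)\cong\mathrm{Hom}_\Z(H_2(\Gamma,\Z),\R)$ actually lands inside $\mathrm{Hom}_\Z(H_2(\Gamma,\Z),\Z)$. One then invokes Specker's theorem (countable subgroups of $\Z^X$ are free abelian) to conclude that $c(K)$, being a subgroup of $\Z^X$ spanning a $d$-dimensional $\R$-space, is free of rank exactly $d$; hence $K$ is a lattice in its $\R$-span and therefore discrete. This step --- realizing $K$ inside a product of copies of $\Z$ rather than merely inside a finite-dimensional $\Q$-space --- is what excludes the $\Z[\tfrac12]$ phenomenon, and your proposal does not contain it or an equivalent.
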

	
	\begin{lemma}\label{lem:Specker}
		Let $A$ be an abelian group and let $B$ be a subgroup of $Hom_\Z(A, \Z)$. If the image of $B$ in $Hom_\Z(A, \R)$ spans a subspace of finite $\R$-dimension $d$, then $B$ is free abelian of rank $d$. 
	\end{lemma}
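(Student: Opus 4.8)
The plan is to realize $B$ concretely as a subgroup of $\Z^d$ and then invoke the structure theory of subgroups of finitely generated free abelian groups. Note first that the natural map $\operatorname{Hom}_\Z(A,\Z)\to\operatorname{Hom}_\Z(A,\R)$ is injective (a $\Z$-valued homomorphism that vanishes as an $\R$-valued one is zero), so I may identify $B$ with its image and let $V\subseteq\operatorname{Hom}_\Z(A,\R)$ be the $d$-dimensional $\R$-span of $B$.

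The key step is to produce elements $a_1,\dots,a_d\in A$ for which the evaluation map $\mathrm{ev}\colon V\to\R^d$, $v\mapsto(v(a_1),\dots,v(a_d))$, is injective. I would construct them greedily: given $a_1,\dots,a_k$, set $W_k=\{v\in V: v(a_1)=\dots=v(a_k)=0\}$; if $W_k\neq 0$, pick a nonzero $w\in W_k$ — since $w$ is a nonzero element of $\operatorname{Hom}_\Z(A,\R)$, there is some $a_{k+1}\in A$ with $w(a_{k+1})\neq 0$, and then $\dim W_{k+1}<\dim W_k$. After at most $d$ steps $W=0$, so $\mathrm{ev}$ is injective on the $d$-dimensional space $V$, hence an isomorphism $V\xrightarrow{\ \sim\ }\R^d$. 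Crucially, since every $\beta\in B$ takes values in $\Z$, the restriction of $\mathrm{ev}$ to $B$ lands in $\Z^d$, giving an injective homomorphism $B\hookrightarrow\Z^d$.

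It then remains to identify the rank. A subgroup of a free abelian group of rank $d$ is free abelian of rank at most $d$, so $B$ is free abelian of some rank $r\le d$. On the other hand $\mathrm{ev}(B)$ $\R$-spans $\mathrm{ev}(V)=\R^d$ (because $B$ spans $V$), so $B\cong\mathrm{ev}(B)$ contains $d$ elements that are $\R$-linearly independent, hence $\Q$-linearly independent, forcing $r\ge d$. Thus $r=d$, as claimed. I do not anticipate a genuine obstacle: the content of the lemma is precisely the passage from ``subgroup of $\R^d$'' (which could be as bad as $\Q^d$) to ``subgroup of $\Z^d$'', and this is exactly where the hypothesis $B\subseteq\operatorname{Hom}_\Z(A,\Z)$ is used; the greedy selection of the $a_i$ is the only step requiring (minor) care.
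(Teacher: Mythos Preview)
Your proof is correct and is actually more elementary than the paper's. The paper embeds $\operatorname{Hom}_\Z(A,\Z)$ into $\Z^X$ by writing $A$ as a quotient of a free abelian group on a set $X$, then observes that $B$ is countable (since it sits inside a finite-dimensional $\Q$-vector space) and invokes Specker's theorem that countable subgroups of $\Z^X$ are free abelian; the rank is then identified via the $\Q$-span. By contrast, you bypass Specker entirely: your greedy selection of evaluation points $a_1,\dots,a_d$ embeds $B$ directly into $\Z^d$, after which only the elementary structure theorem for subgroups of finitely generated free abelian groups is needed. Your argument is self-contained and arguably cleaner for this specific statement; the paper's route is more ``soft'' in that it does not require choosing the $a_i$, but at the cost of importing a nontrivial external result.
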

	
	\begin{proof}[Proof of \cref{lem:Specker}]
		We can consider $B$ as a subgroup spanning a space of finite $\Q$-dimension $d$ in $Hom_\Z(A, \Q)$. Viewing $A$ as a quotient of a free abelian group on some set $X$, the group $Hom_\Z(A, \Z)$ is contained in $\Z^X$. While $\Z^X$ is not free abelian in general, in 
		Specker (Satz~I p.~133 in~\cite{specker}) it is proved that countable subgroups of $\Z^X$ are free abelian . To be more precise,
		Specker proved it for $X$ countable but his proof works in general; alternatively, the statement immediately reduces to the case $X$ countable by taking a subset of $X$ separating the points of $B$.  Our $B$, being a subgroup of a finite dimensional $\Q$-space, is countable and hence free (from the above mentioned result from ~\cite{specker}).  Spanning a $d$-dimensional $\Q$-vector space, its rank must be also $d$. 
	\end{proof}
	
	\begin{proof}[Proof of \cref{thm:U1:qm}]  It suffices to show that $\Gamma$ satisfies the cohomology gap --- i.e., condition~\eqref{pt:K} of \cref{prop:gap}.
		Let $\tilde B$ denote the image of $\Hb^2(\Gamma, \Z)$ in $H^2(\Gamma, \Z)$. Thus the image of $\tilde B$ in $H^2(\Gamma, \R)$ is precisely the image of $K$.

		$$\begin{tikzcd}
		\Hb^2(\Gamma, \Z)  \arrow[r] \arrow[d] & K \subseteq \Hb^2(\Gamma, \R) \arrow[d]\\
		\tilde B \subseteq H^2(\Gamma, \Z) \arrow[r] & H^2(\Gamma, \R)
		\end{tikzcd}$$
		
		Let $d<\infty$ be the dimension of the space spanned by $K$ in $\Hb^2(\Gamma, \R)$. To prove the cohomology gap~\eqref{pt:K}, we first claim that $K$ is free abelian of rank $d$. This claim implies that $K$ is
		discrete in the finite dimensional space $\Hb^2(\Gamma, \R)$ since it spans
		a space of dimension $d$ (we use here the comparison with the standard
		lattice $\Z^d$ in $\R^d$ and the fact that linear maps are continuous in
		finite dimensions). Then, discreteness implies the desired gap. To prove the claim that $K$ is free abelian of rank $d$, we can work with $H^2(\Gamma, \R)$ since $\Hb^2(\Gamma, \R)$ injects there. The universal coefficient theorem gives the following commutative diagram with exact rows.
		$$\begin{tikzcd}
		0  \arrow[r]  & Ext_\Z^1(H_1(\Gamma, \Z), \Z) \arrow[r] \arrow[d] & H^2(\Gamma, \Z) \arrow[r] \arrow[d] & Hom_{\Z}(H_2(\Gamma, \Z), \Z) \arrow[r] \arrow[d] & 0\\
		& 0 \arrow[r] & H^2(\Gamma, \R) \arrow[r] & Hom_{\Z}(H_2(\Gamma, \Z), \R) \arrow[r] & 0
		\end{tikzcd}$$

		Therefore it suffices to show that the image $B$ of $\tilde B$ in $Hom_{\Z}(H_2(\Gamma, \Z), \Z)$ is free abelian of rank $d$, which follows from \cref{lem:Specker} applied to $A=H_2(\Gamma, \Z)$. 
	\end{proof}
	
	Since finitely presented groups have finite-dimensional $H^2$, we obtain the following clean necessary and sufficient condition:
	
	\begin{corollary}
		Let $\Gamma$ be a finitely presented group. Then $\Gamma$ is uniformly $U(1)$-stable if and only if every quasi-morphism of $\Gamma$ is at bounded distance from a homomorphism.
	\end{corollary}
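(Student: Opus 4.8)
The claim is: for a finitely presented group $\Gamma$, uniform $U(1)$-stability is equivalent to every quasimorphism of $\Gamma$ being at bounded distance from a homomorphism. The plan is to deduce this from the already-established machinery: \cref{prop:gap} (the five equivalent conditions, in particular the cohomology gap~\eqref{pt:K}), \cref{thm:U1:qm} (finite-dimensionality of $\Hb^2(\Gamma,\R)$ plus injectivity into $H^2(\Gamma,\R)$ implies uniform $U(1)$-stability), and \cref{quasi2} (the kernel $E\Hb^2(\Gamma,\R)$ of the comparison map is isomorphic to the space of nontrivial quasimorphism classes).

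**The two directions.** For the forward direction ($U(1)$-stable $\Rightarrow$ no nontrivial quasimorphisms): by \cref{prop:gap}, uniform $U(1)$-stability implies the cohomology gap, i.e.\ nonzero elements of $K$ have norm bounded below. As noted in the \cref{remark} after \cref{prop:gap}, the kernel of the comparison map $\Hb^2(\Gamma,\R)\to H^2(\Gamma,\R)$ is a \emph{vector subspace} of $K$; a vector space with a norm gap must be trivial. Hence $E\Hb^2(\Gamma,\R)=0$, and by \cref{quasi2} every quasimorphism is trivial, i.e.\ at bounded distance from a homomorphism. (This direction in fact needs no finite presentation hypothesis.) For the converse (no nontrivial quasimorphisms $\Rightarrow$ $U(1)$-stable): the hypothesis says exactly that $E\Hb^2(\Gamma,\R)=0$, i.e.\ the comparison map $c:\Hb^2(\Gamma,\R)\to H^2(\Gamma,\R)$ is injective. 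Since $\Gamma$ is finitely presented, $H^2(\Gamma,\R)$ is finite-dimensional, so its subspace — the image of the injection $c$ — and hence $\Hb^2(\Gamma,\R)$ itself, is finite-dimensional. Now both hypotheses of \cref{thm:U1:qm} hold, so $\Gamma$ is uniformly $U(1)$-stable.

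**Where the content sits.** The only genuinely nontrivial input is the observation that finitely presented groups have finite-dimensional $H^2(\Gamma,\R)$ — which is standard (a finite presentation gives a $2$-complex with finitely many $2$-cells as a model for the $2$-skeleton of $K(\Gamma,1)$, so $H^2$ is computed from a finite-rank cochain group) and is already asserted in the sentence preceding the corollary. Everything else is an immediate assembly of \cref{prop:gap}, \cref{thm:U1:qm}, and \cref{quasi2}. The main "obstacle," such as it is, is purely bookkeeping: making sure the identification "nontrivial quasimorphism class $\leftrightarrow$ nonzero element of $\ker c$" from \cref{quasi2} is invoked correctly in both directions, and noting that the forward direction uses the linear-algebraic fact that a normed vector space cannot have a gap around $0$ unless it is zero.

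\begin{proof}
	Suppose first that $\Gamma$ is uniformly $U(1)$-stable. By \cref{prop:gap}, the cohomology gap \eqref{pt:K} holds: there is $c_0>0$ such that every nonzero element of $K$ has norm at least $c_0$. By the \cref{remark} following \cref{prop:gap}, the kernel $E\Hb^2(\Gamma,\R)$ of the comparison map $c:\Hb^2(\Gamma,\R)\to H^2(\Gamma,\R)$ is contained in $K$, and it is an $\R$-vector subspace of $\Hb^2(\Gamma,\R)$. A vector subspace all of whose nonzero elements have norm at least $c_0$ must be $\{0\}$ (scaling any nonzero vector down contradicts the bound). Hence $E\Hb^2(\Gamma,\R)=0$, and by \cref{quasi2} every quasimorphism of $\Gamma$ is at bounded distance from a homomorphism.

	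Conversely, suppose every quasimorphism of $\Gamma$ is at bounded distance from a homomorphism. By \cref{quasi2}, this means $E\Hb^2(\Gamma,\R)=0$, i.e.\ the comparison map $c:\Hb^2(\Gamma,\R)\to H^2(\Gamma,\R)$ is injective. Since $\Gamma$ is finitely presented, $H^2(\Gamma,\R)$ is finite-dimensional; as $\Hb^2(\Gamma,\R)$ injects into it, $\Hb^2(\Gamma,\R)$ is finite-dimensional as well. Thus the hypotheses of \cref{thm:U1:qm} are satisfied, and $\Gamma$ is uniformly $U(1)$-stable.
\end{proof}
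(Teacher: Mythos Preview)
Your proof is correct and follows essentially the same approach as the paper. The only cosmetic difference is in the forward direction: the paper invokes \cref{quasi1} directly (a nontrivial quasimorphism implies non-stability), whereas you route through the cohomology gap condition of \cref{prop:gap} and the remark after it; these are the same argument in different clothing, and your version is arguably more self-contained.
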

	
	\begin{proof}  As explained above if $QM(\Gamma) \neq 0$ (i.e., not every quasi-morphism of $\Gamma$ is at bounded distance from a homomorphism) then $\Gamma$ is not $U(1)$-stable. On the other hand, if $QM(\Gamma) = 0$, then $Ker (\Hb^2 (\Gamma, \R) \to H^2(\Gamma, \R)) = 0$, i.e., $\Hb^2(\Gamma,\R)$ injects into $H^2(\Gamma, \R)$.  If in addition $\Gamma$ is finitely presented $H^2 (\Gamma,\R)$ is of finite dimension.  Thus all the assumptions of \cref{thm:U1:qm} are satisfied and $\Gamma$ is uniformly $U(1)$-stable. 
	\end{proof}
	
	Now, back to the case in which $\Gamma$ is a lattice in a higher rank group.  For such $\Gamma$, Burger and Monod \cite{burgerMonod} showed that $\Hb^2 (\Gamma,\R)$ injects into $H^2 (\Gamma,\R)$.  Such $\Gamma$ is \say{usually} finitely presented (and then we can apply the last corollary) except when $\Gamma$ is a lattice in (rank 2) semisimple Lie group of positive characteristic.  But in this case $\Hb^2(\Gamma,\R)$ is anyway zero, by another result of Burger and Monod \cite{burgerMonod}, so \cref{thm:U1:qm} applies and we can deduce:
	
	\begin{theorem}\label{1-stable}
		If $\Gamma$ is a higher rank lattice then it is uniformly $U(1)$-stable.
	\end{theorem}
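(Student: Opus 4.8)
The plan is to deduce this as a corollary of \cref{thm:U1:qm}, so the entire task reduces to verifying its two hypotheses for a higher rank lattice $\Gamma$: first, that $\Hb^2(\Gamma,\R)$ injects into $H^2(\Gamma,\R)$; and second, that $\Hb^2(\Gamma,\R)$ is finite-dimensional. The first hypothesis is precisely a theorem of Burger and Monod \cite{burgerMonod}: for irreducible lattices in higher rank semisimple groups, the comparison map $c:\Hb^2(\Gamma,\R)\to H^2(\Gamma,\R)$ is injective (equivalently, $\Gamma$ admits no nontrivial quasimorphisms). So the only real work is to establish finite-dimensionality of $\Hb^2(\Gamma,\R)$.

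For the finite-dimensionality, I would split into cases according to the characteristic of the local fields involved. In characteristic zero (i.e.\ when all the $K_i$ are archimedean or $p$-adic of characteristic $0$), such a $\Gamma$ is finitely presented — this follows from the arithmeticity of $\Gamma$ (Margulis) together with standard results on finite presentability of $S$-arithmetic groups in characteristic zero (Borel–Serre, Raghunathan). For a finitely presented group, $H^2(\Gamma,\R)$ is finite-dimensional (the Eilenberg–MacLane space can be taken with finitely many $2$-cells after finitely many generators, so $H_2$ is finitely generated, hence $H^2(\Gamma,\R)=\operatorname{Hom}(H_2(\Gamma,\Z),\R)$ is finite-dimensional). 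Combined with the injectivity of $c$, this forces $\Hb^2(\Gamma,\R)$ to be finite-dimensional, and \cref{thm:U1:qm} applies directly. The remaining case is a lattice $\Gamma$ in a (necessarily rank $2$) semisimple group in positive characteristic, where finite presentability may fail. Here one invokes the other relevant Burger–Monod result: for such $\Gamma$ one actually has $\Hb^2(\Gamma,\R)=0$ outright (this uses that the rank $\geq 2$ forces enough mixing; in positive characteristic the only higher rank case is a product situation forcing vanishing). Then the hypotheses of \cref{thm:U1:qm} hold trivially ($0$ is finite-dimensional and injects into anything), and again $\Gamma$ is uniformly $U(1)$-stable.

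I would then simply assemble: in all cases \cref{thm:U1:qm} yields that $\Gamma$ satisfies the cohomology gap condition \eqref{pt:K} of \cref{prop:gap}, hence is uniformly $U(1)$-stable. The main obstacle — really the only nontrivial input not already black-boxed in the excerpt — is citing the correct Burger–Monod statements with the correct hypotheses (injectivity of the comparison map in degree $2$, and vanishing of $\Hb^2(\Gamma,\R)$ in the positive-characteristic case), and correctly justifying finite presentability in characteristic zero; none of this requires new argument, so the proof should be short. One should be slightly careful that "higher rank lattice" here means \emph{irreducible} lattice in a group of rank $\geq 2$ (reducible lattices split as products and would need a separate, easy Künneth-type remark, but the convention in the introduction already restricts to the irreducible case).
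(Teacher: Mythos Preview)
Your proposal is correct and follows essentially the same argument as the paper: reduce to \cref{thm:U1:qm}, invoke Burger--Monod for injectivity of the comparison map, and then split into the characteristic-zero case (where finite presentability gives finite-dimensional $H^2$) versus the positive-characteristic case (where Burger--Monod gives $\Hb^2(\Gamma,\R)=0$ outright). The paper's proof is slightly terser but structurally identical, and your added justifications for finite presentability and the irreducibility caveat are appropriate.
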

	
	The main result of this paper will be a far reaching extension of the above theorem (with some additional assumptions on the lattice $\Gamma$) to a larger family of metric groups (namely any unitary group $U(n)$ equipped with any submultiplicative matrix norm $\|\cdot\|$). In this general case, bounded cohomology theory would not suffice, so we will motivate and build a more appropriate cohomology theory in the upcoming sections.\\
	
	We conclude this section with an observation in the case of groups of Hermitian type:
	\begin{proposition}\label{hermitian}
		Let $G$ be a split Lie group of Hermitian type (for example, $Sp(2m,\R)$), with universal central extension $\tilde{G}$. Let $\Gamma$ be a cocompact lattice in $G$, and $\tilde{\Gamma}$ be its preimage in $\tilde{G}$. Then $\tilde{\Gamma}$ is not uniformly $U(1)$-stable (and hence, not uniformly $\mathfrak{U}$-stable).
	\end{proposition}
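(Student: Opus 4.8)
The plan is to exhibit a non-trivial quasimorphism on $\tilde{\Gamma}$ and then invoke \cref{quasi1}. The key point is that passing to the universal central extension $\tilde{G}$ of a Hermitian Lie group $G$ "creates" a quasimorphism: the Kähler class, which lives in $H^2_{cb}(G,\R)$ and corresponds via continuous bounded cohomology to the bounded Euler-type class, becomes a coboundary in ordinary cohomology $H^2(\tilde{G},\R)$ of the simply connected cover but remains non-trivial in bounded cohomology. Concretely, $\tilde{G}$ carries a homogeneous quasimorphism $\rho$ — the (symplectic/Maslov) rotation number or translation number on the central $\Z$ (or $\R$) — which restricts to a non-trivial homogeneous quasimorphism on the lattice $\tilde{\Gamma}$.

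First I would recall the structure: since $G$ is Hermitian with infinite fundamental group containing a central $\Z$, its universal central extension $\tilde{G}$ sits in $1 \to Z \to \tilde{G} \to G \to 1$ with $Z \supseteq \Z$. The bounded Kähler class $\kappa^b_G \in \Hb^2(G,\R)$ pulls back to $\tilde{G}$; because $\tilde{G}$ is simply connected, $H^2(\tilde{G},\R)$ (continuous cohomology) vanishes in the relevant sense, so the pullback of the comparison image vanishes, meaning the pullback of $\kappa^b_G$ to $\tilde{G}$ lies in the kernel of the comparison map $\Hb^2(\tilde{G},\R) \to H^2(\tilde{G},\R)$. Equivalently, there is a continuous homogeneous quasimorphism $\rho: \tilde{G} \to \R$ with $d\rho$ representing this pulled-back bounded class; $\rho$ is the classical translation/rotation quasimorphism, and it is non-trivial precisely because $\kappa^b_G \neq 0$ (Hermitian type). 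Second, I would restrict $\rho$ to $\tilde{\Gamma}$: since $\tilde{\Gamma}$ surjects onto $\Gamma$ with kernel the central $Z$ (on which $\rho$ is a genuine homomorphism, nonzero on the central $\Z$), and since $\Gamma$ is a cocompact lattice in higher rank Hermitian $G$ so that $\Gamma$ itself has no non-trivial quasimorphisms, the restriction $\rho|_{\tilde{\Gamma}}$ cannot be at bounded distance from a homomorphism: any such homomorphism would have to agree with $\rho$ on the center up to bounded error hence exactly (homomorphisms to $\R$ from the center extend), and the difference would descend to a non-trivial quasimorphism on $\Gamma$, contradiction. Hence $\rho|_{\tilde{\Gamma}}$ is a non-trivial quasimorphism. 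Finally, \cref{quasi1} gives that $\tilde{\Gamma}$ is not uniformly $U(1)$-stable, and since the family $\mathfrak{U}$ contains $U(1)$, it is not uniformly $\mathfrak{U}$-stable either.

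The main obstacle I anticipate is making precise the claim that the restriction of the translation quasimorphism to $\tilde{\Gamma}$ is genuinely non-trivial — i.e., that it does not become a coboundary-plus-homomorphism upon restriction to the lattice. This requires knowing that $\rho|_{\tilde\Gamma}$ is non-zero in $\Hb^2(\tilde\Gamma,\R)$ modulo homomorphisms, which one gets either from the fact that the restriction $\Hb^2(G,\R) \to \Hb^2(\tilde\Gamma,\R)$ composed with the appropriate maps is injective (Burger–Monod type arguments on bounded cohomology of lattices detecting the Kähler class), or more directly by a bounded-Euler-number computation: the quasimorphism $\rho$ evaluated on $\tilde\Gamma$ computes (up to scaling) the Euler number of the associated flat bundle / the Toledo-type invariant of the cocompact surface-group-like data, which is non-zero for a Hermitian lattice. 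A clean way to finish is: the central extension class of $1 \to \Z \to \tilde\Gamma \to \Gamma \to 1$ is a non-zero element of $H^2(\Gamma,\Z)$ whose image in $\Hb^2(\Gamma,\R)$ is bounded (it is the pullback of the bounded Kähler class, which is bounded by cocompactness and Hermitian type), so $\tilde\Gamma$ has a non-trivial quasimorphism by the standard fact that a central $\Z$-extension with bounded Euler class yields a homogeneous quasimorphism (the "rotation number" of the extension) which is non-trivial exactly when the class is non-zero in real bounded cohomology — and here it is, since $\Gamma$ is a higher rank Hermitian lattice and $\Hb^2(\Gamma,\R) \neq 0$ detects the Kähler class.
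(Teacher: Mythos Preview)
Your final paragraph lands on essentially the paper's argument: the extension class $\alpha \in H^2(\Gamma,\Z)$ of $1 \to \Z \to \tilde\Gamma \to \Gamma \to 1$ admits a bounded representative (it is the restriction of the K\"ahler class, bounded by Guichardet/Dupont) and is non-zero in $\Hb^2(\Gamma,\R)$; pulling back along the amenable-kernel extension $\tilde\Gamma \to \Gamma$ gives a class $\tilde\alpha$ that remains non-zero in $\Hb^2(\tilde\Gamma,\R)$ but vanishes in $H^2(\tilde\Gamma,\R)$ (the extension trivializes over itself), so $\tilde\alpha$ lies in the kernel of the comparison map. Then \cref{quasi2} produces a non-trivial quasimorphism and \cref{quasi1} finishes. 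This is exactly the paper's route.

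However, the non-triviality argument in your middle paragraph has a genuine gap: you argue by contradiction assuming that $\Gamma$ itself has no non-trivial quasimorphisms, but the proposition does not assume higher rank. For $G = Sp(2,\R) \cong SL_2(\R)$ (the case $m=1$ of your own example), a cocompact $\Gamma$ is a surface group and has an infinite-dimensional space of quasimorphisms, so your contradiction collapses. The same unwarranted higher-rank appeal reappears at the end of your last paragraph when you justify non-vanishing in $\Hb^2(\Gamma,\R)$. The correct, rank-free reason is simpler: the class is already non-zero in $H^2(\Gamma,\R)$ --- cocompactness of $\Gamma$ makes $\Gamma\backslash G/K$ a closed K\"ahler manifold on which the K\"ahler class is non-zero (some power of it is the volume form) --- and anything with non-zero image under the comparison map is non-zero in $\Hb^2$. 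Drop the higher-rank hypothesis throughout and your last paragraph is a complete proof.
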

	\begin{proof}
		Consider the non-trivial $2$-cocycle $\alpha \in H^2(\Gamma,\Z)$ corresponding to the central extension $\tilde{\Gamma}$ of $\Gamma$. From \cite{guichardet}, we know that $\alpha$ is actually an element of $\Hb^2(\Gamma,\R)$ that does not vanish in $H^2(\Gamma,\R)$. In particular, $\alpha$ is not contained in the kernel of the comparison map $c:\Hb^2(\Gamma,\R) \to H^2(\Gamma,\R)$.\\
		Since the extension $\tilde{\Gamma}$ of $\Gamma$ is central (in particular, has amenable kernel), $\alpha$ has a pullback $\tilde{\alpha} \in \Hb^2(\tilde{\Gamma},\R)$ which is a non-trivial bounded $2$-cocycle. However, $\tilde{\alpha}$ is trivial in cohomology, hence $\tilde{\alpha}$ is an element of the kernel of the comparison map $c:\Hb^2(\tilde{\Gamma},\R) \to H^2(\tilde{\Gamma},\R)$. 
		Thus, from Propositions \cref{quasi1} and \cref{quasi2}, we conclude that $\tilde{\Gamma}$ is not uniformly $U(1)$-stable.
	\end{proof}
	\begin{remark}
		A non-trivial quasimorphism in the above proof of \cref{hermitian} can be explicitly described: let $j:\Gamma \to \tilde{\Gamma}$ be a section corresponding to the cocycle $\alpha \in H^2(\Gamma,\Z)$ so that as a set, $\tilde{\Gamma} = \Z \times j(\Gamma)$. Consider the map $\phi:\tilde{\Gamma} \to \R$ defined to be $\phi(m,j(\gamma)) \coloneqq m$. One can check that this is a non-trivial quasimorphism of $\tilde{\Gamma}$, and note that this map is \say{trivial} on $\Gamma$ and we do know, from \cref{1-stable}, that $\Gamma$ is indeed uniformly $U(1)$-stable if it has rank at least $2$, even if $\tilde{\Gamma}$ is not. 
	\end{remark}
	
	\section{Preliminaries and Basic Constructions}\label{sec-prelims}
	In this section, we establish the connection between uniform stability and a homomorphism lifting problem, which is then further explored in \S\ref{sec-asymgamma} for discrete groups, and in \S\ref{sec-coho} for topological groups.\\
	In the first \S\ref{ssec-definitions}, we define our central notion of uniform stability, and describe it using sequences of maps. This then allows for a reformulation of the notion of stability as a homomorphism lifting problem using the language of ultrafilters in \S\ref{ssec-ultraproducts}. Finally, in \S\ref{ssec-liftings}, we introduce the idea of defect diminishing, which is a relaxation of the lifting problem with abelian kernels. This property can naturally be related to a cohomological problem that is then studied in the subsequent \S\ref{sec-asymgamma}. 
	\subsection{Uniform Stability and Asymptotic Homomorphisms}\label{ssec-definitions}
	Let $\Gamma$ be a countable discrete group, and let $(G,d_G)$ be a metric group (that is, a group $G$ equipped with a bi-invariant metric $d_G$). We use the metric to define the (uniform) distance between maps from $\Gamma$ to $G$ as follows: for $f_1,f_2:\Gamma \to G$, the distance between $f_1$ and $f_2$, denoted $dist_{\Gamma,G}(f_1,f_2)$, as
	$$dist_{\Gamma,G}(f_1,f_2) \coloneq \sup_{x \in \Gamma} d_G\left(f_1(x),f_2(x)\right)$$ 
	This allows us to define the distance of a function $f:\Gamma \to G$ from a homomorphism as follows:
	\begin{definition}
		The \textbf{homomorphism distance} of a function $f:\Gamma \to G$, denoted $D_{\Gamma,G}(f)$, is defined as
		$$D_{\Gamma,G}(f) \coloneq \inf \{ dist_{\Gamma,G}(f,\psi) \space : \space \psi \in Hom(\Gamma,G) \}$$
		The function $f$ is said to be $\delta$-close to a homomorphism if $D_{\Gamma,G}(f) \leq \delta$.  
	\end{definition}
	There is another invariant of a function $f$ that also quantifies its distance from being a homomorphism.
	\begin{definition}
		For a function $f:\Gamma \to G$, we define its (uniform) \textbf{defect} $def_{\Gamma,G}(f)$ as
		$$def_{\Gamma,G}(f) \coloneq \sup_{x,y \in \Gamma} d_G\left( f(xy), f(x)f(y) \right)$$
		The function $f$ is said to be an \textbf{$\epsilon$-homomorphism} if $def_{\Gamma,G}(f) \leq \epsilon$.
	\end{definition}
	Note that a priori both $def_{\Gamma,G}(f)$ and $D_{\Gamma,G}(f)$ could be $\infty$. It is easy to show (by the triangle inequality) that $def_{\Gamma,G}(f) \leq 3D_{\Gamma,G}(f)$ for any function $f$, so if $f$ is close to a homomorphism, then it has small defect. Uniform stability is the question of whether the converse is true: is any function with small defect necessarily close to a homomorphism?\\
	Uniform stability is usually studied with respect to not just one metric group $(G,d_G)$ but a family $\mathcal{G}$ of metric groups. In this case, we can define 
	$$F_{\Gamma,\mathcal{G}}:[0,\infty] \to [0,\infty]$$
	$$F_{\Gamma,\mathcal{G}}(\epsilon) \coloneq \sup_{G \in \mathcal{G}} \sup_{f} \{ D_{\Gamma,G}(f) \space : \space def_{\Gamma,G}(f) \leq \epsilon\}$$
	\begin{definition}
		The group $\Gamma$ is said to be \textbf{uniformly $\mathcal{G}$-stable} if 
		$$\lim \limits_{\epsilon \to 0^{+}} F_{\Gamma,\mathcal{G}}(\epsilon) = 0$$
	\end{definition}
	A further refinement of the above definition involves a quanitification of the above convergence:
	\begin{definition}
		The group $\Gamma$ is uniformly $\mathcal{G}$-stable \emph{with a linear estimate} if $\exists \epsilon_0>0$ and $\exists M\geq 0$ such that $\forall \epsilon<\epsilon_0$ and $\forall G \in \mathcal{G}$, every $\epsilon$-homomorphism $\phi:\Gamma \to G$ is $M\epsilon$-close to a homomorphism.    
	\end{definition}
	
	It is helpful to rephrase these notions also in terms of sequences of maps, especially since we shall further refine this view in \S\ref{ssec-ultraproducts} and \S\ref{ssec-liftings}. Consider a sequence of functions $\{\phi_n:\Gamma \to G_n\}_{n \in \N}$ where $G_n \in \mathcal{G}$ for every $n \in \N$. 
	\begin{itemize}
		\item A sequence $\{\phi_n:\Gamma \to G_n\}_{n \in \N}$ is said to be a (uniform) \textbf{asymptotic homomorphism} of $\Gamma$ to $\mathcal{G}$ if $\lim \limits_{n \to \infty}def_{\Gamma,G_n}(\phi_n) = 0$. 
		\item A sequence $\{\phi_n:\Gamma \to G_n\}_{n \in \N}$ is said to be (uniformly) \textbf{asymptotically close to a homomorphism} if $\lim \limits_{n \to \infty} D_{\Gamma,G_n}(\phi_n) = 0$. 
	\end{itemize}
	It is easy to see that a group $\Gamma$ is uniformly $\mathcal{G}$-stable iff every asymptotic homomorphism of $\Gamma$ to $\mathcal{G}$ is asymptotically close to a homomorphism.\\
	Uniform $\mathcal{G}$-stability with a linear estimate too can be rephrased in terms of sequences of maps. Recall the Laundau big-O notation: for sequences $\{x_n\}_{n \in \N}$ and $\{y_n\}_{n \in \N}$ of positive real numbers, we denote $x_n=O(y_n)$ if there exists a constant $C \geq 0$ and $N \in \N$ such that for all $n \geq N$, $x_n \leq C y_n$. We denote by $x_n=o(y_n)$ if there exists a sequence $\{\epsilon_n\}_{n \in \N}$ with $\lim_{n \to \infty}\epsilon_n=0$ and such that $x_n=\epsilon_n y_n$. Firstly, note that if $\Gamma$ is uniformly $\mathcal{G}$-stable with a linear estimate, then for any asymptotic homomorphism $\{\phi_n:\Gamma \to G_n\}_{n \in \N}$ of $\Gamma$ to $\mathcal{G}$, $D_{\Gamma,G_n}(\phi_n) =  O\left(def_{\Gamma,G_n}(\phi_n)\right)$. The following lemma shows that the converse is also true:
	\begin{lemma}\label{l1}
		The group $\Gamma$ is uniformly $\mathcal{G}$-stable with a linear estimate iff for every asymptotic homomorphism $\{\phi_n:\Gamma \to G_n\}_{n \in \N}$ of $\Gamma$ to $\mathcal{G}$, 
		$$D_{\Gamma,G_n}(\phi_n) =  O\left(def_{\Gamma,G_n}(\phi_n)\right)$$
	\end{lemma}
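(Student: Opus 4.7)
The plan is to prove the two implications separately: the forward direction is a direct unpacking, while the reverse direction is naturally handled by contrapositive with a diagonal construction.

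For the forward direction, I would fix witnesses $\epsilon_0>0$ and $M\geq 0$ for uniform $\mathcal{G}$-stability with linear estimate. Given an asymptotic homomorphism $\{\phi_n:\Gamma \to G_n\}$, the assumption $\lim_n def_{\Gamma,G_n}(\phi_n)=0$ ensures that there exists $N\in\N$ so that $def_{\Gamma,G_n}(\phi_n)<\epsilon_0$ for all $n\geq N$. Applying the linear estimate to each such $\phi_n$ gives $D_{\Gamma,G_n}(\phi_n)\leq M\cdot def_{\Gamma,G_n}(\phi_n)$. The finitely many earlier terms can be absorbed by enlarging the constant if necessary, yielding $D_{\Gamma,G_n}(\phi_n)=O(def_{\Gamma,G_n}(\phi_n))$.

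For the reverse direction, I would argue by contrapositive: assume $\Gamma$ is \emph{not} uniformly $\mathcal{G}$-stable with a linear estimate, and construct an asymptotic homomorphism violating the $O$-bound. The negation of the definition says that for every choice of $\epsilon_0>0$ and $M\geq 0$, there exist $\epsilon<\epsilon_0$, a group $G\in\mathcal{G}$, and an $\epsilon$-homomorphism $\phi:\Gamma\to G$ that is not $M\epsilon$-close to any homomorphism. Apply this with $\epsilon_0=1/n$ and $M=n$ for each $n\in\N$: this produces $G_n\in\mathcal{G}$ and a map $\phi_n:\Gamma\to G_n$ with $def_{\Gamma,G_n}(\phi_n)\leq 1/n$ but $D_{\Gamma,G_n}(\phi_n)>n\cdot def_{\Gamma,G_n}(\phi_n)$. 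The first condition makes $\{\phi_n\}$ an asymptotic homomorphism, while the second condition contradicts $D_{\Gamma,G_n}(\phi_n)=O(def_{\Gamma,G_n}(\phi_n))$ (since for any constant $C$, eventually $n>C$).

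One minor technical point to address is that the inequality $D_n > n\cdot def_n$ is vacuous when $def_n = 0$; however, in that case $\phi_n$ is already a genuine homomorphism so $D_n=0$, and such a $\phi_n$ would be $M\epsilon$-close to a homomorphism (itself) for any $M$, contradicting the choice of $\phi_n$. Hence we may assume $def_n>0$ throughout the construction. I do not anticipate any real obstacle; the lemma is a routine reformulation of a uniform quantitative notion in terms of all sequences, and the only care needed is the standard diagonal extraction in the contrapositive step.
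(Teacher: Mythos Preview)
Your proposal is correct and follows essentially the same approach as the paper: the forward direction is the direct observation made in the paragraph preceding the lemma, and the reverse direction is exactly the paper's contrapositive diagonal construction (the paper uses generic sequences $M_n\to\infty$, $\epsilon_n\to 0$ where you take the specific choice $M_n=n$, $\epsilon_n=1/n$). Your extra remark handling the degenerate case $def_n=0$ is a harmless clarification the paper omits.
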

	\begin{proof}
		Suppose $\Gamma$ is not uniformly $\mathcal{G}$-stable with a linear estimate. Then for any $M>0$ and any $\epsilon>0$, there exists a map $\phi:\Gamma \to G$ (for some $G \in \mathcal{G}$) such that $def_{\Gamma,G}(\phi) \leq \epsilon$ and $D_{\Gamma,G}(\phi) > M\epsilon$. Now consider a sequence $\{M_n\}_{n \in \N}$ with $\lim_{n \to \infty}M_n = \infty$ and a sequence $\{\epsilon_n\}_{n \in \N}$ with $\lim_{n \to \infty}\epsilon_n = 0$. For each $n \in \N$, let $\phi_n:\Gamma \to G_n$ be the map with $def_{\Gamma,G_n}(\phi_n) \leq \epsilon_n$ but $D_{\Gamma,G_n}(\phi_n) > M_n\epsilon_n$. Then $\{\phi_n\}_{n \in \N}$ is an asymptotic homomorphism of $\Gamma$ to $\mathcal{G}$ such that $D_{\Gamma,G_n}(\phi_n) $ is clearly not  $O\left(def_{\Gamma,G_n}(\phi_n)\right)$. 
	\end{proof}
	We shall now conclude this section by informally introducing the following relaxation of the hypothesis of \cref{l1}, which we shall call \emph{asymptotic defect diminishing}. The idea is to look for improvements to the asymptotic homomorphism, rather than a true homomorphism. 
	\begin{definition}
		The group $\Gamma$ is said to have the \textbf{asymptotic defect diminishing} property with respect to the family $\mathcal{G}$ if for any asymptotic homomorphism $\{\phi_n:\Gamma \to G_n\}_{n \in \N}$, there exists an asymptotic homomorphism $\{\psi_n:\Gamma \to G_n\}_{n \in \N}$ such that  
		\begin{itemize}
			\item The defect $def_{\Gamma,G_n}(\psi_n) = o\left(def_{\Gamma,G_n}(\phi_n)\right)$.
			\item The distance $dist_{\Gamma,G_n}(\phi_n,\psi_n) = O\left(def_{\Gamma,G_n}(\phi_n)\right)$.
		\end{itemize}
	\end{definition}
	The following results motivate this notion, which we shall formally study in more detail in \S\ref{ssec-ultraproducts} in an ultraproduct setting.
	\begin{lemma}\label{dim}
		Suppose $\Gamma$ has the asymptotic defect diminishing property with respect to $\mathcal{G}$. Then there exists $\epsilon_0>0$ and $M>0$ such that for $\epsilon<\epsilon_0$ and any $\epsilon$-homomorphism $\phi:\Gamma \to G$, there exists a map $\psi:\Gamma \to G$ with defect $def_{\Gamma,G}(\psi)<\frac{1}{2}def_{\Gamma,G}(\phi)$ and $dist_{\Gamma,G}(\psi)<M def_{\Gamma,G}(\phi)$.
	\end{lemma}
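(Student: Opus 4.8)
The plan is to deduce \cref{dim} from \cref{l1} by contraposition at the level of sequences: show that the failure of the conclusion of \cref{dim} forces an asymptotic homomorphism violating the asymptotic defect diminishing property, contradicting the hypothesis. So first I would assume, for contradiction, that for every $\epsilon_0 > 0$ and every $M > 0$ there is an $\epsilon$-homomorphism $\phi : \Gamma \to G$ (with $\epsilon < \epsilon_0$ and $G \in \mathcal{G}$) such that \emph{every} map $\psi : \Gamma \to G$ with $def_{\Gamma,G}(\psi) < \tfrac12 def_{\Gamma,G}(\phi)$ has $dist_{\Gamma,G}(\phi,\psi) \ge M\, def_{\Gamma,G}(\phi)$. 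Applying this with a sequence $M_n \to \infty$ and $\epsilon_n \to 0$ produces maps $\phi_n : \Gamma \to G_n$ with $\epsilon_n := def_{\Gamma,G_n}(\phi_n) \to 0$, so that $\{\phi_n\}$ is an asymptotic homomorphism, and with the property that any $\psi_n$ with $def_{\Gamma,G_n}(\psi_n) < \tfrac12 def_{\Gamma,G_n}(\phi_n)$ satisfies $dist_{\Gamma,G_n}(\phi_n,\psi_n) \ge M_n\, def_{\Gamma,G_n}(\phi_n)$.

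Next I would feed $\{\phi_n\}$ into the asymptotic defect diminishing property to obtain an asymptotic homomorphism $\{\psi_n : \Gamma \to G_n\}$ with $def_{\Gamma,G_n}(\psi_n) = o(def_{\Gamma,G_n}(\phi_n))$ and $dist_{\Gamma,G_n}(\phi_n,\psi_n) = O(def_{\Gamma,G_n}(\phi_n))$. The first estimate means that for all large $n$ we have $def_{\Gamma,G_n}(\psi_n) < \tfrac12 def_{\Gamma,G_n}(\phi_n)$, so by construction $dist_{\Gamma,G_n}(\phi_n,\psi_n) \ge M_n\, def_{\Gamma,G_n}(\phi_n)$. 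But $dist_{\Gamma,G_n}(\phi_n,\psi_n) = O(def_{\Gamma,G_n}(\phi_n))$ says there is a constant $C$ with $dist_{\Gamma,G_n}(\phi_n,\psi_n) \le C\, def_{\Gamma,G_n}(\phi_n)$ for all large $n$; combining with $M_n \to \infty$ gives $M_n \le C$ eventually, a contradiction. Hence the desired $\epsilon_0, M$ exist. (One should note $def_{\Gamma,G_n}(\phi_n) > 0$ may be assumed: if it vanishes $\phi_n$ is already a homomorphism and there is nothing to improve, so such indices can be discarded or handled trivially.)

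The main subtlety — really the only place one must be slightly careful — is the bookkeeping around the constant $M$ in \cref{dim} versus the constant hidden in the big-$O$ coming from asymptotic defect diminishing: the statement of \cref{dim} quantifies $M$ \emph{before} $\phi$, so one must check that a single $M$ (and $\epsilon_0$) works uniformly. The contrapositive argument above handles this correctly, since the negation of ``$\exists \epsilon_0, M$'' is precisely ``$\forall \epsilon_0, M \,\exists \phi$'', which is what licenses the diagonal choice of $\phi_n$ against $M_n \to \infty$. A secondary point is that the factor $\tfrac12$ is arbitrary: any fixed constant in $(0,1)$ works, because $def_{\Gamma,G_n}(\psi_n) = o(def_{\Gamma,G_n}(\phi_n))$ eventually beats any fixed fraction; I would just fix $\tfrac12$ for concreteness as in the statement. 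No genuine obstacle arises here — this lemma is essentially an unwinding of the definitions, and its role is to set up the inductive iteration of defect diminishing carried out in the subsequent sections.
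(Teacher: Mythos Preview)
Your proof is correct and follows essentially the same contrapositive diagonal-sequence argument as the paper's own proof (the passing reference to \cref{l1} is a slip --- you never actually invoke it, and neither does the paper). If anything, you spell out the final contradiction ($M_n \le C$ eventually) more explicitly than the paper, which simply asserts that the constructed $\{\phi_n\}$ witnesses the failure of asymptotic defect diminishing.
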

	\begin{proof}
		We shall prove this by contradiction, so suppose for every $\epsilon>0$ and every $M>0$, there exists an $\epsilon$-homomorphism $\phi:\Gamma \to G$ such that for any $\psi:\Gamma \to G$, either $def_{\Gamma,G}(\psi)\geq \frac{1}{2}def_{\Gamma,G}(\phi)$ or $dist_{\Gamma,G}(\psi)\geq M def_{\Gamma,G_n}(\phi_n)$.\\
		Consider a sequence $\{M_n\}_{n \in \N}$ with $\lim_{n \to \infty}M_n = \infty$ and a sequence $\{\epsilon_n\}_{n \in \N}$ with $\lim_{n \to \infty}\epsilon_n = 0$. For each $n \in \N$, let $\phi_n:\Gamma \to G_n$ (with $G_n \in \mathcal{G}$) be a map with $def_{\Gamma,G_n}(\phi_n) \leq \epsilon_n$ such that for any map $\psi:\Gamma \to G_n$, either $def_{\Gamma,G_n}(\psi_n)\geq \frac{1}{2}def_{\Gamma,G_n}(\phi_n)$ or $dist_{\Gamma,G_n}(\psi_n)\geq M_n def_{\Gamma,G_n}(\phi_n)$. Then the asymptotic homomorphism $\{\phi_n\}_{n \in \N}$ as constructed proves that that $\Gamma$ does not have the asymptotic defect diminishing property with respect to $\mathcal{G}$. 
	\end{proof}
	\begin{theorem}\label{thm1}
		Suppose $\mathcal{G}$ is such that every $G \in \mathcal{G}$ is a complete metric space (with respect to its metric $d_G$). Then group $\Gamma$ is uniformly $\mathcal{G}$-stable with a linear estimate iff $\Gamma$ has the asymptotic defect diminishing property with respect to $\mathcal{G}$
	\end{theorem}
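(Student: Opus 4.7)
The forward direction is essentially immediate. If $\Gamma$ is uniformly $\mathcal{G}$-stable with a linear estimate and $\{\phi_n : \Gamma \to G_n\}$ is an asymptotic homomorphism, then for $n$ large enough there exist homomorphisms $\psi_n \in Hom(\Gamma, G_n)$ with $dist_{\Gamma, G_n}(\phi_n, \psi_n) = O(def_{\Gamma, G_n}(\phi_n))$. Since each $\psi_n$ is a genuine homomorphism, $def_{\Gamma, G_n}(\psi_n) = 0$, which is trivially $o(def_{\Gamma, G_n}(\phi_n))$. Hence $\Gamma$ has the asymptotic defect diminishing property.

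For the converse, the plan is to iterate \cref{dim} and extract a pointwise limit using completeness. Let $\epsilon_0 > 0$ and $M > 0$ be the constants furnished by \cref{dim}, and fix an $\epsilon$-homomorphism $\phi \colon \Gamma \to G$ with $\epsilon < \epsilon_0$. Setting $\phi^{(0)} = \phi$, I recursively construct maps $\phi^{(k)} \colon \Gamma \to G$ by applying \cref{dim} to $\phi^{(k-1)}$ (which has defect at most $\epsilon < \epsilon_0$, so the lemma applies). By induction,
\begin{equation*}
def_{\Gamma, G}(\phi^{(k)}) \;<\; \tfrac{1}{2^k}\epsilon, \qquad dist_{\Gamma, G}\bigl(\phi^{(k-1)}, \phi^{(k)}\bigr) \;<\; \tfrac{M}{2^{k-1}}\epsilon.
\end{equation*}
(I read the second inequality in \cref{dim} as $dist_{\Gamma, G}(\phi, \psi) < M\, def_{\Gamma, G}(\phi)$; the stated form appears to be a typo.) Telescoping yields $dist_{\Gamma, G}(\phi^{(0)}, \phi^{(k)}) < 2 M \epsilon$ uniformly in $k$.

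Now fix $x \in \Gamma$. The sequence $\{\phi^{(k)}(x)\}_{k \ge 0}$ is Cauchy in $G$ since $d_G(\phi^{(k)}(x), \phi^{(k+1)}(x)) < \tfrac{M}{2^k}\epsilon$, so by completeness it converges to some $\psi(x) \in G$. This defines a map $\psi \colon \Gamma \to G$ with $dist_{\Gamma, G}(\phi, \psi) \le 2 M \epsilon$. To see that $\psi$ is a homomorphism, for any $x, y \in \Gamma$ the bi-invariance and continuity of $d_G$ give
\begin{equation*}
d_G\bigl(\psi(xy), \psi(x)\psi(y)\bigr) \;=\; \lim_{k \to \infty} d_G\bigl(\phi^{(k)}(xy), \phi^{(k)}(x)\phi^{(k)}(y)\bigr) \;\le\; \lim_{k \to \infty} def_{\Gamma, G}(\phi^{(k)}) \;=\; 0.
\end{equation*}
Thus every $\epsilon$-homomorphism with $\epsilon < \epsilon_0$ lies within $2M\epsilon$ of a genuine homomorphism, which is exactly uniform $\mathcal{G}$-stability with a linear estimate (with constant $2M$).

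The only real subtlety is the bookkeeping ensuring that each $\phi^{(k)}$ still has defect below $\epsilon_0$ so that \cref{dim} can be reapplied; this is automatic from the geometric decay $def(\phi^{(k)}) < \epsilon / 2^k$. I expect no further obstacles once \cref{dim} is in hand --- the content of the theorem is really the extraction of a uniform lemma from the asymptotic hypothesis, which has already been done in \cref{dim}, together with this routine Cauchy-completion argument. The completeness of $G$ is used in precisely one place: to guarantee the pointwise limit $\psi(x)$ exists.
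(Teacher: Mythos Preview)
Your proof is correct and follows essentially the same approach as the paper: both directions are handled identically, with the converse proceeding by iterating \cref{dim} to produce a sequence $\phi^{(k)}$ with geometrically decaying defect, extracting a pointwise limit via completeness, and verifying the limit is a homomorphism at distance $\le 2M\epsilon$. Your observation about the typo in \cref{dim} is also correct.
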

	\begin{proof}
		Suppose $\Gamma$ is uniformly $\mathcal{G}$-stable with a linear estimate, then the implication is immediate. Conversely, suppose $\Gamma$ has the asymptotic defect diminishing property with respect to $\mathcal{G}$. From the previous lemma, this means that there exists $\epsilon_0>0$ and $M>0$ such that for any $\epsilon$-homomorphism $\phi:\Gamma \to G$ (with $\epsilon<\epsilon_0$), there exists a map $\psi:\Gamma \to G$ with defect $def_{\Gamma,G}(\psi)<\frac{1}{2}def_{\Gamma,G}(\phi)$ and $dist_{\Gamma,G}(\psi)<M def_{\Gamma,G}(\phi)$. Set $\phi^{(0)} \coloneq \phi$ and $\phi^{(1)} \coloneq \psi$. Applying \cref{dim} inductively on $\phi^{(i)}$ to get $\phi^{(i+1)}$, we obtain a sequence of maps $\{\phi^{(j)}:\Gamma \to G\}_{j \in \N}$ where for each $j \in \N$, $\phi^{(j)}$ has defect at most $\epsilon/2^{j}$ and is of distance at most $M\epsilon/2^{j}$ from $\phi^{(j-1)}$. This gives us a Cauchy sequence of maps from $\Gamma \to G$. Since $G$ is a complete, the sequence  $\{\phi^{(j)}$ has a limit $\phi^{\infty}:\Gamma \to G$ with defect $def(\phi^{\infty}) = 0$ (hence it is a homomorphism). As for its distance from $\phi$,
		$$dist_{\Gamma,G}(\phi,\phi^{\infty}) \leq \sum \limits_{j=0}^{\infty}\frac{M\epsilon}{2^j} = 2M\epsilon$$
		Hence $\Gamma$ is uniformly $\mathcal{G}$-stable with a linear estimate. 
	\end{proof}
	
	\subsection{Ultraproducts and Internal Maps}\label{ssec-ultraproducts}
	We can quantify the asymptotic rates succintly using ultraproducts. We shall now briefly review some concepts from the theory of ultraproducts and non-standard analysis that would be of relevance to our constructions (for more details, refer \cite{goldblatt} and \cite{nonstandard}).\\
	Let $\U$ be a non-principal ultrafilter on $\N$, which is fixed throughout. A subset $S \subseteq \N$ is said to be \emph{large} if $S \in \U$. 
	\begin{definition}
		The (algebraic) \textbf{ultraproduct} $\prod_{\U}X_n$ (or alternately, $\{X_n\}_{\U}$) of an indexed collection $\{X_n\}_{n \in \N}$ of sets is defined to be 
		$$\prod_{\U}X_n \coloneq \prod_{n \in \N}X_n / \sim$$
		where for $\{x_n\}_{n \in \N}, \{y_n\}_{n \in \N} \in \prod_{n \in \N}X_n$, $\{x_n\}_{n \in \N} \sim \{y_n\}_{n \in \N}$ if $\{n \space : \space x_n=y_n\} \in \U$. 
	\end{definition}
	In other words, we identify two sequences $\{x_n\}_{n \in \N}, \{y_n\}_{n \in \N} \in \prod_{n \in \N}X_n$ if they agree on a large set of indices. The image of a sequence $\{x_n\}_{n \in \N} \in \prod_{n \in \N}X_n$ under this equivalence relation shall be denoted $\{x_n\}_{\U}$. Conversely, given an element of $\prod_{\U}X_n$, we shall always regard it as $\{x_n\}_{\U}$ for some sequence $\{x_n\}_{n \in \N} \in \prod_{n \in \N}X_n$.\\
	If $X_n=X$ for every $n \in \N$, then $\prod_{\U}X$ is called the (algebraic) \emph{ultrapower} of $X$, denoted ${}^*X$. Note that $X$ can be embedded in ${}^*X$ via a diagonal embedding (for $x \in X$, $x \mapsto \{x\}_{\U} \in {}^*X$).\\
	Ultraproducts can be made to inherit algebraic structures of their building blocks. More precisely, let $\{X_n\}_{n \in \N}$, $\{Y_n\}_{n \in \N}$, and $\{Z_n\}_{n \in \N}$ be indexed families of sets with operations $*_n:X_n \times Y_n \to Z_n$ for every $n \in \N$. This naturally defines an operation $*:\prod_{\U}X_n \times \prod_{\U}Y_n \to \prod_{\U}Z_n$ by
	$$\{x_n\}_{\U}\space  * \space \{y_n\}_{\U} = \{x_n *_{n} y_n\}_{\U}$$
	We shall frequently encounter the following examples of ultraproducts:
	\begin{itemize}
		\item The ultrapower ${}^*G$ of a group $G$ is itself a group. This can be seen by noting that ${}^*G$ is the quotient of the direct product group $\prod_{n \in \N}G$ by the normal subgroup comprising elements $\{g_n\}_{n \in \N}$ with $\{n \space : \space g_n=1\} \in \U$. 
		\item The ultrapower ${}^*\R$ of $\R$ is a non-archimedean ordered field called the \emph{hyperreals}.
		\item Let $\{W_n\}_{n \in \N}$ be a family of Banach spaces. Then $\prod_{\U}W_n$ can be given the structure of a ${}^*\R$-vector space. In fact, it also comes equipped with a ${}^*\R$-valued norm. 
	\end{itemize}
	One of the standard tools of non-standard analysis is the transfer principle which relates the truth of statements concerning objects and their counterparts in the ultraproduct universe. Intuitively, our standard universe comprises all objects under normal consideration like $\R$, $\C$, etc. but maybe formally modeled as follows: define
	$$V_0(\R)=\R$$
	$$V_{n+1}(\R)=V_n(\R) \cup \mathcal{P}\left(V_n(\R)\right)$$
	where $\mathcal{P}\left(V_n(\R)\right)$ denotes the power set of $\left(V_n(\R)\right)$.
	Then
	$$V(\R)=\cup_{n \geq 0}V_n(\R)$$
	is called the superstructure over $\R$, and can be interpreted as comprising all the natural structures we study in mathematics. This shall comprise our standard universe $Univ$. \\
	We can construct a mapping ${}^*:V(\R) \to V({}^*\R)$ that takes an object in the superstructure of $\R$ (our standard universe $Univ$) to an object in the superstructure of ${}^*\R$ satisfying the following:
	\begin{itemize}
		\item (Extension Principle) The mapping $*$ maps $\R$ to ${}^*\R$.
		\item (Transfer Principle) For any first-order formula $\phi$ involving $k$ variables, and $A_1,\dots,A_k \in V(\R)$, the statement $\phi(A_1,\dots,A_k)$ is true in $V(\R)$ iff $\phi({}^*A_1,\dots,{}^*A_k)$ is true in $V({}^*\R)$.
		\item (Countable Saturation) Suppose $\{X_n\}_{n \in \N}$ is a collection of sets in $^*\left(V(\R)\right)$ such that the intersection of any finite subcollection is non-empty. Then $\cap X_n$ is non-empty.
	\end{itemize}
	It is a basic result of non-standard analysis that such a mapping $*$ exists, and can be constructed using a non-principal ultrafilter on $\N$. The image of this mapping shall be referred to as our non-standard universe${}^*Univ$, which is
	$${}^*Univ= \{ \{X_n\}_{\mathcal{U}} \lvert X_n \in Univ\}$$
	comprising ultraproducts of elements of $Univ$. Note that ${}^*\R, {}^*\C, {}^*\Gamma$ are all contained in ${}^*Univ$.\\
	Objects contained in $Univ$ are called standard, while objects contained in ${}^*Univ$ are called internal. In particular, a subset $S$ of an ultraproduct $\prod_{\U}X_n$ is an \emph{internal subset} if there exist subsets $S_n \subseteq X_n$ for every $n \in \N$ such that $S= \prod_{\U}S_n$. A function $f:\prod_{\U}X_n \to \prod_{\U}Y_n$ is said to be an \textbf{internal function} if there exists a sequence $\{f_n:X_n \to Y_n\}_{n \in \N}$ such that $f=\{f_n\}_{\U}$.\\
	Objects that are not contained in ${}^*Univ$ are called \emph{external}. Note that standard objects like $\R$ and $\C$ too are external. We can always consider objects that are neither in $Univ$ nor in ${}^*Univ$. Two important examples of non-standard external subsets are
	\begin{itemize}
		\item The set of \textbf{bounded} hyperreals, denoted ${}^*\R_b$, is the subset comprising elements $\{x_n\}_{\U} \in {}^*\R$ for which there exists $C \in \R_{\geq 0}$ such that $\lvert x_n \rvert \leq C$ for every $n \in \N$. 
		\item The set of \textbf{infinitesimal} hyperreals, denoted ${}^*\R_{inf}$, is the subset comprising elements $\{x_n\}_{\U} \in {}^*\R$ such that for \emph{every} real $\epsilon > 0$, there exists a large set $S \in \U$ such that $\lvert x_n \rvert < \epsilon$ for every $n \in S$.
		\item For $x,y \in {}^*\R$, denote by $x=O_{\U}(y)$ if $x/y \in {}^*\R_b$, and by $x=o_{\U}(y)$ if $x/y \in {}^*\R_{inf}$ (in particular,any bounded element $x \in {}^*\R_b$ is $x=O_{\U}(1)$ while any infinitesimal element $\epsilon \in {}^*\R_{inf}$ is $\epsilon = o_{\U}(1)$).
	\end{itemize}
	Note that the preimage of ${}^*\R_b$ under the map $\prod_{n \in \N}\R \to {}^*\R$ includes the subset of all bounded sequences, while the preimage of ${}^*\R_{inf}$ includes the subset of infinitesimal sequences (that is, sequences that converge to $0$).\\
	The subset ${}^*\R_b$ forms a valuation ring with ${}^*\R_{inf}$ being the unique maximal ideal, with quotient ${}^*\R_b/{}^*\R_{inf} \cong \R$. The quotient map $st:{}^*\R_b \to \R$ is known as the \emph{standard part} map or \emph{limit along the ultrafilter $\U$}.\\
	The previous construction can also be replicated for metric spaces with specified base points. Let $\{(X_n,d_n,p_n)\}_{n \in \N}$ be an indexed family of metric spaces (where the space $X_n$ comes with the metric $d_n$ and the base point $p_n$). The ultraproduct $\prod_{\U}X_n$ comes equipped with an ${}^*\R$-values metric $\prod_{\U}d_n$ and base point $\prod_{\U}p_n$. Consider the subset, denoted $\left(\prod_{\U}X_n\right)_b$, of $\prod_{\U}X_n$ comprising $\{x_n\}_{\U}$ such that $\{d_n(x_n,p_n)\}_{\U} \in {}^*\R_b$ (such elements are referred to as bounded or \emph{admissible}), and a subset, denoted $\left(\prod_{\U}X_n\right)_{inf}$, comprising $\{x_n\}_{\U}$ such that $\{d_n(x_n,p_n)\}_{\U} \in {}^*\R_{inf}$. Define an equivalence relation $\sim$ on $\left(\prod_{\U}X_n\right)_b$ by $\{x_n\}_{\U} \sim \{y_n\}_{\U}$ if $\{d_n(x_n,y_n)\}_{\U} \in {}^*\R_{inf}$. The set of equivalence classes $\left(\prod_{\U}X_n\right)_b/\sim$ is called the \emph{ultralimit} of $\{(X_n,d_n,p_n)\}_{n \in \N}$. We will return to this notion in the context of Banach spaces in \S\ref{sec-coho}.\\
	\begin{remark}
		For convenience, we shall henceforth denote by \say{for $n \in \U$} to mean \say{for $n \in S$ for some $S \in \U$}.
	\end{remark}
	Returning to our setting, consider a sequence $\{\phi_n:\Gamma \to G_n\}_{n \in \N}$ where $G_n \in \mathcal{G}$. This can be used to construct an internal map $\{\phi_n\}_{\U}: {}^*\Gamma \to \prod_{\U}G_n$, and allows us to redefine asymptotic homomorphisms and closeness to homomorphisms in the ultraproduct. 
	\begin{itemize}
		\item Given two internal maps $\phi^{(1)}:{}^*\Gamma \to \prod_{\U}G_n$ and $\phi^{(2)}:{}^*\Gamma \to \prod_{\U}G_n$, we denote by $dist(\phi^{(1)},\phi^{(2)}) \coloneq \{dist_{\Gamma,G_n}(\phi^{(1)}_n,\phi^{(2)}_n)\}_{\U}$. The maps $\phi^{(1)}$ and $\phi^{(2)}$ are said to be (internally) \textbf{asymptotically close} to each other if $dist(\phi^{(1)},\phi^{(2)}) \in {}^*\R_{inf}$. 
		\item An internal map $\psi: {}^*\Gamma \to \prod_{\U}G_n$ is said to be an \textbf{internal homomorphism} if there exists a sequence $\{\psi_n\}_{n \in \N}$ of homomorphisms such that $\psi=\{\psi_n\}_{\U}$.
		\item An internal map $\phi \coloneq \{\phi_n\}_{\U} : {}^*\Gamma \to \prod_{\U}G_n$ is said to be (internally) \textbf{asymptotically close to an internal homomorphism} if there exists an internal homomorphism $\psi = \{\psi_n:\Gamma \to G_n\}_{n \in \N}$ such that $dist(\phi,\psi) \in {}^*\R_{inf}$ (in other words, $\{D_{\Gamma,G_n}\}_{\U} \in {}^*\R_{inf}$).
		\item An internal map $\phi \coloneq \{\phi_n\}_{\U}: {}^*\Gamma \to \prod_{\U}G_n$ is called an \textbf{internal asymptotic homomorphism} if $def(\phi) \coloneq \{def(\phi_n)\}_{\U} \in {}^*\R_{inf}$. For $\epsilon \in {}^*\R_{inf}$, we shall call an internal asymptotic homomorphism $\phi$ an internal $\epsilon$-homomorphism if $def(\phi) = \epsilon$ (we similarly define internal $O_{\U}(\epsilon)$-homomorphisms and internal $o_{\U}(\epsilon)$-homomorphisms).
	\end{itemize}
	The following lemmas are variants of \cref{l1} and \cref{thm1} in the ultraproduct setting:
	\begin{lemma}\label{l5}
		The group $\Gamma$ is uniformly $\mathcal{G}$-stable iff every internal asymptotic homomorphism $\phi:{}^*\Gamma \to \prod_{\U}G_n$ for $G_n \in \mathcal{G}$ is  asymptotically close to an internal homomorphism.
	\end{lemma}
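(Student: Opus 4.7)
The plan is to prove both directions by translating the ``$\lim_{\epsilon\to 0^+}F_{\Gamma,\mathcal G}(\epsilon)=0$'' formulation of uniform stability into the language of ultraproducts, using the correspondence that a sequence of non-negative reals tends to $0$ if and only if the corresponding element of ${}^*\R_b$ lies in ${}^*\R_{\mathrm{inf}}$ (equivalently, its standard part equals $0$).

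For the forward direction, I would start with an internal asymptotic homomorphism $\phi=\{\phi_n\}_{\U}\colon{}^*\Gamma\to\prod_{\U}G_n$, so $\{\mathrm{def}_{\Gamma,G_n}(\phi_n)\}_{\U}\in {}^*\R_{\mathrm{inf}}$. Uniform $\mathcal G$-stability gives a function $F_{\Gamma,\mathcal G}$ with $F_{\Gamma,\mathcal G}(\epsilon)\to 0$ as $\epsilon\to 0^+$, so $D_{\Gamma,G_n}(\phi_n)\le F_{\Gamma,\mathcal G}(\mathrm{def}_{\Gamma,G_n}(\phi_n))$ for each $n$, and composing with the standard-part map gives $\{D_{\Gamma,G_n}(\phi_n)\}_{\U}\in {}^*\R_{\mathrm{inf}}$. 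Then, since $D_{\Gamma,G_n}(\phi_n)$ is defined as an infimum, I would choose for each $n$ a homomorphism $\psi_n\in \mathrm{Hom}(\Gamma,G_n)$ with $\mathrm{dist}_{\Gamma,G_n}(\phi_n,\psi_n)\le D_{\Gamma,G_n}(\phi_n)+1/n$. Assembling these gives an internal homomorphism $\psi=\{\psi_n\}_{\U}$ with $\mathrm{dist}(\phi,\psi)\in {}^*\R_{\mathrm{inf}}$, as required.

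For the converse (which I would prove by contrapositive), assume $\Gamma$ is not uniformly $\mathcal G$-stable. Then there exist $\delta>0$ and sequences $\epsilon_n\downarrow 0$, $G_n\in\mathcal G$ and $\epsilon_n$-homomorphisms $\phi_n\colon \Gamma\to G_n$ with $D_{\Gamma,G_n}(\phi_n)\ge \delta$ for all $n$. The internal map $\phi=\{\phi_n\}_{\U}$ has $\mathrm{def}(\phi)=\{\epsilon_n\}_{\U}\in {}^*\R_{\mathrm{inf}}$, so it is an internal asymptotic homomorphism. For any internal homomorphism $\psi=\{\psi_n\}_{\U}$ we have $\mathrm{dist}_{\Gamma,G_n}(\phi_n,\psi_n)\ge D_{\Gamma,G_n}(\phi_n)\ge \delta$ for every $n$, hence $\mathrm{dist}(\phi,\psi)\ge \delta$ in ${}^*\R$ and in particular is not infinitesimal. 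Therefore $\phi$ is not asymptotically close to any internal homomorphism.

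There is no real obstacle here: the statement is essentially a reformulation of the definition, and the only mildly delicate point is the extraction of the sequence $\{\psi_n\}$ realising the infimum $D_{\Gamma,G_n}(\phi_n)$ up to $1/n$, which is handled by a straightforward choice. (Note that completeness of the $G_n$, needed in \cref{thm1}, is not required here: we are not trying to construct a genuine homomorphism from a Cauchy procedure, only an internal one via a sequence of genuine homomorphisms.)
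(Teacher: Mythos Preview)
Your proof is correct and essentially matches the paper's approach. The only cosmetic difference is that the paper argues both directions by contrapositive, whereas you prove the forward direction directly by constructing the witnessing internal homomorphism $\psi=\{\psi_n\}_{\U}$ via approximate minimisers of $D_{\Gamma,G_n}(\phi_n)$; this is exactly the content of the paper's parenthetical remark that $\{D_{\Gamma,G_n}(\phi_n)\}_{\U}\in{}^*\R_{\mathrm{inf}}$ is equivalent to being asymptotically close to an internal homomorphism.
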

	\begin{proof}
		We shall prove both directions by contradiction. \\
		Let $\phi=\{\phi_n\}_{\U} :{}^*\Gamma \to \prod_{\U}G_n$ be an internal asymptotic homomorphism that is \emph{not} asymptotically close to any internal homomorphism. This means that $\{def(\phi_n)\}_{\U} \in {}^*\R_{inf}$, but there exists some real $c>0$ and $S \in \U$ such that $D_{\Gamma,G_n}(\phi_n) \geq c$ for $n \in S$. In particular, for this $c$ and any $\epsilon>0$, there exists a map $\phi_n:\Gamma \to G_n$ with defect $def(\phi_n) \leq \epsilon$ and $D_{\Gamma,G_n} \geq c$, thus proving that $\Gamma$ is \emph{not} uniformly $\mathcal{G}$-stable.\\ 
		Conversely, suppose $\Gamma$ is \emph{not} uniformly $\mathcal{G}$-stable. This means, by definition, that there exists $\delta>0$ such that for every $\epsilon>0$, there exists $G \in \mathcal{G}$ and $\phi:\Gamma \to G$ such that $def(\phi)\leq \epsilon$ and $dist_{\Gamma,G}(\phi) > \delta$. Let us fix this $\delta$, and consider a sequence $\{\epsilon_n\}_{n \in \N}$ with $\lim_{n \to \infty}\epsilon_n =0$. For each such $\epsilon_n$, there exists $G_n \in \mathcal{G}$ and $\phi_n:\Gamma \to G_n$ with $def(\phi_n)\leq \epsilon_n$ and $dist_{\Gamma,G_n}(\phi_n) > \delta$. Consider the internal asymptotic homomorphism $\{\phi_n\}_{\U}$. Clearly it is not asymptotically close to any internal homomorphism. 
	\end{proof}
	\begin{lemma}
		The group $\Gamma$ is $\mathcal{G}$-uniformly stable with a linear estimate iff for every internal asymptotic homomorphism $\phi:{}^*\Gamma \to \prod_{\U}G_n$, there exists an internal homomorphism $\psi:{}^*\Gamma \to \prod_{\U}G_n$ with $dist(\phi,\psi) = O_{\U}\left( def(\phi) \right)$.
	\end{lemma}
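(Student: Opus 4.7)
The plan is to prove this as a direct translation of \cref{l1} into the ultrafilter language, mirroring the structure used to derive \cref{l5} from its non-ultraproduct analogue. Since both directions involve packaging sequences of counterexamples into a single internal map (or conversely, unpacking an internal map into its representing sequence), the only nontrivial bookkeeping is matching the definition of $O_\U$ (that is, belonging to ${}^*\R_b$) with the classical big-O along sequences.

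For the forward direction, I would start with an internal asymptotic homomorphism $\phi=\{\phi_n\}_\U$ and use linear stability to extract, for each $n$ in some large set $S\in\U$ with $def_{\Gamma,G_n}(\phi_n)<\epsilon_0$, a homomorphism $\psi_n\in Hom(\Gamma,G_n)$ with $dist_{\Gamma,G_n}(\phi_n,\psi_n)\leq M\cdot def_{\Gamma,G_n}(\phi_n)$. (For $n\notin S$ choose $\psi_n$ arbitrarily.) Then $\psi=\{\psi_n\}_\U$ is an internal homomorphism, and the pointwise inequality $dist_{\Gamma,G_n}(\phi_n,\psi_n)\leq M\cdot def(\phi_n)$, holding on a set in $\U$, transfers to $dist(\phi,\psi)\leq M\cdot def(\phi)$ in ${}^*\R$, hence $dist(\phi,\psi)=O_\U(def(\phi))$.

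For the converse, I would argue by contrapositive exactly as in the proofs of \cref{l1} and \cref{l5}. Assuming $\Gamma$ is not uniformly $\mathcal{G}$-stable with a linear estimate, for each $n\in\N$ I can pick a map $\phi_n:\Gamma\to G_n$ with $def_{\Gamma,G_n}(\phi_n)\leq 1/n$ and $D_{\Gamma,G_n}(\phi_n)>n\cdot def_{\Gamma,G_n}(\phi_n)$. The resulting $\phi=\{\phi_n\}_\U$ is an internal asymptotic homomorphism since $def(\phi)\in{}^*\R_{inf}$. For any candidate internal homomorphism $\psi=\{\psi_n\}_\U$ we have $dist_{\Gamma,G_n}(\phi_n,\psi_n)\geq D_{\Gamma,G_n}(\phi_n)>n\cdot def_{\Gamma,G_n}(\phi_n)$ on a set in $\U$, so $dist(\phi,\psi)/def(\phi)>\{n\}_\U$, which is an unbounded hyperreal and therefore not in ${}^*\R_b$, contradicting $dist(\phi,\psi)=O_\U(def(\phi))$.

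The only genuine subtlety, and the one I would be most careful about, is the degenerate case in which $def(\phi)=0$ strictly (not merely infinitesimal); here the ratio $dist/def$ is undefined. This case corresponds to $\phi_n$ being an honest homomorphism for $n$ in a set in $\U$, so we can simply take $\psi=\phi$ itself and the conclusion $dist(\phi,\psi)=0$ holds trivially. Apart from that, the argument is purely formal transfer between sequences and their $\U$-classes, so I do not anticipate any essential obstacle.
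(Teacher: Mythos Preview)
Your proposal is correct and follows essentially the same approach as the paper: both directions are handled by packaging/unpacking sequences via the ultrafilter, with the contrapositive for the reverse direction constructing a bad sequence $\phi_n$ with $def(\phi_n)\to 0$ and $D(\phi_n)/def(\phi_n)\to\infty$. Your treatment is in fact slightly more careful than the paper's, since you explicitly address the degenerate case $def(\phi)=0$, which the paper's proof leaves implicit.
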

	\begin{proof}
		(The proof is similar to \cref{l1}) Suppose $\Gamma$ is not uniformly $\mathcal{G}$-stable with a linear estimate. Consider a sequence $\{M_n\}_{n \in \N}$ with $\lim_{n \to \infty}M_n = \infty$ and a sequence $\{\epsilon_n\}_{n \in \N}$ with $\lim_{n \to \infty}\epsilon_n = 0$. For each $n \in \N$, let $\phi_n:\Gamma \to G_n$ be the map with $def_{\Gamma,G_n}(\phi_n) \leq \epsilon_n$ but $D_{\Gamma,G_n}(\phi_n) > M_n\epsilon_n$. Then the ultraproduct $\phi = \{\phi_n\}_{\U}:{}^*\Gamma \to \prod_{\U}G_n$ is an internal asymptotic homomorphism with $def(\phi) = \epsilon \coloneq \{\epsilon_n\}_{\U}$ such that for any internal homomorphism $\psi:{}^*\Gamma \to \prod_{\U}G_n$, $dist(\phi,\psi)/def(\phi)$ is not in ${}^*\R_b$.\\
		Conversely, suppose $\Gamma$ is $\mathcal{G}$-uniformly stable with a linear estimate, and let $\phi:{}^*\Gamma \to \prod_{\U}G_n$ be an internal asymptotic homomorphism. There exists $\epsilon_0>0$, $M>0$ and a large subset $S_{\epsilon_0} \in \U$ such that for every $n \in S_{\epsilon_0}$, $\phi_n:\Gamma \to G_n$ has defect $def(\phi_n)\leq \epsilon_0$, and $D_{\Gamma,G_n}(\phi_n)\leq M def(\phi_n)$, allowing us to construct an internal homomorphism $\psi:{}^*\Gamma \to \prod_{\U}G_n$ that is asymptotically close to $\phi$.
	\end{proof}
	We now reformulate the notion of defect diminishing in this ultraproduct setting:
	\begin{definition}\label{def-defdim}
		The group $\Gamma$ is said to have the \textbf{defect diminishing} property with respect to the family $\mathcal{G}$ if for any internal asymptotic homomorphism $\phi:{}^*\Gamma \to \prod_{\U}G_n$, there exists an asymptotic homomorphism $\psi:{}^*\Gamma \to \prod_{\U}G_n$ such that  
		\begin{itemize}
			\item The defect $def(\psi) = o_{\U}\left(def(\phi)\right)$.
			\item The distance $dist(\phi,\psi) = O_{\U}\left(def(\phi)\right)$.
		\end{itemize}
	\end{definition}
	The following lemma reformulates \cref{thm1} in the ultraproduct setting, and the proof is on the same lines.
	\begin{theorem}
		The group $\Gamma$ is uniformly $\mathcal{G}$-stable with a linear estimate iff $\Gamma$ has the defect diminishing property with respect to $\mathcal{G}$.
	\end{theorem}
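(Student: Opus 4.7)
The plan is to mirror closely the proof of \cref{thm1}, substituting the ultraproduct reformulations of defect diminishing and uniform $\mathcal{G}$-stability with a linear estimate that were established in the two preceding lemmas of the excerpt. In particular, the previous lemma already characterizes uniform $\mathcal{G}$-stability with a linear estimate as the condition that every internal asymptotic homomorphism $\phi \colon {}^*\Gamma \to \prod_{\U}G_n$ lies at distance $O_{\U}(def(\phi))$ from some internal homomorphism, so we only need to connect this to the one-step property in \cref{def-defdim}.

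For the forward direction, assume uniform $\mathcal{G}$-stability with a linear estimate, and let $\phi$ be an internal asymptotic homomorphism. By the preceding lemma there is an internal homomorphism $\psi$ with $dist(\phi,\psi) = O_{\U}(def(\phi))$. Since $\psi$ is a genuine internal homomorphism, $def(\psi) = 0$, which is trivially $o_{\U}(def(\phi))$. Hence $\psi$ itself witnesses the defect diminishing property for $\phi$.

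For the reverse direction, first upgrade defect diminishing to a version with explicit standard constants, analogous to \cref{dim}: there exist $\epsilon_0 > 0$ and $M > 0$ such that for every $n$ in a large set $S \in \U$ and every $\phi_n \colon \Gamma \to G_n$ with $def(\phi_n) \leq \epsilon_0$, one can find $\psi_n \colon \Gamma \to G_n$ with $def(\psi_n) \leq \tfrac{1}{2} def(\phi_n)$ and $dist_{\Gamma,G_n}(\phi_n,\psi_n) \leq M \, def(\phi_n)$. This is proved by contradiction exactly as \cref{dim}: a failure of the upgrade produces, for each standard $M_k \to \infty$ and $\epsilon_k \to 0$, a coordinate witness $\phi_n$, and the internal map $\{\phi_n\}_{\U}$ then violates the ultraproduct formulation of defect diminishing.

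Next, apply this coordinate-wise upgrade inductively for each $n \in S$, mirroring the iteration in \cref{thm1}: build a sequence $\phi_n^{(0)} \coloneq \phi_n, \phi_n^{(1)}, \phi_n^{(2)}, \ldots$ with $def(\phi_n^{(j)}) \leq def(\phi_n)/2^{j}$ and $dist_{\Gamma,G_n}(\phi_n^{(j-1)}, \phi_n^{(j)}) \leq M\, def(\phi_n)/2^{j-1}$. This is a Cauchy sequence in the complete metric group $G_n$, whose limit $\phi_n^{(\infty)}$ is a genuine homomorphism at distance at most $2M\, def(\phi_n)$ from $\phi_n$. Taking the ultraproduct $\psi \coloneq \{\phi_n^{(\infty)}\}_{\U}$ yields an internal homomorphism with $dist(\phi,\psi) = O_{\U}(def(\phi))$, which by the preceding lemma establishes uniform $\mathcal{G}$-stability with a linear estimate. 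The main obstacle I anticipate is the upgrade step: one must check that the contradiction argument in the ultraproduct setting genuinely produces a single internal asymptotic homomorphism contradicting \cref{def-defdim} (rather than only a sequence of standard counterexamples), which is handled by the same diagonal construction used in \cref{dim} combined with the transfer principle.
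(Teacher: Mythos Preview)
Your proposal is correct and follows essentially the same route the paper intends: it explicitly says the proof ``is on the same lines'' as \cref{thm1}, and your argument is precisely that---first upgrade the ultraproduct defect-diminishing hypothesis to the standard halving statement of \cref{dim} via a diagonal contradiction, then iterate coordinate-wise and pass to the Cauchy limit. One minor point of phrasing: your upgrade step should be stated as a purely standard assertion (there exist $\epsilon_0, M>0$ such that for every $G\in\mathcal{G}$ and every $\epsilon_0$-homomorphism $\phi\colon\Gamma\to G$ the halving conclusion holds), rather than ``for every $n$ in a large set $S\in\U$'', since at that stage no particular sequence $\{G_n\}$ has yet been fixed; once this standard statement is in hand, it applies coordinate-wise to any given internal asymptotic homomorphism.
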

	
	\subsection{Internal Liftings and Defect Diminishing}\label{ssec-liftings}
	In this \S, we shall reinterpret an internal asymptotic homomorphism as a (true) homomorpism to a quotient group. This will allow us to describe defect diminishing as a homomorphism lifting problem. In \S\ref{ssec-abeliankernel} we will restrict to a family of metric groups of interest such that this homomorphism lifting problem has an abelian kernel, allowing us to build a cohomological theory capturing the obstruction to uniform stability.\\
	Let $\phi:{}^*\Gamma \to \prod_{\U}G_n$ be an internal asymptotic homomorphism. Consider the external subset $\left(\prod_{\U}G_n\right)_{inf}$ defined as
	$$\left(\prod_{\U}G_n\right)_{inf} \coloneq \Big\{ \{g_n\}_{\U} \space : \space \{d_n(g_n,1)\}_{\U} \in {}^*\R_{inf}\Big\}$$
	In other words, $\left(\prod_{\U}G_n\right)_{inf}$ comprises all elements that are infinitesimally close to the identity in $\prod_{\U}G_n$. It is easy to check that $\left(\prod_{\U}G_n\right)_{inf}$ is not just a subset but a normal subgroup of $\prod_{\U}G_n$. Since $\phi$ has defect $def(\phi) \in {}^*\R_{inf}$, this means that for every $x,y \in {}^*\Gamma$, $\phi(xy)^{-1}\phi(x)\phi(y) \in \left(\prod_{\U}G_n\right)_{inf}$, making $\tilde{\phi}:{}^*\Gamma \to \prod_{\U}G_n/\left(\prod_{\U}G_n\right)_{inf}$ a homomorphism. Thus,
	\begin{lemma}
		Given an internal asymptotic homomorphism $\phi:{}^*\Gamma \to \prod_{\U}G_n$, its composition, denoted $\tilde{\phi}$, with the canonical quotient homomorphism $\prod_{\U}G_n \to \prod_{\U}G_n/\left(\prod_{\U}G_n\right)_{inf}$ is a homomorphism from ${}^*\Gamma$ to the group $\prod_{\U}G_n/\left(\prod_{\U}G_n\right)_{inf}$. 
	\end{lemma}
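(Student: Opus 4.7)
The plan is to verify two things in turn: first, that $\bigl(\prod_{\U} G_n\bigr)_{inf}$ is a normal subgroup, so that the quotient is indeed a group and the quotient map is a (surjective) homomorphism; second, that the composition $\tilde\phi$ respects multiplication. Both follow from bi-invariance of the metrics $d_n$ together with the definition of an internal asymptotic homomorphism.

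For the first step, I would check the three subgroup/normality conditions directly on representing sequences, exploiting the fact that the relation $d_n(x_n, 1) \in {}^*\R_{inf}$ is preserved under the relevant operations. Closure under multiplication follows from the triangle inequality and left-invariance: $d_n(g_n h_n, 1) \leq d_n(g_n h_n, h_n) + d_n(h_n, 1) = d_n(g_n, 1) + d_n(h_n, 1)$, and the sum of two infinitesimals is infinitesimal. Closure under inversion uses bi-invariance, since $d_n(g_n^{-1}, 1) = d_n(1, g_n)$. Normality similarly reduces to $d_n(h_n g_n h_n^{-1}, 1) = d_n(g_n, 1)$ via bi-invariance. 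Passing these sequence-level statements through the ultrafilter, we obtain that $\bigl(\prod_{\U}G_n\bigr)_{inf}$ is a normal subgroup of $\prod_{\U}G_n$.

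For the second step, fix $x, y \in {}^*\Gamma$ and consider $\phi(xy)\phi(y)^{-1}\phi(x)^{-1}$. By bi-invariance,
\[
d\bigl(\phi(xy)\phi(y)^{-1}\phi(x)^{-1},\, 1\bigr) \;=\; d\bigl(\phi(xy),\, \phi(x)\phi(y)\bigr) \;\leq\; def(\phi),
\]
and this last quantity is in ${}^*\R_{inf}$ by hypothesis. Hence $\phi(xy)\phi(y)^{-1}\phi(x)^{-1} \in \bigl(\prod_{\U}G_n\bigr)_{inf}$, so after quotienting we get $\tilde\phi(xy) = \tilde\phi(x)\tilde\phi(y)$, as required.

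There is essentially no obstacle here: the statement is a bookkeeping verification that the infinitesimal subgroup is the right object for turning small defect into exact multiplicativity modulo infinitesimals. The only subtlety worth noting is that the argument needs bi-invariance of $d_n$ (not merely left-invariance), which is guaranteed by our standing assumption on the metric groups in $\mathcal{G}$; without this, normality of the infinitesimal subgroup could fail.
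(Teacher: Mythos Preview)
Your proof is correct and follows the same approach as the paper: the paper simply remarks before the lemma that ``it is easy to check'' that $\left(\prod_{\U}G_n\right)_{inf}$ is a normal subgroup, and that the infinitesimal defect implies $\phi(xy)^{-1}\phi(x)\phi(y) \in \left(\prod_{\U}G_n\right)_{inf}$. You have written out these verifications explicitly (and correctly identified bi-invariance as the key ingredient), but the argument is the same.
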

	Note that a homomorphism from ${}^*\Gamma$ to $\prod_{\U}G_n/\left(\prod_{\U}G_n\right)_{inf}$ does not necessarily arise as the composition of an internal map ${}^*\Gamma \to \prod_{\U}G_n$ and the quotient homomorphism $\prod_{\U}G_n \to \prod_{\U}G_n/\left(\prod_{\U}G_n\right)_{inf}$. However, we will be specifically interested in homomorphisms that arise this way.\\
	\begin{definition}
		Let $F:{}^*\Gamma \to \prod_{\U}G_n/\left(\prod_{\U}G_n\right)_{inf}$ be a homomorphism. We say that $F$ has an \textbf{internal lift} $\hat{F}:{}^*\Gamma \to \prod_{\U}G_n$ if $\hat{F}$ is internal, and its composition with the canonical quotient homomorphism $\prod_{\U}G_n \to \prod_{\U}G_n/\left(\prod_{\U}G_n\right)_{inf}$ is $F$.\\
		We say that $F$ has an \textbf{internal lift homomorphism} if there exists an internal lift $\hat{F}$ of $F$ that is also a homomorphism from ${}^*\Gamma$ to $\prod_{\U}G_n$.
	\end{definition}
	Observe that for an internal asymptotic homomorphism $\phi:{}^*\Gamma \to \prod_{\U}G_n$, $\phi$ itself is an internal lift of the homomorphism $\tilde{\phi}$. Conversely,
	\begin{lemma}\label{l4}
		Suppose a homomorphism $F:{}^*\Gamma \to \prod_{\U}G_n/\left(\prod_{\U}G_n\right)_{inf}$ has an internal lift $\hat{F}:{}^*\Gamma \to \prod_{\U}G_n$. Then $\hat{F}$ is an internal asymptotic homomorphism. Furthermore, suppose $\hat{F}^{(1)}$ and $\hat{F}^{(2)}$ are two internal lifts of a homomorphism $F:{}^*\Gamma \to \prod_{\U}G_n/\left(\prod_{\U}G_n\right)_{inf}$, then $\hat{F}^{(1)}$ and $\hat{F}^{(2)}$ are asymptotically close to each other. 
	\end{lemma}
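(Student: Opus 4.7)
The plan is to prove both assertions by the same contradiction template, exploiting that an internal lift is literally an ultraproduct of ordinary maps. The hypothesis on $F$ provides pointwise information on ${}^*\Gamma$ (namely $F(xy)=F(x)F(y)$ for all $x,y\in{}^*\Gamma$ in the first assertion, and $\hat F^{(1)}(x)\equiv\hat F^{(2)}(x)$ modulo $\left(\prod_{\U}G_n\right)_{inf}$ for all $x\in{}^*\Gamma$ in the second), whereas the conclusions concern internal suprema over $\Gamma$. The bridge is the elementary observation that an internal supremum fails to be infinitesimal if and only if one can pick coordinatewise witnesses that assemble into a single point of the ultrapower.

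For the first assertion, I write $\hat F=\{\hat F_n\}_{\U}$ with $\hat F_n:\Gamma\to G_n$, so that $def(\hat F)=\{def(\hat F_n)\}_{\U}\in {}^*\R$. If $def(\hat F)$ were not infinitesimal there would be a real $r>0$ and a set $S\in \U$ with $def(\hat F_n)>r$ for each $n\in S$. By the very definition of supremum, one can choose $x_n,y_n\in\Gamma$ for $n\in S$ with $d_{G_n}(\hat F_n(x_ny_n),\hat F_n(x_n)\hat F_n(y_n))>r$, and set $x_n=y_n=1$ off $S$. The points $x:=\{x_n\}_{\U}$, $y:=\{y_n\}_{\U}\in {}^*\Gamma$ then satisfy $d(\hat F(xy),\hat F(x)\hat F(y))\geq r$ in ${}^*\R$, so $\hat F(xy)^{-1}\hat F(x)\hat F(y)\notin\left(\prod_{\U}G_n\right)_{inf}$, contradicting the identity $F(xy)=F(x)F(y)$ in the quotient.

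The second assertion follows by the same template: if $dist(\hat F^{(1)},\hat F^{(2)})=\{\sup_{x\in\Gamma}d_{G_n}(\hat F^{(1)}_n(x),\hat F^{(2)}_n(x))\}_{\U}$ were not infinitesimal, one obtains $r>0$, a large set $S\in\U$, and coordinatewise witnesses $x_n\in\Gamma$ that assemble into $x:=\{x_n\}_{\U}\in{}^*\Gamma$ at which $\hat F^{(1)}(x)$ and $\hat F^{(2)}(x)$ differ by a non-infinitesimal amount, contradicting that both project to $F(x)$ in the quotient. The only step requiring minor care is the passage between the internal supremum and the pointwise hypothesis on $F$, but since we only need to \emph{exceed} (not attain) a fixed real constant, this is purely the standard coordinatewise interpretation of supremum and no genuine obstacle arises.
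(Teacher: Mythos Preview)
Your proof is correct and follows essentially the same approach as the paper's own proof. The paper is simply terser: it writes the pointwise conclusion $\{\hat F_n(x_ny_n)^{-1}\hat F_n(x_n)\hat F_n(y_n)\}_{\U}\in\left(\prod_{\U}G_n\right)_{inf}$ for every $x,y\in{}^*\Gamma$ and then asserts ``which means that $\{def(\hat F_n)\}_{\U}\in{}^*\R_{inf}$'', whereas you spell out this implication via the contrapositive by selecting coordinatewise witnesses and assembling them into a single point of the ultrapower --- exactly the passage the paper elides.
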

	\begin{proof}
		The map $\hat{F}:{}^*\Gamma \to \prod_{\U}G_n$ , being internal, is of the form $\hat{F}=\{\hat{F}_n\}_{\U}$ for $\hat{F}_n:\Gamma \to G_n$ for every $n \in \N$. Since $F$ is a homomorphism, for every $x=\{x_n\}_{\U},y=\{y_n\}_{\U} \in {}^*\Gamma$, $\{\hat{F}_n(x_ny_n)^{-1}\hat{F}_n(x_n)\hat{F}_n(y_n)\}_{\U} \in \left(\prod_{\U}G_n\right)_{inf}$ which means that $\{def(\hat{F}_n)\}_{\U} \in {}^*\R_{inf}$, making $\hat{F}$ an internal asymptotic homomorphism.\\
		Since both $\hat{F}^{(1)}$ and $\hat{F}^{(2)}$ are internal lifts of the homomorphism $F: {}^*\Gamma \to \prod_{\U}G_n/\left(\prod_{\U}G_n\right)_{inf}$, for every $x=\{x_n\}_{\U} \in {}^*\Gamma$, $\{d_n(\hat{F}^{(1)}_n(x_n),\hat{F}^{(2)}_n(x_n))\}_{\U} \in {}^*\R_{inf}$. 
	\end{proof}
	This motivates the following equivalent condition for uniform $\mathcal{G}$-stability in terms of internal lifts.
	\begin{lemma}\label{intasym}
		The group $\Gamma$ is uniformly $\mathcal{G}$-stable iff every homomorphism $\tilde{\phi}:{}^*\Gamma \to \prod_{\U}G_n/\left(\prod_{\U}G_n\right)_{inf}$ that has an internal lift also has an internal lift homomorphism.
	\end{lemma}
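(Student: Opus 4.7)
The plan is to derive \cref{intasym} as a direct translation of \cref{l5} through the dictionary between internal asymptotic homomorphisms into $\prod_{\U}G_n$ and homomorphisms into the quotient group $\prod_{\U}G_n/\left(\prod_{\U}G_n\right)_{inf}$ set up by the preceding discussion. The key observation enabling both directions is that two internal maps ${}^*\Gamma \to \prod_{\U}G_n$ are asymptotically close (i.e.\ $dist(\cdot,\cdot)\in {}^*\R_{inf}$) if and only if they descend to the same homomorphism into the quotient, which is precisely the statement that they are both internal lifts of the same $\tilde\phi$.

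For the forward direction, assume $\Gamma$ is uniformly $\mathcal{G}$-stable, and let $\tilde\phi:{}^*\Gamma \to \prod_{\U}G_n/\left(\prod_{\U}G_n\right)_{inf}$ be a homomorphism with some internal lift $\phi:{}^*\Gamma \to \prod_{\U}G_n$. By \cref{l4} the map $\phi$ is an internal asymptotic homomorphism, so by \cref{l5} there is an internal homomorphism $\psi:{}^*\Gamma \to \prod_{\U}G_n$ with $dist(\phi,\psi)\in {}^*\R_{inf}$. Infinitesimal closeness in each coordinate means $\psi(x)\phi(x)^{-1}\in \left(\prod_{\U}G_n\right)_{inf}$ for every $x\in {}^*\Gamma$, so $\psi$ descends to the same element of the quotient as $\phi$, i.e.\ $\psi$ is also an internal lift of $\tilde\phi$. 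Hence $\tilde\phi$ has an internal lift homomorphism.

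For the converse, suppose every such $\tilde\phi$ admitting an internal lift admits an internal lift homomorphism. Given an internal asymptotic homomorphism $\phi:{}^*\Gamma \to \prod_{\U}G_n$, its composition with the quotient map is a homomorphism $\tilde\phi$ for which $\phi$ itself is an internal lift (by the lemma preceding \cref{l4}). By hypothesis there is then an internal homomorphism $\psi:{}^*\Gamma \to \prod_{\U}G_n$ lifting $\tilde\phi$. Since $\phi$ and $\psi$ are two internal lifts of the same $\tilde\phi$, the second half of \cref{l4} gives $dist(\phi,\psi)\in {}^*\R_{inf}$, so $\phi$ is asymptotically close to an internal homomorphism. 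Applying \cref{l5} concludes that $\Gamma$ is uniformly $\mathcal{G}$-stable.

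There is essentially no obstacle here: the statement is really a repackaging of \cref{l5} in the language of the quotient group $\prod_{\U}G_n/\left(\prod_{\U}G_n\right)_{inf}$, and all the technical content — that descending to this quotient exactly quotients out infinitesimal distance, and that internal lifts exist precisely for those homomorphisms coming from internal asymptotic homomorphisms — has already been assembled in the previous subsection. The only care needed is to use the second part of \cref{l4} (uniqueness of internal lifts up to infinitesimals) in the backward direction to convert the existence of an internal lift homomorphism into asymptotic closeness of $\phi$ to an internal homomorphism.
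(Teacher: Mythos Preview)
Your proof is correct and follows essentially the same approach as the paper: both directions are obtained by translating through \cref{l4} and \cref{l5}. The paper actually writes out only the backward direction explicitly (lifting property $\Rightarrow$ uniform stability) and dismisses the other as following ``by definition,'' whereas you spell out both directions carefully; but the argument is the same.
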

	\begin{proof}
		Let $\phi:{}^*\Gamma \to \prod_{\U}G_n$ be an internal asymptotic homomorphism, and $\tilde{\phi}:{}^*\Gamma \to \prod_{\U}G_n/\left(\prod_{\U}G_n\right)_{inf}$ be the homomorphism obtained by composing $\phi$ with the quotient map $\prod_{\U}G_n \to \prod_{\U}G_n/\left(\prod_{\U}G_n\right)_{inf}$. Let $\psi:{}^*\Gamma \to \prod_{\U}G_n$ be an internal lift homomorphism of $\tilde{\phi}$. Then by the previous \cref{l4}, $\phi$ is asymptotically close to the internal homomorphism $\psi$. By \cref{l5}, we conclude that $\Gamma$ is uniformly $\mathcal{G}$-stable. The converse follows by definition of uniform $\mathcal{G}$-stability. 
	\end{proof}
	\begin{remark}\label{rem1}
		The quotient group $\prod_{\U}G_n/\left(\prod_{\U}G_n\right)_{inf}$ is called the \emph{metric ultraproduct} of the sequence $\{G_n\}_{n \in \N}$ of groups. For (pointwise) stability of groups, it is shown in \cite{arz} that $\Gamma$ is (pointise) $\mathcal{G}$-stable if any homomorphism $\tilde{\phi}:\Gamma \to \prod_{\U}G_n/\left(\prod_{\U}G_n\right)_{inf}$ from $\Gamma$ to the metric ultraproduct, can be lifted to a homomorphism $\psi:\Gamma \to \prod_{\U}G_n$. In our setting, the uniformity requirement forces us to work with internal maps from the ultrapower ${}^*\Gamma$ as opposed to just maps from $\Gamma$. 
	\end{remark}
	We shall further refine the internal lifting property parametrizing it by the precise defect. Let $\phi = \{\phi_n\}_{\U}:{}^*\Gamma \to \prod_{\U}G_n$ be an internal asymptotic homomorphism with defect $def(\phi)=\epsilon \in {}^*\R_{inf}$. Consider the subset $B(\epsilon)$ (elements \emph{bounded} by $\epsilon$) of $\prod_{\U}G_n$ defined as follows:
	$$B(\epsilon)\coloneqq \{ \{g_n\}_{\U} \in \prod_{\U}G_n: \{d_n(g_n,1_n)\}_{\U} = O_{\U}(\epsilon)\}$$
	Note that $B(\epsilon)$ is an externally defined subset. Since the metric on $G_n$ is bi-invariant, the subset $B(\epsilon)$ is a normal subgroup of $\prod_{\U}G_n$. Let $q_{B(\epsilon)}:\prod_{\U}G_n \to \prod_{\U}G_n/B(\epsilon)$ be the canonical quotient homomorphism, and denote by $\tilde{\phi}$ the composition map $q_{B(\epsilon)}\cdot \phi:{}^*\Gamma \to \left( \prod_{\U}G_n\right)/B(\epsilon)$. The following lemma is a parametrized reformulation of \cref{l4}, and the proof is on the same lines, which we omit here:
	\begin{lemma}
		The map $\tilde{\phi}:{}^*\Gamma \to \left( \prod_{\U}G_n\right)/B(\epsilon)$ is a group homomorphism. 
	\end{lemma}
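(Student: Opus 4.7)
The plan is to unwind the definitions and verify directly that for every $x,y\in{}^*\Gamma$ the multiplicative error $\phi(xy)^{-1}\phi(x)\phi(y)$ lies in $B(\epsilon)$; applying the quotient map $q_{B(\epsilon)}$ then immediately yields $\tilde{\phi}(xy)=\tilde{\phi}(x)\tilde{\phi}(y)$, which is the desired conclusion.

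To carry this out, I would first take for granted (as already asserted just before the lemma) that the bi-invariance of the $d_n$ makes $B(\epsilon)$ a normal subgroup of $\prod_{\U}G_n$, so that the quotient $\prod_{\U}G_n/B(\epsilon)$ and the map $q_{B(\epsilon)}$ are legitimate. The key computation then exploits the internality of $\phi=\{\phi_n\}_{\U}$: group operations evaluate coordinatewise along $\U$, so writing $x=\{x_n\}_{\U}$ and $y=\{y_n\}_{\U}$, bi-invariance applied coordinatewise gives
$$d\left(\phi(xy)^{-1}\phi(x)\phi(y),\,1\right) \;=\; d\left(\phi(xy),\,\phi(x)\phi(y)\right) \;=\; \{d_n(\phi_n(x_ny_n),\,\phi_n(x_n)\phi_n(y_n))\}_{\U}.$$
Each coordinate is bounded by $def_{\Gamma,G_n}(\phi_n)$, so the right-hand side is at most $def(\phi)=\epsilon$, and in particular is $O_{\U}(\epsilon)$. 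This places $\phi(xy)^{-1}\phi(x)\phi(y)$ inside $B(\epsilon)$, as required.

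I do not anticipate any real obstacle here: the statement is essentially a parametrized repackaging of \cref{l4}, with the infinitesimal subgroup $\left(\prod_{\U}G_n\right)_{inf}$ replaced by $B(\epsilon)$ and the relation ``$\in {}^*\R_{inf}$'' replaced by ``$= O_{\U}(\epsilon)$''. The only care needed is to consistently invoke bi-invariance of the $d_n$, both in order to rewrite $d(\phi(xy)^{-1}\phi(x)\phi(y),1)$ as $d(\phi(xy),\phi(x)\phi(y))$ and to ensure normality of $B(\epsilon)$; without bi-invariance, $B(\epsilon)$ would only be a subset and the quotient $\prod_{\U}G_n/B(\epsilon)$ would carry no group structure.
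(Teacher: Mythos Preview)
Your proposal is correct and matches the paper's approach exactly: the paper itself omits the proof, stating only that it is ``a parametrized reformulation of \cref{l4}, and the proof is on the same lines,'' which is precisely the observation you make and carry out explicitly.
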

	Conversely, let $F:{}^*\Gamma \to  \prod_{\U}G_n/B(\epsilon)$. An internal map $\hat{F}:{}^*\Gamma \to  \prod_{\U}G_n$ is said to be an internal lift of $\tilde{\phi}$ if $q_{B(\epsilon)} \cdot \hat{F}=F$. We again have an analogue of \cref{l4} here too, whose proof is similar:
	\begin{lemma}
		Suppose a homomorphism $F:{}^*\Gamma \to  \prod_{\U}G_n/B(\epsilon)$ has an internal lift $\hat{F}:{}^*\Gamma \to  \prod_{\U}G_n$. Then $\hat{F}$ is an internal $O_{\U}(\epsilon)$-homomorphism. Futhermore, if $\hat{F}^{(1)}$ and $\hat{F}^{(2)}$ are two such internal lifts of $F$, then $\hat{F}^{(1)}$ and $\hat{F}^{(2)}$ are internally $O_{\U}(\epsilon)$-close to each other. 
	\end{lemma}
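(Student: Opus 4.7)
The plan is to mimic the proof of \cref{l4} verbatim, with the quantitative parameter $\epsilon$ substituted for infinitesimals and $B(\epsilon)$ in place of $\left(\prod_{\U} G_n\right)_{inf}$. Write the internal lift as $\hat{F} = \{\hat{F}_n\}_{\U}$. The hypothesis that $F$ is a homomorphism into $\prod_{\U} G_n / B(\epsilon)$ means that for every $x, y \in {}^*\Gamma$ the element $\hat{F}(xy)^{-1}\hat{F}(x)\hat{F}(y)$ lies in $B(\epsilon)$, so by bi-invariance of each $d_n$ we obtain the pointwise estimate $d(\hat{F}(xy), \hat{F}(x)\hat{F}(y)) = O_{\U}(\epsilon)$.

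The next step is to promote this pointwise bound to the uniform bound $def(\hat{F}) = O_{\U}(\epsilon)$. Suppose for contradiction that $\{def(\hat{F}_n)\}_{\U}$ is not $O_{\U}(\epsilon)$. For each $n$, choose $(x_n, y_n) \in \Gamma^2$ realizing $d_n(\hat{F}_n(x_n y_n), \hat{F}_n(x_n)\hat{F}_n(y_n)) \geq def(\hat{F}_n)/2$, which is possible because the supremum defining $def(\hat{F}_n)$ is nearly attained. Setting $x = \{x_n\}_{\U}$, $y = \{y_n\}_{\U} \in {}^*\Gamma$ produces a single pair for which $d(\hat{F}(xy), \hat{F}(x)\hat{F}(y)) \geq def(\hat{F})/2$ in ${}^*\R$, so this quantity is also not $O_{\U}(\epsilon)$, contradicting the pointwise estimate from the previous step. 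Hence $\hat{F}$ is an internal $O_{\U}(\epsilon)$-homomorphism.

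For the second claim, given two internal lifts $\hat{F}^{(1)}, \hat{F}^{(2)}$ of $F$, for every $x \in {}^*\Gamma$ the element $\hat{F}^{(1)}(x)\hat{F}^{(2)}(x)^{-1}$ lies in $B(\epsilon)$, giving $d(\hat{F}^{(1)}(x), \hat{F}^{(2)}(x)) = O_{\U}(\epsilon)$ pointwise. Applying the identical near-supremum extraction argument to the function $x \mapsto d_n(\hat{F}_n^{(1)}(x), \hat{F}_n^{(2)}(x))$ on $\Gamma$ upgrades this to $dist(\hat{F}^{(1)}, \hat{F}^{(2)}) = O_{\U}(\epsilon)$, as desired.

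The only genuine subtlety is the pointwise-to-uniform promotion: the $O_{\U}$-constant could a priori depend on the pair $(x,y) \in ({}^*\Gamma)^2$, whereas we need a single standard constant bounding $def(\hat{F}_n)/\epsilon_n$ on a set in $\U$. This obstacle is resolved by the observation that each $def(\hat{F}_n)$ is realized up to a factor of $2$ by an explicit pair in $\Gamma^2$, and that these near-maximizers assemble through the ultraproduct construction into a single internal witness in $({}^*\Gamma)^2$ that would violate the pointwise estimate if the uniform one failed.
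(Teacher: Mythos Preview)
Your proof is correct and follows exactly the approach the paper intends: the paper does not even write out a proof of this lemma, simply stating that it is ``an analogue of \cref{l4} here too, whose proof is similar,'' and your argument is precisely that analogue with $B(\epsilon)$ replacing $\left(\prod_{\U}G_n\right)_{inf}$. Your explicit near-supremum extraction to pass from the pointwise $O_{\U}(\epsilon)$ bound to the uniform defect bound is in fact more careful than the paper's own proof of \cref{l4}, which makes this jump without comment; the only cosmetic gap is that your phrase ``$d_n(\ldots) \geq def(\hat{F}_n)/2$'' tacitly assumes $def(\hat{F}_n) < \infty$, but this is trivially patched (e.g.\ choose witnesses with $d_n(\ldots) \geq \min(def(\hat{F}_n)/2,\, n\,\epsilon_n)$).
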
 
	\begin{lemma}
		The group $\Gamma$ is uniformly $\mathcal{G}$-stable with a linear estimate iff for every $\epsilon \in {}^*\R_{inf}$, every homomorphism $\tilde{\phi}:{}^*\Gamma \to \prod_{\U}G_n/B(\epsilon)$ that has an internal lift also has an internal lift homomorphism.
	\end{lemma}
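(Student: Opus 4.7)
The plan is to view this statement as the parametrized analogue of \cref{intasym}: whereas \cref{intasym} captured plain uniform $\mathcal{G}$-stability through the group $\prod_{\U}G_n/(\prod_{\U}G_n)_{inf}$ (which collapses all infinitesimal errors), the linear-estimate version must additionally track the rate, so the ``correct'' quotient to use is $\prod_{\U}G_n/B(\epsilon)$, which retains precisely the relation ``at distance $O_{\U}(\epsilon)$ from the identity.'' With the two preceding lemmas (that an internal lift of a homomorphism into $\prod_{\U}G_n/B(\epsilon)$ is an internal $O_{\U}(\epsilon)$-homomorphism, and that any two internal lifts are $O_{\U}(\epsilon)$-close), both directions reduce to the ultraproduct characterization of linear uniform stability established earlier in the section (``$\Gamma$ is uniformly $\mathcal{G}$-stable with a linear estimate iff every internal asymptotic homomorphism is $O_{\U}(\mathrm{def})$-close to an internal homomorphism'').

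For the forward direction, I would start with a homomorphism $\tilde{\phi}:{}^*\Gamma \to \prod_{\U}G_n/B(\epsilon)$ admitting an internal lift $\phi = \{\phi_n\}_{\U}$. By the preceding lemma, $\phi$ is an internal $O_{\U}(\epsilon)$-homomorphism, hence in particular an internal asymptotic homomorphism. By the ultraproduct form of linear uniform stability, there exists an internal homomorphism $\psi = \{\psi_n\}_{\U}$ with $dist(\phi,\psi) = O_{\U}(def(\phi)) = O_{\U}(\epsilon)$. This exactly says $\phi(x)\psi(x)^{-1} \in B(\epsilon)$ for every $x \in {}^*\Gamma$, so $q_{B(\epsilon)}\cdot \psi = q_{B(\epsilon)}\cdot \phi = \tilde{\phi}$, exhibiting $\psi$ as an internal lift homomorphism of $\tilde{\phi}$.

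For the reverse direction, I would take an arbitrary internal asymptotic homomorphism $\phi:{}^*\Gamma \to \prod_{\U}G_n$, set $\epsilon \coloneqq def(\phi) \in {}^*\R_{inf}$, and form $\tilde{\phi} \coloneqq q_{B(\epsilon)}\cdot \phi$. By the lemma immediately above the statement, $\tilde{\phi}$ is a homomorphism into $\prod_{\U}G_n/B(\epsilon)$, and it has $\phi$ as an internal lift. By hypothesis, there is an internal lift \emph{homomorphism} $\psi:{}^*\Gamma \to \prod_{\U}G_n$ of $\tilde{\phi}$. Since $\phi$ and $\psi$ are both internal lifts of the same $\tilde{\phi}$, the second preceding lemma gives $dist(\phi,\psi) = O_{\U}(\epsilon) = O_{\U}(def(\phi))$. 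Invoking the ultraproduct characterization of linear uniform stability in the reverse direction then yields that $\Gamma$ is uniformly $\mathcal{G}$-stable with a linear estimate.

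I do not expect a real obstacle: both directions are essentially formal bookkeeping once the external subgroup $B(\epsilon)$ is set up correctly and the two parametrized lifting lemmas are in hand. The only subtle point is the purely logical one that the parameter $\epsilon$ in the hypothesis is quantified over all infinitesimals (in particular, one applies the hypothesis with $\epsilon = def(\phi)$, the specific infinitesimal produced by the given asymptotic homomorphism), so the ``for every $\epsilon$'' in the statement is exactly what makes the reverse implication work.
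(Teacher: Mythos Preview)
Your proposal is correct and follows essentially the same approach as the paper: the paper proves the direction from the lifting property to linear uniform stability exactly as you do in your ``reverse direction'' (set $\epsilon = def(\phi)$, apply the hypothesis, use that two internal lifts are $O_{\U}(\epsilon)$-close, then invoke the ultraproduct characterization of linear stability), and dismisses the other direction with ``the converse is immediate,'' which is precisely your ``forward direction.'' Your write-up is simply more explicit than the paper's on both sides.
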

	\begin{proof}
		Suppose $\phi:{}^*\Gamma \to \prod_{\U}G_n$ is an internal asymptotic homomorphism with defect $\epsilon \in {}^*\R_{inf}$. Then $\tilde{\phi}::{}^*\Gamma \to \prod_{\U}G_n/B(\epsilon)$ is a homomorphism which has internal lift $\phi$. Thus, it also has an internal lift homomorphism $\psi:{}^*\Gamma \to \prod_{\U}G_n$ which is internally $O_{\U}(\epsilon)$-close to $\phi$. Hence, by \cref{intasym}, $\Gamma$ is uniformly $\mathcal{G}$-stable with a linear estimate. The converse is immediate. 
	\end{proof}
	Thus, for an infinitesimal $\epsilon \in {}^*\R_{inf}$, if a homomorphism $\tilde{\phi}:{}^*\Gamma \to \prod_{\U}G_n/B(\epsilon)$ that has an internal lift, can be internally lifted to an internal homomorphism $\psi:{}^*\Gamma \to \prod_{\U}G_n$, then $\Gamma$ is uniformly $\mathcal{G}$-stable with a linear estimate. We shall now try to obtain such a lift by a sequence of intermediate lifts.\\
	For $\epsilon \in {}^*\R_{inf}$, denote by $I(\epsilon)$ the subset of $\prod_{\U}G_n$ (elements infinitesimal with respect to $\epsilon$) defined as
	$$I(\epsilon) \coloneq \{ \{g_n\}_{\U} \in \prod_{\U}G_n: \{d_n(g_n,1_n)\}_{\U} = o_{\U}(\epsilon)\}$$
	Note that by the bi-invariance of the metric, $I(\epsilon) \subseteq B(\epsilon)$ is a normal subgroup of $\prod_{\U}G_n$. Let $q_{I(\epsilon)}:\prod_{\U}G_n \to \prod_{\U}G_n/I(\epsilon)$ be the canonical quotient homomorphism. The following lemma is similar to \cref{l4}, and the proof too is on the same lines:
	\begin{lemma}\label{l3}
		Suppose $\phi:{}^*\Gamma \to \prod_{\U}G_n$ is an internal $o_{\U}(\epsilon)$-homomorphism, then $q_{I(\epsilon)}\cdot \phi: {}^*\Gamma \to \prod_{\U}G_n/I(\epsilon)$ is a homomorphism. Conversely, suppose a homomorphism $F:{}^*\Gamma \to \prod_{\U}G_n/I(\epsilon)$ has an internal lift $\hat{F}:{}^*\Gamma \to \prod_{\U}G_n$, then $\hat{F}$ is an internal $o_{\U}(\epsilon)$-homomorphism, and any two internal lifts of $F$ are internally  $o_{\U}(\epsilon)$-close to one another. 
	\end{lemma}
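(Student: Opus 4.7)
The plan is to mirror the argument used for Lemma~\ref{l4}, with the only change being the introduction of the scale $\epsilon$; bi-invariance of each metric $d_n$ on $G_n$ does most of the work, and the one genuine subtlety is promoting a pointwise infinitesimal bound to a uniform bound on the defect.

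For the first (forward) statement, I would argue directly: given $def(\phi) = o_{\U}(\epsilon)$, bi-invariance implies that for every internal pair $(x, y) \in {}^*\Gamma \times {}^*\Gamma$ the element $\phi(xy)^{-1}\phi(x)\phi(y)$ has distance at most $def(\phi)$ from the identity, hence lies in $I(\epsilon)$. Since $I(\epsilon)$ is precisely the kernel of $q_{I(\epsilon)}$, this immediately gives that $q_{I(\epsilon)}\cdot\phi$ is a homomorphism.

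For the converse, write $\hat{F} = \{\hat{F}_n\}_{\U}$ and $\epsilon = \{\epsilon_n\}_{\U}$. Since $q_{I(\epsilon)}\hat{F} = F$ is a homomorphism, for each internal pair $(x, y) \in {}^*\Gamma \times {}^*\Gamma$ the element $\hat{F}(xy)^{-1}\hat{F}(x)\hat{F}(y)$ lies in $I(\epsilon)$, i.e.\ $d(\hat{F}(xy), \hat{F}(x)\hat{F}(y)) = o_{\U}(\epsilon)$ pointwise in $(x,y)$. The main step, and the one I expect to be the real obstacle, is upgrading this pointwise bound to the uniform bound $def(\hat{F}) = o_{\U}(\epsilon)$: because $I(\epsilon)$ is externally defined (its definition quantifies over all standard reals), the transfer principle cannot be applied to it directly, and the defect is itself a supremum over the possibly infinite set $\Gamma \times \Gamma$. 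I plan to handle this by contradiction: if $def(\hat{F})/\epsilon$ were not infinitesimal, there would be a real $c > 0$ and a large set $S \in \U$ with $def(\hat{F}_n) \geq c\epsilon_n$ for $n \in S$; extracting near-sup witnesses $x_n, y_n \in \Gamma$ with $d_n(\hat{F}_n(x_n y_n), \hat{F}_n(x_n)\hat{F}_n(y_n)) \geq c\epsilon_n/2$ (choosing arbitrary witnesses for $n \notin S$) and packaging them into internal elements $x = \{x_n\}_{\U}$ and $y = \{y_n\}_{\U}$ of ${}^*\Gamma$ produces a pair violating the pointwise infinitesimal bound, yielding the contradiction.

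The closeness of two internal lifts $\hat{F}^{(1)}$ and $\hat{F}^{(2)}$ of $F$ then follows by the same strategy: both have the same image under $q_{I(\epsilon)}$, so $\hat{F}^{(1)}(x)\hat{F}^{(2)}(x)^{-1} \in I(\epsilon)$ pointwise in internal $x \in {}^*\Gamma$, and the same witness-selection argument upgrades this pointwise bound to the uniform statement $dist(\hat{F}^{(1)}, \hat{F}^{(2)}) = o_{\U}(\epsilon)$.
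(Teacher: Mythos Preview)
Your proposal is correct and follows exactly the approach the paper indicates (namely, ``on the same lines'' as Lemma~\ref{l4}, with $I(\epsilon)$ in place of $\left(\prod_{\U}G_n\right)_{inf}$). In fact, your witness-extraction argument for upgrading the pointwise bound to a uniform one makes explicit a step that the paper's proof of Lemma~\ref{l4} simply asserts with the phrase ``which means that $\{def(\hat{F}_n)\}_{\U} \in {}^*\R_{inf}$''.
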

	We can now reformulate the defect diminishing property in terms of internal lifts.
	\begin{lemma}
		The group $\Gamma$ has the defect diminishing property with respect to $\mathcal{G}$ iff for every $\epsilon \in {}^*\R_{inf}$ and every homomorphism $F:{}^*\Gamma \to \prod_{\U}G_n/B(\epsilon)$ that has an internal lift, $F$ has an internal lift $\hat{F}:{}^*\Gamma \to \prod_{\U}G_n$ such that $q_{I(\epsilon)}\cdot \hat{F}: {}^*\Gamma \to \prod_{\U}G_n/I(\epsilon)$ is a homomorphism. 
	\end{lemma}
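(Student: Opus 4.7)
The plan is to establish both directions of the equivalence as a direct translation, leveraging the parametrized lifting lemmas already proved in this subsection. The whole statement is essentially a rephrasing of defect diminishing in terms of the quotient maps $q_{B(\epsilon)}$ and $q_{I(\epsilon)}$.

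For the forward direction, I would assume defect diminishing and fix $\epsilon \in {}^*\R_{inf}$ together with a homomorphism $F:{}^*\Gamma \to \prod_{\U}G_n/B(\epsilon)$ admitting an internal lift $\phi$. The parametrized lifting lemma above guarantees that $\phi$ is an internal $O_{\U}(\epsilon)$-homomorphism, so $def(\phi)=O_{\U}(\epsilon)$. Apply \cref{def-defdim} to $\phi$ to obtain an internal asymptotic homomorphism $\psi$ with $def(\psi)=o_{\U}(def(\phi))$ and $dist(\phi,\psi)=O_{\U}(def(\phi))$. Since $def(\phi)=O_{\U}(\epsilon)$, these bounds compound to $def(\psi)=o_{\U}(\epsilon)$ and $dist(\phi,\psi)=O_{\U}(\epsilon)$. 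The distance bound says that $\phi$ and $\psi$ agree pointwise modulo $B(\epsilon)$, so $\psi$ is also an internal lift of $F$; the defect bound combined with \cref{l3} then forces $q_{I(\epsilon)}\cdot \psi$ to be a homomorphism. Taking $\hat{F}=\psi$ concludes this direction.

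For the reverse direction, I would start with an arbitrary internal asymptotic homomorphism $\phi$ and set $\epsilon=def(\phi)\in {}^*\R_{inf}$ (the case $\epsilon=0$ being trivial, as $\phi$ is then already an internal homomorphism). The parametrized lifting lemma ensures $F\coloneq q_{B(\epsilon)}\cdot \phi$ is a genuine homomorphism ${}^*\Gamma\to \prod_{\U}G_n/B(\epsilon)$ with internal lift $\phi$. By hypothesis, $F$ admits an internal lift $\psi$ for which $q_{I(\epsilon)}\cdot \psi$ is a homomorphism. The converse part of \cref{l3} then yields $def(\psi)=o_{\U}(\epsilon)=o_{\U}(def(\phi))$, while $\phi$ and $\psi$ both being internal lifts of $F$ gives $dist(\phi,\psi)=O_{\U}(\epsilon)=O_{\U}(def(\phi))$. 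This is precisely the defect diminishing property.

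No step presents a real obstacle: the claim is essentially a bookkeeping reformulation of the lemmas immediately preceding it. The only subtle point worth checking explicitly is the arithmetic of hyperreal magnitudes, namely that $o_{\U}(O_{\U}(\epsilon))\subseteq o_{\U}(\epsilon)$ and $O_{\U}(O_{\U}(\epsilon))\subseteq O_{\U}(\epsilon)$, both of which follow at once from the fact that ${}^*\R_{inf}$ is an ideal in the valuation ring ${}^*\R_b$.
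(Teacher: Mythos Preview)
Your proof is correct and follows essentially the same approach as the paper's own argument, which is very terse (the forward direction is dismissed as ``immediate from \cref{def-defdim}'' and the converse is a two-line application of \cref{l3}). Your version simply fills in the details the paper omits, including the check that $o_{\U}(O_{\U}(\epsilon))\subseteq o_{\U}(\epsilon)$ and the explicit handling of the degenerate case $\epsilon=0$.
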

	\begin{proof}
		Suppose $\Gamma$ has the defect diminishing property, then it is immediate from \cref{def-defdim} that for every $\epsilon \in {}^*\R_{inf}$ and every homomorphism $F:{}^*\Gamma \to \prod_{\U}G_n/B(\epsilon)$ that has an internal lift, $F$ has an internal lift $\hat{F}:{}^*\Gamma \to \prod_{\U}G_n$ such that $q_{I(\epsilon)}\cdot F: {}^*\Gamma \to \prod_{\U}G_n/I(\epsilon)$ is a homomorphism.\\
		Conversely, consider an internal asymptotic homomorphism $\phi:{}^*\Gamma \to \prod_{\U}G_n$ with defect $def(\phi)=\epsilon$, and the induced homomorphism $\tilde{\phi}:{}^*\Gamma \to \prod_{\U}G_n/B(\epsilon)$. By the hypothesis of the lemma (and \cref{l3}), there exists an internal $o_{\U}(\epsilon)$-homomorphism $\psi:{}^*\Gamma \to \prod_{\U}G_n$ which is $O_{\U}(\epsilon)$-close to $\phi$. This shows that $\Gamma$ has the defect diminishing property with respect to $\mathcal{G}$. 
	\end{proof}
	We now recap the results obtained so far:
	\begin{theorem}
		Let $\Gamma$ be a discrete group, and $\mathcal{G}$ be a family of metric groups such that for every group $G \in \mathcal{G}$, its metric $d_{G}$ is complete. Then the following are equivalent:
		\begin{enumerate}
			\item The group $\Gamma$ is uniformly $\mathcal{G}$-stable with a linear estimate.
			\item The group $\Gamma$ has the defect diminishing property with respect to $\mathcal{G}$.
			\item For every $\epsilon \in {}^*\R_{inf}$ and every homomorphism $F:{}^*\Gamma \to \prod_{\U}G_n/B(\epsilon)$ that has an internal lift, $F$ has an internal lift $\hat{F}:{}^*\Gamma \to \prod_{\U}G_n$ such that $q_{I(\epsilon)}\cdot \tilde{F}: {}^*\Gamma \to \prod_{\U}G_n/I(\epsilon)$ is a homomorphism.
		\end{enumerate}
	\end{theorem}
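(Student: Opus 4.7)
The plan is to assemble the theorem from the lemmas established just before its statement. The equivalence (2) $\iff$ (3) is exactly the content of the lemma immediately preceding the theorem, so nothing further is needed on that front.

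For (1) $\iff$ (2), the direction (1) $\Rightarrow$ (2) is immediate on unwinding definitions: given any internal $\epsilon$-homomorphism $\phi$, linear uniform stability with some constant $M$ produces, componentwise, homomorphisms $\psi_n\in \mathrm{Hom}(\Gamma,G_n)$ whose distance from $\phi_n$ is at most $M\cdot \mathrm{def}(\phi_n)$ for all $n$ in a large set. Setting $\psi:=\{\psi_n\}_{\U}$ gives an internal homomorphism with $\mathrm{def}(\psi)=0=o_{\U}(\mathrm{def}(\phi))$ and $\mathrm{dist}(\phi,\psi)=O_{\U}(\mathrm{def}(\phi))$, which is the defect diminishing property.

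The substantive direction is (2) $\Rightarrow$ (1). The plan is to iterate defect diminishing, essentially transcribing the completeness argument of \cref{thm1} into the ultraproduct language. Starting from an arbitrary internal asymptotic homomorphism $\phi^{(0)}:=\phi$ of defect $\epsilon_0:=\mathrm{def}(\phi)\in {}^*\R_{inf}$, one applies defect diminishing repeatedly to produce a sequence $\{\phi^{(j)}\}_{j\in\N}$ of internal maps with $\mathrm{def}(\phi^{(j)})\leq \epsilon_0/2^j$ and $\mathrm{dist}(\phi^{(j-1)},\phi^{(j)})\leq M\epsilon_0/2^{j-1}$ for a uniform constant $M$. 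Since every $G_n$ is complete, for each fixed $n$ the components $\phi^{(j)}_n$ form a Cauchy sequence in $G_n$ and converge to some $\psi_n:\Gamma\to G_n$, which must be a genuine homomorphism (defect $0$) satisfying $\mathrm{dist}_{\Gamma,G_n}(\phi_n,\psi_n)\leq 2M\epsilon_0$. Setting $\psi:=\{\psi_n\}_{\U}$ yields an internal homomorphism with $\mathrm{dist}(\phi,\psi)=O_{\U}(\epsilon_0)$, which, by the lemma characterising linear stability via internal homomorphisms, establishes (1).

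The only technical point that requires care is the control of constants across the iteration, namely ensuring that the $O_{\U}$ bound at each step composes geometrically (rather than blowing up) so that the telescoping series remains summable uniformly in $n$. This is where the hypothesis that the sequence $\phi^{(j+1)}$ can be chosen with $\mathrm{dist}(\phi^{(j)},\phi^{(j+1)})=O_{\U}(\mathrm{def}(\phi^{(j)}))$ with a constant independent of $j$ matters; after fixing such a constant, the argument proceeds componentwise and does not require any further non-standard machinery. If a uniform constant is not directly available, one can instead extract it by a single application of countable saturation of the ultrapower, using the fact that the asserted existence holds for each individual $\phi^{(j)}$.
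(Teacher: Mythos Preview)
Your identification of (2)$\iff$(3) with the preceding lemma and of (1)$\Rightarrow$(2) as immediate is correct and matches the paper. The gap is in your argument for (2)$\Rightarrow$(1).

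The defect diminishing property, as formulated in \cref{def-defdim}, gives for each internal asymptotic homomorphism $\phi$ an improvement $\psi$ with $\mathrm{dist}(\phi,\psi)=O_{\U}(\mathrm{def}(\phi))$, but the implied $O_{\U}$-constant depends on $\phi$. When you iterate to produce the external sequence $\phi^{(0)},\phi^{(1)},\dots$, you obtain constants $M_0,M_1,\dots$ with no a priori uniform bound, so your claimed inequality $\mathrm{dist}(\phi^{(j-1)},\phi^{(j)})\leq M\epsilon_0/2^{j-1}$ with a fixed $M$ is unjustified. Moreover, each $O_{\U}$-bound only holds for $n$ in some large set $S_j\in\U$, and passing to the componentwise limit requires control on $\bigcap_j S_j$, which need not lie in $\U$. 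Your appeal to countable saturation is too vague to repair either issue: saturation concerns nonempty intersections of internal sets with the finite intersection property, and it is not clear what internal family you would apply it to here to extract a uniform $M$.

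The paper resolves this not by iterating inside the ultraproduct, but by first descending to the standard setting. \cref{dim} shows, by a contradiction argument, that the (asymptotic/ultraproduct) defect diminishing property forces the existence of concrete constants $\epsilon_0>0$ and $M>0$ such that \emph{every} $\epsilon$-homomorphism $\phi:\Gamma\to G$ with $\epsilon<\epsilon_0$ admits $\psi$ with $\mathrm{def}(\psi)<\tfrac12\mathrm{def}(\phi)$ and $\mathrm{dist}(\phi,\psi)<M\,\mathrm{def}(\phi)$. With this genuinely uniform $M$ in hand, \cref{thm1} iterates componentwise and uses completeness of each $G_n$ to obtain the limiting homomorphism. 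This extraction of a uniform standard constant via contradiction is the missing step in your proposal.
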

	
	\section{A Cohomological Interpretation of Stability}\label{sec-asymgamma}
	We concluded the previous section by noting that in order to prove that $\Gamma$ is uniformly $\mathcal{G}$-stable with a linear estimate, it is sufficient to show that it has the defect diminishing property with respect to $\mathcal{G}$. Furthermore, we interpreted this property in terms of internal lifts of homomorphisms to quotient groups.\\
	Recall that a matrix norm $\|\cdot\|$ on $M_n(\C)$ is said to be submultiplicative if for every $A,B \in M_n(\C)$, $\|AB\|\leq \|A\| \cdot \|B\|$. From now on, we shall work exclusively with  the family of unitary groups, each equipped with a unitarily bi-invariant \emph{submultiplicative} matrix norm, and shall denote this family by $\mathfrak{U}$.
	$$\mathfrak{U} \coloneq \Big\{ \left( U(n), \|\cdot\| \right) \space : \space n \in \N \text{ and }\|\cdot\| \text{ is submultiplicative} \Big\}$$
	In particular, these include the $p$-Schatten norms given by
	\begin{equation*} 
	\| A \|_p = \begin{cases} 
	(Tr |A|^{p})^{1/p} &1 \le p < \infty\\
	\sup_{\| \nu \| = 1} \|A\nu\|  &p = \infty\end{cases}
	\end{equation*}
	Note that for $p=\infty$, this is the operator norm (as studied in \cite{Kaz} and  \cite{BOT}), while for $p=2$, this is the \emph{Frobenius} norm or the (unnormalized) \emph{Hilbert-Schmidt} norm (as studied in \cite{DCGLT} and \cite{dogon}).\\
	
	Before we proceed further, we state and sketch the proof of the following useful transference lemma for uniform $\mathfrak{U}$-stability with a linear estimate, which we shall use in \S\ref{sec-mainproof}. The proof is on the lines of a relative version of \cite[Theorem 3.2]{BOT} (which reproves Kazhdan's result on the Ulam stability of amenable groups), which we can adapt here in the simpler setting of finite index.
	\begin{lemma}\label{finite-index-induction}
		Let $\Lambda \leq \Gamma$ be a subgroup of finite index. Then $\Gamma$ is uniformly $\mathfrak{U}$-stable with a linear estimate iff $\Lambda$ is uniformly $\mathfrak{U}$-stable with a linear estimate. 
	\end{lemma}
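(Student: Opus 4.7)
The plan is to prove both implications by passing unitary representations between $\Lambda$ and $\Gamma$ via induction and restriction, leveraging the finiteness of $k := [\Gamma:\Lambda]$ to absorb all constants into the linear estimate. Fix once and for all coset representatives $T = \{t_1 = e, t_2, \ldots, t_k\}$ for $\Gamma/\Lambda$, and for each $g \in \Gamma$ define $\sigma_g \in \mathrm{Sym}(k)$ and $\lambda_i(g) \in \Lambda$ by $g t_i = t_{\sigma_g(i)} \lambda_i(g)$.

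For the implication ($\Gamma$ stable $\Rightarrow$ $\Lambda$ stable): given an $\epsilon$-homomorphism $\phi: \Lambda \to U(n)$, I would form the induced almost-homomorphism $\widetilde\phi := \mathrm{Ind}_\Lambda^\Gamma \phi \colon \Gamma \to U(nk)$ via the standard block formula $\widetilde\phi(g)_{ij} = \phi(t_i^{-1} g t_j)$ when $t_i^{-1} g t_j \in \Lambda$, and $0$ otherwise. A direct calculation using submultiplicativity and bi-invariance of the norm bounds the defect of $\widetilde\phi$ by $C_k \cdot \epsilon$, where $C_k$ depends only on $k$. Uniform $\mathfrak{U}$-stability of $\Gamma$ with a linear estimate then produces a homomorphism $\Psi: \Gamma \to U(nk)$ within distance $O(\epsilon)$ of $\widetilde\phi$. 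Since $\widetilde\phi|_\Lambda$ is block-diagonal with $(1,1)$-block equal to $\phi$, the corresponding block of $\Psi|_\Lambda$ is a map $\psi: \Lambda \to M_n(\mathbb{C})$ within $O(\epsilon)$ of $\phi$. The main technical step is to upgrade $\psi$ to an honest $U(n)$-valued homomorphism: one shows $\Psi|_\Lambda$ almost preserves the rank-$n$ projection $p$ onto the first block (i.e., $\|\Psi(\lambda) p \Psi(\lambda)^{-1} - p\| = O(\epsilon)$ uniformly in $\lambda$), and then extracts a genuinely $\Psi|_\Lambda$-invariant $n$-dimensional subspace at distance $O(\epsilon)$ from $p \mathbb{C}^{nk}$ via a spectral/polar correction to $p$.

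For the converse ($\Lambda$ stable $\Rightarrow$ $\Gamma$ stable): given an $\epsilon$-homomorphism $\phi: \Gamma \to U(n)$, restrict to $\Lambda$ and apply uniform $\mathfrak{U}$-stability of $\Lambda$ to produce a homomorphism $\rho : \Lambda \to U(n)$ with $\sup_{\lambda \in \Lambda} \|\phi(\lambda) - \rho(\lambda)\| = O(\epsilon)$. I would then define the tentative extension $\psi_0(t_i \lambda) := \phi(t_i) \rho(\lambda)$; a cochain computation, using both the defect bound on $\phi$ and the relations $g t_i = t_{\sigma_g(i)} \lambda_i(g)$, shows that $\psi_0$ is $O(\epsilon)$-close to $\phi$ and has defect $O(\epsilon)$, and moreover that its defect cocycle factors through the finite coset structure $\Gamma/\Lambda$ (since $\psi_0|_\Lambda = \rho$ is already a genuine homomorphism). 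The finite averaging technique of Kazhdan–Shtern–Johnson, applied here to the finite set $\Gamma/\Lambda$, then produces the final correction: lift $\psi_0$ to the Lie algebra by $\log$ on a neighborhood of the identity (valid since the defect is small), average the resulting cochain over the finite orbit structure, and exponentiate back, yielding an honest homomorphism $\psi: \Gamma \to U(n)$ at distance $O(\epsilon)$ from $\phi$.

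The hard part will be the first direction, and specifically the passage from an $n$-dimensional almost-invariant subspace for $\Psi|_\Lambda$ to an honestly invariant one, while keeping the linear estimate intact; the relevant spectral correction must be carried out in a way whose error is controlled by the norm of the cocycle measuring almost-invariance, rather than by $\Psi|_\Lambda$ itself. The second direction is essentially a finite-index specialization of the amenable averaging argument reproved in Section \ref{sec-asymgamma}, which should go through with only notational modifications.
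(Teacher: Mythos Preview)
Your two directions are handled quite differently from the paper's sketch.

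For the implication $\Lambda$ stable $\Rightarrow$ $\Gamma$ stable, your approach and the paper's are close in spirit but differ in execution. The paper, following \cite[Theorem~3.2]{BOT}, first arranges (using $\Lambda$-stability) that $\phi|_\Lambda$ is a genuine homomorphism with $\phi(g\delta)=\phi(g)\phi(\delta)$ for all $\delta\in\Lambda$, and then sets
\[
\phi'(g) \;=\; \frac{1}{[\Gamma:\Lambda]}\sum_{x\in\Gamma/\Lambda}\phi(gx)\phi(x)^{*},
\]
normalizes to land in $U(n)$, and shows the resulting map has defect $C\epsilon^2$; iterating gives a Cauchy sequence converging to a homomorphism. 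Your route via $\psi_0(t_i\lambda)=\phi(t_i)\rho(\lambda)$ followed by a single $\log$/average/$\exp$ correction can also be made to work, but your claim that the defect cocycle of $\psi_0$ ``factors through $\Gamma/\Lambda$'' is only half right: one checks $c(g,g'\lambda)=c(g,g')$, but there is no invariance in the first variable, so the averaging must be over the second slot only --- which is precisely the finite sum the paper writes down. The iterative defect-squaring is more elementary here since it avoids the Lie-algebra passage entirely.

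For the implication $\Gamma$ stable $\Rightarrow$ $\Lambda$ stable, the paper simply invokes \cite[Corollary~2.7]{BOT} (clarified in \cite[Lemma~II.22]{gamm}) and does not pass through induced representations. Your proposed route has two issues worth flagging. First, a minor one: $\widetilde\phi|_\Lambda$ is \emph{not} block-diagonal unless $\Lambda\trianglelefteq\Gamma$; it is a block-permutation matrix. What remains true, and suffices, is that the first coordinate subspace is $\widetilde\phi|_\Lambda$-invariant with action $\phi$ there, since $t_1=e$ forces $\sigma_\lambda(1)=1$ for $\lambda\in\Lambda$. Second, and more substantively: your passage from $(U(n),\|\cdot\|)$ to $(U(nk),\|\cdot\|')$ is never specified, and for a general member of $\mathfrak{U}$ there is no canonical compatible norm on $M_{nk}$; you need one for which both the defect of $\widetilde\phi$ and the block-compression back to $M_n$ are controlled with constants independent of $n$. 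For Schatten classes this is routine, but for an arbitrary unitarily-invariant submultiplicative norm it requires justification you have not supplied. Likewise, the ``spectral/polar correction'' step --- producing a genuinely $\Psi|_\Lambda$-invariant projection from an almost-invariant one with a \emph{dimension-independent} linear estimate in the given norm --- is exactly the delicate point you identify: averaging over the compact group $\overline{\Psi(\Lambda)}$ yields an almost-projection $q_0$, but bounding $\|q-q_0\|$ for the spectral cutoff $q=\mathbf{1}_{[1/2,\infty)}(q_0)$ in an arbitrary norm of $\mathfrak{U}$ is not automatic. The cited argument in \cite{BOT,gamm} sidesteps all of this by staying in $U(n)$ throughout.
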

	\begin{proof}[Sketch]
		Suppose $\Gamma$ is uniformly $\mathfrak{U}$-stable with a linear estimate. Then the proof of \cite[Corollary 2.7]{BOT} (further explained in \cite[Lemma II.22]{gamm}) implies that $\Lambda$ too is uniformly $\mathfrak{U}$-stable with a linear estimate.\\
		Conversely, suppose $\Lambda$ is uniformly $\mathfrak{U}$-stable with a linear estimate, and let $\phi:\Gamma \to U(n)$ be an $\epsilon$-homomorphism. Since the finite index subgroup $\Lambda$ is uniformly $\mathfrak{U}$-stable with a linear estimate, we can assume that the restriction of $\phi$ to $\Lambda$ is a homomorphism, and furthermore, $\phi(g\delta)=\phi(g)\phi(\delta)$ for every $g \in \Gamma$, $\delta \in \Lambda$. Now define $\phi':\Gamma \to M(n)$ as 
		$$\phi'(g) \coloneq \frac{1}{|\Gamma:\Lambda|}\sum_{x \in \Gamma/\Lambda} \phi(gx)\phi(x)^*$$
		Note that $\phi'(g)$ is just the average over coset representatives in $\Gamma/\Lambda$, and $M(n)$ is the space of $n \times n$ matrices. Just as in the proof of \cite[Theorem 3.2]{BOT}, this $\phi'$ can be normalized to obtain $\phi_1:\Gamma \to U(n)$ such that $\phi_1$ has defect $C\epsilon^2$ (for a universal constant $C$ not depending on $n$), and we repeat the process to obtain a (true) homomorphism as the limit. 
	\end{proof}
	\begin{remark}
		In fact, it is further shown in \cite[Proposition 1.5]{francesco} that for a subgroup $\Lambda \leq \Gamma$ that is \emph{co-amenable} in $\Gamma$, if $\Lambda$ is uniformly $\mathfrak{U}$-stable with a linear estimate, then $\Gamma$ too is uniformly $\mathfrak{U}$-stable with a linear estimate. This generalizes one direction of \cref{finite-index-induction}, though the converse is not true in this level of generality.
	\end{remark}
	\begin{remark}\label{remark-comm}
		In particular, if $\Gamma_1$ and $\Gamma_2$ are commensurable, then $\Gamma_1$ is uniformly $\mathfrak{U}$-stable with a linear estimate iff $\Gamma_2$ is uniformly $\mathfrak{U}$-stable with a linear estimate. 
	\end{remark}
	Given a sequence of unitary groups $\{U(k_n)\}_{n \in \N}$, we denote its ultraproduct by $\prod_{\U}U(k_n)$,and given an element $u=\{u_n\}_{\U} \in \prod_{\U}U(k_n)$ (where for each $n \in \N$, $u_n \in U(k_n)$), we denote its distance from the identity $1\in \prod_{\U}U(k_n)$ by $\|u-1\|\coloneq \{\|u_n-I\|\}_{\U} \in {}^*\R_b$, for notational convenience.\\
	Note that for $\epsilon \in {}^*\R_{inf}$ and an ultraproduct $\prod_{\U}U(k_n)$,  the subsets $B(\epsilon) \subseteq \prod_{\U}G_n$ and $I(\epsilon) \subseteq B(\epsilon)$ can now be written as
	$$B(\epsilon) = \Big\{u \in \prod_{\U}U(k_n):\|u-I\| = O_{\U}(\epsilon)\Big\}$$
	$$I(\epsilon) = \Big\{u \in \prod_{\U}U(k_n): \|u-I\| = o_{\U}(\epsilon)\Big\}$$
	Furthermore, the submultiplicativity of the norms implies that:
	\begin{lemma}\label{abelian}
		The group $B(\epsilon)/I(\epsilon)$ is abelian.
	\end{lemma}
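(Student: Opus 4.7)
The plan is to prove that for any $u, v \in B(\epsilon)$, the commutator $[u, v] := uvu^{-1}v^{-1}$ lies in $I(\epsilon)$, which amounts to the norm estimate $\|[u, v] - I\| = o_{\U}(\epsilon)$. Setting $A := u - I$ and $B := v - I$, so that $\|A\|, \|B\| = O_{\U}(\epsilon)$ (and in particular infinitesimal), I would aim for the sharper estimate $\|[u, v] - I\| = O_{\U}(\epsilon^2)$, which is automatically $o_{\U}(\epsilon)$ because $\epsilon$ itself is infinitesimal.

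The method is a direct Neumann-series expansion of $u^{-1}$ and $v^{-1}$ combined with some bookkeeping. Since $\|A\|$ is infinitesimal, the series $u^{-1} = \sum_{k\ge 0}(-A)^k$ converges in the submultiplicative norm pointwise on a set in $\U$, giving $u^{-1} = I - A + A^2 + R_u$ with $\|R_u\| \le \|A\|^3/(1 - \|A\|) = O_{\U}(\epsilon^3)$, and analogously $v^{-1} = I - B + B^2 + R_v$ with $\|R_v\| = O_{\U}(\epsilon^3)$. Expanding the product $(I + A)(I + B)(I - A + A^2 + R_u)(I - B + B^2 + R_v)$ and collecting by total degree in $A, B$, I expect all terms of total degree $\le 2$ other than $I$ and $[A, B] = AB - BA$ to cancel, while the remaining degree-$\ge 3$ monomials together with $R_u, R_v$ contributions form a matrix $R$ with $\|R\| = O_{\U}(\epsilon^3)$ by repeated application of submultiplicativity. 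This yields $uvu^{-1}v^{-1} = I + [A, B] + R$, after which the bound $\|[A, B]\| \le 2\|A\|\|B\| = O_{\U}(\epsilon^2)$ concludes the estimate.

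The only delicate point---and the main obstacle, albeit a routine one once spotted---is avoiding the naïve bound
$$\|[u, v] - I\| = \|(uv - vu)u^{-1}v^{-1}\| \le \|uv - vu\| \cdot \|u^{-1}v^{-1}\| \le 2\|A\|\|B\| \cdot \|I\|^2,$$
which is too weak: for a unitarily bi-invariant submultiplicative norm, $\|u^{-1}v^{-1}\|$ equals $\|I\|$, and $\|I\|$ may grow with the matrix dimension (for instance $\|I_n\|_p = n^{1/p}$ for the Schatten $p$-norm with finite $p$), so the factor $\|I\|^2$ need not be $O_{\U}(1/\epsilon)$. The Neumann expansion sidesteps this by producing the $[A, B]$ cancellation \emph{before} any application of submultiplicativity, so that the remainder only involves products of the small quantities $A, B, R_u, R_v$ and is consequently controlled purely by $\|A\|$ and $\|B\|$, never by $\|I\|$. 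Once the expansion identity is in hand, the abelianness of $B(\epsilon)/I(\epsilon)$ is immediate.
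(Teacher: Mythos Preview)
Your proof is correct, but you have taken a detour around an obstacle that is not actually there. The ``na\"ive'' bound you flag as too weak is in fact exactly what the paper uses, and it works: since the norm is unitarily \emph{bi-invariant} (not merely submultiplicative), right-multiplication by the unitary $u^{-1}v^{-1}$ is an isometry, so
\[
\|[u,v]-I\| = \|(uv-vu)u^{-1}v^{-1}\| = \|uv-vu\|,
\]
an equality rather than an inequality, and no factor of $\|I\|$ ever appears. Then $uv-vu = (u-I)(v-I)-(v-I)(u-I)$ and submultiplicativity give $\|uv-vu\| \le 2\|A\|\|B\| = O_{\U}(\epsilon^2)$ in two lines.

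Your Neumann-series expansion recovers the same $O_{\U}(\epsilon^2)$ estimate and is a perfectly valid alternative; it has the minor advantage of not invoking unitary invariance at all (only submultiplicativity), so it would survive in a setting where the norm is submultiplicative but not bi-invariant. In the present setting, however, bi-invariance is part of the standing hypotheses on $\mathfrak{U}$, and the paper's one-line argument is the intended route.
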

	\begin{proof}
		Let $a,b \in B(\epsilon)$, and consider the commutator $aba^{*}b^{*}$. We shall prove that $aba^{*}b^{*} \in I(\epsilon)$. Observe that $\|aba^*b^*-I\|=\|ab-ba\|$ since the norm is unitarily invariant. 
		$$\|ab-ba\| = \|(a-I)(b-I)-(b-I)(a-I)\|\leq 2\|a-I\|\|b-I\|$$
		This is because of the submultiplicativity of the norm. Since $a,b \in B(\epsilon)$, we conclude that $\|aba^*b^*-I\|=O_{\U}(\epsilon^2)=o_{\U}(\epsilon)$. 
	\end{proof}
	The fact that $B(\epsilon)/I(\epsilon)$ is an abelian group will allow us to rephrase the lifting property discussed in \S\ref{ssec-liftings} in terms of the vanishing of a cohomology which we shall develop in detail in \S\ref{ssec-logarithm} and \S\ref{sec-coho}.\\
	In \S\ref{ssec-abeliankernel}, we explicitly work out the \say{cocycles} corresponding to possible lifts of a homomorphism, which are then transferred to the linearized setting in \S\ref{ssec-logarithm} where a cohomological theory begins to reveal itself. Finally, in \S\ref{ssec-amenable}, we demonstrate the notions discussed in the case of discrete abelian groups, showing that the vanishing of our second cohomology implies uniform stability. 
	
	\subsection{Lifting with an Abelian Kernel}\label{ssec-abeliankernel}
	Let $\phi:{}^*\Gamma \to \prod_{\U}U(k_n)$ be an internal $\epsilon$-homomorphism that induces a homomorphism $\tilde{\phi}:{}^*\Gamma \to \prod_{\U}U(k_n)/B(\epsilon)$. Let $\psi:{}^*\Gamma \to \prod_{\U}U(k_n)$ be an internal lift of $\tilde{\phi}$.\\ 
	To an internal lift $\psi:{}^*\Gamma \to \prod_{\U}U(k_n)$ of $\tilde{\phi}$, we associate an internal map
	$$\rho_{\psi}:{}^*\Gamma \times \prod_{\U}U(k_n) \to \prod_{\U}U(k_n)$$
	\begin{equation}\label{rhopsi}
	\rho_{\psi}(g)(u) \coloneq \psi(g)\cdot u \cdot \psi(g)^{-1}
	\end{equation}
	For every $g \in {}^*\Gamma$ and $u \in B(\epsilon)$, $\rho_{\psi}(g)(u) \in B(\epsilon)$, while for $u\in I(\epsilon)$,  $\rho_{\psi}(g)(u) \in I(\epsilon)$.
	\begin{lemma}
		The internal map $\rho_{\psi}:{}^*\Gamma \times \prod_{\U}U(k_n) \to \prod_{\U}U(k_n)$ induces an action, denoted $\tilde{\rho}_{\psi}$, of ${}^*\Gamma$ on the abelian group $B(\epsilon)/I(\epsilon)$.
	\end{lemma}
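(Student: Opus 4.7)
The plan is to check three things: that for each $g \in {}^*\Gamma$ the map $\rho_\psi(g)$ preserves both $B(\epsilon)$ and $I(\epsilon)$ (so that it descends to the quotient), that the induced map $\tilde{\rho}_\psi(g)$ is a group automorphism of $B(\epsilon)/I(\epsilon)$, and that $g \mapsto \tilde{\rho}_\psi(g)$ is multiplicative. All three reduce to two facts already in hand: the unitary bi-invariance of the matrix norms (so conjugation by any unitary is an isometry, preserving both $B(\epsilon)$ and $I(\epsilon)$), and the commutator estimate implicit in \cref{abelian} (for $a, b \in B(\epsilon)$ one has $\|[a,b]\| = O_{\U}(\|a-I\|\cdot\|b-I\|)$, hence $[a,b] \in I(\epsilon)$).

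First, I would note that bi-invariance immediately gives $\rho_\psi(g)(B(\epsilon)) \subseteq B(\epsilon)$ and $\rho_\psi(g)(I(\epsilon)) \subseteq I(\epsilon)$, and that conjugation is a group homomorphism, so $\tilde{\rho}_\psi(g)$ is well-defined as an automorphism of $B(\epsilon)/I(\epsilon)$.

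The main step is the multiplicativity $\tilde{\rho}_\psi(gh) = \tilde{\rho}_\psi(g)\tilde{\rho}_\psi(h)$. Since $\psi$ is an internal lift of the homomorphism $\tilde{\phi}$, it is an internal $O_{\U}(\epsilon)$-homomorphism, so the element $c \coloneqq \psi(h)^{-1}\psi(g)^{-1}\psi(gh)$ lies in $B(\epsilon)$ for all $g,h \in {}^*\Gamma$. Writing $\psi(gh) = \psi(g)\psi(h)c$ and expanding, a direct calculation yields, for every $u \in B(\epsilon)$,
$$\rho_\psi(gh)(u)\cdot \bigl(\rho_\psi(g)\rho_\psi(h)(u)\bigr)^{-1} = \psi(g)\psi(h)\,[c,u]\,\psi(h)^{-1}\psi(g)^{-1},$$
where $[c,u] = cuc^{-1}u^{-1}$. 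Since both $c$ and $u$ lie in $B(\epsilon)$, \cref{abelian} gives $[c,u] \in I(\epsilon)$, and unitary bi-invariance then keeps the right-hand side in $I(\epsilon)$. This is exactly the desired identity in the quotient. The identity $\tilde{\rho}_\psi(e) = \mathrm{id}$ is obtained the same way: since $\tilde{\phi}(e)$ is trivial, $\psi(e) \in B(\epsilon)$, so conjugation by $\psi(e)$ is the identity modulo $I(\epsilon)$ by the same commutator bound.

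I do not expect a serious obstacle. The argument is essentially bookkeeping in which every failure of $\psi$ to be strictly multiplicative is an element of $B(\epsilon)$, and every commutator of two elements of $B(\epsilon)$ is absorbed into $I(\epsilon)$ through the submultiplicative estimate. The only conceptual content is recognizing that well-definedness of the quotient action is exactly what \cref{abelian} is there to provide; once that observation is made, the remaining verifications are mechanical.
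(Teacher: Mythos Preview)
Your proposal is correct and follows essentially the same approach as the paper: both arguments hinge on the observation that the defect element $\psi(gh)^{-1}\psi(g)\psi(h)$ (or its inverse) lies in $B(\epsilon)$, and then use the commutator estimate from \cref{abelian} together with unitary bi-invariance to conclude that $\rho_\psi(gh)(u)$ and $\rho_\psi(g)\rho_\psi(h)(u)$ agree modulo $I(\epsilon)$. The paper phrases this additively (computing $\|\rho_\psi(g_1)\rho_\psi(g_2)(u) - \rho_\psi(g_1g_2)(u)\|$ directly via bi-invariance), whereas you phrase it multiplicatively by exhibiting the quotient as a conjugate of the commutator $[c,u]$; these are equivalent by bi-invariance, and you are slightly more explicit about the auxiliary checks (preservation of $B(\epsilon)$, $I(\epsilon)$, and the identity).
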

	\begin{proof}
		For $g_1,g_2 \in {}^*\Gamma$ and $u \in B(\epsilon)$, we want to show that $\rho_{\psi}(g_1)(\rho_{\psi}(g_2)(u))-\rho_{\psi}(g_1g_2)(u) \in I(\epsilon)$. Note that for every $g_1,g_2 \in {}^*\Gamma$, $\psi(g_1g_2)^{-1}\psi(g_1)\psi(g_2) \in B(\epsilon)$, so
		$$\|  \psi(g_1)\psi(g_2)u\psi(g_2)^{-1}\psi(g_1)^{-1} - \psi(g_1g_2)u\psi(g_1g_2)^{-1} \| = \| \psi(g_1g_2)^{-1}\psi(g_1)\psi(g_2)u - u\psi(g_1g_2)^{-1}\psi(g_1)\psi(g_2)\|$$
		Since the elements $\psi(g_1g_2)^{-1}\psi(g_1)\psi(g_2)$ and $u$ are in $B(\epsilon)$, their commutator is in $I(\epsilon)$ (as in proof of \cref{abelian}). 
	\end{proof}
	We shall call $\tilde{\rho}_{\psi}$ the \emph{action induced from $\psi$}. Observe that we have defined $\rho_{\psi}$ using a given internal lift $\psi$. However, this induced action of ${}^*\Gamma$ on $B(\epsilon)/I(\epsilon)$ is independent of the choice of internal lift of $\tilde{\phi}$.
	\begin{lemma}
		For two internal lifts $\psi_1$ and $\psi_2$ of $\tilde{\psi}$, $\tilde{\rho}_{\psi_1}=\tilde{\rho}_{\psi_2}$.	
	\end{lemma}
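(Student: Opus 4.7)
The plan is to unwind the definitions and reduce the statement to the same submultiplicativity-plus-commutator estimate used in \cref{abelian}. Since $\psi_1$ and $\psi_2$ are both internal lifts of $\tilde{\phi}$, by definition their values project to the same class in $\prod_{\U}U(k_n)/B(\epsilon)$, so for every $g\in{}^*\Gamma$ the element $b(g) \coloneq \psi_1(g)\psi_2(g)^{-1}$ lies in $B(\epsilon)$. The goal is to show that for every $g \in {}^*\Gamma$ and every $u \in B(\epsilon)$,
$$\rho_{\psi_1}(g)(u)\cdot \rho_{\psi_2}(g)(u)^{-1} \in I(\epsilon),$$
which is precisely the statement that $\tilde{\rho}_{\psi_1}(g)$ and $\tilde{\rho}_{\psi_2}(g)$ act identically on $B(\epsilon)/I(\epsilon)$.

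First, I would rewrite $\rho_{\psi_1}(g)(u) = b(g)\,\rho_{\psi_2}(g)(u)\,b(g)^{-1}$, using $\psi_1(g) = b(g)\psi_2(g)$. Setting $v \coloneq \rho_{\psi_2}(g)(u) = \psi_2(g)\,u\,\psi_2(g)^{-1}$, the key point is that $v \in B(\epsilon)$: conjugation by the unitary $\psi_2(g)$ preserves the norm, so $\|v - I\| = \|u - I\| = O_{\U}(\epsilon)$. Thus the question reduces to showing that for $b, v \in B(\epsilon)$ the commutator $bvb^{-1}v^{-1}$ lies in $I(\epsilon)$.

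This is exactly the estimate carried out in the proof of \cref{abelian}: using unitary invariance and submultiplicativity,
$$\|bvb^{-1} - v\| = \|bv - vb\| = \|(b-I)(v-I) - (v-I)(b-I)\| \leq 2\|b-I\|\,\|v-I\| = O_{\U}(\epsilon^2) = o_{\U}(\epsilon).$$
Equivalently $\|bvb^{-1}v^{-1} - I\| = o_{\U}(\epsilon)$, so $bvb^{-1}v^{-1} \in I(\epsilon)$, which yields $\rho_{\psi_1}(g)(u)\equiv \rho_{\psi_2}(g)(u) \pmod{I(\epsilon)}$ as required.

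There is no real obstacle here: once the decomposition $\psi_1 = b\,\psi_2$ modulo $B(\epsilon)$ is observed, the argument is just the commutator bound already established in \cref{abelian}. The only thing to be slightly careful about is that $v$ genuinely lies in $B(\epsilon)$ (not merely in $\prod_{\U}U(k_n)$), which follows immediately from the bi-invariance of the norm. Hence $\tilde{\rho}_{\psi_1} = \tilde{\rho}_{\psi_2}$.
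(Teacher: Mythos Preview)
Your proof is correct and follows essentially the same approach as the paper's: both arguments observe that the two lifts differ by an element of $B(\epsilon)$ and reduce the claim to the commutator estimate from \cref{abelian}. The only cosmetic difference is that the paper uses $c(g)=\psi_2(g)^{-1}\psi_1(g)$ and commutes it with $u$ before conjugating, whereas you use $b(g)=\psi_1(g)\psi_2(g)^{-1}$ and commute it with $v=\psi_2(g)u\psi_2(g)^{-1}$; since $b=\psi_2 c\psi_2^{-1}$ and $v=\psi_2 u\psi_2^{-1}$, these are the same computation up to a unitary conjugation.
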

	\begin{proof}
		Note that for $g \in {}^*\Gamma$, $\psi_2(g)^{-1}\psi_1(g) \in B(\epsilon)$ since they are both internal lifts of $\tilde{\phi}$. Hence for $u \in B(\epsilon)$, again as in the proof of \cref{abelian}, $\psi_2(g)^{-1}\psi_1(g) u - u \psi_2(g)^{-1}\psi_1(g) \in I(\epsilon)$, which implies that $\psi_1(g)u\psi_1(g)^{-1} - \psi_2(g)u\psi_2(g)^{-1} \in I(\epsilon)$. 
	\end{proof}
	So while the internal map $\rho_{\psi}:{}^*\Gamma \times \prod_{\U}U(k_n) \to \prod_{\U}U(k_n)$ depends on $\psi$, the induced action $\tilde{\rho}_{\psi}$ is independent of the choice of internal lift of $\tilde{\phi}$. Hence we can denote this action by $\tilde{\rho}_{\phi}$.\\
	Define the internal map 
	$$\alpha_{\psi}:{}^*\Gamma \times {}^*\Gamma \to \prod_{\U}U(k_n)$$
	$$\alpha_{\psi}(g_1,g_2) \coloneq \psi(g_1)\psi(g_2)\psi(g_1g_2)^{-1}$$
	Note that by construction $\alpha_{\psi}$ takes values in $B(\epsilon)$ (and our goal is to find some lift $\psi$ for which $\alpha_{\psi}$ takes values only in $I(\epsilon)$). Observe that 
	\begin{lemma}\label{cocycle1}
		For $g_1,g_2,g_3 \in {}^*\Gamma$, 
		$$\alpha_{\psi}(g_1,g_2) \cdot \alpha_{\psi}(g_1g_2,g_3) \cdot \alpha_{\psi}(g_1,g_2g_3)^{-1} = \psi(g_1) \cdot \alpha_{\psi}(g_2,g_3) \psi(g_1)^{-1}$$
	\end{lemma}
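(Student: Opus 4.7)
The plan is to verify this identity by direct computation, unfolding the definition of $\alpha_\psi$ on both sides and cancelling the resulting telescoping products. No subtle argument (and in particular no appeal to the fact that $\psi$ is a lift, or to the structure of $B(\epsilon)$ or $I(\epsilon)$) should be needed: the identity is the formal non-abelian analogue of the usual $2$-cocycle identity $d\alpha = 0$, and it holds for the ``defect cocycle'' of any map $\psi$ between groups.

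Concretely, I would first expand the left-hand side as
\[
[\psi(g_1)\psi(g_2)\psi(g_1g_2)^{-1}]\cdot[\psi(g_1g_2)\psi(g_3)\psi(g_1g_2g_3)^{-1}]\cdot[\psi(g_1)\psi(g_2g_3)\psi(g_1g_2g_3)^{-1}]^{-1}.
\]
The middle $\psi(g_1g_2)^{-1}\psi(g_1g_2)$ and the outer $\psi(g_1g_2g_3)^{-1}\psi(g_1g_2g_3)$ pairs cancel, leaving
\[
\psi(g_1)\psi(g_2)\psi(g_3)\psi(g_2g_3)^{-1}\psi(g_1)^{-1},
\]
which we then recognize as $\psi(g_1)\cdot\alpha_\psi(g_2,g_3)\cdot\psi(g_1)^{-1}$, as required.

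I do not anticipate any obstacle: the whole argument is formal cancellation in the group $\prod_\U U(k_n)$, and in particular does not use $\epsilon$-homomorphism hypotheses. The content of the lemma is not really about proving anything difficult, but about framing the identity so that it foreshadows the cohomological interpretation to come: under the induced action $\tilde\rho_\phi$ on the abelian quotient $B(\epsilon)/I(\epsilon)$, the right-hand side reads as $g_1 \cdot \alpha_\psi(g_2,g_3)$, so the identity will express precisely that the class of $\alpha_\psi$ (viewed modulo $I(\epsilon)$) is a bounded $2$-cocycle for the action $\tilde\rho_\phi$. That, rather than the calculation itself, is the payoff that motivates stating the lemma.
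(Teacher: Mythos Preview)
Your proposal is correct and is exactly the direct telescoping computation the paper has in mind; the paper in fact gives no proof at all, introducing the lemma with ``Observe that'' and leaving the verification to the reader. Your remark that the identity is purely formal and uses no $\epsilon$-homomorphism hypothesis is also on point.
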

	Let $q_{B/I}:B(\epsilon) \to B(\epsilon)/I(\epsilon)$ be the canonical quotient homomorphism. Since $\alpha_{\psi}$ takes values in $B(\epsilon)$, let $\tilde{\alpha}_{\psi} \coloneq q_{B/I} \cdot \alpha_{\psi}: {}^*\Gamma \times {}^*\Gamma \to B(\epsilon)/I(\epsilon)$. Since $B(\epsilon)/I(\epsilon)$ is abelian, \cref{cocycle1} implies the following corollary:
	\begin{corollary}\label{cocycle2}
		For $g_1,g_2,g_3 \in {}^*\Gamma$, 
		$$\tilde{\rho}_{\phi}(g_1) \cdot \tilde{\alpha}_{\psi}(g_2,g_3)-\tilde{\alpha}_{\psi}(g_1g_2,g_3)+\tilde{\alpha}_{\psi}(g_1,g_2g_3)-\tilde{\alpha}_{\psi}(g_1,g_2) = 0$$
	\end{corollary}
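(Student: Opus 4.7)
The plan is to derive \cref{cocycle2} directly from \cref{cocycle1} by passing to the quotient $B(\epsilon)/I(\epsilon)$ and exploiting its abelianness (\cref{abelian}).

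First I would apply the canonical quotient homomorphism $q_{B/I}:B(\epsilon)\to B(\epsilon)/I(\epsilon)$ to both sides of the identity
\[
\alpha_{\psi}(g_1,g_2)\cdot \alpha_{\psi}(g_1g_2,g_3)\cdot \alpha_{\psi}(g_1,g_2g_3)^{-1} \;=\; \psi(g_1)\cdot \alpha_{\psi}(g_2,g_3)\cdot \psi(g_1)^{-1}
\]
established in \cref{cocycle1}. To apply $q_{B/I}$ I need to verify that both sides really lie in $B(\epsilon)$: the left-hand side is a product of values of $\alpha_{\psi}$, which takes values in $B(\epsilon)$ by construction; for the right-hand side, conjugation by $\psi(g_1)$ preserves $B(\epsilon)$ (as already noted in the text right after the definition of $\rho_\psi$), so $\psi(g_1)\alpha_{\psi}(g_2,g_3)\psi(g_1)^{-1}\in B(\epsilon)$ as well.

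Next, since $B(\epsilon)/I(\epsilon)$ is abelian by \cref{abelian}, I switch to additive notation. The left-hand side becomes
\[
\tilde{\alpha}_{\psi}(g_1,g_2) + \tilde{\alpha}_{\psi}(g_1g_2,g_3) - \tilde{\alpha}_{\psi}(g_1,g_2g_3).
\]
For the right-hand side, observe that conjugation by $\psi(g_1)$ on an element of $B(\epsilon)$ is precisely $\rho_{\psi}(g_1)$ as defined in \eqref{rhopsi}; after passing to the quotient this is by definition the induced action $\tilde{\rho}_{\phi}(g_1)$ on $B(\epsilon)/I(\epsilon)$ (which, by the second lemma in the subsection, does not depend on the choice of lift $\psi$). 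Hence the right-hand side becomes $\tilde{\rho}_{\phi}(g_1)\cdot \tilde{\alpha}_{\psi}(g_2,g_3)$.

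Rearranging yields the claimed identity
\[
\tilde{\rho}_{\phi}(g_1)\cdot \tilde{\alpha}_{\psi}(g_2,g_3) - \tilde{\alpha}_{\psi}(g_1g_2,g_3) + \tilde{\alpha}_{\psi}(g_1,g_2g_3) - \tilde{\alpha}_{\psi}(g_1,g_2) \;=\; 0.
\]
There is no substantial obstacle here: the whole content of the corollary is bookkeeping that converts the (noncommutative) conjugation identity of \cref{cocycle1} into the standard additive $2$-cocycle equation, made possible precisely because the commutator correction lies in $I(\epsilon)$ by the submultiplicativity argument behind \cref{abelian}. The value of the statement is conceptual rather than computational: it exhibits $\tilde{\alpha}_{\psi}$ as a genuine $2$-cocycle for ${}^*\Gamma$ with coefficients in the abelian module $\bigl(B(\epsilon)/I(\epsilon),\tilde{\rho}_{\phi}\bigr)$, which is the starting point for the cohomological reformulation developed in \S\ref{ssec-logarithm}.
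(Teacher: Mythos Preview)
Your proof is correct and follows exactly the paper's approach: the corollary is stated immediately after \cref{cocycle1} with the one-line justification that $B(\epsilon)/I(\epsilon)$ is abelian, and your write-up simply makes that passage to the quotient explicit.
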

	While we noted that the action $\tilde{\rho}_{\psi}$ of ${}^*\Gamma$ on $B(\epsilon)/I(\epsilon)$ does not depend on the choice of lift $\psi$, the map $ \tilde{\alpha}_{\psi}:{}^*\Gamma \times {}^*\Gamma \to B(\epsilon)/I(\epsilon)$ does depend on the choice of lift $\psi$. Our hope is to prove the existence of some choice of lift $\psi$ such that $\tilde{\alpha}_{\psi}$ is trivial. \\
	Consider $\tilde{\alpha}_{\psi_1}$ and $\tilde{\alpha}_{\psi_2}$ for two different internal lifts $\psi_1$ and $\psi_2$ of $\tilde{\phi}$. Define an internal map
	$$\beta_{\psi_1,\psi_2}:{}^*\Gamma \to \prod_{\U}G_n$$
	$$\beta_{\psi_1,\psi_2}(g) \coloneq \psi_2(g)\psi_1(g)^{-1}$$
	Since $\beta_{\psi_1,\psi_2}$ takes values in $B(\epsilon)$, denote by $\tilde{\beta}_{\psi_1,\psi_2}:{}^*\Gamma \to B(\epsilon)/I(\epsilon)$ its composition with the quotient map $B(\epsilon) \to B(\epsilon)/I(\epsilon)$. 
	Then for $g_1,g_2 \in {}^*\Gamma$, a careful computation shows that
	\begin{equation}\label{cocycle3}
	\tilde{\alpha}_{\psi_2}-\tilde{\alpha}_{\psi_1} = \tilde{\beta}_{\psi_1,\psi_2}(g_1)+\tilde{\rho}_{\phi}(g_1) \cdot \tilde{\beta}_{\psi_1,\psi_2}(g_2)-\tilde{\beta}_{\psi_1,\psi_2}(g_1g_2)
	\end{equation}
	Suppose there exists an internal map $\beta:{}^*\Gamma \to \prod_{\U}G_n$ that takes values in $B(\epsilon)$ such that for the lift $\psi$, the following equation holds for all $g_1,g_2 \in {}^*\Gamma$:
	$$\tilde{\alpha}_{\psi}(g_1,g_2)= \tilde{\beta}(g_1)+\tilde{\rho}_{\phi}(g_1) \cdot \tilde{\beta}(g_2)-\tilde{\beta}(g_1g_2)$$
	Then the internal map 
	$$\psi^{\sim}:{}^*\Gamma \to \prod_{\U}U(k_n)$$
	$$\psi^{\sim}(g) \coloneq \psi(g)\beta(g)^{-1}$$
	is also an internal lift of $\tilde{\phi}$ such that for every $g_1,g_2 \in {}^*\Gamma$,
	$$\tilde{\alpha}_{\psi^{\sim}}(g_1,g_2)=0$$
	In particular, this means that $\psi^{\sim}$ is the internal lift that we want.\\
	The above discussion hints at a cohomological theory that captures the obstruction to such lifts. The idea is as follows: any candidate internal lift $\psi$ of $\tilde{\phi}$, with defect $O_{\U}(\epsilon)$, gives us a type of $2$-cocycle of ${}^*\Gamma$ with coefficients in $B(\epsilon)/I(\epsilon)$, and if that cocycle happens to be a $1$-couboundary, then the lift $\psi$ can be corrected to obtain another lift that has defect $o_{\U}(\epsilon)$ and is still $O_{\U}(\epsilon)$-close to $\psi$ (and $\phi$), thus implying the defect diminishing property that we want. \\
	\begin{remark}\label{rem2}
		In \cite{DCGLT}, it is shown (using the idea mentioned in \cref{rem1} and defect diminishing) that $\Gamma$ is (pointwise) stable (with respect to unitary matrices equipped with the Frobenius norm) if $H^2\left(\Gamma,B(\epsilon)/I(\epsilon)\right)$ vanishes. From \cref{cocycle1}, it might be tempting to simply consider $\tilde{\alpha}_{\psi}$ as a bounded $2$-cocycle for the group ${}^*\Gamma$ with coefficients in the abelian group $B(\epsilon)/I(\epsilon)$, and interpret \cref{cocycle3} (and the ensuing discussion) as insisting that $\tilde{\alpha}_{\psi}$ is the coboundary of a bounded $1$-cochain of ${}^*\Gamma$ in $B(\epsilon)/I(\epsilon)$. But we cannot simply work with $\Hb^2\left({}^*\Gamma,B(\epsilon)/I(\epsilon)\right)$, since we need to ensure that the bounded $2$-cocycle $\tilde{\alpha}_{\psi}$ (which was induced from an \emph{internal} map $\alpha_{\psi}$) is the coboundary of a bounded $1$-cochain $\tilde{\beta}:{}^*\Gamma \to B(\epsilon)/I(\epsilon)$ that is itself also induced from an \emph{internal} map $\beta:{}^*\Gamma \to \prod_{\U}U(k_n)$. This insistence on our maps being induced from internal maps is essential in our setting of uniform stability, and leads to the definition of an internal and asymptotic bounded cohomology machinery that we construct in \S\ref{sec-coho}.
	\end{remark}
	
	\subsection{Linearization and the Lie Algebra}\label{ssec-logarithm}
	In the previous section we observed that the defect diminishing property could be interpreted as a cohomological problem based on an action of ${}^*\Gamma$ on the abelian group $B(\epsilon)/I_{\epsilon}$. However, as pointed out in \cref{rem2} the subtlety here involves the requirement that we deal only with maps that are induced from some \emph{internal} mapping to $\prod_{\U}U(k_n)$. At that level we do not have the abelianness that would allow us to properly formulate a cohomology theory. In this \S, we transfer to the Lie algebra allowing us to work with spaces of maps from the group to Banach spaces.\\
	For a matrix $A \in M_n(\C)$, consider the matrix logarithm given by
	$$\log A \coloneq \sum \limits_{j=1}^{\infty} (-1)^{j-1}\frac{(A-I)^j}{j}$$
	The series above converges if $\|A-I\|<1$ (for some submultiplicative matrix norm $\|\cdot \|$). By subadditivity and submultiplicativity of the norm $\|\cdot\|$, if $\epsilon \leq 1/2$ and $\|u-I\|\leq \epsilon$,
	$$\|\log{u}\| \leq \sum \limits_{j=1}^{\infty} \|u-I\|^j \leq 2\epsilon$$
	\begin{lemma}\label{logbound}
		For every $\epsilon<1/2$, $n \in \N$, $u \in U(n)$ and every submultiplicative norm $\|\cdot \|$ on $M_n(\C)$, if $\|u-I\| \leq \epsilon$, then $\|\log{u}\| \leq 2 \epsilon$. 
	\end{lemma}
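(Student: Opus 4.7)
The proof strategy is essentially already sketched in the paragraph preceding the statement, so my plan is simply to formalize that sketch cleanly. The idea is to bound $\|\log u\|$ term-by-term in its defining power series using the triangle inequality and submultiplicativity, and then sum a geometric series.

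First, I would verify that the series $\log u = \sum_{j=1}^{\infty} (-1)^{j-1}(u-I)^j/j$ converges absolutely in $(M_n(\C), \|\cdot\|)$ whenever $\|u - I\| \leq \epsilon < 1$. This uses the fact that $\|\cdot\|$ is submultiplicative, which yields $\|(u-I)^j\| \leq \|u-I\|^j$, together with the completeness of $M_n(\C)$ (all norms on the finite-dimensional space $M_n(\C)$ are equivalent, so absolute convergence in $\|\cdot\|$ implies convergence).

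Next, I would apply the triangle inequality to the partial sums and pass to the limit, giving
\[
\|\log u\| \;\leq\; \sum_{j=1}^{\infty} \frac{\|(u-I)^j\|}{j} \;\leq\; \sum_{j=1}^{\infty} \frac{\|u - I\|^j}{j} \;\leq\; \sum_{j=1}^{\infty} \|u-I\|^j.
\]
The last sum is a geometric series with ratio $\|u-I\| \leq \epsilon < 1$, so it evaluates to $\|u-I\|/(1-\|u-I\|) \leq \epsilon/(1-\epsilon)$.

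Finally, the hypothesis $\epsilon \leq 1/2$ gives $1 - \epsilon \geq 1/2$, so $\epsilon/(1-\epsilon) \leq 2\epsilon$, completing the bound. There is no real obstacle here; the only point worth double-checking is that the bound is independent of $n$ and of the specific submultiplicative norm chosen, which is precisely what makes the argument work uniformly across the family $\mathfrak{U}$. Submultiplicativity is used only through the inequality $\|(u-I)^j\| \leq \|u-I\|^j$, and no further structure of the norm is invoked.
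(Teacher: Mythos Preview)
Your proof is correct and follows exactly the approach the paper uses: bound the logarithm series term-by-term via subadditivity and submultiplicativity, then sum the geometric series and use $\epsilon \leq 1/2$ to get the factor $2$. The only difference is that you spell out the convergence and the $\epsilon/(1-\epsilon)$ step more explicitly than the paper does.
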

	It is a classical result that for a unitary matrix $u \in U(n)$, its logarithm $\log{u}$ (whenever it is defined) is an \emph{anti-Hermitian} matrix. Denote by $\mathfrak{u}(n)$ the (real) vector space of anti-Hermitian matrices in $M_n(\C)$.\\
	In the other direction, we have the matrix exponential map defined as 
	$$\exp(A) =  \sum \limits_{j=0}^{\infty} \frac{A^j}{j!}$$
	which is well-defined for every $A \in M_n(\C)$. For an anti-hermitian matrix $W \in \mathfrak{u}(n)$ of the form $W=U \circ diag(i\theta_j)_{j=1}^{n} \circ U^{*}$ for $U \in U(n)$, its exponential $exp(W) = U \circ diag(e^{i\theta_j})_{j=1}^n \circ U^{*}$. The following trivial bound is sufficient for our purposes:
	\begin{lemma}
		For every $\epsilon>0$, $n \in \N$ and a submultiplicative norm $\|\cdot\|$ on $M_n(\C)$, for a matrix $W \in \mathfrak{u}(n)$ with $\|W\|<\epsilon$, $\|exp(W)-I\| \leq e^{\epsilon}-1$. 
	\end{lemma}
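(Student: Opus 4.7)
The plan is to prove the bound directly from the defining power series of the matrix exponential, without making any use of the anti-Hermitian hypothesis (which is not actually needed for this inequality). First I would write
\[
\exp(W) - I \;=\; \sum_{j=1}^{\infty} \frac{W^j}{j!},
\]
and observe that this series converges absolutely in any submultiplicative norm on $M_n(\C)$: iterated submultiplicativity gives $\|W^j\| \leq \|W\|^j$, so the partial sums form a Cauchy sequence dominated by the convergent real series $\sum_{j \geq 1} \|W\|^j / j! = e^{\|W\|} - 1$.

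Next, applying the triangle inequality to the finite partial sums and passing to the limit (which is legitimate by the absolute convergence just noted, and by the fact that $M_n(\C)$ is finite-dimensional so the norm topology is complete), I get
\[
\|\exp(W) - I\| \;\leq\; \sum_{j=1}^{\infty} \frac{\|W^j\|}{j!} \;\leq\; \sum_{j=1}^{\infty} \frac{\|W\|^j}{j!} \;=\; e^{\|W\|} - 1,
\]
where the second inequality is exactly the bound $\|W^j\| \leq \|W\|^j$ obtained by a one-line induction from submultiplicativity.

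Finally, since the function $t \mapsto e^{t} - 1$ is monotonically increasing on $[0,\infty)$ and the hypothesis gives $\|W\| < \epsilon$, it follows that $e^{\|W\|} - 1 \leq e^{\epsilon} - 1$, completing the proof. There is essentially no obstacle here: the entire argument reduces to a single application of the triangle inequality combined with submultiplicativity of the norm. I note in passing that, just like the companion bound $\|\log u\| \leq 2\epsilon$ of \textbf{Lemma~\ref*{logbound}}, this estimate is dimension-free (it depends on neither $n$ nor the particular submultiplicative norm chosen), which is exactly what will be needed later when quantifying defects of internal almost-homomorphisms into $\prod_{\U} U(k_n)$.
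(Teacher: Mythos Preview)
Your proof is correct and is exactly the standard power-series estimate the paper has in mind; indeed the paper does not even write out a proof, calling this a ``trivial bound'' immediately after introducing the series $\exp(A)=\sum_{j\ge 0}A^j/j!$. Your observation that the anti-Hermitian hypothesis is unnecessary and that the bound is dimension-free is also to the point.
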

	Note that since $\mathfrak{u}(n)$ is finite-dimensional, it is complete for any norm, making it a finite-dimensional real Banach space. It comes with an isometric (adjoint) action of $U(n)$ given as follows: for $v \in \mathfrak{u}(n)$ and $U \in U(n)$, $Ad(U)(v) \coloneq UvU^{*} \in \mathfrak{u}(n)$.\\
	Consider the family of $\R$-vector spaces of anti-hermitian matrices $\mathfrak{u}(n)$ each equipped with a submultiplicative norm. 
	$$\Big\{\left( \mathfrak{u}(n), \|\cdot\|\right) \space :n \in \N \text{ and }\|\cdot\| \text{ is submultiplicative}\Big\}$$
	For an infinitesimal $\epsilon \in {}^*\R_{inf}$, define the internal map $$_{\epsilon}\log:\prod_{\U}U(k_n) \to \prod_{\U}\mathfrak{u}(k_n)$$ 
	$$_{\epsilon}\log{u} \coloneq \frac{1}{\epsilon} \{\log{u_{k_n}}\}_{\U}$$
	and similarly, the internal map $\exp:\prod_{\U}\mathfrak{u}(k_n) \to \prod_{\U}U(k_n)$ given by
	$$_{\epsilon}\exp{u} \coloneq \{\exp{\epsilon_{k_n} u_{k_n}}\}_{\U}$$
	
	Let us denote the ultraproduct $\prod_{\U}\mathfrak{u}(k_n)$ by $\W$ from now on. Note that $\W$ comes with a ${}^*\R$-valued norm, which we shall denote simply as $\|\cdot\|$, obtained as the ultraproduct of the respective norms of each $\mathfrak{u}(k_n)$. The bounded elements of $\W$ shall be denoted $\W_b$ while the infinitesimal elements of $\W$ are denoted by $\W_{inf}$. That is,
	\begin{equation}\label{W_b}
	\W_b \coloneq \Big\{w \in \W \space : \|w\| \in {}^*\R_b \Big\}
	\end{equation}
	\begin{equation}\label{W_inf}
	\W_{inf} \coloneq \Big\{ w \in \W \space : \|w\| \in {}^*\R_{inf} \Big\}
	\end{equation}
	The motivation behind scaling the definitions of $\log$ and $\exp$ by $1/\epsilon$ and $\epsilon$ respectively is as follows:
	\begin{proposition}
		The internal map $_{\epsilon}\log: \prod_{\U}U(k_n) \to \W$  when restricted to $B(\epsilon)$, takes values in $\W_b$, and elements in $I(\epsilon)$ are taken to $\mathcal{W}_{inf}$. This induces an isomorphism of the abelian groups $\mathcal{W}_b/\mathcal{W}_{inf}$ and $B(\epsilon)/I(\epsilon)$.
	\end{proposition}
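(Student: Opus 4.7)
The plan is to verify each claim in order and then package them into the isomorphism statement, using that $_\epsilon\log$ and $_\epsilon\exp$ are mutually inverse power series (well defined because we are in the regime $\|u-I\|\le\epsilon$ with $\epsilon\in{}^*\R_{inf}$, so in particular $\epsilon<1/2$ on a large set of indices).

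First, the two containment claims. If $u=\{u_n\}_\U\in B(\epsilon)$, then $\|u_n-I\|\le C\epsilon_n$ for $n\in\U$, so Lemma~\ref{logbound} gives $\|\log u_n\|\le 2C\epsilon_n$ and hence $\|{}_\epsilon\log u\|\le 2C$, i.e.\ $_\epsilon\log u\in\W_b$. The same estimate with $C$ replaced by an infinitesimal shows that $_\epsilon\log$ sends $I(\epsilon)$ into $\W_{inf}$. This makes the induced map
\[
L\colon B(\epsilon)/I(\epsilon)\longrightarrow \W_b/\W_{inf},\qquad [u]\longmapsto [{}_\epsilon\log u]
\]
well defined.

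Next, I would check that $L$ is a group homomorphism. This is the main technical step. For $u,v\in B(\epsilon)$ with $\|u-I\|,\|v-I\|=O_\U(\epsilon)$, the Baker--Campbell--Hausdorff expansion gives $\log(uv)=\log u+\log v+\tfrac12[\log u,\log v]+R(u,v)$, where both the bracket term and the higher remainder $R(u,v)$ are bounded (by submultiplicativity) by a universal constant times $\|\log u\|\cdot\|\log v\|=O_\U(\epsilon^2)$. Dividing by $\epsilon$ gives ${}_\epsilon\log(uv)-{}_\epsilon\log u-{}_\epsilon\log v=O_\U(\epsilon)\in\W_{inf}$. (If one prefers to avoid BCH, the same conclusion follows by directly expanding $\log(uv)$ as a power series in $(u-I)$ and $(v-I)$ and comparing with $\log u+\log v$; every term of total degree $\ge 2$ is $O_\U(\epsilon^2)$.) So $L([u][v])=L([u])+L([v])$ in the abelian quotient $\W_b/\W_{inf}$.

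Finally, I would establish injectivity and surjectivity using $_\epsilon\exp$ as an inverse. For surjectivity: given $w=\{w_n\}_\U\in\W_b$ with $\|w_n\|\le C$, set $u={}_\epsilon\exp(w)=\{\exp(\epsilon_n w_n)\}_\U$; the easy exponential estimate gives $\|\exp(\epsilon_n w_n)-I\|\le e^{C\epsilon_n}-1= O_\U(\epsilon)$, so $u\in B(\epsilon)$, and the identity $\log(\exp(\epsilon_n w_n))=\epsilon_n w_n$ (valid on a large set since $\epsilon_n\|w_n\|<1$) yields $_\epsilon\log u=w$ exactly. For injectivity: if $_\epsilon\log u\in\W_{inf}$, then $\|\log u_n\|=o_\U(\epsilon_n)$, and since $u_n=\exp(\log u_n)$, the exponential bound gives $\|u_n-I\|\le e^{\|\log u_n\|}-1=o_\U(\epsilon_n)$, so $[u]=0$ in $B(\epsilon)/I(\epsilon)$.

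The only place where real care is needed is the homomorphism step, and specifically the observation that the obstruction to $\log$ being additive is a commutator-type term whose scaling by $1/\epsilon$ is still infinitesimal thanks to submultiplicativity; this is exactly the same mechanism that made $B(\epsilon)/I(\epsilon)$ abelian in Lemma~\ref{abelian}, so the proof is internally consistent with what was established there.
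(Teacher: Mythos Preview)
Your proof is correct and follows essentially the same route as the paper: use \cref{logbound} for the containments, use ${}_\epsilon\exp$ as a right inverse for surjectivity, and identify the kernel as $I(\epsilon)$ via the exponential estimate. The only difference is cosmetic: where the paper simply invokes ``properties of the logarithm map'' to obtain ${}_\epsilon\log(u_1u_2)-{}_\epsilon\log u_1-{}_\epsilon\log u_2\in\W_{inf}$, you spell this out via the BCH expansion (or the equivalent direct power-series comparison), which is a welcome clarification rather than a different argument.
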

	\begin{proof}
		It follows from \cref{logbound} that for $u \in B(\epsilon)$, $_{\epsilon}\log{u} \in \W_b$, and for $u \in I(\epsilon)$, $_{\epsilon}\log{u} \in \W_{inf}$. The map is surjective on $\W_b$ since the map $_{\epsilon}\log{(_{\epsilon}\exp{v})}=v$ for $v \in \W_b$ (and similarly for $\W_{inf}$ as well).\\ 
		From properties of the logarithm map, it follows that for $u_1,u_2 \in B(\epsilon)$,$_{\epsilon}\log{u_1u_2}-(_{\epsilon}\log{u_1}+_{\epsilon}\log{u_2}) \in \W_{inf}$. Hence $_{\epsilon}\log$ induces a surjective group homomorphism from $B(\epsilon)$ to $\W_b/\W_{inf}$ with kernel $I(\epsilon)$. 
	\end{proof}
	The ultralimit $\W_b/\W_{inf}$ shall be denoted $\tilde{\W}$. The above lemma tells us that $\tilde{\W} \cong B(\epsilon)/I(\epsilon)$. In fact, $\tilde{\W} \cong B(\epsilon)/I(\epsilon)$ is not just an abelian group but has the structure of a real Banach space. It is an example of a construction known as a \emph{Banach space ultralimit}. We shall not prove this result here, but refer to \cite{banachultra} and \cite{nonstandard} for more details:
	\begin{proposition}[\cite{banachultra}]
		The space $\tilde{W}=B(\epsilon)/I(\epsilon)$ is a real Banach space.
	\end{proposition}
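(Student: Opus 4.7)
The plan is to verify the three defining features of a real Banach space for $\tilde{\W} = \W_b/\W_{inf}$: a real vector space structure, a well-defined norm, and completeness. The first two steps are essentially bookkeeping that descends from structure already present on $\W$; the nontrivial ingredient, as is standard for Banach space ultralimits, is completeness, which will be handled via the countable saturation principle recalled in \S\ref{ssec-ultraproducts}.

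First, although $\W = \prod_{\U}\mathfrak{u}(k_n)$ is a ${}^*\R$-vector space, its subset $\W_b$ of \cref{W_b} is closed under addition (by the triangle inequality for the ${}^*\R$-valued norm) and under scaling by \emph{standard} reals, so it is a real vector subspace of $\W$. The same holds for $\W_{inf}$ of \cref{W_inf}, which is in addition an $\R$-linear subspace of $\W_b$. Hence $\tilde{\W} = \W_b/\W_{inf}$ inherits a real vector space structure, and the previous proposition shows this structure is compatible with the abelian group isomorphism $\tilde{\W} \cong B(\epsilon)/I(\epsilon)$.

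Next, define a norm on $\tilde{\W}$ via the standard part map: for $[w] \in \tilde{\W}$ with representative $w \in \W_b$, set $\|[w]\| := \mathrm{st}(\|w\|) \in \R_{\geq 0}$. This is well-defined, since two representatives differ by an element of $\W_{inf}$, and the reverse triangle inequality for the internal norm then shows their ${}^*\R$-valued norms differ by an infinitesimal, which has the same standard part. Positive definiteness holds because $\mathrm{st}(\|w\|) = 0$ forces $\|w\| \in {}^*\R_{inf}$ and hence $w \in \W_{inf}$, i.e., $[w] = 0$. Homogeneity over $\R$ and the triangle inequality follow because the standard part map is a ring homomorphism on ${}^*\R_b$ restricted to standard scalars.

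The main obstacle is completeness. Given a Cauchy sequence $\{[w^{(k)}]\}_{k \in \N}$ in $\tilde{\W}$, pass to a subsequence with $\|[w^{(k+1)}] - [w^{(k)}]\| < 2^{-k}$ and choose representatives $w^{(k)} \in \W_b$ satisfying $\|w^{(k+1)} - w^{(k)}\| < 2^{-k+1}$, which is possible after adjusting by elements of $\W_{inf}$. For each $k \in \N$, consider the internal subset
\[
A_k := \bigl\{ v \in \W \; : \; \|v - w^{(j)}\| \leq 2^{-j+2} \text{ for all } 1 \leq j \leq k\bigr\}.
\]
Each $A_k$ is internal, because $\W$ is internal (as an ultraproduct of the internal Banach spaces $\mathfrak{u}(k_n)$) and the defining condition is a finite internal conjunction over the internal parameters $w^{(1)}, \dots, w^{(k)}$; and each $A_k$ is non-empty since $v = w^{(k)}$ lies in $A_k$ by the triangle inequality and the choice of representatives. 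The family $\{A_k\}_{k \in \N}$ is nested, hence has the finite intersection property, so countable saturation produces an element $w \in \bigcap_k A_k$. Such a $w$ lies in $\W_b$ and satisfies $\|w - w^{(k)}\| \leq 2^{-k+2}$ for every $k$, so $\mathrm{st}(\|w - w^{(k)}\|) \to 0$, yielding the required limit $[w] \in \tilde{\W}$. The delicate point here is ensuring that each $A_k$ is genuinely internal, which is exactly why the earlier internality framework is set up so that both $\W$ and the chosen representatives are internal objects.
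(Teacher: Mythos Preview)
Your proof is correct. The paper does not actually give its own proof of this proposition: it explicitly states ``We shall not prove this result here, but refer to \cite{banachultra} and \cite{nonstandard} for more details,'' deferring entirely to the cited references on Banach space ultraproducts. What you have written is precisely the standard argument for completeness of a Banach space ultralimit via countable saturation (as recalled in \S\ref{ssec-ultraproducts}), so you have supplied the details the paper chose to omit.
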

	Recall that we had defined (\cref{rhopsi}) an internal map $\rho_{\psi}:{}^*\Gamma \times \prod_{\U}U(k_n) \to \prod_{\U}U(k_n)$ defined as $\rho_{\psi}(g)v=\psi(g)v \psi(g)^{-1}$ which had induced an action $\tilde{\rho}_{\phi}$ of ${}^*\Gamma$ on $B(\epsilon)/I(\epsilon)$. We can similarly define an internal map
	$$\pi_{\psi}:{}^*\Gamma \times \W \to \W$$
	through the internal adjoint action of $\prod_{\U}U(k_n)$ on $\W$ (that is, conjugation),
	\begin{equation}\label{rho}
	\pi_{\psi}v \coloneqq \psi(g)v \psi(g)^{-1}
	\end{equation}
	Again, by the submultiplicativity of the norms, for $v \in \W_b$ and $g_1,g_2 \in {}^*\Gamma$, 
	$$\pi_{\psi}(g_1g_2)v-\pi_{\psi}(g_1)\pi_{\psi}(g_2)v \in \W_{inf} $$
	Thus, the internal map $\pi_{\psi}$ as defined above induces an action of ${}^*\Gamma$ on $\W_b/\W_{inf}$. Unless there is ambiguity, we shall denote the induced action of $g \in {}^*\Gamma$ on $\tilde{v} \in \W_b/\W_{inf}$ through $\pi_{\psi}$ by $g\cdot \tilde{v}$.
	\begin{lemma}
		The internal map $_{\epsilon}\log:\prod_{\U}U(k_n) \to \W$ induces a ${}^*\Gamma$-equivariant (additive) group isomorphism between $B(\epsilon)/I(\epsilon)$ (with the action induced from $\rho_{\psi}$) and $\tilde{\W}$ (with the action induced from $\pi_{\psi}$). 
	\end{lemma}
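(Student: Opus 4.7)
The plan is to extract equivariance from a pointwise conjugation identity for the matrix logarithm, transfer it through the ultraproduct, and then pass to quotients. Bijectivity of the induced map $B(\epsilon)/I(\epsilon) \to \tilde{\W}$ as abelian groups is already established by the preceding proposition, so the only new content is the equivariance statement; the rest is bookkeeping.

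First I would record the elementary identity: for any submultiplicative norm on $M_n(\C)$, any $v \in U(n)$, and any $u \in M_n(\C)$ with $\|u - I\| < 1$, we have $\log(v u v^{-1}) = v (\log u) v^{-1}$. This is immediate from the power series definition $\log u = \sum_{j \geq 1} (-1)^{j-1} (u-I)^j/j$ together with the fact that conjugation is an algebra automorphism (so $(vuv^{-1} - I)^j = v(u-I)^j v^{-1}$). Applying the transfer principle, the same identity holds internally for $v, u \in \prod_{\U} U(k_n)$ whenever the defining series for $\log u$ converges, which is the case for every $u \in B(\epsilon)$ since $\epsilon \in {}^*\R_{inf}$ and Lemma~\ref{logbound} applies componentwise.

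Next, since the normalization ${}_{\epsilon}\log = (1/\epsilon)\log$ is multiplication by a scalar and therefore commutes with conjugation, the same identity upgrades: for every $g \in {}^*\Gamma$ and every $u \in B(\epsilon)$,
\[
{}_{\epsilon}\log\bigl(\rho_{\psi}(g)\, u\bigr) \;=\; {}_{\epsilon}\log\bigl(\psi(g)\, u\, \psi(g)^{-1}\bigr) \;=\; \psi(g)\,\bigl({}_{\epsilon}\log u\bigr)\,\psi(g)^{-1} \;=\; \pi_{\psi}(g)\,{}_{\epsilon}\log u.
\]
Thus ${}_{\epsilon}\log$ intertwines $\rho_{\psi}$ and $\pi_{\psi}$ \emph{on the nose} at the level of internal maps, before any quotient is formed. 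This is the heart of the argument, and I do not anticipate a real obstacle here: the content is an algebraic identity transferred to the ultraproduct.

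Finally, I would descend to the quotients. The previous subsection already showed that $\rho_{\psi}$ preserves both $B(\epsilon)$ and $I(\epsilon)$ and induces the well-defined ${}^*\Gamma$-action $\tilde{\rho}_{\phi}$ on $B(\epsilon)/I(\epsilon)$; an identical argument (sketched in the paragraph just before the lemma) shows that $\pi_{\psi}$ preserves $\W_b$ and $\W_{inf}$ and induces a well-defined ${}^*\Gamma$-action on $\tilde{\W} = \W_b/\W_{inf}$, with the failure of $\pi_{\psi}(g_1 g_2) = \pi_{\psi}(g_1)\pi_{\psi}(g_2)$ absorbed into $\W_{inf}$ via the same commutator estimate used in the proof of Lemma~\ref{abelian} (the defect of $\psi$ lies in $B(\epsilon)$, and its conjugation action on $\W_b$ differs from the identity only modulo $\W_{inf}$). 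The on-the-nose equivariance established above then descends to an equivariance of the induced map $B(\epsilon)/I(\epsilon) \to \tilde{\W}$, and combined with the isomorphism statement of the previous proposition this completes the proof.
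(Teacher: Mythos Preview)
Your proof is correct and follows essentially the same approach as the paper's: both arguments reduce the equivariance to the conjugation invariance of the matrix logarithm, i.e., $\log(\psi(g)\,u\,\psi(g)^{-1}) = \psi(g)(\log u)\psi(g)^{-1}$, and then note that the group isomorphism was already established. Your write-up is more explicit about the power-series justification and the descent to quotients, but there is no substantive difference in strategy.
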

	\begin{proof}
		We already saw that $_{\epsilon}\log$ induces a group isomorphism between $B(\epsilon)/I(\epsilon)$ and $\W_b/\W_{inf}$. Let $g \in {}^*\Gamma$ and $u \in B(\epsilon)$. Then $\rho_{\psi}(g)u=\psi(g) u \psi(g)^{-1}$, which means that $_{\epsilon}\log(\rho_{\psi}(g)u)=\psi(g) _{\epsilon}\log{u} \psi(g)^{-1} $ since the matrix logarithm is invariant with respect to conjugation by a unitary matrix. 
	\end{proof}
	In particular, $\tilde{\W}$ is a real Banach space with an isometric action of ${}^*\Gamma$ (it is a real Banach ${}^*\Gamma$-module).
	\begin{remark}\label{realbanach}
		The fact that $\W=B(\epsilon)/I(\epsilon)$ is a real Banach ${}^*\Gamma$-module is useful in reducing (pointwise) stability of $\Gamma$ with respect to the family $\mathfrak{U}$ to showing that $H^2(\Gamma,\W)=0$, and this line of study is pursued in \cite{DCGLT} and \cite{oppen}. However, in our setting of uniform stability, the Banach structure of $\W$ is not as directly relevant. 
	\end{remark}
	Corresponding to the internal map $\alpha_{\psi}:{}^*\Gamma \times {}^*\Gamma \to \prod_{\U}U(k_n)$, define the internal map
	$$\alpha:{}^*\Gamma \times {}^*\Gamma \to \W$$
	\begin{equation}\label{alpha}
	\alpha(g_1,g_2) \coloneqq _{\epsilon}\log{\alpha_{\psi}(g_1,g_2)}
	\end{equation}
	Since $\alpha_{\psi}:{}^*\Gamma \times {}^*\Gamma \to \prod_{\U}U(k_n)$ takes values only in $B(\epsilon)$, it is clear that $\alpha:{}^*\Gamma \times {}^*\Gamma \to \W$ takes values only in $\W_b$. We shall denote by $\tilde{\alpha}:{}^*\Gamma \times {}^*\Gamma \to \tilde{\W}$ the map induced obtained by composing $\alpha$ with the canonical quotient map $\W_b \to \tilde{\W}$. 
	\begin{lemma}
		The map $\alpha:{}^*\Gamma \times {}^*\Gamma \to \W$ satisfies the following condition: for any $g_1,g_2,g_3 \in {}^*\Gamma$,
		$$\pi_{\psi}(g_1) \alpha(g_2,g_3)-\alpha(g_1g_2,g_3)+\alpha(g_1,g_2g_3)-\alpha(g_1,g_2) \in \W_{inf}$$
	\end{lemma}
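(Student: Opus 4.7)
The plan is to take the multiplicative identity from \cref{cocycle1}, pass to logarithms, and exploit that $\log$ is approximately linear on elements of $B(\epsilon)$, with the additive error being sharp enough to survive the $1/\epsilon$ rescaling built into $\alpha$.

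First, I would establish an ``approximate additivity'' estimate: for $u, v \in B(\epsilon)$ with $\epsilon \in {}^*\R_{inf}$, the quantity $\log(uv) - \log u - \log v$ has norm $O_\U(\epsilon^2)$. By \cref{logbound}, $X := \log u$ and $Y := \log v$ satisfy $\|X\|, \|Y\| = O_\U(\epsilon)$, and Baker--Campbell--Hausdorff gives $\log(e^X e^Y) = X + Y + \tfrac{1}{2}[X,Y] + \cdots$, where each subsequent term is a nested commutator of at least two of $X,Y$. Since the target norm is submultiplicative, every such term has norm $O_\U(\epsilon^2)$, and the BCH series converges uniformly for $\|X\|, \|Y\|$ sufficiently small. (Equivalently, expand $\log(uv) = \sum_{j \geq 1}(-1)^{j-1}(uv - I)^j / j$ using $uv - I = (u - I) + (v - I) + (u - I)(v - I)$ and collect: linear contributions give $\log u + \log v$, and all remaining cross terms are products of at least two factors of norm $O_\U(\epsilon)$.) I would also use that the matrix logarithm commutes with unitary conjugation, i.e.\ $\log(w u w^{-1}) = w (\log u) w^{-1}$ for $w \in \prod_\U U(k_n)$ and $u \in B(\epsilon)$, which is immediate from the defining power series.

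Now apply this to the identity of \cref{cocycle1}, rewritten as
\[
\alpha_\psi(g_1, g_2) \cdot \alpha_\psi(g_1 g_2, g_3) \;=\; \bigl(\psi(g_1)\, \alpha_\psi(g_2, g_3)\, \psi(g_1)^{-1}\bigr) \cdot \alpha_\psi(g_1, g_2 g_3).
\]
Both sides are products of two elements of $B(\epsilon)$ (the conjugate on the right still lies in $B(\epsilon)$ by unitary bi-invariance of the norm). Taking $\log$ on both sides and applying the approximate additivity estimate to each side, together with the conjugation identity on the first factor of the right-hand side, yields
\[
\log \alpha_\psi(g_1, g_2) + \log \alpha_\psi(g_1 g_2, g_3)
\;=\; \psi(g_1)\, \log \alpha_\psi(g_2, g_3)\, \psi(g_1)^{-1} + \log \alpha_\psi(g_1, g_2 g_3) + r,
\]
where $\|r\| = O_\U(\epsilon^2)$. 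Dividing through by $\epsilon$ and invoking the definitions $\alpha = {}_\epsilon\!\log \circ \alpha_\psi$ (cf.~\eqref{alpha}) and $\pi_\psi$ (cf.~\eqref{rho}) produces
\[
\alpha(g_1, g_2) + \alpha(g_1 g_2, g_3) - \pi_\psi(g_1)\alpha(g_2, g_3) - \alpha(g_1, g_2 g_3) \;=\; r/\epsilon,
\]
and $\|r/\epsilon\| = O_\U(\epsilon) \in {}^*\R_{inf}$, so the left-hand side lies in $\W_{inf}$. Rearranging signs gives exactly the stated relation.

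The one delicate point is ensuring that the BCH (or direct series) estimate is \emph{uniform} in $n$: the constant hidden in $O_\U(\epsilon^2)$ must not grow with the dimension $k_n$ of the unitary group. This is precisely where submultiplicativity of the chosen matrix norm is essential, since the BCH bound depends only on $\|X\|, \|Y\|$ and the submultiplicativity constant (which is $1$); nothing in the argument uses anything dimension-dependent. Once this uniformity is in hand, the rest of the argument is a formal computation in the ultraproduct.
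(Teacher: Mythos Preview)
Your proof is correct and follows essentially the same route as the paper: the paper invokes the previously established fact that ${}_\epsilon\!\log$ induces a ${}^*\Gamma$-equivariant group homomorphism $B(\epsilon)/I(\epsilon) \to \W_b/\W_{inf}$, which is exactly the approximate additivity estimate you derive directly via BCH. The only difference is organizational --- the paper packages your $O_\U(\epsilon^2)$ estimate into an earlier proposition and then cites it, whereas you re-derive it in place.
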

	\begin{proof}
		Recall that $\alpha_{\psi}$ satisfies the following property: for $g_1,g_2,g_3 \in {}^*\Gamma$,
		$$\alpha_{\psi}(g_1,g_2)\alpha_{\psi}(g_1g_2,g_3)\alpha_{\psi}(g_1,g_2g_3)^{-1}=\rho_{\psi}(g_1)\alpha_{\psi}(g_2,g_3)$$
		The conclusion then follows from the fact that the map $_{\epsilon}\log: \prod_{\U}U(k_n) \to \W$ induces a ${}^*\Gamma$-equivariant group homomorphism between $B(\epsilon)/I(\epsilon)$ and $\W_b/\W_{inf}$. 
	\end{proof}
	Thus, the induced map $\tilde{\alpha}:{}^*\Gamma \times {}^*\Gamma \to \W$ satisifes the \textbf{$2$-cocycle} condition given by: for $g_1,g_2,g_3 \in {}^*\Gamma$,
	\begin{equation}\label{2cocycle}
	g_1 \cdot  \tilde{\alpha}(g_2,g_3)-\tilde{\alpha}(g_1g_2,g_3)+\tilde{\alpha}(g_1,g_2g_3)-\tilde{\alpha}(g_1,g_2) =0
	\end{equation}
	So transfering to the internal Lie algebra through the $_{\epsilon}\log$ map, we thus have a map $\alpha:{}^*\Gamma \times {}^*\Gamma \to \W$ which takes values in $\W_b$ and satisfies the 2-cocycle condition modulo $\W_{inf}$. Recall that the defect diminishing condition was implied by the following statement: suppose $\alpha_{\psi}:{}^*\Gamma \times {}^*\Gamma \to \prod_{\U}U(k_n)$ is an internal function taking values in $B(\epsilon)$ and such that $\tilde{\alpha}_{\phi}$ satisfies the $2$-cocycle condition. Then there exists an internal $\beta:{}^*\Gamma \to \prod_{\U}U(k_n)$ taking values in $B(\epsilon)$ such that $\tilde{\alpha}_{\phi}(g_1,g_2)=\tilde{\beta}(g_1)+g_1 \cdot \tilde{\beta}(g_2)-\tilde{\beta}(g_1g_2)$
	Using the $_{\epsilon}\log$ map to transfer to $\W$, we conclude with the following proposition summarizing our work so far:
	\begin{proposition}\label{mainthm1}
		Suppose for every internal $\alpha:{}^*\Gamma \times {}^*\Gamma \to \W$ with $Im(\alpha) \subseteq \W_b$ that satisfies the $2$-cocycle condition (\cref{2cocycle}), there exists an internal $\beta:{}^*\Gamma \to \W$ taking values in $\W_b$ such that 
		$$\tilde{\alpha}(g_1,g_2)=\tilde{\beta}(g_1)+g_1 \cdot \tilde{\beta}(g_2)-\tilde{\beta}(g_1g_2)$$
		then $\Gamma$ exhibits the defect diminishing property, and is therefore uniformly $\mathfrak{U}$-stable with a linear estimate.
	\end{proposition}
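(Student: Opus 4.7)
The plan is to show that the hypothesis implies the defect diminishing property stated in \cref{def-defdim}, from which uniform $\mathfrak{U}$-stability with a linear estimate follows by the equivalence established in \S\ref{ssec-liftings}. So fix an arbitrary internal asymptotic homomorphism $\phi:{}^*\Gamma \to \prod_{\U}U(k_n)$ with infinitesimal defect $\epsilon = def(\phi)$, take $\psi := \phi$ as a tautological internal lift of the induced homomorphism $\tilde\phi:{}^*\Gamma \to \prod_{\U}U(k_n)/B(\epsilon)$, and attach to it the internal map $\alpha_\psi(g_1,g_2) = \psi(g_1)\psi(g_2)\psi(g_1g_2)^{-1}$ with values in $B(\epsilon)$. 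The ${}_\epsilon\!\log$ transfer of \cref{alpha} then produces the internal map $\alpha:{}^*\Gamma \times {}^*\Gamma \to \W$ with image in $\W_b$, and the discussion leading to \cref{2cocycle} ensures that its reduction $\tilde\alpha$ satisfies the 2-cocycle relation in $\tilde\W$ with respect to the induced action $\pi_\psi$ of \cref{rho}.

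Applying the hypothesis to this $\alpha$ yields an internal $\beta:{}^*\Gamma \to \W$ with image in $\W_b$ such that
\[
\tilde\alpha(g_1,g_2) \;=\; \tilde\beta(g_1) + g_1 \cdot \tilde\beta(g_2) - \tilde\beta(g_1g_2) \quad\text{in } \tilde\W.
\]
I then transport $\beta$ back to the group side by defining the internal map $b(g) := {}_\epsilon\!\exp(-\beta(g)) \in \prod_{\U}U(k_n)$, which lies in $B(\epsilon)$ by the exponential bound, and setting $\psi^{\sim}(g) := b(g)\,\psi(g)$. Since $b$ takes values in $B(\epsilon)$, $\psi^\sim$ is again an internal lift of $\tilde\phi$ and satisfies $dist(\phi,\psi^\sim) = O_{\U}(\epsilon)$, giving the displacement bound required by \cref{def-defdim}.

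The core of the argument is verifying that $\psi^\sim$ achieves $def(\psi^\sim) = o_{\U}(\epsilon)$, equivalently that $\alpha_{\psi^\sim}$ takes values in $I(\epsilon)$. Expanding $\alpha_{\psi^\sim}(g_1,g_2)$ gives a product of six factors involving $b(g_i)$, $\psi(g_i)$ and $\alpha_\psi(g_1,g_2)$. Three observations reduce this to the hypothesis: (i) conjugation of $b(g_2)$ by $\psi(g_1)$ equals ${}_\epsilon\!\exp$ of the $\pi_\psi(g_1)$-image of $-\beta(g_2)$, since the matrix logarithm is $U(k_n)$-equivariant under conjugation; (ii) any two elements of $B(\epsilon)$ commute modulo $I(\epsilon)$ by \cref{abelian}, so the factors may be freely rearranged multiplicatively without altering the $I(\epsilon)$-class; and (iii) under ${}_\epsilon\!\log$, multiplicative products in $B(\epsilon)$ correspond to sums in $\W_b$ modulo $\W_{inf}$. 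Combining these, the $\tilde\W$-reduction of ${}_\epsilon\!\log\,\alpha_{\psi^\sim}(g_1,g_2)$ becomes $\tilde\alpha(g_1,g_2) - \bigl(\tilde\beta(g_1) + g_1 \cdot \tilde\beta(g_2) - \tilde\beta(g_1g_2)\bigr) = 0$, as desired. The main obstacle is precisely this bookkeeping step: justifying that passing between the $\exp/\log$ duality and between multiplicative and additive expressions introduces only $o_{\U}(\epsilon)$ errors. This is exactly where submultiplicativity of the norms is indispensable, since it is what allows \cref{abelian} to treat $B(\epsilon)$ as abelian modulo $I(\epsilon)$ and what controls the higher order tails of the power series defining $\exp$ and $\log$.
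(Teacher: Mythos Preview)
Your proof is correct and follows essentially the same route as the paper: the paper's argument is the discussion in \S\ref{ssec-abeliankernel}--\S\ref{ssec-logarithm} that the proposition explicitly summarizes, namely producing the correction $\psi^\sim$ from $\beta$ via the ${}_\epsilon\log/{}_\epsilon\exp$ transfer and the abelianness of $B(\epsilon)/I(\epsilon)$. The only cosmetic differences are that the paper writes the correction as $\psi^\sim(g)=\psi(g)\beta(g)^{-1}$ on the right and appeals to the relation \eqref{cocycle3} rather than expanding $\alpha_{\psi^\sim}$ directly, whereas you multiply on the left and compute the six-factor product by hand; since $B(\epsilon)/I(\epsilon)$ is abelian these amount to the same verification.
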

	Since we shall work with internal maps from $({}^*\Gamma)^2$ (or ${}^*\Gamma$) to $\W$ that take values in $\W_b$, it is helpful to describe such a map as an ultraproduct of bounded maps. Let $\{\alpha_{n} \in \ell^{\infty}(\Gamma^2, \mathfrak{u}(k_n))\}_{k=1}^{\infty}$ be a family of maps such that there exists a constant $C>0$ such that $\|\alpha_{n}\|_{\infty} \leq C$ for every $n \in \N$. Then it is clear that the ultraproduct $\alpha = \{\alpha_{n}\}_{\U}$ has image $Im(\alpha) \subseteq \W_b$. Conversely, 
	\begin{lemma}
		Let $\alpha:{}^*\Gamma \times {}^*\Gamma \to \W$ be an internal map with $Im(\alpha) \subseteq \W_b$. Then there exists a family $\{\alpha_{n} \in \ell^{\infty}(\Gamma^2, \mathfrak{u}(k_n))\}_{k=1}^{\infty}$ such that $\alpha = \{\alpha_{n}\}_{\U}$. Conversely, if $\{\alpha_{n} \in \ell^{\infty}(\Gamma^2, \mathfrak{u}(k_n))\}_{k=1}^{\infty}$ is a family of maps such that $\{\|\alpha_{n}\|_{\infty}\}_{\U} \in {}^*\R_b$, then $\alpha = \{\alpha_{n}\}_{\U}{}^*\Gamma \times {}^*\Gamma \to \W$ is an internal map with $Im(\alpha) \subseteq \W_b$.
	\end{lemma}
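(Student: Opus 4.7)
The converse direction is essentially a direct calculation, so I would dispatch it first. Given a family $\{\alpha_n \in \ell^\infty(\Gamma^2, \mathfrak{u}(k_n))\}_{n \in \N}$ with $M := \{\|\alpha_n\|_\infty\}_\U \in {}^*\R_b$, the map $\alpha := \{\alpha_n\}_\U$ is internal by the very definition of internality. For $(g_1, g_2) \in ({}^*\Gamma)^2$ represented as $\{(g_1^{(n)}, g_2^{(n)})\}_\U$, the ${}^*\R$-valued norm satisfies $\|\alpha(g_1, g_2)\| = \{\|\alpha_n(g_1^{(n)}, g_2^{(n)})\|\}_\U \leq M$, and since $M \in {}^*\R_b$, we conclude $\alpha(g_1, g_2) \in \W_b$, giving $Im(\alpha) \subseteq \W_b$.

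For the forward direction, internality of $\alpha$ furnishes a sequence of (a priori not necessarily bounded) maps $\alpha_n : \Gamma^2 \to \mathfrak{u}(k_n)$ with $\alpha = \{\alpha_n\}_\U$. The heart of the proof is to show that in fact $\{\|\alpha_n\|_\infty\}_\U \in {}^*\R_b$, allowing here the convention $\|\alpha_n\|_\infty \in [0, \infty]$. I would argue this by contradiction: assume $\{\|\alpha_n\|_\infty\}_\U$ fails to be bounded by any real $C$, i.e., for every $k \in \N$ the set $S_k := \{n : \|\alpha_n\|_\infty > k\}$ belongs to $\U$. A diagonal selection then extracts a witness $(g_1, g_2) \in ({}^*\Gamma)^2$ with $\|\alpha(g_1, g_2)\|$ unlimited. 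Concretely, set $f(n) := \min(n, \|\alpha_n\|_\infty)$, a finite non-negative real, and by definition of supremum choose $(g_1^{(n)}, g_2^{(n)}) \in \Gamma^2$ with $\|\alpha_n(g_1^{(n)}, g_2^{(n)})\| \geq f(n) - 1$ (possible in both subcases: $\|\alpha_n\|_\infty$ finite, or $\|\alpha_n\|_\infty = +\infty$ in which case $f(n) = n$). For each fixed $k \in \N$, on the large set $S_k \cap \{n \geq k\} \in \U$ one has $f(n) \geq k$, so $\{f(n)\}_\U \geq k$ for every $k$; hence $g_i := \{g_i^{(n)}\}_\U$ yields $\|\alpha(g_1, g_2)\| \geq \{f(n) - 1\}_\U$, which is unlimited, contradicting $Im(\alpha) \subseteq \W_b$.

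With $\{\|\alpha_n\|_\infty\}_\U \in {}^*\R_b$ in hand, on a large set $T \in \U$ each $\alpha_n$ is genuinely bounded, i.e., lies in $\ell^\infty(\Gamma^2, \mathfrak{u}(k_n))$; on the small complement one simply replaces $\alpha_n$ by the zero map, which does not alter the ultraproduct. I expect the main obstacle to be the contradiction step of the forward direction: one needs a coherent diagonal choice of points $(g_1^{(n)}, g_2^{(n)})$ that simultaneously handles the case where $\|\alpha_n\|_\infty$ is finite but unbounded across $n$, and the case where $\|\alpha_n\|_\infty = +\infty$ on a large set of indices. The unified choice $f(n) = \min(n, \|\alpha_n\|_\infty)$ is precisely what covers both subcases; the remainder of the argument is a routine unwinding of the definitions of internal map, $\W_b$, and ultraproduct norm.
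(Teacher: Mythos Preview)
Your proof is correct and follows essentially the same contradiction strategy as the paper: represent $\alpha$ internally by maps $\alpha_n$, then pick witnesses $(g_1^{(n)}, g_2^{(n)})$ whose values have norms growing to infinity along $\U$, yielding an element of $Im(\alpha)$ outside $\W_b$. The only difference is that the paper argues the slightly weaker claim that $\{n : \alpha_n \text{ is unbounded}\}\notin\U$ (picking points with $\|\alpha_n(x_n,y_n)\|\geq n$ directly from unboundedness), whereas your diagonal choice $f(n)=\min(n,\|\alpha_n\|_\infty)$ simultaneously establishes the stronger conclusion $\{\|\alpha_n\|_\infty\}_\U \in {}^*\R_b$---which the paper in fact asserts immediately after the lemma and uses going forward, so your extra care is well placed.
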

	\begin{proof}
		Since $\alpha$ is internal, it is of the form $\alpha=\{f_{n}\}_{\U}$ for a family of maps $f_{n}:\Gamma \times \Gamma \to \mathfrak{u}(k_n)$. For a subset $S \in \U$, suppose $f_{n}$ is unbounded for every $n \in S$. Then for each $n \in S$, there exists $x_{n},y_{n} \in \Gamma$ such that $f_{n}(x_{n},y_{n})$ has norm at least $n$. In particular, for $x=\{x_{n}\}_{\U}$ and $y=\{y_{n}\}_{\U}$, we have $\alpha(x,y) \notin \W_b$. This is a contradiction to the hypothesis that $Im(\alpha) \subseteq \W_b$. The converse is immediate from the definition of $\W_b$. 
	\end{proof}
	Thus, an internal map $\alpha:{}^*\Gamma \times {}^*\Gamma \to \W$ with $Im(\alpha) \subseteq \W_b$ can be described as $\{\alpha_{n}\}_{\U}$ where for every $k \in \N$, $\alpha_{n}:\Gamma \times \Gamma \to \mathfrak{u}(k_n)$ is a bounded map, and such that $\{\|\alpha_{n}\|_{\infty}\}_{\U} \in {}^*\R_b$. In other words, the internal map $\alpha$ is the ultraproduct of bounded maps, and is also bounded as an ultraproduct.\\
	From now on, we shall regard $\alpha$ as an element of $\prod_{\U}\ell^{\infty}(({}^*\Gamma)^2,\mathfrak{u}(k_n)$, which we shall henceforth denote $\L^{\infty}(({}^*\Gamma)^2,\W)$  (understood as the space of internal maps from $({}^*\Gamma)^2$ to $\W$. In fact, $\alpha$ is actually an element of $\L^{\infty}(({}^*\Gamma)^2,\W)_b$ since not only is it internally bounded, but also $\{\|\alpha_{n}\|_{\infty}\}_{\U} \in {}^*\R_b$. In general, we shall use the following notation:
	\begin{itemize}
		\item For $m \in \N$, the internal space $\L^{\infty}(({}^*\Gamma)^m,\W)$ is defined as
		$$\L^{\infty}(({}^*\Gamma)^m,\W) \coloneq \Big\{ \ell^{\infty}(\Gamma^m,\mathfrak{u}(k_n))  \Big\}_{\U}$$
		\item The (external) subspace of $\L^{\infty}(({}^*\Gamma)^m,\W)$ comprising internal functions with bounded (supremum) norm will be denoted $\L^{\infty}_b(({}^*\Gamma)^m,\W)$ while the (external) subspace of $\L^{\infty}_b(({}^*\Gamma)^m,\W)$ comprising internal functions with infinitesimal (supremum) norm will be denoted $\L^{\infty}_{inf}(({}^*\Gamma)^m,\W)$. 
		\item The quotient $\L^{\infty}_b(({}^*\Gamma)^m,\W)/\L^{\infty}_{inf}(({}^*\Gamma)^m,\W)$ shall be denoted $\tilde{\L}^{\infty}(({}^*\Gamma)^m,\W)$. This space comprises bounded maps from $({}^*\Gamma)^m$ to $\tilde{\W}$ that are induced from internal maps in $\L^{\infty}(({}^*\Gamma)^m,\W)$. As in \cref{realbanach}, $\tilde{\L}^{\infty}(({}^*\Gamma)^m,\W)$ is a real Banach space.
		\item For a map $f \in \L^{\infty}_b(({}^*\Gamma)^m,\W)$, we shall denote by $\tilde{f} \in \tilde{\L}^{\infty}(({}^*\Gamma)^m,\W)$ the composition of $f$ with the canonical quotient map $\L^{\infty}_b(({}^*\Gamma)^m,\W)\to \tilde{\L}^{\infty}(({}^*\Gamma)^m,\W)$.
	\end{itemize}
	For convenience, we restate \cref{mainthm1} in this notation:
	\begin{proposition}\label{mainthm2}
		Suppose for every $\alpha \in \L^{\infty}_b(({}^*\Gamma)^2,\W)$ that satisfies the $2$-cocycle condition (\cref{2cocycle}), there exists a $\beta \in \L^{\infty}_b(({}^*\Gamma)^1,\W)$ such that 
		$$\tilde{\alpha}(g_1,g_2)=\tilde{\beta}(g_1)+g_1 \cdot \tilde{\beta}(g_2)-\tilde{\beta}(g_1g_2)$$
		then $\Gamma$ exhibits the defect diminishing property, and is therefore uniformly $\mathfrak{U}$-stable with a linear estimate.
	\end{proposition}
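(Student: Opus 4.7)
The plan is to assemble the pieces already built in \S\ref{ssec-abeliankernel} and \S\ref{ssec-logarithm}, so the real content of the proof is just the traceback from the cohomological hypothesis to the defect-diminishing property, after which the theorem at the end of \S\ref{ssec-liftings} finishes the job. First, I would start with an arbitrary internal asymptotic homomorphism $\phi=\{\phi_n\}_{\U}:{}^*\Gamma\to\prod_{\U}U(k_n)$ with $\epsilon\coloneq def(\phi)\in{}^*\R_{inf}$. Taking $\psi\coloneq\phi$ as an initial internal lift of the induced homomorphism $\tilde{\phi}:{}^*\Gamma\to\prod_{\U}U(k_n)/B(\epsilon)$, form $\alpha_\psi(g_1,g_2)=\psi(g_1)\psi(g_2)\psi(g_1g_2)^{-1}$ which takes values in $B(\epsilon)$, and pass to the Lie algebra via $\alpha\coloneq{}_\epsilon\log\alpha_\psi$. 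By \cref{logbound} $\alpha\in\L^\infty_b(({}^*\Gamma)^2,\W)$, and by the discussion preceding \cref{2cocycle} its class $\tilde{\alpha}\in\tilde{\L}^\infty(({}^*\Gamma)^2,\W)$ satisfies the $2$-cocycle condition.

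Next, I would apply the hypothesis of the proposition to produce $\beta\in\L^\infty_b({}^*\Gamma,\W)$ with $\tilde{\alpha}=d\tilde{\beta}$, and exponentiate back: set $\gamma(g)\coloneq {}_\epsilon\exp(\beta(g))\in\prod_{\U}U(k_n)$. Since $\|\beta\|$ is bounded in ${}^*\R_b$, the bound on the matrix exponential places $\gamma(g)$ in $B(\epsilon)$. Define the corrected lift
\begin{equation*}
\psi^{\sim}(g)\coloneq\psi(g)\gamma(g)^{-1}.
\end{equation*}
Because $\gamma$ takes values in $B(\epsilon)$, $\psi^{\sim}$ is again an internal lift of $\tilde{\phi}$, and $dist(\phi,\psi^{\sim})=O_{\U}(\epsilon)$.

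The core verification is that $\alpha_{\psi^{\sim}}$ takes values in $I(\epsilon)$. For this I would compute via the equivariant isomorphism $B(\epsilon)/I(\epsilon)\cong\tilde{\W}$: modulo $\W_{inf}$, multiplying elements of $B(\epsilon)$ corresponds additively to the sum of their ${}_\epsilon\log$'s (the Baker--Campbell--Hausdorff error for elements of norm $O_{\U}(\epsilon)$ is $O_{\U}(\epsilon^2)=o_{\U}(\epsilon)$, using submultiplicativity exactly as in \cref{abelian}). Combined with \cref{cocycle3} linking $\tilde{\alpha}_{\psi^{\sim}}$ to $\tilde{\alpha}_\psi$ by the coboundary of $\tilde{\gamma}$, and the cocycle identity $\tilde{\alpha}=d\tilde{\beta}$, this yields $\tilde{\alpha}_{\psi^{\sim}}=0$, i.e.\ $\alpha_{\psi^{\sim}}$ lies in $I(\epsilon)$. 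Hence $def(\psi^{\sim})=o_{\U}(\epsilon)$.

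With $\psi^{\sim}$ in hand, the defect-diminishing property of \cref{def-defdim} holds for $\Gamma$ with respect to $\mathfrak{U}$, and the final theorem of \S\ref{ssec-liftings} (applicable because each $U(k_n)$ with a submultiplicative norm is complete) delivers uniform $\mathfrak{U}$-stability with a linear estimate. The main --- though minor --- obstacle is the bookkeeping of infinitesimals in the BCH-style passage between the group law on $B(\epsilon)$ and the additive law on $\W_b$; once one records that products and commutators of $O_{\U}(\epsilon)$-sized elements differ from their log-sums by $O_{\U}(\epsilon^2)$, everything reduces to the already-established ${}^*\Gamma$-equivariant identification $B(\epsilon)/I(\epsilon)\cong\tilde{\W}$.
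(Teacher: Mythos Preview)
Your proposal is correct and follows essentially the same route as the paper. The paper does not give a separate formal proof of this proposition; it is stated as a restatement (in the $\L^\infty$ notation) of \cref{mainthm1}, which in turn is presented as a summary of the work in \S\ref{ssec-abeliankernel} and \S\ref{ssec-logarithm}. Your write-up makes explicit exactly the steps implicit there: form $\alpha_\psi$ from an internal lift $\psi$, linearize via ${}_\epsilon\log$ to obtain the asymptotic $2$-cocycle $\alpha$, apply the hypothesis to get $\beta$, exponentiate back, correct $\psi$ to $\psi^{\sim}$, and verify defect diminishing via the ${}^*\Gamma$-equivariant identification $B(\epsilon)/I(\epsilon)\cong\tilde{\W}$ and \cref{cocycle3}.
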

	
	\subsection{Uniform Stability of Amenable groups}\label{ssec-amenable}
	In this section, we shall demonstrate an application of \cref{mainthm2} to amenable groups. The first step is to recognize the duality of $\W$ in an internal way.\\
	Consider the space $\W=\prod_{\U}\mathfrak{u}(k_n)$, and let $\W^{\sharp} \coloneqq \prod_{\U}(\mathfrak{u}(n))^*$. For the Banach space $\mathfrak{u}(n)$, consider its dual space $(\mathfrak{u}(n))^*$ and let $\langle \cdot \vert \cdot \rangle$ the canonical duality (for instance, if $\mathfrak{u}(n)$ is equipped with the Schatten $p$-norm for $p>1$, then $(\mathfrak{u}(n))^*$ comes equipped with the Schatten $q$-norm, where $1/p+1/q=1$). Denote by $\W^{\sharp}$ the ultraproduct $\prod_{\U}(\mathfrak{u}(k_n))^*$, and its external subsets $\W^{\sharp}_b$ and $\W^{\sharp}_{inf}$ as in (\cref{W_b}) and (\cref{W_inf}). Note that the internal pairing
	$$\langle \cdot \vert \cdot \rangle_{\U} : \W^{\sharp} \times \W \to {}^*\R$$
	induces a pairing 
	$$\langle \cdot \vert \cdot \rangle : \tilde{\W}^{\sharp}  \times \tilde{\W} \to \R$$ 
	Equivalently, $\W_b^{\sharp}$ comprises $\lambda \in \W^{\sharp}$ such that $\lambda(v) \in {}^*\R_b$ for every $v \in \W_b$, while $\W_{inf}^{\sharp}$ comprises $\lambda \in \W^{\sharp}$ such that $\lambda(v) \in {}^*\R_{inf}$ for every $v \in \W_b$.\\
	We can use the internal map $\pi_{\psi}:{}^*\Gamma \times \W \to \W$ to define the internal map
	\begin{equation}\label{eqn-dualW}
	\pi_{\psi}^{\sharp}:{}^*\Gamma \times \W^{\sharp} \to \W^{\sharp}
	\end{equation}
	which on $\lambda \in \W^{\sharp}$ and $v \in \W$ is defined to be
	$$\pi_{\psi}^{\sharp}(g)(\lambda)(v) \coloneqq \lambda(\pi_{\psi}(g^{-1})v)$$
	\begin{lemma}
		The internal map $\pi_{\psi}^{\sharp}$ restricts to a map on $\W^{\sharp}_b$ that induces an action of ${}^*\Gamma$ on $\W^{\sharp}_b/\W^{\sharp}_{inf}$.
	\end{lemma}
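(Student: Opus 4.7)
The plan is to break the lemma into three assertions and prove each by dualizing a fact already established for $\pi_{\psi}$ on $\W_b/\W_{inf}$: \textbf{(i)} for every $g \in {}^*\Gamma$, $\pi_{\psi}^{\sharp}(g)$ sends $\W^{\sharp}_{b}$ into itself; \textbf{(ii)} it also sends $\W^{\sharp}_{inf}$ into itself, so that it descends to a map on the quotient $\W^{\sharp}_{b}/\W^{\sharp}_{inf}$; and \textbf{(iii)} the induced maps satisfy the multiplicativity needed for a genuine ${}^*\Gamma$-action, i.e.\ $\pi_{\psi}^{\sharp}(g_1 g_2)\lambda - \pi_{\psi}^{\sharp}(g_1)\pi_{\psi}^{\sharp}(g_2)\lambda \in \W^{\sharp}_{inf}$ for every $\lambda \in \W^{\sharp}_{b}$ (and analogously for the identity element).

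For \textbf{(i)} and \textbf{(ii)}, the key observation is that $\pi_{\psi}(g)$ acts by an internal isometry on $\W$. Indeed, each component map $v_n \mapsto \psi_n(g_n)\, v_n\, \psi_n(g_n)^{-1}$ preserves the norm on $\mathfrak{u}(k_n)$ because $\psi_n(g_n) \in U(k_n)$ and the chosen norm is unitarily bi-invariant; taking the ultraproduct yields $\|\pi_{\psi}(g)v\| = \|v\|$ in $\W$. Since $\pi_{\psi}^{\sharp}(g)$ is by definition precomposition with $\pi_{\psi}(g^{-1})$, applying the internal supremum definition of the dual norm (and the bijectivity of $\pi_\psi(g^{-1})$) gives $\|\pi_{\psi}^{\sharp}(g)\lambda\| = \|\lambda\|$ as a hyperreal. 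In particular, boundedness and infinitesimality of this norm are preserved, proving \textbf{(i)} and \textbf{(ii)}.

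For \textbf{(iii)}, recall that $\pi_{\psi}$ has already been shown to induce an action of ${}^*\Gamma$ on $\W_{b}/\W_{inf}$: for all $h_1,h_2 \in {}^*\Gamma$ and all $v \in \W_{b}$, $\pi_{\psi}(h_1)\pi_{\psi}(h_2)v - \pi_{\psi}(h_1 h_2)v \in \W_{inf}$. Specializing to $h_1 = g_2^{-1}$, $h_2 = g_1^{-1}$ and pairing with $\lambda \in \W^{\sharp}_{b}$ and an arbitrary $v \in \W_b$, one obtains
\[
\bigl(\pi_{\psi}^{\sharp}(g_1)\pi_{\psi}^{\sharp}(g_2)\lambda - \pi_{\psi}^{\sharp}(g_1 g_2)\lambda\bigr)(v) \;=\; \lambda\bigl(\pi_{\psi}(g_2^{-1})\pi_{\psi}(g_1^{-1})v - \pi_{\psi}((g_1 g_2)^{-1})v\bigr),
\]
whose inner argument lies in $\W_{inf}$ for every $v \in \W_{b}$. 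Since $\lambda$ has bounded dual norm, $|\lambda(w)| \le \|\lambda\|\cdot\|w\|$ shows that $\lambda$ sends $\W_{inf}$ into ${}^*\R_{inf}$; the entire expression is therefore infinitesimal on all of $\W_b$, which by the equivalent characterization of $\W^{\sharp}_{inf}$ recorded right before the lemma is exactly the statement that $\pi_{\psi}^{\sharp}(g_1)\pi_{\psi}^{\sharp}(g_2)\lambda - \pi_{\psi}^{\sharp}(g_1 g_2)\lambda \in \W^{\sharp}_{inf}$. The identity-element clause is handled identically from the corresponding fact on the primal side.

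There is no serious obstacle here; the only subtlety worth flagging is that the externally-defined subsets $\W^{\sharp}_{b}$ and $\W^{\sharp}_{inf}$ must only be accessed via internal quantities (norms and the ultrapairing), which is why the isometry computation in \textbf{(ii)} is carried out at the level of the hyperreal-valued dual norm before passing to the boundedness/infinitesimality conditions.
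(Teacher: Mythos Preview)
Your proof is correct and follows essentially the same approach as the paper: both arguments verify preservation of $\W^{\sharp}_b$ and $\W^{\sharp}_{inf}$ and then deduce the multiplicativity on the quotient by dualizing the corresponding asymptotic-action property of $\pi_{\psi}$ on $\W_b/\W_{inf}$. The only cosmetic difference is that for parts \textbf{(i)}--\textbf{(ii)} the paper appeals directly to the pairing characterization of $\W^{\sharp}_b$ and $\W^{\sharp}_{inf}$ (noting $\pi_{\psi}(g^{-1})$ preserves $\W_b$), whereas you first observe that $\pi_{\psi}^{\sharp}(g)$ is an internal isometry; your version is slightly more explicit but equivalent.
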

	\begin{proof}
		Let $\lambda \in \W_b^{\sharp}$. For $v \in \W$, since  $\pi_{\psi}^{\sharp}(g)(\lambda)(v)=\lambda(\pi_{\psi}(g^{-1})v)$, note that $\pi_{\psi}(g^{-1})v \in \W_b$ for $v \in \W_b$. Hence $\pi_{\psi}^{\sharp}(g)(\lambda) \in \W_b^{\sharp}$ for every $g \in {}^*\Gamma$. Similarly, for $\lambda \in \W_{inf}^{\sharp}$, $\pi_{\psi}^{\sharp}(\lambda) \in \W_{inf}^{\sharp}$. That this induces an action of ${}^*\Gamma$ on $\W_b^{\sharp}/\W_{inf}^{\sharp}$ follows easily from the fact that $\pi_{\psi}$ induces an action of ${}^*\Gamma$ on $\W_b/\W_{inf}$.  
	\end{proof}
	Going one step further, we can obtain a canonical identification of $\W$ with $(\W^{\sharp})^{\sharp}$ (which has the same norm as $\W$) so that we can regard $\W$ as $(\W^{\sharp})^{\sharp}$ with $(\pi_{\psi}^{\sharp})^{\sharp} = \pi_{\psi}$. This is true for $\W$ since $\W=\prod_{\U}\mathfrak{u}(n)$, and $\mathfrak{u}(n)$ is finite-dimensional for each $n \in \N$.
	\begin{remark}
		We shall often use this reflexivity property of $\W$ to regard its dual $\W^{\sharp}$ as its predual, so that $v \in \W$ acts on $\lambda \in \W^{\sharp}$ by $v \cdot \lambda = \lambda(v)$. However, note that for the following discussion of amenability, what we actually need is not reflexivity but merely the property that $\W$ is dual.
	\end{remark}
	Let us now recall the definition of amenability for discrete groups. While there are innumerable equivalent definitions of amenability, here we shall see the definition that is most relevant to us (later on in \S\ref{ssec-subg}, we shall study amenability and amenable actions in the locally compact case). Consider the Banach space $\ell^{\infty}(\Gamma)$ with the following action of $\Gamma$: for $g,x \in \Gamma$ and $f \in \ell^{\infty}(\Gamma)$, $(g\cdot f)(x)=f(g^{-1}x)$. A \emph{mean} on $\ell^{\infty}(\Gamma)$ is a bounded linear functional $m:\ell^{\infty}(\Gamma) \to \R$ such that $\|m\| \leq 1$, $m(1)=1$ and $m(f)\geq 0$ whenever $f \geq 0$. The mean $m$ is said to be $\Gamma$-\emph{invariant} if for every $g \in \Gamma$ and $f \in \ell^{\infty}(\Gamma)$, $m(g \cdot f)=m(f)$.
	\begin{definition}
		The discrete group $\Gamma$ is said to be \textbf{amenable} if there exists a $\Gamma$-invariant mean on $\ell^{\infty}(\Gamma)$.
	\end{definition}
	While the definition of amenability asks for a $\Gamma$-invariant mean on $\ell^{\infty}(\Gamma)$, this can be easily extended to obtain a $\Gamma$-\emph{equivariant} mean on $\ell^{\infty}(\Gamma,W)$ for a dual normed $W$-module (where the action of $\Gamma$ on $\ell^{\infty}(\Gamma,W)$ is given by $(g \cdot f)(x) = g \cdot f(g^{-1}x)$. The following lemma builds on this idea to construct an internal mean on $\L^{\infty}({}^*\Gamma,\W)$.
	\begin{lemma}
		Suppose $\Gamma$ is amenable. Then there exists an internal map $m_{in}:\L^{\infty}({}^*\Gamma,\W) \to \W$ such that $m_{in}$ induces a linear map $\tilde{m}:\tilde{\L}^{\infty}({}^*\Gamma,\W) \to \tilde{\W}$ satisfying the following two conditions:
		\begin{itemize}
			\item  Suppose $\tilde{f} \in \tilde{\L}^{\infty}({}^*\Gamma,\W)$ is the constant function $\tilde{f}(g) = \tilde{v}$ for every $g \in {}^*\Gamma$, then $\tilde{m}(\tilde{f})=\tilde{v}$.
			\item For  $\tilde{f} \in \tilde{\L}^{\infty}({}^*\Gamma,\W)$, $\|\tilde{m}(\tilde{f})\|\leq \|\tilde{f}\|$.
			\item For $g \in {}^*\Gamma$ and $\tilde{f} \in \tilde{\L}^{\infty}({}^*\Gamma,\W)$, $\tilde{m} \left(g \cdot \tilde{f} \right) = g \cdot \tilde{m}(\tilde{f})$.
		\end{itemize}
	\end{lemma}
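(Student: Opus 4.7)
My plan is to promote the $\Gamma$-invariant scalar mean supplied by amenability to a $\W$-valued internal mean by working fiberwise and then taking the ultraproduct. Concretely, let $m\colon \ell^\infty(\Gamma)\to \R$ be a $\Gamma$-invariant mean. Since each $\mathfrak{u}(k_n)$ is finite-dimensional (hence reflexive), I would define $m_n\colon \ell^\infty(\Gamma,\mathfrak{u}(k_n))\to \mathfrak{u}(k_n)$ by the duality formula $\langle m_n(f),\lambda\rangle = m\bigl(x\mapsto \langle f(x),\lambda\rangle\bigr)$ for every $\lambda\in \mathfrak{u}(k_n)^*$. This is the standard promotion of a scalar mean to a dual Banach module, and routine arguments show that $m_n$ is $\R$-linear, norm-decreasing in the sense that $\|m_n(f)\|\le \|f\|_\infty$, sends constant functions to their value, and is left-translation invariant: $m_n\bigl(x\mapsto f(\gamma^{-1}x)\bigr)=m_n(f)$ for every $\gamma\in \Gamma$.

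I would then set $m_{in}\coloneqq \{m_n\}_\U$, which is an internal map from $\L^\infty({}^*\Gamma,\W)$ to $\W$ by construction. The fiberwise norm bound transfers internally to $\|m_{in}(f)\|\le \|f\|$ in ${}^*\R$, so $m_{in}$ carries $\L^\infty_b$ into $\W_b$ and $\L^\infty_{inf}$ into $\W_{inf}$. Consequently $m_{in}$ descends to the promised $\R$-linear map $\tilde m\colon \tilde\L^\infty({}^*\Gamma,\W)\to \tilde\W$, and the norm bound $\|\tilde m(\tilde f)\|\le \|\tilde f\|$ follows. For the constants property, any $\tilde f$ equal to a constant $\tilde v$ admits an internal constant representative $g\mapsto v$ with $v=\{v_n\}_\U\in \W_b$ lifting $\tilde v$; fiberwise $m_n$ returns $v_n$, so $m_{in}(f)=v$ and hence $\tilde m(\tilde f)=\tilde v$.

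The only genuinely delicate point is the ${}^*\Gamma$-equivariance. For $g=\{g_n\}_\U\in {}^*\Gamma$ and $f=\{f_n\}_\U$ representing $\tilde f$, the twisted action reads $(g\cdot f)_n(x)=C_n\bigl(f_n(g_n^{-1}x)\bigr)$, where $C_n$ denotes conjugation by $\psi_n(g_n)$, viewed as a bounded $\R$-linear operator on $\mathfrak{u}(k_n)$. A short duality computation, using the $\Gamma$-invariance of $m$ fiberwise — crucially valid because each $g_n$ is a \emph{standard} element of $\Gamma$, even though $g$ itself is internal — yields the exact fiberwise identity $m_n(g_n\cdot f_n)=C_n(m_n(f_n))$. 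Taking the ultraproduct gives $m_{in}(g\cdot f)=\pi_\psi(g)\,m_{in}(f)$, and passing to $\tilde\W$ produces the desired equivariance. The main subtlety is precisely that, although the action of ${}^*\Gamma$ on $\W$ via $\pi_\psi$ is only an action modulo infinitesimals, performing the calculation one fiber at a time sidesteps this: no infinitesimal correction is needed at the level of $m_n$, and the single quotient that enters at the end is the one under which $\pi_\psi$ is already known to descend to a genuine ${}^*\Gamma$-action on $\tilde\W$.
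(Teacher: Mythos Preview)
Your proposal is correct and takes essentially the same approach as the paper: promote the scalar $\Gamma$-invariant mean to a vector-valued mean via the duality pairing, then pass to the ultraproduct. The only cosmetic difference is that you work fiberwise first (defining each $m_n$ on $\ell^\infty(\Gamma,\mathfrak{u}(k_n))$ by duality and then setting $m_{in}=\{m_n\}_\U$), whereas the paper phrases the construction internally from the start by pairing with $\lambda\in\W^\sharp$; unwinding either description gives the identical formula $\langle m_{in}(f),\lambda\rangle=\{m(x\mapsto\langle f_n(x),\lambda_n\rangle)\}_\U$, and your equivariance computation is in fact more explicit than the paper's.
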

	\begin{proof}
		Consider $f=\{f_n\}_{\U} \in \L^{\infty}({}^*\Gamma,\W)$. Since $\W=(\W^{\sharp})^{\sharp}$, for each $\lambda \in \W^{\sharp}$, we get an internal map 
		$$f_{\lambda}:{}^*\Gamma \to {}^*\R$$
		$$f_{\lambda}(x) \coloneqq f(x)(\lambda)$$
		Note that $f_{\lambda}$ being internal, is of the form $\{(f_{\lambda})_n\}_{\U}$ where $(f_{\lambda})_n \in \ell^{\infty}(\Gamma)$. This allows us to construct the internal map $m_{in}^{\lambda}:\L^{\infty}({}^*\Gamma,\W) \to {}^*\R$ as
		$$m_{in}^{\lambda}(f)=\{m\left( (f_{\lambda})_n   \right)\}_{\U}$$
		and finally $m_{in}:\L^{\infty}({}^*\Gamma,\W) \to (\W^{\sharp})^{\sharp}$ as
		$$m_{in}(f)(\lambda) \coloneqq m_{in}^{\lambda}(f)$$
		It is straightforward to check that $m_{in}$ as defined induces a linear map $\tilde{m}:\tilde{\L}({}^*\Gamma,\W) \to \W_b/\W_{inf}$. As for ${}^*\Gamma$-equivariance, this follows from the observation that $(g \cdot f)_{\lambda}(x)=\pi(g)f(g^{-1}x)(\lambda)$ while $(g \cdot f_{\lambda})(x)=f(g^{-1}x)(\lambda)$. The conditions on $\tilde{m}$ follow from the definition and properties of the $\Gamma$-invariant mean $m$ on $\ell^{\infty}(\Gamma)$. 
	\end{proof}
	We shall denote the internal mean above by $m_{in}^x$ (or $\tilde{m}^x$) when the mean is understood to be taken over $x \in \Gamma$. This would be particularly useful when working with multivariate maps where we fix certain coordinates to obtain a univariate map which we can take a mean over (as in the following \cref{amenable}). Note that in this notation, it is easy to see that the ${}^*\Gamma$-equivariance of the mean constructed above translates to the simpler (invariant) form: for $\tilde{f} \in \tilde{\L}^{\infty}({}^*\Gamma,\W)$ and $g \in {}^*\Gamma$, 
	$$\tilde{m}^x\left( \tilde{f}(gx) \right) = \tilde{m}^x\left( \tilde{f}(x)\right)$$
	We shall now use this internal map $m_{in}$ and the ${}^*\Gamma$-equivariant map $\tilde{m}$ to show the following:.
	\begin{proposition}\label{amenable}
		Suppose $\Gamma$ is amenable. Then for every $\alpha \in \L^{\infty}_b(({}^*\Gamma)^2,\W)$ that satisfies the $2$-cocycle condition (\cref{2cocycle}), there exists a $\beta \in \L^{\infty}_b({}^*\Gamma,\W)$ such that 
		$$\tilde{\alpha}(g_1,g_2)=\tilde{\beta}(g_1)+g_1 \cdot \tilde{\beta}(g_2)-\tilde{\beta}(g_1g_2)$$
	\end{proposition}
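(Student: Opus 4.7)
The plan is to mimic the classical bounded-cohomology proof that amenability implies vanishing in degree two, using the internal mean $m_{in}$ (and its induced mean $\tilde m$) constructed in the preceding lemma, with the cocycle identity holding only modulo $\L^\infty_{inf}$.

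Concretely, given $\alpha \in \L^\infty_b(({}^*\Gamma)^2,\W)$ satisfying the 2-cocycle condition $\tilde\alpha$ of \cref{2cocycle}, I would define the candidate 1-cochain $\beta \in \L^\infty({}^*\Gamma,\W)$ internally by
\[
\beta(g) \coloneqq m_{in}^{x}\bigl[\alpha(g,x)\bigr],
\]
that is, as the ultraproduct of the $W_n$-valued means $\beta_n(g) = m^x[\alpha_n(g,x)]$, where $\alpha=\{\alpha_n\}_\U$. Since $\alpha \in \L^\infty_b$, each $\alpha_n$ is bounded uniformly with $\{\|\alpha_n\|_\infty\}_\U \in {}^*\R_b$, and the norm bound on $m_{in}$ then forces $\beta \in \L^\infty_b({}^*\Gamma,\W)$. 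This is where I will use the norm control supplied by the second bullet of the mean lemma.

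Passing to the quotient, $\tilde\beta \in \tilde{\L}^\infty({}^*\Gamma,\W)$. Now I apply $\tilde m^x$ to the 2-cocycle identity in the last variable. Rewriting \cref{2cocycle} for the triple $(g_1,g_2,x)$ gives
\[
\tilde\alpha(g_1,g_2) \;=\; g_1 \cdot \tilde\alpha(g_2,x) \;-\; \tilde\alpha(g_1 g_2,x) \;+\; \tilde\alpha(g_1,g_2 x).
\]
Both sides lie in $\tilde{\L}^\infty_x$ (the left side being constant in $x$), and applying $\tilde m^x$ and using linearity plus ${}^*\Gamma$-equivariance of $\tilde m$ yields
\[
\tilde\alpha(g_1,g_2) \;=\; g_1 \cdot \tilde\beta(g_2) \;-\; \tilde\beta(g_1 g_2) \;+\; \tilde m^x\!\bigl[\tilde\alpha(g_1,g_2 x)\bigr].
\]
The last term equals $\tilde m^x[\tilde\alpha(g_1,x)] = \tilde\beta(g_1)$ by the invariance property $\tilde m^x[\tilde f(g_2 x)] = \tilde m^x[\tilde f(x)]$ recorded just after the mean lemma, applied to $\tilde f(x) = \tilde\alpha(g_1,x)$. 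This gives exactly the required coboundary identity.

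The main point to be careful about is that the formula for $\beta$ must be genuinely internal: I construct $\beta$ as an ultraproduct of the $\mathfrak{u}(k_n)$-valued means of the component functions $\alpha_n$ (rather than as an externally defined object using $\tilde m$ directly), so that $\beta \in \L^\infty_b({}^*\Gamma,\W)$ as required by \cref{mainthm2}, not merely $\tilde\beta \in \tilde{\L}^\infty$. Everything else is a symbolic manipulation of the 2-cocycle identity inside the real Banach space $\tilde{\W}$, and the only nontrivial ingredients are (i) linearity, contractivity and invariance of $\tilde m$, and (ii) the fact that the internal mean $m_{in}$ preserves internality and boundedness. Once $\beta$ is produced, combining this with \cref{mainthm2} immediately re-derives Kazhdan's theorem that discrete amenable groups are uniformly $\mathfrak{U}$-stable with a linear estimate.
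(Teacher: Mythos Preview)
Your proposal is correct and follows essentially the same approach as the paper: define $\beta(g)=m_{in}^{x}[\alpha(g,x)]$ internally, then apply the mean in the last variable to the rewritten 2-cocycle identity, using the invariance $\tilde m^x[\tilde f(g_2 x)]=\tilde m^x[\tilde f(x)]$ to identify the final term with $\tilde\beta(g_1)$. You are slightly more explicit than the paper about internality and boundedness of $\beta$, but the argument is the same.
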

	\begin{proof}
		Suppose $\alpha \in \L^{\infty}L(({}^*\Gamma)^2,\W)$ satisfies the $2$-cocyle condition: for every $g_1,g_2,x \in {}^*\Gamma$, 
		$$\tilde{\alpha}(g_1,g_2) = g_1 \cdot  \tilde{\alpha}(g_2,x)-\tilde{\alpha}(g_1g_2,x)+\tilde{\alpha}(g_1,g_2x)$$
		For a fixed $g$, the map $\alpha_g:{}^*\Gamma \to \W$ defined as $\alpha_g(x) \coloneq \alpha(g,x)$ is clearly contained in $\L^{\infty}({}^*\Gamma,\W)$. 
		Define $\beta \in \L^{\infty}({}^*\Gamma,\W)$ as
		$$\beta(g) \coloneq m_{in}\left(\alpha_g\right)$$
		In other words, $\beta(g) = m_{in}^x\left(\alpha(g,x)\right)$. Then the $2$-cocycle condition satisfied by $\alpha$ immediately implies that
		$$\tilde{\alpha}(g_1,g_2) = g_1 \cdot  \tilde{\beta}(g_2)-\tilde{\beta}(g_1g_2)+\tilde{\beta}(g_1)$$
		
	\end{proof}
	Observe that the proof of \cref{amenable} is almost exactly on the lines of the proof that $\Hb^2(\Gamma,V)=0$ for amenable $\Gamma$ and dual normed $\Gamma$-module $W$ (refer to Theorem $3.6$ in \cite{frig} for more details).\\
	In light of \cref{amenable} and \cref{mainthm2}, we conclude that: 
	\begin{corollary}\label{amenable2}
		If $\Gamma$ is a discrete amenable group, then $\Gamma$ is uniformly $\mathfrak{U}$-stable with a linear estimate.
	\end{corollary}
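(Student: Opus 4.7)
The plan is to combine \cref{amenable} and \cref{mainthm2} essentially directly. Given that $\Gamma$ is amenable, \cref{amenable} gives us exactly the hypothesis needed to apply \cref{mainthm2}: every $\alpha \in \L^{\infty}_b(({}^*\Gamma)^2,\W)$ satisfying the $2$-cocycle condition \eqref{2cocycle} is of the form $\tilde{\alpha}(g_1,g_2) = \tilde{\beta}(g_1) + g_1 \cdot \tilde{\beta}(g_2) - \tilde{\beta}(g_1 g_2)$ for some $\beta \in \L^{\infty}_b({}^*\Gamma,\W)$. By \cref{mainthm2}, this coboundary property implies that $\Gamma$ has the defect diminishing property with respect to $\mathfrak{U}$, and hence that $\Gamma$ is uniformly $\mathfrak{U}$-stable with a linear estimate.

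There is essentially no obstacle here since all the work has been done in building the machinery: the hard content is in \cref{amenable} (where the internal mean $m_{in}$ is used to produce the primitive $\beta(g) = m_{in}^x(\alpha(g,x))$), and in the reduction machinery of \cref{mainthm2} that was set up in \S\ref{ssec-liftings} and \S\ref{ssec-abeliankernel} (the linearization through $_{\epsilon}\log$, the interpretation of defect diminishing as an internal lifting problem with abelian kernel, and the equivalence between defect diminishing and uniform stability with a linear estimate for complete metric target groups, noting that each $U(n)$ equipped with a submultiplicative norm is indeed complete as a finite-dimensional metric space). The proof thus amounts to a single sentence invoking the two prior propositions in succession.
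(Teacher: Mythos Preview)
Your proposal is correct and matches the paper's own argument: the paper simply states that the corollary follows ``in light of \cref{amenable} and \cref{mainthm2}'', which is exactly the two-step invocation you describe.
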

	Note that while on the one hand \cref{amenable2} generalizes Kazhdan's result \cite{Kaz} to a larger family (where we allow any submultiplicative matrix norm as opposed to just the operator norm as in \cite{Kaz} and \cite{BOT}), we \emph{do not} prove here the analogous result of strong Ulam stability, where the family comprises groups of unitary operators on (possibly infinite-dimensional) Hilbert spaces.
	
	\section{Asymptotic Cohomology of Groups}\label{sec-coho}
	In this section, we shall formally define the asymptotic cohomology theory of (topological) groups, and study some basic properties along the lines of the theory of bounded cohomology. Recall that our goal is to prove uniform $\mathfrak{U}$-stability for lattices in higher rank Lie groups, so this forces us to develop the cohomology theory for locally compact groups (as opposed to just discrete groups as we briefly saw in \S\ref{ssec-logarithm} and \S\ref{ssec-amenable}).\\
	The basic objects we shall deal with are defined in \S\ref{ssec-basicG}, where we describe the category of asymptotic cohomology abstractly using tools from cohomological algebra. In \S\ref{ssec-subg}, we define the asymptotic cohomology of groups and relate it to the way it was motivated in \S\ref{ssec-amenable}. Finally, in \S\ref{ssec-subg}, we use Zimmer amenability and the functorial relations of \S\ref{ssec-basicG} to obtain other complexes that compute the same cohomology, which we shall use in \S\ref{sec-induc} and \S\ref{sec-mainproof}.
	
	\subsection{Basic Definitions and Some Cohomological Algebra}\label{ssec-basicG}
	Recall, from \S\ref{ssec-ultraproducts}, that we fix a non-principal ultrafilter $\U$ on $\N$ to define ultraproducts and internal objects. For convenience, we set some notation and conventions now. Let $C$ be a category, with $C$-objects and $C$-morphisms. We shall define a category ${}^*C$ in ${}^*Univ$ as follows:
	\begin{itemize}
		\item The objects of ${}^*C$, referred to as \emph{internal} $C$-objects, shall be ultraproducts $\prod_{\U}X_n$ where $\{X_n\}_{n\in \N}$ is an indexed collection of $C$-objects.
		\item The morphisms of ${}^*C$, referred to as \emph{internal morphism}, are of the form $\phi=\{\phi_n\}_{\U}$ (also denoted $\prod_{\U}\phi_n$), where $\{\phi_n\}_{n \in \N}$ is an indexed collection of $C$-morphisms.
	\end{itemize}
	Given a category $C$, the internal $C$-objects and internal $C$-morphisms form a category ${}^*C$ in ${}^*Univ$, and these two categories have the same first-order theories, allowing us to use the transfer principle, as remarked in \S\ref{ssec-ultraproducts}.
	\begin{definition}\label{internalprop}
		Let $A$ be a property of $C$-objects (resp, $C$-morphisms). Then $\prod_{\U}X_n$ (resp, $\phi:\prod_{\U}X_n \to \prod_{\U}Y_n$) has \emph{internal $A$} if $X_n$ (resp, $\phi_n$) has $A$ for every $n \in S$ with $S \in \U$. 
	\end{definition}
	For example, let $G$ be a locally compact, second countable topological group, and consider the ultrapower group ${}^*G$ and let $\mathcal{E}=\{E_n\}_{\U}$ be an internal Banach space. An internal map $\pi:{}^*G \times \mathcal{E} \to \mathcal{E}$, where $\pi=\{\pi_n:G \times E_n \to E_n\}$, is an \emph{internal action} of ${}^*G$ on $\mathcal{E}$ (or $\mathcal{E}$ is an \emph{internal ${}^*G$-representation}) if the map $\pi_n:G \times E_n \to E_n$ is an isometric $G$-representation for every $n \in \N$, and the internal map $\pi$ is \emph{internally continuous} if $\pi_n:G \times E_n \to E_n$ is continuous for every $n \in \N$ (here $G \times E_n$ is endowed with the product topology). All of these notions simply involve passing from standard categories to their internal counterparts in ${}^*Univ$.\\
	We shall now work with the category $Ban$ whose objects are (real) Banach spaces and whose morphisms are bounded linear maps, and study ${}^*Ban$. Consider the ultraproduct $\mathcal{E}= \prod_{\U}E_{n}$ of the real Banach spaces $\{E_{n}\}_{n \in \N}$, which is an \textbf{internal Banach space}. For an element $v \in \mathcal{E}$ where $v = \{v_n\}_{\U}$, we denote by $\|v\| \coloneq \{\|v_n\|\}_{\U} \in {}^*\R$. Given two internal Banach spaces $\mathcal{E}=\{E_n\}_{\U}$ and $\mathcal{F}=\{F_n\}_{\U}$, we shall denote by $\mathcal{Hom}(\mathcal{E},\mathcal{E})$ the set of internal morphisms between $\mathcal{E}$ and $\mathcal{F}$. These are exactly of the form $\phi \coloneq \{\phi_n:E_n \to F_n\}_{\U}$ where $\phi_n:E_n \to F_n$ is a bounded linear map for every $n \in \N$ (such maps are internal morphisms). Note that $\mathcal{Hom}(\mathcal{E},\mathcal{E})$ itself is an internal Banach space when endowed with the internal operator norm (that is, $\|\phi\|\coloneq \{\|\phi_n\|_{op}\}_{\U}$).\\
	Consider the set $\mathcal{Hom}(\mathcal{E},{}^*\R)$, which is the \emph{internal dual} of $\mathcal{E}$, denoted $\mathcal{E}^{\sharp}$. Explicitly, for each Banach space $E_n$ as above, let $E_n^{\sharp}$ denote its (constinuous) dual Banach space, and let $\langle \cdot \vert \cdot \rangle$ the canonical duality, and $\mathcal{E}^{\sharp}$ denote the ultraproduct $\prod_{\U}E_n^{\sharp}$. Note that we have an internal pairing 
	$$\langle \cdot \vert \cdot \rangle_{\U} : \mathcal{E}^{\sharp} \times \mathcal{E} \to {}^*\R$$
	We shall call $\mathcal{E}$ an \textbf{internal dual Banach space} if $\mathcal{E}$ is the internal dual of some internal Banach space (which we shall denote $\mathcal{E}^{\flat}=\prod_{\U}E_n^{\flat}$). For $\phi \in \mathcal{Hom}(\mathcal{E},\mathcal{E})$, we shall denote by $\phi^{\sharp} \in \mathcal{Hom}(\mathcal{E}^{\sharp},\mathcal{E}^{\sharp})$ the internal adjoint map with respect to the internal pairing above: that is $\phi^{\sharp}$ is such that for every $v \in \mathcal{E}$ and $w \in \mathcal{E}^{\sharp}$, $\langle \phi^{\sharp}w, v \rangle_{\U} = \langle w, \phi v \rangle_{\U}$.\\
	We shall now see some general functorial aspects of (standard) real Banach spaces and extend them naturally to internal Banach spaces. Let $X$, $Y$, $E$ and $F$ be (standard) real Banach  spaces. Let $B(X \times E)$ denote the Banach space of bounded bilinear forms $X\times E\to \R$, and $L(E,F)$ denote the Banach space of bounded linear functions from $E$ to $F$. Through the canonical pairing, note that $B(X \times E)$ is naturally isometrically isomorphic to the Banach space of bounded linear maps $E \to X^{\sharp}$ (and also to the Banach space of bounded linear maps $X \to E^{\sharp}$). That is,
	\begin{equation}\label{dunford2}
	B(X \times E) \cong L(E,X^{\sharp}) \cong L(X,E^{\sharp})
	\end{equation}
	Using these identifications, we now consider two functors:
	\begin{itemize}
		\item Given a bounded linear operator  $k:X^\sharp \to Y^\sharp$, define a bounded linear operator $k_{*}:B(X \times E)\to B(Y \times E)$ by $(k_{*}\beta)(\cdot,e)=k\beta(\cdot,e))$. Note that the correspondence $k\to k_{*}$ is covariant (although $k_*$ depends on $E$, for ease of notation, we assume the relevant Banach space from context).
		\item Given a bounded linear operator $\sigma:E \to F$, define a bounded linear operator $\sigma^*:B(X \times F)\to:B(X \times E)$ by $(\sigma^{*}\beta)(x,e)=\beta(x,\sigma(e))$. In this case, the correspondence $\sigma \to \sigma^*$ is contravariant (although $\sigma^*$ depends on $X$, for ease of notation, we assume the relevant Banach space from context).
	\end{itemize}
	\begin{proposition}\label{prop_lev1}
		Let $k:X^\sharp \to Y^\sharp$  and $\sigma: E\to F$, and consider the covariant $k_{*}:B(X \times E)\to B(Y \times E)$ and contravariant $\sigma^*:B(X \times F)\to:B(X \times E)$ as defined above. Then 
		\begin{itemize}
			\item $k_{*}\sigma^{*}= \sigma^{*}k_{*}$  
			\item $\|k_{*}\sigma^{*}\|\leq \|k_{*}\|\cdot\|\sigma^{*}\|$
		\end{itemize}
	\end{proposition}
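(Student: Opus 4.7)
The plan is to verify both conclusions by directly unwinding the definitions. The key observation is that $k_{*}$ acts only on the ``left'' slot ($X$ or $Y$) while $\sigma^{*}$ acts only on the ``right'' slot ($E$ or $F$); since the two operators manipulate independent variables in the bilinear forms, their commutativity is essentially formal. There is no genuine obstacle here, so I expect the proof to be entirely routine.

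For the commutativity, I would fix $\beta \in B(X \times F)$ and compute $(k_{*}\sigma^{*}\beta)(y,e)$ and $(\sigma^{*}k_{*}\beta)(y,e)$ separately for arbitrary $y \in Y$ and $e \in E$. Using the identification \eqref{dunford2}, one has $(\sigma^{*}\beta)(\,\cdot\,,e) = \beta(\,\cdot\,,\sigma(e)) \in X^{\sharp}$, so applying $k_{*}$ gives
\[
(k_{*}\sigma^{*}\beta)(y,e) \;=\; k\bigl(\beta(\,\cdot\,,\sigma(e))\bigr)(y).
\]
Going the other way, $(k_{*}\beta)(y,f) = k(\beta(\,\cdot\,,f))(y)$ for every $f \in F$, and substituting $f = \sigma(e)$ yields
\[
(\sigma^{*}k_{*}\beta)(y,e) \;=\; (k_{*}\beta)(y,\sigma(e)) \;=\; k\bigl(\beta(\,\cdot\,,\sigma(e))\bigr)(y).
\]
The two expressions agree for all $y$ and $e$, establishing $k_{*}\sigma^{*} = \sigma^{*}k_{*}$.

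For the norm inequality, I would appeal to submultiplicativity of the operator norm: the composite $k_{*}\sigma^{*}\colon B(X \times F) \to B(Y \times E)$ factors through $B(X \times E)$, so $\|k_{*}\sigma^{*}\| \le \|k_{*}\| \cdot \|\sigma^{*}\|$. Along the way it is worth recording the cleaner bounds $\|k_{*}\| \le \|k\|$ and $\|\sigma^{*}\| \le \|\sigma\|$, both of which follow at once from the definitions together with the isometric nature of the identifications in \eqref{dunford2}. This completes the plan.
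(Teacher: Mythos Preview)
Your proof is correct. The paper states this proposition without proof, treating it as an elementary functorial fact; your direct verification by unwinding the definitions is exactly the expected argument.
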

	These notions extend easily to internal Banach spaces as well. Let $\mathcal{X}=\{X_n\}_{\U}$, $\mathcal{Y}=\{Y_n\}_{\U}$ and $\mathcal{E}=\{E_n\}_{\U}$, $\mathcal{F}=\{F_n\}_{\U}$ be internal Banach spaces. Denote by $\mathcal{B}(\mathcal{X}\times\mathcal{E})$ the internal Banach space $\mathcal{B}(\mathcal{X}\times \mathcal{E}) \coloneq \prod_{\U}B(X_n \times E_n)$ and by $\L(\mathcal{X},\mathcal{E}) \coloneq \prod_{\U}L(X_n,E_n)$. Then 
	$$\mathcal{B}(\mathcal{X} \times \mathcal{E}) \cong \L(\mathcal{E},\mathcal{X}^{\sharp}) \cong \L(\mathcal{X},\mathcal{E}^{\sharp})$$
	where the isomorphisms are internally isometric. Let $k:\mathcal{X}^{\sharp} \to \mathcal{Y}^{\sharp}$ (where $k=\{k_n\}_{\U}$), and $\sigma:\mathcal{E} \to \mathcal{F}$ (where $\sigma=\{\sigma_n\}_{\U}$) be internal morphisms. Then we have the covariant internal morphism $k_{*}: \mathcal{B}(\mathcal{X}\times \mathcal{E}) \to \mathcal{B}(\mathcal{Y}\times\mathcal{E})$ defined as $k_*=\{(k_n)_*\}_{\U}$), and the contravariant internal morphism $\sigma^*: \mathcal{B}(\mathcal{X}\times \mathcal{F}) \to \mathcal{B}(\mathcal{X}\times \mathcal{E})$ defined by $\sigma^*=\{\sigma_n^*\}_{\U}$ such that $k_*\sigma^*=\sigma^*k_*$ and $\|k_*\sigma^*\| \leq \|k_*\| \cdot \|\sigma^*\|$.\\
	Let $\mathcal{E}$ be an internal Banach space. Just as in as in (\cref{W_b}) and (\cref{W_inf}), the internal norm $\|\cdot\|:\mathcal{E} \to {}^*\R$ allows us to define special \emph{external} subsets as follows: 
	$$\mathcal{E}_b \coloneq \{v \in \mathcal{E}:\space \|v\| \in {}^*\R_b\}$$
	$$ \mathcal{E}_{inf} \coloneq \{v \in \mathcal{E} :\space \|v\| \in {}^*\R_{inf}\}$$
	and denote the ultralimit $\mathcal{E}_b/\mathcal{E}_{inf}$ by $\tilde{\mathcal{E}}$, which is a real Banach space \cite{banachultra}). The correspondence $\mathcal{E} \mapsto \tilde{\mathcal{E}}$ is \emph{not} a functor from ${}^*Ban$ to $Ban$ as such, because an internal morphism $\phi \in \mathcal{Hom}(\mathcal{E},\mathcal{F})$ need not induce a morphism $\tilde{\phi}:\tilde{\mathcal{E}} \to \tilde{\mathcal{F}}$ in general. However, if we restrict ourselves to bounded objects and morphisms in ${}^*Ban$, then we do get a functorial correspondence. That is, consider the external subsets $\mathcal{Hom}_b(\mathcal{E},\mathcal{F})$ and $\mathcal{Hom}_{inf}(\mathcal{E},\mathcal{F})$. Then
	\begin{proposition}\label{hom}
		Any $\phi \in \mathcal{Hom}_b(\mathcal{E},\mathcal{F})$, induces a map $\tilde{\phi} \in Hom(\tilde{\mathcal{E}},\tilde{\mathcal{F}})$.
	\end{proposition}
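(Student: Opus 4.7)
The plan is to check that a bounded internal morphism $\phi=\{\phi_n\}_\U$ automatically carries $\mathcal{E}_b$ into $\mathcal{F}_b$ and $\mathcal{E}_{inf}$ into $\mathcal{F}_{inf}$, so that it descends to the quotients. The key tool is the internal inequality $\|\phi(v)\|\leq \|\phi\|\cdot\|v\|$ in ${}^*\R$, obtained by transferring the standard operator bound $\|\phi_n(v_n)\|\leq \|\phi_n\|_{op}\|v_n\|$ componentwise and taking the ultralimit.

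First I would observe that since $\phi\in \mathcal{Hom}_b(\mathcal{E},\mathcal{F})$, the internal operator norm $\|\phi\|$ lies in ${}^*\R_b$ by definition. For $v\in \mathcal{E}_b$ we have $\|v\|\in {}^*\R_b$, and since ${}^*\R_b$ is a subring of ${}^*\R$, the product $\|\phi\|\cdot\|v\|$ is bounded; combined with the internal inequality this gives $\phi(v)\in \mathcal{F}_b$. Likewise, if $v\in \mathcal{E}_{inf}$, then $\|v\|\in {}^*\R_{inf}$; since ${}^*\R_{inf}$ is an ideal of ${}^*\R_b$, the product of a bounded hyperreal with an infinitesimal is infinitesimal, so $\phi(v)\in \mathcal{F}_{inf}$.

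Next, the internal linearity of $\phi$ (transferred from the linearity of each $\phi_n$) restricts to ordinary $\R$-linearity of $\phi|_{\mathcal{E}_b}\colon \mathcal{E}_b\to \mathcal{F}_b$; combined with the preceding containments, this forces $\phi|_{\mathcal{E}_b}$ to factor through an $\R$-linear map
\[
\tilde\phi\colon \tilde{\mathcal{E}}=\mathcal{E}_b/\mathcal{E}_{inf}\longrightarrow \tilde{\mathcal{F}}=\mathcal{F}_b/\mathcal{F}_{inf}.
\]
Boundedness comes from taking standard parts of the internal inequality: for any representative $v$ of a class $\tilde v\in \tilde{\mathcal{E}}$ one gets $\|\tilde\phi(\tilde v)\|_{\tilde{\mathcal{F}}}\leq \operatorname{st}(\|\phi\|)\,\|\tilde v\|_{\tilde{\mathcal{E}}}$, so $\tilde\phi$ is a morphism in $Ban$.

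I do not expect a serious obstacle here: the whole argument is a bookkeeping verification that the external subsets $\mathcal{E}_b$ and $\mathcal{E}_{inf}$ interact with a bounded internal morphism in the obvious way. The only conceptual point worth flagging is that the hypothesis $\phi\in \mathcal{Hom}_b$ (rather than arbitrary $\mathcal{Hom}$) is essential: if $\|\phi\|$ were merely an element of ${}^*\R$ not in ${}^*\R_b$, it could send bounded vectors out of $\mathcal{F}_b$ or infinitesimal vectors out of $\mathcal{F}_{inf}$, breaking both containments and hence the functoriality of the $\mathcal{E}\mapsto\tilde{\mathcal{E}}$ construction.
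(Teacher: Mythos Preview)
Your proposal is correct and follows essentially the same approach as the paper: use the internal operator-norm inequality to show $\phi(\mathcal{E}_b)\subseteq\mathcal{F}_b$ and $\phi(\mathcal{E}_{inf})\subseteq\mathcal{F}_{inf}$, then pass to the quotient. The paper's proof is terser, merely asserting these two containments, while you spell out why they hold and also note the operator-norm bound $\operatorname{st}(\|\phi\|)$ for $\tilde\phi$.
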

	\begin{proof}
		Note that $\phi \coloneq \{\phi_n:E_n \to F_n\}_{\U}$, where each  $\phi_n:E_n \to F_n$ is a bounded linear map for every $n \in \N$. Since $\phi \in \mathcal{Hom}_b(\mathcal{E},\mathcal{F})$, this means that $\phi(\mathcal{E}_b) \subseteq \mathcal{F}_b$, and $\phi(\mathcal{E}_{inf}) \subseteq \mathcal{F}_{inf}$, thus inducing a bounded linear map $\tilde{\phi}:\tilde{\mathcal{E}} \to \tilde{\mathcal{F}}$ between the real Banach spaces $\tilde{\mathcal{E}}$ and $\tilde{\mathcal{F}}$. 
	\end{proof}
	For example, if $\phi:\mathcal{E} \to \mathcal{F}$ is an internal isometry, then $\phi \in \mathcal{Hom}_b(\mathcal{E},\mathcal{F})$ induces $\tilde{\phi}:\tilde{\mathcal{E}} \to \tilde{\mathcal{F}}$.\\
	Observe that $\tilde{\mathcal{Hom}}(\mathcal{E},\mathcal{F})$ is a subspace of $Hom(\tilde{\mathcal{E}},\tilde{\mathcal{F}})$. The latter comprises the Banach space of all bounded linear maps between real Banach spaces $\tilde{\mathcal{E}}$ and $\tilde{\mathcal{F}}$, while the former is a Banach subspace comprising those bounded linear maps that were induced from internal morphisms from $\mathcal{E}$ to $\mathcal{F}$.\\
	\begin{proposition}\label{pairing}
		Let $\mathcal{E}$ be an internal Banach space with dual $\mathcal{E}^{\sharp}$. Then the internal pairing $\langle \cdot \rangle_{\U} \in \mathcal{B}_b(\mathcal{E}^{\sharp}, \mathcal{E})$. Furthermore, for $\phi \in \mathcal{Hom}_b(\mathcal{E},\mathcal{E})$, its internal adjoint $\phi^{\sharp} \in \mathcal{Hom}_b(\mathcal{E}^{\sharp},\mathcal{E}^{\sharp})$.
	\end{proposition}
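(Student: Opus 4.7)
The plan is to prove each of the two assertions by reducing the boundedness claim at the level of the ultraproduct to the corresponding standard Banach space fact applied levelwise, and then invoking the definition of the external subsets $\mathcal{B}_b$ and $\mathcal{Hom}_b$.

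For the first claim, I would write the internal pairing as $\langle \cdot \mid \cdot \rangle_{\U} = \{\langle \cdot \mid \cdot \rangle_n\}_{\U}$ where $\langle \cdot \mid \cdot \rangle_n \colon E_n^{\sharp} \times E_n \to \R$ is the canonical duality pairing on the $n$-th Banach space. The standard fact is that this canonical pairing is a bounded bilinear form of norm at most $1$: for every $\lambda \in E_n^{\sharp}$ and $v \in E_n$ one has $|\langle \lambda \mid v \rangle_n| \leq \|\lambda\| \cdot \|v\|$. Taking ultraproducts, this gives $\|\langle \cdot \mid \cdot \rangle_{\U}\| = \{\|\langle \cdot \mid \cdot \rangle_n\|\}_{\U} \leq 1$ in ${}^*\R$, so in particular the internal norm is bounded. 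By definition of the external subset $\mathcal{B}_b(\mathcal{E}^{\sharp}, \mathcal{E})$, this shows $\langle \cdot \mid \cdot \rangle_{\U} \in \mathcal{B}_b(\mathcal{E}^{\sharp}, \mathcal{E})$.

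For the second claim, write $\phi = \{\phi_n\}_{\U}$ with $\phi_n \colon E_n \to E_n$ a bounded linear map for every $n$. The internal adjoint is $\phi^{\sharp} = \{\phi_n^{\sharp}\}_{\U}$, where $\phi_n^{\sharp} \colon E_n^{\sharp} \to E_n^{\sharp}$ is the usual Banach space adjoint defined by $\langle \phi_n^{\sharp} \lambda \mid v \rangle_n = \langle \lambda \mid \phi_n v \rangle_n$. The classical fact is that in this setting $\|\phi_n^{\sharp}\|_{\mathrm{op}} = \|\phi_n\|_{\mathrm{op}}$. Passing to the ultraproduct, this yields the internal equality $\|\phi^{\sharp}\| = \|\phi\|$ in ${}^*\R$. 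Since $\phi \in \mathcal{Hom}_b(\mathcal{E}, \mathcal{E})$ means $\|\phi\| \in {}^*\R_b$, it follows that $\|\phi^{\sharp}\| \in {}^*\R_b$ as well, hence $\phi^{\sharp} \in \mathcal{Hom}_b(\mathcal{E}^{\sharp}, \mathcal{E}^{\sharp})$.

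There is no real obstacle here: both statements are levelwise standard facts from Banach space theory, and the ultraproduct formalism preserves them once one spells out what \emph{internal} means. The only thing to be careful about is to distinguish the internal norm (a hyperreal) from its boundedness as an external condition, which is exactly what is being asserted by the claim. No saturation or transfer beyond the basic ultraproduct of norms is needed.
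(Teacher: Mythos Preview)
Your proof is correct and is exactly the natural argument: both assertions reduce to the levelwise Banach space facts that the canonical pairing has norm at most $1$ and that the adjoint preserves operator norm, after which boundedness of the internal norm follows immediately. The paper itself states this proposition without proof, treating it as evident from the definitions, so your approach matches what the authors had in mind.
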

	Thus, many of our functorial results about internal Banach spaces in ${}^*Ban$ pass through when we restrict to bounded elements, and induce corresponding results in $Ban$.\\  
	Our main structure of interest is an asymptotic variant of internal ${}^*G$-representations using the external subsets $\mathcal{E}_b$ and $\mathcal{E}_{inf}$:
	\begin{definition}
		Let $G$ be a locally compact, second countable topological group, and $\mathcal{E}$ be an internal Banach space. An internal isometry $\pi:{}^*G \times \mathcal{E} \to \mathcal{E}$ be an internal isometry such that it induces an action $\tilde{\pi}:{}^*G \times \tilde{\mathcal{E}} \to \tilde{\mathcal{E}}$ of ${}^*G$ on the real Banach space $\tilde{\mathcal{E}}$. The internal Banach space $\mathcal{E}$, equipped with such an internal map $\pi$, is called an \textbf{asymptotic Banach ${}^*G$-module} with asymptotic ${}^*G$-representation $\pi$. 
	\end{definition}
	We shall denote an asymptotic Banach ${}^*G$-module either by  $(\pi,\mathcal{E})$, or just $\mathcal{E}$ if the map $\pi$ can implicitly be assumed in context.\\
	Observe that the real Banach space $\tilde{\mathcal{E}}$ is a (true) representation of ${}^*G$ through $\tilde{\pi}$. An element $v \in \mathcal{E}_b$ is said to be \emph{asymptotically fixed} by $g \in {}^*G$ if its image $\tilde{v} \in \tilde{\mathcal{E}}$ is (truly) fixed by $g$. The set of asymptotically ${}^*G$-fixed elements of $\mathcal{E}$ shall be denoted $\mathcal{E}_b^{\sim {}^*G}$. More generally, for an internal subgroup $\mathcal{N} \leq {}^*G$, the set of asymptotically $\mathcal{N}$-fixed elements of $\mathcal{E}$ shall be denoted $\mathcal{E}_b^{\sim \mathcal{N}}$ \\
	For an asymptotic Banach ${}^*G$-module $\mathcal{E}$, consider its dual  internal Banach space $\mathcal{E}^{\sharp}=\L(\mathcal{E},{}^*\R)$. In this case, the internal map $\pi:{}^*G \times\mathcal{E} \to \mathcal{E}$ defines an internal map 
	$$\pi^{\sharp}:{}^*G \times \mathcal{E}^{\sharp} \to \mathcal{E}^{\sharp}$$
	$$\pi^{\sharp}(g)(\lambda)(v) = \lambda\left( \pi(g)^{-1}v \right)$$
	in the usual way, and it is easy to check that $\mathcal{E}^{\sharp}$ an asymptotic Banach ${}^*G$-module $(\pi^{\sharp},\mathcal{E}^{\sharp})$. Essentially, we are simply defining the internal adjoint of $\pi(g)$ with respect to the pairing of $\mathcal{E}$ and $\mathcal{E}^{\sharp}$, and the functorial results of \cref{pairing} and \cref{hom} ensure that the map $\pi^{\sharp}$ defined this way is an asymptotic ${}^*G$-representation of ${}^*G$ on $\mathcal{E}^{\sharp}$. An asymptotic Banach ${}^*G$-module $(\pi,\mathcal{E})$ is called a \textbf{dual asymptotic Banach ${}^*G$-module} if $(\pi,\mathcal{E})$ is the dual of an asymptotic Banach ${}^*G$-module $(\pi^{\flat},\mathcal{E}^{\flat})$.
	
	More generally, let $(\pi,\mathcal{E})$ and $(\rho,\mathcal{F})$ be asymptotic Banach ${}^*G$-modules. Then $\mathcal{L}(\mathcal{E},\mathcal{F})$ and $\mathcal{B}(\mathcal{E},\mathcal{F})$ can also be regarded as asymptotic Banach ${}^*G$-modules in a natural way. 
	\begin{definition}
		Let $(\pi,\mathcal{E})$ and $(\rho,\mathcal{F})$ be asymptotic Banach ${}^*G$-modules. An \textbf{asymptotic ${}^*G$-morphism} from $\mathcal{E}$ to $\mathcal{F}$ is a map $\phi \in \mathcal{L}_b(\mathcal{E},\mathcal{F})$ such that the induced $\tilde{\phi}:\tilde{\mathcal{E}} \to \tilde{\mathcal{F}}$ is ${}^*G$-equivariant (that is, $\tilde{\phi}$ is a morphism of ${}^*G$-representations $\tilde{\mathcal{E}}$ and $\tilde{\mathcal{F}}$).
	\end{definition}
	In other words, an asymptotic ${}^*G$-morphism is an element of $\L_b(\mathcal{E},\mathcal{F})$ whose image in $\tilde{\L}(\mathcal{E},\mathcal{F})$ is a ${}^*G$-fixed point, and the set of such elements is denoted $\mathcal{Hom}_{G}(\mathcal{E},\mathcal{F})$. The following proposition tells us that the functors $k \mapsto k_*$ and $\sigma \mapsto \sigma^*$, defined for internal Banach spaces, respect the asymptotic ${}^*G$-representations.
	\begin{proposition}\label{ksigma2}
		Let $\mathcal{E}, \mathcal{F}, \mathcal{X}$ and $\mathcal{Y}$ be asymptotic Banach ${}^*G$-modules, and $k:\mathcal{X}^{\sharp} \to \mathcal{Y}^{\sharp}$ and $\sigma:\mathcal{E} \to \mathcal{F}$ be asymptotic ${}^*G$-morphisms. Then $k_*:\mathcal{B}(\mathcal{X},\mathcal{E}) \to \mathcal{B}(\mathcal{Y},\mathcal{E})$ and $\sigma^*:\mathcal{B}(\mathcal{X},\mathcal{F}) \to \mathcal{B}(\mathcal{X},\mathcal{E})$ are also asymptotic ${}^*G$-morphisms. 
	\end{proposition}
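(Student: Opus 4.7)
The plan is to verify, for each of $k_*$ and $\sigma^*$, the two defining properties of an asymptotic ${}^*G$-morphism: internal boundedness, and ${}^*G$-equivariance of the induced map on the ultralimit. Internal boundedness is immediate from the internal version of \cref{prop_lev1}: setting one of the two operators equal to the identity gives $\|k_*\|\le\|k\|$ and $\|\sigma^*\|\le\|\sigma\|$, and since $k\in\mathcal{L}_b(\mathcal{X}^\sharp,\mathcal{Y}^\sharp)$ and $\sigma\in\mathcal{L}_b(\mathcal{E},\mathcal{F})$ by hypothesis, the operators $k_*$ and $\sigma^*$ lie in the appropriate bounded classes.

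For equivariance of $\tilde{k}_*$, I would work through the isometric identification $\mathcal{B}(\mathcal{X}\times\mathcal{E})\cong\mathcal{L}(\mathcal{E},\mathcal{X}^\sharp)$, $\beta\leftrightarrow T_\beta$, under which the natural asymptotic ${}^*G$-action becomes $T_{g\cdot\beta}=\pi_\mathcal{X}^\sharp(g)\circ T_\beta\circ\pi_\mathcal{E}(g)^{-1}$ and the functor $k_*$ becomes left-composition with $k$, namely $T_{k_*\beta}=k\circ T_\beta$. A direct computation then yields
\[
T_{k_*(g\cdot\beta)\,-\,g\cdot(k_*\beta)} \;=\; \bigl(k\circ\pi_\mathcal{X}^\sharp(g)\,-\,\pi_\mathcal{Y}^\sharp(g)\circ k\bigr)\circ T_\beta\circ\pi_\mathcal{E}(g)^{-1}.
\]
The parenthesized factor is, after composition with the isometry $\pi_\mathcal{Y}^\sharp(g)$, exactly $k-(g\cdot k)$, which belongs to $\mathcal{L}_{inf}(\mathcal{X}^\sharp,\mathcal{Y}^\sharp)$ because $k$ is an asymptotic ${}^*G$-morphism. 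Combined with $\|T_\beta\|=\|\beta\|\in{}^*\R_b$ and the fact that $\pi_\mathcal{E}(g)^{-1}$ is an isometry, this forces the operator norm of the displayed difference to be infinitesimal, so $k_*(g\cdot\beta)-g\cdot(k_*\beta)\in\mathcal{B}_{inf}$, as required.

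The argument for $\sigma^*$ is symmetric: under the same identification $\sigma^*$ becomes right-composition, $T_{\sigma^*\beta}=T_\beta\circ\sigma$, and a parallel computation gives
\[
T_{\sigma^*(g\cdot\beta)\,-\,g\cdot(\sigma^*\beta)} \;=\; \pi_\mathcal{X}^\sharp(g)\circ T_\beta\circ\bigl(\pi_\mathcal{F}(g)^{-1}\sigma\,-\,\sigma\,\pi_\mathcal{E}(g)^{-1}\bigr).
\]
Post-composing the asymptotic equivariance $\sigma-(g\cdot\sigma)\in\mathcal{L}_{inf}(\mathcal{E},\mathcal{F})$ with the isometry $\pi_\mathcal{F}(g)^{-1}$ shows that the rightmost factor has infinitesimal operator norm, and boundedness of $T_\beta$ together with the isometry $\pi_\mathcal{X}^\sharp(g)$ then delivers the required infinitesimality.

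There is no serious obstacle; the main content is the bookkeeping in the two displayed identities. The one mildly delicate point is that the pointwise equivariance of $\tilde{k}$ (resp.\ $\tilde\sigma$) on the ultralimit must be upgraded to the uniform operator-norm statement $k-g\cdot k\in\mathcal{L}_{inf}$ (resp.\ $\sigma-g\cdot\sigma\in\mathcal{L}_{inf}$) that I use above. For internally bounded operators this equivalence follows by an elementary transfer argument: otherwise one could internally pick approximate norm-maximizers to produce an element of $\mathcal{X}^\sharp_b$ on which asymptotic equivariance fails, contradicting the hypothesis.
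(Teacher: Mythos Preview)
Your argument is correct. The paper states this proposition without proof, so there is no approach to compare against; your verification via the identification $\mathcal{B}(\mathcal{X}\times\mathcal{E})\cong\mathcal{L}(\mathcal{E},\mathcal{X}^\sharp)$ and the two displayed operator identities is exactly the natural one. One remark: the ``mildly delicate point'' you flag---upgrading pointwise equivariance of $\tilde{k}$ to the operator-norm statement $k-g\cdot k\in\mathcal{L}_{inf}$---is already absorbed into the paper's definition, since the sentence following the definition of asymptotic ${}^*G$-morphism rephrases it as ``an element of $\mathcal{L}_b(\mathcal{E},\mathcal{F})$ whose image in $\tilde{\mathcal{L}}(\mathcal{E},\mathcal{F})$ is a ${}^*G$-fixed point''; this is precisely the operator-norm formulation, so no separate transfer argument is needed (though yours is valid). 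A cosmetic point: pre-composing $k\circ\pi_\mathcal{X}^\sharp(g)-\pi_\mathcal{Y}^\sharp(g)\circ k$ with $\pi_\mathcal{Y}^\sharp(g)^{-1}$ actually yields $(g^{-1}\cdot k)-k$ rather than $k-(g\cdot k)$, but since the hypothesis holds for all $g\in{}^*G$ and the actions are internal isometries, this does not affect the conclusion.
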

	
	\begin{definition}
		An asymptotic ${}^*G$-cochain complex $(\mathcal{E}^{\bullet},d^{\bullet})$ is a $\Zplus$-indexed sequence
		$$\begin{tikzcd}
		0 \arrow[r,"d^0"] & \mathcal{E}^0  \arrow[r, "d^{1}"]&  \mathcal{E}^1 \arrow[r, "d^2"] & \mathcal{E}^2 \arrow[r, "d^3"]  & \mathcal{E}^3 \arrow[r, "d^4"] & \dots
		\end{tikzcd}$$
		where for every $n \geq 0$, $\mathcal{E}^n$ is an asymptotic Banach ${}^*G$-module, $d^n$ is an asymptotic ${}^*G$-morphism, and $d^{n+1}d^n(\mathcal{E}^n_b) \subseteq \mathcal{E}^{n+2}_{inf}$.
	\end{definition}
	We shall also denote $(\mathcal{E}^{\bullet},d^{\bullet})$ by just $\mathcal{E}^{\bullet}$ if the maps $d^n$ are assumed from context, and also assume that $\mathcal{E}^{-1}=0$ to make statements with indices easier. Observe that the condition $d^{n+1}d^n(\mathcal{E}^{n-1}_b) \subseteq \mathcal{E}^{n+1}_{inf}$ is a relaxation to infinitesimals of the usual condition on differential maps. In fact, since the maps $d^n$ are asymptotic ${}^*G$-morphisms, the asymptotic ${}^*G$-cochain complex induces a (true) cochain complex 
	$$\begin{tikzcd}
	0 \arrow[r,"\tilde{d}^0"] & (\tilde{\mathcal{E}}^0)^{{}^*G}  \arrow[r, "\tilde{d}^{1}"]&  (\tilde{\mathcal{E}}^1)^{{}^*G}  \arrow[r, "\tilde{d}^{2}"] & (\tilde{\mathcal{E}}^2)^{{}^*G} \arrow[r, "\tilde{d}^{3}"]  &(\tilde{\mathcal{E}}^3)^{{}^*G}  \arrow[r, "\tilde{d}^{4}"] & \dots
	\end{tikzcd}$$
	allowing us to define the following:
	\begin{definition}
		The \textbf{asymptotic cohomology} of the asymptotic ${}^*G$-cochain complex $(\mathcal{E}^{\bullet},d^{\bullet})$, denoted $\Ha^{\bullet}(\mathcal{E}^{\bullet},d^{\bullet})$ or $\Ha^{\bullet}(\mathcal{E}^{\bullet})$, is defined to be
		$$\Ha^m(\mathcal{E}^{\bullet}) \coloneq ker(\tilde{d}^{m+1})/Im(\tilde{d}^m)$$ 
		An element $\alpha \in \mathcal{E}_b$ such that $\tilde{\alpha} \in ker(\tilde{d}^{m+1})$ is called an \textbf{asymptotic $m$-cocycle}, and will be called an \textbf{asymptotic $m$-coboundary} if $\tilde{\alpha} \in Im(\tilde{d}^m)$.
	\end{definition}
	To understand the correspondence $\mathcal{E}^{\bullet} \mapsto \Ha^{\bullet}(\mathcal{E}^{\bullet})$, we now define morphisms and homotopies of asymptotic ${}^*G$-cochain complexes.
	\begin{definition}
		Let $(\mathcal{E}^{\bullet},d_\mathcal{E}^{\bullet})$ and $(\mathcal{F}^{\bullet},d_\mathcal{F}^{\bullet})$ be asymptotic ${}^*G$-cochain complexes. An asymptotic ${}^*G$-morphism $\alpha^{\bullet}:\mathcal{E}^{\bullet} \to \mathcal{F}^{\bullet}$ between $(\mathcal{E}^{\bullet},d_\mathcal{E}^{\bullet})$ and $(\mathcal{F}^{\bullet},d_\mathcal{F}^{\bullet})$ is a family of asymptotic ${}^*G$-morphisms $\alpha^n:\mathcal{E}^n \to \mathcal{F}^n$ for every $n \in \Zplus$ such that for every $n \in \Zplus$,
		$$(d^{n+1}_\mathcal{F} \alpha^n - \alpha^{n+1} d^n_\mathcal{E})(\mathcal{E}^n_b) \subseteq \mathcal{F}^{n+1}_{inf}$$
	\end{definition}
	Again, this simply means that we have a ${}^*G$-morphism of the cochain complexes 
	$$\begin{tikzcd}
	0 \arrow[r,"\tilde{d}^0"] & (\tilde{\mathcal{E}}^0)^{{}^*G}  \arrow[d, "\tilde{\alpha}^0"] \arrow[r, "\tilde{d}^{1}"]&  (\tilde{\mathcal{E}}^1)^{{}^*G} \arrow[d, "\tilde{\alpha}^1"] \arrow[r, "\tilde{d}^{2}"] & (\tilde{\mathcal{E}}^2)^{{}^*G} \arrow[d, "\tilde{\alpha}^2"] \arrow[r, "\tilde{d}^{3}"]  &(\tilde{\mathcal{E}}^3)^{{}^*G} \arrow[d, "\tilde{\alpha}^3"] \arrow[r, "\tilde{d}^{4}"] & \dots\\
	0 \arrow[r,"\tilde{d}^0"] & (\tilde{\mathcal{F}}^0)^{{}^*G}  \arrow[r, "\tilde{d}^{1}"]&  (\tilde{\mathcal{F}}^1)^{{}^*G}  \arrow[r, "\tilde{d}^{2}"] & (\tilde{\mathcal{F}}^2)^{{}^*G} \arrow[r, "\tilde{d}^{3}"]  &(\tilde{\mathcal{F}}^3)^{{}^*G}  \arrow[r, "\tilde{d}^{4}"] & \dots
	\end{tikzcd}$$
	Let $\alpha^{\bullet}$ be an asymptotic ${}^*G$-morphism between the asymptotic ${}^*G$-cochain complexes $\mathcal{X}^{\bullet}$ and $\mathcal{Y}^{\bullet}$. While this clearly gives us an induced map $\Ha^{\bullet}(\mathcal{X}^{\bullet}) \to \Ha^{\bullet}(\mathcal{Y}^{\bullet})$, we now describe when two such asymptotic ${}^*G$-morphisms $\alpha^{\bullet}$ and $\beta^{\bullet}$ correspond to the same induced maps of cohomologies.
	\begin{definition}
		Let $\alpha^{\bullet},\beta^{\bullet}:\mathcal{X}^{\bullet} \to \mathcal{Y}^{\bullet}$ be two asymptotic ${}^*G$-morphisms between the asymptotic ${}^*G$-cochain complex $\mathcal{X}^{\bullet}$ and $\mathcal{Y}^{\bullet}$. Then $\alpha^{\bullet}$ is said to be asymptotically ${}^*G$-homotopic to $\beta^{\bullet}$ if there exists a family of asymptotic ${}^*G$-morphisms $\sigma^n:\mathcal{E}^n \to \mathcal{F}^{n-1}$ such that
		$$\tilde{d}^n\tilde{\sigma}^n+\tilde{\sigma}^{n+1}\tilde{d}^{n+1}=\tilde{\alpha}^n-\tilde{\beta}^n$$
		for every $n \in \Zplus$.
	\end{definition}
	The following lemma lists results that follow from standard cohomological techniques:
	\begin{proposition}\label{homotopy}
		Let $\mathcal{X}^{\bullet}$ and $\mathcal{Y}^{\bullet}$ be asymptotic ${}^*G$-cochain complexes. 
		\begin{itemize}
			\item Suppose $\alpha^{\bullet},\beta^{\bullet}:\mathcal{X}^{\bullet} \to \mathcal{Y}^{\bullet}$ are asymptotic ${}^*G$-morphisms such that $\alpha^{\bullet}$ is asymptotically ${}^*G$-homotopic to $\beta^{\bullet}$. Then they induce the same map $\Ha^{\bullet}(\mathcal{X}^{\bullet}) \to \Ha^{\bullet}(\mathcal{Y}^{\bullet})$ at the level of cohomology.
			\item Suppose $\alpha^{\bullet}:\mathcal{X}^{\bullet} \to \mathcal{Y}^{\bullet}$ and $\beta^{\bullet}:\mathcal{Y}^{\bullet} \to \mathcal{X}^{\bullet}$ are asymptotic ${}^*G$-morphisms such that $\alpha^{\bullet}\circ \beta^{\bullet}:\mathcal{Y}^{\bullet} \to \mathcal{Y}^{\bullet}$ is asymptotically ${}^*G$-homotopic to the identity on $\mathcal{Y}^{\bullet}$, and $\beta^{\bullet} \circ \alpha^{\bullet}:\mathcal{X}^{\bullet} \to \mathcal{X}^{\bullet}$ is asymptotically ${}^*G$-homotopic to the identity on $\mathcal{X}^{\bullet}$. Then $\Ha^{\bullet}(\mathcal{X}^{\bullet})$ is isomorphic to $\Ha^{\bullet}(\mathcal{Y}^{\bullet})$. In this case, the maps $\alpha^{\bullet}$ and $\beta^{\bullet}$ are called asymptotic ${}^*G$-homotopy equivalences between $\mathcal{X}^{\bullet}$ and $\mathcal{Y}^{\bullet}$. 
		\end{itemize}
	\end{proposition}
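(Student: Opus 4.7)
The plan is to observe that, by construction, $\Ha^{\bullet}(\mathcal{E}^{\bullet})$ is just the ordinary cohomology of the induced cochain complex $\bigl((\tilde{\mathcal{E}}^n)^{{}^*G}, \tilde{d}^n\bigr)$ of real Banach spaces. Since the asymptotic $^*G$-morphism condition (together with \cref{hom}) guarantees that any asymptotic $^*G$-morphism $\phi \in \mathcal{Hom}_b(\mathcal{E},\mathcal{F})$ descends to a genuine $^*G$-equivariant bounded linear map $\tilde{\phi} : \tilde{\mathcal{E}} \to \tilde{\mathcal{F}}$, restricting further to $^*G$-fixed points, both statements reduce to the classical chain-level lemmas of homological algebra in the category of real vector spaces. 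First I would make this reduction explicit; once it is on the table, the rest is a standard diagram chase.

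For the first part, I would start with an asymptotic $n$-cocycle $x \in \mathcal{X}^n_b$, so that $\tilde{x} \in \ker \tilde{d}^{n+1}$ inside $(\tilde{\mathcal{X}}^n)^{{}^*G}$. Applying the homotopy identity $\tilde{d}^n\tilde{\sigma}^n + \tilde{\sigma}^{n+1}\tilde{d}^{n+1} = \tilde{\alpha}^n - \tilde{\beta}^n$ to $\tilde{x}$ gives
\[
  \tilde{\alpha}^n(\tilde{x}) - \tilde{\beta}^n(\tilde{x}) \;=\; \tilde{d}^n\bigl(\tilde{\sigma}^n(\tilde{x})\bigr) + \tilde{\sigma}^{n+1}\bigl(\tilde{d}^{n+1}\tilde{x}\bigr) \;=\; \tilde{d}^n\bigl(\tilde{\sigma}^n(\tilde{x})\bigr),
\]
which is an asymptotic $n$-coboundary in $\mathcal{Y}^{\bullet}$. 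Hence $[\tilde{\alpha}^n(\tilde{x})] = [\tilde{\beta}^n(\tilde{x})]$ in $\Ha^n(\mathcal{Y}^{\bullet})$, and so the induced cohomology maps coincide. A routine check here is that $\tilde{\sigma}^n(\tilde{x})$ genuinely lies in the $^*G$-fixed subspace, which is immediate from the assumption that each $\sigma^n$ is an asymptotic $^*G$-morphism.

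For the second part, I would just apply the first part twice. Since $\alpha^{\bullet} \circ \beta^{\bullet}$ is asymptotically $^*G$-homotopic to $\mathrm{id}_{\mathcal{Y}^{\bullet}}$, their induced maps on $\Ha^{\bullet}(\mathcal{Y}^{\bullet})$ agree; but the identity morphism obviously induces the identity on cohomology, so $\Ha^{\bullet}(\alpha^{\bullet}) \circ \Ha^{\bullet}(\beta^{\bullet}) = \mathrm{id}_{\Ha^{\bullet}(\mathcal{Y}^{\bullet})}$. Symmetrically, $\Ha^{\bullet}(\beta^{\bullet}) \circ \Ha^{\bullet}(\alpha^{\bullet}) = \mathrm{id}_{\Ha^{\bullet}(\mathcal{X}^{\bullet})}$. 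Hence $\Ha^{\bullet}(\alpha^{\bullet})$ is a two-sided inverse of $\Ha^{\bullet}(\beta^{\bullet})$, yielding the desired isomorphism.

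There is no real analytic obstacle: all the subtlety about bounded versus infinitesimal elements has already been absorbed into the passage from $\mathcal{E}$ to $\tilde{\mathcal{E}}$. The only point that deserves care is to verify, before the diagram chase begins, that each relevant asymptotic $^*G$-morphism (including the homotopies $\sigma^n$) restricts to a well-defined map between the $^*G$-fixed subspaces, and that the conditions $d^{n+1}d^n(\mathcal{E}^n_b) \subseteq \mathcal{E}^{n+2}_{inf}$ and $(d_{\mathcal{F}}^{n+1}\alpha^n - \alpha^{n+1}d_{\mathcal{E}}^n)(\mathcal{X}^n_b) \subseteq \mathcal{Y}^{n+1}_{inf}$ become honest equalities $\tilde{d}^{n+1}\tilde{d}^n = 0$ and $\tilde{d}_{\mathcal{F}}^{n+1}\tilde{\alpha}^n = \tilde{\alpha}^{n+1}\tilde{d}_{\mathcal{E}}^n$ after quotienting by infinitesimals. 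Once this bookkeeping is in place, the argument is purely formal homological algebra.
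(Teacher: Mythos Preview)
Your proposal is correct and is precisely the standard argument the paper has in mind: the paper does not actually give a proof of this proposition, stating only that it ``lists results that follow from standard cohomological techniques.'' Your reduction to the induced $^*G$-cochain complex of real Banach spaces $\bigl((\tilde{\mathcal{E}}^n)^{{}^*G}, \tilde{d}^n\bigr)$ followed by the classical diagram chase is exactly what is intended.
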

	
	Given an asymptotic ${}^*G$-cochain complex $(\mathcal{X}^{\bullet},d^{\bullet})$ and another asymptotic Banach  ${}^*G$-module $\mathcal{E}$, we can use the functors $d^n \to d^n_*$ to construct an asymptotic ${}^*G$-cochain complex 
	$$\begin{tikzcd}
	0 \arrow[r,"d^0"] & \mathcal{B}(\mathcal{X}^0 \times \mathcal{E})  \arrow[r, "d^{1}"]&  \mathcal{B}(\mathcal{X}^1 \times \mathcal{E}) \arrow[r, "d^2"] & \mathcal{B}(\mathcal{X}^2 \times \mathcal{E}) \arrow[r, "d^3"]  & \mathcal{B}(\mathcal{X}^3 \times \mathcal{E}) \arrow[r, "d^4"] & \dots
	\end{tikzcd}$$
	which we shall denote $\mathcal{B}(\mathcal{X}^{\bullet},\mathcal{E})$.  Combining the functorial properties of $k \to k_*$ and $\sigma \to \sigma^*$ and the fact that they respect the structure of the asymptotic ${}^*G$-representations, we get the following:
	\begin{proposition}\label{bilinear}
		Let $k^{\bullet}$ be an asymptotic ${}^*G$-morphism between the asymptotic ${}^*G$-cochain complexes $(\mathcal{X}^{\bullet})^{\sharp}$ and $(\mathcal{Y}^{\bullet})^{\sharp}$, and $\mathcal{E}$ be an asymptotic Banach ${}^*G$-module. Then $k^{\bullet}_*$ is an asymptotic ${}^*G$-morphism between the asymptotic ${}^*G$-cochain complexes $\mathcal{B}(\mathcal{X}^{\bullet} \times \mathcal{E})$ and $\mathcal{B}(\mathcal{Y}^{\bullet} \times \mathcal{E})$.
	\end{proposition}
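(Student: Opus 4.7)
The plan is to decompose the claim into two pieces: (i) each level map $k^n_*$ is an asymptotic ${}^*G$-morphism, and (ii) the family $k^{\bullet}_*$ commutes with the cochain differentials up to infinitesimals on bounded cochains. Part (i) will be handled directly by invoking \cref{ksigma2} level-wise on each $k^n : (\mathcal{X}^n)^{\sharp} \to (\mathcal{Y}^n)^{\sharp}$, which immediately yields that $k^n_* : \mathcal{B}(\mathcal{X}^n \times \mathcal{E}) \to \mathcal{B}(\mathcal{Y}^n \times \mathcal{E})$ is an asymptotic ${}^*G$-morphism, so no additional work is needed there.

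For part (ii), denote the differentials of $(\mathcal{X}^\bullet)^{\sharp}$ and $(\mathcal{Y}^\bullet)^{\sharp}$ by $\delta^\bullet$ and $\epsilon^\bullet$, so that the differentials of $\mathcal{B}(\mathcal{X}^\bullet \times \mathcal{E})$ and $\mathcal{B}(\mathcal{Y}^\bullet \times \mathcal{E})$ are $\delta^n_*$ and $\epsilon^n_*$ by construction. The key observation is that the assignment $k \mapsto k_*$ is $\R$-linear and covariantly functorial---immediate from the pointwise formula $(k_*\beta)(\cdot,e) = k(\beta(\cdot,e))$---and moreover satisfies $\|k_*\| \leq \|k\|$ (consistent with the operator-norm inequality of \cref{prop_lev1}). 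Setting $h^n = \epsilon^n k^n - k^{n+1}\delta^n$, I would then compute
\[
\epsilon^n_* \circ k^n_* \,-\, k^{n+1}_* \circ \delta^n_* \;=\; h^n_*,
\]
so the problem reduces to showing that $h^n_*$ sends bounded cochains to infinitesimal cochains. By the hypothesis that $k^{\bullet}$ is an asymptotic morphism of complexes, $h^n$ itself carries $(\mathcal{X}^n)^{\sharp}_b$ into $(\mathcal{Y}^{n+1})^{\sharp}_{inf}$ pointwise.

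The main obstacle, and the only step requiring more than bookkeeping, is promoting this pointwise bounded-to-infinitesimal property of $h^n$ to a bound on its internal operator norm $\|h^n\| \in {}^*\R$. Writing $h^n = \{h^n_k\}_{\U}$, the strategy will be a standard approximate-norming argument: were $\|h^n\|$ not infinitesimal, there would exist a constant $c > 0$ and a large set of indices $k$ with $\|h^n_k\|_{op} \geq c$, and choosing unit vectors $x_k$ with $\|h^n_k x_k\| \geq c/2$ would assemble into a bounded internal element $x \in (\mathcal{X}^n)^{\sharp}_b$ on which $h^n(x) \notin (\mathcal{Y}^{n+1})^{\sharp}_{inf}$, contradicting the hypothesis. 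Once $\|h^n\|$ is infinitesimal, the inequality $\|h^n_*\| \leq \|h^n\|$ delivers the same for $h^n_*$, and in particular $h^n_*$ sends $\mathcal{B}_b(\mathcal{X}^n \times \mathcal{E})$ into $\mathcal{B}_{inf}(\mathcal{Y}^{n+1} \times \mathcal{E})$, completing the verification.
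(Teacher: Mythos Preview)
Your proposal is correct and essentially unpacks what the paper treats as immediate from functoriality (the paper states the proposition without proof, relying on the preceding discussion of $k \mapsto k_*$ and \cref{ksigma2}). The one substantive step you supply that the paper leaves implicit is the approximate-norming argument showing that an internal bounded linear map sending bounded vectors to infinitesimals must have infinitesimal operator norm; this is indeed the point where the ``asymptotic'' commutation hypothesis on $k^\bullet$ transfers to $k^\bullet_*$, and your argument for it is valid since $h^n$ is internal (being a difference of compositions of internal maps in $\mathcal{L}_b$).
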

	
	Note that while we cannot conclude anything about $\Ha^{\bullet}(\mathcal{B}(\mathcal{X}^{\bullet},\mathcal{E}))$ from $\Ha^{\bullet}(\mathcal{X}^{\bullet})$, we can still use \cref{homotopy} to show that:
	\begin{lemma}\label{homotopy2}
		Let $k^{\bullet}:(\mathcal{X}^{\bullet})^{\sharp} \to (\mathcal{Y}^{\bullet})^{\sharp}$ and $j^{\bullet}:(\mathcal{Y}^{\bullet})^{\sharp} \to (\mathcal{X}^{\bullet})^{\sharp}$ be asymptotic ${}^*G$-homotopy equivalences (as in \cref{homotopy}) between the asymptotic ${}^*G$-cochain complexes $(\mathcal{X}^{\bullet})^{\sharp}$ and $(\mathcal{Y}^{\bullet})^{\sharp}$. Let $\mathcal{E}$ be an asymptotic Banach ${}^*G$-module. Then $k_*^{\bullet}:\mathcal{B}(\mathcal{X}\times \mathcal{E}) \to \mathcal{B}(\mathcal{Y}\times \mathcal{E})$ and $j_*^{\bullet}:\mathcal{B}(\mathcal{Y}\times \mathcal{E}) \to \mathcal{B}(\mathcal{X}\times \mathcal{E})$ be asymptotic ${}^*G$-homotopy equivalences between the asymptotic ${}^*G$-cochain complexes $\mathcal{B}(\mathcal{X}^{\bullet},\mathcal{E})$ and $\mathcal{B}(\mathcal{Y}^{\bullet},\mathcal{E})$. 
	\end{lemma}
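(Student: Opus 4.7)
The plan is to exploit the functoriality of the pushforward operation $k \mapsto k_*$ in a direct way. Since $k^{\bullet}$ and $j^{\bullet}$ are, by hypothesis, asymptotic ${}^*G$-morphisms of the cochain complexes $(\mathcal{X}^{\bullet})^{\sharp}$ and $(\mathcal{Y}^{\bullet})^{\sharp}$, \cref{bilinear} immediately gives that $k^{\bullet}_*$ and $j^{\bullet}_*$ are asymptotic ${}^*G$-morphisms between $\mathcal{B}(\mathcal{X}^{\bullet} \times \mathcal{E})$ and $\mathcal{B}(\mathcal{Y}^{\bullet} \times \mathcal{E})$. Only the chain-homotopy data needs to be supplied.

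Let $\sigma^n\colon (\mathcal{Y}^n)^{\sharp} \to (\mathcal{Y}^{n-1})^{\sharp}$ and $\tau^n\colon (\mathcal{X}^n)^{\sharp} \to (\mathcal{X}^{n-1})^{\sharp}$ be the asymptotic ${}^*G$-morphisms witnessing that $k^{\bullet} \circ j^{\bullet}$ and $j^{\bullet} \circ k^{\bullet}$ are asymptotically ${}^*G$-homotopic to the respective identities on $(\mathcal{Y}^{\bullet})^{\sharp}$ and $(\mathcal{X}^{\bullet})^{\sharp}$. I propose the candidates $\sigma^n_*\colon \mathcal{B}(\mathcal{Y}^n \times \mathcal{E}) \to \mathcal{B}(\mathcal{Y}^{n-1} \times \mathcal{E})$ and $\tau^n_*\colon \mathcal{B}(\mathcal{X}^n \times \mathcal{E}) \to \mathcal{B}(\mathcal{X}^{n-1} \times \mathcal{E})$ as homotopy operators; these are asymptotic ${}^*G$-morphisms, again by \cref{bilinear}.

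The key observation is that the assignment $k \mapsto k_*$ is a linear functor at the internal level, as is immediate from the defining formula $(k_*\beta)(\cdot,e) = k(\beta(\cdot,e))$: it sends identities to identities, composites to composites, and sums to sums, and it is norm-decreasing (internally $\|k_*\| \leq \|k\|$), whence it carries $\mathcal{Hom}_b$ into $\mathcal{Hom}_b$ and $\mathcal{Hom}_{inf}$ into $\mathcal{Hom}_{inf}$. Unpacking the hypothesized homotopy equivalence at the internal level yields
\begin{equation*}
d^n_{\mathcal{Y}} \circ \sigma^n + \sigma^{n+1}\circ d^{n+1}_{\mathcal{Y}} - k^n \circ j^n + \mathrm{id} \;\in\; \mathcal{Hom}_{inf},
\end{equation*}
and applying $(-)_*$ termwise while invoking the three functorial properties above produces
\begin{equation*}
(d^n_{\mathcal{Y}})_* \circ (\sigma^n)_* + (\sigma^{n+1})_* \circ (d^{n+1}_{\mathcal{Y}})_* - k^n_* \circ j^n_* + \mathrm{id} \;\in\; \mathcal{Hom}_{inf}.
\end{equation*}
Passing to the quotient $\tilde{(-)}$, this is precisely the required asymptotic chain-homotopy relation in $\mathcal{B}(\mathcal{Y}^{\bullet} \times \mathcal{E})$; the symmetric argument with $\tau^n_*$ furnishes the homotopy on $\mathcal{B}(\mathcal{X}^{\bullet} \times \mathcal{E})$. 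Since every step is a mechanical consequence of functoriality already packaged into \cref{prop_lev1} and \cref{ksigma2}, I do not anticipate any substantive obstacle; the only care needed is to verify that the pushforward of an internally infinitesimal morphism is again internally infinitesimal, which is the $\|k_*\| \leq \|k\|$ estimate.
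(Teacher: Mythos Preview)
Your proposal is correct and is precisely the argument the paper sets up but does not spell out: the lemma is stated immediately after \cref{bilinear} with only the phrase ``we can still use \cref{homotopy} to show that'' as justification, and your transfer of the homotopy operators via $(-)_*$ together with the norm estimate $\|k_*\|\leq\|k\|$ is exactly how one fills this in. One small remark: for the individual homotopy operators $\sigma^n$ (which are not cochain maps), the relevant citation is \cref{ksigma2} rather than \cref{bilinear}, but the content is the same.
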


	\subsection{The $L^{\infty}$-cohomology and $\Ha^{\bullet}(G,\V)$}\label{ssec-coho}
	We now come to our definition of the asymptotic cohomology of the group $G$ with coefficients in a dual asymptotic Banach ${}^*G$-module $(\pi,\V)$, where $\V=\{V_n\}_{\U}$. We shall first define it explicitly, and then relate it to the notions discussed in \S\ref{ssec-basicG} to derive further results.\\
	Our objects of interest shall be ultraproducts of $L^{\infty}$-spaces. Since these are not spaces of functions but spaces of equivalence classes of functions upto null sets, we now quickly review some notions regarding its essential image. For $m \geq 0$ and dual Banach space $V$ (with predual $V^{\flat}$, and equipped with the weak-$*$ topology), $L_{w*}^{\infty}(G^m,V)$ denotes the Banach space of (equivalence classes of) essentially bounded weak-$*$ measurable maps from $G^m$ to $V$ with (essential) supremum norm $\|\cdot\|$.\\
	For a weak-$*$ measurable function $f:G^m \to V$, the set $f^{-1}(O_u)$ is measurable in $G^m$. Define the \emph{essential image} $Im(f)$ to be
	$$Im(f)=\{u\in V\;|\; \mu(f^{-1}(O_u))>0 \text{ for every weak-$*$ neighborhood } O_u\mbox{ of } v\}$$
	where $\mu$ is the Haar measure on $G^m$. We  extend this notion to function classes in $L_{w*}^{\infty}(G^m,V)$ as well. Let $f \in L_{w*}^{\infty}(G^m,V)$.
	\begin{itemize}
		\item For any functions $f_1,f_2$ in the class of $f$, $Im(f_1)=Im(f_2)$. That is, essential image is independent of the representative of $f$, allowing us to define $Im(f)$ as the essential image of any representative in its class.
		\item Let $f_0$ be a representative of $f$, then $Im(f)\subseteq \overline{range(f_0)}$ (for $u\in Im(f)$, any $O_u$ intersects with $range(f_0)$).
		\item If $u\not\in Im(f)$ then there is a representative $f_0$ such that $u\not\in\overline{range(f_0)}$ (in this case, there is a neighborhood  $O_u$ such that $f^{-1} (O_u)$ has measure $0$, and so, one may redefine $f$ on $f^{-1}(O_u)$ to avoid $O_u$).
		\item $Im(f)=\bigcap \{\overline{range(g)}\;|\; g\mbox{ is a representative of} f \}$.
		\item There is a representative $f_0$ of $f$ such that $range(f_0)\subseteq Im(f)$ (this is because $Im(f)$, being a norm bounded closed subset of $V$, is second countable, and so 
		$f^{-1}(Im(f))$ is measurable and of the full measure).
	\end{itemize}     
	Formally, we shall call $f \in L_{w*}^{\infty}(G^m,V)$ essentially constant if there exists $v \in V$ with $Im(f)=\{v\}$.\\
	Consider the internal Banach space 
	$$\L^{\infty}\left( ({}^*G)^m,\V \right) \coloneq \prod_{\U}L_{w*}^{\infty}(G^m,V_n)$$
	For $f=\{f_n\}_{\U} \in \L^{\infty}\left( ({}^*G)^m,\V \right)$, we denote by $\|f\|$ the hyperreal $\{\|f_n\|\}_{\U} \in {}^*\R$ and by $Im(f)$ the subset $\{Im(f_n)\}_{\U} \subseteq \V$. Note that the external subset $\L^{\infty}_{b}(({}^*G)^m,\V)$ is the subset of function classes $f=\{f_n\}_{\U}$ such that $Im(f) \subseteq \V_b$, while while $\L^{\infty}_{inf}(({}^*G)^m,\V)$ is the subset of function classes $f=\{f_n\}_{\U}$ such that $Im(f) \subseteq \V_{inf}$. Observe that $Im(f)$ is an internal subset of $\V$, while $\V_b$ and $\V_{inf}$ are external. 
	\begin{claim}
		For $f=\{f_n\}_{\U} \in \L^{\infty}_{b}(({}^*G)^m,\V)$, $Im(f) \subseteq \V_b^{\sim {}^*G}$ iff there exists an internal function $f'=\{f'_n\}_{\U}$ with $f'_n$ in the class of $f_n$ for $n \in \U$ such that $range(f')=\{range(f_n')\}_{\U} \subseteq \V_b^{\sim {}^*G}$.  
	\end{claim}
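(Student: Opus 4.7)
The plan is to reduce the claim to a statement about weak-$*$ lower semi-continuous functions on each $V_n$, and then exploit the fact that the essential image of a weak-$*$ measurable function lies in the weak-$*$ closure of the range of any representative.

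For each $n$ and $v \in V_n$, set $\phi_n(v) := \sup_{g \in G}\|\pi_n(g)v - v\|$. The first step is to verify that an element $\{v_n\}_{\U} \in \V_b$ belongs to $\V_b^{\sim {}^*G}$ if and only if $\{\phi_n(v_n)\}_{\U} \in {}^*\R_{inf}$: the ``if'' direction is immediate, and for the converse, were $\{\phi_n(v_n)\}_{\U}$ not infinitesimal, one could select $g_n \in G$ along a set in $\U$ with $\|\pi_n(g_n)v_n - v_n\|$ bounded below, producing $g = \{g_n\}_{\U} \in {}^*G$ that violates asymptotic fixedness. An analogous selection argument upgrades this to the statement that an internal subset $\{A_n\}_{\U} \subseteq \V$ is contained in $\V_b^{\sim {}^*G}$ if and only if $\{\sup_{u \in A_n}\phi_n(u)\}_{\U}$ is infinitesimal.

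The key step is to observe that each $\phi_n$ is weak-$*$ lower semi-continuous on $V_n$: since $\V$ is a dual asymptotic Banach ${}^*G$-module, $\pi_n(g)$ is weak-$*$ continuous on $V_n$ (being the dual of an operator on the predual), and combined with weak-$*$ lower semi-continuity of the norm, this makes $v \mapsto \|\pi_n(g)v - v\|$ weak-$*$ lsc for every $g$; hence $\phi_n$, a supremum of such maps, is weak-$*$ lsc. From this one derives the sup-identity $\sup_{\overline{A}^{w*}}\phi_n = \sup_A \phi_n$ for every $A \subseteq V_n$: given $v \in \overline{A}^{w*}$ and a net $u_\alpha \in A$ with $u_\alpha \to v$ in the weak-$*$ topology, $\phi_n(v) \leq \liminf_\alpha \phi_n(u_\alpha) \leq \sup_A \phi_n$, while the reverse inequality is trivial.

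With this in place, both directions follow quickly. For ``only if'', assuming $Im(f) \subseteq \V_b^{\sim {}^*G}$ (equivalently, $\{\sup_{v \in Im(f_n)} \phi_n(v)\}_{\U}$ is infinitesimal), use the last bullet preceding the claim to choose, for each $n \in \U$, a representative $f'_n$ of $f_n$ with $range(f'_n) \subseteq Im(f_n)$; then $\sup_{range(f'_n)} \phi_n \leq \sup_{Im(f_n)} \phi_n$ is infinitesimal along $\U$, so $\{range(f'_n)\}_{\U} \subseteq \V_b^{\sim {}^*G}$. For ``if'', given any representative $f'$ with $\{range(f'_n)\}_{\U} \subseteq \V_b^{\sim {}^*G}$, the second bullet yields $Im(f_n) \subseteq \overline{range(f'_n)}^{w*}$, so by the lsc sup-identity $\sup_{Im(f_n)} \phi_n \leq \sup_{\overline{range(f'_n)}^{w*}} \phi_n = \sup_{range(f'_n)} \phi_n$, which is infinitesimal; thus $Im(f) \subseteq \V_b^{\sim {}^*G}$. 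The only real content is the weak-$*$ lsc sup-identity, and the main subtlety to watch for is that the weak-$*$ topology used throughout is indeed the right one, inherited from the weak-$*$ neighborhoods in the definition of essential image and matching the continuity properties of $\pi_n(g)$.
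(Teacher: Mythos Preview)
Your proof is correct and follows essentially the same route as the paper's: both arguments hinge on the fact that the functions $v \mapsto \|\pi_n(g)v - v\|$ are weak-$*$ lower semi-continuous, which you phrase as the sup-identity $\sup_{\overline{A}^{w*}}\phi_n = \sup_A \phi_n$ and the paper phrases as the weak-$*$ closedness of the sublevel set $E_n = \{v : \|v\| \leq \|f'_n\|,\ \phi_n(v) \leq \epsilon_n\}$, sandwiching $range(f') \subseteq E \subseteq \V_b^{\sim {}^*G}$ with $E$ internal. Your version is slightly more explicit in justifying why the supremum $\{\sup_{range(f'_n)}\phi_n\}_\U$ is infinitesimal (via the selection argument), a step the paper states without proof.
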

	\begin{proof}
		Suppose $Im(f) \subseteq \V_b^{\sim {}^*G}$. Then since there exists an internal representative $f'$ with $range(f') \subseteq Im(f)$, we have $range(f') \subseteq \V_b^{\sim {}^*G}$ as well for this $f'$.\\
		Conversely, suppose there exists an internal representative function $f'$ with $range(f') \subseteq \V_b^{\sim {}^*G}$. Let $\epsilon = \{\epsilon_n\}_{\U}$ where $\epsilon_n = ess.sup\left \|\pi_n(g)v-v\| \; | \; v \in range(f'_n), g \in G \right)$. Note that $\epsilon \in {}^*\R_{inf}$ since $range(f') \subseteq \V_b^{\sim {}^*G}$. This allows us to express the internal subset $range(f')$ as a subset of the internal set $E=\{E_n\}_{\U}$ where
		$$E_n=\{ v \in V_n \; | \; \|v\|\leq \|f'_n\| \text{ and } \forall g \in G, \|\pi_n(g)v - v \| \leq \epsilon_n \}$$
		That is, $range(f') \subseteq E \subseteq \V_b^{\sim {}^*G}$ with $E$ being internal. Also note that for $n \in \U$, $E_n$ is closed in the weak-$*$ topology. Hence $Im(f) \subseteq E \subseteq \V_b^{\sim {}^*G}$.
	\end{proof}
	The internal map $\pi:{}^*G \times \V \to \V$ can be used to define the internal map $\tau^m:{}^*G \times \L^{\infty}(({}^*G)^m,\V) \to \L^{\infty}(({}^*G)^m,\V)$ for each $m \geq 0$ defined as
	\begin{equation}\label{tau1}
	(\tau^m(g)(f))(g_1,g_2,\dots,g_m) \coloneqq \pi(g)f(g^{-1}g_1,\dots,g^{-1}g_m)
	\end{equation}
	for every $g \in {}^*G$ and every $g_1,\dots,g_m \in {}^*G$. This internal map $\tau^m:{}^*G \times \L^{\infty}(({}^*G)^m,\V) \to \L^{\infty}(({}^*G)^m,\V)$ induces an action $\tilde{\tau}^m$ of ${}^*G$ on the ultralimit space $\tilde{\L}^{\infty}(({}^*G)^m,\V)$. In particular, this means that $(\tau^m,\L^{\infty}(({}^*G)^m,\V))$ is an asymptotic Banach ${}^*G$-module.\\
	For simplicity, we shall denote the induced action of $g \in {}^*G$ on $\tilde{f} \in \tilde{\L}^{\infty}(({}^*G)^m,\V)$ simply by $g \cdot \tilde{f}$. We shall denote by $\L^{\infty}(({}^*G)^m,\V)^{\sim {}^*G}$ the set of elements $f \in \L^{\infty}(({}^*G)^m,\V)$ such that $\tilde{f} \in \tilde{\L}^{\infty}(({}^*G)^m,\V)^{{}^*G}$. Such an element $f$ shall be referred to as \emph{asymptotically ${}^*G$-equivariant}.
	\begin{remark}
		Later we shall also deal with (truly) ${}^*G$-\emph{invariant}maps $f \in \L^{\infty}({}^*G^m,\V)$ with an internal right action of $G$ on the domain. In this case, we mean that $f(x)=f(xg)$ for \emph{every} $g \in {}^*G$, and for $x=\{x_n\}_{\U} \subseteq \{X_n\}_{\U} \subseteq {}^*G$ where $X_n \subseteq G$ is co-null in $G$. We shall refer to this as $f(x)=f(xg)$ for \emph{every} $g \in {}^*G$, and \emph{almost every} $x \in {}^*G$, for convenience.
	\end{remark}
	For each $m \geq 0$, define the internal map
	$$d^m: \L^{\infty}(({}^*G)^m,\V) \to \L^{\infty}(({}^*G)^{m+1},\V)$$
	$$d^mf (g_0,\dots,g_{m}) \coloneq \sum \limits_{j=0}^m (-1)^j f(g_0,\dots,\hat{g_j},\dots,g_m)$$
	It is clear that $d^m \circ d^{m-1} = 0$ for every $m \geq 1$. Note that $d^m$ induces a map 
	$$\tilde{d}^m:\tilde{\L}^{\infty}(({}^*G)^m,\V) \to \tilde{\L}^{\infty}(({}^*G)^{m+1},\V)$$ 
	Furthermore, when we restrict to $\L^{\infty}(({}^*G)^m,\V)^{\sim {}^*G}$,
	\begin{lemma}
		For $f \in \L^{\infty}(({}^*G)^m,\V)^{\sim {}^*G}$, $d^mf \in \L^{\infty}(({}^*G)^{m+1},\V)^{\sim {}^*G}$. That is, $d^{\bullet}$ is an asymptotic ${}^*G$-morphism of asymptotic Banach ${}^*G$-modules.
	\end{lemma}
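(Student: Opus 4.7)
The plan is to reduce the statement to two separate observations: first that $d^m$ is uniformly bounded internally, and second that $d^m$ commutes with the internal ${}^*G$-action at the level of $\L^\infty(({}^*G)^\bullet,\V)$. Together these force $d^m$ to descend to a ${}^*G$-equivariant bounded map on the ultralimit.

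First I would check boundedness. Since the coboundary $d^m$ is an alternating sum of $m+1$ face maps, on each component of the ultraproduct we have the internal bound $\|d^m f\| \leq (m+1) \|f\|$, so $d^m \in \mathcal{L}_b(\L^\infty(({}^*G)^m,\V),\L^\infty(({}^*G)^{m+1},\V))$. In particular, $d^m$ maps $\L^\infty_b$ to $\L^\infty_b$ and $\L^\infty_{\mathrm{inf}}$ to $\L^\infty_{\mathrm{inf}}$, and hence, by \cref{hom}, induces a bounded linear map $\tilde{d}^m:\tilde{\L}^\infty(({}^*G)^m,\V)\to\tilde{\L}^\infty(({}^*G)^{m+1},\V)$.

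Next I would verify the commutation $\tau^{m+1}(g)\circ d^m = d^m \circ \tau^m(g)$ for every $g \in {}^*G$. This is a direct unfolding of the definitions in \cref{tau1}: for $f \in \L^\infty(({}^*G)^m,\V)$ and $g_0,\dots,g_m \in {}^*G$,
\begin{align*}
(\tau^{m+1}(g)(d^m f))(g_0,\dots,g_m)
&= \pi(g)\,(d^m f)(g^{-1}g_0,\dots,g^{-1}g_m) \\
&= \sum_{j=0}^m (-1)^j \pi(g)\, f(g^{-1}g_0,\dots,\widehat{g^{-1}g_j},\dots,g^{-1}g_m) \\
&= \sum_{j=0}^m (-1)^j (\tau^m(g) f)(g_0,\dots,\widehat{g_j},\dots,g_m) \\
&= (d^m(\tau^m(g) f))(g_0,\dots,g_m),
\end{align*}
where pulling $\pi(g)$ outside the sum uses ${}^*\R$-linearity of the internal $\pi(g)$.

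Combining the two observations gives the conclusion immediately. If $f \in \L^\infty(({}^*G)^m,\V)^{\sim {}^*G}$, then for every $g\in {}^*G$ the element $\tau^m(g)f - f$ lies in $\L^\infty_{\mathrm{inf}}(({}^*G)^m,\V)$. Applying $d^m$ and using the commutation,
\[
\tau^{m+1}(g)(d^m f) - d^m f \;=\; d^m(\tau^m(g) f - f) \;\in\; \L^\infty_{\mathrm{inf}}(({}^*G)^{m+1},\V),
\]
so $d^m f \in \L^\infty(({}^*G)^{m+1},\V)^{\sim {}^*G}$. At the level of ultralimits this says exactly that $\tilde{d}^m$ is ${}^*G$-equivariant, so together with the boundedness from the first step, $d^m$ is an asymptotic ${}^*G$-morphism in the sense defined in \S\ref{ssec-basicG}. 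There is no real obstacle here beyond bookkeeping; the only point worth being careful about is that the commutation is an \emph{exact} internal identity (not merely up to infinitesimals), which is why the argument works uniformly for every $g \in {}^*G$ at the single cost of one application of the bounded operator $d^m$.
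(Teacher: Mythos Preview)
Your proof is correct and follows essentially the same approach as the paper: both rely on the fact that $\tau^{m+1}(g)\circ d^m = d^m\circ \tau^m(g)$ holds as an exact internal identity (since for fixed $g$ the map $\pi(g)$ is an internal linear isometry, even though $g\mapsto\pi(g)$ is only an asymptotic action), together with the boundedness of $d^m$. The paper's proof computes the same thing more tersely by expanding the sum and applying asymptotic equivariance term by term, whereas you isolate the exact commutation and then apply boundedness once; your version is cleaner and more explicit, but the underlying idea is identical.
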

	\begin{proof}
		Consider 
		$$\pi(g)(d^mf(g_0,\dots,g_m)) =  \sum \limits_{j=0}^m (-1)^j \pi(g) f(g_0,\dots,\hat{g_j},\dots,g_m)$$
		Note that since $f \in \L^{\infty}(({}^*G)^m,\V)^{\sim {}^*G}$, $\pi(g)f(g_0,\dots,\hat{g_m},\dots,g_m) - f(gg_0,\dots,gg_m) \in \V_{inf}$, thus implying the conclusion. 
	\end{proof}
	Since the maps $\tilde{d}^m:\tilde{\L}^{\infty}(({}^*G)^m,\V) \to \tilde{\L}^{\infty}(({}^*G)^{m+1},\V)$ are ${}^*G$-equivariant for every $m \geq 1$, with $\tilde{d}^m \circ \tilde{d}^{m-1}=0$, we have an asymptotic ${}^*G$-cochain complex
	$$\begin{tikzcd}
	0 \arrow[r,"d^0"] & \L^{\infty}({}^*G,\V)  \arrow[r, "d^{1}"]&  \L^{\infty}(({}^*G)^2,\V) \arrow[r, "d^2"] & \L^{\infty}(({}^*G)^3,\V) \arrow[r, "d^3"]  & \L^{\infty}(({}^*G)^4,\V) \arrow[r, "d^4"] & \dots
	\end{tikzcd}$$
	and the induced ${}^*G$-cochain complex
	$$\begin{tikzcd}
	0 \arrow[r, "\tilde{d}^{0}"] &  \tilde{\L}^{\infty}({}^*G,\V)^{{}^*G} \arrow[r, "\tilde{d}^1"] & \tilde{\L}^{\infty}(({}^*G)^2,\V)^{{}^*G} \arrow[r, "\tilde{d}^2"]  & \tilde{\L}^{\infty}(({}^*G)^3,\V)^{{}^*G} \arrow[r, "\tilde{d}^3"] & \dots
	\end{tikzcd}$$
	\begin{definition}\label{asympt}
		The \textbf{asymptotic cohomology group of $G$ with coefficients in a dual asymptotic Banach ${}^*G$-module $\V$}, denoted $\Ha^{\bullet}(G,\V)$, is defined to be the asymptotic cohomology of the asymptotic ${}^*G$-cochain complex
		$$\begin{tikzcd}
		0 \arrow[r,"d^0"] & \L^{\infty}({}^*G,\V)  \arrow[r, "d^{1}"]&  \L^{\infty}(({}^*G)^2,\V) \arrow[r, "d^2"] & \L^{\infty}(({}^*G)^3,\V) \arrow[r, "d^3"]  & \L^{\infty}(({}^*G)^4,\V) \arrow[r, "d^4"] & \dots
		\end{tikzcd}$$
	\end{definition}
	Let us illustrate the above definitions in the case of a discrete group $\Gamma$ and the coefficients being the asymptotic Banach ${}^*\Gamma$-module $(\pi_{\Gamma},\W)$ where $\W=\prod_{\U}\mathfrak{u}_{k_n}$ and $\pi_{\Gamma}=\pi_{\psi}$ is as defined in (\cref{rho}), and relate the construction to \cref{mainthm2}. In this case, the asymptotic bounded cohomology $\Ha^{\bullet}(\Gamma,\W)$ is the cohomology of the complex given by
	$$\begin{tikzcd}
	0 \arrow[r, "\tilde{d}^{0}"] &  \tilde{\L}^{\infty}({}^*\Gamma,\W)^{{}^*\Gamma} \arrow[r, "\tilde{d}^1"] & \tilde{\L}^{\infty}(({}^*\Gamma)^2,\W)^{{}^*\Gamma} \arrow[r, "\tilde{d}^2"]  & \tilde{\L}^{\infty}(({}^*\Gamma)^3,\W)^{{}^*\Gamma} \arrow[r, "\tilde{d}^3"] & \dots
	\end{tikzcd}$$
	We shall now construct the same cohomology in another equivalent way which relates to \cref{mainthm2}. For $m \geq 0$, define an internal map
	$$\delta^m: \L^{\infty}(({}^*\Gamma)^m,\W) \to \L^{\infty}(({}^*\Gamma)^{m+1},\W)$$
	$$\delta^m(f)(g_1,\dots,g_{m+1}) \coloneqq \pi_{\Gamma}(g_1)f(g_2,\dots,g_{m+1}) + \sum_{j=1}^{m}(-1)^j f(g_1,\dots,g_jg_{j+1},\dots,g_{m+1}) + (-1)^{m+1}f(g_1,\dots,g_m)$$
	Again, $\delta^m$ restricts to a map from $\L^{\infty}_{b}(({}^*\Gamma)^m,\W)$ to $\L^{\infty}_{b}(({}^*\Gamma)^{m+1},\W)$ (which we shall continue to call $\delta^m$), which maps $\L^{\infty}_{inf}(({}^*\Gamma)^m,\W)$ to $\L^{\infty}_{inf}(({}^*\Gamma)^{m+1},\W)$. Since $\pi_{\Gamma}$ induces an asymptotic action of ${}^*\Gamma$ on $\W$, the induced map
	$$\tilde{\delta}^m: \tilde{\L}^{\infty}(({}^*\Gamma)^m,\W) \to \tilde{\L}^{\infty}(({}^*\Gamma)^{m+1},\W)$$
	$$\tilde{\delta}^m(\tilde{f})(g_1,\dots,g_{m+1}) = g_1\tilde{f}(g_2,\dots,g_{m+1}) + \sum_{j=1}^{m}(-1)^j \tilde{f}(g_1,\dots,g_jg_{j+1},\dots,g_{m+1}) + (-1)^{m+1}\tilde{f}(g_1,\dots,g_m)$$
	is exactly the coboundary map on $\tilde{\ell}(({}^*\Gamma)^m,\W)$. 
	Essentially, we now work with the ${}^*\Gamma$-complex $\mathcal{C}^{\bullet}$
	$$\begin{tikzcd}
	0 \arrow[r] & \tilde{\W}  \arrow[r, "\tilde{\delta}^0"]&  \tilde{\L}^{\infty}({}^*\Gamma,\W) \arrow[r, "\tilde{\delta}^1"] & \tilde{\L}^{\infty}(({}^*\Gamma)^2,\W) \arrow[r, "\tilde{\delta}^2"]  & \tilde{\L}^{\infty}(({}^*\Gamma)^3,\W) \arrow[r, "\tilde{\delta}^3"] & \dots
	\end{tikzcd}$$
	Since $\tilde{\delta}^{m}\cdot \tilde{\delta}^{m-1}=0$, for $m \geq 1$, denote the $m$-th cohomology group of this complex by $H^m(\mathcal{C}^{\bullet})$. In this notation, it is immediate that \cref{mainthm2} can be restated as
	\begin{theorem}
		Suppose $H^2(\mathcal{C}^{\bullet})=0$, then $\Gamma$ is uniformly $\mathfrak{U}$-stable with a linear estimate.
	\end{theorem}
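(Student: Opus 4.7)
The plan is to reduce the statement directly to \cref{mainthm2} by observing that the vanishing $H^2(\mathcal{C}^{\bullet})=0$ is precisely the cohomological condition required there, only packaged in the language of cochain complexes. First I would spell out the coboundary maps of $\mathcal{C}^{\bullet}$ explicitly: the induced map $\tilde{\delta}^2 \colon \tilde{\L}^{\infty}(({}^*\Gamma)^2,\W) \to \tilde{\L}^{\infty}(({}^*\Gamma)^3,\W)$ sends $\tilde{\alpha}$ to
\[
(g_1,g_2,g_3) \longmapsto g_1 \cdot \tilde{\alpha}(g_2,g_3) - \tilde{\alpha}(g_1g_2,g_3) + \tilde{\alpha}(g_1,g_2g_3) - \tilde{\alpha}(g_1,g_2),
\]
which is exactly the left-hand side of the $2$-cocycle condition (\cref{2cocycle}). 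Analogously, $\tilde{\delta}^1$ sends $\tilde{\beta}$ to $(g_1,g_2) \mapsto \tilde{\beta}(g_1) + g_1 \cdot \tilde{\beta}(g_2) - \tilde{\beta}(g_1g_2)$, which matches the coboundary expression appearing in the conclusion of \cref{mainthm2}.

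Next I would verify that the hypothesis of \cref{mainthm2} holds. Given any $\alpha \in \L^{\infty}_b(({}^*\Gamma)^2,\W)$ satisfying (\cref{2cocycle}), its induced image $\tilde{\alpha} \in \tilde{\L}^{\infty}(({}^*\Gamma)^2,\W)$ lies in $\ker(\tilde{\delta}^2)$ by the first paragraph and therefore represents a class in $H^2(\mathcal{C}^{\bullet})$. The assumed vanishing makes this class trivial, so there exists $\tilde{\beta} \in \tilde{\L}^{\infty}({}^*\Gamma,\W)$ with $\tilde{\delta}^1 \tilde{\beta} = \tilde{\alpha}$. By the construction of $\tilde{\L}^{\infty}({}^*\Gamma,\W)$ as the quotient $\L^{\infty}_b({}^*\Gamma,\W)/\L^{\infty}_{inf}({}^*\Gamma,\W)$, any such $\tilde{\beta}$ lifts to a genuine internal map $\beta \in \L^{\infty}_b({}^*\Gamma,\W)$, and the identity $\tilde{\delta}^1 \tilde{\beta} = \tilde{\alpha}$ becomes exactly the equation $\tilde{\alpha}(g_1,g_2) = \tilde{\beta}(g_1) + g_1 \cdot \tilde{\beta}(g_2) - \tilde{\beta}(g_1 g_2)$ required by \cref{mainthm2}. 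Invoking that proposition then yields that $\Gamma$ has the defect diminishing property and hence is uniformly $\mathfrak{U}$-stable with a linear estimate.

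Since the argument is essentially a dictionary translation between the cohomology $H^2(\mathcal{C}^{\bullet})$ and the ad-hoc coboundary-up-to-infinitesimals condition of \cref{mainthm2}, no genuine obstacle arises here. The only point needing a moment of care is the lifting from $\tilde{\beta}$ to $\beta$, but this is immediate from the quotient construction of $\tilde{\L}^{\infty}$. The real content of the theorem lies entirely in the preparatory work of \S\ref{ssec-abeliankernel} and \S\ref{ssec-logarithm} (linearization via $_{\epsilon}\log$, the switch to the Lie algebra $\W$, and the passage from lifts of $\tilde{\phi}$ to $2$-cocycles valued in $\W_b$); the present statement simply packages the hypothesis of \cref{mainthm2} as the vanishing of a recognizable cohomology group.
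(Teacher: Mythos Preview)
Your proposal is correct and takes essentially the same approach as the paper: the theorem is stated there as an immediate restatement of \cref{mainthm2}, and you have simply spelled out the dictionary (that $\tilde{\delta}^2\tilde{\alpha}=0$ is the $2$-cocycle condition \cref{2cocycle} and $\tilde{\delta}^1\tilde{\beta}=\tilde{\alpha}$ is the coboundary equation in \cref{mainthm2}) that makes this immediate.
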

	We now conclude this subsection by showing that $H^{\bullet}(\mathcal{C}^{\bullet}) \cong \Ha^{\bullet}(\Gamma,\W)$. 
	\begin{theorem}\label{inhomo1}
		For every $m \geq 1$, $H^{m}(\mathcal{C}^{\bullet}) \cong \Ha^m(\Gamma,\W)$. In particular, suppose $\Ha^2(\Gamma,\W)=0$, then $\Gamma$ is uniformly $\mathfrak{U}$-stable with a linear estimate.
	\end{theorem}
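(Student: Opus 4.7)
The plan is to mimic the classical isomorphism between the homogeneous and inhomogeneous bar resolutions, adapted to the asymptotic setting, and then feed the resulting identification into the previously established criterion (Proposition \ref{mainthm2}).

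First, I would construct internal maps going both ways between the two complexes. In one direction, define $\Phi^m\colon \L^\infty(({}^*\Gamma)^{m+1},\W) \to \L^\infty(({}^*\Gamma)^m,\W)$ by
\[
(\Phi^m f)(g_1,\dots,g_m) \coloneqq f(1,g_1,g_1g_2,\dots,g_1g_2\cdots g_m),
\]
and in the other, define $\Psi^m\colon \L^\infty(({}^*\Gamma)^m,\W) \to \L^\infty(({}^*\Gamma)^{m+1},\W)$ by
\[
(\Psi^m h)(x_0,x_1,\dots,x_m) \coloneqq \pi_\Gamma(x_0)\, h(x_0^{-1}x_1,\, x_1^{-1}x_2,\dots,\, x_{m-1}^{-1}x_m).
\]
Both maps are internal because they are built coordinatewise from the ultraproduct structure; they preserve boundedness and infinitesimality of the supremum norm, so they descend to maps of the quotient Banach spaces $\tilde{\L}^\infty$.

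Next, I would verify three compatibility statements, each holding up to $\W_{inf}$. (i) If $f$ is asymptotically ${}^*\Gamma$-equivariant in the homogeneous sense (i.e., $\tau^{m+1}(g)f - f \in \L^\infty_{inf}$), then $\Phi^m f$ is bounded, and conversely $\Psi^m h$ is asymptotically ${}^*\Gamma$-equivariant for any bounded $h$. This is just the standard manipulation: $\pi_\Gamma(g)f(g^{-1},g^{-1}g_1,\dots) - f(1,g_1,\dots)$ is infinitesimal by hypothesis, and since $\pi_\Gamma$ is an asymptotic action (cocycle defect in $\W_{inf}$), the various shifts of arguments and compositions of $\pi_\Gamma$ differ only by elements of $\W_{inf}$. (ii) The maps intertwine differentials modulo infinitesimals: $\Phi^{m+1}\circ d^{m+1} - \delta^m\circ \Phi^m$ and $\Psi^{m+1}\circ \delta^m - d^{m+1}\circ \Psi^m$ land in $\L^\infty_{inf}$. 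This is a direct telescoping calculation: after expanding $d^{m+1}$ and substituting, the first term ($j=0$) uses equivariance to produce the $\pi_\Gamma(g_1)\cdot\tilde{f}(g_2,\dots)$ of $\delta^m$, the middle terms collapse the adjacent $g_j, g_{j+1}$ into $g_jg_{j+1}$, and the last term matches. (iii) $\Phi^m\circ\Psi^m$ is the identity, while $\Psi^m\circ\Phi^m$ differs from the identity by an element of $\L^\infty_{inf}$ on the equivariant subspace, again by asymptotic equivariance.

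Combining these, $\tilde{\Phi}^\bullet$ and $\tilde{\Psi}^\bullet$ pass to mutually inverse cochain maps between the two quotient complexes, hence induce isomorphisms on cohomology $\Ha^m(\Gamma,\W) \cong H^m(\mathcal{C}^\bullet)$ for each $m\ge 1$. The ``in particular'' assertion is then immediate: if $\Ha^2(\Gamma,\W)=0$ then $H^2(\mathcal{C}^\bullet)=0$, which by \cref{mainthm2} (combined with the defect-diminishing characterization in \S\ref{ssec-liftings}) yields uniform $\mathfrak{U}$-stability with a linear estimate.

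The main technical obstacle I anticipate is bookkeeping the infinitesimal errors in step (ii): the standard proof in ordinary group cohomology uses strict equivariance $\pi(g_1g_2) = \pi(g_1)\pi(g_2)$, whereas here this identity only holds modulo $\W_{inf}$ on bounded vectors. One must therefore carefully check that each time an asymptotic action is invoked (which happens $O(m)$ times in the computation), the resulting error remains in $\W_{inf}$ and does not accumulate into a bounded discrepancy. This is a finite telescoping argument, so it works, but it requires keeping track of how $\pi_\Gamma$ distributes across the products $g_1\cdots g_j$.
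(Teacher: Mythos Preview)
Your proposal is correct and follows essentially the same approach as the paper: define the standard reparametrization $f\mapsto f(1,g_1,g_1g_2,\dots,g_1\cdots g_m)$ from the homogeneous to the inhomogeneous complex, check that it induces an isomorphism on the quotient spaces intertwining $\tilde d$ with $\tilde\delta$, and then invoke \cref{mainthm2}. In fact, you are more explicit than the paper, which simply states that this is a reparametrization inducing the desired isomorphism; your careful tracking of the $\W_{inf}$ errors from the asymptotic action is exactly the verification the paper's terse proof leaves implicit.
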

	\begin{proof}
		Consider the internal map 
		$$h^m:\L^{\infty}(({}^*\Gamma)^{m+1},\W) \to \L^{\infty}(({}^*\Gamma)^m,\W)$$
		$$h^mf(g_1,\dots,g_m) \coloneqq f(1,g_1,g_1g_2,\dots,g_1g_2\cdots g_m)$$
		This is simply a reparametrization, and its restriction to $\L^{\infty}_{b}(({}^*\Gamma)^m,\W)^{\sim {}^*\Gamma}$ induces an isomorphism
		$$\tilde{h}^m:\tilde{\L}^{\infty}(({}^*\Gamma)^{m+1},\W)^{{}^*\Gamma} \to \tilde{\L}^{\infty}(({}^*\Gamma)^{m},\W)$$
		such that the homogenous coboundary map $\tilde{d}^m$ translates to the coboundary map $\tilde{\delta}^m$ thus making $H_{a}^m(\Gamma,\W)$ canonically isomorphic to $H^m(\mathcal{C}^{\bullet})$. 
	\end{proof}
	\begin{remark}\label{inhomo}
		This complex $\mathcal{C}^{\bullet}$ can be thought of as the \emph{bar resolution} in our context, where we work with an \emph{inhomogenous} differential map, and the proof of \cref{inhomo1} goes through to show that in general, $\Ha^{\bullet}(G,\V)$ can be computed using an inhomogenous cochain complex just as with $\Gamma$. We shall work with this bar resolution for the rest of this subsection.
	\end{remark}
	We conclude this subsection with some results on $\Ha(G,\V)$ when the map $\pi:{}^*G \times \V \to \V$ is an internal trivial action of ${}^*G$ on $\V$ (that is, for every $g \in {}^*G$ and every $v \in \V$, $\pi(g)v = v$). Such an asymptotic Banach ${}^*G$-module shall be referred to as a trivial Banach ${}^*G$-module.\\
	We first recall the following facts about the \emph{vanishing moduli} of $G$ which are implicit in \cite{matsu} and clarified further in \cite[Definition 2.5]{nariman} and \cite[Lemma 4.12]{binate}:
	\begin{theorem}[\cite{matsu}\cite{nariman}\cite{binate}]\label{moduli}
		Let $V$ be trivial dual Banach $G$-module.
		\begin{itemize}
			\item Suppose $\Hb^2(G,V)=0$. Then there exists a constant $C_1>0$ such that for every (inhomogenous) $2$-cocycle $\alpha' \in \Linf_{w*}(G^2,V)$, there exists an (inhomogenous) $1$-cochain $\beta \in \Linf_{w*}(G,V)$ such that $\alpha'=\delta^1\beta$ and $\|\beta\|\leq C_1\|\alpha\|$. 
			\item Suppose $\Hb^3(G,V)$ is Hausdorff. Then there exists a constant $C_2>0$ such that for every (inhomogenous) $2$-cochain $\alpha \in \Linf_{w*}(G^2,V)$, there exists an (inhomogenous) $2$-cocycle $\alpha' \in \Linf_{w*}(G^2,V)$ such that $\|\alpha-\alpha'\|\leq C_2 \|\delta^2\alpha\|$.
		\end{itemize}
	\end{theorem}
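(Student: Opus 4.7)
The plan is to derive both statements from the open mapping theorem (Banach's isomorphism theorem) applied to the coboundary operators on the inhomogeneous bar complex. The first observation is that for any dual Banach $G$-module $V$ with predual $V^{\flat}$, each space $\Linf_{w*}(G^m,V)$ is canonically a Banach space (identified with the dual of $L^1(G^m,V^{\flat})$), and the coboundary maps $\delta^m:\Linf_{w*}(G^m,V)\to \Linf_{w*}(G^{m+1},V)$ are bounded linear operators. Consequently, $Z^m:=\ker(\delta^m)$ is always a closed subspace of $\Linf_{w*}(G^m,V)$, and whether $B^m:=\mathrm{im}(\delta^{m-1})$ is closed is precisely the Hausdorff question for $\Hb^m(G,V)$.

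For the first statement, the hypothesis $\Hb^2(G,V)=0$ says that $\mathrm{im}(\delta^1)=Z^2$, which is closed. Hence the induced injection
\[
\bar\delta^1:\Linf_{w*}(G,V)/\ker(\delta^1)\;\longrightarrow\; Z^2
\]
is a bounded linear bijection of Banach spaces, and the open mapping theorem provides a constant $C>0$ with $\|[\beta]\|_{\mathrm{quot}}\leq C\|\delta^1\beta\|$. Picking a representative whose norm beats twice the quotient norm, we obtain, for any $2$-cocycle $\alpha'$, a $1$-cochain $\beta$ with $\delta^1\beta=\alpha'$ and $\|\beta\|\leq C_1\|\alpha'\|$ for $C_1:=2C$.

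For the second statement, the Hausdorff hypothesis on $\Hb^3(G,V)$ means $B^3$ is closed in $Z^3$; since $Z^3$ is already closed in $\Linf_{w*}(G^3,V)$, $B^3$ is closed in $\Linf_{w*}(G^3,V)$ itself. The open mapping theorem then applies to the bijection $\bar\delta^2:\Linf_{w*}(G^2,V)/\ker(\delta^2)\to B^3$, yielding $C>0$ with $\|[\gamma]\|_{\mathrm{quot}}\leq C\|\delta^2\gamma\|$. Given a $2$-cochain $\alpha$, the $3$-cochain $\delta^2\alpha$ lies in $B^3$ by construction, so we can select $\gamma\in\Linf_{w*}(G^2,V)$ with $\delta^2\gamma=\delta^2\alpha$ and $\|\gamma\|\leq 2C\|\delta^2\alpha\|$. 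Setting $\alpha':=\alpha-\gamma$ produces a $2$-cocycle with $\|\alpha-\alpha'\|\leq C_2\|\delta^2\alpha\|$ for $C_2:=2C$.

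No deep obstacle arises: the content of the theorem is the open mapping principle packaged with the reinterpretation of vanishing/Hausdorffness of the relevant bounded cohomology groups as closedness of the corresponding coboundary image. The only small subtlety is passing from the quotient norm, defined as an infimum that need not be attained, to an actual representative, which costs at most a factor of $2$ (or $1+\varepsilon$ for any $\varepsilon>0$) in the constants $C_1$ and $C_2$.
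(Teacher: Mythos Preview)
Your proof is correct. The paper does not actually provide its own proof of this theorem; it is stated as a known fact with citations to \cite{matsu}, \cite{nariman}, and \cite{binate}, and the open mapping argument you give is precisely the standard one underlying those references.
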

	We can now use \cref{moduli} for $V=\R$ to show that:
	\begin{proposition}\label{trivialR}
		Suppose $\Hb^2(G,\R)=0$ and $\Hb^3(G,\R)$ is Hausdorff. Then $\Ha^2(G,{}^*\R)=0$
	\end{proposition}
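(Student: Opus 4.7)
The plan is to show directly that any asymptotic $2$-cocycle for ${}^*G$ with values in ${}^*\R$ is an asymptotic $2$-coboundary, by applying \cref{moduli} coordinate-wise and then taking ultraproducts. By \cref{inhomo} we may compute $\Ha^\bullet(G,{}^*\R)$ via the inhomogeneous bar resolution, and since the ${}^*G$-action on ${}^*\R$ is trivial, asymptotic ${}^*G$-equivariance is automatic. So I start with an element $\alpha \in \L^\infty_b(({}^*G)^2,{}^*\R)$ representing an asymptotic $2$-cocycle, i.e.\ $\delta^2 \alpha \in \L^\infty_{\text{inf}}(({}^*G)^3,{}^*\R)$. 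Write $\alpha = \{\alpha_n\}_\U$ with $\alpha_n \in L^\infty_{w*}(G^2,\R)$; then $C := \{\|\alpha_n\|\}_\U \in {}^*\R_b$ and $\epsilon := \{\|\delta^2 \alpha_n\|\}_\U \in {}^*\R_{\text{inf}}$.

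The key observation is that both clauses of \cref{moduli} provide \emph{linear} bounds, which is exactly what is needed for the ultraproduct step. Using the hypothesis that $\Hb^3(G,\R)$ is Hausdorff, \cref{moduli} yields a constant $C_2 > 0$ and, for each $n$ in a large set, a genuine $2$-cocycle $\alpha'_n \in L^\infty_{w*}(G^2,\R)$ (i.e.\ $\delta^2 \alpha'_n = 0$) with $\|\alpha_n - \alpha'_n\| \leq C_2 \|\delta^2 \alpha_n\|$. Setting $\alpha' := \{\alpha'_n\}_\U$, we obtain an internal cochain with $\delta^2 \alpha' = 0$ and $\|\alpha - \alpha'\| \leq C_2 \epsilon \in {}^*\R_{\text{inf}}$, so $\alpha - \alpha' \in \L^\infty_{\text{inf}}$ and in particular $\alpha'\in \L^\infty_b$ with $\|\alpha'\| \leq C + C_2\epsilon$.

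Next, using the hypothesis $\Hb^2(G,\R) = 0$, \cref{moduli} yields a constant $C_1 > 0$ and, for each such $n$, a $1$-cochain $\beta_n \in L^\infty_{w*}(G,\R)$ with $\delta^1 \beta_n = \alpha'_n$ and $\|\beta_n\| \leq C_1 \|\alpha'_n\|$. Taking $\beta := \{\beta_n\}_\U \in \L^\infty({}^*G,{}^*\R)$, the norm bound transfers as $\|\beta\| \leq C_1 \|\alpha'\| \leq C_1(C + C_2 \epsilon) \in {}^*\R_b$, so $\beta \in \L^\infty_b({}^*G,{}^*\R)$. By construction $\delta^1 \beta = \alpha'$, hence $\delta^1 \beta - \alpha = \alpha' - \alpha \in \L^\infty_{\text{inf}}$. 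Passing to the quotient, $\tilde{\delta}^1 \tilde{\beta} = \tilde{\alpha}$, which shows that $\tilde{\alpha}$ is an asymptotic coboundary, and therefore $\Ha^2(G,{}^*\R) = 0$.

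There is no real obstacle to anticipate: the whole content of the argument is packaged into \cref{moduli}, and the proof is essentially the observation that \emph{linear} quantitative control of the primitives is what lets one take ultraproducts while preserving the bound on norms. The only point that requires a little care is checking that the weak-$*$ measurability of each $\alpha'_n$ and $\beta_n$ is preserved (which is built into the statement of \cref{moduli}) and that the trivial action of ${}^*G$ on ${}^*\R$ makes asymptotic invariance of every cochain automatic, so that we are indeed working in the correct complex computing $\Ha^\bullet(G,{}^*\R)$.
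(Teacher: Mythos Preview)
Your proof is correct and takes essentially the same approach as the paper: apply the two clauses of \cref{moduli} coordinate-wise to first correct each $\alpha_n$ to a genuine $2$-cocycle $\alpha'_n$ within $C_2\|\delta^2\alpha_n\|$, then find a primitive $\beta_n$ with $\|\beta_n\|\leq C_1\|\alpha'_n\|$, and take ultraproducts. The paper's version is more compressed (it produces $\beta_n$ directly with $\|\alpha_n-\delta^1\beta_n\|\leq C_2\|\delta^2\alpha_n\|$ and $\|\beta_n\|\leq C_1\|\delta^1\beta_n\|$), but the content is identical.
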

	\begin{proof}
		Let $\alpha=\{\alpha_n\}_{\U} \in \L^{\infty}_b(({}^*G)^2,{}^*\R)$ with $\delta^2\alpha \in \L^{\infty}_{inf}(({}^*G)^2,{}^*\R)$. From \cref{moduli}, there exist constants $C_1,C_2>0$ such that for $n \in \U$, there exists $\beta_n \in \Linf(G,\R)$ such that $\|\alpha_n-\delta^1\beta_n\|\leq C_2\|\delta^2\alpha_n\|$ and $\|\beta_n\|\leq C_1\|\delta^1\beta_n\|$. Setting $\beta = \{\beta_n\}_{\U} \in \L^{\infty}_{b}({}^*G,{}^*\R)$, we see that $\alpha-\delta^1\beta \in \L^{\infty}_{inf}(({}^*G)^2,{}^*\R)$, to conclude that $\Ha^2(G,{}^*\R)=0$
	\end{proof}
	To extend this lemma to show vanishing of $\Ha^2(G,\V)$ for more general trivial Banach ${}^*G$-modules $\V$, we would need the constants $C_1$ and $C_2$ in \cref{moduli} to work uniformly across all trivial Banach $G$-modules. 
	\begin{lemma}\label{moduli2}
		Suppose $\Hb^2(G,V)=0$ and $\Hb^3(G,V)$ is Hausdorff for every trivial dual Banach $G$-module $V$.
		\begin{itemize}
			\item There exists a constant $C_1>0$ such that for every trivial dual Banach $G$-module $W$ and a (inhomogenous) $2$-cocycle $\alpha' \in \Linf_{w*}(G^2,W)$, there exists an (inhomogenous) $1$-cochain $\beta \in \Linf_{w*}(G,W)$ such that $\alpha'=\delta^1\beta$ and $\|\beta\|\leq C_1\|\alpha\|$. 
			\item There exists a constant $C_2>0$ such that for any trivial for every trivial dual Banach $G$-module $W$ and (inhomogenous) $2$-cochain $\alpha \in \Linf_{w*}(G^2,W)$, there exists an (inhomogenous) $2$-cocycle $\alpha' \in \Linf_{w*}(G^2,W)$ such that $\|\alpha-\alpha'\|\leq C_2 \|\delta^2\alpha\|$.
		\end{itemize}
	\end{lemma}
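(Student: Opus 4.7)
The plan is to derive both uniform constants by contradiction, using the fact that the $\ell^\infty$-direct sum of a countable family of trivial dual Banach $G$-modules is again such a module, to which the hypothesis applies. Concretely, given a family $\{W_k\}_{k \in \N}$ of trivial dual Banach $G$-modules (with preduals $W_k^\flat$), the $\ell^\infty$-sum $W := \bigoplus_k^{\ell^\infty} W_k$ is the dual of the $\ell^1$-sum $\bigoplus_k^{\ell^1} W_k^\flat$ and hence is itself a trivial dual Banach $G$-module. Each $W_k$ sits inside $W$ as a $1$-complemented isometric subspace via the coordinate inclusion $i_k$ and projection $\pi_k$. A routine measurability check (using that a countable union of null sets is null to commute a countable supremum with an essential supremum) yields an isometric isomorphism $\Linf_{w*}(G^n, W) \cong \bigoplus_k^{\ell^\infty} \Linf_{w*}(G^n, W_k)$, compatible with all coboundaries $\delta^n$.

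For the first assertion, suppose no uniform $C_1$ exists. Then for each $k$ one can select a trivial dual Banach $G$-module $W_k$ and a $2$-cocycle $\alpha_k \in \Linf_{w*}(G^2, W_k)$ with $\|\alpha_k\|=1$ such that every $\beta$ satisfying $\delta^1\beta = \alpha_k$ has $\|\beta\| > k$. Assemble $\alpha = (\alpha_k)_k$ into a $2$-cocycle of norm $1$ in $\Linf_{w*}(G^2, W)$, where $W = \bigoplus_k^{\ell^\infty} W_k$. By the hypothesis $\Hb^2(G, W) = 0$, there is some $\beta \in \Linf_{w*}(G, W)$ with $\delta^1\beta = \alpha$ and $\|\beta\|<\infty$. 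The coordinate projections $\beta_k := \pi_k \circ \beta$ satisfy $\delta^1\beta_k = \alpha_k$ and $\|\beta_k\| \le \|\beta\|$, contradicting $\|\beta_k\|>k$ for every $k$.

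For the second assertion I would first establish a monotonicity lemma: if a trivial dual Banach $G$-module $V$ appears as a $1$-complemented isometric summand of a trivial dual Banach $G$-module $W$, then the optimal Hausdorff constants satisfy $C_2(V) \le C_2(W)$. Indeed, for $\gamma \in \Linf_{w*}(G^2, V)$ set $\tilde\gamma = i_V \circ \gamma$, so that $\|\tilde\gamma\| = \|\gamma\|$ and $\|\delta^2\tilde\gamma\| = \|\delta^2\gamma\|$; apply the $W$-constant to obtain a cocycle $\tilde\alpha'$ in $\Linf_{w*}(G^2, W)$ with $\|\tilde\gamma - \tilde\alpha'\| \le C_2(W)\|\delta^2\gamma\|$, and set $\alpha' := \pi_V \circ \tilde\alpha'$. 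Since $\pi_V$ commutes with $\delta^2$, the cochain $\alpha'$ is a $V$-valued $2$-cocycle, and $\|\gamma - \alpha'\| \le C_2(W)\|\delta^2\gamma\|$. Now suppose no uniform $C_2$ exists and pick $W_k$ with $C_2(W_k) > k$. The hypothesis that $\Hb^3(G, \bigoplus_k^{\ell^\infty} W_k)$ is Hausdorff provides a finite $C_2(\bigoplus_k W_k)$, but the lemma forces $k < C_2(W_k) \le C_2(\bigoplus_k W_k)$ for every $k$, which is absurd.

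The main technical point to verify carefully is the isometric decomposition of $\Linf_{w*}(G^n, \bigoplus_k^{\ell^\infty} W_k)$: weak-$*$ measurability into the $\ell^\infty$-sum reduces coordinate-wise to weak-$*$ measurability into each $W_k$ by density of finitely supported sequences in the $\ell^1$-predual, and interchanging the essential supremum with the countable coordinate supremum uses only that a countable union of null sets is null. Once this decomposition is in hand, both contradictions are formal.
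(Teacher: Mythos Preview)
Your proof is correct and follows essentially the same approach as the paper: both argue by contradiction, assembling a putative bad sequence of modules into an $\ell^\infty$-direct sum (which remains a trivial dual Banach $G$-module) and then projecting back to the coordinates to derive a contradiction. Your treatment of the second item via an explicit monotonicity lemma for $1$-complemented summands, and your attention to the isometric identification $\Linf_{w*}(G^n,\bigoplus_k^{\ell^\infty} W_k)\cong \bigoplus_k^{\ell^\infty}\Linf_{w*}(G^n,W_k)$, are more detailed than the paper's terse ``the proof of the second item is similar,'' but the underlying idea is the same.
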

	\begin{proof}
		Suppose, for the sake of contradiction, that there exists a sequence $\{W_n\}_{n \in \N}$ of dual Banach spaces (all with trivial actions of $G$), and $2$-cocyles $\{\alpha_n \in \Linf_{w*}(G^2,W_n)\}_{n \in \N}$ with $\|\alpha_n\|=1$ for every $n \geq 1$, such that for every sequence $\{\beta_n \in \Linf_{w*}(G,W_n)\}_{n \in \N}$ with $\delta^1\beta_n=\alpha_n$ for every $n \geq 1$, we have $\|\beta_n\|\geq n$. Consider the $\ell^{\infty}$ direct sum $W \coloneq \oplus_n W_n$ (which is a dual Banach space) and the $2$-cocycle $\alpha \coloneq \oplus_n \alpha_n$ with $\|\alpha\|=1$. Then since $\Hb^2(G,W)=0$, that there exists $\beta \in \Linf_{w*}(G,W)$ with $\delta^1\beta = \alpha$, and let $\|\beta\|= C_1$. Note that the projections of $\beta$ on $W_n$ for $n > C_1$ would give us $\beta_n \in \Linf_{w*}(G,W_n)$ with $\delta^1\beta_n=\alpha_n$ and $\|\beta_n\| \leq C_1$, contradicting our assumption. The proof of the second item is similar.
	\end{proof}
	We now use the universality of the constants $C_1$ and $C_2$ in \cref{moduli2} to show the vanishing of $\Ha^2(G,\V)$ for a trivial Banach ${}^*G$-module, just like in \cref{trivialR}.
	\begin{proposition}\label{trivialRV2}
		Suppose $\Hb^2(G,V)=0$ and $\Hb^3(G,V)$ is Hausdorff for every trivial dual Banach $G$-module $V$. Then $\Ha^2(G,\V)=0$ for every trivial asymptotic Banach ${}^*G$-module $\V$. 
	\end{proposition}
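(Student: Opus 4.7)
The plan is to mirror the argument of \cref{trivialR}, replacing the module-dependent constants from \cref{moduli} with the uniform constants $C_1, C_2$ supplied by \cref{moduli2}, so that the very same argument applies simultaneously to each trivial dual Banach $G$-module $V_n$ in the ultraproduct that defines $\V$.

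Concretely, start with a representative $\alpha = \{\alpha_n\}_{\U} \in \L^{\infty}_{b}(({}^*G)^2,\V)$ of an asymptotic $2$-cocycle, so that $\|\alpha_n\| = O_{\U}(1)$ while $\|\delta^2\alpha_n\| = o_{\U}(1)$. Since $\V$ is trivial as an asymptotic Banach ${}^*G$-module, for every $n$ in some set of $\U$ the action of $G$ on $V_n$ is trivial, so the hypotheses of \cref{moduli2} are in force for every such $V_n$ with one common pair of constants $C_1, C_2$. Apply the second item of \cref{moduli2} to each $\alpha_n$ to obtain an honest $2$-cocycle $\alpha'_n \in \Linf_{w*}(G^2, V_n)$ with $\|\alpha_n - \alpha'_n\| \leq C_2\,\|\delta^2\alpha_n\|$. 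Then, using vanishing of $\Hb^2(G,V_n)$ via the first item of \cref{moduli2}, pick $\beta_n \in \Linf_{w*}(G, V_n)$ with $\delta^1\beta_n = \alpha'_n$ and $\|\beta_n\| \leq C_1\,\|\alpha'_n\| \leq C_1\bigl(\|\alpha_n\| + C_2\,\|\delta^2\alpha_n\|\bigr)$.

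Set $\beta = \{\beta_n\}_{\U}$. The last inequality shows that $\|\beta\| \in {}^*\R_b$, so $\beta \in \L^{\infty}_{b}({}^*G,\V)$, while the identity $\alpha - \delta^1\beta = \{\alpha_n - \alpha'_n\}_{\U}$ together with the bound $\|\alpha_n - \alpha'_n\| \leq C_2\,\|\delta^2\alpha_n\|$ shows that $\alpha - \delta^1\beta \in \L^{\infty}_{inf}(({}^*G)^2,\V)$. Passing to $\tilde{\L}^{\infty}$, this gives $\tilde{\alpha} = \tilde{\delta}^1\tilde{\beta}$, so the class of $\alpha$ vanishes in $\Ha^2(G,\V)$, as required. (Using the inhomogeneous bar resolution, as noted in \cref{inhomo}, is harmless here.)

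The only delicate point is the uniformity of the constants across the family $\{V_n\}$: if the bounds in \cref{moduli} genuinely depended on the module, neither $\beta = \{\beta_n\}_{\U}$ would be guaranteed to have internally bounded norm, nor would the correction $\alpha - \alpha'$ be guaranteed to be internally infinitesimal, and the argument would collapse. This is precisely the content of \cref{moduli2}, whose $\ell^{\infty}$-direct-sum trick upgrades the module-by-module existence statements of \cref{moduli} to uniform estimates. With that uniformity in hand, the rest is a direct transfer of the classical vanishing statement into the asymptotic/ultraproduct setting.
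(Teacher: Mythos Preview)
Your proposal is correct and follows exactly the approach the paper indicates: the paper does not spell out a proof of \cref{trivialRV2} but simply says to rerun the argument of \cref{trivialR} using the uniform constants $C_1,C_2$ supplied by \cref{moduli2}, which is precisely what you do. Your explicit unpacking of the two steps (first correct $\alpha_n$ to a true cocycle $\alpha'_n$ with error $\leq C_2\|\delta^2\alpha_n\|$, then solve $\delta^1\beta_n=\alpha'_n$ with $\|\beta_n\|\leq C_1\|\alpha'_n\|$) and your emphasis on why uniformity of the constants is the essential point are both on the mark.
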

	While the assumption that $\Hb^2(G,V)=0$ and $\Hb^3(G,V)$ is Hausdorff for \emph{every} trivial dual Banach $G$-module $V$, a priori, seems stronger than the assumption that $\Hb^2(G,\R)=0$ and $\Hb^3(G,\R)$ is Hausdorff, we now show that the former is actually implied by the latter. Recall that a Banach space $V$ is said to be \emph{injective} if for any Banach space embedding $V \subset W$, $V$ has a complement in $W$.
	\begin{proposition}\label{complement}
		Suppose $\Hb^2(G,\R)=0$ and $\Hb^3(G,\R)$ is Hausdorff. Then the image $\delta^2\Linf(G^2)$ in $\Linf(G^3)$ is injective.
	\end{proposition}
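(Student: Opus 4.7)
The strategy is to realize $\delta^2\Linf(G^2)$ as a complemented Banach subspace of the injective Banach space $\Linf(G^2)$, and then invoke the standard fact that any complemented subspace of an injective Banach space is itself injective (recalling that $L^\infty$-spaces are injective, e.g.\ by Nachbin's theorem).

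First, the hypotheses give a clean Banach-space description of $\delta^2\Linf(G^2)$. Since $\Hb^3(G,\R)$ is Hausdorff, the image $\delta^2\Linf(G^2)$ is closed in $\ker\delta^3$, and hence closed in $\Linf(G^3)$. Combined with $\Hb^2(G,\R)=0$, which gives $\ker\delta^2=\delta^1\Linf(G)$, the first isomorphism theorem together with the open mapping theorem yields a Banach isomorphism
\[
\delta^2\Linf(G^2)\;\cong\;\Linf(G^2)\big/\delta^1\Linf(G).
\]

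Next, I claim that $\delta^1\colon \Linf(G)\to\Linf(G^2)$ is injective, so that $\delta^1\Linf(G)$ is Banach isomorphic to $\Linf(G)$ itself. Indeed, if $\delta^1 f=0$ in $\Linf(G^2)$, then $f$ satisfies the Cauchy-type equation $f(g_1g_2)=f(g_1)+f(g_2)$ almost everywhere; a standard measurability argument (Weil's theorem, together with a Fubini/translation step) ensures that $f$ agrees almost everywhere with a continuous homomorphism $G\to\R$, and a bounded continuous homomorphism into $(\R,+)$ must vanish identically, since $f(g^n)=nf(g)$ combined with $\|f\|_\infty<\infty$ forces $f(g)=0$ for every $g\in G$. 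Consequently $\delta^1\Linf(G)\cong\Linf(G)$ as Banach spaces, and this is injective as an $L^\infty$-space.

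Finally, any injective Banach subspace of a Banach space is complemented (simply extend the identity map on the subspace, via injectivity, to obtain a bounded projection on the ambient space). Thus $\delta^1\Linf(G)$ is complemented in $\Linf(G^2)$, say $\Linf(G^2)=\delta^1\Linf(G)\oplus W$, and $\delta^2$ restricts to a Banach isomorphism $W\xrightarrow{\sim}\delta^2\Linf(G^2)$. Being a complemented subspace of the injective Banach space $\Linf(G^2)$, $W$ is itself injective, and hence so is $\delta^2\Linf(G^2)$. The only delicate point in the plan is the injectivity of $\delta^1$ on $\Linf(G)$, which reduces to the rigidity of bounded measurable characters of a locally compact group; the rest of the argument is purely formal Banach-space algebra.
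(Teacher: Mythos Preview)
Your proof is correct and takes essentially the same route as the paper: show that $\delta^1\Linf(G)$ is a closed injective Banach subspace of $\Linf(G^2)$, hence complemented, and then identify $\delta^2\Linf(G^2)\cong\Linf(G^2)/\delta^1\Linf(G)$ with a complemented subspace of the injective space $\Linf(G^2)$. The only cosmetic difference is that the paper writes $\delta^1\Linf(G)\cong\Linf(G)/\R$ while you (correctly, for the inhomogeneous differential with trivial coefficients) argue $\ker\delta^1=0$ and hence $\delta^1\Linf(G)\cong\Linf(G)$; either way the subspace is injective and the argument concludes identically.
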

	\begin{proof}
		Note that the image $\delta^1 \Linf(G)$ in $\Linf(G^2)$ is closed, since $\Hb^2(G,\R)=0$. In particular, this image is isomorphic to $\Linf(G)/\R$, which is injective since $\Linf(G)$ and $\R$ are injective. Next, since $\Hb^3(G,\R)$ is Hausdorff, the image $\delta^2\Linf(G^2)$ is closed, and isomorphic to $L^2(G)/\delta^1\Linf(G)$, which is injective since $\Linf(G^2)$ and $\delta^1\Linf(G)$ are injective. 
	\end{proof}
	\begin{lemma}\label{trivialRV}
		Suppose $\Hb^2(G,\R)=0$ and $\Hb^3(G,\R)$ is Hausdorff. Then for any trivial dual Banach $G$-module $V$, $\Hb^2(G,V)=0$ and $\Hb^3(G,V)$ is Hausdorff.
	\end{lemma}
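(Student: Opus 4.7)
The plan is to leverage \cref{complement} together with the classical fact that $L^\infty$ of a localizable measure space is an injective Banach space to build a uniform cochain-splitting on the scalar complex, and then transfer it to $V$-valued cochains by post-composition. Under the hypotheses, \cref{complement} yields an injective Banach subspace $E \coloneq \delta^2 \Linf(G^2) \subseteq \Linf(G^3)$, complemented by a bounded projection $P$. Since $\Hb^2(G,\R)=0$, the operator $\delta^1$ induces a Banach isomorphism $\Linf(G)/\R \xrightarrow{\sim} \ker \delta^2$ by the open mapping theorem; since $\R$ is complemented in $\Linf(G)$, the quotient $\Linf(G)/\R$ sits as a complemented subspace of the injective space $\Linf(G)$, hence is itself injective, and so is $\ker \delta^2 \subseteq \Linf(G^2)$.

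Using these three splittings, I will assemble a bounded linear $T \colon \Linf(G^3) \to \Linf(G^2)$ satisfying $\delta^2 T \delta^2 = \delta^2$ (invert the bijection $\Linf(G^2)/\ker \delta^2 \xrightarrow{\sim} E$, lift across the section of $\Linf(G^2) \twoheadrightarrow \Linf(G^2)/\ker \delta^2$, then pre-compose with $P$), and then a bounded linear $\Psi \colon \Linf(G^2) \to \Linf(G)$ with the operator identity $\mathrm{id} - \delta^1 \Psi = T\delta^2$ on $\Linf(G^2)$ (apply the Banach inverse of $\Linf(G)/\R \xrightarrow{\sim} \ker \delta^2$ to $\omega - T\delta^2\omega \in \ker \delta^2$, then lift through the section of $\Linf(G) \twoheadrightarrow \Linf(G)/\R$). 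The upshot is the scalar estimate
\[
\|\omega - \delta^1 \Psi \omega\|_\infty \leq \|T\| \cdot \|\delta^2 \omega\|_\infty \qquad (\omega \in \Linf(G^2)).
\]

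To extend to any trivial dual Banach $G$-module $V = (V^\flat)^*$, I will use the natural isometric identification $\Linf_{w*}(G^n, V) \cong B(V^\flat, \Linf(G^n))$ sending $\omega$ to $u \mapsto \omega_u$, where $\omega_u(g) = \langle \omega(g), u\rangle$. Under this identification the inhomogeneous cobar differential on the left matches post-composition with the scalar $\delta^n$ on the right, so any bounded $L \colon \Linf(G^m)\to\Linf(G^k)$ induces, by post-composition, a bounded $L_V \colon \Linf_{w*}(G^m, V) \to \Linf_{w*}(G^k, V)$ of operator norm at most $\|L\|$, functorially and commuting with each $\delta^n$. Applying this to $\Psi$ and $T$ transfers the scalar identity to $\mathrm{id} - \delta^1 \Psi_V = T_V \delta^2$ on $\Linf_{w*}(G^2, V)$, whence $\|\omega - \delta^1 \Psi_V\omega\| \leq \|T\|\cdot\|\delta^2 \omega\|$ for every such $\omega$. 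Specialising to a $2$-cocycle forces $\omega = \delta^1 \Psi_V \omega$, giving $\Hb^2(G, V)=0$; in general the inequality shows $\delta^2$ is bounded below on $\Linf_{w*}(G^2, V)/\ker\delta^2$, so its image is closed in $\Linf_{w*}(G^3, V)$ and $\Hb^3(G, V)$ is Hausdorff. The most delicate point to verify will be that the identification $\Linf_{w*}(G^n, V) \cong B(V^\flat, \Linf(G^n))$ genuinely intertwines the inhomogeneous cobar differentials on each side and that post-composition is norm-nonincreasing; once that is in place, the three splittings are immediate from \cref{complement} and the injectivity of $L^\infty$-spaces, and the rest is Banach-space diagram chasing.
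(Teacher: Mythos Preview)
Your proposal is correct and follows essentially the same approach as the paper: both rely on the Dunford--Pettis identification $\Linf_{w*}(G^n,V)\cong L(V^{\flat},\Linf(G^n))$, under which the $V$-valued differential becomes post-composition with the scalar $\delta$, so that the complementing maps coming from \cref{complement} (and the injectivity of $\delta^1\Linf(G)$) transfer verbatim to $V$-valued cochains. Your write-up is more explicit in that you package the two scalar splittings into concrete operators $T,\Psi$ satisfying $\mathrm{id}-\delta^1\Psi=T\delta^2$ and then post-compose, whereas the paper simply asserts that the complementing map transfers; this also yields the uniform quantitative estimate $\|\omega-\delta^1\Psi_V\omega\|\leq\|T\|\,\|\delta^2\omega\|$, which is exactly the content of \cref{moduli2}.
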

	\begin{proof}
		Note (\cref{dunford} and \cref{dunford2}) that $\Linf_{w*}(G^j,V) \cong L\left( V^{\flat}, \Linf(G^j) \right)$. In this identification, the differential is just given by applying the scalar differential
		$\delta$ to the image $\Linf(G^j)$. It follows that the complementing map given by \cref{complement} provides a complementing map for the image $\delta^2\Linf_{w*}(G^2,V)$ of $\Linf_{w*}(G^2,V)$ in $\Linf_{w*}(G^3,V)$. In particular this image is closed and hence
		$\Hb^3(G,V)$ is Hausdorff. Similarly, we can show that $\Hb^2(G,V)=0$ as well.
	\end{proof}
	Combining \cref{trivialRV} and \cref{trivialRV2}, we conclude that
	\begin{corollary}\label{trivialRV3}
		Suppose $\Hb^2(G,\R)=0$ and $\Hb^3(G,\R)$ is Hausdorff. Then $\Ha^2(G,\V)=0$ for any trivial dual Banach ${}^*G$-module $\V$.
	\end{corollary}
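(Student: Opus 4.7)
The plan is to obtain this corollary as a direct concatenation of the two immediately preceding results, namely \cref{trivialRV} and \cref{trivialRV2}; no genuinely new argument is needed beyond chaining their statements in the right order.

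First I would apply \cref{trivialRV}, which takes as input the scalar hypotheses $\Hb^2(G,\R)=0$ and $\Hb^3(G,\R)$ Hausdorff and outputs the analogous statements $\Hb^2(G,V)=0$ and $\Hb^3(G,V)$ Hausdorff for \emph{every} trivial dual Banach $G$-module $V$. The underlying mechanism is the injectivity of $\delta^2\Linf(G^2)$ as a closed subspace of $\Linf(G^3)$, furnished by \cref{complement}, combined with the identification $\Linf_{w*}(G^j,V)\cong L(V^\flat,\Linf(G^j))$, which post-composes the scalar complementing projection to handle arbitrary $V$-valued cochains.

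Next I would feed this strengthened hypothesis into \cref{trivialRV2}, whose conclusion is precisely that $\Ha^2(G,\V)=0$ for any trivial asymptotic Banach ${}^*G$-module $\V$. The essential ingredient there is \cref{moduli2}, which — via the $\ell^{\infty}$-direct-sum trick — extracts universal constants $C_1, C_2$ independent of the coefficient module. These module-independent bounds are exactly what is needed to lift the standard vanishing to the ultraproduct setting: for any $\V=\prod_{\U}V_n$ and any representative $\alpha=\{\alpha_n\}_{\U}$ of an asymptotic $2$-cocycle, one trivializes each $\alpha_n$ (up to a correction of size $O(\|\delta^2\alpha_n\|)$) using a uniformly bounded cochain $\beta_n$, and the ultraproduct $\beta=\{\beta_n\}_{\U}$ witnesses the vanishing in $\Ha^2(G,\V)$.

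Conceptually the corollary packages two independent moves: a \emph{complementing} step (from $\R$-coefficients to arbitrary trivial dual coefficients, via injectivity of $\Linf$) and a \emph{uniformity} step (from standard bounded-cohomology vanishing to asymptotic vanishing, via module-independent constants). Because the analytic content has already been carried out in \cref{complement}, \cref{trivialRV}, \cref{moduli2} and \cref{trivialRV2}, I anticipate no obstacle; the proof of the corollary itself amounts to a one-line appeal to the two lemmas in sequence.
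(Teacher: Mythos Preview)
Your proposal is correct and matches the paper's own proof exactly: the corollary is obtained by first applying \cref{trivialRV} to upgrade the scalar hypotheses to all trivial dual Banach $G$-modules, and then invoking \cref{trivialRV2} to pass to the asymptotic setting. No further argument is required.
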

	This property of vanishing $\Hb^2(G,\R)$ and Hausdorffness of $\Hb^3(G,\R)$ will be very useful to us in \cref{ssec-semisimple}, and in \cref{ssec-2half} we shall study this property (called the \say{2\textonehalf-property}) in more detail.
	
	\subsection{Amenable Actions and Cohomology of Subgroups}\label{ssec-subg}
	Recall that we had presented the asymptotic cohomology of $G$ with coefficients in the dual asymptotic Banach ${}^*G$-module $\V$ as asymptotic cohomology of the complex
	$$\begin{tikzcd}
	0 \arrow[r,"d^0"] & \L^{\infty}({}^*G,\V)  \arrow[r, "d^{1}"]&  \L^{\infty}(({}^*G)^2,\V) \arrow[r, "d^2"] & \L^{\infty}(({}^*G)^3,\V) \arrow[r, "d^3"]  & \L^{\infty}(({}^*G)^4,\V) \arrow[r, "d^4"] & \dots
	\end{tikzcd}$$
	In this subsection, we shall relate the functorial descriptions in \S\ref{ssec-basicG} with \cref{asympt} to obtain other complexes that can also be used to compute the same cohomology.\\
	The first step is to interpret $\Ha^{\bullet}(G,\V)$ in a way that fits with \cref{bilinear}. For this, we recall the following classical fact that follows from the Dunford-Pettis theorem: for a (standard) dual Banach space $E$,
	\begin{equation}\label{dunford}
	B(L^1(G^m) \times E) \cong L\left(L^1(G^m),E^{\sharp}\right) \cong L^{\infty}_{w*}(G^m,E^{\sharp})
	\end{equation}
	In fact, this goes through even when we consider the analogous asymptotic Banach ${}^*G$-modules. Denoting the ultrapower ${}^*L^1(G^m)$ by $\L^1(G^m)$, we note that $\L^{\infty}(({}^*G)^m)$ is a dual asymptotic ${}^*G$-module with predual $\L^1(({}^*G)^m)$. In fact, it is more than just an asymptotic ${}^*G$-representation: ${}^*G$ actually has a (true) internal action on $\L^1(({}^*G)^m)$ and its dual $\L^{\infty}(({}^*G)^m)$.\\
	This allows us to construct $\Ha^{\bullet}(G,\V)$ starting from the $G$-cochain complex
	$$\begin{tikzcd}
	0 \arrow[r,"d^0"] & L^{\infty}(G)  \arrow[r, "d^{1}"]&  L^{\infty}(G^2) \arrow[r, "d^2"] & L^{\infty}(G^3) \arrow[r, "d^3"]  & L^{\infty}(G^4) \arrow[r, "d^4"] & \dots
	\end{tikzcd}$$
	extending it internally to the asymptotic ${}^*G$-cochain complex (which, in this case, turns out to be a (true) internal cochain complex)
	$$\begin{tikzcd}
	0 \arrow[r,"d^0"] & \L^{\infty}({}^*G)  \arrow[r, "d^{1}"]&  \L^{\infty}(({}^*G)^2) \arrow[r, "d^2"] & \L^{\infty}(({}^*G)^3) \arrow[r, "d^3"]  & \L^{\infty}(({}^*G)^4) \arrow[r, "d^4"] & \dots
	\end{tikzcd}$$
	and finally using the covariant functor $d \mapsto d_*$ for the coefficient module $\V^{\flat}$ (the predual of $\V$) to get
	$$\begin{tikzcd}
	0 \arrow[r,"d^0"] & \mathcal{B}(\L^1({}^*G) \times \V^{\flat})  \arrow[r, "d^{1}"]&  \mathcal{B}(\L^1(({}^*G)^2) \times \V^{\flat}) \arrow[r, "d^2"] & \mathcal{B}(\L^1(({}^*G)^3) \times \V^{\flat}) \arrow[r, "d^3"]  & \dots
	\end{tikzcd}$$
	which, in turn, is seen to be the same as
	$$\begin{tikzcd}
	0 \arrow[r,"d^0"] & \L^{\infty}({}^*G,\V)  \arrow[r, "d^{1}"]&  \L^{\infty}(({}^*G)^2,\V) \arrow[r, "d^2"] & \L^{\infty}(({}^*G)^3,\V) \arrow[r, "d^3"]  & \L^{\infty}(({}^*G)^4,\V) \arrow[r, "d^4"] & \dots
	\end{tikzcd}$$
	The advantage of this reformulation of $\Ha^{\bullet}(G,\V)$ is that we can use \cref{homotopy2} to construct other asymptotic ${}^*G$-cochain complexes that compute the same cohomology $\Ha^{\bullet}(G,\V)$. In view of this approach, we review some definitions and facts from \cite{bookMonod}:
	\begin{definition}
		Let $S$ be a regular $G$-space. A \textbf{conditional expectation} $\mathfrak{m}:L^{\infty}(G \times S) \to L^{\infty}(S)$ is a measurable norm one linear map such that
		\begin{itemize}
			\item $\mathfrak{m}(1_{G \times S}) = 1_{S}$
			\item For every $f \in L^{\infty}(G \times S)$ and every measurable set $A \subset S$, $\mathfrak{m}(f \cdot 1_{G \times A}) = \mathfrak{m}(f) \cdot 1_A$.
		\end{itemize}
		The $G$-action on $S$ is said to be \textbf{Zimmer amenable} if there exists a $G$-equivariant conditional expectation $\mathfrak{m}:L^{\infty}(G \times S) \to L^{\infty}(S)$.
	\end{definition}
	What we shall use is the following consequence of Zimmer amenability:
	\begin{proposition}
		Let $S$ be a regular $G$-space with a Zimmer-amenable action of $G$. Then there exists a $G$-homotopy equivalence between the $G$-cochain complexes
		$$\begin{tikzcd}
		0 \arrow[r,"d^0"] & L^{\infty}(G)  \arrow[r, "d^{1}"]&  L^{\infty}(G^2) \arrow[r, "d^2"] & L^{\infty}(G^3) \arrow[r, "d^3"]  & L^{\infty}(G^4) \arrow[r, "d^4"] & \dots
		\end{tikzcd}$$
		and
		$$\begin{tikzcd}
		0 \arrow[r,"d^0"] & L^{\infty}(S)  \arrow[r, "d^{1}"]&  L^{\infty}(S^2) \arrow[r, "d^2"] & L^{\infty}(S^3) \arrow[r, "d^3"]  & L^{\infty}(S^4) \arrow[r, "d^4"] & \dots
		\end{tikzcd}$$
	\end{proposition}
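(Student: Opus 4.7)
The plan is to show that both $L^\infty(G^{\bullet+1})$ and $L^\infty(S^{\bullet+1})$, augmented by $\R$ via the constant maps, are strong resolutions of $\R$ by relatively injective Banach $G$-modules. Once this is established, the fundamental lemma of relative homological algebra for Banach $G$-modules (as in Chapter~7 of~\cite{bookMonod}) provides $G$-equivariant chain maps $\alpha^\bullet : L^\infty(G^{\bullet+1}) \to L^\infty(S^{\bullet+1})$ and $\beta^\bullet : L^\infty(S^{\bullet+1}) \to L^\infty(G^{\bullet+1})$ extending the identity on $\R$, and any two such chain maps extending the identity are $G$-homotopic; in particular $\beta^\bullet \circ \alpha^\bullet$ and $\alpha^\bullet \circ \beta^\bullet$ are $G$-homotopic to the respective identity maps, which yields the desired $G$-homotopy equivalence.

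First I would recall that $L^\infty(G^{\bullet+1})$ is the classical strong relatively injective resolution of $\R$: relative injectivity uses the left-invariant mean construction on $G$ along a chosen coordinate, and strongness (i.e.\ existence of a norm-one contracting homotopy by bounded linear operators, not necessarily $G$-equivariant) follows by evaluating at a fixed basepoint, or more invariantly by convolving with a probability measure on $G$. Next, strongness of $L^\infty(S^{\bullet+1})$ is obtained similarly: choose a probability measure on $S$ absolutely continuous with respect to Haar and use the standard homotopy $h^n f(s_0,\dots,s_{n-1}) = \int_S f(s,s_0,\dots,s_{n-1})\, d\mu(s)$, which gives $d h + h d = \mathrm{id}$ on the augmented complex and is bounded of norm one.

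The heart of the argument, and the only place where Zimmer amenability enters, is the relative injectivity of each $L^\infty(S^n)$ as a Banach $G$-module. Given a $G$-equivariant Banach $G$-module morphism $\iota : A \hookrightarrow B$ admitting a (not necessarily equivariant) bounded linear left inverse $\sigma : B \to A$, and a $G$-morphism $f : A \to L^\infty(S^n)$, one must construct a $G$-morphism $\tilde f : B \to L^\infty(S^n)$ with $\tilde f \circ \iota = f$. The natural candidate is the ``averaged extension'' $\tilde f(b)(s_1,\dots,s_n) = \mathfrak{m}^{g}\bigl( f(g^{-1}\cdot \sigma(g\cdot b))(g^{-1} s_1,\dots, g^{-1} s_n)\bigr)$, where $\mathfrak{m}$ is the $G$-equivariant conditional expectation $L^\infty(G\times S)\to L^\infty(S)$ provided by Zimmer amenability, applied in the $G$-variable after fixing one of the $S$-coordinates to carry the $G$-action. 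This is exactly the argument used in~\cite{bookMonod} to show that $L^\infty(S)$ is relatively injective when the $G$-action on $S$ is amenable, and it extends verbatim to $L^\infty(S^n)$ since the latter is isomorphic as a $G$-module to $L^\infty(S^{n-1}, L^\infty(S))$ with the diagonal action, and the relative injectivity passes through such tensor-like constructions.

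The main obstacle I anticipate is purely bookkeeping: verifying the equivariance of the averaged extension requires careful use of both the $G$-equivariance of $\mathfrak{m}$ and of Fubini-type identifications between $L^\infty(S^{n+1})$ and $L^\infty(S, L^\infty(S^n))$, and checking that the formula lands in $L^\infty(S^n)$ (rather than merely a weak-$*$ measurable space) uses the conditional expectation property $\mathfrak{m}(f \cdot 1_{G\times A}) = \mathfrak{m}(f)\cdot 1_A$. Once these measure-theoretic technicalities are handled exactly as in~\cite[\S4.5 and \S7.5]{bookMonod}, the fundamental lemma assembles the pieces into the desired $G$-homotopy equivalence.
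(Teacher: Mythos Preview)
Your proposal is correct and follows exactly the standard route from~\cite{bookMonod} that the paper is invoking: the proposition is stated in the paper without proof, as a known consequence of Zimmer amenability via the relative homological algebra machinery (strong relatively injective resolutions and the fundamental lemma). Your sketch of relative injectivity via the $G$-equivariant conditional expectation and the resulting homotopy equivalence is precisely the argument the paper defers to~\cite{bookMonod}.
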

	Extending this internally, if $S$ a regular $G$-space with a Zimmer-amenable action of $G$, then this gives us an asymptotic ${}^*G$-homotopy equivalences between the asymptotic ${}^*G$-cochain complexes
	$$\begin{tikzcd}
	0 \arrow[r,"d^0"] & \L^{\infty}({}^*G)  \arrow[r, "d^{1}"]&  \L^{\infty}(({}^*G)^2) \arrow[r, "d^2"] & \L^{\infty}(({}^*G)^3) \arrow[r, "d^3"]  & \L^{\infty}(({}^*G)^4) \arrow[r, "d^4"] & \dots
	\end{tikzcd}$$
	and
	$$\begin{tikzcd}
	0 \arrow[r,"d^0"] & \L^{\infty}({}^*S)  \arrow[r, "d^{1}"]&  \L^{\infty}(({}^*S)^2) \arrow[r, "d^2"] & \L^{\infty}(({}^*S)^3) \arrow[r, "d^3"]  & \L^{\infty}(({}^*S)^4) \arrow[r, "d^4"] & \dots
	\end{tikzcd}$$
	Now, since $(L^1(S^m))^{\sharp} = L^{\infty}(S^m)$ and $B(L^1(S^m) \times E) \cong L\left(L^1(S^m),E^{\sharp}\right) \cong L^{\infty}_{w*}(S^m,E^{\sharp})$, applying \cref{homotopy2} with $\mathcal{X}^{\bullet} = \L^1(({}^*G)^{\bullet})$, $\mathcal{Y}^{\bullet}=\L^1(({}^*S)^{\bullet})$ and $\mathcal{E}=\V^{\flat}$, we get
	\begin{theorem}\label{zimmer}
		Let $S$ be a regular $G$-space with a Zimmer-amenable action of $G$. Then $\Ha^{\bullet}(G,\V)$ can be computed as the asymptotic cohomology of the asymptotic ${}^*G$-cochain complex
		$$\begin{tikzcd}
		0 \arrow[r,"d^0"] & \L^{\infty}({}^*S,\V)  \arrow[r, "d^{1}"]&  \L^{\infty}(({}^*S)^2,\V) \arrow[r, "d^2"] & \L^{\infty}(({}^*S)^3,\V) \arrow[r, "d^3"]  & \L^{\infty}(({}^*S)^4,\V) \arrow[r, "d^4"] & \dots
		\end{tikzcd}$$
	\end{theorem}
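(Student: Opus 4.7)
The plan is to invoke the functorial machinery assembled in \S\ref{ssec-basicG}, particularly \cref{homotopy2}, using the preceding proposition (the classical $G$-homotopy equivalence between $L^{\infty}(G^{\bullet+1})$ and $L^{\infty}(S^{\bullet+1})$ coming from Zimmer amenability) as the input. In other words, I would start with standard $G$-equivariant bounded maps $\alpha^m \colon L^\infty(G^{m+1}) \to L^\infty(S^{m+1})$ and $\beta^m \colon L^\infty(S^{m+1}) \to L^\infty(G^{m+1})$ together with $G$-equivariant homotopies $\sigma^m,\tau^m$ witnessing that $\alpha^\bullet$ and $\beta^\bullet$ are mutually inverse up to $G$-chain homotopy, and transfer this whole package to the ultraproduct.

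The transfer step is straightforward: define the internal maps $\{\alpha^m\}_{\U}$, $\{\beta^m\}_{\U}$, $\{\sigma^m\}_{\U}$, $\{\tau^m\}_{\U}$ on the ultrapowers $\L^\infty(({}^*G)^{m+1})$ and $\L^\infty(({}^*S)^{m+1})$. Each of these ultraproduct maps is automatically internally bounded (with the same operator norm bound as in the standard setting) and, in the sense of \cref{internalprop}, has internal $G$-equivariance; hence by \cref{hom} they descend to genuine ${}^*G$-equivariant maps on the ultralimits, so they constitute asymptotic ${}^*G$-morphisms between the asymptotic ${}^*G$-cochain complexes $\L^\infty(({}^*G)^{\bullet+1})$ and $\L^\infty(({}^*S)^{\bullet+1})$. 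The two homotopy identities, being first-order statements in the standard setting, pass through the ultraproduct by transfer, so $\{\alpha^m\}_{\U}$ and $\{\beta^m\}_{\U}$ form an asymptotic ${}^*G$-homotopy equivalence in the sense of \cref{homotopy}.

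Now I would apply \cref{homotopy2} with $\mathcal{X}^\bullet = \L^1(({}^*G)^{\bullet+1})$ and $\mathcal{Y}^\bullet = \L^1(({}^*S)^{\bullet+1})$ (these are internal preduals of the corresponding $\L^\infty$ spaces) and with $\mathcal{E} = \V^\flat$, the asymptotic Banach ${}^*G$-predual of $\V$. This produces asymptotic ${}^*G$-homotopy equivalences between the cochain complexes $\mathcal{B}(\L^1(({}^*G)^{\bullet+1}) \times \V^\flat)$ and $\mathcal{B}(\L^1(({}^*S)^{\bullet+1}) \times \V^\flat)$, and by \cref{homotopy} they induce isomorphisms on asymptotic cohomology. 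To conclude, I would use the internal Dunford--Pettis identification described just before the theorem, which is the ultraproduct of the classical isomorphism in \eqref{dunford}, to rewrite each $\mathcal{B}(\L^1(({}^*S)^{m+1}) \times \V^\flat)$ as $\L^\infty(({}^*S)^{m+1}, \V)$, compatibly with the coboundary maps; since the $G$-side already computes $\Ha^\bullet(G,\V)$ by \cref{asympt}, the $S$-side complex computes the same asymptotic cohomology.

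The main obstacle I anticipate is essentially bookkeeping rather than mathematical difficulty: one has to verify that the adjoint $d^m \mapsto d^m_\ast$ construction, the Dunford--Pettis identification, and the boundedness/infinitesimal estimates all interact correctly with the internal/ultraproduct constructions, so that boundedness in the standard setting translates to membership in the $\mathcal{B}_b$ subspaces required by \cref{homotopy2}. Everything else reduces to the classical Zimmer-amenability argument for bounded cohomology coupled with the transfer principle.
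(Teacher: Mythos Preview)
Your proposal is correct and follows essentially the same approach as the paper: internalize the classical $G$-homotopy equivalence between $L^\infty(G^{\bullet})$ and $L^\infty(S^{\bullet})$ provided by Zimmer amenability, then apply \cref{homotopy2} with $\mathcal{X}^{\bullet}=\L^1(({}^*G)^{\bullet})$, $\mathcal{Y}^{\bullet}=\L^1(({}^*S)^{\bullet})$, $\mathcal{E}=\V^{\flat}$, and use the internal Dunford--Pettis identification $\mathcal{B}(\L^1(({}^*S)^m)\times\V^\flat)\cong\L^\infty(({}^*S)^m,\V)$ to read off the conclusion. The bookkeeping concerns you raise are exactly the ones the paper's preceding discussion is set up to handle.
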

	We now state some observations that follow immediately from \cref{zimmer}, which we shall use later in \cref{Qinv1} in \S\ref{sec-mainproof}. Let $S$ and $T$ be two Zimmer-amenable regular $G$-spaces, so that by \cref{zimmer}, we have an asymptotic ${}^*G$-homotopy equivalence between the asymptotic ${}^*G$-cochain complexes $\L^{\infty}(({}^*S)^{\bullet},\V)$ and $\L^{\infty}(({}^*T)^{\bullet},\V)$ given by $k^{\bullet}:\L^{\infty}(({}^*S)^{\bullet},\V) \to \L^{\infty}(({}^*T)^{\bullet},\V)$ and $j^{\bullet}:\L^{\infty}(({}^*T)^{\bullet},\V) \to \L^{\infty}(({}^*S)^{\bullet},\V)$. 
	\begin{itemize}
		\item Let $\omega \in \L^{\infty}_b(({}^*T)^{m+1},\V)^{\sim {}^*G}$ be an asymptotic $m$-cocycle such that $Im(\omega) \subseteq \V_b^{\sim {}^*G}$. Then $Im(j^m\omega) \subseteq  \V_b^{\sim {}^*G}$. This follows from the construction of $j^m$ and \cref{prop_lev1}. 
		\item Let $\omega \in \L^{\infty}_b(({}^*S)^{m+1},\V)^{\sim {}^*G}$ be an asymptotic $m$-cocycle such that $Im(k^m\omega) \subseteq  \V_b^{\sim {}^*G}$. Then, setting $\omega_1 = j^m k^m \omega \in \L^{\infty}_b(({}^*S)^{m+1},\V)^{\sim {}^*G}$, note that $Im(\omega_1) \subseteq \V_b^{\sim {}^*G}$. Furthermore, since $k^{\bullet}$ and $j^{\bullet}$ are asymptotic ${}^*G$-homotopy equivalences, $\omega$ and $\omega_1$ are asymptotically cohomologous, that is, there exists $\alpha \in \L^{\infty}_b(({}^*S)^{m},\V)^{\sim {}^*G}$ such that
		\begin{equation}\label{invariants}
		\tilde{\omega}-\tilde{\omega}_1 = \tilde{d}^m\tilde{\alpha}
		\end{equation}
	\end{itemize}
	\cref{zimmer} has the following two immediate corollaries which we shall use in \S\ref{sec-induc} and \S\ref{sec-mainproof}. The first of these is an asymptotic analogue of the classical result that amenable groups have vanishing bounded cohomology, and follows from the fact that if $G$ is amenable, then the trivial space is Zimmer-amenable for $G$.
	\begin{corollary}\label{corr-amentriv}
		Let $G$ be an amenable group and $\V$ be a dual asymptotic Banach ${}^*G$-module. Then for every $n \geq 1$, $\Ha^n(G,\V)=0$.  
	\end{corollary}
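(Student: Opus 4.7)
The plan is to invoke Theorem \ref{zimmer} with the trivial one-point $G$-space $S = \{*\}$. The first step is to verify that, under the hypothesis of amenability, the action of $G$ on $\{*\}$ is Zimmer-amenable. This is immediate: a $G$-equivariant conditional expectation $L^{\infty}(G \times \{*\}) = L^{\infty}(G) \to L^{\infty}(\{*\}) = \R$ is by definition a left $G$-invariant mean on $L^{\infty}(G)$, whose existence is one of the standard characterizations of amenability.

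With Zimmer-amenability in hand, \cref{zimmer} reduces the computation of $\Ha^{\bullet}(G, \V)$ to the asymptotic cohomology of the complex $\L^{\infty}(({}^*\{*\})^{\bullet+1}, \V)$. Since $\{*\}^m$ is a single point, each term canonically collapses to $\V$, and on constants the simplicial differential $d^m f(s_0,\dots,s_m) = \sum_{j=0}^{m}(-1)^j f(s_0,\dots,\hat{s_j},\dots,s_m)$ acts by the scalar $\sum_{j=0}^{m}(-1)^j$, giving the identity when $m$ is even and zero when $m$ is odd. Because $G$ acts trivially on $\{*\}$, the induced asymptotic ${}^*G$-action on $\L^{\infty}(\{*\}^m, \V) \cong \V$ coincides with the original one, and after passing to asymptotic ${}^*G$-invariants the complex becomes
\[
\tilde{\V}^{{}^*G} \xrightarrow{\,0\,} \tilde{\V}^{{}^*G} \xrightarrow{\mathrm{id}} \tilde{\V}^{{}^*G} \xrightarrow{\,0\,} \tilde{\V}^{{}^*G} \xrightarrow{\mathrm{id}} \cdots
\]

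The final step is to read off the vanishing directly: for each $n \geq 1$, either $\tilde d^{n+1} = \mathrm{id}$ (so $\ker \tilde d^{n+1} = 0$), or $\tilde d^{n+1} = 0$ and $\tilde d^n = \mathrm{id}$ (so $\ker \tilde d^{n+1} = \mathrm{im}\,\tilde d^n = \tilde{\V}^{{}^*G}$); either way the quotient is zero, giving $\Ha^n(G,\V) = 0$ for $n \geq 1$. I do not anticipate any real obstacle: once \cref{zimmer} has been invoked, the argument is the exact asymptotic analogue of the classical proof that amenable groups have vanishing bounded cohomology in positive degrees, and all the serious work has been absorbed into the homotopy-theoretic machinery of \S\ref{ssec-basicG}--\S\ref{ssec-subg}.
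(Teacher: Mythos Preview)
Your proof is correct and follows exactly the approach the paper indicates: the paper simply remarks that the corollary ``follows from the fact that if $G$ is amenable, then the trivial space is Zimmer-amenable for $G$,'' and you have carried out precisely this computation, filling in the details of the resulting degenerate complex.
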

	The second corollary uses the fact that for a lattice $\Gamma$ in a locally compact group $G$, $G$ is Zimmer-amenable as a regular $\Gamma$-space. This serves as the starting point for an induction procedure to go from $\Gamma$ to $G$ which we describe in \S\ref{sec-induc}.
	\begin{corollary}\label{corr-GammaG}
		Let $\Gamma \leq G$ be a lattice in a locally compact group $G$, and let $\W$ be a dual asymptotic Banach ${}^*\Gamma$-module. Then $\Ha^{\bullet}(\Gamma,\W)$ can be computed as the asymptotic cohomology of the asymptotic ${}^*\Gamma$-cochain complex
		$$\begin{tikzcd}
		0 \arrow[r,"d^0"] & \L^{\infty}({}^*G,\W)  \arrow[r, "d^{1}"]&  \L^{\infty}(({}^*G)^2,\W) \arrow[r, "d^2"] & \L^{\infty}(({}^*G)^3,\W) \arrow[r, "d^3"]  & \L^{\infty}(({}^*G)^4,\W) \arrow[r, "d^4"] & \dots
		\end{tikzcd}$$
	\end{corollary}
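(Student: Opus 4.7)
The plan is to apply Theorem~\ref{zimmer} directly, with the role of the ambient group played by $\Gamma$ and the role of the auxiliary amenable space played by $G$ itself, viewed as a regular $\Gamma$-space under the left translation action. The entire argument therefore reduces to verifying one classical ingredient: that $G$, as a $\Gamma$-space, admits a Zimmer-amenable action of the lattice $\Gamma$.

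First I would recall the standard fact (going back to Zimmer) that whenever $\Gamma$ is a lattice in a locally compact second countable group $G$, the $\Gamma$-action on $G$ by left translations is amenable in Zimmer's sense. The cleanest way to see this in our present framework is to exhibit a $\Gamma$-equivariant conditional expectation $\mathfrak{m} \colon L^{\infty}(\Gamma \times G) \to L^{\infty}(G)$. One constructs this by fixing a Borel fundamental domain $D \subseteq G$ for the right $\Gamma$-action on $G$ (whose existence follows from regularity and $\Gamma$ being a lattice), writing each $g \in G$ uniquely as $g = d(g)\gamma(g)$ with $d(g) \in D$ and $\gamma(g) \in \Gamma$, and defining
\[
\mathfrak{m}(f)(g) \;\coloneqq\; f\!\bigl(\gamma(g),\, g\bigr), \qquad f \in L^{\infty}(\Gamma \times G).
\]
One checks directly that $\mathfrak{m}$ is a norm-one linear map satisfying $\mathfrak{m}(1)=1$ and the module property, and that it is $\Gamma$-equivariant with respect to the diagonal $\Gamma$-action on $\Gamma \times G$ and the translation action on $G$. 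This is the verification that $G$ is a Zimmer-amenable regular $\Gamma$-space.

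Once Zimmer amenability is in place, the result is immediate: Theorem~\ref{zimmer}, stated for $G$-actions, applies verbatim with $\Gamma$ in place of $G$ and $S = G$, yielding an asymptotic ${}^*\Gamma$-homotopy equivalence between the homogeneous bar complex $\L^{\infty}(({}^*\Gamma)^{\bullet+1}, \W)$ and the complex $\L^{\infty}(({}^*G)^{\bullet+1}, \W)$. Taking asymptotically ${}^*\Gamma$-invariant vectors and cohomology commutes with such homotopy equivalences by Propositions~\ref{homotopy} and~\ref{homotopy2}, so both complexes compute $\Ha^{\bullet}(\Gamma, \W)$, which is exactly the conclusion.

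The only conceptual step beyond a formal invocation of Theorem~\ref{zimmer} is the Zimmer amenability statement, and since that is a standard and purely classical fact about lattices (not affected in any way by our passage to internal/asymptotic objects, which only enters through the already-packaged machinery of \S\ref{ssec-basicG}), I do not anticipate any genuine obstacle. The one small point worth being careful about is that Theorem~\ref{zimmer} was stated for a locally compact group acting on its amenable space, whereas here we apply it to the discrete group $\Gamma$; this is harmless since discrete groups are a special case of locally compact second countable groups, and all the functorial constructions of \S\ref{ssec-basicG}--\S\ref{ssec-coho} specialise correctly to the discrete setting as illustrated by the $\Gamma$-computation following Definition~\ref{asympt}.
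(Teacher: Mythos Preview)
Your proposal is correct and follows exactly the paper's approach: the paper simply remarks before the corollary that $G$ is Zimmer-amenable as a regular $\Gamma$-space (a standard fact for lattices) and invokes Theorem~\ref{zimmer}. One minor slip in your explicit construction: since $\Gamma$ acts on $G$ by \emph{left} translation, you should take $D$ to be a fundamental domain for the left $\Gamma$-action and write $g=\gamma(g)d(g)$, so that $\gamma(\gamma_0 g)=\gamma_0\gamma(g)$ and the equivariance check for $\mathfrak{m}$ goes through cleanly.
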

	The next corollary uses the fact that for a closed subgroup $Q\leq G$ (and in particular, when $Q=G$), and a closed amenable subgroup $P\leq G$, the space $G/P$ is a regular $Q$-space that is Zimmer-amenable for the $Q$-action.
	\begin{corollary}\label{Qcoho}
		Let $P \leq G$ be a closed amenable subgroup of $G$, $Q$ be a closed subgroup of $G$, and $\V$ be a dual asymptotic Banach ${}^*G$-module. Then $\Ha^{\bullet}(Q,\V)$ can be computed as the asymptotic cohomology of the asymptotic ${}^*Q$-cochain complex
		$$\begin{tikzcd}
		0 \arrow[r,"d^0"] & \L^{\infty}(({}^*(G/P)),\V)  \arrow[r, "d^{1}"]&  \L^{\infty}(({}^*(G/P))^2,\V) \arrow[r, "d^2"] & \L^{\infty}(({}^*(G/P))^3,\V) \arrow[r, "d^3"]  &\dots
		\end{tikzcd}$$
	\end{corollary}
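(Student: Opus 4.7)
The plan is to deduce Corollary~\ref{Qcoho} as a direct application of Theorem~\ref{zimmer}, once we verify that the homogeneous space $S = G/P$ is a regular $Q$-space on which the $Q$-action (by left multiplication) is Zimmer-amenable. The $Q$-action on $G/P$ is regular since $G$ is locally compact second countable and $P$ is closed, so $G/P$ carries a standard Borel structure with a quasi-invariant measure (the pushforward of Haar measure), and the $Q$-action preserves this measure class. This verifies the regularity hypothesis.

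The main content, therefore, is to exhibit a $Q$-equivariant conditional expectation $\mathfrak{m}: L^\infty(Q \times G/P) \to L^\infty(G/P)$. The plan is to obtain this by restriction from the $G$-equivariant case: since $P$ is a closed amenable subgroup, a classical theorem of Zimmer (see \cite{bookMonod}) produces a $G$-equivariant conditional expectation $\mathfrak{M}: L^\infty(G \times G/P) \to L^\infty(G/P)$, built from an invariant mean on $L^\infty(P)$ via the identification $L^\infty(G \times G/P) \cong L^\infty(G/P, L^\infty(P))$ using a Borel section $G/P \to G$. Restricting the first variable from $G$ to the closed subgroup $Q$ yields a norm-one linear map $\mathfrak{m}: L^\infty(Q \times G/P) \to L^\infty(G/P)$, and one checks that the defining properties of a conditional expectation (the normalization $\mathfrak{m}(1) = 1$ and the $L^\infty(G/P)$-module property $\mathfrak{m}(f \cdot 1_{Q \times A}) = \mathfrak{m}(f) \cdot 1_A$) are preserved under restriction, as is $Q$-equivariance (inherited from $G$-equivariance of $\mathfrak{M}$).

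Having established the Zimmer amenability of $Q \curvearrowright G/P$, we simply invoke Theorem~\ref{zimmer} with the group $G$ there replaced by $Q$ (and noting that $\V$, being a dual asymptotic Banach ${}^*G$-module, restricts to a dual asymptotic Banach ${}^*Q$-module). This identifies $\Ha^\bullet(Q, \V)$ with the asymptotic cohomology of the complex $\L^\infty(({}^*(G/P))^{\bullet+1}, \V)$ as claimed.

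The only step requiring any care is the passage from the $G$-equivariant expectation to the $Q$-equivariant one; once this is in hand, the rest is the functorial machinery already set up in Sections~\ref{ssec-basicG}--\ref{ssec-coho}. I do not anticipate any genuine obstacles, since the restriction-of-conditional-expectation argument is standard in the bounded cohomology literature and our asymptotic framework has been designed precisely so that such homotopy-equivalence arguments transfer without modification through the functor $k \mapsto k_*$ of Proposition~\ref{bilinear} and Lemma~\ref{homotopy2}.
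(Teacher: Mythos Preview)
Your approach is exactly the paper's: invoke Theorem~\ref{zimmer} once you know that $G/P$ is a Zimmer-amenable $Q$-space, a fact the paper simply asserts without further justification. Your added detail is welcome, but one terminological point deserves care: the passage from the $G$-equivariant conditional expectation $\mathfrak{M}\colon L^\infty(G\times G/P)\to L^\infty(G/P)$ to a $Q$-equivariant one is not literally a ``restriction of the first variable''---restriction of functions goes the wrong way. What actually works is to \emph{extend} $f\in L^\infty(Q\times G/P)$ to $\tilde f\in L^\infty(G\times G/P)$ via a Borel section $Q\backslash G\to G$ (setting $\tilde f(g,x)=f(q,x)$ where $g=q\cdot s(Qg)$), and then put $\mathfrak{m}(f):=\mathfrak{M}(\tilde f)$; one checks directly that this is $Q$-equivariant and satisfies the conditional-expectation axioms. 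Alternatively, simply cite the standard fact (e.g.\ from \cite{bookMonod}) that Zimmer amenability of a $G$-space is inherited by every closed subgroup.
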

	The second corollary uses the fact that for a closed subgroup $Q\leq G$, the space $G/P$ is a regular $Q$-space that is Zimmer-amenable for the $Q$-action.

	\section{The Induction Module}\label{sec-induc}
	In the previous section, we noted (in \cref{corr-GammaG}) that for a lattice $\Gamma$ in a Lie group $G$, the cohomology $\Ha^{\bullet}(\Gamma,\W)$ could be computed as the asymptotic cohomology of the asymptotic ${}^*\Gamma$-cochain complex
	$$\begin{tikzcd}
	0 \arrow[r,"d^0"] & \L^{\infty}({}^*G,\V)  \arrow[r, "d^{1}"]&  \L^{\infty}(({}^*G)^2,\V) \arrow[r, "d^2"] & \L^{\infty}(({}^*G)^3,\V) \arrow[r, "d^3"]  & \L^{\infty}(({}^*G)^4,\V) \arrow[r, "d^4"] & \dots
	\end{tikzcd}$$
	We shall now apply an induction procedure to go from ${}^*\Gamma$-equivariance to  ${}^*G$-equivariance, and shall construct an asymptotic Banach ${}^*G$-module $\V$ such that $\Ha^{\bullet}(\Gamma,\W) \cong \Ha^{\bullet}(G,\V)$.\\
	In \S\ref{ssec-trueAction}, we begin by studying useful properties of an intermediary structure $\L^{\infty}_b({}^*G,\W)^{\sim {}^*\Gamma}$ that shall arise in the induction procedure worked out in \S\ref{ssec-shapiro}. This structure is, upto infinitesimals, equal to the induced module $\L^{\infty}({}^*D,\W)$ that we shall use, but has the additional  useful feature of being equipped with a (true) internal action of ${}^*G$. A $1$-cohomology argument is used to pass between asymptotically equivariant maps in $\L^{\infty}({}^*D,\W)$ and (truly) equivariant maps in $\L^{\infty}_b({}^*G,\W)^{\sim {}^*\Gamma}$, which is a result we shall often use, especially in \S\ref{ssec-invariants}. Finally, the induction procedure is described in \S\ref{ssec-shapiro}.
	
	\subsection{The ${}^*G$-action on $\L^{\infty}_b({}^*G,\W)^{\sim {}^*\Gamma}$}\label{ssec-trueAction}
	Recall that the internal Banach space $\W=\prod_{\U}\mathfrak{u}(k_n)$ came with an internal map $\pi_{\Gamma}:{}^*\Gamma \times \W \to \W$ such that this map induced an action of ${}^*\Gamma$ on $\tilde{\W}=\W_b/\W_{inf}$. Let $S$ be an Zimmer-amenable regular $G$-space. For $m \geq 0$, consider the internal Banach space
	$$\L^{\infty}(({}^*S)^m,\W)$$
	equipped with the following internal ${}^*G$-action: for $g \in {}^*G$, $f \in \L^{\infty}(({}^*S)^m,\W)$,
	$$(g \cdot f)(x_1,\dots,x_m) = f(g^{-1}x_1,\dots,g^{-1}x_m)$$
	for $x_1,\dots,x_m \in {}^*S$. Clearly $\L^{\infty}_b(({}^*S)^m,\W)$ is invariant with respect to this ${}^*G$-action, and induces a ${}^*G$-action on $\tilde{L}^{\infty}(({}^*S)^m,\W)$. \\
	Consider the subsets $\L^{\infty}_b(({}^*S)^m,\W)^{{}^*G}$ of bounded  ${}^*G$-fixed points of this action, and the subset $\L^{\infty}_b(({}^*S)^m,\W)^{\sim {}^*G}$ of bounded asymptotically ${}^*G$-fixed points. Clearly, 
	$$\L^{\infty}_b(({}^*S)^m,\W)^{{}^*G} + \L^{\infty}_{inf}(({}^*S)^m,\W) \subseteq \L^{\infty}_b(({}^*S)^m,\W)^{\sim {}^*G}$$
	We shall now show that the containment goes through in the other direction too. That is,
	\begin{lemma}\label{correction}
		For every $f \in \L^{\infty}_b(({}^*S)^m,\W)^{\sim {}^*G}$, there exists $\beta \in \L^{\infty}_{inf}(({}^*S)^m,\W)$ such that $f-\beta \in \L^{\infty}_b(({}^*S)^m,\W)^{{}^*G}$. Moreover, the map $f \mapsto \beta$ is induced from an internal map from $\L^{\infty}(({}^*S)^m,\W)$ to itself.
	\end{lemma}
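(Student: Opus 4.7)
The plan is to produce $\beta$ as the output of an internal ``averaging'' operator constructed from the Zimmer-amenability of $S$. The key preliminary step is to fix a $G$-equivariant conditional expectation $\mathfrak{m} : L^{\infty}(G \times S^m) \to L^{\infty}(S^m)$, where $G$ acts diagonally on $G \times S^m$; such an $\mathfrak{m}$ exists because amenable $G$-actions are stable under products with arbitrary $G$-spaces (see \cite{bookMonod}), so $S^m$ is itself an amenable $G$-space. Extending $\mathfrak{m}$ coordinatewise to $\mathfrak{u}(k_n)$-valued functions, for each $n$ define the operator
$$P_n : L^{\infty}(S^m, \mathfrak{u}(k_n)) \longrightarrow L^{\infty}(S^m, \mathfrak{u}(k_n)), \qquad P_n(f_n)(s) \;:=\; \mathfrak{m}\bigl((g, s) \mapsto f_n(g^{-1} s)\bigr)(s).$$
A direct check shows that the function $(g,s) \mapsto f_n(g^{-1}s)$ is $G$-invariant under the diagonal action (since $f_n((h^{-1}g)^{-1}h^{-1}s) = f_n(g^{-1}s)$), so $P_n(f_n)$ lies in the $G$-invariants $L^{\infty}(S^m, \mathfrak{u}(k_n))^G$; and the conditional-expectation property implies $P_n$ restricts to the identity on $G$-invariant inputs.

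Taking the ultraproduct of the $P_n$ yields an internal linear operator $\mathcal{P}$ on $\L^{\infty}(({}^*S)^m, \W)$ whose image lies in $\L^{\infty}_b(({}^*S)^m, \W)^{{}^*G}$. Given $f \in \L^{\infty}_b(({}^*S)^m, \W)^{\sim {}^*G}$, set $\beta := f - \mathcal{P} f$; then $f - \beta = \mathcal{P} f$ is truly ${}^*G$-invariant and the correspondence $f \mapsto \beta$ is internal by construction, so the only remaining task is to verify $\beta \in \L^{\infty}_{inf}$. Using that $\mathfrak{m}$ has operator norm one and fixes the constant-in-$g$ function $(g,s) \mapsto f_n(s)$, one computes, for each representative $f_n$,
$$\beta_n \;=\; \mathfrak{m}\bigl((g,s) \mapsto f_n(s) - f_n(g^{-1}s)\bigr), \qquad \|\beta_n\|_\infty \;\leq\; \sup_{g \in G}\,\|g \cdot f_n - f_n\|_\infty,$$
where the last inequality uses $(g \cdot f_n)(s) = f_n(g^{-1}s)$.

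It thus suffices to establish that the hyperreal $\bigl\{\sup_{g \in G} \|g \cdot f_n - f_n\|_\infty\bigr\}_{\U}$ is infinitesimal, which is a standard overspill argument: if it were bounded below by some standard $\delta > 0$ on a $\U$-large set of indices, then for each such $n$ one could select $g_n \in G$ witnessing $\|g_n \cdot f_n - f_n\|_\infty \geq \delta/2$, and the resulting internal element $g = \{g_n\}_{\U} \in {}^*G$ would give $\|g \cdot f - f\| \geq \delta/2$, contradicting the hypothesis $f \in \L^{\infty}_b(({}^*S)^m, \W)^{\sim {}^*G}$. The only substantive point in the whole argument is the existence of $\mathfrak{m}$ on $G \times S^m$; this is a routine consequence of the product-stability of amenable $G$-actions combined with coordinatewise extension to the finite-dimensional Banach targets $\mathfrak{u}(k_n)$, and the internality of $\mathcal{P}$ follows because $\mathfrak{m}$ is a single fixed linear operator applied uniformly in $n$.
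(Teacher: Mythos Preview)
Your proof is correct but follows a genuinely different route from the paper's. The paper argues cohomologically: it views $\alpha(g) := g\cdot f - f$ as an internal $1$-cocycle, observes that each $\alpha_n$ lies in $Z^1_b\bigl(G, L^{\infty}_{w*}(S^m,\mathfrak{u}(k_n))\bigr)$, and invokes the relative injectivity of $L^{\infty}_{w*}(S^m,\mathfrak{u}(k_n))$ (from \cite{bookMonod}) to conclude $\Hb^1 = 0$ with a uniform constant $C$, yielding $\beta_n$ with $\alpha_n = d\beta_n$ and $\|\beta_n\|\leq C\|\alpha_n\|$. You instead build an explicit internal projection $\mathcal{P}$ onto ${}^*G$-invariants directly from the Zimmer-amenable conditional expectation, and set $\beta = f - \mathcal{P}f$.

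Both arguments rest on the same underlying input (amenability of the $G$-action on $S^m$), and in fact your averaging operator is essentially the concrete mechanism that makes $L^{\infty}_{w*}(S^m,V)$ relatively injective in the first place. Your approach is more elementary and yields the sharper bound $\|\beta_n\|\leq \sup_{g}\|g\cdot f_n - f_n\|$ without an auxiliary constant; the paper's approach has the advantage of slotting into the general cohomological framework used throughout (cf.\ \cref{almostGequi}). One small point: the phrase ``extending $\mathfrak{m}$ coordinatewise'' should be made precise via duality, i.e.\ defining $\langle \mathfrak{m}(F),\lambda\rangle := \mathfrak{m}(\langle F,\lambda\rangle)$ for $\lambda\in\mathfrak{u}(k_n)^*$, which guarantees the extension still has norm one independently of $n$; a naive basis-coordinatewise extension could in principle introduce a dimension-dependent constant that would break the infinitesimal estimate.
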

	\begin{proof}
		Consider the internal map $\alpha: {}^*G \to  \L^{\infty}(({}^*S)^m,\W)$ defined as $\alpha(g) \coloneq g\cdot f - f$. Note that since $f \in \L^{\infty}_b(({}^*S)^m,\W)^{\sim {}^*G}$, $Im(\alpha) \subseteq \L^{\infty}_{inf}(({}^*S)^m,\W)$ (hence $\|\alpha\| \in {}^*\R_{inf}$). Let $\alpha=\{\alpha_n\}_{\U}$ where for every $n \in \N$, $\alpha_n:G \to L^{\infty}_{w*}(S^m,\mathfrak{u}(k_n))$. Observe that each such $\alpha_n$ is a bounded function that satisfies the (inhomogenous) $1$-cocycle condition. That is, $\alpha_n \in \Hb^1\left( G, L^{\infty}_{w*}(S^m,\mathfrak{u}(k_n)) \right)$. From \cite{bookMonod}, we know that $L^{\infty}_{w*}(S^m,\mathfrak{u}(k_n))$ is a relatively injective Banach $G$-module, and hence $\Hb^1\left( G, L^{\infty}_{w*}(S^m,\mathfrak{u}(k_n)) \right)=0$. In fact, there exists a constant $C$ (independent of $n$) such that for every $n \in \N$, there exists $\beta_n \in L^{\infty}_{w*}(S^m,\mathfrak{u}(k_n))$ with $\alpha_n(g)=g\cdot \beta_n - \beta_n$, with $\|\beta_n\|\leq C\|\alpha_n\|$. Set $\beta=\{\beta_n\}_{\U}$ so that for $g \in {}^*G$, $\alpha(g)=g \cdot \beta - \beta$ implying that $g \cdot (f-\beta)=f-\beta$. Since $\|\beta\|\leq C\|\alpha\|$, $\beta \in \L^{\infty}_{inf}(({}^*S)^m,\W)$. Note that the correspondence $f \mapsto \beta$ is internal by construction.
	\end{proof}
	A special case of particular interest to us is the internal Banach space $\L^{\infty}({}^*G,\W)$. This space comes equipped with an internal ${}^*G$-action and an asymptotic ${}^*\Gamma$-action: 
	\begin{itemize}
		\item For $g \in {}^*G$ and $f \in \L^{\infty}({}^*G,\W)$, define $(g \cdot f)(x)=f(xg)$ for $x \in {}^*G$. This makes $\L^{\infty}({}^*G,\W)$ an internal ${}^*G$-representation. 
		\item Consider the internal map $\pi'_{\Gamma}:{}^*\Gamma \times \L^{\infty}({}^*G,\W) \to \L^{\infty}({}^*G,\W)$ defined as follows: for $\gamma \in {}^*\Gamma$ and $f \in \L^{\infty}({}^*G,\W)$, define 
		$$(\pi'_{\Gamma}(\gamma)f)(x)=\pi_{\Gamma}(\gamma)f(\gamma^{-1}x)$$
		where $x \in {}^*G$. This makes $\L^{\infty}({}^*G,\W)$ into an asymptotic Banach ${}^*\Gamma$-module.
	\end{itemize}
	\begin{remark}
		Note that ${}^*G$ acts internally on $\L^{\infty}({}^*G,\W)$, while ${}^*\Gamma$ does not act, through $\pi'_{\Gamma}$, on $\L^{\infty}({}^*G,\W)$, but only induces an action on the quotient $\tilde{\L}^{\infty}({}^*G,\W)$.
	\end{remark}
	Let us restrict to the subspace $\L^{\infty}_{b}({}^*G,\W)^{\sim {}^*\Gamma}$ comprising functions that are asymptotically ${}^*\Gamma$-equivariant. Note that $\L^{\infty}_{b}({}^*G,\W)^{\sim {}^*\Gamma}$ is \emph{not} an internal space.
	\begin{lemma}\label{inducG}
		The subset $\L^{\infty}_{b}({}^*G,\W)^{\sim {}^*\Gamma}$ is invariant under the internal action of ${}^*G$. 
	\end{lemma}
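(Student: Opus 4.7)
The plan is to verify the lemma by a direct computation showing that the internal left ${}^*G$-action by right translation commutes \emph{identically} (not merely asymptotically) with the asymptotic ${}^*\Gamma$-representation $\pi'_{\Gamma}$ on $\L^{\infty}({}^*G,\W)$, and then to combine this with the fact that right translation preserves the internal $L^{\infty}$-norm.

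First I would fix $f \in \L^{\infty}_b({}^*G,\W)^{\sim {}^*\Gamma}$, $g \in {}^*G$, and $\gamma \in {}^*\Gamma$, and establish the identity
\[
\pi'_{\Gamma}(\gamma)(g \cdot f) \;=\; g \cdot \bigl(\pi'_{\Gamma}(\gamma) f\bigr)
\]
as elements of $\L^{\infty}({}^*G,\W)$. Unfolding the definitions and using the associativity $\gamma^{-1}(xg) = (\gamma^{-1}x)g$, both sides evaluate at $x \in {}^*G$ to $\pi_{\Gamma}(\gamma) f(\gamma^{-1}xg)$. Subtracting $g \cdot f$ from both sides gives
\[
\pi'_{\Gamma}(\gamma)(g\cdot f) - (g \cdot f) \;=\; g \cdot \bigl(\pi'_{\Gamma}(\gamma)f - f\bigr).
\]

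Next I would record that right translation by $g$ is an internal isometry of $\L^{\infty}({}^*G,\W)$: each $G_n$-right translation is a measurable bijection of $G$ preserving null sets (it rescales the Haar measure only by the modular function), so it preserves the essential supremum norm on each $L^{\infty}_{w*}(G,\mathfrak{u}(k_n))$, and hence the internal norm on the ultraproduct. Consequently right translation by $g$ sends $\L^{\infty}_{inf}({}^*G,\W)$ into itself and preserves $\L^{\infty}_b({}^*G,\W)$. Since $f \in \L^{\infty}_b({}^*G,\W)^{\sim {}^*\Gamma}$, the difference $\pi'_{\Gamma}(\gamma)f - f$ lies in $\L^{\infty}_{inf}({}^*G,\W)$, so by the displayed identity $\pi'_{\Gamma}(\gamma)(g\cdot f) - (g\cdot f)$ does as well. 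Combined with $\|g \cdot f\| = \|f\| \in {}^*\R_b$, this yields $g \cdot f \in \L^{\infty}_b({}^*G,\W)^{\sim {}^*\Gamma}$, as required.

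I do not anticipate any substantive obstacle: the proof is bookkeeping, and its content is precisely that the two actions have been set up on opposite sides of the argument so as to commute on the nose. The lemma's interest lies not in the difficulty of its proof but in its consequence, namely that $\L^{\infty}_b({}^*G,\W)^{\sim {}^*\Gamma}$ becomes a useful intermediary carrying a genuine internal ${}^*G$-action before one passes, via \cref{correction}, to the induction module $\V$ of Section~\ref{ssec-shapiro}.
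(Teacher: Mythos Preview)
Your proof is correct and is precisely the natural argument: the left ${}^*\Gamma$-action and the right ${}^*G$-translation commute on the nose, so asymptotic ${}^*\Gamma$-equivariance is preserved, and right translation is an internal isometry so boundedness is preserved. The paper in fact states this lemma without proof, presumably regarding it as immediate from the definitions; your write-up supplies exactly the bookkeeping one would expect.
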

	Consider the subspaces $\left(\L^{\infty}_{b}(G,\W)^{\sim{}^*\Gamma}\right)^{{}^*G}$ and $\left(\L^{\infty}_{b}(G,\W)^{\sim{}^*\Gamma}\right)^{\sim {}^*G}$. Observe that the proof of \cref{correction} goes through when restricted to asymptotically ${}^*\Gamma$-equivariant elements, giving us:
	\begin{corollary}\label{VG}
		For $v \in \left( \L_b^{\infty}({}^*G,\W)^{\sim {}^*\Gamma}\right)^{\sim {}^*G}$, there exists $w \in  \left( \L_b^{\infty}({}^*G,\W)^{\sim {}^*\Gamma}\right)^{{}^*G}$ such that $v-w \in  \L_{inf}^{\infty}({}^*G,\W)$. Moreover, the map $v \mapsto w$ is induced from an internal map from $\L^{\infty}({}^*G,\W)$ to itself.
	\end{corollary}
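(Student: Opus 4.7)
The plan is to deduce \cref{VG} directly from \cref{correction} applied with $S=G$ (acting on itself by right translation, which is Zimmer amenable since the stabilisers are trivial) and $m=1$. Under this choice, the internal isometric ${}^*G$-action on $\L^{\infty}(({}^*S)^m,\W)=\L^{\infty}({}^*G,\W)$ appearing in \cref{correction} reads $(g\cdot f)(x)=f(xg)$, which is precisely the internal ${}^*G$-action considered in \cref{VG}. There is no compatibility required between this ${}^*G$-action and the $\pi_{\Gamma}$-twist: \cref{correction} does not place any hypothesis on $\W$ beyond its being an internal Banach space, and the argument proceeds via relative injectivity of $L^{\infty}_{w*}(G,\mathfrak{u}(k_n))$ as a Banach $G$-module together with the uniform vanishing of $\Hb^{1}$.

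First, I would observe that the hypothesis $v\in\bigl(\L^{\infty}_{b}({}^*G,\W)^{\sim{}^*\Gamma}\bigr)^{\sim{}^*G}$ implies in particular that $v\in\L^{\infty}_{b}({}^*G,\W)^{\sim{}^*G}$, and then apply \cref{correction} to $v$ to produce $\beta\in\L^{\infty}_{inf}({}^*G,\W)$ with $v-\beta\in\L^{\infty}_{b}({}^*G,\W)^{{}^*G}$, the assignment $v\mapsto\beta$ being induced from an internal map $\L^{\infty}({}^*G,\W)\to\L^{\infty}({}^*G,\W)$. Setting $w\coloneq v-\beta$ then yields an element of $\L^{\infty}_{b}({}^*G,\W)$ that is (truly) fixed by the internal ${}^*G$-action.

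Next, I would verify that the correction preserves asymptotic ${}^*\Gamma$-equivariance, i.e.\ that $w$ still lies in $\L^{\infty}_{b}({}^*G,\W)^{\sim{}^*\Gamma}$. This is because $\L^{\infty}_{b}({}^*G,\W)^{\sim{}^*\Gamma}$ is a linear subspace of $\L^{\infty}_{b}({}^*G,\W)$ containing $\L^{\infty}_{inf}({}^*G,\W)$: any infinitesimal element is automatically asymptotically fixed by every asymptotic isometric action, as its image in the ultralimit $\tilde{\L}^{\infty}({}^*G,\W)$ is zero. Since $v$ is asymptotically ${}^*\Gamma$-equivariant by hypothesis and $\beta\in\L^{\infty}_{inf}({}^*G,\W)$, the difference $w=v-\beta$ is likewise asymptotically ${}^*\Gamma$-equivariant.

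Combining the two conclusions, $w\in\bigl(\L^{\infty}_{b}({}^*G,\W)^{\sim{}^*\Gamma}\bigr)^{{}^*G}$ and $v-w=\beta\in\L^{\infty}_{inf}({}^*G,\W)$, while the assignment $v\mapsto w$ is internal by the internality clause of \cref{correction}. No genuine obstacle arises beyond the content already established in \cref{correction}; the only point to record is that the two actions in play, namely right translation by ${}^*G$ and the twisted $\pi'_{\Gamma}$-action by ${}^*\Gamma$, coexist without interference because the corrective term $\beta$ lives in $\L^{\infty}_{inf}({}^*G,\W)$, a subspace stable under every internal isometric action.
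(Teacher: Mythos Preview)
Your proof is correct and follows essentially the same approach as the paper, which simply remarks that the proof of \cref{correction} goes through when restricted to asymptotically ${}^*\Gamma$-equivariant elements. Your version spells out the only point that needs checking beyond invoking \cref{correction}, namely that the infinitesimal correction $\beta$ does not destroy asymptotic ${}^*\Gamma$-equivariance, which you justify cleanly via $\L^{\infty}_{inf}({}^*G,\W)\subseteq\L^{\infty}_{b}({}^*G,\W)^{\sim{}^*\Gamma}$.
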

	An element $f \in \left(\L^{\infty}_{b}(G,\W)^{\sim{}^*\Gamma}\right)^{{}^*G}$ satisfies the following two conditions:
	\begin{itemize}
		\item For every $g\in {}^*G$ and almost every $x \in {}^*G$, $f(xg)=f(x)$. In other words, $f$ is an essentially constant function, where the constant value is an element of $\W_b$.
		\item For $\gamma \in {}^*\Gamma$ and almost every $x \in {}^*G$, $f(\gamma x) - \gamma f(x) \in \W_{inf}$ (that is, the constant value of $f$ is an element of $\W_b)^{\sim {}^*\Gamma}$).
	\end{itemize}
	Thus, $f \in \left(\L^{\infty}_{b}(G,\W)^{\sim{}^*\Gamma}\right)^{{}^*G}$ can be represented as an element of $(\W_b)^{\sim {}^*\Gamma}$. 
	\begin{lemma}\label{Wb}
		The internal map $e:\W \to \L^{\infty}(G,\W)$ defined as $e(w)(g) \coloneq w$ for $w \in \W_b$ and $g \in {}^*G$ restricts to a bijection between $\W_b^{\sim {}^*\Gamma}$ and $\left(\L^{\infty}_{b}(G,\W)^{\sim{}^*\Gamma}\right)^{{}^*G}$. 
	\end{lemma}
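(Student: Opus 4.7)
The plan is to verify in turn that $e$ restricted to $\W_b^{\sim{}^*\Gamma}$ takes values in the claimed target, is injective, and is surjective. The first two properties are essentially formal; the real content is in surjectivity, which reduces to showing that any truly ${}^*G$-invariant $f\in\L^\infty_b({}^*G,\W)$ is essentially constant.

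For well-definedness, the constant function $e(w)$ manifestly lies in $\L^\infty_b$ with $\|e(w)\|=\|w\|$, is truly ${}^*G$-invariant by construction, and is asymptotically ${}^*\Gamma$-equivariant because $\pi'_\Gamma(\gamma)e(w)-e(w)$ is the constant function with value $\pi_\Gamma(\gamma)w-w$, which belongs to $\W_{inf}$ whenever $w\in\W_b^{\sim{}^*\Gamma}$, and hence the whole difference sits in $\L^\infty_{inf}$. Injectivity is immediate: if two constant $\L^\infty$-functions agree a.e., their values agree in $\W$.

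For surjectivity, given $f\in\bigl(\L^\infty_b({}^*G,\W)^{\sim{}^*\Gamma}\bigr)^{{}^*G}$, I would invoke the interpretation of true ${}^*G$-invariance from the remark in \S\ref{ssec-coho}: there is an internal co-null set $X=\{X_n\}_{\U}\subseteq{}^*G$ on which $f(xg)=f(x)$ holds for every $g\in{}^*G$. Applying the transfer principle to this internal statement, for $n$ in a large index set $X_n\subseteq G$ is co-null and $f_n(x_n g)=f_n(x_n)$ for every $x_n\in X_n$ and $g\in G$; fixing a representative $x_n^0\in X_n$ and letting $g$ range over $G$, transitivity of the right $G$-action on itself shows that $f_n$ is essentially constant, with value $w_n\coloneq f_n(x_n^0)$. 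Setting $w=\{w_n\}_{\U}$ one reads off $e(w)=f$ in $\L^\infty$ and $\|w\|=\|f\|_\infty\in{}^*\R_b$, so $w\in\W_b$. Finally, the essential constancy of $f$ with value $w$ makes $\pi'_\Gamma(\gamma)f-f$ essentially constant with value $\pi_\Gamma(\gamma)w-w$; since this difference lies in $\L^\infty_{inf}$ by asymptotic ${}^*\Gamma$-equivariance of $f$, we obtain $\pi_\Gamma(\gamma)w-w\in\W_{inf}$, placing $w$ in $\W_b^{\sim{}^*\Gamma}$.

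The main obstacle I anticipate is the essential-constancy extraction in the surjectivity step, namely translating true ${}^*G$-invariance of the internal function $f$ into a.e.\ constancy of (a.e.) $f_n$; this is exactly where the author's convention on internal co-null witnesses does the work, as true ${}^*G$-invariance phrased only as a family of a.e.\ identities indexed over the uncountable group ${}^*G$ would not by itself support a Fubini/transitivity argument. Once essential constancy is in hand, the back-and-forth passage between $\L^\infty_{inf}$ and $\W_{inf}$ and between $\L^\infty_b$ and $\W_b$ is routine, since the essential supremum of an essentially constant function equals the norm of its value.
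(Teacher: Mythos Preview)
Your proposal is correct and follows essentially the same line as the paper. In fact, the paper does not supply a formal proof environment for this lemma at all: the argument is the short discussion immediately preceding the statement, which observes that true ${}^*G$-invariance forces $f$ to be essentially constant with value in $\W_b$, and that asymptotic ${}^*\Gamma$-equivariance then places this value in $\W_b^{\sim{}^*\Gamma}$. Your write-up is a more careful elaboration of exactly this reasoning, and you correctly identify that the essential-constancy step relies on the paper's convention (the remark in \S\ref{ssec-coho}) that true ${}^*G$-invariance is witnessed on an internal co-null set, which is what makes the transfer/transitivity argument go through.
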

	\cref{VG} and \cref{Wb} together imply that, upto infinitesimals, $\left(\L_b^{\infty}({}^*G,\W)^{\sim {}^*\Gamma}\right)^{\sim {}^*G}$ can be identified with $\W_b^{\sim {}^*\Gamma}$, and this bijection is (trivially) ${}^*G$-equivariant and induced from an internal map between $\L^{\infty}({}^*G,\W)$ and $\W$.\\
	We combine the results above to obtain a corollary that will be useful later in \S\ref{sec-mainproof}. 
	\begin{proposition}\label{almostGequi2}
		Let $Q$ be a closed subgroup of $G$. Let $f \in  \L^{\infty}_b\left( ({}^*Q)^m, \L^{\infty}({}^*G,\W)\right)^{\sim {}^*Q}$ be such that $Im(f) \subseteq (\L^{\infty}_b({}^*G,\W)^{\sim {}^*\Gamma})^{\sim {}^*G}$. Then there exists $\beta \in \L_{inf}^{\infty}\left( ({}^*Q)^m, \L^{\infty}({}^*G,\W) \right)$ such that $f-\beta$ is ${}^*Q$-fixed, and $Im(f-\beta) \subseteq \left( \L^{\infty}_b({}^*G,\W)^{\sim {}^*\Gamma} \right)^{{}^*G}=\W_b^{{}^*\Gamma}$. The map $f \mapsto \beta$ is induced from an internal map from $\L^{\infty}\left( ({}^*Q)^m, \L^{\infty}({}^*G,\W)\right)$ to itself.
	\end{proposition}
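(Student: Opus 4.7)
The strategy is to split $\beta$ into two successive infinitesimal corrections: the first pushes the pointwise values of $f$ into $(\L^{\infty}_b({}^*G,\W)^{\sim {}^*\Gamma})^{{}^*G}$, while the second upgrades asymptotic ${}^*Q$-equivariance to genuine ${}^*Q$-invariance. The key observation making this work is that $(\L^{\infty}_b({}^*G,\W)^{\sim {}^*\Gamma})^{{}^*G} = e(\W_b^{\sim {}^*\Gamma})$ by \cref{Wb}, and $e(\W)$ is pointwise ${}^*G$-fixed for the right-translation action on $\L^{\infty}({}^*G,\W)$, so the induced ${}^*Q$-action on $e(\W)$-valued maps is concentrated entirely on the domain.

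First I would apply \cref{VG} pointwise. That corollary supplies an internal self-map $C$ of $\L^{\infty}({}^*G,\W)$ sending any $v \in (\L^{\infty}_b({}^*G,\W)^{\sim {}^*\Gamma})^{\sim {}^*G}$ to an element $C(v) \in (\L^{\infty}_b({}^*G,\W)^{\sim {}^*\Gamma})^{{}^*G}$ with $v - C(v)$ infinitesimal. Set $f_1 := C \circ f$ and $\beta_1 := f - f_1$; since $C$ is internal, $\beta_1 \in \L^{\infty}_{inf}(({}^*Q)^m, \L^{\infty}({}^*G,\W))$, and $f_1$ remains asymptotically ${}^*Q$-equivariant because it differs from $f$ only by an infinitesimal.

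Next, the image of $f_1$ lies in $e(\W_b^{\sim {}^*\Gamma})$, so I can write $f_1 = e \circ g_1$ for some $g_1 \in \L^{\infty}_b(({}^*Q)^m, \W)$ with image in $\W_b^{\sim {}^*\Gamma}$. Since $e(\W)$ is ${}^*G$-fixed, the ${}^*Q$-action on $f_1$ transports to the pure translation action on $g_1$ with trivial coefficient action, and hence $g_1$ is asymptotically ${}^*Q$-invariant. I then run the argument of \cref{correction} verbatim with $G$ replaced by $Q$ and $S := Q$, using that $L^\infty_{w*}(Q^m, \mathfrak{u}(k_n))$ is a relatively injective Banach $Q$-module and that $\Hb^1(Q, L^\infty_{w*}(Q^m, \mathfrak{u}(k_n))) = 0$ with a uniform contracting constant. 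This produces $\beta_2' \in \L^{\infty}_{inf}(({}^*Q)^m, \W)$ such that $g_1 - \beta_2'$ is truly ${}^*Q$-invariant, and the assignment $g_1 \mapsto \beta_2'$ is internal.

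Setting $\beta_2 := e \circ \beta_2'$ and $\beta := \beta_1 + \beta_2$, we have $f - \beta = e \circ (g_1 - \beta_2')$: the right-hand side is ${}^*Q$-fixed, and its image lies in $e(\W_b^{\sim {}^*\Gamma})$ because $\W_b^{\sim {}^*\Gamma}$ is closed under addition of infinitesimals (asymptotic ${}^*\Gamma$-invariance is insensitive to $\W_{inf}$-perturbations). Both corrections being internal, so is the map $f \mapsto \beta$. The main obstacle I anticipate is keeping the various actions straight, namely the right-translation ${}^*G$-action, the diagonal ${}^*Q$-action on $\L^{\infty}(({}^*Q)^m, \L^{\infty}({}^*G,\W))$, and their interplay under $e$, together with confirming uniformity in $n$ of the contracting constant from relative injectivity — the latter being exactly what guarantees that $\beta_2'$ is genuinely infinitesimal rather than merely bounded.
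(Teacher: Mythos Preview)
Your proposal is correct and follows essentially the same two-step approach as the paper: first apply \cref{VG} pointwise to push the image into $(\L^{\infty}_b({}^*G,\W)^{\sim {}^*\Gamma})^{{}^*G}$, then apply \cref{correction} (with $Q$ in place of $G$ and $S=Q$) to upgrade asymptotic ${}^*Q$-equivariance to true ${}^*Q$-invariance. The paper's proof is more terse, simply writing ``the conclusion then follows from applying \cref{correction} to $f'$,'' whereas you make the identification via $e$ explicit and verify the side conditions (closure of $\W_b^{\sim {}^*\Gamma}$ under infinitesimal perturbation, uniformity of the contracting constant); these details are exactly what the paper's one-line appeal to \cref{correction} implicitly uses.
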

	\begin{proof}
		Since $Im(f) \subseteq (\L^{\infty}_b({}^*G,\W)^{\sim {}^*\Gamma})^{\sim {}^*G}$, \cref{VG} gives us $f' \in \L^{\infty}_b\left( ({}^*Q)^m, \L^{\infty}({}^*G,\W)\right)^{\sim {}^*Q}$ with $f-f' \in \L^{\infty}_{inf}\left( ({}^*Q)^m, \L^{\infty}({}^*G,\W)\right)$ and $Im(f') \subseteq (\L^{\infty}_b({}^*G,\W)^{\sim {}^*\Gamma})^{{}^*G} = \W_b^{\sim {}^*\Gamma}$. The conclusion then follows from applying \cref{correction} to $f'$. 
	\end{proof}
	Recall \cref{correction} where an element in $\L^{\infty}_b(({}^*S)^m,\W)^{\sim {}^*G}$ was shown to be corrected to get a truly ${}^*G$-fixed element in $\L^{\infty}_b(({}^*S)^m,\W)^{\sim {}^*G}$. We shall now see a similar result with coefficients being $\L^{\infty}({}^*G,\W)$ instead, which we shall use in \S\ref{ssec-shapiro}. For $m \geq 0$, consider the internal space 
	$$\L^{\infty}\left(({}^*G)^m,\L^{\infty}({}^*G,\W) \right)$$
	The internal action of ${}^*G$ on $\L^{\infty}({}^*G,\W)$ can be extended to the following internal action of ${}^*G$ on this space in the natural way: for $g,h \in {}^*G$, $F \in \L^{\infty}\left(({}^*G)^m,\L^{\infty}({}^*G,\W) \right)$,
	$$(g\cdot F)(g_1,g_2,\dots,g_m)(x)=F(g^{-1}g_1,\dots,g^{-1}g_m)(xg)$$
	For convenience, let us denote by
	$$\L^{\infty}\left( ({}^*G)^m, \L_b^{\infty}({}^*G,\W)^{\sim {}^*\Gamma} \right)$$
	the space of internal maps in $\L^{\infty}\left( ({}^*G)^m, \L_{in}^{\infty}({}^*G,\W) \right)$  whose image is contained in $\L_b^{\infty}({}^*G,\W)^{\sim {}^*\Gamma}$. From \cref{inducG}, this space is invariant under the internal action of ${}^*G$, so denote by
	$$\L^{\infty}\left( ({}^*G)^m, \L_b^{\infty}({}^*G,\W)^{\sim {}^*\Gamma} \right)^{{}^*G}$$
	the subspace of ${}^*G$-equivariant maps, and by 
	$$\L^{\infty}\left( ({}^*G)^m, \L_b^{\infty}({}^*G,\W)^{\sim {}^*\Gamma} \right)^{\sim {}^*G}$$
	the subspace of asymptotically ${}^*G$-equivariant maps. That is, $f \in \L^{\infty}\left( ({}^*G)^m, \L_b^{\infty}({}^*G,\W)^{\sim {}^*\Gamma} \right)^{\sim {}^*G}$ if for every $g \in {}^*G$, $g \cdot f - f \in \L_{inf}^{\infty}\left( ({}^*G)^m, \L^{\infty}({}^*G,\W) \right)$. \\
	Note that 
	$$\L^{\infty}\left( ({}^*G)^m, \L_b^{\infty}({}^*G,\W)^{\sim {}^*\Gamma} \right)^{{}^*G} + \L_{inf}^{\infty}\left( ({}^*G)^m, \L^{\infty}({}^*G,\W) \right) \subseteq \L^{\infty}\left( ({}^*G)^m, \L_b^{\infty}({}^*G,\W)^{\sim {}^*\Gamma} \right)^{\sim {}^*G}$$
	In other words, a perturbation of a ${}^*G$-equivariant function by an infinitesimal function is clearly an asymptotically ${}^*G$-equivariant function. We now show that the converse is true. That is, any asymptotically ${}^*G$-equivariant map is infinitesimally close to a ${}^*G$-equivariant map.\\
	\begin{proposition}\label{almostGequi}
		For $f \in \L^{\infty}\left( ({}^*G)^m, \L_b^{\infty}({}^*G,\W)^{\sim {}^*\Gamma} \right)^{\sim {}^*G}$, there exists $\beta \in \L_{inf}^{\infty}\left( ({}^*G)^m, \L^{\infty}({}^*G,\W) \right)$ such that $f-\beta \in \L^{\infty}\left( ({}^*G)^m, \L_b^{\infty}({}^*G,\W)^{\sim {}^*\Gamma} \right)^{{}^*G}$ (that is, $f-\beta$ is (truly) ${}^*G$-equivariant). The map $f \mapsto \beta$ is induced from an internal map from $\L^{\infty}\left( ({}^*G)^m, \L^{\infty}({}^*G,\W) \right)$ to itself.
	\end{proposition}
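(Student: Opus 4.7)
The plan is to adapt the proof of Lemma \ref{correction} to coefficients in $\L^{\infty}({}^*G,\W)$, and then verify that the infinitesimal correction preserves the asymptotic ${}^*\Gamma$-equivariance of the image.

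First, I would define the internal map $\alpha\colon {}^*G \to \L^{\infty}\bigl(({}^*G)^m, \L^{\infty}({}^*G,\W)\bigr)$ by $\alpha(g) := g\cdot f - f$. Since $f$ is asymptotically ${}^*G$-equivariant, $\|\alpha\|$ is infinitesimal; and by associativity of the internal ${}^*G$-action on the coefficient space, $\alpha$ satisfies the inhomogeneous $1$-cocycle identity $\alpha(gh) = g\cdot\alpha(h) + \alpha(g)$. Writing $\alpha = \{\alpha_n\}_{\U}$ with each $\alpha_n\colon G \to L^{\infty}(G^m, L^{\infty}(G, \mathfrak{u}(k_n)))$ a bounded measurable $1$-cocycle for the $G$-action $(g\cdot F)(g_1,\dots,g_m)(x) = F(g^{-1}g_1,\dots,g^{-1}g_m)(xg)$, I would invoke relative injectivity of this coefficient module as a Banach $G$-module (following \cite{bookMonod}): the action is a standard one on $L^{\infty}$ of a $G$-space with values in a Banach coefficient, hence relatively injective with a contracting homotopy whose operator norm does not depend on the coefficient. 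This yields $\beta_n$ with $\alpha_n(g) = g\cdot\beta_n - \beta_n$ and $\|\beta_n\| \leq C\|\alpha_n\|$ for a universal constant $C$. Setting $\beta = \{\beta_n\}_{\U}$, we obtain $\beta \in \L^{\infty}_{inf}\bigl(({}^*G)^m, \L^{\infty}({}^*G,\W)\bigr)$, and $g\cdot(f-\beta) = f - \beta$ for every $g \in {}^*G$, i.e., $f-\beta$ is (truly) ${}^*G$-equivariant. The correspondence $f \mapsto \beta$ is induced from an internal map since the contracting-homotopy construction is uniform in $n$.

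To finish, I would verify that $(f-\beta)(y) \in \L^{\infty}_b({}^*G,\W)^{\sim {}^*\Gamma}$ for almost every $y \in ({}^*G)^m$. Boundedness is immediate since $f(y)$ is bounded and $\|\beta(y)\|_{\L^{\infty}({}^*G,\W)}$ is infinitesimal for a.e.\ $y$. For the asymptotic ${}^*\Gamma$-equivariance at almost every $x \in {}^*G$ and each $\gamma \in {}^*\Gamma$, the quantity $(f-\beta)(y)(\gamma x) - \pi_{\Gamma}(\gamma)(f-\beta)(y)(x)$ splits as the difference $[f(y)(\gamma x) - \pi_{\Gamma}(\gamma) f(y)(x)] - [\beta(y)(\gamma x) - \pi_{\Gamma}(\gamma)\beta(y)(x)]$; the first bracket lies in $\W_{inf}$ by hypothesis on $f$, and the second is bounded in norm by $2\|\beta(y)\|$, which is itself infinitesimal for a.e.\ $y$.

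I expect the only subtlety to be the uniform-in-$n$ estimate $\|\beta_n\| \leq C\|\alpha_n\|$, i.e., confirming that the contracting homotopy supplied by relative injectivity of $L^{\infty}(G^m, L^{\infty}(G, \mathfrak{u}(k_n)))$ has operator norm independent of $n$. This is the same point already absorbed in Lemma \ref{correction} and follows because Monod's contracting homotopy on $L^{\infty}(G, V)$ is built using transfer machinery that does not depend on the Banach coefficient $V$; one takes $V = L^{\infty}(G^{m-1}, L^{\infty}(G, \mathfrak{u}(k_n)))$ (or iterates over the $m$ outer variables) to handle the case at hand.
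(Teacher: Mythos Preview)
Your proposal is correct and follows essentially the same approach as the paper: define the $1$-cocycle $\alpha(g)=g\cdot f-f$, pass to components $\alpha_n$, and use relative injectivity of $L^{\infty}_{w*}(G^m, L^{\infty}_{w*}(G,\mathfrak{u}(k_n)))$ as a Banach $G$-module to produce $\beta_n$ with a uniform constant. Your explicit verification that $f-\beta$ still takes values in $\L^{\infty}_b({}^*G,\W)^{\sim {}^*\Gamma}$ (via the splitting into the $f$-bracket and the infinitesimal $\beta$-bracket) is a point the paper leaves implicit but which is indeed needed for the stated conclusion.
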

	\begin{proof}
		The argument is exactly as in the proof of \cref{correction}, except that now we use the fact that $L^{\infty}_{w*}(G^m, L^{\infty}_{w*}(G,\mathfrak{u}(k_n)))$ is relatively injective as a $G$-module. 
	\end{proof}
	In conclusion,
	\begin{equation*}
	\L^{\infty}\left( ({}^*G)^m, \L_b^{\infty}({}^*G,\W)^{\sim {}^*\Gamma} \right)^{{}^*G} + \L_{inf}^{\infty}\left( ({}^*G)^m, \L^{\infty}({}^*G,\W) \right) = \L^{\infty}\left( ({}^*G)^m, \L_b^{\infty}({}^*G,\W)^{\sim {}^*\Gamma} \right)^{\sim {}^*G}
	\end{equation*}

	\subsection{$\L^{\infty}({}^*D,\W)$ and the Eckmann-Shapiro Induction}\label{ssec-shapiro}
	Consider the space $\L_b^{\infty}({}^*G,\W)^{\sim {}^*\Gamma}$. We shall now show that, upto infinitesimals, this space can be identified with the internal Banach space $\V \coloneq \L_b^{\infty}({}^*D,\W)$ where $D$ is a Borelian left fundamental domain of $\Gamma$ in $G$. This will then be used to serve as the coefficients to define the asymptotic cohomology $\Ha^{\bullet}(G,\V)$ of $G$.\\
	Consider the internal map
	$$\theta: \L^{\infty}({}^*G,\W) \to \L^{\infty}({}^*D,\W)$$
	given by restriction of a function to ${}^*D$. That is, for $f=\{f_n\}_{\U} \in \L^{\infty}({}^*G,\W)$,
	\begin{equation}\label{theta}
	\theta f= \{f_n \vert_D\}_{\U}
	\end{equation}
	In the other direction, consider the internal map
	$$\zeta: \L^{\infty}({}^*D,\W) \to \L^{\infty}({}^*G,\W)$$
	\begin{equation}\label{zeta}
	\zeta f(g) \coloneqq \pi_{\Gamma}(\gamma)f(z)
	\end{equation}
	where $g=\gamma z$ for $\gamma \in {}^*\Gamma$ and $z \in D$. Observe that $\theta \cdot \zeta$ is the identity on $\L^{\infty}({}^*D,\W)$. As for $\zeta \cdot \theta$,
	\begin{lemma}
		For any $f \in \L_b^{\infty}({}^*G,\W)^{\sim {}^*\Gamma}$, $(\zeta \cdot \theta)(f) - f \in \L_{inf}^{\infty}({}^*G,\W)$.
	\end{lemma}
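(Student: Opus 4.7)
The plan is to unpack the definitions via the fundamental-domain decomposition, express $(\zeta \cdot \theta)(f) - f$ pointwise in terms of the asymptotic ${}^{*}\Gamma$-equivariance defect of $f$, and then invoke an internal/overflow argument to convert pointwise infinitesimality into uniform infinitesimality.

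First I would fix $f = \{f_n\}_{\U} \in \L_b^{\infty}({}^{*}G,\W)^{\sim {}^{*}\Gamma}$ and use transfer of the fundamental-domain property $G = \bigsqcup_{\gamma \in \Gamma} \gamma D$ to obtain, for (almost) every $g \in {}^{*}G$, a unique decomposition $g = \gamma(g)\, z(g)$ with $\gamma(g) \in {}^{*}\Gamma$ and $z(g) \in {}^{*}D$. Unwinding \cref{theta} and \cref{zeta} then gives
\[
(\zeta \cdot \theta)(f)(g) - f(g) \;=\; \pi_{\Gamma}(\gamma(g))\, f(z(g)) - f(g) \;=\; \pi_{\Gamma}(\gamma(g))\, f(\gamma(g)^{-1} g) - f(g) \;=\; \bigl(\pi'_{\Gamma}(\gamma(g))\, f - f\bigr)(g).
\]
Since $\gamma(g)$ is constant on each piece $\gamma \cdot {}^{*}D$ of the disjoint decomposition ${}^{*}G = \bigsqcup_{\gamma \in {}^{*}\Gamma} \gamma \cdot {}^{*}D$, taking essential suprema yields
\[
\|(\zeta \cdot \theta)(f) - f\|_{\infty} \;=\; \sup_{\gamma \in {}^{*}\Gamma} \operatorname*{ess\,sup}_{g \in \gamma \cdot {}^{*}D} \bigl\|(\pi'_{\Gamma}(\gamma)\, f - f)(g)\bigr\| \;\le\; \sup_{\gamma \in {}^{*}\Gamma} \|\pi'_{\Gamma}(\gamma)\, f - f\|_{\infty},
\]
and the defining property of $\L_b^{\infty}({}^{*}G,\W)^{\sim {}^{*}\Gamma}$ places each term on the right in ${}^{*}\R_{inf}$.

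The final step is to promote this pointwise data to an infinitesimal supremum. The map $\phi \colon \gamma \mapsto \|\pi'_{\Gamma}(\gamma) f - f\|_{\infty}$ is an internal ${}^{*}\R$-valued function on ${}^{*}\Gamma$, bounded above by $2\|f\|$. I would prove the auxiliary lemma that any pointwise-infinitesimal internal function $\phi \colon {}^{*}\Gamma \to {}^{*}\R_{\ge 0}$ has infinitesimal supremum, by contradiction: if some standard $\epsilon > 0$ and a set $T \in \U$ satisfy $\sup \phi_n > \epsilon$ for all $n \in T$, one can pick $\gamma_n \in \Gamma$ with $\phi_n(\gamma_n) > \epsilon/2$ for $n \in T$ (and arbitrary otherwise), and the resulting $\gamma = \{\gamma_n\}_{\U} \in {}^{*}\Gamma$ gives $\phi(\gamma) \notin {}^{*}\R_{inf}$, contradicting the hypothesis.

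I expect the pointwise-to-uniform step to be the only genuinely non-trivial part: over the large (non-standard) indexing set ${}^{*}\Gamma$ such a conversion fails in general, and it is only the internal nature of the supremand together with the overflow principle that makes it work. Everything else is routine unwinding of \cref{theta}, \cref{zeta}, and the definition of asymptotic ${}^{*}\Gamma$-equivariance.
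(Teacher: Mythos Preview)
Your argument is correct and follows the same route as the paper: compute $(\zeta\cdot\theta)(f)(g)-f(g)=\pi_{\Gamma}(\gamma)f(z)-f(\gamma z)$ for $g=\gamma z$ and invoke asymptotic ${}^*\Gamma$-equivariance. The paper's proof is a single sentence recording exactly this pointwise identity and stops there; you have additionally made explicit the passage from ``$\pi'_\Gamma(\gamma)f-f\in\L^{\infty}_{inf}$ for each $\gamma\in{}^*\Gamma$'' to ``$\sup_{\gamma}\|\pi'_\Gamma(\gamma)f-f\|_\infty\in{}^*\R_{inf}$'' via the internality of $\gamma\mapsto\|\pi'_\Gamma(\gamma)f-f\|_\infty$ and a witness-selection (overflow) argument. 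This step is genuinely needed and is only implicit in the paper, so your write-up is in fact more complete.
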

	\begin{proof}
		Since $f \in \L_b^{\infty}({}^*G,\W)^{\sim {}^*\Gamma}$, we note that for $\gamma \in {}^*\Gamma$ and $z \in {}^*D$, $f(\gamma z) - \pi_{\Gamma}(\gamma)f(z) \in \W_{inf}$. 
	\end{proof}
	Furthermore, since $\theta$ maps $\L_{inf}^{\infty}({}^*G,\W)$ to $ \L_{inf}^{\infty}({}^*D,\W)$, and $\zeta$ maps $ \L_{inf}^{\infty}({}^*D,\W)$ to $ \L_{inf}^{\infty}({}^*G,\W)$,
	\begin{lemma}
		The internal maps $\theta$ and $\zeta$ induce bijections
		$$\tilde{\theta}: \L_b^{\infty}({}^*G,\W)^{\sim {}^*\Gamma}/\L_{inf}^{\infty}({}^*G,\W) \to \tilde{\L}^{\infty}({}^*D,\W)$$
		$$\tilde{\zeta}:\tilde{\L}^{\infty}({}^*D,\W) \to \L_b^{\infty}({}^*G,\W)^{\sim {}^*\Gamma}/\L_{inf}^{\infty}({}^*G,\W)$$
		with $\tilde{\zeta}=\tilde{\theta}^{-1}$. 
	\end{lemma}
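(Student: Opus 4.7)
The plan is to verify that $\tilde{\theta}$ and $\tilde{\zeta}$ are well-defined on the indicated quotient spaces and are mutually inverse. This splits into four checks: (i) $\theta$ preserves boundedness and infinitesimality on the source, so it descends; (ii) $\zeta$ maps bounded functions to bounded \emph{asymptotically ${}^*\Gamma$-equivariant} functions and maps infinitesimals to infinitesimals, so it descends; (iii) $\tilde{\theta}\tilde{\zeta}=\mathrm{id}$, which is essentially built into the definition of $\zeta$; and (iv) $\tilde{\zeta}\tilde{\theta}=\mathrm{id}$, which is the content of the lemma immediately preceding the target statement.

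For (i), restriction to ${}^*D$ never increases the essential supremum, so $\theta\bigl(\L_b^{\infty}({}^*G,\W)^{\sim {}^*\Gamma}\bigr)\subseteq \L_b^{\infty}({}^*D,\W)$ and $\theta\bigl(\L_{inf}^{\infty}({}^*G,\W)\bigr)\subseteq \L_{inf}^{\infty}({}^*D,\W)$. For (ii), take $f\in \L^{\infty}({}^*D,\W)$ and decompose $g=\gamma z$ with $\gamma\in{}^*\Gamma$, $z\in{}^*D$. Because $\pi_{\Gamma}(\gamma)$ is internally conjugation by a unitary element of $\prod_{\U}U(k_n)$, it is an internal isometry of $\W$ with respect to the unitarily bi-invariant norm; hence $\|\zeta f(g)\|=\|f(z)\|$ almost everywhere, giving $\|\zeta f\|=\|f\|$ and the two inclusions
\[
\zeta\bigl(\L_b^{\infty}({}^*D,\W)\bigr)\subseteq \L_b^{\infty}({}^*G,\W),\qquad \zeta\bigl(\L_{inf}^{\infty}({}^*D,\W)\bigr)\subseteq \L_{inf}^{\infty}({}^*G,\W).
\]
To see that the image of the bounded subspace lands in the asymptotically ${}^*\Gamma$-equivariant subspace, fix $\gamma'\in{}^*\Gamma$ and note that $\gamma' g=(\gamma'\gamma)z$, so
\[
\zeta f(\gamma' g)-\pi_{\Gamma}(\gamma')\zeta f(g)=\bigl[\pi_{\Gamma}(\gamma'\gamma)-\pi_{\Gamma}(\gamma')\pi_{\Gamma}(\gamma)\bigr]f(z).
\]
This expression lies in $\W_{inf}$ uniformly in almost every $g$, by the defining asymptotic representation property of $\pi_{\Gamma}$ applied to the bounded value $f(z)$. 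Hence $\zeta f\in \L_b^{\infty}({}^*G,\W)^{\sim {}^*\Gamma}$, and $\tilde{\zeta}$ is well-defined on the quotient.

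Points (iii) and (iv) are then immediate: $\theta\zeta=\mathrm{id}$ on $\L^{\infty}({}^*D,\W)$ because any $z\in{}^*D$ decomposes as $z=1\cdot z$ and $\pi_{\Gamma}(1)=\mathrm{id}_{\W}$; passing to quotients gives $\tilde{\theta}\tilde{\zeta}=\mathrm{id}$. In the opposite direction, the lemma stated directly above records that $(\zeta\theta)(f)-f\in \L_{inf}^{\infty}({}^*G,\W)$ for every $f\in \L_b^{\infty}({}^*G,\W)^{\sim {}^*\Gamma}$, so $\tilde{\zeta}\tilde{\theta}=\mathrm{id}$. Combining these yields $\tilde{\zeta}=\tilde{\theta}^{-1}$ and establishes that both maps are bijections. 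The only step calling for any care is (ii), specifically the uniform infinitesimal control in the asymptotic-homomorphism identity for $\pi_{\Gamma}$, but this is guaranteed by the internality of $\pi_{\Gamma}$ and the hypothesis that $f$ is bounded, and poses no genuine obstruction.
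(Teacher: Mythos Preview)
Your proof is correct and follows the same approach as the paper, which assembles the lemma from the observations immediately preceding it: that $\theta\zeta$ is the identity, that $(\zeta\theta)(f)-f$ is infinitesimal for asymptotically ${}^*\Gamma$-equivariant $f$, and that both maps preserve infinitesimals. You have simply made explicit a check the paper leaves implicit, namely that $\zeta$ actually lands in $\L_b^{\infty}({}^*G,\W)^{\sim {}^*\Gamma}$; your computation $\zeta f(\gamma' g)-\pi_{\Gamma}(\gamma')\zeta f(g)=[\pi_{\Gamma}(\gamma'\gamma)-\pi_{\Gamma}(\gamma')\pi_{\Gamma}(\gamma)]f(z)$ is exactly the right verification.
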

	Henceforth we shall restrict $\theta$ to $\L_b^{\infty}({}^*G,\W)^{\sim {}^*\Gamma}$. We shall now define an internal map
	$$\pi_G: {}^*G \times \L^{\infty}({}^*D,\W) \to \L^{\infty}({}^*D,\W)$$
	\begin{equation}
	\pi_G(g)(f)(z) \coloneqq \pi_{\Gamma}(\gamma) f(x)\label{piG}
	\end{equation}
	where $zg = \gamma x$ for $\gamma \in {}^*\Gamma$ and $x \in {}^*D$. Note that ${}^*G \times {}^*D \to {}^*D$ given by $(g,z) \mapsto x$, where $zg=\gamma x$, defines an internal right action of ${}^*G$ on ${}^*D$. So the above map $\pi_G$ in \cref{piG} can be denoted as
	$$\pi_G(g)(f)(z) \coloneqq \pi_{\Gamma}(\gamma) f(zg)$$
	This map $\pi_G$ induces an action of ${}^*G$ on $\tilde{\L}^{\infty}({}^*D,\W)$, which we shall denote $\tilde{\pi_G}$. In particular, note that this gives the internal Banach space $\L^{\infty}({}^*D,\W)$ the structure of an asymptotic Banach ${}^*G$-module, with the asymptotic ${}^*G$-representation being $\pi_G$ as defined above.
	\begin{lemma}
		The maps $\tilde{\theta}$ and $\tilde{\zeta}$ are ${}^*G$-equivariant. 
	\end{lemma}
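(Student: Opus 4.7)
The plan is to reduce the claim to a single direct computation. Since the previous lemma shows that $\tilde{\theta}$ and $\tilde{\zeta}$ are mutually inverse bijections between $\L_b^{\infty}({}^*G,\W)^{\sim{}^*\Gamma}/\L_{inf}^{\infty}({}^*G,\W)$ and $\tilde{\L}^{\infty}({}^*D,\W)$, it suffices to establish ${}^*G$-equivariance for one of them, say $\tilde{\zeta}$; equivariance of $\tilde{\theta}$ then follows automatically.

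To set up the calculation, I would encode the right action of ${}^*G$ on ${}^*D$ implicit in the definition of $\pi_G$ by writing, for $z \in {}^*D$ and $g \in {}^*G$, the unique decomposition $zg = \gamma(z,g)\cdot (z\cdot g)$ with $\gamma(z,g) \in {}^*\Gamma$ and $z\cdot g \in {}^*D$; this gives a cocycle identity $\gamma(z, gh) = \gamma(z,g)\cdot\gamma(z\cdot g, h)$. Now fix $f \in \L^{\infty}_b({}^*D,\W)$, $g \in {}^*G$, and evaluate both sides at an arbitrary $h = \gamma_0 z_0 \in {}^*G$ with $\gamma_0 \in {}^*\Gamma$, $z_0 \in {}^*D$. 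Unpacking the definitions (\ref{zeta}) and (\ref{piG}) yields
\begin{align*}
(g\cdot\zeta(f))(h) &= \zeta(f)(\gamma_0\gamma(z_0,g)\cdot(z_0\cdot g)) = \pi_{\Gamma}\!\bigl(\gamma_0\gamma(z_0,g)\bigr)\,f(z_0\cdot g),\\
\zeta(\pi_G(g)f)(h) &= \pi_{\Gamma}(\gamma_0)\cdot(\pi_G(g)f)(z_0) = \pi_{\Gamma}(\gamma_0)\,\pi_{\Gamma}\!\bigl(\gamma(z_0,g)\bigr)\,f(z_0\cdot g).
\end{align*}
Hence the difference at $h$ is
$$\bigl[\pi_{\Gamma}(\gamma_0)\pi_{\Gamma}(\gamma(z_0,g)) - \pi_{\Gamma}(\gamma_0\gamma(z_0,g))\bigr]\,f(z_0\cdot g),$$
which is precisely the asymptotic-cocycle defect of $\pi_{\Gamma}$ applied to the value $f(z_0\cdot g)$.

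The next step is to show that this defect, evaluated on the bounded map $f$, lies in $\L^{\infty}_{inf}({}^*G,\W)$. Since $\pi_{\Gamma}$ is an asymptotic ${}^*\Gamma$-representation, writing $\pi_{\Gamma}(\gamma) = \mathrm{Ad}\,\psi(\gamma)$ for an internal lift $\psi$ with $\|\psi(\gamma_1)\psi(\gamma_2)\psi(\gamma_1\gamma_2)^{-1}-1\| = O_{\U}(\epsilon)$ (uniformly in $\gamma_1,\gamma_2$), the bi-invariance of the norm and the submultiplicativity estimate $\|uv-vu\|\le 2\|u-1\|\|v\|$ from Lemma \ref{abelian} give a uniform bound
$$\bigl\|\pi_{\Gamma}(\gamma_0)\pi_{\Gamma}(\gamma(z_0,g))v - \pi_{\Gamma}(\gamma_0\gamma(z_0,g))v\bigr\| \le 2\,O_{\U}(\epsilon)\,\|v\|$$
for every $v \in \W$ and every $\gamma_0, \gamma(z_0,g) \in {}^*\Gamma$. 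Applying this with $v = f(z_0\cdot g)$ and taking essential suprema over $h$, and using $\|f\|_{\infty} \in {}^*\R_b$, yields $\|\zeta(\pi_G(g)f) - g\cdot\zeta(f)\|_{\infty} \in {}^*\R_{inf}$, i.e.\ $\zeta(\pi_G(g)f) - g\cdot\zeta(f) \in \L^{\infty}_{inf}({}^*G,\W)$.

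Passing to the quotient by $\L^{\infty}_{inf}({}^*G,\W)$, this gives $\tilde{\zeta}\circ\tilde{\pi_G}(g) = \tilde{(g\cdot)}\circ\tilde{\zeta}$ for every $g \in {}^*G$, which is the claimed ${}^*G$-equivariance of $\tilde{\zeta}$. Applying $\tilde{\theta} = \tilde{\zeta}^{-1}$ on both sides immediately yields the equivariance of $\tilde{\theta}$. There is no real obstacle here beyond carefully tracking the uniformity of the cocycle defect of $\pi_{\Gamma}$; the only point that would deserve attention in a full write-up is justifying the uniform infinitesimal bound on the defect, which is where the submultiplicativity assumption on the norms and the bounded-defect property of the lift $\psi$ enter crucially.
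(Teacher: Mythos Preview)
Your proof is correct and essentially parallels the paper's argument, just from the other end: the paper verifies $\tilde{\theta}$ directly by computing $\theta(g\cdot f)(z)=f(zg)=f(\gamma x)$ versus $\pi_G(g)(\theta f)(z)=\pi_{\Gamma}(\gamma)f(x)$ and invoking the asymptotic ${}^*\Gamma$-equivariance of $f$, then dismisses $\zeta$ with ``a similar argument''. You instead verify $\tilde{\zeta}$ directly and deduce $\tilde{\theta}$ from the inverse relation; your computation for $\zeta$ is exactly what the paper's ``similar argument'' would have to be, since for $\zeta$ the input $f\in\L^{\infty}_b({}^*D,\W)$ carries no ${}^*\Gamma$-equivariance and one is forced to use the uniform asymptotic-cocycle defect of $\pi_{\Gamma}$ itself. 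Your explicit tracking of the uniform bound $\|\pi_{\Gamma}(\gamma_0)\pi_{\Gamma}(\gamma_1)v-\pi_{\Gamma}(\gamma_0\gamma_1)v\|\le 2\,\mathrm{def}(\psi)\,\|v\|$ (via bi-invariance and submultiplicativity) is the right justification and is in fact more careful than the paper's one-line dismissal.
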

	\begin{proof}
		Note that while we have an internal action of ${}^*G$ on $\L^{\infty}({}^*G,\W)$ that is invariant on $\L_b^{\infty}({}^*G,\W)^{\sim {}^*\Gamma}$, the internal map $\pi_G$ is not an action of ${}^*G$ on $\L^{\infty}({}^*D,\W)$. Nevertheless, let $f \in \L_b^{\infty}({}^*G,\W)^{\sim {}^*\Gamma}$ and $g \in {}^*G$. Then $\pi_G(g) (\theta f)(z)=\pi_{\Gamma}(\gamma) (\theta f)(x)=\pi_{\Gamma}(\gamma) f(x)$ where $zg=\gamma x$. Also, $(\theta gf)(z)=f(zg)=f(\gamma x)$. Since $f \in \L_b^{\infty}({}^*G,\W)^{\sim {}^*\Gamma}$, $f(\gamma x) -\pi_{\Gamma}(\gamma)f(x) \in \W_{inf}$. Thus, $\theta gf - \pi_G(g) \theta f \in \L_{inf}^{\infty}({}^*D,\W)$. A similar argument hold for $\zeta$ as well.     
	\end{proof}
	\begin{remark}\label{WbVG}
		Consider the space $\L^{\infty}({}^*D,\W)^{\sim {}^*G}$ of asymptotic ${}^*G$-fixed elements. The restrictions of the maps $\zeta$ and $\theta$ to $\L^{\infty}({}^*D,\W)^{\sim {}^*G}$ and $\left(\L^{\infty}({}^*G,\W)^{\sim {}^*\Gamma}\right)^{\sim {}^*G}$ allows us to identify these two spaces upto infinitesimals. Furthermore, from \cref{VG} and \cref{Wb}, we see that, upto infinitesimals, $\L^{\infty}({}^*D,\W)^{\sim {}^*G}$ can be identified with $\W_b^{\sim {}^*\Gamma}$.
	\end{remark}
	One of the advantages of defining and working with $\L^{\infty}({}^*D,\W)$ (instead of $\L^{\infty}({}^*G,\W)^{\sim {}^*\Gamma}$) is that, not only is it an asymptotic Banach ${}^*G$-module, but is also easily seen to be dual, which we explicitly describe below. Consider the internal Banach space $\L^1({}^*D,\W^{\flat})$ constructed as
	$$\L^1({}^*D,\W^{\flat}) \coloneqq \prod_{\U}L^1\left(D,(\mathfrak{u}(k_n))^{\flat}\right)$$
	where $L^1\left(D,(\mathfrak{u}(k_n))^{\flat}\right)$ is the Bochner-Lebesgue space of Bochner-integrable functions from $D$ to $(\mathfrak{u}(k_n))^{\flat}$. Note that $\Linf(D,\mathfrak{u}(k_n))$ is the dual space of $L^1(D,(\mathfrak{u}(k_n))^{\flat})$, so in particular, an element of $\L^{\infty}({}^*D,\W)$ is an internal linear map from $\L^1({}^*D,\W^{\flat})$ to ${}^*\R$. We now have an explicit dual pairing  can be used to construct the predual asymptotic ${}^*G$-action given $\pi_G$. For $f=\{f_n\}_{\U} \in \L^{\infty}({}^*D,\W)$ and $\eta=\{\eta_n\}_{\U} \in  \L^1({}^*D,\W^{\flat})$, define $\langle f, \eta \rangle$ as
	$$\langle f, \eta \rangle = \left\{ \int_D \langle f_n(x),\eta_n(x) \rangle dx \right\}_{\U}$$
	This defines an internal pairing
	$$\langle,\rangle_{\U} : \L^{\infty}({}^*D,\W) \times \L^1({}^*D,\W^{\flat}) \to {}^*\R$$
	that induces a pairing between the $\R$-spaces $\tilde{\L}^{\infty}({}^*D,\W)$ and $\tilde{\L}^1({}^*D,\W^{\flat})$. The space $\L^1({}^*D,\W^{\flat})$ comes with an internal map 
	$$\pi_G^{\flat}: {}^*G \times \L^1({}^*D,\W^{\flat}) \to \L^1({}^*D,\W^{\flat})$$
	such that:
	\begin{itemize}
		\item The map $\tilde{\pi}_G$ is contragredient to $\tilde{\pi}_G^{\flat}$, that is, for $f \in \L^{\infty}({}^*D,\W)$, $\eta \in \L^{1}({}^*D,\W^{\flat})$ and $g \in {}^*G$, 
		$$\langle \pi_G(g)f, \eta \rangle - \langle f, \pi_G^{\flat}(g)\eta \rangle \in {}^*\R_{inf}$$
		\item The internal map $\pi_G^{\flat}$ induces an action of ${}^*G$, denoted $\tilde{\pi}_G^{\flat}$, on the quotient $\tilde{\L}^1({}^*D,\W^{\flat})$.
	\end{itemize}
	Thus, 
	\begin{proposition}
		The internal Banach space $(\pi_G,\L^{\infty}({}^*D,\W))$ is a dual asymptotic Banach ${}^*G$-module with predual $\L^{1}({}^*D,\W^{\flat})$. 
	\end{proposition}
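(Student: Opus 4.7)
The plan is to define $\pi_G^{\flat}$ componentwise via formal adjoints, take the internal ultraproduct, and verify that the resulting map descends to a genuine ${}^*G$-action on the ultralimit.

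At each component $n$, the map $\pi_{G,n}(g)$ is an isometry of $L^{\infty}(D,\mathfrak{u}(k_n))$: $\pi_{\Gamma,n}(\gamma)v=\psi_n(\gamma)v\psi_n(\gamma)^{-1}$ is isometric by unitary invariance of the matrix norm, and the right $G$-action on $D$ preserves Haar measure (as $G$ is unimodular, since it admits a lattice). Let $\pi_{G,n}^{\flat}(g)$ denote the formal Banach-space adjoint of $\pi_{G,n}(g)$ on the predual $L^1(D,\mathfrak{u}(k_n)^{\flat})$; a direct change-of-variables calculation gives the explicit formula
\begin{equation*}
\pi_{G,n}^{\flat}(g)(\eta)(z) \;=\; \pi_{\Gamma,n}^{\flat}\bigl(\gamma(g,z\cdot g^{-1})\bigr)\,\eta(z\cdot g^{-1}),
\end{equation*}
where $\gamma(h,w)\in\Gamma$ is the cocycle defined by $wh=\gamma(h,w)\cdot(w\cdot h)$ with $w\cdot h\in D$. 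Setting $\pi_G^{\flat}\coloneq\{\pi_{G,n}^{\flat}\}_{\U}$ produces an internal isometry. Since at each component the identity $\langle \pi_{G,n}(g)f,\eta\rangle=\langle f,\pi_{G,n}^{\flat}(g)\eta\rangle$ holds exactly for all admissible $f,\eta$, ultraproducting yields the internal equality $\langle \pi_G(g)f,\eta\rangle_{\U}=\langle f,\pi_G^{\flat}(g)\eta\rangle_{\U}$ in ${}^*\R$, which is strictly stronger than the infinitesimal contragredient relation claimed in the proposition.

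The substantive step is verifying that $\pi_G^{\flat}$ descends to a genuine ${}^*G$-action on $\tilde{\L}^1({}^*D,\W^{\flat})$. The exact cocycle identity $\gamma(h_1h_2,w)=\gamma(h_1,w)\cdot\gamma(h_2,w\cdot h_1)$ reduces the compositional discrepancy of $\pi_G^{\flat}$ applied to any $\eta$ to a pointwise expression of the form
\begin{equation*}
\bigl[\pi_{\Gamma}^{\flat}(\beta_1\beta_2)-\pi_{\Gamma}^{\flat}(\beta_1)\pi_{\Gamma}^{\flat}(\beta_2)\bigr]\eta(\cdot),
\end{equation*}
for suitable $\beta_1,\beta_2\in{}^*\Gamma$ depending on the group elements and the variable. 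Since $\pi_\Gamma(\gamma)v=\psi(\gamma)v\psi(\gamma)^{-1}$, a short commutator computation using a submultiplicative estimate (as in \cref{abelian}) yields the operator-norm bound $\|\pi_\Gamma(\gamma_1\gamma_2)-\pi_\Gamma(\gamma_1)\pi_\Gamma(\gamma_2)\|_{op}\leq 2\,def(\psi)$ on $\W$, \emph{uniformly} in $\gamma_1,\gamma_2\in{}^*\Gamma$, and this is internally infinitesimal because $def(\psi)\in{}^*\R_{inf}$. Dualizing, the bracketed operator has uniformly infinitesimal operator norm on $\W^{\flat}$; applied pointwise to $\eta$ and integrated against the finite internal Haar measure on ${}^*D$ (finiteness comes from the lattice hypothesis on $\Gamma$), this yields an element of $\L^1_{inf}({}^*D,\W^{\flat})$. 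Hence $\tilde{\pi}_G^{\flat}$ is a genuine action of ${}^*G$, making $(\pi_G^{\flat},\L^1({}^*D,\W^{\flat}))$ an asymptotic Banach ${}^*G$-module whose dual is $(\pi_G,\L^{\infty}({}^*D,\W))$, as required. The principal subtlety is obtaining \emph{uniform} asymptotic multiplicativity of $\pi_\Gamma^{\flat}$ --- an operator-norm statement rather than a merely pointwise one on bounded sets --- which is essential for converting the pointwise infinitesimality into an $L^1$-infinitesimality after integration over ${}^*D$.
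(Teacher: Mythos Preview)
Your proof is correct and follows the approach the paper implicitly intends: the paper itself does not supply a proof but simply asserts the existence of $\pi_G^{\flat}$ satisfying the two bullet points and concludes the proposition, so you have filled in exactly the details left to the reader. One minor remark: the finiteness of the Haar measure on $D$ is not actually needed for your $L^1$-estimate (the bound $\|[\pi_\Gamma^{\flat}(\beta_1\beta_2)-\pi_\Gamma^{\flat}(\beta_1)\pi_\Gamma^{\flat}(\beta_2)]\eta(\cdot)\|_{L^1}\leq 2\,def(\psi)\,\|\eta\|_{L^1}$ holds by the uniform operator-norm estimate and measure-preservation alone), so that parenthetical can be dropped.
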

	We now have the dual asymptotic Banach ${}^*G$-module $\V=\L^{\infty}({}^*D,\W)$ and an internal map 
	$$\pi_G: {}^*G \times \V \to \V$$
	that induces an action of ${}^*G$ on $\tilde{\V}$. This allows us to define the asymptotic cohomology of $G$ with coefficients in $\V$ as in \S\ref{ssec-basicG}, as the cohomology of the complex
	$$\begin{tikzcd}
	0 \arrow[r] & \tilde{\V}^{{}^*G}  \arrow[r, "\tilde{d}^{-1}"]&  \tilde{\L}^{\infty}({}^*G,\V)^{{}^*G} \arrow[r, "\tilde{d}^0"] & \tilde{\L}^{\infty}(({}^*G)^2,\V)^{{}^*G}  \arrow[r, "\tilde{d}^1"]  & \dots
	\end{tikzcd}$$ 
	\begin{theorem}\label{induction}
		For every $m \geq 0$, $\Ha^m(G,\V) \cong \Ha^m(\Gamma,\W)$.
	\end{theorem}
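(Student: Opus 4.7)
The plan is to construct an explicit Shapiro-type isomorphism between the complex computing $\Ha^m(G,\V)$ of \cref{asympt} and the complex computing $\Ha^m(\Gamma,\W)$ provided by \cref{corr-GammaG}. Concretely, I will compare the two cochain complexes
\[
\tilde{\L}^{\infty}(({}^*G)^{m+1},\V)^{{}^*G} \quad \text{and} \quad \tilde{\L}^{\infty}(({}^*G)^{m+1},\W)^{{}^*\Gamma}
\]
under the homogeneous differential $\tilde d^{\bullet}$, and check that they induce isomorphic cohomology in every degree.

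First, I would promote the internal maps $\theta$ and $\zeta$ of \cref{theta,zeta} to internal morphisms $\Theta, Z$ on $\L^{\infty}(({}^*G)^{m+1},-)$ by applying them pointwise in the last coordinate. By the lemmas of \S\ref{ssec-trueAction}, these pointwise lifts are mutually inverse modulo infinitesimals once we restrict to cochains whose essential image lies in $\L^{\infty}_b({}^*G,\W)^{\sim {}^*\Gamma}$, and they commute with the simplicial differentials. Via $\Theta, Z$, the complex on the $\V$-side becomes identified, up to infinitesimals, with the subcomplex of asymptotically ${}^*G$-equivariant internal cochains $({}^*G)^{m+1} \to \L^{\infty}_b({}^*G,\W)^{\sim {}^*\Gamma}$.

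Next, I would apply \cref{almostGequi,almostGequi2} to replace asymptotic ${}^*G$-equivariance by genuine ${}^*G$-equivariance at the cost of an infinitesimal error, and then define the Shapiro correspondence directly. For an internal cochain $\phi:({}^*G)^{m+1}\to\W$, set
\[
(S\phi)(g_0,\dots,g_m)(x) \coloneqq \phi(xg_0,\dots,xg_m),
\]
and, conversely, for a truly ${}^*G$-equivariant $F:({}^*G)^{m+1}\to \L^{\infty}({}^*G,\W)$, set $(TF)(g_0,\dots,g_m) \coloneqq F(g_0,\dots,g_m)(1_G)$. A direct computation parallel to the classical Shapiro lemma shows that $S$ and $T$ are mutually inverse, that they intertwine the homogeneous differentials on the nose, and that the asymptotic ${}^*\Gamma$-equivariance of $\phi$ translates exactly into the property that $S\phi$ takes values (modulo infinitesimals) in $\L^{\infty}_b({}^*G,\W)^{\sim {}^*\Gamma}$ while $S\phi$ itself is ${}^*G$-equivariant. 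Composing with $\Theta$ and $Z$ yields a pair of cochain maps between the two complexes inducing mutually inverse isomorphisms on cohomology.

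The step I expect to be most delicate is the compatibility of the straightening provided by \cref{almostGequi,almostGequi2} with the simplicial structure: to get an honest cochain map at the level of quotients by infinitesimals, the correction $f \mapsto f - \beta$ must commute with the coboundary modulo infinitesimals across all degrees simultaneously. This is exactly what is afforded by the fact that $\beta$ is constructed via a single internal operator built from the relative injectivity of the modules $L^{\infty}_{w^*}(G^m,L^{\infty}_{w^*}(G,\mathfrak{u}(k_n)))$, with a norm bound on $\beta$ uniform in $n$; consequently $\beta$ is compatible with the simplicial differential up to infinitesimals. Once this is in place, the remaining verifications reduce to the classical Eckmann--Shapiro calculation transplanted to the Banach complexes $\tilde{\L}^{\infty}(({}^*G)^{\bullet},\V)^{{}^*G}$ and $\tilde{\L}^{\infty}(({}^*G)^{\bullet},\W)^{{}^*\Gamma}$, yielding the desired isomorphism $\Ha^m(G,\V)\cong \Ha^m(\Gamma,\W)$ for every $m\ge 0$.
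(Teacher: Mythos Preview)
Your approach is essentially identical to the paper's: the paper defines exactly your map $S$ (written there as $\alpha \mapsto A_\alpha$ with $A_\alpha(g_0,\dots,g_m)(x)=\alpha(xg_0,\dots,xg_m)$), composes with $\theta$ and $\zeta$, and invokes \cref{almostGequi} to upgrade asymptotic ${}^*G$-equivariance to true ${}^*G$-equivariance before descending. The one point to fix is your inverse $TF(g_0,\dots,g_m):=F(g_0,\dots,g_m)(1_G)$, which is not literally well-defined since $F(g_0,\dots,g_m)$ is only an $L^\infty$-class; the paper instead takes $\alpha_A(g_0,\dots,g_m)$ to be the essential value of the \emph{essentially constant} map $x\mapsto A(x^{-1}g_0,\dots,x^{-1}g_m)(x)$, which is the rigorous substitute for evaluation at $1_G$ once true ${}^*G$-equivariance is in hand.
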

	The first step towards proving \cref{induction} involves a bijection between ${}^*\Gamma$-invariants in $\tilde{\L}^{\infty}(({}^*G)^m,\W)$, and ${}^*G$-invariants in $\tilde{\L}^{\infty}\left(({}^*G)^m,\V \right)$. After that, we shall use the internal maps $\theta: \L^{\infty}({}^*G,\W) \to \L^{\infty}({}^*D,\W)$ and $\zeta: \L^{\infty}({}^*D,\W) \to \L^{\infty}({}^*G,\W)$ (defined in (\cref{theta}) and (\cref{zeta})) to pass between $\L_{b}^{\infty}({}^*G,\W)^{\sim {}^*\Gamma}$ and $\V$.\\
	Let $\tilde{\alpha} \in \tilde{\L}^{\infty}(({}^*G)^m,\W)^{{}^*\Gamma}$, and let $\alpha \in \L^{\infty}(({}^*G)^m,\W)$ be an internal map that induces $\tilde{\alpha}$ (note that $\alpha \in \L^{\infty}_{b}(({}^*G)^m,\W)^{\sim {}^*\Gamma}$). Define the internal map
	$$A_{\alpha}: ({}^*G)^m \to \L^{\infty}({}^*G,\W)$$
	$$A_{\alpha}(g_1,\dots,g_m) (x) \coloneq \alpha(xg_1,\dots,xg_m)$$
	Firstly it is clear that $A_{\alpha} \in \L^{\infty}\left(({}^*G)^m,\L^{\infty}({}^*G,\W) \right)$. Furthermore,
	\begin{proposition}
		For every $\tilde{\alpha} \in \tilde{\L}^{\infty}(({}^*G)^m,\W)^{{}^*\Gamma}$, the map $A_{\alpha} \in \L^{\infty}\left(({}^*G)^m,\L^{\infty}({}^*G,\W) \right)$ takes values in $\L^{\infty}_{b}({}^*G,\W)^{\sim {}^*\Gamma}$ and is ${}^*G$-equivariant. 
	\end{proposition}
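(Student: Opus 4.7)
The plan is to verify, in turn, three independent assertions implicit in the statement: that $A_\alpha$ is internal and internally bounded (hence genuinely lies in $\L^\infty(({}^*G)^m, \L^\infty({}^*G,\W))$), that it is ${}^*G$-equivariant, and that each value $A_\alpha(g_1,\dots,g_m)$ is asymptotically ${}^*\Gamma$-equivariant.  The first two checks will be formal; the substantive content is in the third.

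Internality is immediate: writing $\alpha = \{\alpha_n\}_{\U}$ and $g_i = \{g_i^{(n)}\}_{\U}$, the map $A_\alpha$ is the ultraproduct of the standard maps $(g_1^{(n)},\dots,g_m^{(n)}) \mapsto \bigl(x \mapsto \alpha_n(xg_1^{(n)},\dots,xg_m^{(n)})\bigr)$, and $\|A_\alpha(g_1,\dots,g_m)\|_\infty \leq \|\alpha\|_\infty$ holds internally since we are only reparametrising $\alpha$ along an orbit while taking essential suprema. The ${}^*G$-equivariance (with respect to the action $(h\cdot F)(g_1,\dots,g_m)(x)= F(h^{-1}g_1,\dots,h^{-1}g_m)(xh)$) is a direct algebraic identity: for $h \in {}^*G$,
\begin{equation*}
(h\cdot A_\alpha)(g_1,\dots,g_m)(x) = A_\alpha(h^{-1}g_1,\dots,h^{-1}g_m)(xh) = \alpha\bigl((xh)(h^{-1}g_1),\dots,(xh)(h^{-1}g_m)\bigr) = A_\alpha(g_1,\dots,g_m)(x).
\end{equation*}

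For the asymptotic ${}^*\Gamma$-equivariance of the values, I would proceed by a measure-preserving change of variables combined with an internal Fubini argument. For each $\gamma \in {}^*\Gamma$, the hypothesis that $\tilde\alpha$ is fixed by $\tau^m(\gamma)$ gives that
\begin{equation*}
F_\gamma(h_1,\dots,h_m) := \pi_\Gamma(\gamma)\alpha(\gamma^{-1}h_1,\dots,\gamma^{-1}h_m) - \alpha(h_1,\dots,h_m)
\end{equation*}
lies in $\L_{inf}^\infty(({}^*G)^m,\W)$, i.e.\ has infinitesimal internal essential supremum.  Consider the internal bijection $\Phi(x,g_1,\dots,g_m) = (x,xg_1,\dots,xg_m)$ of $({}^*G)^{m+1}$, which preserves Haar-null sets.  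Since $F_\gamma\circ\Phi$ is independent of $x$, its internal $\L^\infty$-norm over $({}^*G)^{m+1}$ coincides with $\|F_\gamma\|_\infty$ and is therefore infinitesimal. By internal Fubini, for almost every $(g_1,\dots,g_m)$ the essential supremum over $x$ of
\begin{equation*}
\pi_\Gamma(\gamma)A_\alpha(g_1,\dots,g_m)(\gamma^{-1}x) - A_\alpha(g_1,\dots,g_m)(x) = F_\gamma(xg_1,\dots,xg_m)
\end{equation*}
is again infinitesimal, which is exactly asymptotic $\gamma$-equivariance of $A_\alpha(g_1,\dots,g_m)$. The main obstacle is a quantifier swap: the above argument yields, for each fixed $\gamma$, a full-measure set of good $(g_1,\dots,g_m)$, whereas one needs a single full-measure set valid for every $\gamma \in {}^*\Gamma$ simultaneously. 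I would resolve this using the representative characterisation from the claim at the beginning of \S\ref{ssec-coho}: the image of an internal $\L^\infty$-map lies in an external subspace of the form $\mathcal V_b^{\sim H}$ iff some internal representative has actual range in that subspace. Concretely, one chooses (internally) a representative of $\alpha$ whose range sits in a set of elements of $\L^\infty_{w*}(({}^*G)^m,\W)$ with uniformly infinitesimal ${}^*\Gamma$-defect over all $\gamma$; the induced representative of $A_\alpha$ then has actual range contained in $\L^\infty_b({}^*G,\W)^{\sim{}^*\Gamma}$, uniformly in $\gamma$, completing the proof.
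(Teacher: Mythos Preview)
Your proof is correct and follows the same direct verification as the paper, which simply writes $A_\alpha(g_1,\dots,g_m)(\gamma x)=\alpha(\gamma x g_1,\dots,\gamma x g_m)$ and invokes the asymptotic ${}^*\Gamma$-equivariance of $\alpha$ without explicitly addressing either the Fubini step or the quantifier swap you isolate. Your extra care is warranted; one remark on the resolution of the swap: it comes out more directly than through the essential-image claim. Since the \emph{standard} group $\Gamma$ is countable, at each level $n$ the union over $\gamma_n\in\Gamma$ of the exceptional null sets in $G^m$ is still null, so the pointwise supremum $\sup_{\gamma_n\in\Gamma}\|\pi'_\Gamma(\gamma_n)(A_\alpha)_n(\cdot)-(A_\alpha)_n(\cdot)\|_\infty$ is bounded a.e.\ by $\delta_n:=\sup_{\gamma_n\in\Gamma}\|\tau^m(\gamma_n)\alpha_n-\alpha_n\|_\infty$. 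The hyperreal $\{\delta_n\}_\U$ is infinitesimal (else a near-maximising choice of $\gamma_n$'s would assemble into some $\gamma\in{}^*\Gamma$ with non-infinitesimal defect, contradicting the hypothesis on $\alpha$), and this single infinitesimal bound handles all $\gamma\in{}^*\Gamma$ at once. Your phrasing ``a representative of $\alpha$ whose range sits in \dots $\L^\infty_{w*}(({}^*G)^m,\W)$'' is slightly garbled (the range of $\alpha$ is in $\W$), but the intent is the same argument.
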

	\begin{proof}
		For $g,x \in {}^*G$, 
		$$A_{\alpha}(gg_1,\dots,gg_m)(x)= \alpha(xgg_1,\dots,xgg_m) = A_{\alpha}(g_1,\dots,g_m)(xg)$$
		This proves that $A_{\alpha}$ is ${}^*G$-equivariant. Next, for $\gamma \in {}^*\Gamma$, 
		$$A_{\alpha}(g_1,\dots,g_m)(\gamma x)=\alpha(\gamma xg_1,\dots,\gamma xg_m)$$
		Since $\tilde{\alpha} \in  \tilde{\L}^{\infty}(({}^*G)^m,\W)^{{}^*\Gamma}$, this means that 
		$$\alpha(\gamma xg_1,\dots,\gamma xg_m)-\pi_{\Gamma}(\gamma)\alpha( xg_1,\dots, xg_m) \in \W_{inf}$$
		Hence $A_{\alpha}(g_1,\dots,g_m)(\gamma x)-\pi_{\Gamma}(\gamma) \left( A_{\alpha}(g_1,\dots,g_m)(\gamma x) \right) \in \W_{inf}$. This shows that $A_{\alpha}(g_1,\dots,g_m) \in \L^{\infty}_{b}({}^*G,\W)^{\sim {}^*\Gamma}$.  
	\end{proof}
	Thus, given a ${}^*\Gamma$-equivariant map $\tilde{\alpha} \in \tilde{\L}^{\infty}(({}^*G)^m,\W)$, we obtain an internal ${}^*G$-equivariant map $A_{\alpha} \in \L^{\infty}\left(({}^*G)^m,\L^{\infty}({}^*G,\W) \right)$ that takes values in $\L^{\infty}_{b}({}^*G,\W)^{\sim {}^*\Gamma}$.\\
	Conversely, suppose we have an internal ${}^*G$-equivariant map $A \in \L^{\infty}\left(({}^*G)^m,\L^{\infty}({}^*G,\W) \right)$ that takes values in $\L^{\infty}_{b}({}^*G,\W)^{\sim {}^*\Gamma}$. Define the internal map
	$$\alpha_A \in \L^{\infty}(({}^*G)^m,\W)$$
	$$\alpha_A(g_1,\dots,g_m) \coloneqq A(x^{-1}g_1,\dots,x^{-1}g_m)(x)$$
	for $x \in {}^*G$. Note that since $A$ is ${}^*G$-equivariant, the map $x \mapsto A(x^{-1}g_1,\dots,x^{-1}g_m)(x)$ is essentially constant in $\W$, making the above well-defined.
	\begin{lemma}\label{updown}
		Given an internal ${}^*G$-equivariant map $A \in \L^{\infty}\left(({}^*G)^m,\L^{\infty}({}^*G,\W) \right)$ that takes values in $\L^{\infty}_{b}({}^*G,\W)^{\sim {}^*\Gamma}$, the internal map $\alpha_A$ as defined above induces the map $\tilde{\alpha}_A$ that is ${}^*\Gamma$-equivariant. That is, $\tilde{\alpha}_A \in \tilde{\L}^{\infty}(({}^*G)^m,\W)^{{}^*\Gamma}$.
	\end{lemma}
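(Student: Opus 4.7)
The plan is to verify three things in turn: (i) that $\alpha_A$ is well-defined as an element of $\L^{\infty}(({}^*G)^m,\W)$, (ii) that it is internally bounded, and (iii) that the induced class $\tilde{\alpha}_A$ is ${}^*\Gamma$-equivariant. Each step is a short computation that simply unwinds the two structural hypotheses on $A$ — true ${}^*G$-equivariance with respect to the right-translation action on $\L^{\infty}({}^*G,\W)$, and asymptotic ${}^*\Gamma$-equivariance of each value $A(g_1,\dots,g_m) \in \L^{\infty}_b({}^*G,\W)^{\sim {}^*\Gamma}$.

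For step (i), fix $(g_1,\dots,g_m) \in ({}^*G)^m$ and set $F(x) := A(x^{-1}g_1,\dots,x^{-1}g_m)(x)$. Applying the ${}^*G$-equivariance of $A$, namely $A(h^{-1}g_1,\dots,h^{-1}g_m)(yh) = A(g_1,\dots,g_m)(y)$ for every $h \in {}^*G$ and almost every $y$, with $h := g$ and $y := x$ and then substituting $x \mapsto xg$, one obtains $F(xg) = F(x)$ for every $g \in {}^*G$ and almost every $x$; hence $F$ is essentially constant and $\alpha_A(g_1,\dots,g_m)$ is unambiguously the common value in $\W$. For step (ii), internal boundedness follows immediately because $F$ is the evaluation of an internally bounded function $A(x^{-1}g_1,\dots,x^{-1}g_m) \in \L^{\infty}_b({}^*G,\W)$ at a single point, so $\|\alpha_A\| \leq \|A\| \in {}^*\R_b$; thus $\alpha_A \in \L^{\infty}_b(({}^*G)^m,\W)$ and defines a class $\tilde{\alpha}_A \in \tilde{\L}^{\infty}(({}^*G)^m,\W)$.

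The heart of the argument is step (iii). Fix $\gamma \in {}^*\Gamma$ and compute
\begin{equation*}
\alpha_A(\gamma g_1,\dots,\gamma g_m) = A\bigl(x^{-1}\gamma g_1,\dots,x^{-1}\gamma g_m\bigr)(x).
\end{equation*}
Since the right-hand side is essentially constant in $x$, we are free to substitute $x = \gamma y$, obtaining
\begin{equation*}
\alpha_A(\gamma g_1,\dots,\gamma g_m) = A\bigl(y^{-1}g_1,\dots,y^{-1}g_m\bigr)(\gamma y).
\end{equation*}
By hypothesis, $A(y^{-1}g_1,\dots,y^{-1}g_m) \in \L^{\infty}_b({}^*G,\W)^{\sim {}^*\Gamma}$, so
\begin{equation*}
A(y^{-1}g_1,\dots,y^{-1}g_m)(\gamma y) - \pi_{\Gamma}(\gamma)\,A(y^{-1}g_1,\dots,y^{-1}g_m)(y) \in \W_{inf}
\end{equation*}
for almost every $y$. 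The second term on the left equals $\pi_{\Gamma}(\gamma)\alpha_A(g_1,\dots,g_m)$ by the definition of $\alpha_A$, so
\begin{equation*}
\alpha_A(\gamma g_1,\dots,\gamma g_m) - \pi_{\Gamma}(\gamma)\,\alpha_A(g_1,\dots,g_m) \in \W_{inf},
\end{equation*}
which is precisely the statement that $\tilde{\alpha}_A$ is ${}^*\Gamma$-equivariant.

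There is really no serious obstacle here; the only subtlety is bookkeeping between true identities that hold for almost every point and approximate identities modulo $\W_{inf}$ or $\L^{\infty}_{inf}$. To keep this rigorous I would fix representatives $A = \{A_n\}_{\U}$ once and for all so that the substitutions $x \mapsto xg$ and $x = \gamma y$ are carried out at the level of ordinary measurable functions on $G^m$, and then invoke the ultraproduct conventions from \S\ref{ssec-ultraproducts} at the very end to pass to $\W_{inf}$. With this bookkeeping, the three steps collapse into the two-line chain displayed above.
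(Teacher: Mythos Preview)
Your proof is correct and follows essentially the same approach as the paper's own proof: both use the ${}^*G$-equivariance (equivalently, the essential constancy of $x \mapsto A(x^{-1}g_1,\dots,x^{-1}g_m)(x)$) to rewrite $\alpha_A(\gamma g_1,\dots,\gamma g_m)$ as $A(y^{-1}g_1,\dots,y^{-1}g_m)(\gamma y)$, and then invoke the asymptotic ${}^*\Gamma$-equivariance of the values of $A$ to conclude. Your added verification of well-definedness and boundedness (steps (i) and (ii)) is stated in the paper just before the lemma rather than inside the proof, but the content is identical.
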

	\begin{proof}
		Let $\gamma \in {}^*\Gamma$, then since $A$ is ${}^*G$-equivariant,
		$$\alpha_A(\gamma g_1,\dots,\gamma g_m) = A(x^{-1}\gamma g_1,\dots,x^{-1}\gamma g_m)(x)= A( x^{-1}g_1,\dots,x^{-1}g_m)(\gamma x)$$
		Since $A$ takes values in $\L^{\infty}_{b}({}^*G,\W)^{\sim {}^*\Gamma}$, we know that
		$$A( x^{-1}g_1,\dots,x^{-1}g_m)(\gamma x) - \pi_{\Gamma}(\gamma) \left( A( x^{-1}g_1,\dots,x^{-1}g_m)(x) \right) \in \W_{inf}$$
		Thus, $\alpha_A(\gamma g_1,\dots,\gamma g_m)-\pi_{\Gamma}(\gamma)\alpha_A( g_1,\dots, g_m) \in \W_{inf}$, proving that $\tilde{\alpha}_A \in \tilde{\L}^{\infty}(({}^*G)^m,\W)^{{}^*\Gamma}$. 
	\end{proof}
	Furthermore, the correspondences 
	\begin{align}\label{alphaA}
	\alpha & \mapsto  A_{\alpha} \\
	A & \mapsto  \alpha_A
	\end{align}
	are clearly inverses of each other, thus giving a bijection between internal maps $\alpha \in  \L^{\infty}_{b}(({}^*G)^m,\W)$ that are asymptotically ${}^*\Gamma$-equivariant, and internal ${}^*G$-equivariant maps $A \in \L^{\infty}\left(({}^*G)^m,\L^{\infty}({}^*G,\W) \right)$ that take values in $\L^{\infty}_{b}({}^*G,\W)^{\sim {}^*\Gamma}$.
	\begin{proof}[Proof of \cref{induction}]
		For $m \in \Zplus$, it is sufficient to show a bijection between the quotients $\tilde{\L}^{\infty}(({}^*G)^{m+1},\W)^{{}^*\Gamma}$ and $\tilde{\L}^{\infty}(({}^*G)^{m+1},\V)^{{}^*G}$ that commutes with the differentials $\tilde{d}$.\\
		Let $\alpha \in  \L^{\infty}_{b}(({}^*G)^{m+1},\W)^{\sim {}^*\Gamma}$ and let
		$$A_{\alpha} \in \L^{\infty}\left(({}^*G)^{m+1},\L^{\infty}({}^*G,\W) \right)$$
		be its lift (as in (\cref{alphaA})) that is asymptotically ${}^*G$-equivariant and takes values in $\L^{\infty}_{b}({}^*G,\W)^{\sim {}^*\Gamma}$. Composing $A_{\alpha}$ with $\theta$, we get 
		$$\theta \cdot A_{\alpha} \in \L^{\infty}_b\left( ({}^*G)^{m+1} ,\V \right)^{\sim {}^*G}$$
		In the other direction, for an internal map $A' \in \L^{\infty}_b\left( ({}^*G)^{m+1} ,\V \right)^{\sim {}^*G}$, we first compose $\zeta$ (defined in (\cref{zeta})) with $A'$ to get $\zeta \cdot A' \in \L^{\infty}\left(({}^*G)^{m+1},\L^{\infty}_{b}({}^*G,\W)^{\sim {}^*\Gamma} \right)$, where $\zeta \cdot A'$ is asymptotically ${}^*G$-equivariant. Let $A \in  \L^{\infty}\left( ({}^*G)^{m+1}, \L_b^{\infty}({}^*G,\W)^{\sim {}^*\Gamma} \right)^{ {}^*G}$ be the ${}^*G$-equivariant map infinitesimally close to $\zeta \cdot A'$ (as guaranteed by \cref{almostGequi}).\\
		Now we can descend to $\Gamma$ by defining the internal map
		$$\alpha_A \in \L^{\infty}(({}^*G)^{m+1},\W)$$	$$\alpha_A(g_0,\dots,g_{m}) \coloneqq A(x^{-1}g_0,\dots,x^{-1}g_m)(x)$$
		Since $A$ is ${}^*G$-equivariant, from \cref{updown} we conclude that $\alpha_A$ is asymptotically ${}^*\Gamma$-equivariant. Thus, we have a bijection between the quotients $\tilde{\L}^{\infty}(({}^*G)^{m+1},\W)^{{}^*\Gamma}$ and $\tilde{\L}^{\infty}(({}^*G)^{m+1},\V)^{{}^*G}$ that commutes with the differentials $\tilde{d}.$
	\end{proof}
	
	\subsection{Internal Contraction and Fixed Points}\label{ssec-contraction}
	Recall that we have the dual asymptotic Banach ${}^*G$-module $\L^{\infty}({}^*D,\W)$ with the asymptotic ${}^*G$-action of ${}^*G$ given by $\pi_G$ as in (\cref{piG}). The map $\pi_G$ can be studied in terms of two internal maps: an internal right ${}^*G$-action ${}^*G \times {}^*D \to {}^*D$ of ${}^*G$ on ${}^*D$ given by $(g,z)\mapsto zg$, and another internal twisting map ${}^*G \times {}^*D \to {}^*\Gamma$ given by $(g,z) \mapsto \gamma$ (here $\gamma \in {}^*\Gamma$ and $x \in {}^*D$ are such that $zg=\gamma x$). The latter map is then composed with $\pi_{\Gamma}$ to give $\pi_G(g)(f)(z) \coloneqq \pi_{\Gamma}(\gamma) f(x)$ as in (\cref{piG}).\\
	We now see a continuity property of the asymptotic ${}^*G$-action $\pi_G$ on $\L^{\infty}({}^*D,\W)$. Observe that the internal map $(g,z) \mapsto \pi_{\Gamma}(\gamma)$ is internally measurable (and internally locally constant), and as for the internal (true) action of ${}^*G$ on $\L^{\infty}({}^*D,\W)$ given by $(g \cdot f)(z) \coloneq f(x)$, this is internally continuous, but with respect to the internal $L^2$-norm defined on $f=\{f_n\}_{\U}$ as follows:
	$$\|f\|_2^2 \coloneq \left\{  \int_D \|f_n(x)\|^2 dx\right\}_{\U}$$
	\begin{lemma}\label{2cont}
		The dual asymptotic Banach ${}^*G$-module $\L^{\infty}({}^*D,\W)$ is internally continuous with respect to the internal $L^2$-norm on $\L^{\infty}({}^*D,\W)$. 
	\end{lemma}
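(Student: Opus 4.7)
The plan is to establish continuity of each standard component $\pi_{G,n}: G \times L^\infty_{w*}(D, \mathfrak{u}(k_n)) \to L^\infty_{w*}(D, \mathfrak{u}(k_n))$ with respect to the $L^2$-norm topology on the target and source $L^\infty$-spaces (note the $L^2$-norm is finite on $L^\infty$ since $D$ has finite Haar measure). Unravel $\pi_{G,n}(g)(f)(z) = \pi_{\Gamma,n}(\gamma)f(T_g(z))$ into a measure-theoretic piece and an algebraic twist: the map $T_g: D \to D$ defined by $zg = \gamma T_g(z)$ with $\gamma \in \Gamma$ and $T_g(z) \in D$, and the conjugation $\pi_{\Gamma,n}(\gamma) = \psi_n(\gamma)(\cdot)\psi_n(\gamma)^{-1}$. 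Two observations drive the argument: (i) conjugation by a unitary is a pointwise isometry on $(\mathfrak{u}(k_n), \|\cdot\|)$ because the submultiplicative norm is unitarily invariant, and (ii) $T_g$ is measure-preserving on $D$, as it implements the right $G$-action on $\Gamma \backslash G \cong D$ with the $G$-invariant quotient of Haar measure.

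Together (i) and (ii) imply that each operator $\pi_{G,n}(g)$ is an $L^2$-isometry: $\|\pi_{G,n}(g)f\|_2^2 = \int_D \|\pi_{\Gamma,n}(\gamma) f(T_g(z))\|^2\,dz = \int_D \|f(T_g(z))\|^2\,dz = \|f\|_2^2$. This uniform bound reduces joint continuity of $\pi_{G,n}$ to strong $L^2$-continuity of the orbit map $g \mapsto \pi_{G,n}(g)f$ at $g = e$, for each fixed $f$. The plan is to split $D = D_g^0 \sqcup (D \setminus D_g^0)$ with $D_g^0 = \{z \in D : zg \in D\}$, so that $\gamma = e$ and $T_g(z) = zg$ on $D_g^0$. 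On the complement $D \setminus D_g^0$, the integrand $\|\pi_{G,n}(g)f(z) - f(z)\|^2$ is pointwise bounded by $4\|f\|_\infty^2$, while $\mu(D \setminus D_g^0) \to 0$ as $g \to e$. On $D_g^0$, the problem reduces to $\int_{D_g^0} \|f(zg) - f(z)\|^2\,dz \to 0$, which follows from $L^2$-density of continuous compactly supported functions in $L^2(D, \mathfrak{u}(k_n))$ combined with uniform continuity of right translation applied to such approximants.

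The main technical obstacle is verifying $\mu(D \setminus D_g^0) \to 0$ as $g \to e$, which depends on choosing the Borel fundamental domain $D$ with sufficiently tame boundary (any reasonable choice works, but the argument has to inspect this). A cleaner route that sidesteps the fundamental-domain boundary entirely is to use the internal maps $\theta, \zeta$ of \S\ref{ssec-shapiro} to identify $\L^\infty({}^*D, \W)$ isometrically (in the $L^2$ sense) with the space $\L_b^\infty({}^*G, \W)^{\sim {}^*\Gamma}$ modulo infinitesimals, under which $\pi_G$ corresponds to the internal right regular action $(g\cdot F)(x) = F(xg)$ on $\L^\infty({}^*G,\W)$. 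Componentwise, strong $L^2$-continuity of the right regular representation of $G$ on $L^2(K, \mathfrak{u}(k_n))$ over any compact $K \subset G$ is classical, and via the isometric identification this yields strong $L^2$-continuity of $\pi_{G,n}$. Either approach produces the claimed internal continuity.
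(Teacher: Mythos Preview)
Your proof is correct and follows the paper's approach: the paper does not give a formal proof but only remarks in the preceding paragraph that $\pi_G$ decomposes into the internally locally constant isometric twist $(g,z)\mapsto\pi_\Gamma(\gamma)$ and the internally $L^2$-continuous translation $f\mapsto f(\cdot\, g)$, which is precisely your decomposition (your $D_g^0$-splitting simply makes the ``locally constant'' observation quantitative). Your alternative route via $\theta,\zeta$ is valid but not needed.
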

	We shall refer to internal continuity with respect to the $L^2$-norm on $\L^{\infty}({}^*D,\W)$ as \textbf{internal $L^2$-continuity}. The main consequence of this property that we shall use is the following: let $g^{(1)},g^{(2)},\dots$ be a sequence of elements of ${}^*G$ (with $g^{(m)}=\{g_n{(m)}\}_{\U}$) such that the internal limit $\lim \limits_{m \to \infty}g^{(m)}=\{\lim\limits_{m \to \infty} g_n^{(m)} \}_{\U}=g \in {}^*G$. Then for $f \in \L^{\infty}({}^*D,\W)$, $\lim \limits_{m \to \infty} f(g^{(m)})=f(g)$ (where the limit is now  with respect to the internal $L^2$-norm). A particular instance of an internal limit we shall use is given by \emph{contraction} of elements. 
	\begin{definition}
		Let $g,h \in G$. We say that $h$ is contracted by $g$ (or $g$ contracts $h$) if $\lim \limits_{m \to \infty} g^{-m} hg^{m} = 1$.  Let $g=\{g_n\}_{\U}$ and $h=\{h_n\}_{\U}$ be elements of ${}^*G$. We say $h$ \textbf{is internally contracted by $g$} (or $g$ internally contracts $h$) if for every $n \in \U$,
		$$\lim \limits_{m \to \infty} g_{n}^{-m} h_n g_n^{m} = 1$$
		In other words, $g$ internally contracts $h$ if the sequence $g^{-m}hg^m$ has internal limit being the identity in ${}^*G$.
	\end{definition}
	\begin{definition}
		An internal subgroup of the ultrapower ${}^*G$ is called an \textbf{internally amenable subgroup} if it is of the form $\prod_{\U}H_n$ where $H_n \leq G$ is a closed amenable subgroup for every $n \in \N$. 
	\end{definition}
	\begin{remark}\label{contractamen}
		Suppose $g,h \in G$ such that $g$ contracts $h$. Then the closed subgroup $<g,h>$ generated by $g$ and $h$ is an amenable subgroup of $G$ (refer Propositions $6.5$ and $6.17$ in \cite{caprace}). In particular, this implies that if $g,h \in {}^*G$ is such that $g$ internally contracts $h$, then the internal subgroup $<g,h>$ generated by $g$ and $h$ in ${}^*G$ is an internally amenable subgroup of ${}^*G$.
	\end{remark} 
	To use the full power of internal contraction and the internal $L^2$-continuity of the asymptotic ${}^*G$-action $\pi_G$, our first step is to \say{correct} the asymptotic ${}^*G$-action $\pi_G$ to get a true action when restricted to an internally amenable subgroup. Note that the imperfection is in the map $\pi_G'$, so consider the map $\alpha:{}^*G \times {}^*D \to \prod_{\U}U(k_n)$ with $\alpha(g,z) \coloneq \pi_{\Gamma}(\gamma)$. Note that $\alpha$ is a measurable map, and for every $g_1,g_2 \in {}^*\Gamma$ and $z \in {}^*D$,
	\begin{equation}\label{twista}
	\|\alpha(g_1,z)\alpha(g_2,zg_1)-\alpha(g_1g_2,z) \| \in {}^*\R_{inf}
	\end{equation}
	Observe that this looks similar to the very classical question of Ulam stability, but with a twist provided by the action of $G$ on $D$. We consider the following definitions which are analogues of uniform stability in the context of such twists:
	\begin{definition}
		Let $G$ be a locally compact, second countable group, and $X$ be a non-singular $G$-space. A measurable map $\psi:G \times X \to U(n)$ is called a \emph{twisted} homomorphism of $G$ (with respect to $D$) if for every $g_1,g_2 \in G$ and almost every $z \in X$, 
		$$\psi(g_1g_2,z)=\psi(g_1,z)\psi(g_2,zg_1)$$ 
		For $\epsilon>0$ (the defect), a measurable  map $\phi:{}G \times X \to U(n)$ is called a twisted $\epsilon$-homomorphism of $G$ (with respect to $D$) if for almost every $g_1,g_2 \in G$ and almost every $z \in X$ $$\|\phi(g_1,z)\phi(g_2,zg_1)-\phi(g_1g_2,z) \|  \leq \epsilon$$
	\end{definition}
	The following claim essentially retraces the arguments used in \S\ref{sec-asymgamma} (where we use the logarithm map to obtain an asymptotic cocycle, and use the vanishing of cohomology to diminish defect). 
	\begin{claim}\label{twisted}
		There exists $\epsilon>0$ small enough, and a constant $C$ such that for an amenable subgroup $H$ of $G$ and any twisted $\epsilon$-homomorphism $\alpha:G \times D \to U(n)$, there exists a measurable map $\alpha_{H}:H \times D \to U(n)$ of $H$ such that for every $h \in H$ and almost every $z \in D$, $\|\alpha(h,z)-\alpha_{H}(h,z)\| \leq C\epsilon$, and for  every $h_1,h_2 \in H$ and $z \in D$, $$\alpha_{H}(h_1,z)\alpha_{H}(h_2,zh_1)=\alpha_{H}(h_1 h_2,z)$$
	\end{claim}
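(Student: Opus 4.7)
The argument is the twisted analogue of the Kazhdan/BOT proof that amenable groups are Ulam stable (\cref{amenable2}), transposed into the cocycle setting of the claim. Assume $\epsilon < 1/4$ throughout, so that \cref{logbound} applies to all defects.

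\emph{Linearization.} For $h_1,h_2\in H$ and $z\in D$, set
$$\omega(h_1, h_2, z) \coloneqq \log\bigl(\alpha(h_1 h_2, z)^{-1}\alpha(h_1, z)\alpha(h_2, z h_1)\bigr) \in \mathfrak{u}(n),$$
so $\|\omega\|_\infty \le 2\epsilon$ by \cref{logbound}. Expanding $\alpha(h_1 h_2 h_3, z)$ in two ways via the twisted identity and passing to the Lie algebra (using the submultiplicativity bound of \cref{abelian} to confine cross-terms) yields the approximate $2$-cocycle identity
$$Ad(\alpha(h_1, z))\,\omega(h_2, h_3, z h_1) - \omega(h_1 h_2, h_3, z) + \omega(h_1, h_2 h_3, z) - \omega(h_1, h_2, z) = O(\epsilon^2).$$
This is precisely the linearized equation \eqref{2cocycle} applied to the Banach $H$-module $V := L^\infty(D, \mathfrak{u}(n))$ equipped with the \emph{approximate} representation $(h \cdot f)(z) \coloneqq Ad\bigl(\alpha(h, z h^{-1})\bigr) f(z h^{-1})$.

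\emph{Averaging.} Amenability of $H$ gives an $H$-invariant mean $m$ on $\ell^\infty(H)$, which, as in the proof of \cref{amenable} (using that $V$ is a dual Banach space), extends to a bounded $H$-equivariant map $\ell^\infty(H,V)\to V$. Set $\beta(h, z) \coloneqq m^x\bigl[\omega(h, x, z)\bigr]$; then $\|\beta\|_\infty \le 2\epsilon$, and applying $m^{h_3}$ to the approximate cocycle identity and using the exact invariance of $m$ produces
$$\omega(h_1, h_2, z) = Ad(\alpha(h_1, z))\,\beta(h_2, z h_1) - \beta(h_1 h_2, z) + \beta(h_1, z) + O(\epsilon^2),$$
parallel to the trivialization carried out in \cref{amenable}.

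\emph{Defect diminishing and iteration.} Set $\alpha^{(1)}(h, z) \coloneqq \exp(-\beta(h, z))\,\alpha(h, z)$. Then $\|\alpha^{(1)} - \alpha\|_\infty \le C_1\epsilon$, and a direct computation combining the displayed identity with the $\log$/$\exp$ bounds shows $\alpha^{(1)}$ is a twisted $C_2\epsilon^2$-homomorphism of $H$ on $D$. Iterating this procedure yields a sequence $\alpha^{(k)}$ with defect at most $C_2\epsilon^{2^k}$ and $\|\alpha^{(k)} - \alpha^{(k-1)}\|_\infty \le C_1\epsilon^{2^{k-1}}$, hence uniformly Cauchy in the submultiplicative norm; its limit $\alpha_H\colon H \times D \to U(n)$ is a true twisted $H$-homomorphism with $\|\alpha_H - \alpha\|_\infty \le C\epsilon$, as required.

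The main obstacle is the honest bookkeeping of the $O(\epsilon^2)$ errors: the $H$-module structure on $V$ is only approximate (the associativity defect being itself $O(\epsilon)$), and one must verify that both averaging and exponentiation confine the discrepancies to quadratic order. This is exactly the estimate that the ultraproduct formalism of \S\ref{sec-asymgamma} absorbs cleanly by passing to infinitesimals; here one performs it directly, using submultiplicativity at each step and choosing $\epsilon_0$ small enough that the resulting quadratic iteration converges.
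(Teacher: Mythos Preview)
Your approach is essentially the same as the paper's: linearize via the logarithm to obtain an approximate $2$-cocycle for $H$ with coefficients in the dual module $L^\infty(D,\mathfrak{u}(n))$, use amenability of $H$ to average it to an approximate coboundary, correct $\alpha$, and iterate the defect-diminishing. The paper packages the $O(\epsilon^2)$ bookkeeping exactly as you anticipate in your final paragraph, by passing to the ultraproduct and invoking $\Ha^2(H,\L^\infty({}^*D,\W))=0$ from \cref{corr-amentriv}; your direct-estimate version is a legitimate alternative.

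There is one step you are missing. Your iteration produces a uniform limit $\alpha_H$ for which the twisted cocycle identity
\[
\alpha_H(h_1,z)\,\alpha_H(h_2,zh_1)=\alpha_H(h_1h_2,z)
\]
holds only for \emph{almost every} $h_1,h_2\in H$ and almost every $z\in D$ (this is all the $L^\infty$-norm convergence can give, and the null set a priori depends on the pair $(h_1,h_2)$). The claim, however, demands the identity for \emph{every} $h_1,h_2\in H$. The paper closes this gap by invoking Zimmer's cocycle regularization theorem (Theorem~B.9 in \cite{zimmerbook}), which upgrades a measurable cocycle identity holding almost everywhere to one holding strictly everywhere after modification on a null set. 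You should add this final step.
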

	\begin{proof}
		As in \S\ref{sec-asymgamma}, consider a sequence $\{\alpha_n:H \times D \to U(k_n)\}_{n \in \N}$ and its ultraproduct $\alpha$, which is an asymptotic twisted homomorphism with defect $\epsilon \in {}^*\R_{inf}$. Define the internal map
		$$\omega: {}^*H \times {}^*H \times {}^*D \to \prod_{\U}\mathfrak{u}(k_n)$$
		$$\omega(h_1,h_2,z) = {}_{\epsilon} \log\left(\alpha(h_1,z)\alpha(h_2,zh_1)\alpha(h_1h_2,z)^{-1}\right)$$
		As $\mathfrak{u}(k_n)=\W$, note that we can regard $\omega$ as an element of $\L^{\infty}_b({}^*H \times {}^*H \times {}^*D, \W)$, or equivalently, $\L^{\infty}_b(({}^*H)^2,\L^{\infty}({}^*D,\W))$. Equipping $\L^{\infty}({}^*D,\W)$ with an asymptotic action $\rho_{\alpha}:{}^*H \times \L^{\infty}({}^*D,\W) \to \L^{\infty}({}^*D,\W)$ of ${}^*H$ given by $\rho_{\alpha}(h)(f)(z) \coloneq \alpha(h,z)f(zh)\alpha(h,z)^{-1}$, making it a dual asymptotic ${}^*H$-module. One can check that the map $\omega$ satisfies the condition to be an (inhomogenous) asymptotic $2$-cocycle in $\Ha^2(H,\L^{\infty}({}^*D,\W))$. Now since $H$ is amenable and $\L^{\infty}({}^*D,\W)$ is a dual asymptotic Banach ${}^*H$-module, we know (from \cref{corr-amentriv}) that $\Ha^2(H,\L^{\infty}({}^*D,\W))=0$, implying that there exists $\beta \in \L^{\infty}_b({}^*H,\L^{\infty}({}^*D,\W))$ such that $\alpha \cdot _{\epsilon}\exp{\beta}$ is an $o_{\U}(\epsilon)$-twisted homomorphism (note that $\alpha \cdot _{\epsilon}\exp{\beta}$ is measurable). Repeating this process (as in defect diminishing), we obtain a measurable map $\alpha_{H}':H \times D \to U(n)$ such that for almost every $h_1,h_2 \in H$ and almost every $z \in D$, we have$\alpha_{H}'(h_1,z)\alpha_{H}'(h_2,zh_1)=\alpha_{H}'(h_1h_2,z)$. From Theorem B$9$ (p.200) in \cite{zimmerbook}, we conclude that there exists $\alpha_{H}$ as desired. 
	\end{proof}
	\begin{remark}\label{twista2}
		The same argument also goes through for internally amenable subgroups $\mathcal{H}$ by applying \cref{twisted} internally.
	\end{remark}
	\begin{lemma}\label{piH}
		Let $\mathcal{H} \leq {}^*G$ be an internally amenable subgroup of ${}^*G$. Then there exists an internal map $\pi_\mathcal{H}:\mathcal{H} \times \L^{\infty}({}^*D,\W) \to \L^{\infty}({}^*D,\W)$ such that
		\begin{itemize}
			\item For every $h_1,h_2 \in H$, $\pi_{\mathcal{H}}(h_1h_2)=\pi_{H}(h_1)\pi_H(h_2)$. In other words, $\pi_{\mathcal{H}}$ is a (true) internal action of $\mathcal{H}$ on $\L^{\infty}({}^*D,\W)$.
			\item For every $f \in \L_{b}^{\infty}({}^*D,\W)$ and $h \in \mathcal{H}$, $\pi_{\mathcal{H}}(h)f-\pi_G(h)f \in \L_{inf}^{\infty}({}^*D,\W)$.
			\item The internal action $\pi_{\mathcal{H}}$ of $\mathcal{H}$ is internally $2$-continuous.
		\end{itemize}
	\end{lemma}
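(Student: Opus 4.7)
The plan is to work entirely at the level of the measurable twisting cocycle underlying $\pi_G$, and to repair it into a true internal twisted cocycle on $\mathcal{H}$. Following the discussion just before \cref{piH}, we think of $\pi_G$ as being encoded by the internal measurable map $\alpha:{}^*G\times {}^*D\to \prod_{\U}U(k_n)$ defined by $\alpha(g,z)\coloneqq\psi(\gamma)$, where $zg=\gamma x$ with $\gamma\in{}^*\Gamma$ and $x\in {}^*D$. In this notation $\pi_G(g)(f)(z)=\alpha(g,z)\,f(z\star g)\,\alpha(g,z)^{-1}$, and the failure of $\pi_G$ to be a true action is precisely the infinitesimal failure of $\alpha$ to be a true twisted cocycle, recorded in (\ref{twista}). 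Thus our task is to replace $\alpha|_{\mathcal{H}\times {}^*D}$ by a true internal twisted cocycle $\alpha_{\mathcal{H}}$.

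The key input is the internal version of \cref{twisted} given by \cref{twista2}: since $\mathcal{H}\leq{}^*G$ is internally amenable, we may apply the claim componentwise and take the ultraproduct to obtain an internal map $\alpha_{\mathcal{H}}:\mathcal{H}\times {}^*D\to\prod_{\U}U(k_n)$ satisfying, for every $h_1,h_2\in\mathcal{H}$ and almost every $z\in{}^*D$,
\[
\alpha_{\mathcal{H}}(h_1h_2,z)=\alpha_{\mathcal{H}}(h_1,z)\,\alpha_{\mathcal{H}}(h_2,z\star h_1),
\]
together with the uniform infinitesimal bound $\|\alpha_{\mathcal{H}}(h,z)-\alpha(h,z)\|=O_{\U}(\epsilon)\in{}^*\R_{inf}$ where $\epsilon$ is the internal defect of $\alpha$. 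I then define
\[
\pi_{\mathcal{H}}(h)(f)(z)\coloneqq \alpha_{\mathcal{H}}(h,z)\,f(z\star h)\,\alpha_{\mathcal{H}}(h,z)^{-1}.
\]
This is internal by construction. Property (i) of the lemma is immediate from the true twisted cocycle identity combined with associativity of the right internal ${}^*G$-action on ${}^*D$: a direct computation identical to the one done for $\pi_G$, but now with equalities in place of infinitesimal approximations, gives $\pi_{\mathcal{H}}(h_1h_2)=\pi_{\mathcal{H}}(h_1)\pi_{\mathcal{H}}(h_2)$. Property (ii) follows from submultiplicativity and unitary bi-invariance of the norm: the difference $\pi_{\mathcal{H}}(h)f-\pi_G(h)f$ is pointwise bounded by a constant multiple of $\|\alpha_{\mathcal{H}}(h,z)-\alpha(h,z)\|\cdot\|f\|$, which is uniformly infinitesimal for $f\in\L^\infty_b({}^*D,\W)$.

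The main obstacle is property (iii), internal $L^2$-continuity, because $\alpha_{\mathcal{H}}$ is produced by a general measurable correction procedure (defect diminishing followed by Zimmer's cocycle selection) whose continuity properties in $h$ are not directly visible. The plan to handle this is to exploit the uniform bound $\|\alpha_{\mathcal{H}}-\alpha\|_\infty\in{}^*\R_{inf}$: writing $\pi_{\mathcal{H}}=\operatorname{Ad}(\alpha_{\mathcal{H}}\alpha^{-1})\circ\pi_G$, the outer conjugation differs from the identity by an infinitesimal in operator norm uniformly in $h$, so any internal $L^2$-modulus of continuity for $\pi_G$ (supplied by \cref{2cont}) is inherited by $\pi_{\mathcal{H}}$ up to infinitesimals. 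Working componentwise in the ultraproduct, for each $n\in\U$ the map $\pi_{\mathcal{H},n}$ is a uniform $C\epsilon_n$-perturbation of the $L^2$-continuous $\pi_{G,n}$, and in the ultralimit such a uniformly infinitesimal perturbation is absorbed into the equivalence relation defining $\tilde{\V}$, yielding internal $L^2$-continuity of $\pi_{\mathcal{H}}$ in the required sense.
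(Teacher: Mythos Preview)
Your construction of $\alpha_{\mathcal{H}}$ via \cref{twista2} and the definition of $\pi_{\mathcal{H}}$ from it are exactly what the paper does, and your verifications of the first two bullet points are fine. The gap is in your argument for internal $L^2$-continuity.

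Recall that by the paper's convention (\cref{internalprop}), ``internally $L^2$-continuous'' means that for $n\in\U$ the component map $\pi_{\mathcal{H},n}:H_n\times L^\infty(D,\mathfrak{u}(k_n))\to L^\infty(D,\mathfrak{u}(k_n))$ is $L^2$-continuous in the ordinary sense. Your perturbation argument only shows that $\pi_{\mathcal{H},n}$ is within $C\epsilon_n$ of the $L^2$-continuous map $\pi_{G,n}$ in operator norm, uniformly in $h$. A uniformly small perturbation of a continuous map need not be continuous; at each fixed level $n$, the number $\epsilon_n>0$ is just a positive real, not ``infinitesimal'', so nothing is absorbed. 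Saying this is ``absorbed into the equivalence relation defining $\tilde{\V}$'' conflates the genuine componentwise continuity required by the statement with a weaker asymptotic notion that the lemma does not assert (and which would not immediately suffice for the limit argument in \cref{contract1}, where one fixes $n$ and sends $m\to\infty$).

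The paper's route for the third bullet is both simpler and yields the correct conclusion: since $\pi_{\mathcal{H}}$ is a \emph{true} internal action and $\alpha_{\mathcal{H}}$ is internally measurable, each $\pi_{\mathcal{H},n}$ is a measurable homomorphism of $H_n$ into the unitary group of $L^2(D,\mathfrak{u}(k_n))$, hence automatically strongly continuous by the classical fact that measurable homomorphisms of locally compact second countable groups into Polish groups are continuous. The point you are missing is that having already upgraded $\pi_{\mathcal{H}}$ to a genuine action lets you invoke this automatic continuity; you never need to compare back to $\pi_G$ for this step.
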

	\begin{proof}
		Consider the internal map $\alpha: {}^*G \times {}^*D \to \prod_{\U}U(k_n)$
		$$\alpha(g,z) \coloneq \pi_{\Gamma}(\gamma)$$
		where $\gamma x = zg$ for $\gamma \in {}^*\Gamma$ and $x \in {}^*D$. Since $\alpha$ is an asymptotic twisted homomorphism of $G$, from \cref{twista2}, there exists an internally measurable map $\alpha_{\mathcal{H}}: \mathcal{H} \times {}^*D \to \prod_{\U}U(k_n)$ as in \cref{twisted}.\\
		Define $\pi_{\mathcal{H}}:\mathcal{H} \times \L^{\infty}({}^*D,\W) \to \L^{\infty}({}^*D,\W)$ by
		$$\pi_{\mathcal{H}}(g)(f)(z) \coloneq \alpha_{\mathcal{H}}(g,z) f(x)$$
		whereas always, $x \in {}^*D$ such that $zg=\gamma x$ for $\gamma \in {}^*\Gamma$. Then $\pi_{\mathcal{H}}$ is a (true) internal action of $\mathcal{H}$ on $\L^{\infty}({}^*D,\W)$, and for every $f \in \L_{b}^{\infty}({}^*D,\W)$ and $h \in \mathcal{H}$, $\pi_{\mathcal{H}}(h)f-\pi_G(h)f \in \L_{inf}^{\infty}({}^*D,\W)$. Since $\pi_{\mathcal{H}}$ is an internal action of $\mathcal{H}$ that is internally measurable, it is also internally $2$-continuous. 
	\end{proof}
	\begin{lemma}\label{contract1}
		Let $x,y \in {}^*G$ such that $x$ internally contracts $y$. Suppose $f \in \L^{\infty}_b({}^*D,\W)$ is such that $\pi_G(x)f-f \in \L^{\infty}_{inf}({}^*D,\W)$ (in other words, $x$ fixes $\tilde{f}$). Then $\pi_G(y)f-f \in \L^{\infty}_{inf}({}^*D,\W)$, that is, $y$ too fixes $\tilde{f}$. 
	\end{lemma}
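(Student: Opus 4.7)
The plan is to adapt the classical Mautner lemma to the asymptotic setting, replacing the asymptotic ${}^*G$-representation $\pi_G$ by a genuine internal action on a suitable internally amenable subgroup. First I would let $\mathcal{H} \leq {}^*G$ be the internal subgroup generated by $x$ and $y$. By \cref{contractamen}, internal contraction of $y$ by $x$ forces $\mathcal{H}$ to be internally amenable, so \cref{piH} supplies a true internal action $\pi_{\mathcal{H}}$ of $\mathcal{H}$ on $\L^{\infty}({}^*D,\W)$ satisfying $\pi_{\mathcal{H}}(h)v - \pi_G(h)v \in \L^{\infty}_{inf}$ for $v \in \L^{\infty}_b$, and \cref{2cont} shows that $\pi_{\mathcal{H}}$ is internally $L^2$-continuous. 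The hypothesis transfers to $\pi_{\mathcal{H}}(x)f - f \in \L^{\infty}_{inf}$, and since $\pi_{\mathcal{H}}$ is a genuine internal group homomorphism, the induced $\tilde{\pi}_{\mathcal{H}}$ is a true $\mathcal{H}$-action on $\tilde{\V}$; iterating yields $\tilde{\pi}_{\mathcal{H}}(x^m)\tilde{f} = \tilde{f}$ for \emph{every} $m \in {}^*\Z$, i.e.\ $\pi_{\mathcal{H}}(x^m)f - f \in \L^{\infty}_{inf}$ even for infinite hyperintegers $m$.

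Next I would exploit internal contraction. For each $n \in \U$ we have $x_n^{-m}y_nx_n^m \to 1$ in $G$, so by saturation I can pick an internal hyperinteger $M = \{M_n\}_{\U}$ with $M_n \to \infty$ fast enough that $y_M := x^{-M}yx^M$ is infinitesimally close to $1_{{}^*G}$. Internal $L^2$-continuity at the identity then forces $\|\pi_{\mathcal{H}}(y_M)f - f\|_2 \in {}^*\R_{inf}$. Writing $y = x^M y_M x^{-M}$, the Mautner identity
\[
\pi_{\mathcal{H}}(y)f - f \;=\; \pi_{\mathcal{H}}(x^M)\bigl(\pi_{\mathcal{H}}(y_M)f - f\bigr) + \bigl(\pi_{\mathcal{H}}(x^M)f - f\bigr) - \pi_{\mathcal{H}}(x^M)\pi_{\mathcal{H}}(y_M)\bigl(f - \pi_{\mathcal{H}}(x^{-M})f\bigr),
\]
combined with the $L^{\infty}$-isometry of $\pi_{\mathcal{H}}(x^M)$ and $\pi_{\mathcal{H}}(x^M)f \equiv f \pmod{\L^{\infty}_{inf}}$, reduces the conclusion to showing that $\|\pi_{\mathcal{H}}(y_M)f - f\|_\infty$ can be made infinitesimal for a suitable choice of $M$.

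The main obstacle is this last step: internal $L^2$-continuity only delivers an $L^2$-infinitesimal bound, while the conclusion of the lemma is an $L^{\infty}$ one, and $L^2$-infinitesimality does not imply $L^{\infty}$-infinitesimality even for functions that are $L^{\infty}$-bounded. My plan to close this gap is to refine the choice of $M$ using the structural form of $\pi_{\mathcal{H}}$: by construction $\pi_{\mathcal{H}}(g)f(z) = \alpha_{\mathcal{H}}(g,z) f(zg)$, where the twist $\alpha_{\mathcal{H}}(\cdot, z)$ is internally measurable in $g$ and the underlying internal right action $(g,z) \mapsto zg$ on $D$ is measure-preserving. For $y_M$ infinitesimally close to $1$ in ${}^*G$, the induced map on ${}^*D$ fixes almost every $z$ up to an infinitesimal $G$-translation, so by choosing $M = \{M_n\}_{\U}$ simultaneously satisfying the contraction estimate and an $L^{\infty}$-scale requirement on $f_n$ (which is possible by saturation because both conditions hold for each $n$ and sufficiently large $M_n$), one obtains the required $L^{\infty}_{inf}$ bound on $\pi_{\mathcal{H}}(y_M)f - f$. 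Combining all the steps and using $\pi_G(y)f - \pi_{\mathcal{H}}(y)f \in \L^{\infty}_{inf}$ finally yields $\pi_G(y)f - f \in \L^{\infty}_{inf}$, as required.
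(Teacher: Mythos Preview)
Your overall strategy matches the paper's: pass to the internally amenable subgroup $\mathcal{H}$ generated by $x,y$, replace $\pi_G$ by the genuine internal action $\pi_{\mathcal{H}}$ from \cref{piH}, and run the Mautner identity using internal $L^2$-continuity. However, there are two genuine gaps in your execution, and the paper closes both with a single move you are missing.

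First, the hyperinteger iteration is unjustified. From $\pi_{\mathcal{H}}(x)f-f\in\L^\infty_{inf}$ you only get $\|\pi_{\mathcal{H}}(x^m)f-f\|_\infty\le m\,\|\pi_{\mathcal{H}}(x)f-f\|_\infty$ via telescoping and the isometry of $\pi_{\mathcal{H}}$; for an infinite hyperinteger $M$ this bound need not be infinitesimal. The fact that $\tilde{\pi}_{\mathcal{H}}$ is a true $\mathcal{H}$-action on $\tilde{\V}$ only gives $\tilde{\pi}_{\mathcal{H}}(x^m)\tilde{f}=\tilde{f}$ for \emph{standard} $m$, since the abstract subgroup of $\mathcal{H}$ generated by $x$ is $\{x^m:m\in\Z\}$, not $\{x^M:M\in{}^*\Z\}$. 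Second, the $L^2$-to-$L^\infty$ gap is real and your proposed fix does not work: the underlying translation action on $L^\infty(D)$ is simply not $L^\infty$-continuous (characteristic functions of small sets already witness this), so no choice of $M$ via saturation will force $\|\pi_{\mathcal{H}}(y_M)f-f\|_\infty$ to be infinitesimal.

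The paper avoids both problems at once by first correcting $f$ to an $f_{\mathcal{H}}\in\L^\infty_b({}^*D,\W)$ with $f-f_{\mathcal{H}}\in\L^\infty_{inf}$ that is \emph{exactly} fixed: $\pi_{\mathcal{H}}(x)f_{\mathcal{H}}=f_{\mathcal{H}}$ (this is a $1$-cohomology correction for the amenable cyclic group $\langle x\rangle$, in the spirit of \cref{correction}). With true invariance under $x$, the Mautner identity becomes an \emph{equality} for every standard $m$:
\[
\|\pi_{\mathcal{H}}(y)f_{\mathcal{H}}-f_{\mathcal{H}}\|_2=\|\pi_{\mathcal{H}}(x^{\pm m}yx^{\mp m})f_{\mathcal{H}}-f_{\mathcal{H}}\|_2.
\]
Working coordinatewise, the right side tends to $0$ as $m\to\infty$ by $L^2$-continuity and contraction, while the left side is independent of $m$; hence $\|\pi_{\mathcal{H}}(y)f_{\mathcal{H}}-f_{\mathcal{H}}\|_2=0$ exactly, so $\pi_{\mathcal{H}}(y)f_{\mathcal{H}}=f_{\mathcal{H}}$ in $L^\infty$ as well. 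The conclusion for $f$ and $\pi_G$ then follows by adding back the infinitesimal corrections. The missing idea in your attempt is precisely this passage to an exactly $x$-fixed vector, which eliminates both the hyperinteger issue and the $L^2$/$L^\infty$ discrepancy simultaneously.
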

	\begin{proof}
		Let $\mathcal{H}$ be the closure of the subgroup generated by $x$ and $y$. By \cref{contractamen}, $\mathcal{H}$ is an internally amenable subgroup of ${}^*G$. By \cref{piH}, consider the correction $\pi_{\mathcal{H}}$ whose restriction to $\mathcal{H}$ is a (true) internally $2$-continuous action of $\mathcal{H}$ on $\L^{\infty}({}^*D,\W)$. Now $\pi_{\mathcal{H}}(x)f-f \in \L_{inf}^{\infty}({}^*D,\W)$, and let $f_{\mathcal{H}} \in \L_{b}^{\infty}({}^*D,\W)$ such that $f-f_{\mathcal{H}} \in \L_{inf}^{\infty}({}^*D,\W)$ and $\pi_{\mathcal{H}}(x)f_{\mathcal{H}}=f_{\mathcal{H}}$.\\
		Now we have 
		$$\|\pi_{\mathcal{H}}(y)f_H-f_H\|_2=\|\pi_{\mathcal{H}}(y)\pi_{\mathcal{H}}(x^{-m})f_{\mathcal{H}}-\pi_{\mathcal{H}}(x^{-m})f_{\mathcal{H}} \|_2$$
		Since $\pi_{\mathcal{H}}$ too acts unitarily,
		$$\|\pi_{\mathcal{H}}(y)\pi_{\mathcal{H}}(x^{-m})f_{\mathcal{H}}-\pi_{\mathcal{H}}(x^{-m})f_{\mathcal{H}}\|_2 = \|\pi_{\mathcal{H}}(x^{m})\pi_{\mathcal{H}}(y)\pi_H(x^{-m})f_{\mathcal{H}}-f_{\mathcal{H}}\|_2$$
		Since $\pi_{\mathcal{H}}$ is a (true) internal action of ${\mathcal{H}}$, $\pi_{\mathcal{H}}(x^{m})\pi_{\mathcal{H}}(y)\pi_{\mathcal{H}}(x^{-m}) = \pi_{\mathcal{H}}(x^myx^{-m})$, and so
		$$\|\pi_{\mathcal{H}}(x^{m})\pi_{\mathcal{H}}(y)\pi_{\mathcal{H}}(x^{-m})f_{\mathcal{H}}-f_{\mathcal{H}}\|_2 = \|\pi_{\mathcal{H}}(x^myx^{-m})f_{\mathcal{H}} - f_{\mathcal{H}}\|_2$$
		Since $\pi_{\mathcal{H}}$ is internally $2$-continuous and $y$ is internally contracted by $x$, we conclude that $\pi_{\mathcal{H}}(y)f_{\mathcal{H}} = f_{\mathcal{H}}$, implying that $\pi_G(y)f-f \in \L_{inf}^{\infty}({}^*D,\W)$. 
	\end{proof}
	We can apply the above results in a slightly more general setting, which we shall develop into an internal \emph{Mautner's Lemma} in Subection \S\ref{ssec-invariants}. 
	\begin{definition}\label{bdgendef}
		Let $T\leq G$ be a closed subgroup and $M>0$. The group $G$ is said to be \emph{$M$-boundedly generated by $T$-contracted elements} if any element $g \in G$, there exist $s_1,\dots,s_m \in G$ with $m \leq M$, such that $g=s_1s_2\dots s_m$, and each $s_i\in G$ is contracted by some element of $T$. More generally, for a family $\mathfrak{T}$ of closed subgroups of $G$, the group $G$ is said to be \textbf{boundedly generated by $\mathfrak{T}$-contracted elements} if there exists $M>0$ such that $G$ is $M$-boundedly generated by $T$-contracted elements for every $T\in \mathfrak{T}$.
	\end{definition}
	\begin{remark}\label{bdgen}
		Suppose $G$ is boundedly generated by $\mathfrak{T}$-contracted elements for a family $\mathfrak{T}$ of subgroups. Let $\mathcal{T}=\prod_{\U}T_n$ be an internal subgroup of ${}^*G$, with $T_n \in \mathfrak{T}$ for every $n \in \N$. Observe that there exists $M>0$ such that any $g \in {}^*G$ can be expressed as $g=s_1s_2\dots s_m$ such that each $s_i \in {}^*G$ is internally contracted by some element of $\mathcal{T}$. 
	\end{remark}
	With this definition, \cref{contract1} implies the following corollary:
	\begin{corollary}\label{contract2}
		Suppose $G$ is boundedly generated by $\mathfrak{T}$-contracted elements, and let $\mathcal{T}=\prod_{\U}T_n$ be an internal subgroup of ${}^*G$, with $T_n \in \mathfrak{T}$ for every $n \in \N$. Then if $f \in \L^{\infty}_b({}^*D,\W)^{\sim \mathcal{T}}$, then $f \in \L^{\infty}_b({}^*D,\W)^{\sim {}^*G}$ (that is, if $f$ is asymptotically fixed by all elements of $\mathcal{T}$, then it is asymptotically ${}^*G$-fixed).
	\end{corollary}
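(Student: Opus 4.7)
The plan is to apply Lemma \ref{contract1} coordinate-by-coordinate to a bounded-generation factorization of an arbitrary element $g \in {}^*G$, and then assemble the pieces via a short telescoping argument. Fix $g \in {}^*G$. By Remark \ref{bdgen} (the transfer of the bounded-generation hypothesis to the ultrapower), there is a uniform integer $M$ independent of $g$ and a factorization $g = s_1 s_2 \cdots s_m$ with $m \le M$, such that each $s_i \in {}^*G$ is internally contracted by some $t_i \in \mathcal{T}$.

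Since $f \in \L^{\infty}_b({}^*D,\W)^{\sim \mathcal{T}}$, we have $\pi_G(t_i) f - f \in \L^{\infty}_{inf}({}^*D,\W)$ for every $i$. Applying Lemma \ref{contract1} with $x = t_i$ and $y = s_i$ then gives $\pi_G(s_i) f - f \in \L^{\infty}_{inf}({}^*D,\W)$ for each $i \in \{1,\dots,m\}$. Note that Lemma \ref{contract1} is exactly set up for this: its hypothesis is that one element fixes $\tilde f$ and contracts another, and its conclusion hands us asymptotic fixation by the contracted element.

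It remains to combine these $m$ asymptotic fixations into an asymptotic fixation by the product $g = s_1 \cdots s_m$. Because $(\pi_G, \L^{\infty}({}^*D,\W))$ is an asymptotic Banach ${}^*G$-module, the map $\pi_G$ is an internal isometry that is multiplicative modulo $\L^{\infty}_{inf}({}^*D,\W)$ on bounded elements. A telescoping identity then gives
\[
\pi_G(g) f - f \;\equiv\; \sum_{i=1}^{m} \pi_G(s_1 \cdots s_{i-1})\bigl(\pi_G(s_i) f - f\bigr) \pmod{\L^{\infty}_{inf}({}^*D,\W)}.
\]
Each summand on the right is the image under an internal isometry $\pi_G(s_1\cdots s_{i-1})$ of an element of $\L^{\infty}_{inf}({}^*D,\W)$, and is therefore itself infinitesimal; since $m \le M$ is bounded by a standard integer, the sum lies in $\L^{\infty}_{inf}({}^*D,\W)$. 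Thus $\pi_G(g) f - f \in \L^{\infty}_{inf}({}^*D,\W)$, and since $g$ was arbitrary, $f \in \L^{\infty}_b({}^*D,\W)^{\sim {}^*G}$.

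The only step that requires genuine care is the legitimacy of choosing the $s_i$ and $t_i$ as \emph{internal} elements of ${}^*G$ and $\mathcal{T}$ respectively, since bounded generation was originally a property of each $G$ with respect to each $T_n \in \mathfrak{T}$. This is precisely the content of Remark \ref{bdgen} (an application of the transfer principle to the first-order bounded-generation statement), so there is no serious obstacle beyond bookkeeping; the rest is a direct invocation of Lemma \ref{contract1} combined with the asymptotic-action structure of $\pi_G$.
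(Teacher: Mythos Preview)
Your proof is correct and follows essentially the same approach as the paper: factor $g$ via Remark~\ref{bdgen}, apply Lemma~\ref{contract1} to each factor, and conclude asymptotic fixation by $g$. The paper's proof is terser, omitting the explicit telescoping and the remark that $m \le M$ being a standard integer is what makes the finite sum of infinitesimals infinitesimal, but your added detail only clarifies what the paper leaves implicit.
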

	\begin{proof}
		Let $f \in \L^{\infty}_b({}^*D,\W)^{\sim \mathcal{T}}$ and $g \in {}^*G$. Let $g=s_1s_2\dots s_m$ as in \cref{bdgen}. Since $f$ is asymptotically fixed by every element of $\mathcal{T}$, \cref{contract1} implies that it is asymptotically fixed by each $s_i$. In particular, $f$ is asymptotically fixed by $g$. Thus, $f \in \L^{\infty}_b({}^*D,\W)^{\sim {}^*G}$. 
	\end{proof}
	
	\section{Vanishing of $\Ha^2(\Gamma,\W)$}\label{sec-mainproof}
	In this section, we combine the results of the previous sections to prove our main result about uniform stability of lattices in semisimple groups. In \S\ref{ssec-invariants}, we build further tools specialized to $G$ being semisimple groups, which in turn shall be used in \S\ref{ssec-mainproof}. We begin with some structure properties of semisimple groups with regard to the notions of bounded generation by contracting elements, as discussed in \S\ref{ssec-contraction}, and use it prove an asymptotic version of the Mautner property and an asymptotic double ergodicity theorem with coefficients. In \S\ref{ssec-mainproof}, we study the Property-$G(\mathcal{Q}_1,\mathcal{Q}_2)$, which is an assumption on $G$ that would allow us to build on the results of \S\ref{ssec-invariants} to prove our main theorem, namely that $\Ha^2(G,\V)=0$. We then conclude the section by studying Property-$G(\mathcal{Q}_1,\mathcal{Q}_2)$ in more detail in \S\ref{ssec-2half}, and list out a large class of semisimple groups that satisfy it, thus making our main result applicable to them. 
	
	\subsection{Asymptotic Mautner Property and Ergodicity}\label{ssec-invariants}
	In this subsection and henceforth, we shall work with $G$ being a semisimple group of the form $G = \prod^k_{i = 1} {\GG_i}(K_i)$ where for $1\leq i \leq k$, $K_i$ is a local field, and $\GG_i$ is a connected, simply connected, almost $K_i$-simple group. Our goal is to use the results developed in the previous sections to prove that $\Ha^2(\Gamma,\W)=\Ha^2(G,\V)=0$ (recall that $\W$ is the ultraproduct $\prod_{\U}\mathfrak{u}(k_n)$ which is an asymptotic Banach ${}^*\Gamma$-module obtained from the asymptotic homomorphism we start with, while $\V=\L^{\infty}({}^*D,\W)$ is the asymptotic Banach ${}^*G$-module obtained by the induction procedure in \cref{ssec-shapiro}).\\
	Recall \cref{bdgendef} of bounded generation by contracted elements. We shall now see that $G$ is boundedly generated by $\mathfrak{T}$-contracted elements, for the subgroup family $\mathfrak{T}$ being the maximal tori of the radicals of proper parabolic subgroups of $G$. 
	\begin{proposition}\label{rapinchuk}
		Let $G=\GG(K)$ be a connected, simply connected, almost $K$-simple $K$-isotropic group over local field $K$, and let $\mathfrak{T}$ be the family of maximal tori of the solvable radicals of proper parabolic subgroups of $G$. Then $G$ is boundedly generated by $\mathfrak{T}$-contracted elements. 
	\end{proposition}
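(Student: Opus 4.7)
The proof strategy is as follows. Write $Q = L \ltimes U_Q$ for a proper $K$-parabolic, so that the solvable radical of $Q$ is $R_Q = Z(L)^{\circ} \cdot U_Q$, and any maximal torus $T \in \mathfrak{T}$ of $R_Q$ is $K$-conjugate to $Z(L)^{\circ}$. Since $Q$ is proper, the cocharacter of $Z(L)^{\circ}$ cutting out $Q$ has strictly positive pairing with every root in $\Phi(U_Q)$ and strictly negative pairing with every root in $\Phi(U_Q^-)$. Thus there exists $t \in T(K)$ with $t^n u t^{-n} \to 1$ for every $u \in U_Q(K)$ and $t^{-n} v t^n \to 1$ for every $v \in U_Q^-(K)$. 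In particular, every element of $U_Q(K) \cup U_Q^-(K)$ is $T$-contracted in the sense of \cref{bdgendef}, reducing the task to showing that $G$ is boundedly generated by the set $U_Q \cup U_Q^-$, with bound depending only on the $G$-conjugacy class of $Q$.

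I plan to obtain this bounded generation through a Gauss-style decomposition. By the Bruhat decomposition relative to $Q$, $G = \bigsqcup_{w \in W_Q \backslash W/W_Q} Q \dot w Q$ is a \emph{finite} disjoint union, and the Iwasawa factorization $Q = U_Q \cdot L$ lets us write any $g \in G$ as $g = u_1 \ell_1 \dot w \ell_2 u_2$ with $u_i \in U_Q$, $\ell_i \in L$, and $\dot w$ from a fixed finite set of Weyl representatives. It therefore suffices to express the $\dot w$ and arbitrary $\ell \in L$ as bounded products from $U_Q \cup U_Q^-$. The Weyl representatives are handled via the standard identity $s_\alpha = u_\alpha(1) u_{-\alpha}(-1) u_\alpha(1)$ inside the root-$SL_2$ attached to any $\alpha \in \Phi(U_Q)$, which lies in $\langle U_Q, U_Q^- \rangle$; these reflections generate the relative Weyl group since $Q$ contains a minimal $K$-parabolic. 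For $\ell \in L$, we write $L = Z(L)^{\circ} \cdot [L,L]$: the derived group $[L,L]$ is generated by root subgroups $U_\alpha$ with $\alpha \in \Phi_L$, while $Z(L)^{\circ}$ is generated by the one-parameter coroot images $h_\beta(K^\times)$ for $\beta \in \Phi(U_Q)$, each of which sits inside the root-$SL_2$ of $\beta$ and hence inside $\langle U_Q, U_Q^- \rangle$.

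The key technical step is thus to express each $U_\alpha$ with $\alpha \in \Phi_L$ as a bounded product from $U_Q \cup U_Q^-$. Since $Q$ is proper, $\Phi(U_Q) \neq \emptyset$; I select $\beta \in \Phi(U_Q)$ for which $\gamma := \alpha - \beta$ is again a root (possible by varying $\beta$). Then the $Q$-grading of $\gamma$ coincides with that of $-\beta$, so $\gamma \in \Phi(U_Q^-)$. The Chevalley commutator formula
\[
[U_\beta, U_\gamma] \subseteq \prod_{i,j \geq 1} U_{i\beta + j\gamma},
\]
combined with an induction on the height of $\alpha$ in $\Phi_L$, isolates $U_\alpha$ using a bounded number of commutators, each contributing four elements from $U_Q \cup U_Q^-$. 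The resulting uniform bound depends only on the root datum of $G$ and on the Levi type of $Q$; since there are finitely many $G$-conjugacy classes of proper $K$-parabolic subgroups, and since bounded generation is conjugation-invariant (so the bound is constant along each class of $T$), taking the maximum over the finitely many types yields a single $M$ valid for all $T \in \mathfrak{T}$.

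The principal obstacle lies in executing the Chevalley step when the relative root system has multiplicities greater than one --- for instance, for $G$ of type $BC_n$, twisted type ${}^2\!A_n$, or a ramified classical type over a non-archimedean $K$. In such settings the commutator $[U_\beta, U_\gamma]$ may involve contributions from several root subgroups $U_{i\beta + j\gamma}$ simultaneously, and isolating a single $U_\alpha$ will require a coordinated choice of several pairs $(\beta, \gamma)$ together with an analysis of the explicit structure constants. Addressing these cases will rely on the classification of absolutely almost simple $K$-isotropic groups and careful combinatorics of the relative root data; the $K$-rank $\geq 1$ hypothesis is precisely what guarantees enough such pairs to make the argument close. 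In the split, quasi-split, and archimedean settings, the argument proceeds essentially as outlined above without further complication.
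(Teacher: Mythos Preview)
Your reduction to bounded generation of $G$ by $\Delta = U_Q(K) \cup U_Q^-(K)$ matches the paper, but the route from there is quite different. The paper's argument is topological rather than combinatorial: since $G$ is generated by $\Delta$, some product map $\Delta^k \to G$ is a dominant $K$-morphism, so $\Omega = \Delta^\ell$ contains an open neighbourhood of the identity. One then uses a Cartan-type decomposition $G = E \cdot T(K) \cdot R_u(P)(K)$ with $E$ compact and $T$ a maximal $K$-split torus contained in $Q \cap Q^-$. The unipotent radical $R_u(P)(K)$ is absorbed because $T(K)$ normalizes $\Delta$ and the $T(K)$-conjugates of $\Omega \cap R_u(P)(K)$ exhaust $R_u(P)(K)$; the compact piece $E$ lies in $\Omega^s$ for some $s$ since $\Omega$ is open and generates $G$; and $T(K)$ is handled by the coroot-in-root-$SL_2$ trick you also invoke. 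No Levi is ever dismantled root by root.

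Your Bruhat-based approach carries a genuine gap beyond the obstacle you flag. Writing $g = u_1 \ell_1 \dot w \ell_2 u_2$ reduces the problem to expressing an arbitrary $\ell \in L(K)$ as a bounded product from $\Delta$, but the assertion that $[L,L]$ is ``generated by'' its root subgroups $U_\alpha$, $\alpha \in \Phi_L$, does not give \emph{bounded} generation of $[L,L](K)$; you would at minimum have to run a Bruhat decomposition inside $[L,L]$ and handle its own torus, effectively recursing. More seriously, the commutator step of isolating $U_\alpha$ from $[U_\beta, U_\gamma]$ is precisely the case-by-case root combinatorics you identify as the principal obstacle: in non-reduced or high-multiplicity relative root systems the Chevalley expansion $\prod_{i,j \ge 1} U_{i\beta + j\gamma}$ produces further terms in $\Phi_L$, and the induction on height does not obviously terminate with a uniform bound. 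The paper's use of compactness of $E$ and openness of $\Omega$ is exactly what replaces this root-system case analysis, giving a uniform bound over all local fields and all $K$-isotropic types without appeal to classification.
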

	\begin{proof}
		Let $T$ be a maximal torus contained in the solvable radical of a proper parabolic subgroup $Q$ of $G$. It is sufficient to show that for some $M>0$, $G$ is $M$-boundedly generated by $T$-contracted elements. This is because the same bound $M$ works for conjugates of $T$, and upto conjugation, the group $G$ has only finitely many parabolic subgroups.\\
		Let $Q=L\cdot R_{u}(Q)$ be the Levi decomposition of $Q$, where $L$ is the Levi subgroup and $R_{u}(Q)$ is the unipotent radical of $Q$. Then every element of $R_{u}(Q)$ is contracted by some element of $T$. In fact, the same holds in the case of the opposite parabolic $Q^{-}$, that is, every element of $R_{u}(Q^{-})$ too is contracted by some element of $T$. Hence it is sufficient to show that $G$ is boundedly generated by $\Delta \coloneqq R_{u}(Q)(K) \cup R_{u}(Q^{-})(K)$, that is, to show that there exists $\ell>0$ such that $G=\Delta^{\ell}$.\\
		Firstly, note that $G$ is indeed generated by $\Delta$ (\cite[1.5.4]{margulisbook}), and so, there exists $k>0$ so that the product map ${\Delta}^k \to G$ is a dominant $K$-morphism. From \cite[Proposition 3.3]{rapinchuk}, we conclude that $\Omega \coloneqq \Delta^{\ell}$ contains an open neighborhood of the identity. Let $T$ be a maximal $K$-split torus of $G$ contained in $Q \cap Q^{-}$. For any $K$-defined unipotent subgroup $U \subset G$ which is normalized by $T$, we have $$U(K)=\bigcup_{t \in T(K)} t (U(K)\cap \Omega) t^{-1}$$
		Since $T(K)$ normalizes $\Delta$ (and hence also $\Omega$), we see that $U(K) \subseteq \Omega$. In particular, for a $K$-defined minimal parabolic subgroup $P \subset Q$ containing $T$, we conclude that $R_u(P)(K) \subset \Omega$.\\
		Let $D=T\cdot R_u(P)$. Then there exists a compact subset $E \subset G$ so that
		$$G=E \cdot D(K)=E \cdot T(K)\cdot R_u(P)(K)$$
		Since $\Omega$ is open and generates $G$, there exists $s>0$ such that $E \subset \Omega^s$. Hence it is sufficient to show that there exists $t >0$ such that $T(K) \subset \Omega^t$.\\
		There exists $N>0$ such that for any root $\alpha$ of $T$ with coroot $\alpha^{\vee}$ (recall that for a root $\alpha:T(K) \to K$, its dual $\alpha^{\vee}:K \to T(K)$ is such that $\alpha^{\vee}(\alpha)=2$), $\alpha^{\vee}(t)$ for $t \in K^{\times}$ can be written as a product of $N$ elements of $\Delta$ (for instance, $N=6$ when $G=SL_n$). In particular, this means that every element of $T(K)$ can be written as a product of $Nn$ elements of $\Delta$ (where $n$ is the rank of $G$). This concludes the proof.
	\end{proof}
	In fact, \cref{rapinchuk} can be immediately extended to semisimple groups as well, as long as we ensure that the projection of the tori to each factor is non-trivial.
	\begin{corollary}\label{rapinchuk2}
		Let $G$ be a semisimple group of the form $G = \prod^k_{i = 1} {\GG_i}(K_i)$ where for $1\leq i \leq k$, $K_i$ is a local field, and $\GG_i$ is a connected, simply connected, almost $K_i$-simple $K_i$-isotropic group, and let $\mathfrak{T}$ be the family of maximal tori of the radicals of parabolic subgroups of $G$ of the form $Q=\prod_{i=1}^k Q_i$, where each $Q_i$ is a proper parabolic subgroup of ${\GG_i}(K_i)$. Then $G$ is boundedly generated by $\mathfrak{T}$-contracted elements. 
	\end{corollary}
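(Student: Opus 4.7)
The plan is to reduce \cref{rapinchuk2} to the simple case handled in \cref{rapinchuk} by factorizing coordinate-wise. First I would apply \cref{rapinchuk} to each simple factor $\GG_i(K_i)$ to obtain constants $M_i > 0$ such that every element $g_i \in \GG_i(K_i)$ can be written as a product of at most $M_i$ elements, each contracted by some element of a maximal torus contained in the radical of some proper parabolic subgroup of $\GG_i(K_i)$. Setting $M := \sum_{i=1}^k M_i$ will be the candidate bound, uniform over all tori $T \in \mathfrak{T}$.

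The key structural observation enabling the component-wise combination is that a parabolic $Q = \prod_{i=1}^k Q_i$ has solvable radical $R(Q) = \prod_i R(Q_i)$, so any maximal torus $T \leq R(Q)$ decomposes as $T = \prod_i T_i$ with each $T_i$ a maximal torus of $R(Q_i)$. Given $g = (g_1,\ldots,g_k) \in G$, I would factor each coordinate $g_i = s_{i,1} s_{i,2} \cdots s_{i,m_i}$ via \cref{rapinchuk} with $m_i \leq M_i$ and each $s_{i,j}$ contracted by some $t_{i,j} \in T_i$. Letting $\tilde{s}_{i,j} \in G$ denote the element equal to $s_{i,j}$ in the $i$-th slot and to the identity in every other slot, and similarly $\tilde{t}_{i,j} \in G$ with $t_{i,j}$ in the $i$-th slot and identity elsewhere, one obtains the factorization $g = \prod_{i=1}^k \prod_{j=1}^{m_i} \tilde{s}_{i,j}$ involving at most $M$ factors. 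Each $\tilde{s}_{i,j}$ is contracted by $\tilde{t}_{i,j}$ since conjugation of the identity is trivial in the $\ell \neq i$ coordinates, while in the $i$-th coordinate the required contraction $t_{i,j}^{-m} s_{i,j} t_{i,j}^m \to 1$ holds by construction.

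There is no essential obstacle here; the argument is a clean coordinate-wise repackaging of the simple case. The two minor points to verify are that $\tilde{t}_{i,j}$ genuinely lies in the \emph{full} torus $T = \prod_i T_i$ (immediate, since each $T_\ell$ contains the identity), and that the bound $M$ is uniform across all $T \in \mathfrak{T}$. The latter is automatic because \cref{rapinchuk} already provides a uniform bound within each simple factor (since each $\GG_i(K_i)$ has finitely many parabolic subgroups up to conjugation, and conjugation preserves contraction relations), and the coordinate-wise construction above uses only these per-factor bounds.
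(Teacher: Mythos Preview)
Your proposal is correct and is precisely the coordinate-wise argument the paper has in mind; the paper itself does not spell out a proof beyond noting that \cref{rapinchuk} ``can be immediately extended to semisimple groups as well, as long as we ensure that the projection of the tori to each factor is non-trivial'', and your decomposition $T=\prod_i T_i$ with each $T_i$ a maximal torus of $R(Q_i)$ (non-trivial because each $Q_i$ is \emph{proper}) is exactly what makes that caveat work.
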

	Combining \cref{contract2} with \cref{rapinchuk2}, we immediately get the following:
	\begin{corollary}[Asymptotic Mautner Property]\label{Mautner}
		Let $G$ and a parabolic subgroup $Q \leq G$ be as in \cref{rapinchuk2}, and let $N \leq G$ be the radical of $Q$. Then $\V_b^{\sim {}^*N} = \V_b^{\sim {}^*G}$. 
	\end{corollary}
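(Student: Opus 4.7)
The inclusion $\V_b^{\sim {}^*G} \subseteq \V_b^{\sim {}^*N}$ is immediate, since $N \leq G$ gives ${}^*N \leq {}^*G$, so any element asymptotically fixed by the larger group is in particular asymptotically fixed by the smaller one. The content is therefore the reverse inclusion $\V_b^{\sim {}^*N} \subseteq \V_b^{\sim {}^*G}$, and the plan is to extract from $N$ a single internal subgroup that is both contained in ${}^*N$ and large enough (in the bounded-generation sense) to control all of ${}^*G$ via \cref{contract2}.

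Concretely, I would pick a maximal torus $T$ of the radical $N$ of $Q$; since $N$ is the radical of a proper parabolic of the required product form, $T$ belongs by definition to the family $\mathfrak{T}$ appearing in \cref{rapinchuk2}. Taking the constant sequence $T_n = T$, I form the internal subgroup $\mathcal{T} = {}^*T = \prod_{\U} T_n \leq {}^*G$, which is of the form required by \cref{contract2} since each $T_n \in \mathfrak{T}$. Because $T \leq N$ gives ${}^*T \leq {}^*N$, any $f \in \V_b^{\sim {}^*N}$ is in particular asymptotically fixed by $\mathcal{T}$, i.e.\ $f \in \V_b^{\sim \mathcal{T}}$.

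Now I invoke the two ingredients flagged before the statement. By \cref{rapinchuk2}, $G$ is boundedly generated by $\mathfrak{T}$-contracted elements, and this property transfers internally (cf.\ \cref{bdgen}) to say that every $g \in {}^*G$ factors as a bounded product $g = s_1 \cdots s_m$ where each $s_i$ is internally contracted by some element of $\mathcal{T}$. \cref{contract2} then upgrades asymptotic $\mathcal{T}$-fixedness to asymptotic ${}^*G$-fixedness, yielding $f \in \V_b^{\sim {}^*G}$, as desired.

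The argument is really a two-line synthesis once the heavy lifting has been done: \cref{rapinchuk2} is the structural input from the theory of semisimple groups (bounded generation by unipotents and reduction to coroots), while \cref{contract2} packages the internal $L^2$-continuity of $\pi_G$ on internally amenable subgroups (via the twisted-cocycle correction $\pi_{\mathcal{H}}$ of \cref{piH}) into the statement that asymptotic fixedness propagates along contraction. The only point that merits care is the compatibility of conventions: one must verify that the maximal torus of the radical of $Q$ lies in the family $\mathfrak{T}$ used in \cref{rapinchuk2}, which is essentially the definition, and that $\mathcal{T}$ is a legitimate internal subgroup in the sense of \cref{bdgen}, for which the constant sequence suffices.
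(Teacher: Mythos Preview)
Your proof is correct and matches the paper's approach exactly: the paper simply states that the corollary follows by combining \cref{contract2} with \cref{rapinchuk2}, and you have spelled out precisely how that combination works (choose a maximal torus $T \leq N$ in the family $\mathfrak{T}$, take $\mathcal{T} = {}^*T$, and apply \cref{contract2}).
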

	\cref{contract2} and \cref{rapinchuk2} can also be used to get an internal analogue of ergodicity (and double ergodicity) with coefficients as in \cite{bookMonod}. We recall the following classical definitions for comparison:
	\begin{definition}
		For a locally compact second countable group $G$ and a regular $G$-space $S$, the $G$-action on $S$ is said to be \emph{ergodic} if any measurable $G$-invariant function $f:S \to \R$ is essentially constant. The $G$-action on $S$ is said to be \emph{doubly ergodic} if the diagonal $G$-action on $S \times S$ is ergodic.
	\end{definition}
	
	There is no difference if $\R$ above is replaced by any dual separable Banach space: the $G$-action on $S$ is ergodic (resp., double ergodic) iff for any dual separable Banach space $E$, any weak-* measurable $G$-\emph{invariant} map $f:S \to E$ (resp, $f:S \times S \to E$) is essentially constant. Indeed the underlying Borel structures are isomorphic. Even if $E$ is not separable, it suffices that it admits a weak-* measurable (for instance dual linear) injection into a separable dual, since we can then apply ergodicity in the latter space. Here is an illustration of this phenomenon:
	
	\begin{claim}\label{invconstant}
		Let $f\colon G \to \Linf_{w*}(D,\mathfrak{u}(k_n))$ be a $G$-invariant weak-* measurable map. Then $f$ is essentially constant.
	\end{claim}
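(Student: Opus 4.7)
The plan is to reduce the weak-* measurable, vector-valued statement to the scalar case by pairing against elements of the predual. The target $\Linf_{w*}(D,\mathfrak{u}(k_n))$ is the dual of the Bochner--Lebesgue space $L^1(D,\mathfrak{u}(k_n)^{\flat})$, and since $D$ is a second countable measure space and $\mathfrak{u}(k_n)^{\flat}$ is finite-dimensional, this predual is separable. This places us exactly in the setting of the discussion preceding the claim, where ergodicity of the $G$-action suffices to force weak-* measurable invariant maps into a (separable) dual to be essentially constant.

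For each $\lambda$ in the predual, the scalar function $\phi_\lambda(g):=\langle f(g),\lambda\rangle$ is measurable, uniformly bounded by $\|f\|_\infty\|\lambda\|_1$, and inherits $G$-invariance from that of $f$. Since $G$ acts on itself by left translation \emph{transitively}, the $\sigma$-algebra of invariant measurable sets is trivial and $\phi_\lambda$ must be essentially constant, say equal to $c(\lambda)$. The assignment $\lambda\mapsto c(\lambda)$ is linear and bounded by $\|f\|_\infty$, so it defines an element $v_0\in\Linf_{w*}(D,\mathfrak{u}(k_n))$.

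To upgrade this from ``for each $\lambda$, a.e. $g$'' to ``for a.e. $g$, every $\lambda$'', I would choose a countable dense subset $\{\lambda_i\}$ of the separable predual. For each $i$ the set $N_i=\{g:\phi_{\lambda_i}(g)\neq c(\lambda_i)\}$ is null, hence so is the countable union $N=\bigcup_i N_i$. Outside $N$, $\langle f(g),\lambda_i\rangle=\langle v_0,\lambda_i\rangle$ for every $i$; density of $\{\lambda_i\}$ together with continuity of the duality pairing then forces $f(g)=v_0$ in $\Linf_{w*}(D,\mathfrak{u}(k_n))$. The only step requiring any real attention is this synchronization of null sets via separability; the ergodic content itself is the tautology that the left action of $G$ on itself is transitive.
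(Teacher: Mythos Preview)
Your argument is correct. You reduce to the scalar case by pairing against a countable dense family in the separable predual $L^1(D,\mathfrak{u}(k_n)^{\flat})$, apply ergodicity of the left translation action of $G$ on itself to each scalar function, and then synchronize the null sets using separability. One cosmetic remark: you invoke $\|f\|_\infty$ to bound $c(\lambda)$ and thereby place $v_0$ in the dual, but the claim does not explicitly assume $f$ is essentially bounded. This is harmless: once you have the co-null set $G\setminus N$ on which all the pairings $\langle f(g),\lambda_i\rangle$ are constant, simply fix any $g_0\notin N$ and take $v_0:=f(g_0)$, which automatically lies in $\Linf_{w*}(D,\mathfrak{u}(k_n))$.

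The paper's proof proceeds differently: it observes that $\Linf_{w*}(D,\mathfrak{u}(k_n))$ admits a dual linear (hence weak-$*$ measurable) injection into the separable dual space $L^2(D,\mathfrak{u}(k_n))$ --- namely the adjoint of the natural inclusion $L^2\hookrightarrow L^1$, available because $D$ has finite measure --- and then invokes the general principle stated just before the claim, that ergodicity with real coefficients automatically upgrades to any separable dual target (via a Borel-isomorphism argument). Your route sidesteps this injection entirely by exploiting separability of the \emph{predual} rather than separability of the target; this is more elementary and self-contained, while the paper's route fits its broader framework of reducing to separable duals.
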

	\begin{proof}
		The $G$-action on itself is ergodic, so that this follows from the previous discussion after noticing that $\Linf_{w*}(D,\mathfrak{u}(k_n))$ admits a dual linear injection into $L^2(D,\mathfrak{u}(k_n))$, which is dual and separable. This injection is the dual of the natural map from $L^2(D,\mathfrak{u}(k_n))$ to $L^1(D,\mathfrak{u}(k_n))$, which is defined because $D$ has finite measure.
	\end{proof}
	Note that \cref{invconstant} immediately implies the following (wherein no $G$-structure is considered on $\V$):
	\begin{claim}\label{invconstantint}
		Let $F \in \L_b^{\infty}\left( {}^*G,\V\right)$ be ${}^*G$-invariant (that is, for every $x \in {}^*G$ and almost every $g \in {}^*G$, $F(xg)=F(g)$). Then $F$ is essentially constant. \qed
	\end{claim}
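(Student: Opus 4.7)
The statement is the internal counterpart of Claim~\ref{invconstant}, and the natural plan is to derive it by a single application of the transfer principle, rather than by any genuinely new argument.

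First, I would recast Claim~\ref{invconstant} as a first-order statement $\phi(m)$ in the superstructure $V(\R)$, parameterized by the dimension $m \in \N$: ``every weak-$*$ measurable map $f : G \to \Linf_{w*}(D,\mathfrak{u}(m))$ satisfying $f(xg) = f(g)$ for every $x \in G$ and almost every $g \in G$ is essentially constant''. The truth of $\phi(m)$ for every standard $m$ is exactly Claim~\ref{invconstant}, whose content rests on the ergodicity of $G$ acting on itself by left translation together with the dual-linear injection $\Linf_{w*}(D,\mathfrak{u}(m)) \hookrightarrow L^2(D,\mathfrak{u}(m))$ into a separable dual, as already noted before Claim~\ref{invconstant}.

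Next, I would apply the transfer principle to $\forall m\,\phi(m)$. Under the $*$-map, $m$ is replaced by an internal hyperinteger $M$, the coefficient space $\Linf_{w*}(D,\mathfrak{u}(m))$ by its internal analogue, and the quantifiers ``for every $x$'' and ``a.e.~$g$'' by their internal counterparts in the sense made precise in the remark preceding Claim~\ref{invconstantint} (namely, $g$ lies in an internal co-null set). Specializing the transferred statement to the internal hyperinteger $M = \{k_n\}_\U$ identifies the coefficient space with $\L^\infty({}^*D,\W) = \V$. The transferred statement then reads: every internal weak-$*$ measurable $F : {}^*G \to \V$ satisfying $F(xg) = F(g)$ for every $x \in {}^*G$ and internally almost every $g \in {}^*G$ is internally essentially constant.

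Finally, I would verify that the given $F \in \L^\infty_b({}^*G,\V)$ fits the transferred hypothesis: writing $F = \{F_n\}_\U$, each $F_n$ is a bounded weak-$*$ measurable map $G \to \Linf_{w*}(D,\mathfrak{u}(k_n))$, so $F$ is internal and internally weak-$*$ measurable, and its invariance is precisely the hypothesis of the claim. The transferred statement thus produces an internal $c = \{c_n\}_\U \in \V$ with $F(g) = c$ for internally almost every $g \in {}^*G$, i.e., $F$ is (internally) essentially constant, as desired. The only care required is to phrase Claim~\ref{invconstant} in a form to which transfer applies cleanly; I do not expect any substantive obstacle, because the real content has already been dispatched by Claim~\ref{invconstant} and everything else is an exercise in non-standard bookkeeping.
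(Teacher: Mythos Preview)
Your proposal is correct and matches the paper's approach: the paper gives no explicit proof, merely noting that Claim~\ref{invconstant} ``immediately implies'' the result (hence the bare \qed), and your argument via transfer is precisely one clean way to make that implication rigorous. An equivalent and equally short route is to observe directly that if $F = \{F_n\}_\U$ were not coordinatewise $G$-invariant on a large set of indices, one could choose $x_n \in G$ witnessing non-invariance of $F_n$ and obtain $x = \{x_n\}_\U$ with $x\cdot F \neq F$; then apply Claim~\ref{invconstant} to each $F_n$ --- but this is just the transfer argument unpacked by hand.
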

	A classical example of a doubly ergodic action for a semisimple group $G$ (as in \cref{rapinchuk2}) is its action on $G/P$ for $P$ being a minimal parabolic subgroup of $G$ (which implies that the action of $G$ on $G/Q$ is doubly ergodic for any parabolic subgroup $Q$ of $G$).
	
	We shall now use \cref{contract2} with \cref{rapinchuk2} to get a much stronger double ergodicity result for asymptotically ${}^*G$-\emph{equivariant} maps. Classically, this is referred to as double ergodicity \emph{with coefficients} (\cite[Chapter 4.11]{bookMonod}):
	
	A $G$-action on a regular $G$-space $S$ is said to be \emph{ergodic with coefficients in a Banach $G$-module $E$} if any weak-$*$ measurable $G$-equivariant map $f: S \to E$ is essentially constant. Note that this is a much stronger condition than ergodicity; it does not even hold for the transitive action of $G$ on itself. The action is called \emph{doubly ergodic with coefficients} if the corresponding condition holds for the diagonal $G$-action on $S \times S$.\\
	
	In our setting, the goal is to show that for $f \in \L^{\infty}_{b}\left({}^*(G/P)^2,\V \right)^{\sim {}^*G}$, there exists $F \in \V_b$ so that $f-F \in \L^{\infty}_{inf}\left({}^*(G/P)^2,\V \right)$. In other words, we would like to prove an asymptotic version of double ergodicity of the action of $G$ on $G/P$ with coefficients being the asymptotic Banach ${}^*G$-module $\V$. The crucial difference with \Cref{invconstant} is that we involve the $G$-structure on $\V$ this time.
	
	\begin{remark}
		We caution the reader that no (non-trivial) action whatsoever can be ergodic with \emph{arbitrary} coefficients. Therefore, the nature of the asymptotic $G$-module $\V$ intervenes in the result. Specifically, the fact that it involves a finite $G$-invariant measure on $D$ is used and it enters the proof through our appeal to the asymptotic Mautner's lemma (\cref{Mautner}), which ultimately relies on the $L^2$-topology used in \Cref{contract1}. That $L^2$-continuity argument, introduced in \S\ref{ssec-contraction}, is not available in the absence of a finite invariant measure (and indeed the statements can fail in that absence).\\
	\end{remark}
	
	We begin with a series of short claims that shall be used in the proof of \cref{lemma-ergodicity1}. The first of these states that it is sufficient to work with $f \in \L^{\infty}_{b}\left({}^*(G/A),\V \right)^{\sim {}^*G}$ for $A=P \cap wPw^{-1}$ for some $w \in G$.
	\begin{claim}\label{GPA}
		There exists $w \in G$ such that for $A=P \cap wPw^{-1}$, the map 
		$$G/A \to G/P \times G/P$$
		$$gA \mapsto (gP,gwP)$$
		is a measure space isomorphism respecting the action of $G$.
	\end{claim}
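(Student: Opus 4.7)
The plan is to apply the Bruhat decomposition of $G$ with respect to the minimal parabolic $P$. Recall that in each factor $\GG_i(K_i)$ one has $\GG_i(K_i) = \bigsqcup_{w\in W_i} P_iwP_i$ where $W_i$ is the relative Weyl group; taking the product over $i$ yields $G = \bigsqcup_{w \in W} PwP$ with $W = \prod_i W_i$. There is a unique longest element $w_0 = \prod_i w_{0,i} \in W$ whose double coset $Pw_0 P$ is Zariski open in $G$ and of full Haar measure, the complement being a finite union of strictly lower-dimensional Bruhat cells.

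Next, I would translate this decomposition to the diagonal $G$-action on $G/P \times G/P$. The assignment $g \mapsto (eP, gP)$ descends to a bijection between $P \backslash G / P$ and the set of $G$-orbits on $G/P \times G/P$; explicitly, the orbit corresponding to a double coset $PwP$ consists of pairs $(gP, gwP)$ for $g \in G$. Applied to $w = w_0$, this identifies the open-cell orbit with the image of the map $gA \mapsto (gP, gw_0 P)$, where $A = P \cap w_0 P w_0^{-1}$ is precisely the $G$-stabilizer of the basepoint $(eP, w_0 P)$. Hence taking $w = w_0$ yields a $G$-equivariant Borel bijection from $G/A$ onto a $G$-invariant conull subset of $G/P \times G/P$.

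To upgrade this to a genuine measure space isomorphism, I would observe that both $G/A$ and $G/P \times G/P$ carry canonical $G$-quasi-invariant measure classes, and such a class on a transitive $G$-space (or on a conull $G$-orbit in a $G$-space) is uniquely determined up to equivalence by $G$-equivariance. The orbit map above is therefore automatically measure-class preserving and, since the complement of its image is null, defines an isomorphism of measure spaces respecting the $G$-action, after the standard normalization of the Haar measures. The main (though routine) point to verify is the measure-zero statement for the non-open Bruhat cells, which reduces factorwise to the standard fact that they are proper analytic submanifolds of strictly smaller dimension in $\GG_i(K_i)$ over each local field $K_i$.
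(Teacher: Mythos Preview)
Your argument is correct and is precisely the standard Bruhat decomposition argument underlying this claim; the paper does not give a proof at all but simply cites \cite[Theorem~3.13 and Corollary~3.15]{tits}, which establish exactly the orbit structure of $G/P \times G/P$ and the conullity of the open cell that you describe. So you have supplied the details the paper omits, via the same route.
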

	\begin{proof}
		The result is a consequence of \cite[Theorem 3.13 and Corollary 3.15]{tits}. 
	\end{proof}
	The next claim is a special case of the asymptotic Mautner's lemma \cref{Mautner} for $A$.
	\begin{claim}\label{MautnerA}
		Let $A$ be as in \cref{GPA}. Then $\V_b^{\sim {}^*A} = \V_b^{\sim {}^*G}$.
	\end{claim}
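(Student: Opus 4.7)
The plan is to locate a maximal $K$-split torus of $G$ sitting inside $A$, to verify that this torus belongs to the family $\mathfrak{T}$ of \cref{rapinchuk2}, and then to invoke \cref{contract2} to obtain $\V_b^{\sim {}^*T} \subseteq \V_b^{\sim {}^*G}$; combined with the trivial inclusions this will yield the asserted equality. The reverse inclusion $\V_b^{\sim {}^*G} \subseteq \V_b^{\sim {}^*A}$ is of course immediate from $A \subseteq G$.

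First I would analyse the structure of $A$. Since the map $G/A \to G/P \times G/P$ of \cref{GPA} is a measure-space isomorphism, the double coset $PwP$ must be open and dense in $G$, hence equal to the big Bruhat cell $P w_0 P$, where $w_0$ is (a representative of) the longest element of the relative Weyl group. Writing $w = p_1 w_0 p_2$ with $p_1, p_2 \in P$, one has $wPw^{-1} = p_1(w_0 P w_0^{-1}) p_1^{-1} = p_1 P^{-} p_1^{-1}$, where $P^{-}$ is the opposite minimal parabolic, so that $A = P \cap p_1 P^{-} p_1^{-1} = p_1(P \cap P^{-})p_1^{-1} = p_1 L p_1^{-1}$ for $L$ the standard Levi factor of $P$. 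In particular $T := p_1 T_0 p_1^{-1}$ lies in $A$ for any maximal $K$-split torus $T_0 \subseteq L$.

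Next, writing the semisimple group as $G = \prod_{i=1}^k \GG_i(K_i)$ and $P = \prod_i P_i$ with each $P_i$ a proper minimal parabolic, the torus $T$ (being a conjugate by $p_1 \in P$ of a torus with non-trivial projection to every factor) still projects non-trivially onto each $\GG_i(K_i)$. Moreover $T$ is a maximal torus of the solvable radical $T \cdot R_u(P)$ of the proper parabolic $p_1 P p_1^{-1} = P$. Hence $T$ belongs to the family $\mathfrak{T}$ of \cref{rapinchuk2}, and $G$ is boundedly generated by $T$-contracted elements.

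Finally, by \cref{bdgen} this bounded generation transfers internally to ${}^*G$ and ${}^*T$, so \cref{contract2} yields $\V_b^{\sim {}^*T} \subseteq \V_b^{\sim {}^*G}$. Since ${}^*T \subseteq {}^*A$, asymptotic fixedness by ${}^*A$ is a fortiori asymptotic fixedness by ${}^*T$, giving $\V_b^{\sim {}^*A} \subseteq \V_b^{\sim {}^*T} \subseteq \V_b^{\sim {}^*G}$, which together with the trivial reverse inclusion completes the proof. The only substantive step beyond routine invocation of previous results is the Bruhat-decomposition bookkeeping of the first paragraph, verifying that for the specific $w$ produced by \cref{GPA} the subgroup $A$ really does contain a full maximal split torus lying in $\mathfrak{T}$.
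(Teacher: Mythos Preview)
Your proposal is correct and follows essentially the same approach as the paper: the paper's proof is the single sentence ``This is again a consequence of \cref{rapinchuk2} and \cref{contract2}, since $A$ contains a maximal torus of $G$,'' and you have supplied the Bruhat-decomposition bookkeeping that justifies that last clause. Your identification of $A$ as a conjugate of the Levi factor $P\cap P^{-}$ (via $w$ lying in the big cell), and hence as containing a conjugate of the maximal split torus lying in $\mathfrak{T}$, is exactly the content the paper leaves implicit.
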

	\begin{proof}
		This is again a consequence of \cref{rapinchuk2} and \cref{contract2}, since $A$ contains a maximal torus of $G$. 
	\end{proof}
	Now we obtain a correction of \emph{asymptotically} ${}^*G$-invariant maps to \emph{truly} ${}^*G$-invariant maps:
	\begin{claim}\label{inv1}
		Suppose $f \in \L_b^{\infty}\left( {}^*G,\V\right)$ is such that for every $x \in {}^*G$ and almost every $g \in {}^*G$, $f(xg)-f(g) \in \V_{inf}$. Then there exists $F \in \L_b^{\infty}\left( {}^*G,\V\right)$ such that $F-f \in \L_{inf}^{\infty}\left( {}^*G,\V \right)$ and for every $x \in {}^*G$ and almost every $g \in {}^*G$, $F(xg)=F(g)$.
	\end{claim}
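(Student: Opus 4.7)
The plan is to adapt the argument of \cref{correction} and \cref{almostGequi} to the present setting; the key difference is that the invariance now asked of $F$ is invariance under left translation on the domain ${}^*G$ (which in view of \cref{invconstantint} means essential constancy), rather than equivariance with respect to an action on the coefficient module $\V$. Concretely, I would introduce the internal cocycle $\alpha\colon{}^*G\to\L^{\infty}({}^*G,\V)$ defined by
\[
\alpha(x)(g):=f(g)-f(x^{-1}g).
\]
A direct telescoping computation shows that $\alpha$ is a bounded internal $1$-cocycle for the left-translation action $(\rho(x)h)(g)=h(x^{-1}g)$ on $\L^{\infty}({}^*G,\V)$, namely $\alpha(xy)=\alpha(x)+\rho(x)\alpha(y)$.

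The first substantive step is to convert the pointwise-in-$x$ hypothesis into a uniform internal estimate $\|\alpha\|\in{}^*\R_{inf}$. Applying the hypothesis with $x^{-1}$ in place of $x$ yields $\alpha(x)\in\L^{\infty}_{inf}({}^*G,\V)$ for \emph{every} $x\in{}^*G$, including every nonstandard $x$. If one had $\|\alpha\|>r$ for some standard $r>0$, then, by definition of the internal essential supremum, there would be an internal set of $x$'s of positive measure with $\|\alpha(x)\|>r$; extracting any internal element $x_0$ from that set would then contradict $\alpha(x_0)\in\L^{\infty}_{inf}$. Hence $\|\alpha\|$ is infinitesimal. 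The fact that the null set of bad $g$'s is allowed to depend on $x$ causes no trouble since one is working with essential suprema throughout.

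The remainder of the proof is a direct analogue of \cref{correction}: for each $n$, the module $L^{\infty}_{w*}(G,V_n)$ equipped with the left-translation action of $G$ (with $V_n=L^{\infty}_{w*}(D,\mathfrak{u}(k_n))$ treated as a trivial coefficient module) is relatively injective, so $\Hb^1(G,L^{\infty}_{w*}(G,V_n))=0$ with a uniform quantitative bound $\|\beta_n\|\le C\|\alpha_n\|$, see \cite{bookMonod}. Taking $\beta:=\{\beta_n\}_{\U}$ yields $\beta\in\L^{\infty}_{inf}({}^*G,\V)$ with $d^0\beta=\alpha$ internally, i.e.\ $\beta(x^{-1}g)-\beta(g)=f(g)-f(x^{-1}g)$ for every $x\in{}^*G$ and almost every $g$. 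Setting $F:=f+\beta\in\L^{\infty}_b({}^*G,\V)$ then gives $F-f=\beta\in\L^{\infty}_{inf}({}^*G,\V)$, and rearranging the previous identity produces $F(x^{-1}g)=F(g)$, equivalently $F(xg)=F(g)$, for every $x\in{}^*G$ and almost every $g$. The main obstacle is the pointwise-to-uniform step for $\|\alpha\|$ described above; once that is secured, everything reduces to the standard quantitative $\Hb^1$-vanishing on a relatively injective module, exactly as in \cref{correction}.
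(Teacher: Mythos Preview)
Your proposal is correct and follows essentially the same approach as the paper, which simply says the proof is the same as \cref{correction} with $L^{\infty}_{w*}(G,L^{\infty}_{w*}(D,\mathfrak{u}(k_n)))$ as the relatively injective module. The only point you elaborate beyond the paper is the pointwise-to-uniform step (passing from $\alpha(x)\in\L^{\infty}_{inf}$ for every $x$ to $\|\alpha\|\in{}^*\R_{inf}$), which the paper's proof of \cref{correction} also uses implicitly in the parenthetical ``hence $\|\alpha\|\in{}^*\R_{inf}$''; your extraction argument via an internal positive-measure set is the standard way to justify this.
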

	\begin{proof}
		The proof is the same as in \cref{correction}, but now using the fact that $\L^{\infty}\left( {}^*G,\V\right)$ is the ultraproduct of relatively injective Banach $G$-modules $\Linf_{w*}(G,\Linf_{w*}(D,\mathfrak{u}(k_n)))$.
	\end{proof}
	With the above statement at hand, we now prove that: 
	\begin{theorem}[Double Ergodicity with coefficients]\label{lemma-ergodicity1}
		Let $G$ be as in \cref{rapinchuk2}. For $f \in \L^{\infty}_{b}\left({}^*(G/P)^2,\V \right)^{\sim {}^*G}$, there exists $v \in \V_b^{\sim {}^*G}$ so that $f-v \in \L^{\infty}_{inf}\left({}^*(G/P)^2,\V \right)$. 
	\end{theorem}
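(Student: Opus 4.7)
The plan is to imitate the classical proof of double ergodicity with coefficients for higher rank semisimple groups, translating every step into its asymptotic counterpart. Classically, one uses \cref{GPA} to identify $(G/P)^2$ measurably with $G/A$ for $A=P\cap wPw^{-1}$, lifts the equivariant map to $G$, and combines left $G$-equivariance with right $A$-invariance to force it to be determined by a single value $v\in V^A$; Mautner's lemma then promotes $v$ to lie in $V^G$. The novelty here is that our actions on $\V$ are only asymptotic, so the classical equivariance must be corrected into a true equivariance before the Mautner argument can be carried out. This correction is provided by \cref{almostGequi} in the auxiliary space $\L^{\infty}_b({}^*G,\W)^{\sim{}^*\Gamma}$ (which carries a genuine right-regular ${}^*G$-action), and the final Mautner step is supplied by \cref{MautnerA}, which applies because $A$ contains a maximal torus of $G$ and so \cref{rapinchuk2} and \cref{contract2} are available.

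Concretely, by \cref{GPA} it suffices to prove the statement for $f\in\L^{\infty}_b({}^*(G/A),\V)^{\sim {}^*G}$. Lift $f$ along ${}^*G\to{}^*(G/A)$ to $F\in\L^{\infty}_b({}^*G,\V)$, which is truly right ${}^*A$-invariant and asymptotically left ${}^*G$-equivariant under $\pi_G$. Post-compose with $\zeta$ of~(\cref{zeta}) to obtain $F':=\zeta\circ F$, an internal map ${}^*G\to\L^{\infty}_b({}^*G,\W)^{\sim{}^*\Gamma}$ that inherits the right ${}^*A$-invariance and is asymptotically ${}^*G$-equivariant under the combined left-regular/right-regular action appearing in \cref{almostGequi}, the latter thanks to the ${}^*G$-equivariance of $\tilde\zeta$. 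Apply \cref{almostGequi} with $m=1$ to obtain $\beta\in\L^{\infty}_{inf}({}^*G,\L^{\infty}({}^*G,\W))$ such that $F'':=F'-\beta$ is truly ${}^*G$-equivariant, and hence $F''(g)=g\cdot W$ for $W:=F''(1)\in\L^{\infty}_b({}^*G,\W)^{\sim{}^*\Gamma}$. The right ${}^*A$-invariance of $F'$ together with the isometry of the right-regular ${}^*G$-action then yields the uniform estimate $\sup_{a\in{}^*A}\|a\cdot W-W\|\le 2\|\beta\|$, which is infinitesimal. Setting $v:=\theta(W)\in\V_b$ and using the ${}^*G$-equivariance of $\tilde\theta$ (which intertwines the right-regular action with $\pi_G$) gives $v\in\V_b^{\sim{}^*A}$, and \cref{MautnerA} promotes this to $v\in\V_b^{\sim{}^*G}$.

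Tracing back, $F(g)-v=\theta(g\cdot W-W)+\theta(\beta(g))$. The second summand is uniformly infinitesimal in $g$ since $\beta\in\L^{\infty}_{inf}$ and $\theta$ is norm-decreasing. The main obstacle will be controlling the first summand uniformly in $g$: the asymptotic Mautner property only gives pointwise asymptotic ${}^*G$-fixedness of $v$, whereas the conclusion $f-v\in\L^{\infty}_{inf}({}^*(G/A),\V)$ demands a uniform infinitesimal bound on $\|g\cdot W-W\|$ across all $g\in{}^*G$. I plan to bridge this gap by using the bounded generation provided by \cref{rapinchuk2} to decompose any $g\in{}^*G$ into a bounded number of factors, each internally contracted by a torus inside $A$, and then propagating the uniform estimate $\|a\cdot W-W\|\le 2\|\beta\|$ for $a\in{}^*A$ along these factors using the isometric right-regular ${}^*G$-action together with the $L^2$-continuity of the corrected action from \cref{2cont} and \cref{piH}. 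This should yield a uniform infinitesimal bound on $\|g\cdot W-W\|$ and thereby the required uniform estimate $f-v\in\L^{\infty}_{inf}({}^*(G/A),\V)$.
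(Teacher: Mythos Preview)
Your route differs from the paper's in a meaningful way. The paper never passes through $\L^{\infty}_b({}^*G,\W)^{\sim{}^*\Gamma}$ via $\zeta$ and \cref{almostGequi}; instead it \emph{untwists} the asymptotic equivariance directly inside $\V$ by setting $f'(g):=\pi_G(g)^{-1}f(g\,{}^*A)$, observes that $f'$ is asymptotically ${}^*G$-\emph{invariant} (for the action on the domain alone), corrects it to a truly invariant map via \cref{inv1}, and then applies plain ergodicity (\cref{invconstantint}) to see that the corrected map is essentially a single constant $v\in\V_b$. A Fubini/section argument then shows $v\in\V_b^{\sim{}^*A}$, after which \cref{MautnerA} finishes. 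Your approach is a legitimate alternative, though the assignment $W:=F''(1)$ is not literally well-defined since $F''$ is only an $L^\infty$-class; what you really need is that $g\mapsto g^{-1}\cdot F''(g)$ is truly ${}^*G$-invariant and hence essentially constant, which is precisely the reasoning of \cref{invconstantint}.

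More importantly, the ``main obstacle'' you flag is a non-obstacle, and the proposed detour through bounded generation and $L^2$-continuity is unnecessary. Asymptotic ${}^*G$-fixedness of $v$, meaning $\pi_G(g)v-v\in\V_{inf}$ for \emph{every} $g\in{}^*G$, already forces the uniform estimate $\{\operatorname{ess\,sup}_{g\in G}\|\pi_{G,n}(g)v_n-v_n\|\}_\U\in{}^*\R_{inf}$ by a one-line internality argument: the function $g\mapsto\|\pi_G(g)v-v\|$ is internal, so if its internal essential supremum were $\ge\epsilon$ for some real $\epsilon>0$, then choosing at each level $n$ a near-witness $g_n\in G$ would produce $g=\{g_n\}_\U\in{}^*G$ with $\pi_G(g)v-v\notin\V_{inf}$, a contradiction. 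This is exactly the (implicit) step that closes the paper's own argument when it passes from the uniformly infinitesimal $f(g\,{}^*A)-\pi_G(g)v$ together with the pointwise condition $v\in\V_b^{\sim{}^*G}$ to the uniform conclusion $f-v\in\L^{\infty}_{inf}$. Thus, once \cref{MautnerA} hands you $v\in\V_b^{\sim{}^*G}$, your trace-back $F(g)-v=\theta(g\cdot W-W)+\theta(\beta(g))$ is immediately uniformly infinitesimal in $g$ with no further work; the elaborate propagation scheme you outline would essentially be re-proving \cref{contract2} while tracking constants that internality already controls for free.
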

	\begin{proof}
		From \cref{GPA}, we know that this is equivalent to proving that for $f \in \L^{\infty}_{b}\left({}^*(G/A),\V \right)^{\sim {}^*G}$, there exists $v \in \V_b$ so that $f-v \in \L^{\infty}_{inf}\left({}^*(G/A),\V \right)$. Define $f' \in \L^{\infty}_{b}\left({}^*G,\V \right)$ as
		$f'(g) \coloneqq \pi_G(g)^{-1}f(g \,{}^*A)$. Since $\pi_G$ is an asymptotic ${}^*G$-action and $f$ is asymptotically ${}^*G$-equivariant, for every $x \in {}^*G$ and almost every $g \in {}^*G$, 
		$$f'(xg) - f'(g) \in \V_{inf}$$
		In other words, $f'$, by construction, is asymptotically ${}^*G$-invariant. From \cref{inv1}, there exists $F \in \L_b^{\infty}\left( {}^*G,\V\right)$ such that $F-f \in \L_{inf}^{\infty}\left( {}^*G,\V \right)$ and for every $x \in {}^*G$ and almost every $g \in {}^*G$, $F(xg)=F(g)$. Now from \cref{invconstantint}, $F$ is essentially constant, and hence there exists $v \in \V_b$ such that for almost every $g \in {}^*G$, $f'(g)-v \in \V_{inf}$. Thus, for almost every $g \in {}^*G$,
		$$f(g {}^*A) - \pi_G(g)v \in \V_{inf}$$
		We now claim that $v \in \V_b^{\sim {}^*A}$. Then, by \cref{MautnerA}, we have $v \in \V_b^{\sim {}^*G}$, allowing us to conclude that for almost every $g \in {}^*G$, $f(g {}^*A) - v \in \V_{inf}$. Hence it is left to prove that $v \in \V_b^{\sim {}^*A}$.\\
		Define the pullback $\hat{f} \in \L^{\infty}_{b}\left({}^*G,\V \right)$ by $\hat{f}(g) \coloneqq f(g {}^*A)$, and fix a measurable section $s:G/A \to G$ to define a measure isomorphism 
		$$G/A \times A \to G$$
		$$(gA,a) \mapsto s(gA)a$$
		The above isomorphism can be defined internally to get an internal measure isomorphism ${}^*(G/A) \times {}^*A \to {}^*G$ (with the internal section map which we continue denoting $s$ for simplicity), and we have that for almost every $g{}^*A \in {}^*(G/A)$ and almost every $a \in {}^*A$,
		$$\hat{f}(s(g{}^*A)a) - \pi_G(s(g{}^*A)a)v \in \V_{inf}$$
		Since $\hat{f}(s(g{}^*A)a)=\hat{f}(s(g{}^*A))$, the above simplifies to the following: for almost every $g{}^*A \in {}^*(G/A)$ and almost every $a \in {}^*A$
		$$\pi(s(g{}^*A))v - \pi_G(s(g{}^*A)a)v \in \V_{inf}$$
		Now we fix some $g {}^*A \in {}^*(G/A)$ so that for this $g{}^*A$, we have that for almost every $a \in {}^*A$,
		$$\pi_G(s(g{}^*A))v - \pi_G(s(g{}^*A)a)v \in \V_{inf}$$
		Since $\pi_G$ is an asymptotic action of ${}^*G$, $\pi_G(s(g{}^*A)a)v-\pi_G(s(g{}^*A))\pi_G(a)v \in \V_{inf}$. And so, we conclude that, for almost every $a \in {}^*A$,
		$$v - \pi_G(a)v \in \V_{inf}$$
		This means that there exists an internal subset $\mathcal{A}=\{A'_n\}_{\U}$ of ${}^*A$ with $A'_n$ being a conull subset of $A$ for $n \in \U$, such that $v \in \V_b^{\sim \mathcal{A}}$. But note that $\mathcal{A}\cdot\mathcal{A} = {}^*A$ (since if $A_n'$ is a co-null subset of the locally compact group $A$, then $A_n'A_n' = A$). Hence $v \in \V_b^{\sim {}^*A}$ as claimed.
	\end{proof}
	\cref{lemma-ergodicity1} tells us that for any $f \in \L^{\infty}_{b}\left({}^*(G/P)^2,\V \right)^{\sim {}^*G}$ the induced map $\tilde{f} \in\tilde{\L}^{\infty}\left({}^*(G/P)^2,\V \right)^{{}^*G}$ is essentially constant. This is the asymptotic analogue of the classical result that the $G$-action on $G/P$ is doubly ergodic with coefficients in a suitable $G$-module.\\ 
	We conclude this subsection by showing that $\Ha^2(Q,\V)=0$ for a proper parabolic subgroup $Q$ of $G$ as in \cref{rapinchuk2} such that $\Hb^2(Q,\R)=0$ and $\Hb^3(Q,\R)$ is Hausdorff. We begin with the following proposition that is an application of \cref{Mautner}:
	\begin{proposition}\label{Qinv1}
		Let $G$ and a parabolic subgroup $Q \leq G$ be as in \cref{rapinchuk2}. Let $\omega \in \L^{\infty}_{b}(({}^*Q)^3,\V)^{\sim {}^*Q}$ be an asymptotic $2$-cocycle for $Q$ with coefficients in $\V$. Then there exists an asymptotic $2$-cocycle $\omega' \in \L^{\infty}_{b}(({}^*Q)^3,\V)^{\sim {}^*Q}$ for $Q$ that takes values in $\V_b^{\sim {}^*G}$ such that 
		$$\tilde{\omega} - \tilde{\omega}' = \tilde{d}^1\tilde{\alpha_1}$$
		where $\alpha_1 \in \L^{\infty}_{b}(({}^*Q)^2,\V)^{\sim {}^*Q}$.
	\end{proposition}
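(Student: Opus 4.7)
The plan is to exploit that the radical $N$ is a normal amenable subgroup of $Q$, and pass to the Zimmer-amenable $Q$-resolution built on $Q/N$. Since $N$ is amenable, the translation $Q$-action on $Q/N$ is Zimmer-amenable, so \cref{zimmer}, applied with $Q$ in place of $G$ and with $S = Q/N$, furnishes asymptotic ${}^*Q$-homotopy equivalences $k^{\bullet}\colon \L^{\infty}(({}^*Q)^{\bullet+1},\V) \to \L^{\infty}(({}^*(Q/N))^{\bullet+1},\V)$ and $j^{\bullet}$ in the reverse direction, between the two asymptotic ${}^*Q$-cochain complexes, both of which compute $\Ha^{\bullet}(Q,\V)$. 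I would set $\eta \coloneqq k^2\omega \in \L^{\infty}_b(({}^*(Q/N))^3,\V)^{\sim {}^*Q}$.

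\textbf{Automatic $N$-invariance plus Mautner.}
The crucial observation is that since $N$ is normal in $Q$, every element $n \in N$ acts trivially on $Q/N$ under the $Q$-action: $n \cdot xN = nxN = x(x^{-1}nx)N = xN$, because $x^{-1}nx \in N$. By transfer the same holds internally, so $n \cdot x = x$ for every $n \in {}^*N$ and almost every $x \in {}^*(Q/N)$. Combined with asymptotic ${}^*Q$-equivariance of $\eta$, this forces
\begin{equation*}
\pi_Q(n)\,\eta(x_0,x_1,x_2) - \eta(x_0,x_1,x_2) \;=\; \pi_Q(n)\,\eta(x_0,x_1,x_2) - \eta(nx_0,nx_1,nx_2) \;\in\; \V_{inf}
\end{equation*}
for every $n \in {}^*N$ and almost every $(x_0,x_1,x_2) \in ({}^*(Q/N))^3$, so $Im(\eta) \subseteq \V_b^{\sim {}^*N}$. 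The asymptotic Mautner property (\cref{Mautner}) identifies $\V_b^{\sim {}^*N} = \V_b^{\sim {}^*G}$, so $Im(\eta) \subseteq \V_b^{\sim {}^*G}$.

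\textbf{Pullback, cohomology relation, and obstacles.}
Set $\omega' \coloneqq j^2 \eta \in \L^{\infty}_b(({}^*Q)^3,\V)^{\sim {}^*Q}$. By the first observation in the discussion preceding equation~(\cref{invariants}), the containment $Im(\eta) \subseteq \V_b^{\sim {}^*G}$ is preserved by $j^2$, so $Im(\omega') \subseteq \V_b^{\sim {}^*G}$; and $\omega'$ is itself an asymptotic $2$-cocycle because $j^{\bullet}$ is a chain map. Using that $j^{\bullet}\circ k^{\bullet}$ is asymptotically ${}^*Q$-homotopic to the identity, the second observation there produces $\alpha_1 \in \L^{\infty}_b(({}^*Q)^2,\V)^{\sim {}^*Q}$ with $\tilde{\omega} - \tilde{\omega}' = \tilde{d}^1\tilde{\alpha_1}$, as required. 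The argument is essentially a clean assembly of the resolution comparison (\cref{zimmer}), the elementary fact that a normal subgroup acts trivially on its quotient, and the asymptotic Mautner lemma (\cref{Mautner}); no conceptual obstacle arises. The only mildly delicate bookkeeping is to verify that the ``almost every'' clauses and $\V_{inf}$-sized error terms propagate correctly through $k^2$ and $j^2$, which is already built into the definitions of asymptotic equivariance and asymptotic homotopy equivalence.
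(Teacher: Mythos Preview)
Your proof is correct and follows essentially the same approach as the paper: both pass to the $Q/N$-resolution via \cref{zimmer}, use normality of $N$ to force $Im(k^2\omega) \subseteq \V_b^{\sim {}^*N}$, upgrade this to $\V_b^{\sim {}^*G}$ via the asymptotic Mautner property, and then invoke the observations around (\ref{invariants}) to produce $\omega'$ and $\alpha_1$. You have in fact spelled out more explicitly than the paper why normality of $N$ yields the ${}^*N$-invariance of the image.
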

	\begin{proof}
		Let $N$ be the unipotent radical of $Q$, which is a normal amenable closed subgroup of $Q$. Since $N$ is amenable, \cref{zimmer} tells us that $\Ha^{\bullet}(Q,\V)$ can be computed as the asymptotic cohomology of the asymptotic ${}^*Q$-cochain complex
		$$\begin{tikzcd}
		0 \arrow[r,"d^0"] & \L^{\infty}({}^*(Q/N),\V)  \arrow[r, "d^{1}"]&  \L^{\infty}(({}^*(Q/N))^2,\V) \arrow[r, "d^2"] & \L^{\infty}(({}^*(Q/N))^3,\V) \arrow[r, "d^3"]  & \dots
		\end{tikzcd}$$
		Let $k^{\bullet}:\L^{\infty}(({}^*Q)^{\bullet},\V) \to \L^{\infty}(({}^*(Q/N))^{\bullet},\V)$ and $j^{\bullet}:\L^{\infty}(({}^*(Q/N))^{\bullet},\V) \to \L^{\infty}(({}^*Q)^{\bullet},\V)$ be the asymptotic ${}^*Q$-homotopy equivalences. Consider $k^2\omega \in \L^{\infty}_b(({}^*(Q/N))^3,\V)^{\sim {}^*Q}$. Since $N$ is normal in $Q$,  $Im(k^2\omega) \subseteq \V_b^{\sim {}^*N}$. By \cref{Mautner}, since $\V_b^{\sim {}^*N}=\V_b^{\sim {}^*G}$, this implies that $Im(k^2\omega) \subseteq \V_b^{\sim {}^*G}$. The conclusion then follows from (\cref{invariants}). 
	\end{proof}
	Thus, the obstacle to the asymptotic $2$-cocycle $\omega \in \L^{\infty}_{b}(({}^*Q)^3,\V)^{\sim {}^*Q}$ being an asymptotic $2$-coboundary is the asymptotic $2$-cocycle $\omega' \in \L^{\infty}_{b}(({}^*Q)^3,\V)^{\sim {}^*Q}$ that takes values in $\V_b^{\sim {}^*G}$. We shall now see how to handle this using assumptions on $Q$. \\
	Consider the asymptotic $2$-cocycle $\omega' \in \L^{\infty}_{b}(({}^*Q)^3,\V)^{\sim {}^*Q}$ for $Q$ that takes values in $\V^{\sim {}^*G}$. The first step is to correct $\omega'$ to an element $\omega'' \in \L^{\infty}_{b}(({}^*Q)^3,\W)^{{}^*Q}$ with $Im(\omega'') \subseteq \W_b^{\sim {}^*\Gamma}$. Recall the internal map $\theta: \L^{\infty}({}^*G,\W) \to \L^{\infty}({}^*D,\W)$ as defined in \cref{theta}. 
	\begin{lemma}\label{qinv2}
		Given $\omega' \in \L^{\infty}_{b}(({}^*Q)^3,\V)^{\sim {}^*Q}$ with $Im(\omega') \subseteq \V^{\sim {}^*G}$, there exists $$\omega'' \in \L^{\infty}_{b}(({}^*Q)^3,\L^{\infty}({}^*G,\W))^{{}^*Q}$$ with $Im(\omega'') \subseteq \left(\L^{\infty}({}^*G,\W)^{\sim {}^*\Gamma}\right)^{{}^*G} = \W_b^{\sim {}^*\Gamma}$ such that $\omega'-\theta \cdot \omega'' \in \L^{\infty}_{inf}(({}^*Q)^3,\V)$.
	\end{lemma}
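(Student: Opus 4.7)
The plan is to build $\omega''$ out of $\omega'$ in two stages: first lift $\omega'$ pointwise via the section $\zeta$ of \cref{zeta} to produce a candidate valued in $\L^{\infty}({}^*G,\W)$, and then rectify it with an infinitesimal correction so that the resulting object is genuinely ${}^*Q$-equivariant and takes values in the space of truly ${}^*G$-fixed elements.

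Concretely, I would set $\omega_1 \coloneqq \zeta \circ \omega' \in \L^{\infty}(({}^*Q)^3, \L^{\infty}({}^*G,\W))$. Since $\zeta$ is internal, bounded, and factors through $\L^{\infty}_b({}^*G,\W)^{\sim {}^*\Gamma}$, the map $\omega_1$ inherits boundedness and its image automatically lies in $\L^{\infty}_b({}^*G,\W)^{\sim {}^*\Gamma}$. Moreover, $\tilde\zeta$ is ${}^*G$-equivariant (\cref{WbVG}), so the hypothesis $Im(\omega') \subseteq \V_b^{\sim {}^*G}$ promotes to $Im(\omega_1) \subseteq \bigl(\L^{\infty}_b({}^*G,\W)^{\sim {}^*\Gamma}\bigr)^{\sim {}^*G}$. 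Likewise, ${}^*Q$-equivariance of $\zeta$ (as a consequence of ${}^*G$-equivariance) ensures $\omega_1 \in \L^{\infty}_b(({}^*Q)^3, \L^{\infty}({}^*G,\W))^{\sim {}^*Q}$.

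With $\omega_1$ in place, I would invoke \cref{almostGequi2} directly: it supplies $\beta \in \L^{\infty}_{inf}(({}^*Q)^3, \L^{\infty}({}^*G,\W))$ such that $\omega'' \coloneqq \omega_1 - \beta$ is truly ${}^*Q$-fixed and its image lies in $\bigl(\L^{\infty}_b({}^*G,\W)^{\sim {}^*\Gamma}\bigr)^{{}^*G} = \W_b^{\sim {}^*\Gamma}$ (using the identification of \cref{Wb}). This is exactly the object required by the lemma. For the final compatibility, since $\theta \circ \zeta$ is the identity on $\V$ and $\theta$ maps $\L^{\infty}_{inf}({}^*G,\W)$ into $\L^{\infty}_{inf}({}^*D,\W)$, one computes $\theta \omega'' = \omega' - \theta\beta$, so $\omega' - \theta\omega'' = \theta\beta \in \L^{\infty}_{inf}(({}^*Q)^3, \V)$, as needed.

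This is essentially an assembly argument: all the work is already packaged in the tools of \S\ref{sec-induc}, namely the $\theta/\zeta$ pair and the relative-injectivity correction \cref{almostGequi2}. The only potential subtlety to be careful about is verifying that $\zeta \circ \omega'$, interpreted as an element of $\L^{\infty}(({}^*Q)^3, \L^{\infty}({}^*G,\W))$, is genuinely internal (so that \cref{almostGequi2} applies), but this is immediate from internality of $\zeta$ and of $\omega'$. I do not expect any serious obstacle; the statement is precisely the form in which the inductive lift machinery of \S\ref{ssec-trueAction}--\S\ref{ssec-shapiro} was designed to be applied.
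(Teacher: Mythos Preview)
Your proposal is correct and is precisely the argument the paper intends: the one-line proof in the paper (``This follows from \cref{WbVG} and \cref{almostGequi2}'') is exactly the assembly you describe---lift $\omega'$ via $\zeta$ to land in the setting of \cref{almostGequi2}, apply that proposition to obtain $\omega''$, and use $\theta\circ\zeta = \mathrm{id}$ together with the fact that $\theta$ preserves infinitesimals for the final compatibility. You have simply written out the details the paper leaves implicit.
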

	\begin{proof}
		This follows from \cref{WbVG} and \cref{almostGequi2}. 
	\end{proof}
	Now consider $\omega'' \in \L^{\infty}_{b}(({}^*Q)^3,\W)^{{}^*Q}$ with $Im(\omega'') \subseteq \W_b^{\sim {}^*\Gamma}$, and observe that 
	$$d^3\omega'' \in \L^{\infty}_{inf}\left(({}^*Q)^4, \W \right)$$
	That is, $\omega''$ can be thought of as an almost $2$-cocycle for $Q$ with coefficients in $\W$ (with a trivial action of ${}^*Q$). The underlying idea is that we are now within the domain of classical bounded cohomology of $Q$ with coefficients in a trivial $Q$-module, and can apply the results of \cref{trivialRV3}. 
	\begin{lemma}\label{qinv3}
		Suppose $\Hb^2(Q,\R)=0$ and $\Hb^3(Q,\R)$ is Hausdorff. Then there exists $\alpha' \in  \L^{\infty}_b\left(({}^*Q)^2,\W \right)^{{}^*Q}$ with $Im(\alpha') \subseteq \W_b^{\sim {}^*\Gamma}$ and $d^2\alpha' - \omega'' \in \L^{\infty}_{inf}\left(({}^*Q)^3, \W \right)$.
	\end{lemma}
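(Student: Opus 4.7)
The plan is to apply the vanishing $\Ha^{2}(Q,\W)=0$ from \cref{trivialRV3} (with $\W$ viewed as a trivial dual asymptotic Banach ${}^*Q$-module) to produce a primitive $\alpha'$ of $\omega''$, and then to verify a posteriori that any such primitive automatically lands in $\W_{b}^{\sim{}^*\Gamma}$. The cohomology vanishing itself exerts no control over where the values of $\alpha'$ sit inside $\W$; the image constraint will instead follow from the rigidity of asymptotic bounded homomorphisms out of $Q$.

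For existence, the hypotheses $\Hb^{2}(Q,\R)=0$ and $\Hb^{3}(Q,\R)$ Hausdorff yield $\Ha^{2}(Q,\W)=0$ via \cref{trivialRV3}. Applied to $\omega''$ (whose coboundary $d^{3}\omega''$ is infinitesimal), this gives some $\alpha''\in\L^{\infty}_{b}(({}^*Q)^{2},\W)^{\sim{}^*Q}$ with $d^{2}\alpha''-\omega''\in\L^{\infty}_{inf}$. The same relative-injectivity correction used in \cref{correction}, applied now with trivial $Q$-action on each $L^{\infty}_{w*}(Q^{2},\mathfrak{u}(k_n))$, then perturbs $\alpha''$ by an element of $\L^{\infty}_{inf}$ to produce a truly ${}^*Q$-equivariant $\alpha'$ with the same approximation property.

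For the image constraint, fix $\gamma\in{}^*\Gamma$ and let $\beta_{\gamma}\coloneq(\pi_{\Gamma}(\gamma)-1)\alpha'$. Since $\pi_{\Gamma}(\gamma)$ acts pointwise on values, it commutes with both the diagonal ${}^*Q$-action on $({}^*Q)^{2}$ and with the $\R$-linear map $d^{2}$, so $\beta_{\gamma}$ lies in $\L^{\infty}_{b}(({}^*Q)^{2},\W)^{{}^*Q}$. Combining $d^{2}\alpha'-\omega''\in\L^{\infty}_{inf}$ with $Im(\omega'')\subseteq\W_{b}^{\sim{}^*\Gamma}$ and the isometric nature of $\pi_{\Gamma}(\gamma)$ forces $d^{2}\beta_{\gamma}\in\L^{\infty}_{inf}$; in other words, $\beta_{\gamma}$ is an asymptotic $1$-cocycle of $Q$ with trivial $\W$-coefficients. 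Translating to the inhomogeneous bar picture via $\beta_{\gamma}(q_{0},q_{1})=\tilde{\beta}_{\gamma}(q_{0}^{-1}q_{1})$ turns $\tilde{\beta}_{\gamma}\colon{}^*Q\to\W_{b}$ into a bounded internal asymptotic homomorphism. Iterating the asymptotic-homomorphism relation gives $\|\tilde{\beta}_{\gamma}(q^{n})-n\tilde{\beta}_{\gamma}(q)\|\leq(n-1)\|\delta^{1}\tilde{\beta}_{\gamma}\|$ for every standard $n\in\N$, so the boundedness of $\tilde{\beta}_{\gamma}$ yields $\|\tilde{\beta}_{\gamma}(q)\|\leq\|\tilde{\beta}_{\gamma}\|/n+\|\delta^{1}\tilde{\beta}_{\gamma}\|$ uniformly in $q$. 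Letting $n\to\infty$ in standard $\N$ makes $\|\tilde{\beta}_{\gamma}\|_{\infty}$ infinitesimal, hence $\beta_{\gamma}\in\L^{\infty}_{inf}$ for every $\gamma$, which is exactly $Im(\alpha')\subseteq\W_{b}^{\sim{}^*\Gamma}$.

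The main obstacle is precisely this image constraint: \cref{trivialRV3} delivers an abstract primitive but exerts no grip on where the values of $\alpha'$ sit inside $\W$. The equivariance part is routine (relative injectivity), but the image part succeeds only because $Q$ is rigid enough that the auxiliary $\beta_{\gamma}$, automatically an asymptotic $1$-cocycle with trivial coefficients, is forced to be infinitesimal, which is itself a quantitative asymptotic version of the classical fact that bounded group homomorphisms into a Banach space vanish.
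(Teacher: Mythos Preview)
Your overall strategy matches the paper's: obtain $\alpha'$ from \cref{trivialRV3}, then argue separately that its image lands in $\W_b^{\sim{}^*\Gamma}$. Your reformulation of the image constraint --- fixing $\gamma\in{}^*\Gamma$ and showing that $\beta_\gamma=(\pi_\Gamma(\gamma)-1)\alpha'$ is infinitesimal --- is a legitimate rephrasing (the paper instead passes to the quotient $\tilde{\W}/\tilde{\W}^{{}^*\Gamma}$ and works with the inhomogeneous $f$ there, but the two are equivalent).

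The gap is in your iteration step. You write $\|\tilde{\beta}_\gamma(q^n)-n\tilde{\beta}_\gamma(q)\|\leq(n-1)\|\delta^1\tilde{\beta}_\gamma\|$, but $\tilde{\beta}_\gamma$ is an element of $\L^\infty({}^*Q,\W)=\prod_\U L^\infty(Q,\mathfrak{u}(k_n))$, i.e.\ an equivalence class of functions on a non-discrete group. Pointwise evaluation at a specific $q$ is not well-defined, and the relation $\|\delta^1\tilde{\beta}_\gamma\|_\infty\in{}^*\R_{inf}$ only asserts the defect inequality for almost every $(q_1,q_2)\in({}^*Q)^2$. The iteration requires applying this relation along the diagonal $(q,q),(q^2,q),\ldots$, which is a null set. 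This is not a cosmetic issue: the classical quasimorphism trick you invoke works for genuine (everywhere-defined) functions on discrete groups, and does not transfer directly to the $L^\infty$ setting.

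The paper repairs this by a positive-measure argument: choose $w$ in the essential image with near-maximal norm $\|\tilde w'\|\geq 0.9C$ in the quotient, take the internal preimage $Y$ of a small ball around $w$ (positive measure), and observe that for $(q_1,q_2)$ in the conull defect set intersected with $Y\times Y$, the value $f(q_1q_2)$ is forced to have norm $\geq 1.6C$ in the quotient, contradicting $C=\sup$. This uses $Y\times Y$, not the diagonal. An alternative fix available to you: since $\Hb^2(Q,\R)=0$, the map $\delta^1\colon L^\infty(Q,V)\to L^\infty(Q^2,V)$ is injective (bounded measurable homomorphisms to a Banach space vanish) with closed image, hence bounded below by a constant uniform in $V$ (by the same complementation argument as in \cref{moduli2} and \cref{trivialRV}); this gives $\|\tilde{\beta}_\gamma\|\leq C\|\delta^1\tilde{\beta}_\gamma\|$ directly, without any pointwise iteration.
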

	\begin{proof}
		From \cref{trivialRV3}, we know that there exists $\alpha' \in  \L^{\infty}_b\left(({}^*Q)^2,\W \right)^{{}^*Q}$ such that $d^2\alpha' - \omega'' \in \L^{\infty}_{inf}\left(({}^*Q)^3, \W \right)$. We only need to show that $Im(\alpha') \subseteq \W_b^{\sim {}^*\Gamma}$. That is, we want to show that for an internal cochain $\alpha' \in  \L^{\infty}_b\left(({}^*Q)^2,\W \right)^{{}^*Q}$, if $Im(d^2\alpha') \subseteq \W_b^{\sim {}^*\Gamma}$, then $Im(\alpha') \subseteq \W_b^{\sim {}^*\Gamma}$.\\
		Equivalently, consider the inhomogenous cochain (refer \cref{inhomo}) corresponding to $\alpha'$, namely $f \in  \L^{\infty}_b\left({}^*Q,\W \right)$ defined as follows: for $g \in {}^*Q$, $f(g)$ is the essential value $\alpha'(x,xg)$. Clearly, $Im(d^2\alpha')=Im(\delta^1 f)$ and $Im(\alpha')=Im(f)$. Hence it is sufficient to show that if $f \in \L^{\infty}_b\left({}^*Q,\W \right)$ such that $Im(\delta^1 f) \subseteq \W_b^{\sim {}^*\Gamma}$, then $Im(f) \subseteq \W_b^{\sim {}^*\Gamma}$. Note that $\tilde{\W}^{{}^*\Gamma}$ is a closed subspace of the real Banach space $\tilde{\W}$, hence $\tilde{f} \in \tilde{\L}^{\infty}({}^*Q,\tilde{\W})$ is such that $\tilde{\delta}^1\tilde{f}$ is $0$ in the quotient Banach space $\tilde{\W}/\tilde{\W}^{{}^*\Gamma}$. We would like to show that $\tilde{f}$ is $0$ in $\tilde{\W}/\tilde{\W}^{{}^*\Gamma}$.\\
		Consider the image, denoted $S$, of $Im(f) \subseteq \W_b$ in the quotient $\tilde{\W}/\tilde{\W}^{{}^*\Gamma}$ with the natural map $\W_b \to \tilde{\W}/\tilde{\W}^{{}^*\Gamma}$ denoted $w \mapsto \tilde{w}'$. Let $C=\sup \{\|v\|: v \in S\}$. Since $f \in \L^{\infty}_b\left({}^*Q,\W \right)$, we know that $C \in \R$. Let $w \in Im(f)$ be such that $\|\tilde{w}'\|\geq 0.9C$, and consider the ball $B_{0.1 C}(\tilde{w}')$ of radius $0.1C$ around $\tilde{w}'$, and let $Y=\{Y_n\}_{\U} \subseteq {}^*Q$ be an internal subset of ${}^*Q$ of positive (internal) measure such that the image of $\tilde{f}(Y)$ is in $B_{0.1 C}(\tilde{w}')$ (such an internal subset $Y$ of positive measure exists because the image in $\tilde{\W}/\tilde{\W}^{{}^*\Gamma}$ of the internal ball $B_{0.1C}(w) \subseteq \W_b$ is contained in $B_{0.1 C}(\tilde{w}')$, and $f^{-1}(B_{0.1C}(w))$ is of positive measure).\\
		Since $\tilde{\delta}^1\tilde{f}$ is $0$ in the quotient Banach space $\tilde{\W}/\tilde{\W}^{{}^*\Gamma}$, we know that there exists an internal subset $A=\{A_n\}_{\U} \subseteq {}^*Q \times {}^*Q$, where $A_n$ is co-null in $Q \times Q$ for $n \in \U$, such that for $(g,h) \in A$, $\tilde{f}(g)+\tilde{f}(h)-\tilde{f}(gh) \in \tilde{\W}^{{}^*\Gamma}$. Note that $A \cap Y \times Y$, denoted $D$, is an internal subset of positive (internal) measure in ${}^*Q \times {}^*Q$, and so is the product $D\cdot D \coloneqq \{xy: (x,y) \in D\} \subseteq {}^*Q$. It follows that the image $v \in \tilde{\W}/\tilde{\W}^{{}^*\Gamma}$ of $\tilde{f}(g)$ has norm $\|v\|\geq 1.6C$, for $g \in D\cdot D$. This implies that $C=0$, allowing us to conclude that $S \subseteq \tilde{\W}^{{}^*\Gamma}$ and hence $\tilde{f}$ is $0$ in $\tilde{\W}/\tilde{\W}^{{}^*\Gamma}$.
	\end{proof}
	
	We now combine the results above to conclude that $\Ha^2(Q,\V)=0$.
	\begin{theorem}\label{Qtheorem}
		Let $G$ and a parabolic subgroup $Q \leq G$ be as in \cref{rapinchuk2}, and suppose $\Hb^3(Q,\R)$ is Hausdorff and $\Hb^2(Q,\R)=0$. Then $\Ha^2(Q,\V)=0$.
	\end{theorem}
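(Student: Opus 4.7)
The proof will be a direct assembly of the three preparatory results (\cref{Qinv1}, \cref{qinv2}, \cref{qinv3}) that have just been established, so the plan is really to verify that they chain together correctly and to handle the bookkeeping of moving between the coefficient modules $\V = \L^{\infty}({}^*D,\W)$, $\L^{\infty}({}^*G,\W)$, and $\W$.

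I would start with an arbitrary asymptotic $2$-cocycle $\omega \in \L^{\infty}_{b}(({}^*Q)^3,\V)^{\sim {}^*Q}$ and apply \cref{Qinv1} to obtain an asymptotic $2$-cocycle $\omega' \in \L^{\infty}_{b}(({}^*Q)^3,\V)^{\sim {}^*Q}$ with $Im(\omega') \subseteq \V_b^{\sim {}^*G}$ and an asymptotic $1$-cochain $\alpha_1 \in \L^{\infty}_{b}(({}^*Q)^2,\V)^{\sim {}^*Q}$ satisfying $\tilde{\omega} - \tilde{\omega}' = \tilde{d}^1 \tilde{\alpha}_1$. This already reduces the problem to showing that $\omega'$ is itself an asymptotic coboundary, since any coboundary we produce for $\omega'$ can be added to $\alpha_1$ to give one for $\omega$.

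Next I would apply \cref{qinv2} to lift $\omega'$ to an honestly ${}^*Q$-invariant $\omega'' \in \L^{\infty}_b(({}^*Q)^3, \L^{\infty}({}^*G,\W))^{{}^*Q}$ whose image lies in the subspace $(\L^{\infty}_b({}^*G,\W)^{\sim {}^*\Gamma})^{{}^*G}$, which we have identified with $\W_b^{\sim {}^*\Gamma}$; this lift satisfies $\omega' - \theta\cdot\omega'' \in \L^{\infty}_{inf}(({}^*Q)^3,\V)$. At this point $\omega''$ can be regarded as a bounded, truly ${}^*Q$-invariant, nearly-$2$-cocycle valued in the trivial ${}^*Q$-module $\W$, and crucially its values are asymptotically $\Gamma$-fixed. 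This is precisely the setup of \cref{qinv3}, whose hypotheses $\Hb^2(Q,\R) = 0$ and $\Hb^3(Q,\R)$ Hausdorff are exactly what we have assumed. Invoking it yields $\alpha' \in \L^{\infty}_b(({}^*Q)^2,\W)^{{}^*Q}$ with $Im(\alpha') \subseteq \W_b^{\sim {}^*\Gamma}$ and $d^2 \alpha' - \omega'' \in \L^{\infty}_{inf}(({}^*Q)^3,\W)$.

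To finish, I would push $\alpha'$ back down to $\V$ by composing with $\theta$: set $\alpha_2 := \theta \cdot \alpha' \in \L^{\infty}_b(({}^*Q)^2,\V)^{\sim {}^*Q}$ (the asymptotic ${}^*Q$-equivariance follows because $\theta$ converts genuine ${}^*\Gamma$-invariance on $\W_b^{\sim {}^*\Gamma}$ into asymptotic ${}^*G$-equivariance, hence a fortiori asymptotic ${}^*Q$-equivariance, as worked out in \S\ref{ssec-shapiro}). Because $\theta$ commutes with the differential and kills infinitesimals, applying $\theta$ to $d^2\alpha' - \omega''$ gives $d^2 \alpha_2 - \theta\cdot\omega'' \in \L^{\infty}_{inf}$, and combining this with $\omega' - \theta\cdot\omega'' \in \L^{\infty}_{inf}$ yields $\omega' - d^2\alpha_2 \in \L^{\infty}_{inf}$, that is, $\tilde{\omega}' = \tilde{d}^1 \tilde{\alpha}_2$. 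Setting $\alpha = \alpha_1 + \alpha_2$ gives $\tilde\omega = \tilde{d}^1\tilde\alpha$, proving $\Ha^2(Q,\V) = 0$.

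The genuinely delicate point is not the chaining itself but verifying that $\theta$ behaves functorially on these complexes in the way just used: specifically, that $\theta$ intertwines the ${}^*G$-invariant coboundary map on $\L^{\infty}_b({}^*G,\W)^{\sim {}^*\Gamma}$-valued cochains with the asymptotic ${}^*G$-coboundary on $\V$-valued cochains, and that it sends internally $\Gamma$-fixed constant functions to asymptotically ${}^*G$-equivariant elements of $\V$. Both facts are consequences of the $\zeta$--$\theta$ duality already set up in \S\ref{ssec-shapiro} together with \cref{WbVG}, but writing them out carefully will be the main labour of the proof.
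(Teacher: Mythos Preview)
Your proposal is correct and follows essentially the same approach as the paper's proof, which simply invokes \cref{Qinv1}, then combines \cref{qinv2} and \cref{qinv3} to produce an $\alpha_1' \in \L^{\infty}_{b}(({}^*Q)^2,\V)^{\sim {}^*Q}$ with $\tilde{\omega}' = \tilde{d}\tilde{\alpha}_1'$, and sets $\alpha = \alpha_1 + \alpha_1'$. You have merely unpacked the transition between $\V$ and $\W$ via $\theta$ more explicitly than the paper does; your identification of the $\theta$--$\zeta$ functoriality as the main bookkeeping point is accurate and is indeed handled by the material in \S\ref{ssec-shapiro} and \cref{WbVG}.
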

	\begin{proof}
		From \cref{Qinv1}, we know that exists an asymptotic $2$-cocycle $\omega_1 \in \L^{\infty}_{b}(({}^*Q)^3,\V)^{\sim {}^*Q}$ for $Q$ that takes values in $\V^{\sim {}^*G}$ such that $\tilde{\omega} - \tilde{\omega}' = \tilde{d}^1\tilde{\alpha}_1$ where $\alpha_1 \in \L^{\infty}_{b}(({}^*Q)^2,\V)^{\sim {}^*Q}$. From \cref{qinv2} and \cref{qinv3},  we conclude that there exists $\alpha_1' \in \L^{\infty}_{b}(({}^*Q)^2,\V)^{\sim {}^*Q}$ such that $\tilde{\omega}' = \tilde{d}^1\tilde{\alpha_1'}$. We now set $\alpha=\alpha_1+\alpha_1'$. 
	\end{proof}
	
	\subsection{Property-$G(\mathcal{Q}_1,\mathcal{Q}_2)$ and the Main Theorem}\label{ssec-mainproof}
	Recall that the results of \S\ref{ssec-invariants} assumed the existence of a parabolic subgroup $Q=\prod_{i=1}^k Q_i$ (where each $Q_i$ is a proper parabolic subgroup of ${\GG_i}(K_i)$), satisfying the conditions that $\Hb^3(Q,\R)$ is Hausdorff and $\Hb^2(Q,\R)=0$. This allowed us to prove that $\Ha^2(Q,\V)$ vanishes. This motivates the following definition:
	\begin{definition}
		A locally compact group $G$ has the \textbf{2\textonehalf-property} if $\Hb^2(G, \R)$ vanishes and $\Hb^3(G, \R)$ is Hausdorff.
	\end{definition}
	In order to demystify this definition, we should consider that it is a natural strengthening of the vanishing of $\Hb^2(G, \R)$. Indeed, recall that $\Hb^1(G, \R)$ always vanishes and that the Hausdorff condition on $\Hb^3(G, \R)$ means that the differential map on two-cochains is an open map. Thus the 2\textonehalf-property states that the augmented differential complex computing bounded cohomology starts of as an exact sequence up to degree two and retains a weaker consequence of exactness in degree three, namely that the differential is open (see \cite{matsu} for a detailed discussion of the openness of differentials in chain complexes of Banach spaces).
	\begin{proposition}\label{thm-2half3}
		The 2\textonehalf-property is preserved under extensions, and hence in particular, under finite direct products.
	\end{proposition}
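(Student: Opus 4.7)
The natural tool is the Hochschild--Serre-type spectral sequence for continuous bounded cohomology with trivial real coefficients, as developed in Chapter~12 of Monod's monograph. Given an extension $1 \to N \to G \to Q \to 1$ of locally compact groups with both $N$ and $Q$ satisfying the 2\textonehalf-property, this yields
$$E_2^{p,q} = \Hb^p\bigl(Q, \Hb^q(N, \R)\bigr) \Longrightarrow \Hb^{p+q}(G, \R),$$
with $\Hb^q(N, \R)$ carrying its canonical Banach $Q$-module structure (where the Hausdorffness hypothesis enters to make this a genuine Banach module rather than merely a seminormed one). The finite direct product case then follows by induction on the number of factors, since $G_1 \times G_2$ is itself the extension of $G_2$ by $G_1$.

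Next I would harvest the $E_2$ entries in low degrees. Using $\Hb^1(N, \R) = 0$ (bounded $\R$-valued homomorphisms always vanish) together with the hypotheses $\Hb^2(N, \R) = \Hb^2(Q, \R) = 0$, the entire $p+q=2$ diagonal vanishes, giving $\Hb^2(G, \R) = 0$ at once. On the $p+q = 3$ diagonal the same computations kill $E_2^{2,1}$ and $E_2^{1,2}$, leaving only the two corners $E_2^{3,0} = \Hb^3(Q, \R)$ and $E_2^{0,3} = \Hb^3(N, \R)^Q$, both Hausdorff (the second as a closed subspace of the Hausdorff space $\Hb^3(N, \R)$, since continuous invariants of a continuous action form a closed subspace). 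Tracking the differentials $d_r$ for $r \geq 2$: everything incoming to $E^{3,0}$ either originates from the already-vanishing $p+q=2$ diagonal or from negative bidegrees, and everything outgoing from $E^{3,0}$ lands in negative $q$-degree; similarly, the outgoing differentials from $E^{0,3}$ land in $E^{r, 4-r}$ with $4-r \leq 2$, which were just shown to vanish. Thus $E_\infty^{3,0} = E_2^{3,0}$ and $E_\infty^{0,3} = E_2^{0,3}$, and the abutment filtration collapses to a short exact sequence
$$0 \to \Hb^3(Q, \R) \to \Hb^3(G, \R) \to \Hb^3(N, \R)^Q \to 0.$$

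The hard part will be to upgrade this algebraic conclusion to a topological one, since Hausdorffness of $\Hb^3(G, \R)$ is precisely the assertion that the coboundary on bounded $2$-cochains has closed image, equivalently that it is open onto that image in the sense of \cite{matsu}. One must therefore verify that the spectral sequence filtration on $\Hb^3(G, \R)$ is by \emph{topologically} closed subspaces, which amounts to checking that the differentials in the underlying double complex have closed images. Since both surviving corners are Hausdorff by assumption and every differential into or out of them on each $E_r$ page is identically zero by the computation above, the filtration is genuinely by closed subspaces, and a standard topological extension argument (an extension of a Hausdorff seminormed space by a Hausdorff closed subspace is itself Hausdorff) yields Hausdorffness of $\Hb^3(G, \R)$. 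The technical crux of the proof is thus the simultaneous control of the algebraic spectral sequence and the Banach/open-mapping structure on the complexes computing $\Hb^\bullet(N,\R)$ and $\Hb^\bullet(Q,\R)$ --- which is exactly the data encoded in the 2\textonehalf-property.
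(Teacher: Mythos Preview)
Your approach is the same as the paper's: both invoke the Hochschild--Serre spectral sequence of \S12 in Monod's monograph, with the paper simply citing Proposition~12.2.2 rather than unpacking the $E_2$-page as you do.

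There is, however, a small slip in your differential bookkeeping. You claim the outgoing differentials from $E^{0,3}$ land in terms ``which were just shown to vanish,'' but for $r=4$ the target is $E_4^{4,0}$, a subquotient of $E_2^{4,0} = \Hb^4(Q,\R)$, about which the 2\textonehalf-hypothesis says nothing. Consequently $E_\infty^{0,3}$ is only the \emph{kernel} of $d_4$ inside $E_2^{0,3} = \Hb^3(N,\R)^Q$, not all of it, and your displayed short exact sequence need not hold as stated. This does not damage the Hausdorffness conclusion: a kernel of a continuous map out of a Hausdorff space is closed, so $E_\infty^{0,3}$ is still a closed (hence Hausdorff) subspace of $\Hb^3(N,\R)^Q$, and the extension argument for $\Hb^3(G,\R)$ proceeds with this corrected right-hand term. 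Just replace $\Hb^3(N,\R)^Q$ by $E_\infty^{0,3}$ in your exact sequence and the rest of your argument goes through.
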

	\begin{proof}
		This follows from the Hochschild-Serre spectral sequence as set up in ~\cite[\S12]{bookMonod}. Specifically, the Hausdorff assumption allows us to apply Proposition~12.2.2 in~\cite[\S12]{bookMonod}. 
	\end{proof}
	Furthermore, the results on Hochschild-Serre spectral sequences in \cite[\S12]{bookMonod} imply that a direct product of groups has the 2\textonehalf-property iff each factor has the 2\textonehalf-property.\\ 
	We recall that elementary properties of bounded cohomology allow us to disregard ``amenable pieces'' when it comes to the 2\textonehalf-property:
	\begin{lemma}\label{thm-2half2}
		\begin{enumerate}
			\item Suppose that $\widetilde G$ is an extension of $G$ by an amenable kernel, for instance a central extension of $G$. Then $\widetilde G$ has the 2\textonehalf-property if and only if $G$ does.
			\item Suppose that $G_1<G$ is a closed co-amenable subgroup of $G$, for instance a closed normal subgroup with amenable quotient. If $G_1$ has the 2\textonehalf-property, then so does $G$.
		\end{enumerate}
	\end{lemma}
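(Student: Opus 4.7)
The plan is to reduce both parts to standard naturality and norm-preservation properties of continuous bounded cohomology with respect to amenable normal kernels (for part (1)) and coamenable subgroups (for part (2)), and then observe that the 2\textonehalf-property, being formulated in terms of vanishing and Hausdorffness of $\Hb^n(\cdot,\R)$ as seminormed topological vector spaces, transfers along any isometric isomorphism and along any isometric embedding with injective image.

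For part (1), I would appeal to the classical fact (see \cite{bookMonod}, in the spirit of Chapters 8 and 12) that when $N \triangleleft \widetilde{G}$ is a closed normal amenable subgroup with quotient $G$, the inflation map $\mathrm{infl}\colon \Hb^n(G,\R) \to \Hb^n(\widetilde{G},\R)$ is an isometric isomorphism of seminormed spaces for every $n$. This is precisely the kind of Hochschild--Serre argument already invoked for \cref{thm-2half3}; indeed the relevant $E_2$-page degenerates because all positive-degree $\Hb^j(N,\R)$ vanish. Since both the vanishing of $\Hb^2$ and the Hausdorffness of $\Hb^3$ are invariants of the isomorphism type of seminormed spaces, the biconditional follows at once, and in particular covers the case of central (or any amenable) kernels.

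For part (2), the strategy is to promote coamenability of $G_1$ in $G$ to an \emph{isometric embedding} on bounded cohomology, which then transfers both conditions. Coamenability provides a $G$-invariant mean $\mathfrak{m}$ on $L^\infty(G/G_1)$, which we view as a $G$-equivariant norm-one retraction $\mathfrak{m}\colon L^\infty(G/G_1) \to \R$ splitting the canonical inclusion $\R \hookrightarrow L^\infty(G/G_1)$ of constants. Combining this with the Eckmann--Shapiro identification $\Hb^n(G_1,\R) \cong \Hb^n(G,L^\infty(G/G_1))$, under which the restriction map corresponds to the map induced by the inclusion of constants, the functoriality of $\Hb^n(G,\cdot)$ applied to $\mathfrak{m}$ yields a norm-one transfer $T\colon \Hb^n(G_1,\R) \to \Hb^n(G,\R)$ with $T \circ \mathrm{res} = \mathrm{id}$. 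A standard norm chase ($\|[\omega]\| = \|T\,\mathrm{res}[\omega]\| \le \|\mathrm{res}[\omega]\| \le \|[\omega]\|$) then forces $\mathrm{res}$ to be an isometry of seminormed spaces.

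Once this embedding is in hand, the conclusion is routine. Injectivity of $\mathrm{res}$ gives $\Hb^2(G,\R) \hookrightarrow \Hb^2(G_1,\R) = 0$, hence $\Hb^2(G,\R)=0$. For Hausdorffness of $\Hb^3(G,\R)$: any class $[\omega] \in \Hb^3(G,\R)$ of zero seminorm maps under $\mathrm{res}$ to a class of zero seminorm in $\Hb^3(G_1,\R)$, which must vanish since $\Hb^3(G_1,\R)$ is Hausdorff by assumption, and injectivity of $\mathrm{res}$ then forces $[\omega]=0$. The only point requiring care is verifying that the norm-one coefficient-module retraction $\mathfrak{m}$ induces a norm-one map at the level of bounded cohomology; this is where I would cite the functoriality of continuous bounded cohomology with respect to bounded $G$-equivariant maps of dual coefficient modules (again from \cite{bookMonod}), using that the relevant seminorms are computed from the $L^\infty$ norms at the cochain level. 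This is the only real subtlety; everything else is formal manipulation of the two properties that define the 2\textonehalf-property.
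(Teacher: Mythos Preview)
Your proposal is correct and takes essentially the same approach as the paper: both parts reduce to the facts that inflation along an amenable kernel is an isometric isomorphism on $\Hb^\bullet(\cdot,\R)$ and that restriction to a coamenable subgroup is isometrically injective, from which the two defining conditions of the 2\textonehalf-property transfer immediately. The paper simply cites \cite[8.5.2, 8.6.6]{bookMonod} for these two facts, whereas you spell out the transfer-map construction for part~(2); the substance is the same.
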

	\begin{proof}
		The first point follows from the fact that the inflation map $\Hb^\bullet(G, \R) \to \Hb^\bullet(\widetilde G, \R)$ is an isometric isomorphism, see e.g.~\cite[8.5.2]{bookMonod}. For the second point, we recall that the restriction map $\Hb^\bullet(G, \R) \to \Hb^\bullet(G_1, \R)$ is isometrically injective, see e.g.~\cite[8.6.6]{bookMonod}. 
	\end{proof}
	So in \cref{invariants}, we considered a semisimple group $G = \prod^k_{i = 1} {\GG_i}(K_i)$ (where for $1\leq i \leq k$, $K_i$ is a local field, and $\GG_i$ is a connected, simply connected, almost $K_i$-simple group) and showed that $\Ha^2(Q,\V)=0$ for a parabolic subgroup $Q=\prod_{i=1}^k Q_i$ (where each $Q_i$ is a proper parabolic subgroup of ${\GG_i}(K_i)$) assuming $Q$ has the 2\textonehalf-property. To use this to prove that $\Ha^2(G,\V)=0$, we need the existence of \emph{two} such parabolic subgroups $Q_1$ and $Q_2$ both containing $P$ and boundedly generating $G$. This is described in the following definition:
	\begin{definition}\label{GQ1Q2}
		Let $G = \prod^k_{i = 1} {\GG_i}(K_i)$ be a semisimple group (where for $1\leq i \leq k$, $K_i$ is a local field, and $\GG_i$ is a connected almost $K_i$-simple group), and let $P$ be a minimal parabolic subgroup. Then $G$ is said to have \textbf{Property-$G(\mathcal{Q}_1,\mathcal{Q}_2)$} if there exist two parabolic subgroups $Q_1$ and $Q_2$ of $G$ satisfying the following properties:
		\begin{itemize}
			\item Both $Q_1$ and $Q_2$ are of the form $Q_1=\prod_{i=1}^k Q_{1,i}$ and $Q_2=\prod_{i=1}^k Q_{2,i}$ with $Q_{1,i}$ and $Q_{2,i}$ being proper parabolic subgroups of $\GG_i(K_i)$ for $1\leq i\leq k$.
			\item Both $Q_1$ and $Q_2$ have the 2\textonehalf-property.
			\item The intersection $Q_1 \cap Q_2$ is a parabolic subgroup that contains the minimal parabolic subgroup $P$ of $G$.
			\item The group $G$ is boundedly generated by the union of $Q_1$ and $Q_2$.
		\end{itemize}
	\end{definition}
	Observe that the existence of two distinct proper parabolic subgroups immediately implies that if $G$ has Property-$G(\mathcal{Q}_1,\mathcal{Q}_2)$, then each of its simple factors  has rank at least $2$. We shall see explicit examples of groups with the 2\textonehalf-property and Property-$G(\mathcal{Q}_1,\mathcal{Q}_2)$ in \S\ref{ssec-mainproof}.\\
	\begin{theorem}\label{maintheorem1}
		Let $\Gamma$ be a lattice in a semisimple group $G= \prod^k_{i = 1} {\GG_i}(K_i)$ (where for $1\leq i \leq k$, $K_i$ is a local field, and $\GG_i$ is a connected, simply connected, almost $K_i$-simple group) that has Property-$G(\mathcal{Q}_1,\mathcal{Q}_2)$. Then $\Ha^2(\Gamma,\W)=0$.
	\end{theorem}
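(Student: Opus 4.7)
The plan is to follow the Monod--Shalom strategy sketched in the introduction, executed entirely within the asymptotic framework of the previous sections. By the Eckmann--Shapiro induction of Theorem \ref{induction}, it suffices to show $\Ha^2(G, \V) = 0$, where $\V = \L^{\infty}({}^*D, \W)$ is equipped with the asymptotic ${}^*G$-action $\pi_G$. Since a minimal parabolic $P \leq G$ is amenable, Corollary \ref{Qcoho} allows us to compute both $\Ha^{\bullet}(G, \V)$ and $\Ha^{\bullet}(Q_j, \V)$ from the same asymptotic cochain complex $\L^\infty(({}^*(G/P))^{\bullet+1}, \V)$, varying only the equivariance group. Fix an asymptotic $2$-cocycle $\omega \in \L^{\infty}_b(({}^*(G/P))^3, \V)^{\sim {}^*G}$; the task is to produce an asymptotically ${}^*G$-equivariant $2$-cochain $\alpha$ with $d^2\alpha \equiv \omega \pmod{\L^{\infty}_{inf}}$.

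By Property-$G(\mathcal{Q}_1, \mathcal{Q}_2)$, the proper parabolics $Q_1, Q_2$ contain $P$ and enjoy the 2\textonehalf-property, so Theorem \ref{Qtheorem} gives $\Ha^2(Q_j, \V) = 0$ for $j=1,2$. This yields cochains $\alpha_j \in \L^{\infty}_b(({}^*(G/P))^2, \V)^{\sim {}^*Q_j}$ with $d^2\alpha_j \equiv \omega \pmod{\L^{\infty}_{inf}}$. Their difference $\gamma := \alpha_1 - \alpha_2$ is an asymptotic $2$-cocycle that is asymptotically ${}^*(Q_1 \cap Q_2)$-equivariant, hence in particular asymptotically ${}^*P$-equivariant. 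The heart of the argument is the gluing step: to modify the $\alpha_j$'s by asymptotic coboundaries so as to arrange $\gamma \in \L^{\infty}_{inf}$. Mirroring the classical recipe, one first reparametrizes $\gamma$ as $d^1\beta$ for some $\beta \in \L^{\infty}_b({}^*(G/P), \V)$ (this is available cochain-theoretically by fixing a basepoint, without equivariance). The asymptotic ${}^*P$-equivariance of $\gamma$ forces the "defect" $c(p) := \pi_G(p)\beta - \beta$ to define an asymptotic $1$-cocycle ${}^*P \to \V$; by the amenability of $P$ together with Corollary \ref{corr-amentriv}, $\Ha^1(P, \V) = 0$, so $c$ is an asymptotic coboundary, and absorbing the corresponding constant into $\beta$ makes $\beta$ itself asymptotically ${}^*P$-equivariant. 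The asymptotic double ergodicity with coefficients of Theorem \ref{lemma-ergodicity1} then forces the asymptotically ${}^*G$-equivariant residue of $\gamma$ on $(G/P)^2$ to be essentially constant modulo $\L^{\infty}_{inf}$, and the cocycle condition pins this constant so that after the adjustments $\alpha_1 \equiv \alpha_2 \pmod{\L^{\infty}_{inf}}$.

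Setting $\alpha$ to be the common value, $\alpha$ is asymptotically equivariant under both ${}^*Q_1$ and ${}^*Q_2$. Since Property-$G(\mathcal{Q}_1, \mathcal{Q}_2)$ includes that $G$ is boundedly generated by $Q_1 \cup Q_2$, a direct propagation argument---each factor in the bounded-length product contributes only an infinitesimal error, and a bounded product of infinitesimals is again infinitesimal---upgrades this to asymptotic ${}^*G$-equivariance. Thus $\omega$ is an asymptotic coboundary of a ${}^*G$-equivariant cochain, so $\Ha^2(G, \V) = 0$ and hence $\Ha^2(\Gamma, \W) = 0$ by Theorem \ref{induction}. The principal obstacle is the gluing step in the middle paragraph: the interplay between amenable-kernel vanishing and double ergodicity must be executed purely asymptotically, requiring careful bookkeeping to ensure that every correction comes from an internal map and that boundedness survives the passage to the ultralimit $\tilde{\V}$.
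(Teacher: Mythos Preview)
Your overall architecture is exactly the paper's: reduce to $\Ha^2(G,\V)$ via \cref{induction}, work in the $G/P$-complex via \cref{Qcoho}, cobound $\omega$ over each $Q_j$ via \cref{Qtheorem}, glue using amenability of $P$ and double ergodicity, and finish with bounded generation by $Q_1\cup Q_2$. So the strategy is correct.

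The gluing paragraph, however, is where your account becomes imprecise and diverges from the paper in a way that leaves a genuine gap. Two points:

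\emph{First}, you take a detour to obtain a ${}^*P$-equivariant primitive $\beta$: write $\gamma=d^1\beta$ by a basepoint trick with no equivariance, then study the defect $c(p)=\pi_G(p)\beta-\beta$ as a ``$1$-cocycle ${}^*P\to\V$'' and kill it via $\Ha^1(P,\V)=0$. But $c(p)$ lands in $\L^\infty({}^*(G/P),\V)$, not in $\V$; you are implicitly using that $d(c(p))=p\cdot\gamma-\gamma\in\L^\infty_{inf}$ forces $c(p)$ to be asymptotically constant, and then extracting that constant internally in $p$. This can be made to work, but it is needlessly delicate. The paper bypasses all of it: since $G/P$ is also Zimmer-amenable as a $P$-space, $\Ha^1(P,\V)$ is computed by the very same complex $\L^\infty(({}^*(G/P))^{\bullet},\V)$, so $\Ha^1(P,\V)=0$ directly yields $\beta\in\L^\infty_b({}^*(G/P),\V)^{\sim{}^*P}$ with $\tilde\gamma=\tilde d^1\tilde\beta$ in one stroke. (Incidentally, $\gamma$ is an asymptotic $1$-cocycle, not a $2$-cocycle.)

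\emph{Second}, and more seriously, you never say to which asymptotically ${}^*G$-equivariant object \cref{lemma-ergodicity1} is applied. Your phrase ``the asymptotically ${}^*G$-equivariant residue of $\gamma$'' has no clear meaning: $\gamma$ is only ${}^*P$-equivariant, and double ergodicity requires ${}^*G$-equivariance. The paper's key construction, which you are missing, is to promote the ${}^*P$-equivariant $\beta$ on ${}^*(G/P)$ to a ${}^*G$-equivariant $\beta_1$ on $({}^*(G/P))^2$ via
\[
\beta_1(g\,{}^*P,\,h\,{}^*P)\ :=\ \pi_G(g)\,\beta(g^{-1}h\,{}^*P).
\]
This $\beta_1$ is asymptotically ${}^*G$-equivariant by construction (using the asymptotic ${}^*P$-equivariance of $\beta$), so \cref{lemma-ergodicity1} applies to $\beta_1$ and gives $\beta_1\equiv F\in\V_b^{\sim{}^*G}$ modulo $\L^\infty_{inf}$. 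Restricting to $g=e$ then shows $\beta\equiv F$ modulo $\L^\infty_{inf}$ as well, whence $\tilde d^1\tilde\beta=0$ and $\tilde\alpha_1=\tilde\alpha_2$. Without this lift, there is no ${}^*G$-equivariant input to feed into double ergodicity, and your ``cocycle condition pins this constant'' remains a gesture rather than an argument.
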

	\begin{proof}
		By \cref{induction}, $\Ha^2(\Gamma,\W)=\Ha^2(G,\V)$. Consider an asymptotic $2$-cocycle $$\omega \in \L^{\infty}_{b}(({}^*(G/P))^2,\V)^{\sim {}^*G}$$
		By \cref{Qtheorem}, there exist $\alpha_1 \in \L_b^{\infty}(({}^*(G/P))^2,\V)^{\sim {}^*Q_1}$ and $\alpha_2 \in \L_b^{\infty}(({}^*(G/P))^2,\V)^{\sim {}^*Q_2}$ such that
		$$\tilde{\omega}=\tilde{d}^2\tilde{\alpha}_1=\tilde{d}^2\tilde{\alpha}_2$$
		Since $P \subseteq Q_1 \cap Q_2$, note that $\alpha_1-\alpha_2 \in \L_b^{\infty}(({}^*(G/P))^2,\V)^{\sim {}^*P}$ is an asymptotic $1$-cocyle for $P$. As $P$ is amenable, $\Ha^1(P,\V)=0$, and so there exists $\beta \in \L_b^{\infty}({}^*(G/P),\V)^{\sim {}^*P}$ such that
		$$\tilde{\alpha}_1-\tilde{\alpha}_2 = \tilde{d}^1\tilde{\beta}$$
		We now use $\beta$ to define the internal map
		$$\beta_1:({}^*(G/P))^2 \to \V$$
		$$\beta_1(g{}^*P,h{}^*P) \coloneqq \pi_G(g) \beta(g^{-1}h {}^*P)$$
		Note that $\beta_1 \in \L_b^{\infty}(({}^*(G/P))^2,\V)^{\sim {}^*G}$, and by \cref{lemma-ergodicity1}, there exists $F \in \V_b^{\sim {}^*G}$ so that $\beta_1-F \in \L_{inf}^{\infty}(({}^*(G/P))^2,\V)$. In particular, the same holds for $\beta$ as well: $\beta - F \in \L_{inf}^{\infty}({}^*(G/P),\V)$, implying that
		$$\tilde{d}^1\tilde{\beta}=0$$
		This means that $\tilde{\alpha}_1=\tilde{\alpha}_2$. Setting $\alpha \coloneq \alpha_1$, note that $\alpha$ is both asymptotically ${}^*Q_1$-equivariant and asymptotically ${}^*Q_2$-equivariant. Since $G$ is boundedly generated by elements of $Q_1$ and $Q_2$, this implies that $\alpha \in \L_b^{\infty}(({}^*(G/P))^2,\V)^{\sim {}^*G}$. 
	\end{proof}
	While the above theorem assumes that $\Gamma$ is a lattice in a semisimple group $G$ that is simply connected, we can extend this as follows:
	\begin{theorem}
		Let $\Gamma$ be a lattice in a semisimple group $G= \prod^k_{i = 1} {\GG_i}(K_i)$ (where for $1\leq i \leq k$, $K_i$ is a local field, and $\GG_i$ is a connected almost $K_i$-simple group) that has Property-$G(\mathcal{Q}_1,\mathcal{Q}_2)$. Then $\Gamma$ is uniformly $\mathfrak{U}$-stable with a linear estimate. 
	\end{theorem}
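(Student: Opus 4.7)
The strategy is to reduce the non-simply-connected semisimple case to the simply-connected case handled by \cref{maintheorem1}, via a two-step dance: first lift up along the simply connected isogeny cover, then descend through its finite central kernel.

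For each $i$, let $\tilde\GG_i$ denote the $K_i$-simply connected cover of $\GG_i$ and write $\pi_i\colon \tilde\GG_i(K_i)\to\GG_i(K_i)$ for the induced isogeny, which has finite central kernel and image of finite index (using finiteness of Galois cohomology with finite coefficients over local fields). Set $\tilde G := \prod_i \tilde\GG_i(K_i)$, let $Z$ be the finite central kernel of the product map $\pi\colon \tilde G\to G$, $G^\circ := \pi(\tilde G)$, $\Gamma_0 := \Gamma\cap G^\circ$ (finite index in $\Gamma$), and $\tilde\Gamma := \pi^{-1}(\Gamma_0)$ (a lattice in $\tilde G$). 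By \cref{finite-index-induction} it suffices to prove uniform $\mathfrak U$-stability with a linear estimate for $\Gamma_0$.

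Next I would check that $\tilde G$ satisfies Property-$G(\tilde{\mathcal Q}_1,\tilde{\mathcal Q}_2)$ with $\tilde Q_j := \pi^{-1}(Q_j\cap G^\circ)$. Each $\tilde Q_j$ is of the required product form, the intersection $\tilde Q_1\cap \tilde Q_2$ contains the pullback of the minimal parabolic of $G$, bounded generation transfers (since $Z$ is central and lies inside each $\tilde Q_j$, one need only multiply by at most one extra central element after lifting a decomposition from $G^\circ$), and the 2\textonehalf-property of each $\tilde Q_j$ follows from that of $Q_j$ by combining \cref{thm-2half2}(1) (for the central extension by the amenable finite kernel $Z$) with a Hochschild--Serre computation for the finite quotient $Q_j/(Q_j\cap G^\circ)$. \cref{maintheorem1} then yields that $\tilde\Gamma$ is uniformly $\mathfrak U$-stable with a linear estimate.

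It remains to descend the stability across the central kernel $Z$. Given an $\epsilon$-homomorphism $\phi\colon\Gamma_0\to U(n)$ in some submultiplicative norm, lift it through $\pi$ to an $\epsilon$-homomorphism $\tilde\phi\colon\tilde\Gamma\to U(n)$ which is trivial on $Z$, and produce a homomorphism $\tilde\psi\colon\tilde\Gamma\to U(n)$ with $\sup_g\|\tilde\psi(g)-\tilde\phi(g)\|\leq C\epsilon$ for a universal constant $C$. For each $z\in Z$ we then have $\|\tilde\psi(z)-I\|\leq C\epsilon$, and since $\rho(A)\leq\|A\|$ for any submultiplicative matrix norm, all eigenvalues of $\tilde\psi(z)$ lie within $C\epsilon$ of $1$; being $|Z|$-th roots of unity, for $\epsilon<2\sin(\pi/|Z|)/C$ (a threshold depending only on $|Z|$ and $C$, hence uniform in $n$) they must all equal $1$, forcing $\tilde\psi(z)=I$ and letting $\tilde\psi$ descend to the required $\psi\colon\Gamma_0\to U(n)$. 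The chief difficulty is precisely this last step: one must ensure that the stabilizing homomorphism $\tilde\psi$ restricts trivially to the finite central kernel, and the uniform-in-$n$ bound relies on the spectral radius inequality holding for \emph{every} submultiplicative matrix norm together with the gap between $1$ and the other roots of unity of order at most $|Z|$.
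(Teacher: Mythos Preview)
Your overall strategy coincides with the paper's: pass to the simply connected cover $\hat G$, check that Property-$G(\mathcal{Q}_1,\mathcal{Q}_2)$ transfers to $\hat G$ via \cref{thm-2half2}, apply \cref{maintheorem1} to the lifted lattice $\hat\Gamma$, and then bring stability back down to $\Gamma$. The paper's proof is terser on the transfer of Property-$G(\mathcal{Q}_1,\mathcal{Q}_2)$; your unpacking of this step (product form, containment of the minimal parabolic, bounded generation after absorbing one central element, and the 2\textonehalf-property via the amenable kernel) is a legitimate elaboration of what the paper compresses into a single sentence.

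The one genuine difference is the descent step. The paper simply asserts that $\hat\Gamma$ and $\Gamma$ are \emph{commensurable} and invokes \cref{remark-comm}; this is true (one finds a finite-index subgroup of $\hat\Gamma$ meeting the finite center trivially, e.g.\ a torsion-free subgroup), but is left to the reader. You instead argue directly: lift an $\epsilon$-homomorphism of $\Gamma_0$ to $\tilde\Gamma$, correct it to a true homomorphism $\tilde\psi$, and then force $\tilde\psi|_Z=I$ using that $\rho(\tilde\psi(z)-I)\le\|\tilde\psi(z)-I\|\le C\epsilon$ together with the discreteness of $|Z|$-th roots of unity. This is a clean, self-contained alternative that avoids the commensurability claim entirely and makes the uniformity in $n$ and in the norm explicit (the threshold depends only on $|Z|$ and the stability constant for $\tilde\Gamma$). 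Both routes are correct; yours trades a one-line citation for an elementary spectral argument.
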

	\begin{proof}
		Let $\hat{G}$ be the universal cover of $G$, which is a semisimple simply connected group, and let $\hat{\Gamma}$ be the preimage of $\Gamma$ in $\hat{G}$. Note that $\Gamma$ and $\hat{\Gamma}$ are commensurable, hence \cref{remark-comm} tells us that it is sufficient to prove that $\hat{\Gamma}$ is is uniformly $\mathfrak{U}$-stable with a linear estimate. Note that since the fundamental group of $G$ is abelian, from \cref{thm-2half2} it follows that $\hat{G}$ has Property-$G(\mathcal{Q}_1,\mathcal{Q}_2)$. Hence we can now apply \cref{maintheorem1} to $\hat{\Gamma} \leq \hat{G}$ to conclude that $\Ha^2(\hat{\Gamma},\W)=0$, implying that $\hat{\Gamma}$ is uniformly $\mathfrak{U}$-stable with a linear estimate.  
	\end{proof}
	
	\subsection{Groups with Property-$G(\mathcal{Q}_1,\mathcal{Q}_2)$}\label{ssec-semisimple}
	In this subsection, we shall list classes of groups that satisfy Property-$G(\mathcal{Q}_1,\mathcal{Q}_2)$ used in the hypothesis of \cref{maintheorem1}. Since Property-$G(\mathcal{Q}_1,\mathcal{Q}_2)$ involves the existence of two proper parabolic subgroups with the 2\textonehalf-property, we first list classes of groups known to have this property.
	\subsubsection*{Simple Groups with the 2\textonehalf-property}\label{ssec-2half}
	We shall collect below the necessary statement from the existing literature, and complement them by an additional argument in the non-Archimedan case, to establish that a number of natural semisimple groups have the 2\textonehalf-property.\\
	Let us first consider simple groups over a non-archimedean field.  Here, the vanishing of $\Hb^2(G, \R)$ was established in~\cite{burgerMonod}. More precisely, this reference establishes the injectivity of the comparison map (and we recall that the case of trivial coefficients $\R$ for the semisimple group $G$ was actually the easy part of this result). On the other hand, the vanishing of the usual cohomology is well-known in this setting, see e.g.~\cite{borelwallach}.\\
	Therefore, what we need to justify is the condition on $\Hb^3(G, \R)$. It turns out that this vanishes: this result is established by the inductive method introduced in~\cite{monod2010}, the basis of the induction being provided by~\cite{buchermonod}.
	\begin{theorem}
		For $G=\GG(K)$, where $K$ is a non-Archimedean local field and $\GG$ is a connected, simply connected, semisimple $K$-group, the bounded cohomology $\Hb^3(G, \R)$ vanishes (and is therefore Hausdorff).
	\end{theorem}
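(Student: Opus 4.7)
The plan is to reduce to the almost $K$-simple, $K$-isotropic case and then combine two inputs from the existing literature. Since $\GG$ is simply connected and semisimple, it decomposes as a direct product of its simply connected almost $K$-simple factors. Vanishing of $\Hb^3(\cdot,\R)$ is preserved under direct products via the Hochschild-Serre spectral sequence of \cite{bookMonod}, and any $K$-anisotropic simply connected almost $K$-simple factor is compact, hence amenable, so all its higher continuous bounded cohomology vanishes. We are thus reduced to the case where $\GG$ is almost $K$-simple and $K$-isotropic, i.e.\ where there is a non-trivial Bruhat-Tits building to exploit.

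The strategy in this reduced setting is to prove $\Hb^3(G,\R)=0$ by combining two inputs: (i) the vanishing of the \emph{ordinary} continuous cohomology $H^3(G,\R)$, which is classical in the non-Archimedean setting and follows from \cite{borelwallach} via the contractibility of the Bruhat-Tits building together with the amenability of its cell stabilizers; and (ii) the injectivity of the comparison map $c\colon \Hb^3(G,\R) \to H^3(G,\R)$. Step (i) is input-free at our level, so the real content is to establish step (ii), after which $\Hb^3(G,\R)$ embeds into a zero group and vanishes. A vanishing space is trivially Hausdorff, giving the final clause of the theorem for free.

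Injectivity of the comparison map in degree three is exactly where the induction on $K$-rank enters. The base case of $K$-rank one is handled in \cite{buchermonod}, exploiting the fact that the Bruhat-Tits building of $G$ is then a regular tree $T$ whose boundary $\partial T$ is an amenable, doubly ergodic $G$-space; the $L^\infty$-resolution on $(\partial T)^{\bullet}$ computes $\Hb^{\bullet}(G,\R)$, and the combinatorics of alternating cochains on tuples of boundary points allows one to compare bounded and ordinary cohomology in low degree explicitly. The inductive step of \cite{monod2010} then bootstraps this base case to arbitrary $K$-rank by combining the action on the spherical building at infinity with the Hochschild-Serre spectral sequence applied to proper parabolic subgroups $Q = L \cdot N$: since $N$ is amenable, $\Hb^{\bullet}(Q,\R) \cong \Hb^{\bullet}(L,\R)$, and the semisimple part of the Levi $L$ has strictly smaller $K$-rank, so the inductive hypothesis applies.

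The main obstacle is unambiguously the base case. The rank-one argument of \cite{buchermonod} requires genuinely concrete manipulations of boundary cochains on the tree, and there is no purely formal shortcut available; in particular, one cannot simply quote the Burger-Monod injectivity result of \cite{burgerMonod} in degree two and hope it extends by abstract nonsense. Once the base case is in hand, however, the inductive step is comparatively routine machinery in the theory of relatively injective resolutions and the exactness of the Hochschild-Serre spectral sequence for continuous bounded cohomology, as developed in \cite{bookMonod} and \cite{monod2010}. Feeding injectivity of $c$ into the vanishing of $H^3(G,\R)$ then closes the argument.
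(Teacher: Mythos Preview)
Your high-level strategy (reduce to the simple isotropic case, handle rank one via \cite{buchermonod}, then induct on rank) matches the paper's, but your description of the inductive step contains a genuine gap. You write that the induction proceeds via ``the Hochschild--Serre spectral sequence applied to proper parabolic subgroups $Q = L\cdot N$''. Hochschild--Serre for the extension $1\to N\to Q\to L\to 1$ only tells you $\Hb^\bullet(Q,\R)\cong \Hb^\bullet(L,\R)$; it says nothing about $\Hb^\bullet(G,\R)$, since $Q$ is a subgroup of $G$, not a quotient. There is no Hochschild--Serre mechanism linking $\Hb^3(G,\R)$ to the $\Hb^\bullet(Q,\R)$ for various parabolics $Q$.

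The paper's actual inductive engine is different: it is the hypercohomology spectral sequence associated to the (augmented) simplicial resolution coming from the Tits building $T$ of $G$ over $K$, completed by the Steinberg module $\mathrm{St}_G$. The first page is $E_1^{p,q}=\Hb^q(G,C(T^{(p-1)}))\cong\bigoplus_I \Hb^q(G_I,\R)$, where the $G_I$ are semisimple parts of standard parabolics of strictly smaller rank; the induction hypothesis feeds in here. The paper then chases differentials to show $E_1^{0,3}=E_\infty^{0,3}=0$, which requires several auxiliary inputs you do not mention: vanishing of $\Hb^2(G_I,\R)$ for the lower-rank pieces, vanishing of $\Hb^1(G,\mathrm{St}_G)$ (handled via the comparison map and Borel--Wallach when the rank is two), and $(\mathrm{St}_G)^G=0$. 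Your detour through ``injectivity of the comparison map $\Hb^3\to H^3$'' followed by $H^3(G,\R)=0$ is not how the paper argues, and you do not explain how an inductive proof of that injectivity would actually run; the paper establishes $\Hb^3(G,\R)=0$ directly from the building spectral sequence without ever invoking $H^3(G,\R)=0$.
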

	\begin{proof}
		The proof is by induction on the $K$-rank of $\GG$. The case of rank zero corresponds to $G=\GG(K)$ being compact, and thus having trivial bounded cohomology in all degrees.  The induction really starts with rank one. In that case, $G$ is an automorphism group of a Bruhat--Tits tree satisfying the assumptions of the main result of~\cite{buchermonod}, which states that $\Hb^n(G, \R)$ vanishes for all $n>0$.\\
		We now perform the induction step for $\GG$ of $K$-rank $r\geq 2$. We use (a minor variation of) the spectral sequence introduced in~\cite[\S6.A]{monod2010} as follows. We consider the Tits building $T$ of $\GG$ over $K$ and denote by $T^{(d)}$ its set of $d$-simplices. Thus $T^{(d)}$ is defined for all $d\leq r-1$ and is topologized by identifying it with the union of homogeneous spaces $G/P_I$, where $P_I$ ranges over standard parabolic subgroups of semisimple rank $r-1-d$ (see ~\cite{monod2010} for more details). By convention, we take the \emph{augmented} simplicial complex, namely we also consider the one-point space of negative simplices $T^{(-1)} = G/G$. In this set-up, an appropriate version of the Solomon-Tits theorem implies that the following sequence of spaces of continuous functions is exact:
		$$0 \to C(T^{(-1)}) \to  C(T^{(0)}) \to \cdots  \to C(T^{(r-1)})$$
		see Theorems~2.7 and~3.9 in~\cite{monod2010}. We denote by $\mathrm{St}_G$ the cokernel of the last map above, which is one version of the Steinberg representation of $G$. Finally, the spectral sequence that we consider is the first quadrant hypercohomology spectral sequence obtained by computing the bounded cohomology of $G$ with coefficients in the following complex of Banach $G$-modules
		\begin{equation}\label{treeeq1}
		0 \to C(T^{(-1)}) \to  C(T^{(0)}) \to \cdots  \to C(T^{(r-1)}) \to \mathrm{St}_G \to 0
		\end{equation}
		Concretely, the first page of the spectral sequence is by definition $E^{p,q}_1 = \Hb^q\left(G, C(T^{(p-1)}) \right)$ when $p\leq r$, for $p=r+1$ it is $E^{r+1,q}_1 = \Hb^q(G, \mathrm{St}_G)$, and for $p>r+1$ it is zero. The only difference with~\cite[\S6.A]{monod2010} is that the complex was truncated at $T^{(r-1)}$ there, not completing it with $ \mathrm{St}_G$. A crucial point to allow this definition is that $\mathrm{St}_G$ is Hausdorff, which follows from an alternative description of the Steinberg representation given by Borel--Serre, see Remark~2.8 in~\cite{monod2010}. 
		
		This spectral sequence abuts to zero since the above complex is exact. By cohomological induction, we have as in Theorem~6.1 of~\cite{monod2010} the identifications
		\begin{equation}\label{treeeqn2}
		E^{p,q}_1 = \Hb^q\left(G, C(T^{(p-1)}) \right) \cong \bigoplus \Hb^q (G_I, \R) \kern7mm (\forall\,p\leq r, \forall q)
		\end{equation}
		where $G_I$ is the semisimple part of the parabolic subgroup $P_I$ and the sum is taken over all such subgroups of semisimple rank $r-p$. Notice that the rank is indeed $r-p$ and not $r-1-p$ since we started with $T^{(-1)}$ corresponding to $p=0$.
		
		Our goal is to prove $E^{0,3}_1=0$ since this is $\Hb^3 (G, \R)$. We have $E^{0,3}_1=E^{0,3}_2$ since the right hand side is the kernel of the map $E^{0,3}_1 \to E^{1,3}_1$ which vanishes by \cref{treeeqn2} and the inductive hypothesis. Next, $E^{0,3}_2=E^{0,3}_3$ because the right hand side is the kernel of the map $E^{0,3}_2 \to E^{2,2}_2$ and already $E^{2,2}_1$ vanishes by \cref{treeeqn2} and the general vanishing of $\Hb^2$ mentioned earlier.
		
		The next differential to consider is $E^{0,3}_3 \to E^{3,1}_3$. If $r\geq 3$, we can still apply \cref{treeeqn2} and the general vanishing of $\Hb^1$ to conclude $E^{0,3}_3=E^{0,3}_4$. If however $r=2$, then we reach the same conclusion provided we justify that $\Hb^1(G, \mathrm{St}_G)$ vanishes. To this end, we first observe that the comparison map to ordinary cohomology is always injective in degree one. On the other hand, the ordinary first cohomology of $G$ with values in the Steinberg representation is known to vanish if (and only if!) the rank of $G$ is not one, see e.g. Theorem~4.12 p.~205 in~\cite{borelwallach}. Thus, in either case we have established $E^{0,3}_3=E^{0,3}_4$.
		
		The final differential is $E^{0,3}_4 \to E^{4,0}_4$. But we have already second page vanishing of all $E^{p,0}_2$. Indeed, by \cref{treeeqn2} this statement amounts to the acyclicity of the subcomplex of $G$-invariants of \cref{treeeq1}. Ignoring the last term $( \mathrm{St}_G)^G$ at first, this comes from the fact that the $G$-orbits in the Tits building form by definition a full simplex of dimension $r-1$ (augmented in dimension~$-1$), which is hence acyclic. It remains to argue acyclicity of the last term
		$$C(T^{(r-1)})^G \to (\mathrm{St}_G)^G \to 0$$
		which amounts to showing that $( \mathrm{St}_G)^G$ vanishes. This follows e.g. by realizing the Steinberg representation as a space of $L^2$ harmonic maps on the Bruhat--Tits building, see~\cite{klingler} for a concrete description of this isomorphism (also due to Borel--Serre).
		
		In conclusion, we have shown that $E^{0,3}_1=E^{0,3}_4 = E^{0,3}_\infty$ holds. Since the spectral sequence converges to zero, this establishes as desired the vanishing of $E^{0,3}_1=\Hb^3 (G, \R)$. 
	\end{proof}
	Next, let us consider simple groups over $\R$ or $\C$. Regarding $\Hb^2(G, \R)$, the injectivity of the comparison map was established for all connected semisimple Lie groups $G$ in~\cite{burgerMonod}. Therefore, the corresponding vanishing holds in all cases where the usual second bounded cohomology vansihes, which is always the case for $G =SL_n(\C)$ and is the case for $G =SL_n(\R)$ if and only if $n\neq 2$.
	It therefore remains to collect the following results from the existing literature:
	\begin{theorem}
		Let $n\geq 2$.
		\begin{enumerate}
			\item For $G=SL_n(\R)$, the bounded cohomology $\Hb^3(G, \R)$ vanishes (and is therefore Hausdorff).
			\item For $G=SL_n(\C)$, the bounded cohomology $\Hb^3(G, \R)$ is one-dimensional and Hausdorff.
		\end{enumerate}
	\end{theorem}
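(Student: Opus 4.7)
The plan combines the van Est computation of continuous cohomology, surjectivity of the comparison map, and an injectivity/Hausdorffness argument via the Furstenberg boundary resolution.

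First I would compute $H^3_c(G,\R)$ using the van Est isomorphism $H^\bullet_c(G,\R)\cong H^\bullet(\mathfrak{g},\mathfrak{k};\R)$, identified with the space of $K$-invariant forms on $\mathfrak{p}$, where $\mathfrak{g}=\mathfrak{k}\oplus\mathfrak{p}$ is the Cartan decomposition and $K$ is a maximal compact. For $G=SL_n(\R)$ the primitive generators of the invariant ring occur in degrees $5,9,13,\ldots$, so $H^3_c(SL_n(\R),\R)=0$. For $G=SL_n(\C)$ the primitives occur in degrees $3,5,7,\ldots$, giving $H^3_c(SL_n(\C),\R)\cong\R$, generated by the Borel class $\beta_n$; a bounded representative is furnished by the Bloch--Wigner complex volume cocycle on $\partial\mathbf{H}^3\cong\mathbf{CP}^1$, pulled back via the inclusion $SL_2(\C)\hookrightarrow SL_n(\C)$ and then averaged to restore full $G$-invariance.

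Next I would invoke surjectivity of the comparison map $c^3\colon \Hb^3(G,\R)\to H^3_c(G,\R)$. For $SL_n(\R)$ this is vacuous since the target is zero. For $SL_n(\C)$, surjectivity follows either from the explicit bounded Bloch--Wigner representative of $\beta_n$ described above, or from general results of Hartnick--Ott (with Gromov as a predecessor) on surjectivity of the comparison map for connected semisimple Lie groups in low degrees.

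The heart of the proof is then to show that $c^3$ is also injective with Hausdorff image, so that $\Hb^3(G,\R)\cong H^3_c(G,\R)$, which is finite-dimensional and therefore Hausdorff. For this I would pass to the resolution by $G$-invariant bounded measurable cochains on powers of the Furstenberg boundary $G/P$, which computes $\Hb^\bullet(G,\R)$ thanks to the amenability of the minimal parabolic $P$. Combining the classical double ergodicity of $G$ on $(G/P)^2$ (in the flavor of \cref{lemma-ergodicity1}) with the Mautner property, one reduces the description of invariant $3$-cocycles on $(G/P)^4$ to their values on a finite number of $G$-orbits given by the Bruhat decomposition. The upshot is that the cohomology in this degree is detected by finitely many scalar invariants, all of which are already captured by continuous classes, so the kernel of $c^3$ must vanish.

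The main obstacle will be the injectivity step. In degree two one can invoke the clean Burger--Monod injectivity theorem, but no equally general statement exists in degree three, so one must delicately combine the ergodic input with the explicit parabolic structure of $SL_n(K)$. The difference between $K=\R$ and $K=\C$ enters here precisely through the different invariant theory of the $K$-forms on $\mathfrak{p}$, and hence through the different parabolic orbit structure on $(G/P)^4$, accounting for the dichotomy ($0$ versus $\R$) in the conclusion. I expect this to mirror the boundary/Bruhat analysis already used throughout \S\ref{sec-mainproof} and in Burger--Monod.
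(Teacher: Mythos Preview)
Your outline is broadly correct and is in the spirit of the arguments in the references the paper cites, but it is far more elaborate than what the paper actually does. The paper does not give an original proof of this theorem at all: it simply records that the comparison map $\Hb^3(G,\R)\to H^3_c(G,\R)$ is an isomorphism, citing Burger--Monod (for $SL_2$) and Monod's stabilization paper (for general $n$), together with the classical computation of $H^3_c$. The one additional observation the paper makes is that injectivity of the comparison map \emph{alone} already forces $\Hb^3$ to be Hausdorff, by a general Banach-space principle from Matsumoto--Morita; you instead deduce Hausdorffness from the stronger fact that $\Hb^3$ is finite-dimensional, which also works but requires the full isomorphism.

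Your proposed route to injectivity---computing $G$-invariant cocycles on $(G/P)^4$ via the Bruhat orbit structure---is indeed essentially what happens inside those cited papers, so you have correctly identified both the method and the fact that this is where all the real work lies. But as a proof \emph{in this paper}, the sketch is too vague at exactly that point: ``the cohomology in this degree is detected by finitely many scalar invariants, all of which are already captured by continuous classes'' is the desired conclusion, not an argument, and carrying it out rigorously (handling the alternating condition, the coboundaries, and the measurability issues on the non-generic Bruhat cells) is a substantial piece of work that would not fit here. The paper's choice to defer entirely to the literature is the appropriate one for this statement.
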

	\begin{proof}
		The case of $SL_n(\R)$ was established for $n=2$ in Theorem~1.5 of~\cite{burgermonod2} and for general $n$ in Theorem~1.2 of~\cite{monod2004stabilization}.
		
		Regarding $G=SL_n(\C)$, these two references established that the comparison map is an isomorphism from $\Hb^3(G, \R)$ to $H^3(G, \R)$, the latter being classically known to be one-dimensional (see~\cite[Theorem~1.2]{burgermonod2} and~\cite[Remark~3.5]{monod2004stabilization}). In our context, the only relevant point (and the hard one anyway) is that the comparison map is injective. Indeed, it is a general fact for any group and any degree $d$ that the injectivity of the comparison map $\Hb^d \to H^d$ implies that $\Hb^d$ is Hausdorff. This can be seen by combining Theorems~2.3 and~2.8 in~\cite{matsu}. 
	\end{proof}
	We now summarize the above results with the following list of simple groups that we know to have the 2\textonehalf-property:
	\begin{theorem}\label{thm-2half}
		The following groups $G$ have the 2\textonehalf-property:
		\begin{enumerate}
			\item $G=\GG(k)$, where $k$ is a non-Archimedean local field and $\GG$ is a connected semisimple $k$-group.
			\item $G =SL_n(\R)$ for any $n\neq 2$.
			\item  $G =SL_n(\C)$ for any $n$.
		\end{enumerate}
	\end{theorem}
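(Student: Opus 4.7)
The plan is to assemble the $2\tfrac{1}{2}$-property in each case from its two constituent ingredients, namely the vanishing of $\Hb^2(G,\R)$ and the Hausdorffness of $\Hb^3(G,\R)$, drawing almost entirely on the two theorems that immediately precede this statement. Since Hausdorffness in degree three is precisely what was already extracted above, the only substantive remaining task is to argue the vanishing of the second bounded cohomology in each of the three listed families.

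For $\Hb^2(G,\R)$, the unified input is the result of Burger--Monod~\cite{burgerMonod} that, for any connected semisimple Lie group or semisimple $k$-group over a non-Archimedean local field, the comparison map $c\colon \Hb^2(G,\R)\to H^2(G,\R)$ is injective. Consequently, it is enough to check that ordinary continuous cohomology $H^2(G,\R)$ vanishes in each case. In the non-Archimedean case, this is classical and can be found in~\cite{borelwallach}. For $SL_n(\R)$ with $n\neq 2$, the vanishing $H^2(G,\R)=0$ is likewise classical; the case $n=2$ is excluded here precisely because $H^2(SL_2(\R),\R)$ fails to vanish (which is also the reason the hypothesis $n\neq 2$ appears in the preceding theorem). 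For $SL_n(\C)$, one has $H^2(G,\R)=0$ for every $n$, the first nonvanishing continuous cohomology sitting in degree three.

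For the Hausdorffness of $\Hb^3(G,\R)$, each case is handled by simply quoting the appropriate clause of the two preceding theorems. In case~(1), the Tits-building spectral-sequence argument actually establishes the stronger vanishing $\Hb^3(G,\R)=0$, which is a fortiori Hausdorff. In case~(2), the results of~\cite{burgermonod2,monod2004stabilization} give $\Hb^3(SL_n(\R),\R)=0$ for all $n\geq 2$. In case~(3), the same references show that the comparison map $\Hb^3(SL_n(\C),\R)\to H^3(SL_n(\C),\R)$ is an isomorphism onto a one-dimensional target; and injectivity of the comparison map in a given degree is well known to imply Hausdorffness of bounded cohomology in that degree, as seen by combining Theorems~2.3 and~2.8 of~\cite{matsu}.

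Given that every ingredient has been extracted in the two preceding theorems, I do not expect a genuine obstacle here: the proof is essentially bookkeeping, matching each listed family of groups against the corresponding clauses of those results and invoking~\cite{burgerMonod} for the second-degree vanishing. The only mildly delicate point worth flagging explicitly is the implication ``injectivity of $c$ in degree three $\Rightarrow$ Hausdorffness of $\Hb^3$'' used in case~(3), but this is a general consequence of the framework of~\cite{matsu} and does not require any input specific to $SL_n(\C)$.
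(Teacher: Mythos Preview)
Your proposal is correct and matches the paper's approach exactly: the theorem is presented there as a summary of the two preceding theorems together with the Burger--Monod injectivity result for $\Hb^2$, with no separate proof given. One small inaccuracy: the preceding theorem on $\Hb^3(SL_n(\R),\R)$ is stated for all $n\geq 2$, so the restriction $n\neq 2$ in the present statement comes solely from the failure of $\Hb^2(SL_2(\R),\R)$ to vanish, not from any hypothesis in the $\Hb^3$ result.
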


	\subsubsection*{From the 2\textonehalf-property to  Property-$G(\mathcal{Q}_1,\mathcal{Q}_2)$}
	We can now list out simple groups having Property-$G(\mathcal{Q}_1,\mathcal{Q}_2)$ using the results discussed earlier in this section. Again, we begin with simple groups, and then extend the results to semisimple groups using \cref{thm-2half3}.\\
	Let us first consider the case of a simple group $G$ over a non-archimedean field, and $Q\leq G$ be a parabolic subgroup. Note that, using \cref{thm-2half}, \cref{thm-2half2} and \cref{thm-2half3}, we can conclude that  $Q$ has the 2\textonehalf-property (since, modulo its amenable radical, it is a semisimple group). Thus, since $G$ has rank at least $2$, we can always find two proper parabolic subgroups, both having the 2\textonehalf-property, generating $G$. Hence,
	\begin{proposition}
		The group $G=\GG(k)$, where $k$ is a non-Archimedean local field and $\GG$ is a connected, simply connected, semisimple $k$-group of rank at least $2$, has Property-$G(\mathcal{Q}_1,\mathcal{Q}_2)$. 
	\end{proposition}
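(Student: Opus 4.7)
The plan is to construct explicit parabolics $Q_1,Q_2$ witnessing Property-$G(\mathcal{Q}_1,\mathcal{Q}_2)$ and verify the four clauses of \cref{GQ1Q2} in turn. Since $\GG$ has $k$-rank $r\ge 2$, fix a minimal parabolic $P\le G$ with associated set of simple $k$-roots $\Delta=\{\alpha_1,\dots,\alpha_r\}$, and let $Q_i$ (for $i\in\{1,2\}$) be the standard maximal parabolic obtained by omitting the node $\alpha_i$. Both $Q_1$ and $Q_2$ are then proper parabolic subgroups containing $P$, so in particular $Q_1\cap Q_2\supseteq P$ is a parabolic subgroup, and the first and third clauses of \cref{GQ1Q2} hold by construction (in the general semisimple case with each simple factor of rank $\ge 2$ the same choice is made factor-wise).

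To verify the 2\textonehalf-property for each $Q_i$, I would pass through the Levi decomposition $Q_i=L_i\ltimes R_u(Q_i)$: the unipotent radical $R_u(Q_i)$ is amenable, and the connected center $Z(L_i)^\circ$ is a $k$-torus, hence also amenable, so $N_i\coloneq Z(L_i)^\circ\cdot R_u(Q_i)$ is an amenable closed normal subgroup of $Q_i$. The quotient $Q_i/N_i$ is the group of $k$-points of a connected semisimple $k$-group of $k$-rank $r-1$, and by \cref{thm-2half} it enjoys the 2\textonehalf-property. Applying \cref{thm-2half2}(1) to the extension $1\to N_i\to Q_i\to Q_i/N_i\to 1$ then transfers the 2\textonehalf-property back up to $Q_i$.

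Finally, to show $G$ is boundedly generated by $Q_1\cup Q_2$, I would invoke the Bruhat decomposition $G=\bigsqcup_{w\in W}PwP$, where $W$ is the finite relative Weyl group generated by the simple reflections $s_1,\dots,s_r$. The standard parabolic subgroups $W_{Q_i}\le W$ associated to $Q_i$ are generated by $\{s_j:j\ne i\}$; since $s_1\in W_{Q_2}$ and $s_2\in W_{Q_1}$, while $s_3,\dots,s_r$ lie in both, the set $W_{Q_1}\cup W_{Q_2}$ contains every simple reflection and hence generates $W$. Finiteness of $W$ yields a uniform bound $N\le |W|$ such that every $w\in W$ admits a factorisation $w=u_1\cdots u_N$ with $u_j\in W_{Q_{\epsilon_j}}$ for some $\epsilon_j\in\{1,2\}$. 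Lifting each $u_j$ to a representative in $N_G(T)\cap Q_{\epsilon_j}$ (for a maximal $k$-split torus $T\le P$) and using $P\subseteq Q_{\epsilon_j}$, one gets $PwP\subseteq Q_{\epsilon_1}\cdots Q_{\epsilon_N}$, so $G\subseteq(Q_1\cup Q_2)^N$.

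The main obstacle is the 2\textonehalf-property step, which requires correctly identifying an amenable closed normal subgroup of $Q_i$ whose quotient is genuinely semisimple so that the deep input \cref{thm-2half} applies. The other clauses are essentially structural: containment $P\subseteq Q_1\cap Q_2$ is automatic from the choice of $Q_i$ as standard parabolics differing at exactly one node of the Dynkin diagram, and the bounded generation is a routine consequence of the Bruhat decomposition paired with the finiteness of $W$.
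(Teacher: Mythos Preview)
Your proof is correct and follows essentially the same approach as the paper: reduce the 2\textonehalf-property for each parabolic to \cref{thm-2half} via \cref{thm-2half2} by passing to the semisimple Levi quotient modulo the amenable radical, and then use that rank $\ge 2$ furnishes two distinct proper standard parabolics. The paper's argument is terser and does not spell out the bounded generation clause; your Bruhat-decomposition argument (covering $W$ by $W_{Q_1}\cup W_{Q_2}$ and lifting a bounded word in simple reflections back to a bounded product in $Q_1\cup Q_2$) is a correct and standard way to fill that in.
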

	In the complex case, since we know that $SL_n(\C)$ has the 2\textonehalf-property for every $n$, we can use Dynkin diagrams to explicitly construct proper parabolic subgroups (which, modulo their amenable radical, would correspond to $SL_n(\C)$ that we know has the 2\textonehalf-property) that together generate the group, to conclude that the simple group has Property-$G(\mathcal{Q}_1,\mathcal{Q}_2)$.
	\begin{proposition}\label{complexlie}
		Let $G$ be a simple, simply connected, complex Lie group of rank $n \geq 2$. Then $G$ has Property-$G(\mathcal{Q}_1,\mathcal{Q}_2)$.
	\end{proposition}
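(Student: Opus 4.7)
The plan is to establish Property-$G(\mathcal{Q}_1,\mathcal{Q}_2)$ by exhibiting, for each Dynkin type, two maximal parabolic subgroups whose semisimple Levi components are products of type-$A$ simple factors. Since type-$A$ complex simple groups are (up to finite central quotients) of the form $SL_m(\C)$, Theorem~\ref{thm-2half}, Proposition~\ref{thm-2half3}, and Lemma~\ref{thm-2half2} will together yield the 2\textonehalf-property for the parabolics themselves. The required bounded generation by the union of the two parabolics will come from the Bruhat decomposition.

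First, I will fix a Borel $P \leq G$ and recall that maximal parabolics $Q \leq G$ containing $P$ correspond to simple roots (equivalently, nodes of the Dynkin diagram of $G$), with the semisimple part of the Levi factor having Dynkin diagram obtained by deleting the chosen node. I then go through the classification and select two such nodes for each type so that the resulting subdiagrams have only type-$A$ components. For $A_n$ ($n\geq 2$) any two distinct nodes work; for $B_n$ and $C_n$ ($n \geq 2$) one may delete the end node on the multiple-bond side together with its neighbor, yielding type $A_{n-1}$ and $A_{n-2}\times A_1$; for $D_n$ ($n \geq 4$) the two ``fork'' end-nodes each yield type $A_{n-1}$; for $E_6, E_7, E_8$ deleting the short branch node yields $A_{n-1}$ and deleting the trivalent node yields a product of type-$A$ diagrams; for $F_4$ deleting either of the two nodes flanking the double bond yields $A_2 \times A_1$; for $G_2$ both maximal parabolics have Levi of type $A_1$.

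Second, I will check all four clauses of Property-$G(\mathcal{Q}_1,\mathcal{Q}_2)$ for the chosen pair $(Q_1, Q_2)$. Both parabolics contain $P$ by construction, hence $Q_1 \cap Q_2$ is again a parabolic containing $P$. Each parabolic $Q_i$ is an extension of its reductive Levi factor $L_i$ by the amenable unipotent radical, and $L_i$ is in turn an extension of its semisimple part $L_i^{ss}$ by a central torus (also amenable). By the first step, $L_i^{ss}$ is a finite central quotient of a product of groups $SL_{m}(\C)$; Theorem~\ref{thm-2half} supplies the 2\textonehalf-property for each factor, Proposition~\ref{thm-2half3} transfers it to the product, and Lemma~\ref{thm-2half2}(1) transfers it across both the central quotient and the amenable extensions, giving the 2\textonehalf-property for $Q_i$. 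For bounded generation, I use the Bruhat decomposition $G = \bigcup_{w \in W} PwP$: any $g \in G$ equals $b\,\dot w\,b'$ for some $b, b' \in P$ and some $w$ in the Weyl group, and $w$ has a reduced expression of length at most $\ell(w_0)$ in the simple reflections; each such simple reflection lifts to an element of $Q_1$ or $Q_2$ (since the two deleted nodes are distinct, every simple reflection belongs to the Levi of at least one of the parabolics), yielding a uniform bound on the word length of $g$ in $Q_1 \cup Q_2$.

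The main point requiring care is the case-by-case verification in the first step; the existence of two type-$A$ maximal parabolics is clear for the classical types and requires a brief inspection of the diagrams for $E_6, E_7, E_8$ and $F_4$, but no new ingredients beyond the tabulated results of the paper are needed.
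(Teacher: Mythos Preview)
Your proof is correct and follows essentially the same strategy as the paper: choose two proper standard parabolics whose Levi factors have only type-$A$ components (so that \cref{thm-2half}, \cref{thm-2half3}, and \cref{thm-2half2} give the 2\textonehalf-property), with the union of the corresponding subsets of simple roots being the full set. The only differences are that you consistently use \emph{maximal} parabolics (deleting a single node) whereas the paper typically pairs an $A_1$-parabolic with an $A_{m-1}$-parabolic (two $A_2$'s for $F_4$), and you spell out the bounded-generation step via the Bruhat decomposition, which the paper leaves implicit.
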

	\begin{proof}
		Recall that simple, simply connected Lie groups over $\C$ split, and hence are classified by Dynkin diagrams, so let $X$ be the Dynkin diagram of $G$. Any subdiagram $Y$ of $X$ corresponds to a parabolic subgroup $Q \leq G$ containing a fixed minimal parabolic subgroup $P \leq G$, such that $Q$ modulo its amenable radical is a semisimple group corresponding to the subdiagram $Y$. Furthermore, if the subdiagram $Y$ is of type $A_n$ (for $n \geq 1$), then the parabolic subgroup $Q$ corresponding to $Y$, modulo its amenable radical, is $SL_{n+1}(\C)$, and so, by \cref{thm-2half} and \cref{thm-2half2},  $Q$ has the 2\textonehalf-property.\\
		So constructing two proper parabolic subgroups $Q_1$ and $Q_2$, that both contain $P$ and generate $G$, is equivalent to choosing two proper subdiagrams $Y_1$ and $Y_2$ of $X$ that the union of the vertex sets of $Y_1$ and $Y_2$ is the vertex set of $X$. Furthermore, if we can ensure both these diagrams are of type $A_n$, then this gives us two proper parabolic subgroups with the 2\textonehalf-property, ensuring that $G$ has Property-$G(\mathcal{Q}_1,\mathcal{Q}_2)$.
		We claim that for any connected Dynkin diagram $X$ corresponding to a simple complex Lie algebra of rank at least $2$, we can find two such subdiagrams $Y_1$ and $Y_2$ both of type $A_n$ (for $n \geq 1$). This can be seen by considering each case separately, and is illustrated in the figure below. Observe that for the Dynkin diagram of type $F_4$, we construct subdiagrams $Y_1$ and $Y_2$ both of type $A_2$, while for a Dynkin diagram of any other type, we can always choose $Y_1$ and $Y_2$ to be of type $A_1$ and $A_{m-1}$ (where $m$ is the rank of the group $G$). 
	\end{proof} 
	\begin{center} 
		\includegraphics[scale=0.30]{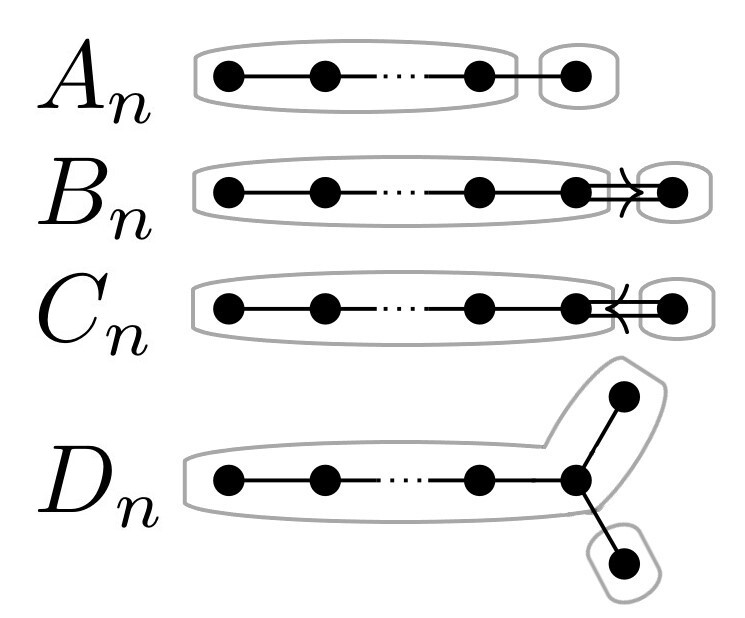} \quad \quad \quad
		\includegraphics[scale=0.25]{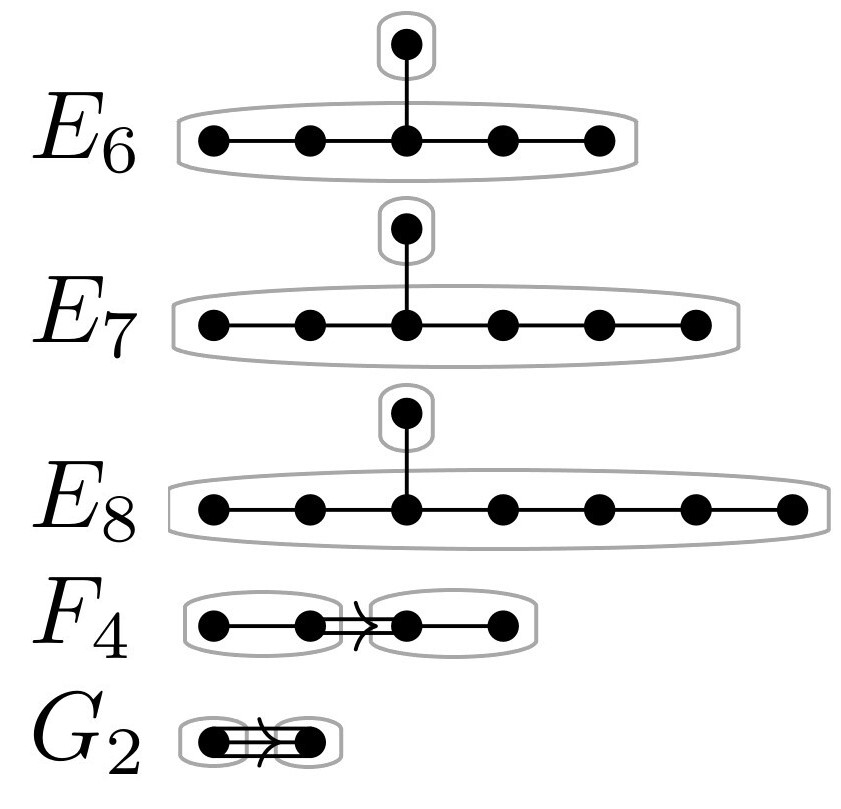}  
	\end{center} 
	We next consider the case of connected, simply connected groups over the reals, where the situation is more involved. Firstly, note that even $SL_3(\R)$ (which, by \cref{thm-2half}, has the 2\textonehalf-property) does not have Property-$G(\mathcal{Q}_1,\mathcal{Q}_2)$ even though it has rank $2$, because any proper parabolic subgroup of $SL_3(\R)$, modulo its amenable radical, is $SL_2(\R)$ for which $\Hb^2(SL_2(\R),\R) \neq 0$. However, since $SL_n(\R)$ has the 2\textonehalf-property for $n \geq 3$, we can apply the proof technique of \cref{complexlie} in the case of certain \emph{split} simple real Lie groups to show that:
	\begin{proposition}
		Let $G$ be a split simple real Lie group of type $A_n$, $D_n$ (for $n \geq 3$), $F_4$, $E_6$, $E_7$ or $E_8$.Then $G$ has Property-$G(\mathcal{Q}_1,\mathcal{Q}_2)$. 
	\end{proposition}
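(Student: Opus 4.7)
The plan is to mimic the approach of Proposition~\ref{complexlie} in the real split setting, with the crucial caveat that over $\R$ we no longer have the $2$\textonehalf-property for $SL_2(\R)$, so we are restricted to subdiagrams whose connected components are of type $A_k$ with $k \geq 2$ (i.e.\ Levi factors of the form $SL_{k+1}(\R)$ with $k+1 \geq 3$). First I would recall that for a simply connected split real simple Lie group $G$, the parabolic subgroups containing a fixed minimal parabolic $P$ are in bijection with subdiagrams of the Dynkin diagram $X$ of $G$: to a subdiagram $Y \subseteq X$ corresponds the standard parabolic $Q_Y$ whose Levi quotient modulo the amenable (solvable) radical is the product of split real simple groups indexed by the connected components of $Y$. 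Combining Theorem~\ref{thm-2half}, Proposition~\ref{thm-2half3}, and Lemma~\ref{thm-2half2}, if every connected component of $Y$ is of type $A_k$ with $k \geq 2$, then $Q_Y$ has the $2$\textonehalf-property.

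Next I would observe that if $Y_1, Y_2 \subseteq X$ are two proper subdiagrams whose union covers every vertex of $X$, then the corresponding parabolics $Q_{Y_1}, Q_{Y_2}$ both contain $P$, their intersection is the parabolic $Q_{Y_1 \cap Y_2}$, and together with $P$ they generate $G$. Bounded generation of $G$ by $Q_{Y_1} \cup Q_{Y_2}$ then follows from the Bruhat decomposition: every element of $G$ lies in a product of the form $P w P$ for some Weyl group element $w$, and $w$ in turn factors as a bounded product of simple reflections, each of which admits a representative in one of $Q_{Y_1}$ or $Q_{Y_2}$ since $Y_1 \cup Y_2$ exhausts the simple roots; boundedness follows because the Weyl group is finite.

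It then remains to exhibit, for each of the Dynkin diagrams in the statement, a pair of subdiagrams $(Y_1, Y_2)$ with the two properties above and whose components are all of type $A_k$, $k \geq 2$. This is a straightforward case-by-case verification:
\begin{itemize}
\item Type $A_n$ with $n \geq 3$: label the simple roots $\alpha_1, \dots, \alpha_n$ along the chain and take $Y_1 = \{\alpha_1, \dots, \alpha_{n-1}\}$ and $Y_2 = \{\alpha_2, \dots, \alpha_n\}$, both of type $A_{n-1}$.
\item Type $D_n$ with $n \geq 4$ (the case $D_3 \cong A_3$ is already handled): with the usual convention that $\alpha_{n-1}$ and $\alpha_n$ are the two roots attached to $\alpha_{n-2}$, take $Y_1 = \{\alpha_1, \dots, \alpha_{n-1}\}$ of type $A_{n-1}$ and $Y_2 = \{\alpha_{n-2}, \alpha_n\}$ of type $A_2$.
\item Type $F_4$ (with nodes $\alpha_1 - \alpha_2 = \alpha_3 - \alpha_4$): take $Y_1 = \{\alpha_1, \alpha_2\}$ and $Y_2 = \{\alpha_3, \alpha_4\}$, both of type $A_2$.
\item Types $E_6, E_7, E_8$: with the standard Bourbaki labelling, let $Y_1$ be the subdiagram obtained by removing the branch node $\alpha_2$, which is a chain of type $A_{n-1}$ ($n = 6, 7, 8$), and let $Y_2 = \{\alpha_2, \alpha_4\}$, of type $A_2$.
\end{itemize}
In each case the union of vertices equals $X$, so the two parabolics boundedly generate $G$, and both have the $2$\textonehalf-property, establishing Property-$G(\mathcal{Q}_1, \mathcal{Q}_2)$.

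The only genuine obstacle is the restriction to $A_k$-type components, which rules out several low-rank exceptional or classical cases; in particular this is precisely why type $A_2$ (i.e.\ $SL_3(\R)$) is excluded in the statement of the theorem, and why split groups of types $B_n$, $C_n$, $G_2$ are not covered by this method. For these, the argument cannot proceed without establishing the $2$\textonehalf-property for additional split simple real Lie groups, which lies beyond the present framework. For the types in the statement, however, the construction above suffices and the rest is a routine verification using the functorial properties of $\Hb^\bullet$ collected in Section~\ref{ssec-2half}.
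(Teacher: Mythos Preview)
Your proposal is correct and follows essentially the same approach as the paper: reduce to a combinatorial problem on the Dynkin diagram, exhibiting two proper subdiagrams of type $A_k$ with $k\geq 2$ whose vertex sets together cover $X$, so that the corresponding parabolics have Levi factors (mod amenable radical) of the form $SL_{k+1}(\R)$ with $k+1\geq 3$. The paper does exactly this, referring to an accompanying figure rather than spelling out the subdiagrams; your explicit case-by-case choices are valid and your Bruhat-decomposition justification of bounded generation makes explicit a step the paper leaves implicit.
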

	\begin{proof}
		As in the proof of \cref{complexlie}, we construct two subdiagrams $Y_1$ and $Y_2$ of the Satake diagram $X$ of the split simple real group $G$, such that the subdiagrams whose vertex sets together cover the vertex set of $X$, such that the simple real Lie groups corresponding to the subdiagrams are both $SL_n(\R)$ for some $n \geq 3$. This is illustrated in the figure below, where the subdiagrams are encircled in grey. 
	\end{proof}
	\begin{center} 
		\includegraphics[scale=0.42]{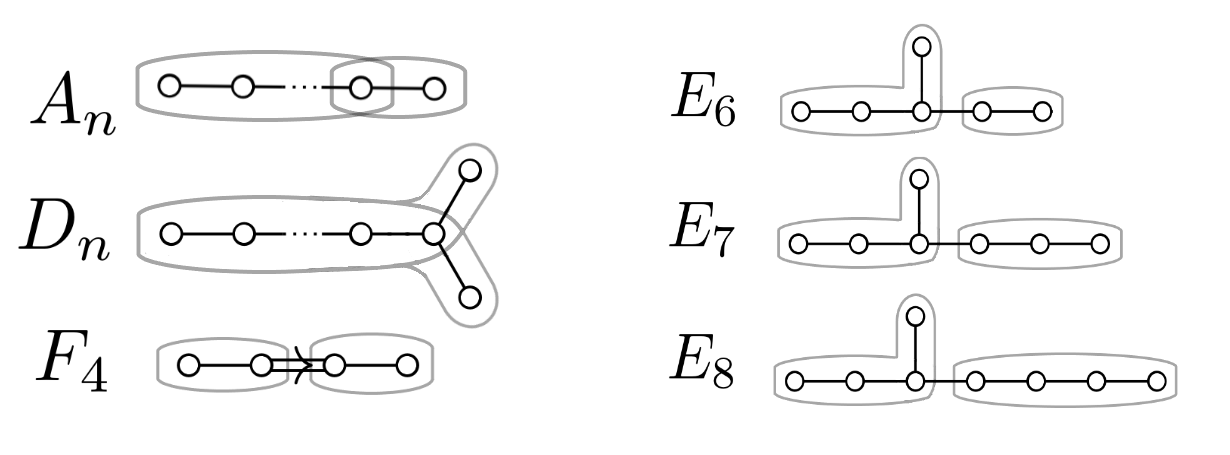}
	\end{center} 
	Note that our method will not work for the split simple real Lie groups of type $C_n$ or $G_2$. In the case of $C_n$, for any two two proper subdiagrams covering the vertices of the Satake diagram, at least one of them must either itself be of type $C_m$, which corresponds to the simple group $Sp(2m,\R)$ which does \emph{not} have the 2\textonehalf-property (as $\Hb^2(Sp_{2m}(\R),\R)\neq 0$), or of type $A_1$, which corresponds to the simple group $SL_2(\R)$. In the case of $G_2$, any proper parabolic subgroup is of type $A_1$. Thus, split simple real Lie groups of type $C_n$ or $G_2$ \emph{do not} have Property-$G(\mathcal{Q}_1,\mathcal{Q}_2)$.\\
	
	\section{Conclusions and Discussion}\label{sec-conclusions}
	The current paper presents many questions and directions for futher research. We highlight a few of them now.
	\begin{itemize}
		\item \textbf{Prove a complete Ulam-stability result for higher rank lattices in semisimple Lie groups}.\\
		One could hope to extend our main results (specifically \cref{maintheorem1}) by considering a larger family of groups for which the Property-$G(\mathcal{Q}_1,\mathcal{Q}_2)$ is known. However, this has its limitations since there are groups for which Property-$G(\mathcal{Q}_1,\mathcal{Q}_2)$ is simply not true. An important example of such a group is $SL_3(\R)$, where for a maximal parabolic subgroup $Q \leq SL_3(\R)$, $\Hb^2(Q,\R)=\Hb^2(SL_2(\R),\R) \neq 0$. We can ask if Property-$G(\mathcal{Q}_1,\mathcal{Q}_2)$ for the ambient group is even necessary for Ulam stability, or if it just happens to be an artifact of our proof technique.\\
		\item \textbf{Surjectivity/Injectivity of a comparison map $\Ha^2(\Gamma,\W) \mapsto \Hb^2(\Gamma,\tilde{\W})$}.\\
		There exists a natural forgetful map $\Ha^2(\Gamma,\W) \mapsto \Hb^2(\Gamma,\tilde{\W})$ (analogous to the comparison map $c:\Hb^2(\Gamma,W) \to H^2(\Gamma,W)$ for a $\Gamma$-module $W$). It is not immediate if this map is either surjective or injective. Suppose it were injective, then we could truly reduce the question of uniform stability with a linear estimate to the study of the second bounded cohomology group $\Hb^2(\Gamma,\tilde{W})$ which is a well-studied notion. For instance, it is known (\cite{burgerMonod}) that for a lattice $\Gamma$ in a higher rank simple Lie group $G$, $\Hb^2(\Gamma,W)=0$ for every dual separable Banach $\Gamma$-module $W$ (note, however, that the Banach space ultraproduct $\tilde{W}$ is not separable unless it is finite dimensional).\\
		\item \textbf{Uniform versus non-uniform stability}\\
		All along in this paper, we have dealt with the question of uniform stability as opposed to non-uniform, or pointwise, stability. The connection between pointwise  stability and vanishing second cohomology is studied in \cite{DCGLT}, \cite{oppen}. We stress that such stability results in the non-uniform setting is far from being known for most lattices. So far such results are known for many lattices in $p$-adic Lie groups (with respect to the Frobenius or $p$-Schatten norms for $p < \infty$), but almost nothing is known for lattices in real (or complex) Lie groups (see \cite{bader}). Moreover, when the family $M_n(\C)$ is endowed with the operator norm (i.e. the $p$-Schatten norm for $p=\infty$) then it is known that the stability result is \emph{not} true for most hgh rank lattices. This follows from the results in \cite{dadarlat} \cite{moscovici} that show that if $H^{2j}(\Gamma,\R) \neq 0$ for some positive integer $j$, then $\Gamma$ is not pointwise stable for the operator norm. Note that in our setting of uniform stability, the case of $p=\infty$ and $p <\infty$ are treated together without any problem.\\
		\item \textbf{Stability with respect to the (normalized) Hilbert-Schmidt norm}.\\
		In \cite{becker} it is shown that a lattice $\Gamma$ that has Property (T) is not pointwise stable for the (normalized) Hilbert-Schmidt norm. In \cite{dogon} it is shown that if a residually finite group is uniformly stable with respect to the (normalized) Hilbert-Schmidt norm, then it is virtually abelian. In particular, this means that no lattice in a non-compact semisimple Lie group is uniformly stable with respect to the (normalized) Hilbert-Schmidt norm.\\
		In both cases, the results still leave the possibility that higher rank lattices are \emph{flexibly} stable (pointwise or uniform) with respect to the (normalized) Hilbert-Schmidt norm. For more on flexible stability, refer \cite{beckerchapman} and \cite{becker}.\\
		\item \textbf{ Uniform stability with non-linear estimate}.\\
		Our machinery, whenever it can be applied, works to prove uniform stability with a linear estimate. We do not know of examples of groups that are uniformly stable, but without a linear estimate. It is interesting to compare this with \cite{beckermosheiff} where it is shown that $\Z^2$ exhibits pointwise stability (with respect to Hamming metric on $Sym(n)$) but with a non-linear estimate. However, in the case of uniform stability in our setting, $\Z^2$ (being amenable) is uniformly stable for any submultiplicative norm on $U(n)$. The quantitative aspects of stability are an active line of current research.\\ 
		\item \textbf{ Stability with respect to non-archimedean metrics}.\\
		A recent monograph of \cite{fournier1} studies stability with respect to $p$-adic groups. An interesting feature in this non-archimedean setting is that the ultrametric (strong triangle inequality) forces an equivalence between uniform and pointwise stability (for finitely presented groups). One can ask if further stability results in this setting too can be proved using the framework of asymptotic cohomology. 
	\end{itemize}

	\bibliography{references}

\end{document}